\DeclareMathOperator{\Cris}{{Cris}}
\newcommand{\bA}{{\mathbb A}}
\newcommand{\bF}{{\mathbb F}}
\newcommand{\bG}{{\mathbb G}}
\newcommand{\bN}{{\mathbb N}}
\newcommand{\bP}{{\mathbb P}}
\newcommand{\bQ}{{\mathbb Q}}
\newcommand{\bZ}{{\mathbb Z}}
\newcommand{\cA}{{\mathcal A}}
\newcommand{\cC}{{\mathcal C}}
\newcommand{\cD}{{\mathcal D}}
\newcommand{\cE}{{\mathcal E}}
\newcommand{\cF}{{\mathcal F}}
\newcommand{\cL}{{\mathcal L}}
\newcommand{\cM}{{\mathcal M}}
\newcommand{\cO}{{\mathcal O}}
\newcommand{\cP}{{\mathcal P}}
\newcommand{\cR}{{\mathcal R}}
\newcommand{\cY}{{\mathcal Y}}
\newcommand{\fa}{{\mathfrak a}}
\newcommand{\fp}{{\mathfrak p}}
\newcommand{\oM}{\overline{M}}
\newcommand{\ou}{\overline{u}}
\newcommand{\ta}{\widetilde{a}}
\newcommand{\tm}{\widetilde{m}}
\newcommand{\tx}{\widetilde{x}}
\newcommand{\ty}{\widetilde{y}}
\newcommand{\tchi}{\widetilde{\chi}}
\newcommand{\tB}{\widetilde{B}}
\newcommand{\tE}{\widetilde{E}}
\newcommand{\tF}{\widetilde{F}}
\newcommand{\tV}{\widetilde{V}}
\newcommand{\et}{\mathrm{\acute{e}t}}
\DeclareMathOperator*{\colim}{colim}
\DeclareMathOperator{\gr}{{gr}}
\DeclareMathOperator{\Isom}{{Isom}}
\DeclareMathOperator{\Lie}{{Lie}}
\DeclareMathOperator{\Spf}{{Spf}}
\DeclareMathOperator{\Mod}{{Mod}}
\newcommand{\RGamma}{\mathrm{R}\Gamma}
\DeclareMathOperator{\Ext}{{Ext}}
\DeclareMathOperator{\Hom}{{Hom}}
\DeclareMathOperator{\id}{{id}}
\DeclareMathOperator{\im}{{Im}} 
\DeclareMathOperator{\Mor}{{Mor}}
\DeclareMathOperator{\op}{{op}}
\DeclareMathOperator{\Perf}{{Perf}}
\DeclareMathOperator{\Spec}{{Spec}}
\DeclareMathOperator{\coker}{coker}
\DeclareMathOperator{\cofib}{cofib}
\DeclareMathOperator{\QCoh}{QCoh}
\DeclareMathOperator{\Map}{Map}
\DeclareMathOperator{\Rep}{Rep}
\DeclareMathOperator{\Cat}{Cat}
\DeclareMathOperator{\diag}{diag}
\DeclareMathOperator{\WCart}{WCart}
\DeclareMathOperator{\DAlg}{DAlg}
\DeclareMathOperator{\DK}{DK}
\DeclareMathOperator{\Sch}{Sch}
\DeclareMathOperator{\Proj}{Proj}
\DeclareMathOperator{\PreStk}{PreStk}
\DeclareMathOperator{\Ring}{Ring}
\DeclareMathOperator{\ob}{ob}
\newcommand{\hD}{\widehat{D}}
\DeclareMathOperator{\Gal}{{Gal}}
\newcommand{\alg}{\mathrm{alg}}
\newcommand{\Fr}{\operatorname{Fr}}
\newcommand{\dR}{\mathrm{dR}}
\newcommand{\HT}{\mathrm{HT}}
\newcommand{\cris}{\mathrm{cris}}
\newcommand{\grp}{\mathrm{grp}}
\DeclareMathOperator{\QC}{\mathcal{QC}}
\DeclareMathOperator{\Func}{Func}
\DeclareMathOperator{\QSyn}{QSyn}
\DeclareMathOperator{\AffQSyn}{AffQSyn}
\DeclareMathOperator{\Alg}{Alg}
\DeclareMathOperator{\CAlg}{CAlg}
\DeclareMathOperator{\Stab}{Stab}
\newcommand{\Id}{\mathrm{Id}}
\newcommand{\Fil}{\mathrm{Fil}}
\newcommand{\Tr}{\mathrm{Tr}}
\newcommand{\oF}{\overline{F}}
\newcommand{\sgn}{\mathrm{sgn}}
\newcommand{\RG}{\mathrm{R}\Gamma}
\newcommand{\Bock}{\mathrm{Bock}}
\newcommand{\fib}{\mathrm{fib}}
\newcommand{\can}{\mathrm{can}}
\newcommand{\RHom}{\mathrm{RHom}}
\newcommand{\conj}{\mathrm{conj}}
\newcommand{\qrsprfd}{\mathrm{QRSPrfd}}
\newcommand{\obj}{\mathrm{obj}}
\newcommand{\naive}{\mathrm{naive}}
\newcommand{\ks}{\mathrm{ks}}
\newcommand{\cosimp}{\mathrm{cosimp}}
\newcommand{\tors}{\mathrm{tors}}
\newcommand{\perf}{\mathrm{perf}}
\newcommand{\sing}{\mathrm{sing}}
\DeclareSymbolFontAlphabet{\mathbb}{AMSb} 
\DeclareSymbolFontAlphabet{\mathbbl}{bbold}
\newcommand{\comment}[1]{}
\newcommand{\Rmnum}[1]{\expandafter\@slowromancap\romannumeral #1@}
\newtheorem{pr}{Proposition}[section]
\newtheorem{thm}[pr]{Theorem}
\newtheorem{lm}[pr]{Lemma}
\newtheorem{cor}[pr]{Corollary}
\theoremstyle{definition}
\newtheorem{rem}[pr]{Remark}
\theoremstyle{definition}
\newtheorem{definition}[pr]{Definition}
\newtheorem{construction}[pr]{Construction}
\newtheorem{notation}[pr]{Notation}
\newtheorem{example}[pr]{Example}
\newtheorem{convention}[pr]{Convention}
\numberwithin{equation}{section}
\DeclareMathOperator{\DHod}{\slashed{D}}
\newcommand{\com}[1]{\ifstrequal{0}{1}{\textcolor{blue}{#1}}{}}
\begin{document}

\newcount\shown
\shown=1
\title[Non-decomposability of the de Rham complex]{Non-decomposability of the de Rham complex and non-semisimplicity of the Sen operator}

\author[]{Alexander Petrov}
\address{Massachusetts Institute of Technology, Cambridge, USA}
\email{alexander.petrov.57@gmail.com}

\begin{abstract}
We describe the obstruction to decomposing in degrees $\leq p$ the de Rham complex of a smooth variety over a perfect field $k$ of characteristic $p$ that lifts over $W_2(k)$, and show that there exist liftable smooth projective varieties of dimension $p+1$ whose Hodge-to-de Rham spectral sequence does not degenerate at the first page. We also describe the action of the Sen operator on the de Rham complex in degrees $\leq p$ and give examples of varieties with a non-semisimple Sen operator. 

Our methods rely on the commutative algebra structure on de Rham and Hodge-Tate cohomology, and are inspired by the properties of Steenrod operations on cohomology of cosimplicial commutative algebras. The example of a non-degenerate Hodge-to-de Rham spectral sequence relies on a non-vanishing result on cohomology of groups of Lie type. We give applications to other situations such as describing extensions in the canonical filtration on de Rham, Hodge, and \'etale cohomology of an abelian variety equipped with a group action. We also show that the de Rham complex of a smooth variety over $k$ is formal as an $E_{\infty}$-algebra if and only if the variety lifts to $W_2(k)$ together with its Frobenius endomorphism.
\end{abstract}
\maketitle
\setcounter{tocdepth}{1}
\tableofcontents
\section{Introduction}
\subsection{The main results.} A celebrated theorem of Deligne and Illusie provides an analog of Hodge decomposition on de Rham cohomology in degrees $<p$ of varieties over a perfect field $k$ of positive characteristic $p$ that lift to $W_2(k)$. In \cite{deligne-illusie} they proved that for a smooth variety $X_0$ over $k$ that admits a lift to a smooth scheme $X_1$ over the ring $W_2(k)$ of length $2$ Witt vectors, the canonical truncation of the de Rham complex $\dR_{X_0/k}:=(F_{X_0/k*}\Omega^{\bullet}_{X_0/k},d)$ in degrees $\leq p-1$ decomposes in the derived category $D(X_0^{(1)})$ of $\cO_{X^{(1)}_0}$-modules:
\begin{equation}\label{intro: de rham complex decomposition}
\tau^{\leq p-1}\dR_{X_0/k}\simeq\bigoplus\limits_{i=0}^{p-1} \Omega^i_{X^{(1)}_0/k}[-i] .
\end{equation}
Passing to cohomology, this gives decompositions for $n<p$:
\begin{equation}\label{intro: cohomology decomposition}H^n_{\dR}(X_0/k)\simeq \bigoplus\limits_{i+j=n}H^j(X_0,\Omega^i_{X_0/k})^{(1)} .\end{equation}
In this paper we investigate whether the de Rham complex of a variety liftable to $W_2(k)$ decomposes in further degrees. We prove that in general it does not decompose, and decomposition (\ref{intro: cohomology decomposition}) might not exist, answering a question of Deligne and Illusie \cite[Remarque 2.6(iii)]{deligne-illusie}:

\begin{thm}[Corollary \ref{nondeg example: main corollary}]\label{intro: nondeg example}
There exists a smooth projective variety $X_0$ over $k$ of dimension $p+1$ that lifts to $W(k)$ such that $\dim_k H^p_{\dR}(X_0/k)<\displaystyle\sum\limits_{i+j=p}\dim_k H^j(X_0,\Omega^i_{X_0/k})$. In particular, the Hodge-to-de Rham spectral sequence of $X_0$ does not degenerate at the first page, and $\dR_{X_0/k}$ is not quasi-isomorphic to $\bigoplus\limits_{i\geq 0}\Omega^i_{X^{(1)}_0/k}[-i]$.
\end{thm}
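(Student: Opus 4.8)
The plan is to split the argument into two parts: first, to describe the obstruction to degeneration of the Hodge-to-de Rham spectral sequence in total degree $p$ (this is the content promised in the abstract), and second, to exhibit one variety of dimension $p+1$, liftable to $W(k)$, on which this obstruction does not vanish, the non-vanishing being imported from the cohomology of a finite group of Lie type.

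For the first part I would refine the Deligne--Illusie splitting of $\tau^{\leq p-1}\dR_{X_0/k}$. Over a $W_2(k)$-lift the Cartier isomorphism is canonically realized in degrees $\leq p-1$, and the failure to extend it one further step --- equivalently, the first potentially non-zero differential of the Hodge-to-de Rham spectral sequence, which by Deligne--Illusie lands in total degree $p$ --- should be identified with a canonical cohomology operation of cohomological degree $p$ acting on Hodge cohomology $\bigoplus_i H^{\bullet}(X_0,\Omega^i_{X_0/k})$. The natural way to produce such an operation is to present de Rham cohomology as the cohomology of a cosimplicial commutative $k$-algebra, so that Steenrod-type operations become available, and to compute the relevant operation --- a Frobenius- or Bockstein-type operation first appearing just past the Deligne--Illusie range --- in terms of the given $W_2(k)$-lift. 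The output is a natural class $o(X_0)$ in an explicit $\Ext$-group assembled from the sheaves $\Omega^i_{X_0/k}$, which vanishes precisely when the spectral sequence degenerates in total degree $\leq p$; by Deligne--Illusie, $o(X_0)\neq 0$ already forces $\dim X_0 \geq p+1$, so $p+1$ is the optimal dimension.

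For the second part I would realize $X_0$ as a quotient $Y/G$, where $G$ is a finite group of Lie type over $\bF_p$ with $p \mid |G|$ acting freely on a smooth projective $W(k)$-scheme $Y$ of relative dimension $p+1$ whose de Rham and Hodge cohomology in degrees $\leq p$ reproduce, $G$-equivariantly, those of $BG$ with coefficients in the appropriate Hodge pieces; concretely $Y$ can be a $G$-stable smooth complete intersection in the projectivization of a faithful $W(k)[G]$-module, or a free quotient of a product of an abelian scheme manufactured from a defining representation with an auxiliary variety carrying a free $G$-action, with the numerics arranged so that $\dim Y = p+1$. Since $Y \to X_0$ is a finite \'etale $G$-torsor, de Rham and Hodge cohomology satisfy Galois descent: $\RGamma_{\dR}(X_0/k) \simeq \RGamma_{\dR}(Y/k)^{hG}$ and $\RGamma(X_0,\Omega^i_{X_0}) \simeq \RGamma(Y,\Omega^i_Y)^{hG}$, with spectral sequences starting from $H^{\ast}(G,H^{\ast}_{\dR}(Y/k))$ and $H^{\ast}(G,H^{\ast}(Y,\Omega^i_Y))$ --- here $p \mid |G|$ is essential, since otherwise these would collapse onto one row and nothing would happen. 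By naturality $o(X_0)$ is the image, under the relevant edge maps, of the corresponding universal class for the $G$-module $H^{\bullet}(Y,\Omega^{\bullet}_Y)$, a subquotient of an exterior algebra on a defining representation, so the whole problem reduces to a group-theoretic statement: this class is a non-zero element of a specific group cohomology $H^{\ast}(G,-)$ in a degree near $p$ and it survives all differentials.

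To conclude, $\dim_k H^p_{\dR}(X_0/k)$ and $\sum_{i+j=p}\dim_k H^j(X_0,\Omega^i_{X_0})$ are read off from $\RGamma_{\dR}(Y/k)^{hG}$ and $\bigoplus_i \RGamma(Y,\Omega^i_Y)^{hG}$; since $\dim_k H^n_{\dR}(X_0/k)\leq \sum_{i+j=n}\dim_k H^j(X_0,\Omega^i_{X_0})$ always, with equality iff the spectral sequence degenerates in degree $n$, the non-vanishing of $o(X_0)$ yields the strict inequality in degree $p$. The last sentence of the theorem is then immediate, since $\bigoplus_{i\geq 0}\Omega^i_{X_0^{(1)}/k}[-i]$ visibly has degenerate Hodge-to-de Rham spectral sequence, so a quasi-isomorphism with $\dR_{X_0/k}$ would contradict the strict inequality. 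I expect the decisive difficulty to be the last, reduced statement: exhibiting in the defining-characteristic cohomology of a finite group of Lie type a non-zero class in the precise degree near $p$ where $o$ lives, and checking it is not a boundary --- this needs real input about such groups (the Steinberg module and the Tits building, cohomological stability, or an explicit small computation), while the descent and Steenrod-operation steps surrounding it are formal.
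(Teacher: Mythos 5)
Your first half is, in outline, what the paper does: the obstruction just past the Deligne--Illusie range is identified with a Frobenius/Bockstein-type operation coming from the (co)simplicial or derived commutative algebra structure on the de Rham complex, computed from the $W_2(k)$-lift, and the dimension count at the end is fine. The genuine gap is in the construction of the example. The paper's variety is \emph{not} a free quotient by a finite group of Lie type with \'etale descent: it is an approximation $Z/G$ of the classifying stack of a finite flat \emph{non-\'etale} group scheme $G=SL_p(\bF_{p^2})\ltimes\bigl(E[p]\otimes_{\bF_p}\bF_{p^2}^{\oplus p}\bigr)$ with $E$ supersingular. The infinitesimal part is essential: for a finite discrete group $\Gamma$ one has $L\Omega^1_{B\Gamma/k}=0$, so $B\Gamma$ contributes nothing on the Hodge side and every Hodge class of $Y/\Gamma$ comes from $Y$ itself; in the paper it is precisely the co-Lie complex of the non-\'etale part of $G$ that supplies the class in $H^0(X_0,\Omega^p_{X_0})$ supporting the non-zero differential into $H^{p+1}(X_0,\cO)$. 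Your concrete candidates for $Y$ do not repair this: a smooth complete intersection of dimension $p+1$ has the Hodge cohomology of the ambient projective space in degrees $\le p$ (in particular $H^0(Y,\Omega^p_Y)=0$), so the $G$-modules entering your descent spectral sequence carry no obstruction class; and an abelian scheme built from the defining representation has dimension at least $2p$, the linear action fixes the origin so is never free, and restoring freeness by multiplying with an auxiliary variety pushes the dimension above $p+1$. The paper avoids this by splitting the two roles: the interesting equivariant cohomology lives on the \emph{stack} $[A_0/SL_p(\cO_F)]$, a non-free action of an infinite arithmetic group on a $2p$-dimensional abelian scheme, which maps to $BG$; only afterwards is $BG$ approximated in dimension $p+1$ by a complete intersection quotient, using merely that $H^i(Z_0,\cO)=0$ for $0<i\le p$.

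Relatedly, the ``reduced group-theoretic statement'' you isolate is not the one actually needed, and it is not a statement about a finite group of Lie type alone. The obstruction has the shape $\Bock(\ob_{F}\cdot\alpha)$, so non-vanishing requires simultaneously: $\alpha(V)\neq 0$ for the algebraic group $SL_p$ (Kempf vanishing); its survival in $H^{p-1}(SL_p(\bF_q),V^{(1)})$ for $q>p$ (Cline--Parshall--Scott--van der Kallen restriction); a \emph{non-zero Bockstein} of this class, which the paper can only arrange over the arithmetic group $SL_p(\cO_F)$ for a carefully chosen real quadratic field $F$ (using a unit $u$ with $\Fr_p(\bar u)\neq\bar u^{\,p}$); and supersingularity of $E$, which makes equivariant coherent cohomology of the abelian scheme split (Frobenius vanishes on $H^1(A_0,\cO)$) while equivariant de Rham cohomology does not (Frobenius is non-zero on $H^1_{\dR}$) --- this Hodge-versus-de Rham contrast is what converts the group-cohomology class into a non-zero differential. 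None of these last two inputs appears in your reduction, and without them the class you propose to detect may die under the Bockstein or be absorbed by an equivariant splitting; so the steps you label ``formal'' (descent) conceal the real geometric input, while the step you label decisive is only one of several non-vanishing statements required.
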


Moreover, we compute the obstruction to decomposing the truncation $\tau^{\leq p}\dR_{X_0/k}$ of the de Rham complex in degrees $\leq p$ in terms of other invariants of the variety $X_0$ and its lift $X_1$. Given that $\tau^{\leq p-1}\dR_{X_0/k}$ decomposes, the next truncation fits into an exact triangle in the derived category $D(X^{(1)}_0)$:
\begin{equation}\label{intro: deg p triangle}
\bigoplus\limits_{i=0}^{p-1}\Omega^i_{X^{(1)}_0/k}[-i]\to\tau^{\leq p}\dR_{X_0/k}\to\Omega^p_{X^{(1)}_0/k}[-p]
\end{equation}
and hence gives an extension class $$e_{X_1,p}\in H^{p+1}(X^{(1)}_0,\Lambda^p T_{X^{(1)}_0/k})\oplus H^p(X^{(1)}_0,\Lambda^pT_{X^{(1)}_0/k}\otimes\Omega^1_{X^{(1)}_0/k})\oplus\ldots$$ that vanishes if and only if (\ref{intro: deg p triangle}) splits in $D(X^{(1)}_0)$. We compute this class:

\begin{thm}[{Theorem \ref{cosimp applications: the best part de Rham} if $X_1$ lifts to $W(k)$, Corollary \ref{semiperf: smooth cor} in general}]\label{intro: main extension class}
All components of $e_{X_1,p}$ except for the one in $H^{p+1}(X^{(1)}_0,\Lambda^p T_{X^{(1)}_0/k})$ vanish, and that component is equal to
\begin{equation}\label{intro: main extension class formula}
\Bock_{X^{(1)}_1}(\ob_{F,X_1}\cdot\hspace{2pt} \alpha(\Omega^1_{X^{(1)}_0}))\in H^{p+1}(X^{(1)}_0,\Lambda^p T_{X^{(1)}_0/k})
\end{equation}
where 
\begin{itemize}
\item $\Bock_{X^{(1)}_1}:H^p(X^{(1)}_0,\Lambda^p T_{X^{(1)}_0/k})\to H^{p+1}(X^{(1)}_0,\Lambda^p T_{X^{(1)}_0/k})$ is the Bockstein homomorphism, that is the connecting morphism arising from the short exact sequence of sheaves on $X_1$: $0\to \Lambda^p T_{X^{(1)}_0/k}\to\Lambda^p T_{X^{(1)}_1/W_2(k)}\to \Lambda^p T_{X^{(1)}_0/k}\to 0$.

\item $\ob_{F,X_1}\in H^1(X^{(1)}_0,F_{X_0^{(1)}}^*T_{X^{(1)}_0})$ is the obstruction to lifting the Frobenius morphism $F_{X_0}:X_0\to X_0$ to an endomorphism of  $X_1$,

\item $\alpha(\Omega^1_{X^{(1)}_0})\in H^{p-1}(X^{(1)}_0,F_{X^{(1)}_0}^*\Omega^1_{X^{(1)}_0}\otimes(\Lambda^p\Omega^1_{X^{(1)}_0})^{\vee})$ is a certain `characteristic class' of the cotangent bundle, defined in Definition \ref{free cosimplicial: alpha definition}.\end{itemize}
\end{thm}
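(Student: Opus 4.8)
The plan is to prove the identity in two stages. First, under the stronger assumption that $X_1$ extends to a smooth scheme over $W(k)$ — here the cosimplicial/power-operation machinery of the paper applies head-on and gives Theorem \ref{cosimp applications: the best part de Rham} — and then, for a general lift over $W_2(k)$, reduce to this case by descent (Corollary \ref{semiperf: smooth cor}). For the $W(k)$-liftable case I would begin by replacing $\dR_{X_0/k}$, Zariski-locally on $X^{(1)}_0$, by an honest cosimplicial commutative $k$-algebra $A^\bullet$ — for instance a \v{C}ech--Alexander/infinitesimal nerve attached to the smooth $W(k)$-model — arranged so that $A^\bullet$ is the reduction modulo $p$ of a flat cosimplicial $W(k)$-algebra. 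The point of insisting on a strict cosimplicial commutative ring, rather than treating $\dR_{X_0/k}$ merely as an object of the derived category, is that $H^*(A^\bullet)$ then carries the Steenrod-type power operations analysed earlier, and these operations are what control the attaching map of the top piece $\Omega^p_{X^{(1)}_0}[-p]$ of $\tau^{\le p}\dR_{X_0/k}$ onto $\tau^{\le p-1}\dR_{X_0/k}\simeq\bigoplus_{i\le p-1}\Omega^i_{X^{(1)}_0}[-i]$, which is the class $e_{X_1,p}$.

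The heart is computing this attaching map. A useful guiding picture is \v{C}ech-theoretic: over a cover of $X^{(1)}_1$ carrying a lift $\tilde F$ of Frobenius, the Deligne--Illusie maps $\omega\mapsto p^{-1}\tilde F^{*}\tilde\omega$ and their exterior powers split all of $\tau^{\le p}\dR_{X_0/k}$, with $\tau^{\le p-1}$ split canonically; so $e_{X_1,p}$ is the higher-descent obstruction to gluing these local splittings, controlled by the discrepancy of two of them over an overlap. That discrepancy has two sources: the difference $\tilde F_\beta-\tilde F_\alpha$ of the Frobenius lifts, whose cocycle represents $\ob_{F,X_1}\in H^1(X^{(1)}_0,F^{*}T_{X^{(1)}_0})$; and the universal way the $p$-fold product producing the degree-$p$ part of the splitting fails to be $\cO_{X^{(1)}_0}$-linear in that difference — a "division by $p$"/$p$-th-power effect intrinsic to de Rham multiplication, already present in the free cosimplicial commutative algebra, which by construction is the class $\alpha(\Omega^1_{X^{(1)}_0})\in H^{p-1}\!\big(X^{(1)}_0,F^{*}\Omega^1_{X^{(1)}_0}\otimes(\Lambda^p\Omega^1_{X^{(1)}_0})^{\vee}\big)$ of Definition \ref{free cosimplicial: alpha definition}. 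Running this through the power-operation calculus on $H^*(A^\bullet)$: cupping the two inputs and contracting $F^{*}(T\otimes\Omega^1)\to\cO$ gives a class in $H^p(X^{(1)}_0,\Lambda^pT_{X^{(1)}_0})$, and the Bockstein $\Bock_{X^{(1)}_1}$ of $0\to\Lambda^pT_{X^{(1)}_0/k}\to\Lambda^pT_{X^{(1)}_1/W_2(k)}\to\Lambda^pT_{X^{(1)}_0/k}\to0$ enters because this degree-$p$ discrepancy is literally $p^{-1}$ times an integral ($W_2$-level) cup product, hence lies in the image of the connecting homomorphism. The same computation shows the output can only populate the $\Omega^p[-p]\to\cO$ component — the operation in play sends a conjugate-weight-$1$ class to a conjugate-weight-$p$ one — so the vanishing of all other components of $e_{X_1,p}$ and the formula for the surviving one come out together.

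For a general $X_1$ over $W_2(k)$ one reduces to the $W(k)$-liftable case: Zariski-locally $X^{(1)}_0$ does lift to $W(k)$ (smooth affines carry no lifting obstruction), and on an affine the given $W_2(k)$-lift agrees with the reduction of such a local $W(k)$-model (being unique up to isomorphism), so the previous computation applies locally; assembling these local answers, which the paper carries out via quasi-syntomic/semiperfect descent to keep the functoriality transparent — over a semiperfect base de Rham and Hodge--Tate cohomology collapse into a single degree, making the extension class explicit — yields the general formula. This is Corollary \ref{semiperf: smooth cor}.

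The step I expect to be the main obstacle is the middle computation, and inside it the assertion that the universal nonlinearity is \emph{exactly} $\alpha(\Omega^1)$, with no stray combinatorial coefficient, together with the precise form of the Bockstein. Pinning this down requires working out the free cosimplicial example completely: there the power operations on cohomology of \emph{cosimplicial}, as opposed to simplicial, commutative $\bF_p$-algebras behave asymmetrically in low versus high cohomological degrees — this is why the governing class sits in cohomological degree $p-1$ and nowhere else — and one must then track it through the exterior-power bookkeeping relating the degree-$1$ datum $\tilde F_\beta-\tilde F_\alpha$ to the degree-$p$ splitting. A secondary difficulty is rigidifying the multiplicativity used for the vanishing half: one wants $\dR_{X_0/k}$ with its conjugate filtration promoted to a \emph{filtered} cosimplicial-commutative algebra with associated graded the formal de Rham algebra $\bigoplus_i\Omega^i_{X^{(1)}_0}[-i]$, rather than just a filtered object of the derived category.
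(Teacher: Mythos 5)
Your first stage is, in outline, the route the paper actually takes for the $W(k)$-liftable case: one exploits the (derived or cosimplicial) commutative multiplication to build $s_p\colon S^p(\Omega^1_{X^{(1)}_0}[-1])\to\dR_{X_0/k}$, identifies the failure of $s_p$ to factor through $\Omega^p[-p]$ with $\alpha(\Omega^1_{X^{(1)}_0})$ via the fiber of the norm map $N_p\colon S^p\to\Gamma^p$, shows that the composite of that fiber with the multiplication is ``Frobenius followed by Bockstein'' (this is Lemma \ref{free cosimplicial: algebra Bockstein}, which needs the flat $W(k)$-structure and is exactly your ``$p^{-1}$ times an integral cup product'' heuristic), and finally identifies Frobenius precomposed with the Deligne--Illusie splitting with $\ob_{F,X_1}$ (Proposition \ref{applications: frobenius lift obstruction prop}); the vanishing of the other components and the formula indeed come out together from the shape of (\ref{cosimp: main formula}). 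You have correctly flagged that the delicate point is pinning down the nonlinearity as exactly $\alpha$ with the exact Frobenius/Bockstein factorization, but your sketch is consistent with how the paper does it.

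The genuine gap is your second stage. You propose to handle a general $W_2(k)$-lift by lifting $X_0$ to $W(k)$ Zariski-locally on smooth affines, running the first-stage computation there, and ``assembling these local answers.'' This cannot work as stated: the classes being compared, $e_{X_1,p}\in H^{p+1}(X^{(1)}_0,\Lambda^pT_{X^{(1)}_0})$ and $\Bock_{X_1^{(1)}}(\ob_{F,X_1}\cdot\alpha)$, restrict to zero on every affine open (coherent cohomology in degrees $p,p+1$ vanishes there), so a Zariski-local verification of the identity of cohomology classes is vacuous. To glue, you would have to compare the two extensions as morphisms in the derived category together with a full system of coherent homotopies over the cover, and on smooth affines the relevant mapping spaces $\Map(\Omega^p_{X_0}[-p],\cO_{X_0})$ are far from discrete, so ``local lifts agree up to isomorphism'' does not supply this data (the non-uniqueness of the isomorphism is precisely the problem). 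The paper's actual argument in the general case is different in kind, not just in bookkeeping: it never reduces to $W(k)$-liftable smooth pieces. Instead it proves the Sen-operator identity $c_{X_1,p}=\ob_{F,X_1}\cdot\alpha(L\Omega^1_{X_0})$ (Theorem \ref{semiperf: main sen operator}) by a direct module-level computation on quasiregular semiperfectoid $\bZ/p^2$-algebras (Proposition \ref{semiperf: sen for qrsprfd}, using that $\Omega^{\DHod}$ is concentrated in degree $0$ there, that $\Theta$ is a derivation, and flatness over $\bZ/p^2$ to cancel $p!$), and then descends along the quasisyntomic topology precisely because on qrsp objects both sides are maps between discrete modules, so functoriality is a condition rather than extra coherence data; finally $e_{X_1,p}=\Bock(c_{X_1,p})$ by Lemma \ref{sen operator: classes over zp2}. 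So your stage-two reduction needs to be replaced either by this semiperfectoid-descent argument (which also forces formulating $\alpha$, $\ob_F$, and the extension for quasisyntomic, non-smooth rings with cotangent complexes) or by some other mechanism that genuinely controls the higher homotopies; as written, the step ``the previous computation applies locally, then assemble'' is where the proof fails.
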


The class $\alpha(E)\in H^{p-1}(X_0,F_{X_0}^*E\otimes (\Lambda^p E)^{\vee})$ is defined for any vector bundle $E$ on $X_0$, and it can be made especially explicit when $p=2$, in which case it is the class of the extension
\begin{equation}
0\to F_{X_0}^*E\to S^2E\xrightarrow{e_1\cdot e_2\mapsto e_1\wedge e_2}\Lambda^2E\to 0.
\end{equation}

The fact that $e_{X_1,p}$ has only one potentially non-zero component is a shadow of an additional $\bZ/p$-grading on the de Rham complex $\dR_{X_0/k}$ in the presence of a lift $X_1$, discovered by Drinfeld through his work on prismatization \cite{drinfeld}. More generally, Bhatt-Lurie \cite{apc} showed that a lift of $X_0$ to a smooth $W_2(k)$-scheme $X_1$ induces an endomorphism $\Theta_{X_1}$ of $\dR_{X_0/k}$ in the derived category $D(X^{(1)}_0)$, called the {\it Sen operator}. 

The action of $\Theta_{X_1}$ on the $i$th cohomology sheaf $H^i(\dR_{X_0/k})\simeq\Omega^i_{X^{(1)}_0/k}$ is given by multiplication by $(-i)$. Therefore generalized eigenspaces for this operator form a decomposition
\begin{equation}\label{intro: drinfeld decomposition}
\dR_{X_0/k}\simeq\bigoplus\limits_{i=0}^{p-1}\dR_{X_0/k,i}
\end{equation}
such that the cohomology sheaves of the object $\dR_{X_0/k,i}$ are non-zero only in degrees congruent to $i$ modulo $p$. In particular, $\tau^{\leq p}\dR_{X_0/k,0}$ is an extension of $\Omega^p_{X^{(1)}_0/k}[-p]$ by $\cO_{X^{(1)}_0}$ that splits off as a direct summand from the extension (\ref{intro: deg p triangle}), corroborating the fact that the components of $e_{X_1,p}$ in $H^{p+1-i}(X^{(1)}_0,\Lambda^pT_{X^{(1)}_0}\otimes \Omega^i_{X^{(1)}_0})$ vanish for $i>0$. It was observed earlier by Achinger and Suh \cite{achinger-suh} that these components vanish for $i>1$, for a purely homological-algebraic reason which is related to our method.

The Sen operator $\Theta_{X_1}$ not only induces the decomposition (\ref{intro: drinfeld decomposition}) but also equips each direct summand $\dR_{X_0/k,i}$ with an additional structure of the nilpotent endomorphism $\Theta_{X_1}+i$. Our methods allow us to describe the action of $\Theta_{X_1}$ on $\tau^{\leq p}\dR_{X_0/k,0}$. Since $\tau^{\leq p}\dR_{X_0/k,0}$ has only two non-zero cohomology sheaves and $\Theta_{X_1}$ acts on them by zero, it naturally defines a map $\Omega^p_{X^{(1)}_0}[-p]\to\cO_{X^{(1)}_0}$ in the derived category $D(X^{(1)}_0)$ whose cohomology class we denote by $c_{X^{(1)}_1,p}\in H^p(X^{(1)}_0,\Lambda^p T_{X^{(1)}_0})$. This class can be described as:

\begin{thm}[{Theorem \ref{semiperf: main sen operator}}]\label{intro: sen class} For a smooth $W_2(k)$-scheme $X_1$ with the special fiber $X_0=X_1\times_{W_2(k)}k$ we have
\begin{equation}\label{intro: sen class equation}c_{X_1,p}=\ob_{F,X_1}\cdot\alpha(\Omega^1_{X_0})\end{equation}where the right hand side is the product of classes $\ob_{F,X_1}\in H^1(X_0,F_{X_0}^{*}T_{X_0/k})$ and $\alpha(\Omega^1_{X_0/k})\in H^{p-1}(X_0,F_{X_0}^*\Omega^1_{X_0/k}\otimes \Lambda^p T_{X_0/k})$
\end{thm}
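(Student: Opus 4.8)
The plan is to reduce \eqref{intro: sen class equation} to a computation with \v Cech cocycles in a local model built from a Frobenius lift, and then to a universal identity expressed through the cosimplicial commutative-algebra machinery of the body of the paper. Since $X_0$ is smooth, both sides of \eqref{intro: sen class equation} are functorial and compatible with \'etale localization, and the target group $H^p(X_0^{(1)},\Lambda^p T_{X_0^{(1)}})$ vanishes on affines, so the assertion is really a comparison of \v Cech cocycles for an affine open cover $\{U_\alpha\}$ of $X_0$. On each $U_{\alpha,1}\subset X_1$ choose a lift $\widetilde F_\alpha$ of the relative Frobenius; the differences $\widetilde F_\alpha-\widetilde F_\beta$, divided by $p$, form a \v Cech $1$-cocycle representing $\ob_{F,X_1}\in H^1(X_0,F_{X_0}^*T_{X_0/k})$. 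Each $\widetilde F_\alpha$ produces, \`a la Deligne--Illusie, a splitting of $\tau^{\le p-1}\dR_{U_\alpha/k}$; moreover the restriction of $\Theta_{X_1}$ to $\tau^{\le p}\dR_{U_\alpha/k,0}$ vanishes in $D(U_\alpha^{(1)})$ (its only possibly nonzero component is a map $\Omega^p[-p]\to\cO$, whose class lies in $H^p(U_\alpha^{(1)},\Lambda^p T)=0$), and $\widetilde F_\alpha$ pins down a null-homotopy. The theorem amounts to the statement that these local null-homotopies do not glue, and that the resulting obstruction --- a \v Cech $p$-cocycle valued in $\Lambda^p T_{X_0^{(1)}}$ --- is $c_{X_1,p}$.

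To evaluate this $p$-cocycle one needs a model of $\Theta_{X_1}$, together with its null-homotopy, that is functorial and explicit enough to compute with. This is where the semiperfect-ring (quasiregular semiperfectoid) techniques enter: they reduce the problem to a situation in which $\Theta_{X_1}$ is presented through a cosimplicial commutative-ring resolution, to which the descriptions of the Sen operator in \cite{apc}, \cite{drinfeld}, \cite{li-mondal} apply term by term, and simultaneously reduce the general $W_2(k)$-scheme $X_1$ to one that lifts to $W(k)$. Feeding the $1$-cocycle $\{\widetilde F_\alpha-\widetilde F_\beta\}$ into the degree-$p$ part of $\Theta_{X_1}$ in this resolution, the output factors through the divided Frobenius $\tfrac1p\widetilde F^*\colon\Omega^1_{X_0^{(1)}}\to F_{X_0*}\Omega^1_{X_0/k}$, whose failure to be additive is governed by the ``freshman's dream'' term $\tfrac1p\big((x+y)^p-x^p-y^p\big)$, pushed through $\Lambda^p$ of the cotangent bundle. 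This is precisely a Steenrod-operation computation on the cohomology of the free cosimplicial commutative algebra, and by the very construction of $\alpha$ (Definition \ref{free cosimplicial: alpha definition}) it extracts the class $\alpha(\Omega^1_{X_0/k})\in H^{p-1}(X_0,F_{X_0}^*\Omega^1_{X_0/k}\otimes\Lambda^p T_{X_0/k})$. Assembling, the $p$-cocycle is the cup product of $\ob_{F,X_1}$ with $\alpha(\Omega^1_{X_0/k})$, contracted along $F_{X_0}^*T_{X_0/k}\otimes F_{X_0}^*\Omega^1_{X_0/k}\to\cO_{X_0^{(1)}}$, i.e.\ the right-hand side of \eqref{intro: sen class equation}.

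The main obstacle is the computation just sketched: one must show that the term governing the failure of the local null-homotopies to glue in cohomological degree $p$ is \emph{exactly} $\alpha(\Omega^1_{X_0/k})$ --- with the correct sign and Frobenius twist, and with no spurious $\binom{p}{\bullet}$-type contributions in lower cohomological degree. The latter are excluded by the Deligne--Illusie decomposition in degrees $\le p-1$ together with the compatibility of $\Theta_{X_1}$ with the weight grading \eqref{intro: drinfeld decomposition}, which is also why only the $H^p(X_0^{(1)},\Lambda^p T_{X_0^{(1)}})$-component can be nonzero. Once the universal case --- the free cosimplicial commutative algebra, essentially $\bA^n_{W_2(k)}$ equipped with a chosen Frobenius lift --- is settled, functoriality and the projection formula propagate the identity to arbitrary $X_1$, and descending from a quasiregular semiperfectoid base to a general $W_2(k)$-scheme is a routine descent argument. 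As a consistency check, applying the Bockstein $\Bock_{X_1^{(1)}}$ to \eqref{intro: sen class equation} should recover formula \eqref{intro: main extension class formula} of Theorem \ref{intro: main extension class}, and the whole analysis dovetails with the vanishing of the other components of $e_{X_1,p}$ established there.
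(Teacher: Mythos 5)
Your sketch locates the right landmarks (Frobenius lifts, the obstruction class, the class $\alpha$, descent to quasiregular semiperfectoid rings), but the step that actually constitutes the theorem is asserted rather than proved, and the properties of $\Theta_{X_1}$ you invoke are insufficient to prove it. The only facts about the Sen operator you use are its eigenvalues $-i$ on the graded pieces of the conjugate filtration and the resulting weight decomposition; these do not determine the nilpotent (off-diagonal) part of $\Theta_{X_1}$ on $\Fil^{\conj}_p\dR_{X_0}$, which is precisely $c_{X_1,p}$. The paper's proof of Theorem \ref{semiperf: main sen operator} hinges on two further inputs that never appear in your proposal: (i) $\Theta$ is a \emph{derivation} for the (derived) commutative algebra structure on diffracted Hodge cohomology, and (ii) for a quasiregular semiperfectoid flat $\bZ/p^2$-algebra $S$ the complex $\Omega^{\DHod}_{S/\bZ/p^2}$ is a discrete flat algebra, so one can compute at the level of elements (Proposition \ref{semiperf: sen for qrsprfd}): one writes $p!\,\widetilde y-s(\widetilde x)^p\in\Fil^{\conj}_{p-1}$, applies $\prod_{i=0}^{p-1}(f+i)$, uses the derivation property to get $f(s(\widetilde x)^p)=-p\,s(\widetilde x)^p$, and cancels $p!$ by flatness over $\bZ/p^2$ to conclude $(f-f^p)(y)=s(x)^p$. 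Your ``divided Frobenius / freshman's dream / Steenrod operation'' paragraph is the place where such an identity would have to be established, but it never ties $\Theta_{X_1}$ to the local Frobenius lifts at any cochain or cosimplicial level, so the claimed extraction of $\alpha(\Omega^1_{X_0})$ has no foundation; the connection to $\ob_{F,X_1}$ (Proposition \ref{applications: frobenius lift obstruction prop}) only enters after the factorization of $f-f^p$ through $\psi_{M_S/p}$ and the section $s$ has been proved.

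Two further points would break the argument as written even granting the computation. First, the assertion that the semiperfectoid reduction ``simultaneously reduces the general $W_2(k)$-scheme $X_1$ to one that lifts to $W(k)$'' is unjustified and is not what the paper does; moreover the $W(k)$-lift argument (Theorem \ref{cosimp applications: the best part de Rham}) computes only the extension class $e_{X,p}=\Bock(c_{X,p})$, and the Bockstein is not injective, so $c_{X_1,p}$ cannot be recovered that way -- the whole point of Section \ref{semiperf: section} is to compute $c$ directly over $\bZ/p^2$. Second, your reduction to a comparison of \v{C}ech cocycles presupposes canonical local null-homotopies of $\Theta_{X_1}$ attached to the lifts $\widetilde F_\alpha$, together with coherent compatibilities on overlaps; constructing these is the nontrivial content, and the paper's quasisyntomic descent is designed exactly to avoid it: on quasiregular semiperfectoid rings the mapping space $\Map_{D(X_0)}(L\Omega^p_{X_0}[-p],\cO_{X_0})$ is discrete, so equality of $c$ and the proposed formula is a pointwise condition with no homotopies to glue. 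Reintroducing an affine \v{C}ech cover reinstates the coherence problem without the tools to solve it.
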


It is not a coincidence that $e_{X_1,p}$ is the result of applying the Bockstein homomorphism to $c_{X_1,p}$ -- this follows from the basic properties of the action of the Sen operator on the diffracted Hodge cohomology of $X_1$, as we prove in Lemma \ref{sen operator: classes over zp2}. In particular, if $\tau^{\leq p}\dR_{X_0/k}$ is not decomposable then the Sen operator on $\tau^{\leq p}\dR_{X_0/k}$ must be non-semisimple. Thus Theorem \ref{intro: nondeg example} also provides an example of a liftable variety with a non-semisimple Sen operator on its de Rham cohomology, answering a question of Bhatt.

In fact, the non-vanishing of $c_{X_1,p}$ is a more frequent phenomenon than that of $e_{X_1,p}$ as we demonstrate by the following:

\begin{cor}[Proposition \ref{nonsemisimp: p dim example}]
There exists a smooth projective variety $X_0$ of dimension $p$ over $k$ equipped with a lift $X_1$ over $W_2(k)$ such that the Sen operator $\Theta_{X_1}$ on $\dR_{X_0/k}$ is not semisimple.
\end{cor}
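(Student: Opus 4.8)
The goal is to produce a smooth projective $X_0/k$ of dimension $p$ with a lift $X_1$ over $W_2(k)$ for which $\Theta_{X_1}$ on $\dR_{X_0/k}$ is non-semisimple; by Theorem~\ref{intro: sen class} it suffices to arrange that the class $c_{X_1,p}=\ob_{F,X_1}\cdot\alpha(\Omega^1_{X_0/k})\in H^p(X_0,\Lambda^p T_{X_0/k})$ is non-zero. The plan is to choose $X_0$ so that all the relevant cohomology groups are computable: the natural candidates are abelian varieties, or rather an étale or isogeny quotient of one, since for an abelian variety $A$ of dimension $p$ the tangent bundle is trivial, $\Lambda^p T_A\simeq\cO_A$, and $H^p(A,\cO_A)\simeq k$ is one-dimensional, so $c_{X_1,p}$ lives in a $1$-dimensional space and we only need it not to be the zero element. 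For a genuine abelian variety with the standard lift, however, both $\ob_{F,X_1}$ and $\alpha$ tend to behave too well (the lift of Frobenius can be chosen compatibly, killing $\ob_{F,X_1}$); so instead I would take $X_0$ to be an abelian variety equipped with a non-free action of a finite group $G$ — exactly the setup the abstract advertises for the abelian-variety applications — or a suitable quotient, so that the Frobenius lifting obstruction is forced to be non-trivial while the Hodge cohomology stays small.

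\textbf{Key steps.} First, reduce via Theorem~\ref{intro: sen class} to the non-vanishing of the cup product $\ob_{F,X_1}\cdot\alpha(\Omega^1_{X_0/k})$ in $H^p(X_0,\Lambda^p T_{X_0/k})$. Second, specialize to $X_0$ with trivial (or sufficiently explicit) tangent bundle so that this cup product becomes a pairing $H^1(X_0,T_{X_0})\otimes H^{p-1}(X_0,\Omega^1_{X_0}\otimes\Lambda^p T_{X_0})\to H^p(X_0,\Lambda^p T_{X_0})$ that after trivializations is just the cup product $H^1(X_0,\cO_{X_0}^{\oplus p})\otimes H^{p-1}(X_0,\cO_{X_0}^{\oplus p})\to H^p(X_0,\cO_{X_0})$ composed with a contraction; for an abelian variety this is the honest exterior-algebra multiplication $H^1\wedge H^{p-1}\to H^p$ on $H^*(A,\cO_A)=\Lambda^* H^1(A,\cO_A)$, which is a perfect pairing. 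Third, compute $\alpha(\Omega^1_{X_0})$ for this $X_0$: since the cotangent bundle is trivial, Definition~\ref{free cosimplicial: alpha definition} should give $\alpha$ as an explicit (non-zero, by the characteristic-class nature of the construction) element built from the Frobenius twist, and one checks it is not annihilated by contraction against generic elements of $H^1(X_0,T_{X_0})$. Fourth — the crux — exhibit a lift $X_1/W_2(k)$ whose Frobenius obstruction $\ob_{F,X_1}\in H^1(X_0,F_{X_0}^*T_{X_0})$ pairs non-trivially with $\alpha(\Omega^1_{X_0})$ under the contraction; since $H^1(X_0,F_{X_0}^*T_{X_0})$ is then identified with $F^*H^1(A,\cO_A)^{\oplus p}$ and the pairing with $\alpha$ is perfect in the $H^1\wedge H^{p-1}$ sense, it is enough to find \emph{one} lift with $\ob_{F,X_1}\neq 0$ — and for a $p$-dimensional abelian variety that is a generic condition, failing only for the canonical (Serre–Tate/CM-type) lifts. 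One clean way to force $\ob_{F,X_1}\neq 0$ is to equip $A$ with a finite group action with non-free fixed locus and take an equivariant lift, using the group-action analysis in the paper to see that the obstruction cannot vanish; alternatively, one deforms the canonical lift within $H^1(X_0,T_{X_0})$ and tracks how $\ob_{F,X_1}$ varies affinely, hitting a non-zero value.

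\textbf{Main obstacle.} The genuinely delicate point is Step four: showing that some \emph{liftable projective} $X_0$ of dimension exactly $p$ admits a lift with $\ob_{F,X_1}\cdot\alpha(\Omega^1_{X_0})\neq 0$. It is not automatic, because for the most symmetric varieties (ordinary abelian varieties with their canonical lift) $\ob_{F,X_1}$ vanishes identically, and one must check that a \emph{non-canonical} lift — or a variety with extra structure such as a group action or a non-trivial isogeny — both remains projective over $W_2(k)$ and has the obstruction landing in the part of $H^1(X_0,F_{X_0}^*T_{X_0})$ that pairs non-trivially with $\alpha(\Omega^1_{X_0})$. I expect the cleanest route is to run this computation on an abelian variety $A$ of dimension $p$ with $p$-rank $<p$ (so that even the canonical lift has $\ob_{F,A_1}\neq 0$, since a lift of Frobenius would imply ordinarity), reducing the whole problem to the non-vanishing of an explicit element of the $1$-dimensional space $H^p(A,\cO_A)$, which one then verifies by the duality between $H^1(A,\cO_A)$ and $H^{p-1}(A,\Omega^1_A)$ together with the explicit shape of $\alpha$.
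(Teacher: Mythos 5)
There is a fatal gap at your Step 3, and it sinks the whole strategy: for an abelian variety (or anything with trivial cotangent bundle) the class $\alpha(\Omega^1_{X_0})$ is \emph{zero}, not ``non-zero by the characteristic-class nature of the construction''. The construction of Definition \ref{free cosimplicial: alpha definition} is functorial in pullback of vector bundles: $\alpha(E)$ is the pullback of the universal class $\alpha(V)\in \Ext^{p-1}_{GL_p}(\Lambda^pV,V^{(1)})$ along the classifying map of $E$ (this is exactly how the paper uses it, e.g.\ in the proof of Lemma \ref{group cohomology: reducible formula}). If $E\simeq\cO_{X_0}^{\oplus p}$ is trivial, the classifying map factors through $\Spec k$, so $\alpha(E)$ is pulled back from $\Ext^{p-1}_{\Spec k}(\Lambda^p k^{\oplus p},F^*k^{\oplus p})=0$ (as $p-1>0$); equivalently, the fiber sequence (\ref{free cosimplicial: alpha extension}) for a trivial bundle is pulled back from a point and hence split. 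So for any lift $A_1$ of a $p$-dimensional abelian variety, $c_{A_1,p}=\ob_{F,A_1}\cdot\alpha(\Omega^1_{A_0})=0$ no matter how non-trivial $\ob_{F,A_1}$ is (supersingular, non-canonical lift, etc.), and the Sen operator on $\Fil^{\conj}_p\dR_{A_0}$ is semisimple. This is consistent with the fact, recorded in the paper, that a trivialization of $\Omega^1_{A_0}$ already decomposes the de Rham complex of an abelian variety; the ``perfect pairing'' on $\Lambda^\bullet H^1(A,\cO)$ that you invoke never gets to act on a non-zero class. Passing to a finite group action does not rescue this within the corollary's scope: with a free action of order prime to $p$ the class is still killed (trace argument), and with fixed points or $p$-divisible quotients you leave the world of smooth projective $p$-folds and land in the equivariant/stacky setting, which is a different statement (and is what the paper uses elsewhere, for the dimension $p+1$ Hodge--de Rham example).

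The paper's actual construction avoids exactly this trap by choosing a $p$-fold whose cotangent bundle is non-trivially an extension of a rank $p-1$ bundle by a line bundle: $X=S^{\times_Y(p-1)}$ for a non-isotrivial relative curve $h\colon S\to Y$ over a curve $Y$. Lemma \ref{nonsemisimp: reducible formula} then identifies $\alpha(\Omega^1_{X_0})$ with (essentially) the $(p-1)$st power of the Kodaira--Spencer class of the fibration, which is non-zero precisely because the family is non-isotrivial, while non-isotriviality also forces $g(Y_0)\ge 2$ and hence $\ob_{F,Y_1}\neq 0$; Theorem \ref{nonsemisimp: sen class fibration} plus degree estimates on $\coker\ks_{h_0}$ (arranged by a Mumford-style complete intersection curve in a compactified moduli space, via $\det T_{M_g}\simeq\omega^{-13}$) then give $c_{X_1,p}\neq 0$. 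If you want to salvage your write-up, you must replace the abelian variety by such a variety with genuinely non-trivial $\Omega^1$ and supply a non-vanishing mechanism for $\alpha(\Omega^1_{X_0})$ — triviality of the tangent bundle is not a simplification here but the very feature that kills the class.
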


The examples we construct admit a smooth proper morphism to a curve, and for such varieties the class $\alpha(\Omega^1_{X_0})$ can be expressed in terms of the Kodaira-Spencer class of this fibration, giving a more tangible reformulation (Theorem \ref{nonsemisimp: sen class fibration}) of Theorem \ref{intro: sen class}.

We give two different, but similar in spirit, proofs of Theorem \ref{intro: main extension class}, both of which use crucially the multiplicative structure on the de Rham and Hodge-Tate complexes.

\subsection{Proof of Theorem \ref{intro: main extension class}.} The first proof, for which we additionally need to assume that $X_0$ lifts to a smooth (formal) scheme $X$ over $W(k)$, takes place in homotopical algebra rather than algebraic geometry. To prove Theorem \ref{intro: main extension class} in full generality, in a situation when only a lift over $W_2(k)$ exists, we appeal to Theorem \ref{intro: sen class}, which implies it by Corollary \ref{semiperf: smooth cor}. 

To describe the idea, recall the structure of the proof of \cite{deligne-illusie}. Given the lift $X_1$ over $W_2(k)$, they produce a map $s:\Omega^1_{X^{(1)}_0}[-1]\to \dR_{X_0/k}$ in the derived category $D(X^{(1)}_0)$ inducing an isomorphism on cohomology sheaves in degree $1$. Using that $\dR_{X_0/k}$ can be represented by a cosimplicial commutative algebra in quasi-coherent sheaves on $X_0$ via the \v{C}ech resolution, $s$ gives rise to maps 
\begin{equation}\label{intro: deg i splitting map}s_i:S^i(\Omega^1_{X^{(1)}_0/k}[-1])\to \dR_{X_0/k}\end{equation} 
for all $i\geq 0$ where $S^i$ denotes the derived symmetric power functor in the sense of \cite{illusie-cotangent1}. When $i<p$, the derived symmetric power $S^i(\Omega^1_{X^{(1)}_0/k}[-1])$ is identified with $\Omega^i_{X^{(1)}_0/k}[-i]$, and maps $s_i$ produce a quasi-isomorphism $\bigoplus s_i:\bigoplus\limits_{i=0}^{p-1}\Omega^i_{X^{(1)}_0/k}[-i]\simeq\tau^{\leq p-1}\dR_{X_0/k}$.

This method of decomposing $\dR_{X_0/k}$ stops working for $i=p$, because $S^p(\Omega^1_{X^{(1)}_0/k}[-1])$ is now different from $\Omega^p_{X^{(1)}_0/k}[-p]$. It has non-zero cohomology sheaves in several degrees, and we analyze this object in detail in Section \ref{free cosimplicial: section}. Specifically, there is a natural map $N_p:S^p(\Omega^1_{X^{(1)}_0/k}[-1])\to\Omega^p_{X^{(1)}_0/k}[-p]$ but the map $s_p$ need not factor through it. In Lemma \ref{free cosimplicial: algebra Bockstein} we compute the map $s_p$ on the fiber of $N_p$, and thus relate the obstruction to splitting $\tau^{\leq p}\dR_{X_0/k}$ to the obstruction to the existence of a section of the map $N_p$, the latter being precisely the class $\alpha(\Omega^1_{X_0})$. 

This argument is not specific to the de Rham complex and applies more generally to derived commutative algebras whose cohomology algebra is freely generated by $H^1$. In the body of the paper we work with derived commutative algebras in the sense of Mathew, but here we state the main algebraic result for the more classical notion of cosimplicial commutative algebras:

\begin{thm}[Theorem \ref{cosimp: main theorem}]\label{intro: general cosimplicial}
Let $A$ be a cosimplicial commutative algebra in quasi-coherent sheaves on a flat $\bZ_{(p)}$-scheme $X$. Suppose that $H^0(A)=\cO_X$, the sheaf $H^1(A)$ is a locally free sheaf of $\cO_X$-modules, and the multiplication on the cohomology of $A$ induces an isomorphism $\Lambda^{\bullet}H^1(A)\simeq H^{\bullet}(A)$. If there exists a map $s:H^1(A)[-1]\to A$ in $D(X)$ inducing an isomorphism on cohomology in degree $1$, then

\begin{enumerate}
    \item $\tau^{\leq p-1}A$ is quasi-isomorphic to $\bigoplus\limits_{i=0}^{p-1}H^i(A)[-i]$.
    \item The connecting map $H^p(A)\to (\tau^{\leq p-1}A)[p+1]$ corresponding to the fiber sequence $\tau^{\leq p-1}A\to\tau^{\leq p}A\to H^p(A)[-p]$ can be described as the composition
    \begin{multline}\label{intro: general algebra formula}
    H^p(A)=\Lambda^p H^1(A)\to \Lambda^pH^1(A)/p\xrightarrow{\alpha(H^1(A)/p)}F_{X_0}^*(H^1(A)/p)[p-1]\xrightarrow{F_{X_0}^*s[p]}(\tau^{\leq 1}F^*_{X_0}(A/p))[p]\\ \xrightarrow{\varphi_{A/p}}(\tau^{\leq 1}A/p)[p]\xrightarrow{\tau^{\leq 1}\Bock_A}(\tau^{\leq 1}A)[p+1]\xrightarrow{}(\tau^{\leq p-1}A)[p+1]
    \end{multline}
    Here $\varphi_{A/p}$ is the Frobenius endomorphism of the cosimplicial commutative algebra $A/p$, and $\Bock_A:A/p\to A[1]$ is the morphism corresponding to the fiber sequence $A\xrightarrow{p}A\to A/p$ on $X$.
\end{enumerate}
\end{thm}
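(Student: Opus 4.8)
I would pass to the derived commutative algebra associated with $A$ (as in the body of the paper) and use the universal property of the free derived commutative algebra: the morphism $s$ extends to a map of derived commutative algebras $\sigma\colon\Sym^{\bullet}(H^1(A)[-1])\to A$, whose weight-$i$ part is a map $s_i\colon S^i(H^1(A)[-1])\to A$ in $D(X)$ with $s_0$ the unit and $s_1=s$. On its top cohomology sheaf, $s_i$ induces, via the d\'ecalage identification $H^i\!\big(S^i(H^1(A)[-1])\big)\cong\Lambda^iH^1(A)$ recalled in Section~\ref{free cosimplicial: section}, the multiplication map $\Lambda^iH^1(A)\to H^i(A)$; for $i<p$ this is $\Lambda^i$ of the isomorphism that $s$ induces on $H^1$, hence an isomorphism. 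Since for $i<p$ one also has $S^i(H^1(A)[-1])\simeq\Lambda^iH^1(A)[-i]$, concentrated in degree $i$, the map $\bigoplus_{i<p}s_i\colon\bigoplus_{i<p}\Lambda^iH^1(A)[-i]\to\tau^{\leq p-1}A$ is an isomorphism on every cohomology sheaf and therefore an equivalence; this gives part~(1), and amounts to the Deligne--Illusie argument carried out abstractly, with the cosimplicial commutative algebra structure supplied by hypothesis instead of by a \v{C}ech resolution.

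For part~(2) I would compare two exact triangles. Because $S^p(H^1(A)[-1])$ has cohomology concentrated in degrees $\leq p$ (Section~\ref{free cosimplicial: section}), $s_p$ factors through $\tau^{\leq p}A$; postcomposing with $\tau^{\leq p}A\to H^p(A)[-p]$ yields a map $S^p(H^1(A)[-1])\to H^p(A)[-p]$ which, its target being concentrated in degree $p$, is determined by its effect on $H^p$ and hence equals $\mu\circ N_p$, where $N_p\colon S^p(H^1(A)[-1])\to\Lambda^pH^1(A)[-p]$ is the natural norm map of Section~\ref{free cosimplicial: section} and $\mu\colon\Lambda^pH^1(A)\iso H^p(A)$ is multiplication. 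By rigidity of the degree-$p$ part, the triple $(\psi,s_p,\mu)$ assembles into a morphism from the triangle $\fib(N_p)\to S^p(H^1(A)[-1])\xrightarrow{N_p}\Lambda^pH^1(A)[-p]\xrightarrow{\alpha}$ to the triangle $\tau^{\leq p-1}A\to\tau^{\leq p}A\to H^p(A)[-p]\xrightarrow{\delta}$, where $\psi\colon\fib(N_p)\to\tau^{\leq p-1}A$ is the restriction of $s_p$ to the fibre of $N_p$. Using the input from Section~\ref{free cosimplicial: section} and Definition~\ref{free cosimplicial: alpha definition} that $\fib(N_p)\simeq F^{*}_{X_0}(H^1(A)/p)[-2]$ and that the boundary map $\alpha$ of the first triangle is $\alpha(H^1(A)/p)$ precomposed with the reduction $\Lambda^pH^1(A)\to\Lambda^p(H^1(A)/p)$, comparison of boundary maps gives $\delta=\psi[p+1]\circ\alpha(H^1(A)/p)[p]\circ(\mathrm{red})\circ\mu^{-1}$, which is the first segment of~(\ref{intro: general algebra formula}).

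It then remains to compute $\psi=s_p|_{\fib(N_p)}\colon F^{*}_{X_0}(H^1(A)/p)[-2]\to\tau^{\leq p-1}A$, which is Lemma~\ref{free cosimplicial: algebra Bockstein}. The conceptual content: $\fib(N_p)$ is the Frobenius-twisted, mod-$p$ summand of $S^p(H^1(A)[-1])$ measuring the failure of $S^p(-[-1])$ to coincide with $\Lambda^p(-)[-p]$; the algebra map $\sigma$ carries the $p$-th power operation of the free derived commutative algebra to the Frobenius $\varphi_{A/p}$ of the cosimplicial commutative $\bF_p$-algebra $A/p$; and since $\fib(N_p)$ is $p$-torsion while $\tau^{\leq p-1}A$ need not be, $\psi$ necessarily factors through the Bockstein $\Bock_A$. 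Tracing the weight-$p$ part of $\sigma$ modulo $p$ through $F^{*}_{X_0}(H^1(A)/p)[-1]\xrightarrow{F^{*}_{X_0}s}F^{*}_{X_0}(A/p)\xrightarrow{\varphi_{A/p}}A/p$, tracking the truncations and using part~(1) to recognize the relevant $\tau^{\leq 1}A$-summand, identifies $\psi$ with the remaining composite in~(\ref{intro: general algebra formula}).

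Part~(1), the d\'ecalage identifications in weights $<p$, and the octahedral bookkeeping of part~(2) are formal. The main obstacle is Lemma~\ref{free cosimplicial: algebra Bockstein}: it requires an explicit enough model for the derived symmetric power $S^p(M[-1])$ and its norm map $N_p$ --- in particular for the $p$-th power (divided-Frobenius) operation exhibiting $\fib(N_p)$ as a Frobenius twist of $M/p$ --- together with a verification that $\sigma$ transports this operation to $\varphi_{A/p}$ with the correct sign and normalization, and one must pin down the exact placement of the truncations appearing in~(\ref{intro: general algebra formula}).
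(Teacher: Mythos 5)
Your proposal reproduces the paper's own proof essentially step for step: your weight pieces of the algebra map are the paper's maps $s_i=m_A\circ S^i s$, part (1) is obtained in the same way from the d\'ecalage identifications $S^i(H^1(A)[-1])\simeq \Lambda^iH^1(A)[-i]$ for $i<p$, and part (2) is obtained by the same comparison of the norm fiber sequence $\fib(N_p)\to S^p(H^1(A)[-1])\xrightarrow{N_p}\Lambda^pH^1(A)[-p]$ with the truncation triangle, with the boundary of the former identified with $\alpha(H^1(A)/p)$ (via Lemma \ref{free cosimplicial: alpha adjunction}) and with Lemma \ref{free cosimplicial: algebra Bockstein} computing $s_p$ on $\fib(N_p)$; since that lemma is already established in Section \ref{free cosimplicial: section}, the step you single out as the ``main obstacle'' is available and is not a gap.

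The one place where your justification is too quick is the assertion that the composite $S^p(H^1(A)[-1])\to\tau^{\leq p}A\to H^p(A)[-p]$ is ``determined by its effect on $H^p$'' because the target sits in a single degree: for a source concentrated in degrees $\leq p$ this is false in general, since maps from the lower truncation into a sheaf placed in degree $p$ are higher Ext classes and need not vanish. What is true, and what the paper uses, is that $\Map_{D(X)}(\fib(N_p),H^p(A)[-p])$ is contractible, because $\fib(N_p)\simeq F_{X_0}^*(H^1(A)/p)[-2]$ is a $p$-torsion object concentrated in degree $2\leq p$ while $H^p(A)$ is a locally free sheaf on the $\bZ_{(p)}$-flat $X$ placed in degree $p$; hence the composite factors uniquely through $N_p$, and the factored map is identified with the multiplication isomorphism by precomposing with $H^1(A)[-1]^{\otimes p}$ and using Lemma \ref{cosimp: tensor power vs symmetric power}. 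The same contractibility-type argument is also what pins down the truncation bookkeeping you mention at the end (recovering the map to $\tau^{\leq p-1}A$ from its composite to $A$). With these repairs your argument coincides with the paper's.
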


The fact that the map $s_p:S^p(\Omega^1_{X^{(1)}_0}[-1])\to\dR_{X_0/k}$ need not factor through $\Omega^p_{X^{(1)}_0}[-p]$ is an incarnation of the classical phenomenon responsible for the existence of non-trivial Steenrod operations on mod $p$ cohomology of topological spaces. If the map $s:\Omega^1_{X^{(1)}_0/k}[-1]\to \dR_{X_0/k}$ in the derived category $D(X^{(1)}_0)$ was represented by a genuine map between these complexes, all the maps $s_i$ would have to factor through $\Omega^i_{X^{(1)}_0/k}[-i]$ because of the commutative DG algebra structure on the de Rham complex. However, $s$ can generally only be represented by a chain-level map into a model of $\dR_{X_0/k}$ where multiplication is commutative up to a set of coherent homotopies, rather than commutative on the nose. 

Theorem \ref{intro: general cosimplicial} draws inspiration from Steenrod's construction \cite{steenrod-symmetric-homology,may-steenrod} of cohomology operations from homology classes of symmetric groups, and our method of proof implies the following statement about operations on cohomology of cosimplicial commutative $\bF_p$-algebras, defined by Priddy \cite{priddy}. The appearance of Frobenius and Bockstein homomorphisms in the description of the degree $1$ Steenrod operation is directly related (Lemma \ref{free cosimplicial: algebra Bockstein} being the common reason for both) to their appearance in (\ref{intro: general algebra formula}).

\begin{pr}[{Proposition \ref{free cosimplicial: steenrod operations description prop}}]
For a cosimplicial commutative $\bF_p$-algebra $B$,

\begin{enumerate}
    \item the degree zero Steenrod operation $P^0:H^i(B)\to H^i(B)$ is equal to the Frobenius endomorphism $\varphi_B$ of $B$,
    \item the degree $1$ Steenrod operation $P^1:H^i(B)\to H^{i+1}(B)$ is equal to Serre's Witt vector Bockstein homomorphism induced by the exact sequence $B\xrightarrow{V}W_2(B)\to B$. In particular, for any cosimplicial commutative flat $\bZ/p^2$-algebra $\tB$ lifting $B$, $P^1$ can be described as the composition $H^i(B)\xrightarrow{\varphi_B}H^i(B)\xrightarrow{\Bock_{\tB}}H^{i+1}(B)$.
\end{enumerate}
\end{pr}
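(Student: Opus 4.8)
The plan is to extract both statements from the explicit low-degree analysis of the free cosimplicial commutative $\mathbb{F}_p$-algebra --- equivalently, of the $C_p$-extended power --- carried out in Section~\ref{free cosimplicial: section}, and in particular from Lemma~\ref{free cosimplicial: algebra Bockstein}. Recall that Priddy's operations $P^0,P^1$ are built, in Steenrod fashion, from the first two cells of the standard free resolution of $\mathbb{F}_p$ over $\mathbb{F}_p[C_p]$, together with the cosimplicial diagonal $B\to B^{\otimes p}$ and the multiplication $B^{\otimes p}\to B$, so it is enough to follow what these two cells contribute. One may equivalently argue by naturality: $P^0$ and $P^1$ are natural operations on the cohomology of cosimplicial commutative $\mathbb{F}_p$-algebras, hence determined by their values on a universal class, and the relevant low-degree cohomology of the free algebra on one generator is precisely what Section~\ref{free cosimplicial: section} computes.

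For part (1): the zero-dimensional (bottom) cell contributes the operation represented, on a cocycle for $x\in H^i(B)$, by the composite $B\xrightarrow{\Delta}B^{\otimes p}\xrightarrow{\ \mathrm{mult}\ }B$, that is by the levelwise $p$-th power $b\mapsto b^p$. As $B$ is a cosimplicial commutative $\mathbb{F}_p$-algebra this is a morphism of cosimplicial commutative rings --- the Frobenius endomorphism $\varphi_B$ --- so it induces $\varphi_B$ on $H^{\ast}(B)$, giving $P^0=\varphi_B$. The one bookkeeping point is that Priddy's indexing attaches the name $P^0$ to the bottom cell, which one checks on $H^0(B)$, where $P^0$ is the $p$-th power and so agrees tautologically with $\varphi_B$.

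For part (2): the attaching map of the one-dimensional cell onto the zero-cell of the $\mathbb{F}_p[C_p]$-resolution, transported through the $C_p$-extended power, is the map computed in Lemma~\ref{free cosimplicial: algebra Bockstein} --- the same computation that controls $s_p$ on the fiber of $N_p$ --- and the outcome is that $P^1(x)$ is obtained by applying a Bockstein-type connecting homomorphism to $\varphi_B(x)$. To identify this connecting homomorphism with Serre's Witt vector Bockstein $\delta$ of $0\to B\xrightarrow{V}W_2(B)\to B\to 0$ --- and, at the same time, with $\Bock_{\tB}$ for any flat cosimplicial commutative $\mathbb{Z}/p^2$-algebra $\tB$ lifting $B$ --- one argues by a diagram chase: there is a canonical morphism of cosimplicial commutative rings $\theta\colon W_2(B)\to\tB$, $[a_0,a_1]\mapsto\hat a_0^{\,p}+p\hat a_1$ (well defined since $p^2=0$ in $\tB$), with $\theta\circ V$ equal to the inclusion $B\xrightarrow{\ \sim\ }p\tB$ induced by $b\mapsto p\hat b$ and with $q\circ\theta=\varphi_B\circ R$, where $R\colon W_2(B)\to B$ and $q\colon\tB\to B$ are the two projections; comparing the connecting homomorphisms of the Witt sequence and of $0\to B\xrightarrow{\,\cdot\,p\,}\tB\xrightarrow{q}B\to 0$ through $\theta$ yields $\delta=\Bock_{\tB}\circ\varphi_B$. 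Combining these gives $P^1=\delta=\Bock_{\tB}\circ\varphi_B$, which is the full content of part (2).

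The step I expect to be the real obstacle is the translation between frameworks: reconciling Priddy's cosimplicial operations with the extended-power / derived-commutative-algebra formalism in which Lemma~\ref{free cosimplicial: algebra Bockstein} is proved, and matching degree and sign conventions so that the lemma computes exactly $P^1$ rather than a shift, a scalar multiple, or a $\beta$-twist of it. Once the operations have been pinned down as natural transformations represented by explicit classes in the cohomology of the free algebra, and the conventions are fixed, both assertions are forced by the low-degree computations of Section~\ref{free cosimplicial: section}.
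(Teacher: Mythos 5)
Your proposal follows essentially the same route as the paper: $P^0$ is identified with $\varphi_B$ by factoring the bottom-cell contribution through $\Delta$ and the multiplication (naturality of $\Delta$), and $P^1$ is computed from Lemma \ref{free cosimplicial: algebra Bockstein} and then matched with the Witt vector Bockstein by exactly the comparison map $[a_0]+V[a_1]\mapsto \widetilde{a}_0^{\,p}+p\widetilde{a}_1$ that the paper uses. The one point to make explicit is that Lemma \ref{free cosimplicial: algebra Bockstein} lives over a base flat over $\bZ_p$, so for a general $B$ (which need not admit a flat lift) the lemma is applied, as your naturality remark suggests, only after reducing to the universal class in $H^i(S^{\bullet}(\bF_p[-i]))$ and passing to its canonical lift $S^{\bullet}(\bZ_p[-i])$ --- which is precisely how the paper closes the argument.
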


We apply Theorem \ref{intro: general cosimplicial} to the diffracted Hodge complex $A:=\Omega^{\DHod}_X$ of \cite{apc} (it coincides with the Hodge-Tate cohomology of \cite{prisms} over an appropriately chosen prism), and deduce Theorem \ref{intro: main extension class} by reducing modulo $p$, using that $\dR_{X_0/k}$ is the reduction of $\Omega^{\DHod}_{X^{(1)}}$. We thus along the way obtain in Theorem \ref{cosimp applications: HT extension} a similar description of the extension class corresponding to $\tau^{\leq p}\Omega^{\DHod}_X$ as well. Theorem \ref{intro: general cosimplicial} also applies to a number of cohomology theories evaluated on abelian varieties: in Section \ref{AV coherent: section} we use it to study extensions in the canonical filtration on coherent (Proposition \ref{AV coherent: coherent main}), de Rham (Proposition \ref{AV coherent: AV de rham}), and \'etale cohomology (Proposition \ref{AV coherent: etale}) of an abelian scheme being acted on by a group. The results on coherent and de Rham equivariant cohomology of abelian schemes play the key role in our example of a liftable variety with a non-degenerate conjugate spectral sequence.

A related approach to decomposability of the de Rham complex has been previously used by Beuchot \cite{beuchot}, who used a description of cohomology sheaves of $(\Omega^1_{X^{(1)}_0}[-1]^{\otimes p})_{hS_p}$ to prove decomposability of the truncation $\tau^{\leq p}\dR_{X_0/k}$ for a liftable variety $X_0$ under the assumption that certain cohomology groups of $X_0$ vanish. The consequence of Theorem \ref{intro: general cosimplicial} for the de Rham complex was also obtained independently by Robert Burklund, by a similar method.

At the moment it is unclear how to generalize Theorem \ref{intro: general cosimplicial}, and therefore Theorem \ref{intro: main extension class}, to describe extensions in degrees $>p$. We make some preliminary observations on this in Section \ref{higher ext: section}.

Computations of the map $S^p(\Omega^1_{X^{(1)}_0}[-1])\to\dR_{X_0/k}$ that we perform for the proof of Theorem \ref{intro: main extension class} also allow us to determine when the de Rham complex can be decomposed compatibly with the algebra structure. The following result is to be contrasted with the formality result of \cite{deligne-griffiths-morgan-sullivan} for smooth projective varieties in characteristic zero:

\begin{pr}[{Proposition \ref{cosimp applications: de rham formality}}]
For a smooth variety $X_0$ over $k$ the de Rham complex $\dR_{X_0/k}$ is quasi-isomorphic to $\bigoplus\limits_{i\geq 0}\Omega^i_{X^{(1)}_0/k}[-i]$ as an $E_{\infty}$-algebra in $D(X^{(1)}_0)$ if and only if $X_0$ admits a lift over $W_2(k)$ together with its Frobenius endomorphism.
\end{pr}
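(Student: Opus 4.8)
The plan is to prove both implications by tracking the $E_\infty$-algebra structure through the analysis of the extension class $e_{X_1,p}$ already developed in the excerpt. For the ``if'' direction, suppose $X_0$ lifts to a smooth $W_2(k)$-scheme $\tilde X_1$ together with a lift $\tilde F$ of its Frobenius. Then $\tilde F$ induces a map of $E_\infty$-algebras $F_{X^{(1)}_0}^*\cO_{X_0^{(1)}} = \cO_{X_0^{(1)}}\to \dR_{X_0/k}$ whose composite with the projection to each cohomology sheaf is the Cartier-type map; more to the point, the obstruction class $\ob_{F,X_1}$ vanishes, so by Theorem \ref{intro: main extension class} the extension class $e_{X_1,p}$ vanishes, and by Theorem \ref{intro: sen class} the Sen class $c_{X^{(1)}_1,p}$ vanishes as well. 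The vanishing of $\ob_{F,X_1}$ is in fact the strong statement: a Frobenius lift gives a genuine $E_\infty$ (indeed, strictly commutative) splitting. Concretely, when $\Fr$ lifts, the de Rham complex $\dR_{X_0/k}$ is the mod-$p$ reduction of the crystalline cohomology, and a Frobenius lift trivializes the Sen operator on the diffracted Hodge complex entirely; by Drinfeld's $\bZ/p$-grading (\ref{intro: drinfeld decomposition}) together with the vanishing of all the relevant obstruction classes, each derived symmetric power map $s_i: S^i(\Omega^1_{X_0^{(1)}}[-1])\to \dR_{X_0/k}$ now factors through $\Omega^i_{X_0^{(1)}}[-i]$ compatibly with multiplication, so the resulting map $\bigoplus_i \Omega^i_{X_0^{(1)}}[-i]\to \dR_{X_0/k}$ is an $E_\infty$-algebra quasi-isomorphism. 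This requires being careful in degrees $>p$, where one must iterate: having a Frobenius lift means the obstruction to a section of $N_i$ (the analog of $\alpha$ in higher degrees, which controls the failure of $s_i$ to factor through $\Omega^i$) is killed because it is always a multiple of $\ob_{F,X_1}$ through the Steenrod-operation formalism of Section \ref{free cosimplicial: section}.

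For the ``only if'' direction, suppose $\dR_{X_0/k}\simeq \bigoplus_{i\geq 0}\Omega^i_{X_0^{(1)}}[-i]$ as $E_\infty$-algebras in $D(X_0^{(1)})$. First I would extract from this splitting a map $s:\Omega^1_{X_0^{(1)}}[-1]\to \dR_{X_0/k}$ of $E_\infty$-modules (indeed, a section of the whole algebra), which in particular is the Deligne–Illusie section. The key point is that an $E_\infty$-algebra splitting forces $s_p = s_1^{\cdot p}$ (the $p$-fold multiplication) to factor through $N_p$, because in a strictly commutative — or even $E_\infty$ with a chosen null-homotopy of the relevant Steenrod data — model the $p$-th power map lands in $\Omega^p$. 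Hence the obstruction $\alpha(\Omega^1_{X_0})\cdot \text{(lift data)}$ must vanish against the actual structure; running Theorem \ref{intro: general cosimplicial}(2) and Theorem \ref{intro: main extension class} in reverse, the vanishing of the algebra-level obstruction to splitting $\tau^{\le p}$ forces $\Bock_{X^{(1)}_1}(\ob_{F,X_1}\cdot \alpha(\Omega^1_{X_0^{(1)}})) = 0$, and more: compatibility with the algebra structure is strictly stronger than splitting the underlying complex, and the precise chain-level computation of $s_p$ on $\fib(N_p)$ in Lemma \ref{free cosimplicial: algebra Bockstein} shows that the algebra splitting forces the class $\ob_{F,X_1}$ itself — not merely its product with $\alpha$ — to be trivial. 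Indeed, one can recover $\ob_{F,X_1}$ from the $P^1$-Steenrod operation / the Frobenius-and-Bockstein presentation in Proposition \ref{free cosimplicial: steenrod operations description prop}(2) applied to the cosimplicial model of $\dR_{X_0/k}$: an $E_\infty$-splitting makes all Steenrod operations act as in a polynomial algebra, which forces the Bockstein-Frobenius data, hence $\ob_{F,X_1}$, to vanish. Vanishing of $\ob_{F,X_1}$ is exactly the statement that $\Fr_{X_0}$ lifts to $X_1$.

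The main obstacle, and the step requiring the most care, is the last one: showing that the $E_\infty$-splitting forces the \emph{full} class $\ob_{F,X_1} \in H^1(X_0^{(1)}, F^*_{X_0^{(1)}}T_{X_0^{(1)}})$ to vanish, not just the product $\ob_{F,X_1}\cdot \alpha(\Omega^1)$ that appears in $c_{X_1,p}$ and $e_{X_1,p}$. The point is that $\alpha(\Omega^1_{X_0})$ need not be injective on cohomology, so from $c_{X_1,p} = 0$ alone one cannot conclude $\ob_{F,X_1}=0$; one genuinely needs the finer information that an $E_\infty$-structure-preserving splitting encodes \emph{all} the maps $s_i$ simultaneously and compatibly, and the obstruction to this package of compatibilities — read off degree by degree via the Steenrod operation $P^1$ and its relation to the Witt-vector Bockstein in Proposition \ref{free cosimplicial: steenrod operations description prop} — is precisely a Frobenius lift. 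I would phrase this as: a coherent system of factorizations $s_i$ through $\Omega^i$ for all $i$ is equivalent data to a map of $E_\infty$-algebras $\dR_{X_0/k}\to \bigoplus_i \Omega^i_{X_0^{(1)}}[-i]$ splitting the canonical filtration, and by the universal property of the de Rham complex (or of the diffracted Hodge complex) such a map is equivalent to a Frobenius lift of $X_0$ over $W_2(k)$. Assembling this equivalence cleanly — ideally by identifying $\dR_{X_0/k}$ with the $E_\infty$-algebra corepresenting Frobenius lifts, so that the splitting literally is the lift — is where the real work lies; everything else is bookkeeping with the results already in the paper.
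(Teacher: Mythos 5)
Your proposal has the right flavor but two genuine gaps, one in each direction. For the ``if'' direction, your plan is to iterate degree by degree, claiming that the obstruction to factoring each $s_i$ through $\Omega^i_{X_0^{(1)}}[-i]$ for $i>p$ ``is always a multiple of $\ob_{F,X_1}$ through the Steenrod-operation formalism''. Nothing in the degree-$\leq p$ analysis supports this: the structure of $S^i(\Omega^1[-1])$ for $i>p$ is genuinely more complicated, and the paper explicitly states (Section \ref{higher ext: section}) that it does not know how to control extensions beyond degree $p$ by these methods. Moreover, even if every $s_i$ factored, you would only get a splitting of the underlying complex; producing a map of $E_\infty$- (or derived commutative) algebras requires a coherent multiplicative construction, which your sketch does not supply. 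The paper avoids both problems at once by fpqc descent along the perfection $X_0^{\perf}\to X_0$: on each term of the \v{C}ech diagram the scheme is quasiregular semiperfect, so the de Rham algebra is discrete and the Frobenius lift gives a genuine isomorphism with the divided power algebra by \cite[Proposition 3.17]{bhatt-derived}, and the identification globalizes by taking the totalization. Some argument of this global type (or a citation to it) is needed; the degree-by-degree obstruction bookkeeping will not produce it.

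For the ``only if'' direction, you correctly isolate the crux --- that $c_{X_1,p}=0$ only kills the product $\ob_{F,X_1}\cdot\alpha(\Omega^1)$ and one must recover the full class $\ob_{F,X_1}$ --- but your proposed resolutions (``Steenrod operations act as in a polynomial algebra'', or identifying $\dR_{X_0/k}$ as the $E_\infty$-algebra corepresenting Frobenius lifts) are unproved and the second is essentially a restatement of the claim. The actual fix is more elementary and does not pass through $\alpha$ at all: the $E_\infty$-splitting (in fact just an $E_\infty$ augmentation $\varepsilon:\dR_{X_0/k}\to\cO_{X_0^{(1)}}$ splitting $H^0$) forces the composite $(\Omega^1_{X_0^{(1)}}[-1]^{\otimes p})_{hS_p}\to S^p(\Omega^1[-1])\xrightarrow{s_p}\dR_{X_0/k}\xrightarrow{\varepsilon}\cO_{X_0^{(1)}}$ to be nullhomotopic; a degree argument via Lemma \ref{free cosimplicial: cosimp vs einf} upgrades this to nullhomotopy of $T_p(\Omega^1[-1])[-1]\to\dR_{X_0/k}\xrightarrow{\varepsilon}\cO$. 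Precomposing with the map $F_{X_0^{(1)}}^*\Omega^1[-1]\to T_p(\Omega^1[-1])[-1]$ of Lemma \ref{free cosimplicial: tate p}(4), the resulting composite is, by construction of the Frobenius, exactly $\varepsilon\circ\varphi_{\dR_{X_0/k}}\circ F^*s$, and Proposition \ref{applications: frobenius lift obstruction prop} identifies this with $\ob_{F,X_1}$ followed by the map $\cO_{X_0^{(1)}}\to\dR_{X_0/k}$ and then $\varepsilon$. Since $\varepsilon$ splits that map, the nullhomotopy gives $\ob_{F,X_1}=0$ directly --- the obstruction class enters composed with a split injection, so no injectivity of $\alpha$, no Bockstein, and no corepresentability statement is needed.
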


The `if' direction was proven in \cite{deligne-illusie} and is the starting point of their proof of the decomposition (\ref{intro: de rham complex decomposition}).

\subsection{Liftable variety with a non-degenerate conjugate spectral sequence} It is rather non-obvious that the formula (\ref{intro: main extension class formula}) ever takes non-zero values. Our example for Theorem \ref{intro: nondeg example} is an approximation (in the sense of \cite{antieau-bhatt-mathew}) of the classifying stack of a finite flat non-commutative group scheme $G$ over $W(k)$ defined as follows. Let $E$ be an elliptic curve over $W(k)$ whose reduction is supersingular. The group scheme $G$ is 
\begin{equation}
G:=SL_p(\bF_{p^2})\ltimes (E[p]\otimes_{\bF_p}\bF_{p^2}^{\oplus p}).   
\end{equation}

Here $E[p]\otimes_{\bF_p}\bF_{p^2}^{\oplus p}$ is the product of $2p$ copies of the $p$-torsion group scheme $E[p]$, and the discrete group $SL_p(\bF_{p^2})$ acts on it via the tautological representation on $\bF_{p^2}^{\oplus p}$.

We do not apply formula (\ref{intro: main extension class formula}) to the stack $BG$ directly, but rather study the conjugate spectral sequence of an auxiliary quotient stack by applying Theorem \ref{intro: general cosimplicial} to equivariant Hodge and de Rham cohomology of the abelian scheme $E^{\times d}$ for an appropriate $d$, with respect to an action of a certain discrete group.

First, we show that the class $\alpha(-)$ does not vanish in the universal example:

\begin{pr}[{Proposition \ref{rational group cohomology: main non-vanishing}}]
Denote by $V$ the tautological $p$-dimensional representation of the algebraic group $GL_{p,k}$. The class $\alpha(V)\in \Ext^{p-1}_{GL_{p,k}}(\Lambda^p V,V^{(1)})$ is non-zero.
\end{pr}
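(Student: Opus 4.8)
The plan is to exhibit an explicit nonzero class in $\Ext^{p-1}_{GL_{p,k}}(\Lambda^p V, V^{(1)})$ by identifying $\alpha(V)$ with a Yoneda composition of well-understood extensions and then detecting its non-vanishing on a concrete subgroup or torus. Recall from Definition \ref{free cosimplicial: alpha definition} that $\alpha(E)$ measures the failure of the norm map $N_p\colon S^p(E[-1])\to \Lambda^p E[-p]$ (equivalently $\Lambda^pE\to S^p E$, the divided-power/symmetric comparison) to admit a section; for a $GL_p$-representation $V$ this is an equivariant extension class. So first I would write down the relevant four-term (or length $p-1$) exact sequence of $GL_{p,k}$-representations computing $S^p(V[-1])$ through its Postnikov tower: the cohomology of the derived symmetric power $S^p(V[-1])$ is governed by the partition complex / the $S_p$-homology of $(V[-1])^{\otimes p}$, and in the range relevant here its associated graded pieces are built from $\Lambda^j V \otimes \Gamma^{?}$-type pieces together with one Frobenius-twist copy $V^{(1)}$ appearing in cohomological degree $p$. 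The class $\alpha(V)$ is precisely the $k$-invariant linking $\Lambda^pV$ (top) to $V^{(1)}$ (bottom), an element of $\Ext^{p-1}_{GL_p}(\Lambda^p V, V^{(1)})$.

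Next I would reduce the computation to something already in the literature on rational cohomology of $GL_p$ (or $SL_p$) with coefficients in twists of $V$. The key point is that $\Ext^{\bullet}_{GL_{p,k}}(V^{(1)}, V^{(1)})$ and $\Ext^{\bullet}_{GL_{p,k}}(\Lambda^p V, \text{(twists)})$ are computable: by the work of Cline--Parshall--Scott--van der Kallen and Friedlander--Suslin on cohomology with Frobenius-twisted coefficients, $\Ext^{*}_{GL_{n,k}}(V^{(1)}, V^{(1)}) = k$ concentrated in degree $0$ for $n$ large, but the groups $\Ext^{*}_{GL_{n,k}}(k, \mathfrak{gl}_n^{(1)})$ and related ones are nonzero starting in degree $2$; here what I actually need is the single group $\Ext^{p-1}_{GL_{p,k}}(\Lambda^pV, V^{(1)})$. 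Since $\Lambda^p V = \det$ is one-dimensional, $\Hom(\Lambda^p V, V^{(1)}) = \Hom(k, V^{(1)}\otimes \det^{-1})$ and more generally $\Ext^{p-1}_{GL_p}(\Lambda^pV,V^{(1)}) = H^{p-1}(GL_{p,k}, V^{(1)}\otimes \det^{-1})$. The plan is to compute (or bound below) this group and show $\alpha(V)$ lands nontrivially in it.

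The cleanest route to non-vanishing, and the one I would pursue, is restriction to a maximal torus or, better, to the subgroup $\mathbb{G}_m \times GL_{p-1}$ or to the ``Steenrod-theoretic'' model: because $\alpha$ is defined for arbitrary vector bundles and is natural, I can restrict $V$ along $\mathbb{G}_m^p \hookrightarrow GL_p$ to the sum of $p$ distinct characters $L_1\oplus\cdots\oplus L_p$, where the derived symmetric power splits as a sum over monomials. The single monomial $L_1 L_2\cdots L_p$ (appearing in $S^p$) matches $\Lambda^p$, and the Frobenius-twisted summand $L_i^{(1)} = L_i^p$ appears in $S^p$ as the image of the $p$-th power; the $k$-invariant between $L_1\cdots L_p$ and $L_i^p$ in $S^p(L[-1])$ restricted to the cyclic subgroup is exactly the classical nonzero Steenrod/Bockstein-type class, i.e. it is detected on the sub-line generated by $(L_1\cdots L_p)$ versus $L_i^p$ by the same computation that shows $P^1$ (the degree one Steenrod operation, Proposition \ref{free cosimplicial: steenrod operations description prop}) acts nontrivially. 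Concretely, on the rank-one pieces this reduces to the non-triviality of the universal class in $\Ext^{p-1}$ between a product of distinct $1$-dimensional weights and the $p$-th power of one of them, which is the statement that $\mathbb{F}_p[x_1,\dots,x_p]$ is not a free divided-power algebra — equivalently the nonvanishing of the relevant $\Tor$ over $\mathbb{F}_p[S_p]$.

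The main obstacle, and where real work is required, is making the torus-restriction argument actually detect $\alpha(V)$ rather than kill it: restricting a class in $\Ext^{p-1}_{GL_p}(\Lambda^pV, V^{(1)})$ to the torus lands in $\Ext^{p-1}_{\mathbb{G}_m^p}$ of a sum of characters, and I need to verify that the specific torus-component the class projects to (namely the $(L_1\cdots L_p, L_i^p)$-component) is nonzero and, crucially, that the restriction map $\Ext^{p-1}_{GL_p}(\Lambda^pV, V^{(1)}) \to \bigoplus_i \Ext^{p-1}_{\mathbb{G}_m^p}(L_1\cdots L_p, L_i^p)$ does not annihilate $\alpha(V)$ — i.e. that the GL-equivariant class genuinely ``remembers'' the torus-level Steenrod obstruction. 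For this I would either (a) compute $\Ext^{p-1}_{GL_p}(\det, V^{(1)})$ directly and show it is $1$-dimensional with the torus restriction injective, using that $V^{(1)}\otimes\det^{-1}$ has a small number of composition factors as a $GL_p$-representation and running the Lyndon--Hochschild--Serre / Kempf-vanishing analysis; or (b) use the naturality of $\alpha$ under the diagonal $GL_1\hookrightarrow GL_p$-type maps together with the explicit $p=2$ description $0\to F^*E\to S^2E\to\Lambda^2E\to 0$ as a base case and an induction on $p$ via the décalage comparing $S^p$ of a sum with a convolution of lower $S^j$'s. I expect (a) to be the more robust path, and the genuinely delicate point is the injectivity (or at least nontriviality) of the torus-restriction on this particular $\Ext$ group, which is where I would spend most of the effort.
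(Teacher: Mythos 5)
Your detection step fails for a structural reason: the representation category of a diagonalizable group scheme such as $\bG_m^p$ over $k$ is semisimple, so $\Ext^{i}_{\bG_m^p}(M,N)=0$ for all $i>0$ and all finite-dimensional $M,N$. In particular $\Ext^{p-1}_{\bG_m^p}(L_1\cdots L_p,\,L_i^{p})=0$, so restriction to a maximal torus annihilates \emph{every} class in $\Ext^{p-1}_{GL_{p,k}}(\Lambda^pV,V^{(1)})$, including $\alpha(V)$; the ``main obstacle'' you flag is not a technical point to be handled but an impossibility, and your preferred fix (a) --- ``torus restriction injective'' --- cannot hold because the target is zero. Any detection must go through a subgroup with nontrivial higher rational cohomology: the paper's mechanism (used later for finite and arithmetic groups) is that restriction to a Borel $B$ is an isomorphism $H^i_{\alg}(SL(V),W)\simeq H^i_{\alg}(B,W)$ by Kempf vanishing, and the cohomology is then carried by the unipotent radical, not the torus. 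Relatedly, your hope to quote the literature for the relevant $\Ext$ group is misplaced: the computation $H^i_{\alg}(SL(V),V^{(1)})=0$ for $i\neq p-1$ and $\simeq k$ for $i=p-1$ is proved in the paper (via the degree-$p$ graded piece $\Omega^{\leq p-1}_p$ of the de Rham complex of $\bA(V)$, Cartier's theorem, and a comparison of the b\^ete and canonical filtration spectral sequences using good-filtration/Kempf vanishing $H^{>0}(SL(V),S^aV\otimes\Lambda^bV)=0$) and is noted there to be new for $p>3$.

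Even if you knew the target group were one-dimensional, you would still owe an argument that the particular class $\alpha(V)$ is nonzero, and neither the torus route nor the vague induction in (b) supplies one. The paper's actual argument does both at once: it identifies $S^p(V[-1])$ with $\tau^{\geq 1}\bigl((V[-1]^{\otimes p})_{hS_p}\bigr)$, passes to the explicit two-periodic complex computing $(V[-1]^{\otimes p})_{hC_p}$, uses $H^{>0}(SL(V),V^{\otimes p})=0$ and $H^0(SL(V),V^{\otimes p})\simeq\Lambda^pV$ (antisymmetrization) to compute $\RGamma(SL(V),-)$ of that complex, and concludes non-splitting from the fact that the composite $\Lambda^pV\to V^{\otimes p}\to\Lambda^pV$ vanishes in characteristic $p$, so the map $H^p(SL(V),\tau^{\geq 2}S^p(V[-1]))\to H^0(SL(V),\Lambda^pV)=k$ is zero --- which is exactly what splitting would contradict. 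Your proposal would need to be rebuilt around an input of this kind (or the Borel/unipotent-radical analysis); as written, the core of the argument does not work.
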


For the proof, we find an explicit representation for $\alpha(V)$ as a Yoneda extension and apply Kempf's vanishing theorem to compare spectral sequences associated to b\^{e}te and canonical filtrations on this explicit complex. We also prove an enhancement of this non-vanishing: the class $\alpha(V)$ remains non-zero after being restricted to the discrete group $GL_p(\bF_{p^r})$ whenever $r>1$ (Proposition \ref{group cohomology: from algebraic to discrete}), and that the Bockstein homomorphism applied to $\alpha(V)$ gives a non-zero class in the cohomology of the special linear group $SL_p(\cO_F)$ of the ring of integers in an appropriately chosen number field $F$ (Proposition \ref{group cohomology: ring of integers main}). These enhancements use the technique of Cline-Parshall-Scott-van der Kallen \cite{cpsk} for comparing the cohomology a reductive group with that of its group of $\bF_q$-points.

With these non-vanishing results in hand, we apply Theorem \ref{intro: general cosimplicial} to de Rham and coherent cohomology of an abelian scheme being acted on by a group. Our example in Theorem \ref{intro: nondeg example} stems from the following potential discrepancy between de Rham and coherent cohomology:

\begin{pr}[{Proposition \ref{nondeg example: AV quotient}}]\label{intro: equivariant de rham vs coherent}
There exists an abelian scheme $A$ over $W(k)$ with an action of a discrete group $\Gamma$ such that the truncation $\tau^{\leq p}\RGamma(A_0,\cO)$ of the cohomology of the structure sheaf of the special fiber $A_0=A\times_{W(k)}k$ is $\Gamma$-equivariantly decomposable, while its de Rham cohomology $\tau^{\leq p}\RGamma_{\dR}(A_0/k)$ is not.
\end{pr}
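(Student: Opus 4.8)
The plan is to choose the abelian scheme and the group so that both $\RGamma(\cA_0,\cO_{\cA_0})$ and $\RGamma_{\dR}(\cA_0/k)$ become $\Gamma$-equivariant derived commutative algebras of the type governed by Theorem \ref{intro: general cosimplicial}, and then to read off from formula (\ref{intro: general algebra formula}) that the obstruction to decomposing $\tau^{\leq p}$ vanishes $\Gamma$-equivariantly on the coherent side but not on the de Rham side. Concretely, I would take $\cA=E\otimes_{\bZ}\cO_F^{\oplus p}$ over $W(k)$, where $E$ is an elliptic curve with supersingular special fiber $E_0$ and $\cO_F$ is the ring of integers of a number field $F$ in which $p$ is inert with residue degree $r>1$, and let $\Gamma=\mathrm{SL}_p(\cO_F)$ act through its tautological action on $\cO_F^{\oplus p}$, hence by automorphisms of $\cA$; this is the abelian-scheme datum underlying the construction for Theorem \ref{intro: nondeg example}. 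The crucial feature of this choice is that the induced $\Gamma$-action on the cotangent space $H^0(\cA_0,\Omega^1_{\cA_0/k})=\mathrm{Lie}(\cA_0)^{\vee}$ factors through $\mathrm{SL}_p(\bF_{p^r})$ and contains, up to Galois twists and duality, the tautological $p$-dimensional representation $V$. Both cohomology theories have cohomology the exterior algebra on $H^1$ and come with the section $s$ of the truncation in degrees $\leq 1$ required by Theorem \ref{intro: general cosimplicial}; to obtain a base flat over $\bZ_{(p)}$ one works over $W(k)$, using $\RGamma(\cA,\cO_{\cA})$ in the coherent case and the diffracted Hodge complex $\RGamma(\cA,\Omega^{\DHod}_{\cA/W(k)})$ in the de Rham case, exactly as in the proof of Theorem \ref{intro: main extension class}, and decomposability passes to the reductions mod $p$ since all cohomology sheaves are $W(k)$-flat.

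\textbf{The coherent side.} Here I invoke Proposition \ref{AV coherent: coherent main}: the obstruction to $\Gamma$-equivariantly decomposing $\tau^{\leq p}\RGamma(\cA,\cO_{\cA})$ is the composite (\ref{intro: general algebra formula}) with $A=\RGamma(\cA,\cO_{\cA})$, and one checks that the map $F^{*}s[p]$ in it lands in the summand $F^{*}H^1(\cA_0,\cO_{\cA_0})$ of the truncation $\tau^{\leq 1}F^{*}(A/p)[p]$, out of which the Frobenius $\varphi_{A/p}$ has exactly one potentially nonzero component, the map $F^{*}H^1(\cA_0,\cO_{\cA_0})\to H^1(\cA_0,\cO_{\cA_0})$ given by the (linearized) Hasse--Witt operator. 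Since $\cA_0$ is a product of supersingular elliptic curves the Hasse invariant vanishes, this operator is zero, and hence the whole composite (\ref{intro: general algebra formula}) is zero. Thus $\tau^{\leq p}\RGamma(\cA,\cO_{\cA})$ is $\Gamma$-equivariantly a direct sum of shifts of its cohomology sheaves, and so is $\tau^{\leq p}\RGamma(\cA_0,\cO_{\cA_0})$ after $-\otimes^{\mathbb L}_{W(k)}k$.

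\textbf{The de Rham side.} By Proposition \ref{AV coherent: AV de rham} (the equivariant abelian-scheme form of Theorem \ref{intro: main extension class}), the obstruction to $\Gamma$-equivariantly decomposing $\tau^{\leq p}\RGamma_{\dR}(\cA_0/k)$ is identified with $\Bock_{\cA_1}\bigl(\ob_{F,\cA_1}\cdot\alpha(\Omega^1_{\cA_0/k})\bigr)$ in the $\Gamma$-equivariant group $H^{p+1}(\cA_0,\Lambda^p T_{\cA_0/k})$, all other potential components vanishing. Because $E_0$ is supersingular, no lift of $\cA_0$ to $W_2(k)$ admits a Frobenius lift, so $\ob_{F,\cA_1}\neq 0$ for every $\cA_1$; by the choice of the action, $\alpha(\Omega^1_{\cA_0/k})$ restricts to a class manufactured from $\alpha(V)$, which is nonzero by Proposition \ref{rational group cohomology: main non-vanishing} and, after restriction to $\mathrm{SL}_p(\bF_{p^r})$ with $r>1$, by Proposition \ref{group cohomology: from algebraic to discrete}. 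It then remains to show that cupping with $\ob_{F,\cA_1}$ and applying the Witt-vector Bockstein does not kill this class; I would arrange $E$, the lift $\cA_1$, and the field $F$ so that, after decomposing $\Lambda^p T_{\cA_0/k}$, $\ob_{F,\cA_1}$ and the Bockstein through the Künneth factors of $\cA_0$ onto the group-cohomology side, the surviving class agrees up to a unit with the one shown nonzero in Proposition \ref{group cohomology: ring of integers main}, namely the Witt-vector Bockstein of $\alpha(V)$ in the cohomology of $\mathrm{SL}_p(\cO_F)$. Then the de Rham obstruction is nonzero, $\tau^{\leq p}\RGamma_{\dR}(\cA_0/k)$ is not $\Gamma$-equivariantly decomposable, and combined with the coherent case this establishes the proposition.

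\textbf{Main obstacle.} The coherent vanishing is essentially formal once the obstruction is recognized as factoring through the Hasse--Witt operator, so the substantive difficulty is the last step of the de Rham argument: converting the abstract class $\Bock(\ob_{F,\cA_1}\cdot\alpha)$ into a concrete cohomology class of a group of Lie type on which the Kempf-vanishing and Cline--Parshall--Scott--van der Kallen techniques underlying Propositions \ref{rational group cohomology: main non-vanishing}--\ref{group cohomology: ring of integers main} detect non-vanishing. This requires a careful matching of the arithmetic of $\cO_F$ with the Frobenius-lifting obstruction so that $\ob_{F,\cA_1}$ pairs nontrivially with $\alpha(V)$ after Bockstein — a priori it could pair to zero — together with bookkeeping of the Künneth decomposition to see that the surviving component sits in $H^{p+1}(\cA_0,\Lambda^p T_{\cA_0/k})$ rather than in a component already shown to vanish. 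It is worth noting that supersingularity is used in opposite directions on the two sides: on the coherent side it makes the Hasse--Witt operator vanish, while on the de Rham side the same vanishing of the Hasse invariant is exactly what forces $\ob_{F,\cA_1}\neq 0$.
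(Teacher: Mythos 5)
Your choice of example ($A=E\otimes_{\bZ}\cO_F^{\oplus p}$ with supersingular $E_0$ and $\Gamma=SL_p(\cO_F)$) and your treatment of the coherent half are exactly the paper's: Proposition \ref{AV coherent: coherent main} plus vanishing of the Hasse--Witt operator on $H^1(A_0,\cO)$ kills the obstruction, and this part of your argument is fine. The de Rham half, however, contains a genuine gap. You cite Proposition \ref{AV coherent: AV de rham} as saying that the obstruction to decomposing $\tau^{\leq p}\RGamma_{\dR}(A_0/k)$ in $D_{\Gamma}(k)$ is $\Bock_{A_1}(\ob_{F,A_1}\cdot\alpha(\Omega^1_{A_0/k}))$ in equivariant $H^{p+1}(A_0,\Lambda^pT_{A_0/k})$; it does not say this. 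That class is the one from Theorem \ref{intro: main extension class} and controls a different extension problem, namely the ($\Gamma$-equivariant) conjugate filtration on the de Rham complex of sheaves in $D_{\Gamma}(A_0^{(1)})$, whose graded pieces are the $\Omega^i_{A_0^{(1)}}[-i]$. The statement you are asked to prove concerns the canonical truncation of the cohomology complex $\RGamma_{\dR}(A_0/k)$ as an object of $D_{\Gamma}(k)$, whose graded pieces are the $\Gamma$-modules $H^i_{\dR}(A_0/k)[-i]$; non-splitting of one does not imply non-splitting of the other. The paper instead applies the algebraic Theorem \ref{cosimp: equivariant main theorem} directly to the $\Gamma$-equivariant derived commutative algebra $\RGamma_{\cris}(A_0/W(k))$ (the crystalline lift of $\RGamma_{\dR}(A_0/k)$), obtaining the obstruction as the group-cohomology class $F^*_{A_0}\circ\Bock^{p-1}\bigl(\alpha(H^1_{\dR}(A_0/k))\bigr)\in H^p\bigl(\Gamma,H^1_{\dR}(A_0/k)\otimes\Lambda^pH^1_{\dR}(A_0/k)^{\vee}\bigr)$, where $\alpha$ is applied to the $\Gamma$-representation $H^1_{\dR}$ (not to the cotangent bundle) and the Bockstein comes from $H^1_{\cris}(A_0/W_2(k))$; the class $\ob_{F,A_1}$ plays no role there. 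The true source of the coherent/de Rham contrast is that the Frobenius entering formula (\ref{intro: general algebra formula}) vanishes on $H^1(A_0,\cO)$ but not on $H^1_{\dR}(A_0/k)$ -- not, as you assert, that supersingularity forces $\ob_{F,A_1}\neq 0$.

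Even setting aside the mis-identification, your argument is incomplete at the decisive step: you defer the non-vanishing ("I would arrange $E$, the lift, and $F$ so that \dots the surviving class agrees up to a unit with the one in Proposition \ref{group cohomology: ring of integers main}", conceding it "could pair to zero"). The paper closes this by an explicit reduction: choose $\xi\in H^1_{\dR}(E_0/k)$ with $F^*_{E_0}\xi\neq 0$, use it to embed the tautological representation $V$ as a $\Gamma$-equivariant direct summand of $H^1_{\dR}(A_0)$ compatibly with the crystalline lattice, and observe that the resulting component of the obstruction is exactly $\Bock^{p-1}_{\widetilde V}(\alpha(V))$ followed by a split injection, so Proposition \ref{group cohomology: ring of integers main} gives the non-vanishing. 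Without this (or an equivalent) identification, and with the obstruction located in the wrong cohomology group, the de Rham half of your proposal does not establish the proposition.
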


The ultimate reason for different behavior of de Rham and coherent cohomology in Proposition \ref{intro: equivariant de rham vs coherent} is that the Frobenius morphism appearing in (\ref{intro: general algebra formula}) is zero on $H^1(A_0,\cO)$, but is non-zero on $H^1_{\dR}(A_0/k)$. We moreover arrange that the stack $[A_0/\Gamma]$ has a non-degenerate conjugate spectral sequence. We then construct a map $[A_0/\Gamma]\to BG$ and deduce that the conjugate spectral sequence of $BG$ does not degenerate either, as desired. 

The dependence of the extension class (\ref{intro: general algebra formula}) on the Frobenius action yields the following criterion for supersingularity of an elliptic curve:

\begin{pr}[{Corollary \ref{AV coherent: supersingularity criterion}}]
\label{intro: oridnary vs supseringular}
Suppose that $n\geq 2p$. For any elliptic curve $E_0$ over $k$, the discrete group $GL_n(\bZ)$ naturally acts on $E_0^{\times n}$ and therefore on $\RGamma(E_0^{\times n},\cO)$. The complex $\tau^{\leq p}\RGamma(E_0^{\times n},\cO)$ is $GL_n(\bZ)$-equivariantly quasi-isomorphic to the direct sum $\bigoplus\limits_{i=0}^pH^i(E_0^{\times n},\cO)[-i]$ of its cohomology if and only if $E_0$ is supersingular.
\end{pr}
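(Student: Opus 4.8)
The plan is to apply Theorem~\ref{intro: general cosimplicial} to the cosimplicial (or derived) commutative algebra $A = \RGamma(E_0^{\times n},\cO)$, equivariantly for the action of $\Gamma = GL_n(\bZ)$. Note $H^0(A) = k$, and $H^1(E_0^{\times n},\cO) = H^1(E_0,\cO)^{\oplus n}$ is a $\Gamma$-representation of dimension $n$ over $k$, with $H^\bullet(E_0^{\times n},\cO) = \Lambda^\bullet H^1(E_0^{\times n},\cO)$; moreover since $E_0^{\times n}$ lifts to the abelian scheme $E^{\times n}$ over $W(k)$ with its $\Gamma$-action, Deligne--Illusie provides a $\Gamma$-equivariant splitting map $s\colon H^1(A)[-1]\to A$ in the derived category of $k[\Gamma]$-modules (one can also invoke Proposition~\ref{AV coherent: coherent main} directly). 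Thus the hypotheses of Theorem~\ref{intro: general cosimplicial} hold $\Gamma$-equivariantly, and since $\dim E_0^{\times n} = n \geq 2p > p-1$ the truncation $\tau^{\leq p-1}A$ splits $\Gamma$-equivariantly, and the obstruction to splitting $\tau^{\leq p}A$ is the single class obtained from formula~(\ref{intro: general algebra formula}).

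The key point is to read off the dependence of that obstruction class on $E_0$. The composition~(\ref{intro: general algebra formula}) factors through the Frobenius morphism $\varphi_{A/p}$ of the cosimplicial algebra $A/p = \RGamma(E_0^{\times n},\cO)$, and more precisely through its effect on $H^1$: the relevant map is $F_{X_0}^* s[p]$ followed by $\varphi_{A/p}$, which on the level of $H^1$ is the Frobenius-semilinear operator on $H^1(E_0^{\times n},\cO) = H^1(E_0,\cO)^{\oplus n}$, i.e. $n$ copies of the action of Frobenius on $H^1(E_0,\cO)$. For an elliptic curve this operator is precisely the Hasse--Witt operator: it is zero when $E_0$ is supersingular and an isomorphism (bijective Frobenius-linear) when $E_0$ is ordinary. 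Hence when $E_0$ is supersingular the entire composition~(\ref{intro: general algebra formula}) vanishes, the obstruction class is zero, and $\tau^{\leq p}A$ splits $\Gamma$-equivariantly; this gives the ``if'' direction.

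For the ``only if'' direction, suppose $E_0$ is ordinary; I must show the obstruction class is nonzero. Here the Frobenius factor is an isomorphism, so the obstruction class equals (up to the isomorphism induced by Frobenius and by $s$) the composition of the purely characteristic-p classes $\alpha(H^1(A)/p)$, the Bockstein $\Bock_A$ coming from the lift $E^{\times n}$ over $W(k)$, and the splitting into $\tau^{\leq p-1}A$. This is exactly the ``universal'' situation: $H^1(E_0^{\times n},\cO)$ is the standard $n$-dimensional representation of $GL_n(\bZ)$ (more precisely, a twist of it by the Hasse--Witt line), the Bockstein is Serre's Witt-vector Bockstein, and the composition is — after identifying $GL_n(\bZ)$-cohomology with stable cohomology or comparing with algebraic group cohomology via the technique of Cline--Parshall--Scott--van der Kallen — governed by the non-vanishing of the Bockstein of $\alpha(V)$ established in Propositions~\ref{rational group cohomology: main non-vanishing} and~\ref{group cohomology: ring of integers main} (here taking $F = \bQ$, $\cO_F = \bZ$). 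I would cite those non-vanishing results to conclude that the relevant class in $H^{p+1}(GL_n(\bZ), \Lambda^p H^1 \otimes (\text{summand of } \tau^{\leq p-1}A))$ is nonzero, so $\tau^{\leq p}A$ does not split $\Gamma$-equivariantly.

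The main obstacle is the bookkeeping in the ``only if'' direction: one must carefully track how the $GL_n(\bZ)$-representation $H^1(E_0^{\times n},\cO)$ relates to the standard representation $V$ restricted along $GL_n(\bZ) \to GL_n(\bF_{p^r})$ (or to an algebraic group after base change), confirm that the Frobenius twist and the Hasse--Witt line do not accidentally kill the class, and verify that the degree-$1$ Steenrod operation / Bockstein appearing in~(\ref{intro: general algebra formula}) matches the Bockstein applied to $\alpha(V)$ in the non-vanishing propositions — i.e. that the identifications are compatible. Once this dictionary is set up, the supersingular-versus-ordinary dichotomy is entirely a statement about the Hasse--Witt operator being zero or not, which is classical.
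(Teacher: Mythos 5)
Your overall strategy matches the paper's: apply the equivariant splitting result (Proposition \ref{AV coherent: coherent main}, i.e.\ Theorem \ref{intro: general cosimplicial} for the augmented algebra $\RGamma(E^{n},\cO)$), observe that in the supersingular case the Frobenius on $H^1(E_0^{n},\cO)$ vanishes so the obstruction class dies, and in the ordinary case use a Frobenius lift (the canonical lift of $E_0$) to reduce non-splitness to the non-vanishing of $\Bock^{p-1}(\alpha(H^1))$ in the cohomology of the acting group. The ``if'' direction and the reduction in the ``only if'' direction are fine.

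However, there is a genuine gap at the final non-vanishing step. You propose to quote Proposition \ref{group cohomology: ring of integers main} ``taking $F=\bQ$, $\cO_F=\bZ$'', i.e.\ to get non-vanishing of $\Bock^{p-1}(\alpha(V))$ over $SL_p(\bZ)$ and hence over $GL_n(\bZ)$. That proposition does not allow $F=\bQ$: its proof requires a real quadratic field chosen via Lemma \ref{group cohomology: sorry not sorry}, where one needs $\cO_F^{\times}$ to surject onto the norm-$\pm1$ units of $\bF_{p^2}$ and a unit $u$ with $\Fr_p(\ou)\neq\ou^p$ in $W_2(\bF_{p^2})$; the unit group $\bZ^{\times}=\{\pm1\}$ fails this for every $p>2$, and even the weaker statement that $\alpha(V)$ survives restriction to the $\bF_p$-points is not available — Proposition \ref{group cohomology: from algebraic to discrete} requires $q>p$, and the paper's remark at the end of Section \ref{group cohomology: section} shows that for $p=2$ the restriction of $\alpha(V)$ to $SL_2(\bF_2)$ is actually zero. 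So working with $SL_p(\bZ)\subset GL_n(\bZ)$ cannot close the argument. The paper's resolution is to take the quadratic field $F$ (with $p$ inert, residue field $\bF_{p^2}$) from Proposition \ref{group cohomology: ring of integers main}, identify $\cO_F\simeq\bZ^{\oplus 2}$ as abelian groups to embed $SL_p(\cO_F)\subset GL_{2p}(\bZ)\subset GL_n(\bZ)$ — this is exactly where the hypothesis $n\geq 2p$ enters, which your proposal never uses or explains — and then note that the restriction of the $GL_n(\bZ)$-representation $H^1(E^{n},\cO)$ to $SL_p(\cO_F)$ contains the tautological $p$-dimensional representation as a direct summand, so the class detected over $SL_p(\cO_F)$ forces non-vanishing over $GL_n(\bZ)$. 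Without this (or some substitute detection subgroup with $q>p$), the ``only if'' direction is not established.
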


\subsection{Computation of the Sen operator.} Finally, let us describe the idea of our proof of Theorem \ref{intro: sen class}.  It might be possible to upgrade Theorem \ref{intro: general cosimplicial} to work in the category of sheaves on $X$ equipped with an endomorphism, but we instead evaluate the Sen operator on $\tau^{\leq p}\dR_{X_0/k}$ using the technique of descent to quasiregular semiperfectoid rings developed by \cite{bms2}. We set up the foundation for our computation in Section \ref{sen operator: section}, defining in particular a version of diffracted Hodge cohomology equipped with a Sen operator relative to the base $\bZ/p^n$ for $n\geq 2$ (Theorem \ref{sen operator: zpn main}). We then prove formula (\ref{intro: sen class equation}) in Theorem \ref{semiperf: main sen operator}.

For a quasiregular semiperfectoid $W(k)$-algebra $S$ with mod $p$ reduction $S_0=S/p$ the derived de Rham complex $\dR_{S_0/k}$ is concentrated in degree zero so the Sen operator in question is an endomorphism of a classical module, rather than an object of the derived world. To compute it, we consider the natural map \begin{equation}\label{intro: semiperf sym to dr map}\bigoplus\limits_{i\geq 0} S^i(L\Omega^1_S[-1])\to \Omega^{\DHod}_S\end{equation} from the symmetric algebra on the shifted cotangent complex of $S$, comprised of the analogs of the maps (\ref{intro: deg i splitting map}). 

Since $S$ is quasiregular semiperfectoid, the map (\ref{intro: semiperf sym to dr map}) is an injection of flat $W(k)$-modules with torsion cokernel. As $\Theta_S$ acts on the source of this map via multiplication by $(-i)$ on the summand $S^i(L\Omega^1_S[-1])$, this allows us to pin down exactly how $\Theta_S$ acts on $\Omega^{\DHod}_S$. A slight variation of this argument works for a semiperfectoid $W_2(k)$-algebra $S_1$ lifting $S_0$, even if there is no lift over $W(k)$, as we prove in Lemma \ref{semiperf: sen for qrsprfd}; though, as a result, we only compute the Sen operator on $\dR_{S_0}$ rather than $\Omega^{\DHod}_{S_1/\bZ/p^2}$. 

Note that, as in the proof of Theorem \ref{intro: main extension class}, we do not directly use here the construction of the Sen operator via the Cartier-Witt stack, but rather rely crucially on its basic properties: that it is a derivation with respect to the multiplication on $\Omega^{\DHod}_X$ and that it acts by scalar multiplication on graded quotients of the conjugate filtration.

\subsection*{Notation.} We use the machinery of $\infty$-categories in the sense of \cite{lurie-htt}. When referring to an ordinary category we always view it as an $\infty$-category via the simplicial nerve construction \cite[1.1.2]{lurie-htt}. 

For an object $\cF\in D(X)$ of the derived category of quasi-coherent sheaves on a prestack $X$ we denote by $H^i(\cF)\in \QCoh(X)$ the cohomology sheaf of $\cF$ in degree $i$, and by $H^i(X,\cF)$ the abelian group of the degree $i$ cohomology of the derived global sections complex $\RGamma(X,\cF)$. By $\tau^{\leq i}\cF$ we denote the canonical truncation: it is an object with a map $\tau^{\leq i}\cF\to \cF$ such that $H^j(\tau^{\leq i}\cF)\simeq H^j(\cF)$ for $j\leq i$, and $H^j(\tau^{\leq i}\cF)=0$ for $j>i$.

When working in a stable $\infty$-category $\cC$ linear over a commutative ring $R$ we denote by $\RHom_{\cC}(M,N)\in D(R)$ the complex of morphisms between objects $M,N\in \cC$. The mapping space $\Map_{\cC}(M,N)$ is equivalent to the simplicial set underlying the truncation $\tau^{\leq 0}\RHom_{\cC}(M,N)$.

For an algebraic stack $X$ over a ring $R$ we denote by $L\Omega^i_{X/R}$ the $i$th exterior power of its cotangent complex, viewed as an object of the derived category $D(X)$ of quasicoherent sheaves on $X$. If $pR=0$ we denote by $\dR_{X/R}$ the (derived) de Rham complex of $X$ relative to $R$, viewed as an object of $D(X\times_{R,F_R} R)$, cf. \cite[\S 3]{bhatt-derived}. We sometimes drop the base $R$ from the notation if it is clear from the context, especially if $R$ is perfect.

\subsection*{Acknowledgements.}I am very grateful to Vadim Vologodsky for many conversations on the surrounding topics throughout the years, I learned most of the techniques used in this paper from him. I am extremely thankful to Luc Illusie for numerous stimulating conversations, encouragement, and many very helpful comments and corrections on the previous versions of this text. 

I am also grateful to Bhargav Bhatt, Robert Burklund, Sanath Devalapurkar, Dmitry Kubrak, Adrian Langer, Shizhang Li, Shubhodip Mondal, Arthur Ogus, Alexei Skorobogatov, and Bogdan Zavyalov for helpful conversations, suggestions, and explanations. Special thanks to Dmitry and Shizhang for pointing out errors in the previous versions of some of the arguments. I would also like to thank the anonymous referees for their numerous corrections and helpful suggestions.

This work was completed during my stay at the Max Planck Institute for Mathematics in Bonn, I am grateful to MPIM for hospitality. This research was partially conducted during the period the author served as a Clay Research Fellow.

\section{Preliminaries on homotopical algebra}

The key to our approach to the study of the de Rham and Hodge-Tate cohomology is the fact that the de Rham complex and the diffracted Hodge complex admit a structure of a commutative algebra, in the appropriate derived sense. In this expository section we summarize the necessary facts about non-abelian derived functors, derived commutative algebras in the sense of Mathew, and cosimplicial commutative algebras. The specific piece of structure that will be used in Section \ref{cosimp: section} is a map $S^p A\to A$ where $S^p$ is the derived symmetric power functor in the sense of \cite{illusie-cotangent1}. Such a map is naturally induced both by the structure of a derived commutative algebra and a cosimplicial commutative algebra structure on $A$. We choose to work with the former, but comment in Subsection \ref{dalg: cosimp subsection} on how to translate the results to the language of cosimplicial commutative algebras.

Our exposition here definitely does not introduce any original mathematics but, on the contrary, is aiming to convince the reader that the part of this machinery relevant for us is powered by a fairly classical piece of homolog(top)ical algebra.

\subsection{Non-abelian derived functors.} \label{doldpuppe subsection}
Let $X$ be an arbitrary prestack, that is a functor $X:\Ring^{\op}\to \mathrm{Spc}$ from the category of ordinary commutative rings to the $\infty$-category of spaces. We always assume that for our prestack $X$ its derived $\infty$-category of quasi-coherent sheaves $D(X)$ is a presentable category in the sense of \cite[5.5]{lurie-htt}. Here $D(X)$ is defined as the limit $\lim\limits_{\Spec R\to X}D(R)$ of derived categories of $R$ modules, cf. \cite[Chapter I.3]{gaitsgory-rozenblum}. We denote by $\PreStk$ the $\infty$-category of prestacks $X$ such that $D(X)$ is presentable. We work in this large generality simply because all actual proofs in this section take place over affine schemes and the language of prestacks provides a convenient framework for generalizing the result to more general geometric objects via descent. 

All particular prestacks involved involved in our main results will actually be classical, in the sense that they will take values in $1$-groupoids. By an algebraic stack we will always mean a 1-Artin stack. In all of our applications, $X$ will either be a scheme, a global quotient of a scheme by a group scheme, or a $p$-adic formal scheme. If $X$ is a quasi-compact scheme with affine diagonal then $D(X)$ is equivalent to the derived category (in the sense of \cite[Definition 1.3.5.8]{lurie-ha}) of the usual abelian category $\QCoh(X)$ of quasi-coherent sheaves on $X$. If such an $X$ is being acted on by an affine flat group scheme $G$ then $D([X/G])$ is equivalent to the left completion of the derived category of the abelian category $\QCoh_G(X)$ of $G$-equivariant quasicoherent sheaves on $X$, cf. \cite[Remark 1.2.10]{drinfeld-gaitsgory}.

For each $n\geq 0$ we have the derived functors $S^n,\Gamma^n,\Lambda^n:D(X)\to D(X)$ of symmetric, divided, and exterior power functors, respectively. These are the $\infty$-categorical enhancements for the non-abelian derived functors defined by Illusie \cite{illusie-cotangent1}. We refer to \cite[Section 3]{brantner-mathew} and \cite[Section 2.3]{barwick-glasman-mathew-nikolaus} for a treatment in the case $X=\Spec R$ is an affine scheme, and \cite[Section A.2]{kubrak-prikhodko-p-adic} for the general case. We will now briefly recall the construction of these functors and their basic properties. Denote by $\Mod_R$ the ordinary abelian category of $R$-modules. These functors are uniquely characterized by the following properties:

\begin{enumerate}
\item If $X=\Spec R$ is affine then for a flat module $M\in \Mod_R\subset D(R)$, the values 
\begin{multline*}S^n(M)=(M^{\otimes n})_{S_n}, \Gamma^n(M)=(M^{\otimes n})^{S_n},\\ \Lambda^n(M)=(M^{\otimes n})/\langle m_1\otimes\ldots\otimes m_n|m_i=m_j\text{ for some }i\neq j\rangle\end{multline*} are the usual symmetric, divided, and exterior powers.

\item If $X$ is an affine scheme, the functors $S^n,\Lambda^n,\Gamma^n$ preserve sifted colimits, and are polynomial functors of degree $\leq n$ in the sense of \cite[Definition 2.11]{barwick-glasman-mathew-nikolaus}.

\item These functors are natural in morphisms of the underlying stacks. That is, the functors $S^n,\Gamma^n,\Lambda^n$ can be enhanced to endomorphisms of the functor $D(-):\PreStk^{\op}\to \Cat_{\infty}$ that sends $X$ to $D(X)$ and a morphism $f:X\to Y$ to the pullback functor $f^*:D(Y)\to D(X)$.
\end{enumerate}

These derived functors are first constructed on affine schemes using the following:

\begin{lm}[{\hspace{1sp}\cite{barwick-glasman-mathew-nikolaus}}]\label{dalg: derived functors for rings}
For a commutative ring $R$, the restriction functor \begin{equation}\label{dalg: derived functors for rings formula}\Func_{\leq n}(D(R),D(R))\to \Func_{\leq n}(\Proj_R^{\mathrm{f.g}},D(R))\end{equation} is an equivalence. Here the source of the functor is the category of polynomial of degree $\leq n$ endofunctors of the stable $\infty$-category $D(R)$ that preserve sifted colimits. The target category is the category of polynomial functors of degree $\leq n$ from the abelian category of finitely generated projective $R$-modules to the $\infty$-category $D(R)$.
\end{lm}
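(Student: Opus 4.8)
The plan is to factor the restriction functor through the connective derived category,
\[
\Func_{\leq n}(D(R),D(R))\longrightarrow \Func_{\leq n}(D(R)_{\geq 0},D(R))\longrightarrow \Func_{\leq n}(\Proj_R^{\mathrm{f.g}},D(R)),
\]
where in the first two terms preservation of sifted colimits is part of the data (as in the statement), and to show that each of the two functors is an equivalence. The second functor I would handle via the universal property of the nonabelian derived category: $D(R)_{\geq 0}$ is the sifted cocompletion $\cP_{\Sigma}(\Proj_R^{\mathrm{f.g}})$, so restriction identifies sifted-colimit-preserving functors out of $D(R)_{\geq 0}$ with arbitrary functors out of $\Proj_R^{\mathrm{f.g}}$, the inverse being left Kan extension $L$. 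It remains to see that $L$ and its inverse preserve the condition of having polynomial degree $\leq n$, which amounts to the vanishing of the $(n+1)$-st cross-effect. This follows because the cross-effect functors $\mathrm{cr}_k$ are built from total fibers of the cubes of ``omit a summand'' maps, i.e.\ from finite limits, and finite limits commute with all small colimits in the stable category $D(R)$; since $L$ is computed pointwise by sifted colimits over comma categories, $\mathrm{cr}_{n+1}(LF)\simeq L(\mathrm{cr}_{n+1}F)$, which vanishes on all of $D(R)_{\geq 0}$ if and only if $\mathrm{cr}_{n+1}F$ vanishes on $\Proj_R^{\mathrm{f.g}}$.

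The first functor is the main point. Here I would use that a polynomial functor $F$ of degree $\leq n$ has a \emph{finite} polynomial tower $F\simeq P_nF\to\cdots\to P_1F\to P_0F=F(0)$ whose successive fibers $D_jF$ are $j$-homogeneous and are classified by symmetric $j$-linear functors $\ell_j$, that is, functors reduced and exact in each of their $j$ variables. The constant functor $F(0)$ extends uniquely, and trivially, to $D(R)$. Each $\ell_j\colon D(R)_{\geq 0}^{\times j}\to D(R)$, being exact and sifted-colimit-preserving in each variable, is colimit-preserving in each variable and hence extends uniquely to $D(R)^{\times j}\to D(R)$ by the universal property of the stabilization $D(R)\simeq\Sp(D(R)_{\geq 0})$, applied one variable at a time. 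Reassembling the finite tower over $D(R)$ — its attaching maps being natural transformations of the ingredients just discussed, hence themselves determined by restriction — produces a functor on $D(R)$ extending $F$, still polynomial of degree $\leq n$ (a finite tower with $j$-homogeneous layers, $j\leq n$) and still sifted-colimit-preserving. The same layer-by-layer analysis computes mapping spaces between degree $\leq n$ functors in terms of the multilinear layers, where full faithfulness of restriction is once more the universal property of $\Sp$; so the first functor is an equivalence.

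The hard part is the structural input from the theory of polynomial functors on additive/stable $\infty$-categories: that the polynomial tower of a bounded-degree functor is finite, that its layers are exactly the homogeneous functors and these are classified by symmetric multilinear functors, and that this description is natural and compatible with restriction along $D(R)_{\geq 0}\hookrightarrow D(R)$. These are precisely the facts provided by \cite{barwick-glasman-mathew-nikolaus}, with classical simplicial antecedents in the work of Dold--Puppe and of Illusie \cite{illusie-cotangent1}; granting them, the argument above is essentially formal. As an alternative to the tower decomposition for the first functor, one could instead prove a connectivity estimate — a reduced, sifted-colimit-preserving functor of degree $\leq n$ sends $m$-connective objects to $(m-c_n)$-connective ones, for a constant $c_n$ depending only on $n$ — and combine it with the fact that $D(R)$ is generated under colimits by shifts of objects of $\Proj_R^{\mathrm{f.g}}$ to pin the functor down; but the homogeneous-layer approach is cleaner.
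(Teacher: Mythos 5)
Your proposal is correct in substance, but it does something different from the paper: the paper's entire proof is a two-line citation, namely apply \cite[Theorem 2.19]{barwick-glasman-mathew-nikolaus} to $\cA=\Proj_R^{\mathrm{f.g.}}$ and combine it with the identification $D(R)\simeq \mathrm{Ind}(\mathrm{Stab}(\cA))=\mathrm{Ind}(\Perf(R))$, whereas you essentially reconstruct the internal mechanism of that theorem. Your factorization through $D(R)_{\geq 0}\simeq \cP_{\Sigma}(\Proj_R^{\mathrm{f.g.}})$, with the first step handled by the universal property of sifted cocompletion plus commutation of cross-effects (finite limits in a stable target) with the Kan extension, and the second step handled by the finite tower with homogeneous layers classified by symmetric multilinear functors on the stabilization, is indeed how results of this type are proved; the paper instead reaches $D(R)$ via $\Perf(R)$ and ind-completion rather than via the connective cover. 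What your route buys is visibility into why the degree bound and sifted-colimit preservation propagate; what the paper's route buys is brevity, since the heavy lifting is wholly outsourced. Two cautions: the step where you ``reassemble the finite tower'' over $D(R)$ by saying the attaching maps ``are determined by restriction'' is circular as stated --- full faithfulness of restriction on degree $\leq n$ functors is part of what is being proved, so this must be organized as an induction on $n$, proving full faithfulness for degree $\leq n-1$ before extending the $n$-th layer and its attaching map; and the equivalence between bounded cross-effect degree and the existence of such a tower (together with the multilinear classification of its layers, with the multilinear functors living on the stabilization of the source) is precisely the content of the cited theorem, so your argument is not independent of \cite{barwick-glasman-mathew-nikolaus} --- which is acceptable here, since the lemma itself is attributed to that paper, but worth stating plainly.
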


\begin{proof}
Statement (2) is a combination of \cite[Theorem 2.19]{barwick-glasman-mathew-nikolaus} applied to $\cA=\Proj(R)^{\mathrm{f.g}}$ with the fact that $D(R)$ is the ind-completion of $\mathrm{Stab}(\cA)=\Perf(R)$.
\end{proof}

Denote by $\Mod:\Ring\to \Cat$ the functor from the ordinary category of commutative rings to the $2$-category of ordinary categories that sends a commutative ring $R$ to the ordinary category $\Mod_R$ or $R$ modules, and a morphism of rings $f:R\to R'$ to the functor $R'\otimes_R -:\Mod_R\to \Mod_{R'}$. Similarly, denote by $\Proj^{\mathrm{f.g.}}$ the subfunctor of $\Mod$ that sends a ring $R$ to the category $\Proj^{\mathrm{f.g.}}_R$ of finitely generated projective $R$-modules.

We denote by $\Mor_{\leq n}(\Proj^{\mathrm{f.g.}},\Mod)\subset \Mor(\Proj^{\mathrm{f.g.}},\Mod)$ the full subcategory of the ordinary $1$-category of natural transformations $\Proj^{\mathrm{f.g.}}\to\Mod$ spanned by transformations that are given by polynomial functors $\Proj^{\mathrm{f.g.}}_R\to\Mod_R$ of degree $\leq n$, for every ring $R$. We have the following construction principle for polynomial functors on arbitrary prestacks

\begin{lm}\label{dalg: derived functors for stacks}
For every prestack $X$ there is a functor \begin{equation}\Sigma_X:\Mor_{\leq n}(\Proj^{\mathrm{f.g.}},\Mod)\to \Func(D(X),D(X))\end{equation} satisfying the following property:

For any morphism $f:\Spec R\to X$ from an affine scheme, and a polynomial functor $T\in \Mor_{\leq n}(\Proj^{\mathrm{f.g.}},\Mod)$ there is an equivalence of functors $f^*\circ \Sigma_X(T)\simeq T^{\mathrm{derived}}_R\circ f^*$ from $D(X)$ to $D(R)$, where $T^{\mathrm{derived}}_R$ is the result of applying the inverse of the equivalence (\ref{dalg: derived functors for rings formula}) to $T_R:\Proj_{R}^{\mathrm{f.g.}}\to\Mod_R$.

Moreover, if $X$ is an algebraic stack flat over a ring $A$, then $\Sigma_X$ factors through the category $\Mor_{\leq n}(\Proj^{\mathrm{f.g.}}|_{A-flat},\Mod|_{A-flat})$ of morphisms between these functors restricted to the category of flat $A$-algebras.
\end{lm}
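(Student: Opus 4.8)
The plan is to build $\Sigma_X$ in three stages: first for affine $X$ out of Lemma \ref{dalg: derived functors for rings}, then over the whole category $\Ring$ of commutative rings at once, and finally for arbitrary prestacks by right Kan extension; the flat refinement will then follow from descent along a flat cover. For affine $X=\Spec R$ there is no choice: $\Sigma_{\Spec R}(T)$ must be the image $T^{\mathrm{derived}}_R$ of $T_R$ under the inverse of (\ref{dalg: derived functors for rings formula}). The first substantial point is that these values assemble into an endomorphism of the functor $D(-)\colon\Ring\to\Cat_\infty$, covariant in rings with transition functors the derived base-change functors $R'\otimes^{\mathbb{L}}_R(-)$. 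For this one globalizes Lemma \ref{dalg: derived functors for rings}: since $D(S)$ is the ind-completion of $\Perf(S)$ for every ring $S$ (which is exactly what powers the proof of that lemma), the same restriction equivalence holds with the target $D(R)$ replaced by $D(S)$ for a varying ring $S$, and this lets one identify the $\infty$-category of polynomial-of-degree-$\leq n$, sifted-colimit-preserving endomorphisms of $D(-)\colon\Ring\to\Cat_\infty$ with the $\infty$-category of natural transformations $\Proj^{\mathrm{f.g.}}\to D$ that are fiberwise polynomial of degree $\leq n$. A morphism $T$ of $\Mor_{\leq n}(\Proj^{\mathrm{f.g.}},\Mod)$ determines, upon viewing its values inside $D$, such a fiberwise-polynomial transformation $\Proj^{\mathrm{f.g.}}\to D$ (the tautological inclusion $\Proj^{\mathrm{f.g.}}\hookrightarrow D$ being compatible with base change, as finitely generated projective modules remain so, with no higher Tor, after base change), and applying the inverse of the globalized equivalence produces an endomorphism of $D(-)\colon\Ring\to\Cat_\infty$ with value $T^{\mathrm{derived}}_R$ at $\Spec R$; running this with the tautological $T$ over the category $\Mor_{\leq n}(\Proj^{\mathrm{f.g.}},\Mod)$ itself makes $T\mapsto\Sigma(T)$ into a functor. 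Finally, because $D(-)\colon\PreStk^{\op}\to\Cat_\infty$ sends colimits of prestacks to limits of $\infty$-categories, it is the right Kan extension of its restriction to affine schemes, so its endomorphisms over $\PreStk$ agree, compatibly with composition, with its endomorphisms over $\Ring$; I let $\Sigma_X(T)$ be the value at $X$ of the endomorphism just built, and the asserted compatibility $f^*\circ\Sigma_X(T)\simeq T^{\mathrm{derived}}_R\circ f^*$ for $f\colon\Spec R\to X$ is precisely its naturality at the morphism $f$.

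For the flat refinement, suppose $X$ is an algebraic stack flat over $A$; in the situations of interest ($X$ a scheme with affine diagonal, a $p$-adic formal scheme, or a global quotient) it admits a flat cover by an affine scheme $\Spec R_0$, and then $R_0$ is flat over $A$ and in the \v{C}ech conerve $R_\bullet$ of $\Spec R_0\to X$ every term $R_n$ is flat over $A$ and every coface map is flat. By flat descent $D(X)\simeq\lim_{[n]\in\Delta}D(R_n)$, and under this identification $\Sigma_X(T)$ is assembled from the endofunctors $T^{\mathrm{derived}}_{R_n}$ of $D(R_n)$ together with their base-change comparisons along the (flat) coface maps. This entire body of input depends on $T$ only through its restriction to flat $A$-algebras, so $\Sigma_X$ factors through $\Mor_{\leq n}(\Proj^{\mathrm{f.g.}}|_{A\text{-flat}},\Mod|_{A\text{-flat}})$, as claimed.

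The one genuinely non-formal step is the globalization of Lemma \ref{dalg: derived functors for rings} over $\Ring$: one must check that the individually defined $T^{\mathrm{derived}}_R$ glue into a single coherent endomorphism of $D(-)\colon\Ring\to\Cat_\infty$, i.e.\ that the equivalence of Lemma \ref{dalg: derived functors for rings} is compatible with base change, rather than merely that the $T^{\mathrm{derived}}_R$ agree objectwise after base change. The mechanism making this go through is structural: every comparison reduces, via the family form of that lemma, to a statement after restriction to finitely generated projective modules, where base change is exact and introduces no higher Tor, so the naturality data already packaged into a morphism of $\Mor_{\leq n}(\Proj^{\mathrm{f.g.}},\Mod)$ suffices to manufacture the required coherences; once this is in place, the passage to prestacks and the flat refinement are formal applications of right Kan extension and descent. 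The remaining care needed is purely bookkeeping with $\infty$-categorical coherences — organizing the family version of Lemma \ref{dalg: derived functors for rings} and the comparison of endomorphism monoids under right Kan extension — rather than any new mathematical input.
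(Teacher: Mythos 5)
Your construction of $\Sigma_X$ itself is essentially the paper's: on affine schemes it is forced by Lemma \ref{dalg: derived functors for rings}, and the passage to a general prestack is exactly the limit description $D(X)\simeq\lim_{\Spec R\to X}D(R)$ (your right-Kan-extension formulation of $D(-)$ is the same thing), with the coherences supplied by the fact that $T$ is a strict natural transformation on finitely generated projective modules and that the equivalence of Lemma \ref{dalg: derived functors for rings} is a restriction functor, so all comparison data can be manufactured after restriction to projectives, where base change is exact. You spell out this globalization over $\Ring$ more explicitly than the paper (which simply declares the family $T^{\mathrm{derived}}_R\circ f^*$ to be an object of $\lim_{\Spec R\to X}\Func(D(X),D(R))$), but it is the same idea and is fine.

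The gap is in the ``moreover'' clause. The paper obtains it from \cite[Proposition 3.1.4.2]{gaitsgory-rozenblum}: for an algebraic stack $X$ flat over $A$, the limit $\lim_{\Spec R\to X}D(R)$ may be restricted to those $f:\Spec R\to X$ with $R$ flat over $A$, so the defining family only ever evaluates $T$ on flat $A$-algebras and no cover is chosen. Your argument instead picks a flat affine cover $\Spec R_0\to X$ and uses its \v{C}ech nerve, which needs (i) $X$ quasi-compact, so that such a cover exists, and (ii) the diagonal of $X$ affine, so that the nerve terms are again affine and the descent formula $D(X)\simeq\lim_{[n]\in\Delta}D(R_n)$ is a limit of module categories over flat $A$-algebras. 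Neither hypothesis is in the statement, and (ii) genuinely fails in cases where the paper uses the flat refinement: it feeds Lemma \ref{free cosimplicial: tate p}(3) and hence Theorems \ref{cosimp: main theorem} and \ref{cosimp: equivariant main theorem}, which are applied to quotients $[X/\Gamma]$ by infinite discrete groups such as $SL_p(\cO_F)$ and $GL_n(\bZ)$; there the diagonal is an infinite disjoint union of graphs, so it is neither quasi-compact nor affine, and $\Spec R_0\times_X\Spec R_0$ is not affine. You hedge with ``in the situations of interest,'' but as written the factorization is established only for quasi-compact stacks with affine diagonal, which is strictly less than the statement and misses some of the paper's own applications. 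The repair is either to invoke the cofinality/flat-presentation result as the paper does, or to refine the \v{C}ech nerve by further affine (hyper)covers of its terms; with your current argument alone, the moreover clause is not proved in the claimed generality.
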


\begin{proof}
For a general $X$ the category $D(X)$ is, by definition, equivalent to $\lim\limits_{\Spec R\to X}D(R)$, where the limit is taken over all affine schemes mapping to $X$. Therefore $\Func(D(X),D(X))\simeq \lim\limits_{\Spec R\to X}\Func(D(X),D(R))$ and we define $\Sigma_X(T)$ as the object of this limit given by $T_R^{\mathrm{derived}}\circ f^*\in \Func(D(X),D(R))$ for every map $f:\Spec R\to X$.

The last assertion follows from the fact that if $X$ is a flat algebraic stack over $A$, then in the above limit we may restrict to morphisms $\Spec R\to X$ for which $R$ is flat over $A$, by \cite[Proposition 3.1.4.2]{gaitsgory-rozenblum}.
\end{proof}

\begin{definition}
For a prestack $X$, we define functors $S^n,\Gamma^n,\Lambda^n:D(X)\to D(X)$ as the images under $\Sigma_X$ of the corresponding polynomial functors on projective modules over rings. We will denote these functors by $S^n_X,\Gamma^n_X,\Lambda^n_X$ if the base is not clear from the context.
\end{definition}

\begin{rem}We can give an explicit recipe for computing the derived functors $S^n,\Gamma^n,\Lambda^n$ that we state in a special case, for simplicity. Suppose that $X$ is a quasi-compact separated scheme and $M\in D(X)$ is an object that can be represented by a complex $M^0\to M^1\to\ldots$ concentrated in degrees $\geq 0$ of flat quasicoherent sheaves. Then $S^n(M)$ (and likewise for the other functors $\Lambda^n,\Gamma^n$) is the totalization of the cosimplicial diagram
\begin{equation}
\begin{tikzcd}
S^n(\DK(M)^0) \arrow[r, shift left=0.65ex] \arrow[r, shift right=0.65ex] & S^n(\DK(M)^1) \arrow[r, shift left=1.3ex] \arrow[r, shift right=1.3ex] \arrow[r] &\ldots
\end{tikzcd}
\end{equation}
where $\DK(M)$ is the cosimplicial sheaf associated to the complex $M$ by the Dold-Kan equivalence. The miracle is that the resulting object $S^n(M)$ does not depend on the choice of a resolution, up to a quasi-isomorphism. \end{rem}

We will use Lemma \ref{dalg: derived functors for stacks} frequently for homotopy-coherent computations with polynomial functors. Note that specifying an object of $\Mor_{\leq n}(\Proj^{\mathrm{f.g.}},\Mod)$ amounts to a manageable collection of data: we need to give a functor $T_R:\Proj^{\mathrm{f.g.}}_R\to \Mod_R$ for every ring $R$, construct equivalences $R'\otimes_R T_R(-)\simeq T_{R'}(R'\otimes_R -)$ for all maps of rings $R\to R'$, and then check (rather than provide any additional constructions) that these equivalences are associative with respect to the composition of maps of rings. Let us illustrate this technique by constructing certain maps between the symmetric and divided power functors.

For a projective module $M$ over a ring $R$ there are natural maps between $S^nM=(M^{\otimes n})_{S_n}$ and $\Gamma^nM=(M^{\otimes n})^{S_n}$:

\begin{equation}
r_n:\Gamma^n M\hookrightarrow M^{\otimes n}\twoheadrightarrow S^nM \qquad N_n=\sum\limits_{\sigma\in S_n}\sigma: S^nM\to \Gamma^nM.
\end{equation}

The compositions $N_n\circ r_n$ and $r_n\circ N_n$ are equal to $n!\cdot \Id_{\Gamma^n M}$ and $n!\cdot \Id_{S^n M}$, respectively. Since the maps $r_n$ and $N_n$ are compatible with base change along arbitrary maps of rings $R\to R'$, they define morphism between functors $S^n$ and $\Gamma^n$ in the category $\Mor_{\leq n}(\Proj^{\mathrm{f.g.}},\Mod)$. Therefore Lemma \ref{dalg: derived functors for stacks} provides us with maps of endofunctors
\begin{equation}\label{dalg: norm prestacks}
r_n:\Gamma^n\to S^n\qquad N_n:S^n\to \Gamma^n
\end{equation}
of the category $D(X)$ for every prestack $X$.  

\begin{lm}\label{dalg: polynomial frobenius lemma}
For an $\bF_p$-algebra $R_0$ and a finite projective $R_0$-module $M$ the map $r_p:\Gamma^pM\to S^pM$ naturally factors as
\begin{equation}
\Gamma^p M\xrightarrow{\psi_M} F^*_{R_0}M\xrightarrow{\Delta_M} S^p M
\end{equation}
where the first arrow is a surjection and the second one is an injection.
\end{lm}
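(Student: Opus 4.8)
The plan is to exhibit the factorization on the level of the ordinary polynomial functors $\Gamma^p$, $S^p$, and $F^*_{R_0}$ restricted to flat (e.g. finitely generated projective) modules over $\bF_p$-algebras, and then invoke Lemma \ref{dalg: derived functors for stacks} to propagate it to arbitrary prestacks. Concretely, for a finitely generated projective module $M$ over an $\bF_p$-algebra $R_0$, I would first define the surjection $\psi_M\colon \Gamma^p M\to F^*_{R_0}M$. Recall $F^*_{R_0}M=R_0\otimes_{R_0,F_{R_0}}M$; sending $m\in M$ to $1\otimes m$ is Frobenius-semilinear. There is a natural map $F^*_{R_0}M\to \Gamma^p M$ characterized by $1\otimes m\mapsto \gamma_p(m)$ (the $p$th divided power $m^{[p]}=m\otimes\cdots\otimes m\in(M^{\otimes p})^{S_p}$), which is well defined because the target is an $R_0$-module via the diagonal action and $\gamma_p(am)=a^p\gamma_p(m)$; this is the classical identification of $\Gamma^p$ with the "Frobenius twist plus lower terms". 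Rather than split $\Gamma^p M$ directly, it is cleaner to define $\psi_M$ as the cokernel of the inclusion of the submodule $\Gamma^{<p}M\cdot \Gamma^{>0}M$ (products of divided powers of positive degrees summing to $p$) inside $\Gamma^p M$: the quotient is freely spanned Frobenius-semilinearly by the classes $\gamma_p(m)$, so it is canonically $F^*_{R_0}M$, and $\psi_M$ is the projection, manifestly surjective. Dually, $\Delta_M\colon F^*_{R_0}M\to S^p M$ sends $1\otimes m$ to the class of $m^{\otimes p}$, i.e. to $m^p\in S^p M$; this is well defined since $(am)^p=a^pm^p$ in $S^pM$, and it is injective because on a basis $\{m_i\}$ of $M$ the elements $m_i^p$ are part of a basis of $S^p M$ (the monomials of degree $p$), so the $R_0$-linear span of $\{1\otimes m_i\}$ maps isomorphically onto the span of $\{m_i^p\}$ — here the $\bF_p$-hypothesis is used, since over $\bZ$ the map $m\mapsto m^p$ is not additive.

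Next I would check that $\Delta_M\circ\psi_M=r_p$. Both sides are natural transformations $\Gamma^p\to S^p$ of functors on finitely generated projective $R_0$-modules, so it suffices to compare them on the universal generator, i.e. on $\gamma_p(m)$ for a single "coordinate" $m$ (formally, take $M=R_0\langle m\rangle$ free of rank one and use that $\Gamma^p$, $S^p$ are polynomial hence determined by their values on such modules together with multilinearity). On $\gamma_p(m)=m\otimes\cdots\otimes m$ the norm-type map $r_p\colon\Gamma^pM\hookrightarrow M^{\otimes p}\twoheadrightarrow S^pM$ sends it to $m^p\in S^pM$; on the other hand $\psi_M(\gamma_p(m))=1\otimes m$ and $\Delta_M(1\otimes m)=m^p$. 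They agree. (One must also check $r_p$ kills the submodule $\Gamma^{<p}M\cdot\Gamma^{>0}M$ so that it factors through $\psi_M$ at all: a product $\gamma_{i_1}(m_{j_1})\cdots$ with some $i_k<p$ maps under $\Gamma^pM\hookrightarrow M^{\otimes p}$ to a sum of tensors with repeated coordinates in an unbalanced pattern, and tracing through the explicit formula for the composite with $M^{\otimes p}\twoheadrightarrow S^p M$ shows it lands in $p\cdot S^p M=0$; this is where $\bF_p$ enters again, through the identity $N_p\circ r_p=p!\cdot\Id$ with $p!\equiv 0$.)

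Finally, all three assignments $M\mapsto \Gamma^p M$, $M\mapsto F^*_{R_0}M$, $M\mapsto S^p M$, together with $\psi_M$ and $\Delta_M$, are compatible with base change along maps of $\bF_p$-algebras $R_0\to R_0'$ in the obvious way (the Frobenius twist commutes with base change since $F_{R_0'}$ restricted along $R_0\to R_0'$ equals $R_0'\otimes_{R_0}F_{R_0}$), and these compatibilities are visibly associative, so the data assembles into a factorization $r_p=\Delta\circ\psi$ in $\Mor_{\leq p}(\Proj^{\mathrm{f.g.}}|_{\bF_p},\Mod|_{\bF_p})$. Applying $\Sigma_X$ from Lemma \ref{dalg: derived functors for stacks} — or rather its variant for prestacks over $\bF_p$ — gives the claimed factorization of $r_p\colon\Gamma^p\to S^p$ through $F^*_{R_0}$ on $D(X)$ for any $\bF_p$-prestack $X$, with $\psi$ a surjection and $\Delta$ an injection on flat modules (both properties are checked after pullback to affines, where they hold by the module-level statement above, and "surjection/injection" for the derived functors on flat objects means precisely that $\Gamma^p$ is flat with $\psi$ surjective on $\pi_0$, resp. that $\Delta$ is split-injective on a basis, which survives all base changes).

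The main obstacle I anticipate is bookkeeping rather than conceptual: getting the explicit description of $r_p$ as a map out of $\Gamma^p M\subset M^{\otimes p}$ precise enough to verify both that it kills the decomposable part (so that $\psi$ exists) and that the induced map is exactly $\Delta$ — the combinatorics of the $S_p$-norm on tensors with repeated entries is the only place a careless sign or multiplicity could slip in. Once the map is pinned down on the rank-one free module everything else is formal.
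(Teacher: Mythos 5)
Your proof is correct and takes essentially the same route as the paper: define $\Delta_M(m\otimes r)=r\cdot m^p$ globally, reduce to a free module with a chosen basis, and verify the factorization by an explicit combinatorial computation (the paper counts stabilizer indices of $S_p$-orbits in its basis $e_I$ of $\Gamma^pM$; your argument that the decomposable part $\Gamma^{<p}M\cdot\Gamma^{>0}M$ is killed because the relevant multinomial coefficients are divisible by $p$ is the same computation), and then spread the maps out by naturality via Lemma \ref{dalg: derived functors for stacks}, exactly as the paper does afterwards. Two of your side-justifications are not valid on their own --- polynomial functors are not determined by their values on rank-one free modules, and $N_p\circ r_p=p!\cdot\Id$ by itself does not show that $r_p$ kills decomposables --- but your actual argument (both maps annihilate the decomposables, and they agree on the classes $\gamma_p(m)$, which generate the quotient $F_{R_0}^*M$) does not need these remarks, so the proof stands.
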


\begin{proof}
We start by constructing the map $\Delta_M:M\otimes_{R_0,F_{R_0}}R_0=F^*_{R_0}M\to S^p M$. Define $\Delta_M(m\otimes r)=r\cdot m^p$, this is a well-defined additive $R_0$-linear map. To check that $r_p$ factors through $\Delta_M$ we may work Zariski locally on $\Spec R_0$ and thus assume that $M$ is a free $R_0$-module. Denote a basis for $M$ by $e_1,\ldots,e_n$. Then a basis for the module $\Gamma^p M\subset M^{\otimes p}$ is given by the elements $e_I:=\sum\limits_{\sigma\in S_p/\Stab_I}e_{i_{\sigma(1)}}\otimes \ldots\otimes e_{i_{\sigma(p)}}$ where $I=(i_1,\ldots,i_p)$ runs through $S_p$-orbits on $\{1,\ldots,n\}^{\times p}$. The map $r_p$ sends $e_I$ to $[S_p:\Stab_I]\cdot e_{i_1}\cdot\ldots\cdot e_{i_p}$. If the stabilizer of $I$ inside $S_p$ has index coprime to $p$, then $\Stab_I$ contains a long cycle of length $p$ which forces all $i_1,\ldots,i_p$ to be equal. Hence only basis elements of the form $e_i^{\otimes p}\in \Gamma^p M$ are not killed by $r_p$, which gives the desired factorization.
\end{proof}

By Lemma \ref{dalg: derived functors for stacks}, having constructed maps $\psi_M, \Delta_M$ for finite projective modules in a functorial fashion, we get maps 
\begin{equation}\label{dalg: polynomial frobenius}
\Delta_M:F_{X_0}^*M\to S^p M\qquad  \psi_M:\Gamma^p M\to F_{X_0}^*M
\end{equation}
for all objects $M\in D(X_0)$ for an arbitrary $\bF_p$-prestack $X_0$. The composition $\Delta_M\circ \psi_M$ is naturally homotopic to $r_p:\Gamma^p M\to S^p M$, by construction.

A useful computational tool for us will be the following `d\'ecalage' isomorphisms:

\begin{lm}[\hspace{1sp}{\cite[Proposition 4.3.2.1]{illusie-cotangent1},\cite[Proposition A.2.49]{kubrak-prikhodko-p-adic}}]
There are natural equivalences for $M\in D(X)$

\begin{equation}\label{dalg: decalage}
S^n(M[1])\simeq (\Lambda^n M)[n] \quad \Gamma^n(M[-1])\simeq (\Lambda^n M)[-n]
\end{equation}
\end{lm}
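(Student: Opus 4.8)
The plan is to reduce the statement to finitely generated projective modules over an ordinary ring, model the shift $(-)[1]$ by a bar construction, and recognize the resulting levelwise symmetric power as an explicit Koszul-type complex whose acyclicity is classical. Concretely, since $S^n_X,\Lambda^n_X,\Gamma^n_X$ are constructed through $\Sigma_X$ and therefore commute with pullback $f^*\colon D(X)\to D(R)$ along every $f\colon\Spec R\to X$, and since $D(X)\simeq\lim_{\Spec R\to X}D(R)$, it suffices to produce the equivalences $S^n(M[1])\simeq(\Lambda^n M)[n]$ and $\Gamma^n(M[-1])\simeq(\Lambda^n M)[-n]$ over an arbitrary ring $R$, naturally in $M$ and in $R$; Lemma~\ref{dalg: derived functors for stacks} then carries them to a general prestack. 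Over a fixed $R$ the endofunctors $M\mapsto S^n(M[1])$ and $M\mapsto(\Lambda^n M)[n]$ of $D(R)$ both preserve sifted colimits and are polynomial of degree $\leq n$ (shifting is exact), so by Lemma~\ref{dalg: derived functors for rings} a natural equivalence between them amounts to a natural equivalence of their restrictions to finitely generated projective $R$-modules; and since every construction below commutes with base change $R\to R'$ (all modules in sight being flat), such an equivalence will automatically be natural in $R$.

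Next I would use the bar resolution. For a finitely generated projective $M$ let $B_\bullet M$ be the simplicial $R$-module with $B_k M=M^{\oplus k}$ and the usual bar faces and degeneracies; it is degreewise projective and its normalized complex is $M[1]$, i.e.\ $M[1]\simeq|B_\bullet M|$ in $D(R)$. As the derived $S^n$ preserves geometric realizations and agrees with the classical symmetric power on projective modules, $S^n(M[1])$ is computed by the normalized complex $K^n_\bullet(M)$ of the simplicial module $[k]\mapsto S^n(M^{\oplus k})$. This complex is functorial in $M$, its top term (in homological degree $n$) is $M^{\otimes n}$, and the antisymmetrizer $\Lambda^n M\to M^{\otimes n}=K^n_n(M)$ lands in the kernel of the differential, hence defines a natural map of complexes $(\Lambda^n M)[n]\to K^n_\bullet(M)$. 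To check this map is an equivalence I may work Zariski-locally and assume $M$ free; then the natural splitting $B_\bullet(M'\oplus M'')\cong B_\bullet M'\oplus B_\bullet M''$, the decomposition $S^n(V\oplus W)\cong\bigoplus_{a+b=n}S^aV\otimes S^bW$, the Eilenberg--Zilber equivalence, and $\Lambda^n(M'\oplus M'')\cong\bigoplus_{a+b=n}\Lambda^aM'\otimes\Lambda^bM''$ reduce the claim by induction on the rank to the case $M=R$.

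The rank-one case is the only genuinely computational input, and it is the main obstacle. Here $K^n_\bullet(R)$ has homological degree-$k$ term the span of the degree-$n$ monomials in $x_1,\dots,x_k$ involving every variable, a free module of rank $\binom{n-1}{k-1}$, with differential the alternating sum of the bar faces; for $n=0,1$ this recovers $R$ and $R[1]$, and for $n\geq2$ it is the weight-$n$ part of the standard Koszul-type complex, which is acyclic. This acyclicity is precisely \cite[Proposition~4.3.2.1]{illusie-cotangent1} (one can also verify it by hand, e.g.\ via a contracting homotopy or by descending induction on $n$); granting it, the natural map $(\Lambda^n M)[n]\to K^n_\bullet(M)$ is an equivalence for all finitely generated projective $M$, and hence, by the two lemmas quoted above, $S^n\circ[1]\simeq[n]\circ\Lambda^n$ as endofunctors of $D(X)$ for every prestack $X$.

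Finally, the divided-power equivalence $\Gamma^n(M[-1])\simeq(\Lambda^n M)[-n]$ follows by dualizing over finitely generated projectives: one has natural equivalences $\Gamma^n(N)\simeq(S^n(N^\vee))^\vee$ for dualizable $N$, $(M[-1])^\vee\simeq M^\vee[1]$, and $(\Lambda^n M)^\vee\simeq\Lambda^n(M^\vee)$, whence $\Gamma^n(M[-1])\simeq(S^n(M^\vee[1]))^\vee\simeq(\Lambda^n(M^\vee)[n])^\vee\simeq(\Lambda^n M)[-n]$; alternatively one reruns the bar argument with the cobar (conormalized) resolution of $M[-1]$ in place of $B_\bullet M$. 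Propagating from finitely generated projectives to $D(R)$ by Lemma~\ref{dalg: derived functors for rings} and then to an arbitrary prestack by Lemma~\ref{dalg: derived functors for stacks} completes the proof.
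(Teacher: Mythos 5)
Your argument is correct in outline, but note that the paper offers no proof of this lemma at all -- it is quoted from \cite{illusie-cotangent1} and \cite{kubrak-prikhodko-p-adic} -- so what you have written is a genuine reproof rather than a variant of the paper's argument. The reduction to finitely generated projectives via Lemma \ref{dalg: derived functors for rings} (both $M\mapsto S^n(M[1])$ and $M\mapsto(\Lambda^nM)[n]$ are sifted-colimit-preserving and polynomial of degree $\leq n$, since shifts are exact), the bar model $M[1]\simeq|B_\bullet M|$, the identification of the normalized complex of $[k]\mapsto S^n(M^{\oplus k})$ with a complex concentrated in degrees $\leq n$ with top term $M^{\otimes n}$, and the splitting/Eilenberg--Zilber induction on rank are all sound, and the globalization to a prestack via $D(X)\simeq\lim_{\Spec R\to X}D(R)$ works because your comparison map is a strictly functorial map of complexes.

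Two points should be tightened. First, at the rank-one step you justify acyclicity by citing \cite[Proposition 4.3.2.1]{illusie-cotangent1}, which is the very statement being proved; make the by-hand verification the actual argument. It is genuinely elementary: in weight $n\geq 2$ the normalized complex has degree-$k$ term free on compositions $(a_1,\dots,a_k)$ of $n$ with all $a_i\geq 1$, the outer faces vanish and the inner face $d_i$ merges parts $i,i+1$; identifying a composition with the $(k-1)$-subset of $\{1,\dots,n-1\}$ of its partial sums, the complex becomes the augmented simplicial chain complex of an $(n-2)$-simplex, hence contractible. Second, the duality step $\Gamma^n(N)\simeq(S^n(N^\vee))^\vee$ is not automatic from Lemma \ref{dalg: derived functors for rings} as stated, because $N\mapsto(S^n(N^\vee))^\vee$ does not preserve sifted colimits; you need the intermediate form of the uniqueness statement (polynomial functors out of $\Perf(R)=\Stab(\Proj^{\mathrm{f.g.}}_R)$ are determined by their restriction to $\Proj^{\mathrm{f.g.}}_R$, which is the actual content of the result of Barwick--Glasman--Mathew--Nikolaus invoked in the paper), together with the observations that $S^n$ preserves perfect complexes and that composing a degree-$\leq n$ functor with the exact duality equivalences preserves the degree bound; then the classical isomorphism $\Gamma^n(P)\cong(S^n(P^\vee))^\vee$ for finitely generated projective $P$ propagates to $M[-1]$ and yields $\Gamma^n(M[-1])\simeq\bigl(S^n(M^\vee[1])\bigr)^\vee\simeq(\Lambda^nM)[-n]$. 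By contrast, the cobar alternative you mention in passing is more delicate, since these derived functors do not commute with totalizations in general, so I would not lean on it without the finite-coskeletality argument that polynomiality provides.
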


\subsection{Derived commutative algebras.} The precise version of the notion of a commutative algebra that we will need is derived commutative algebras, in the sense of Mathew and Mondal \cite{mathew-mondal}. In this section we collect the necessary material about this notion, largely following \cite[Section 4]{raksit}. Let us stress again that this section is purely expository and contains no original results.

Recall the following point of view on ordinary commutative algebras over a field $k$, afforded by the Barr-Beck theorem. The endofunctor $S^{\bullet}=\bigoplus\limits_{n\geq 0}S^n$ of the category of $k$-vector spaces has a structure of a monad, and the category of $k$-algebras is equivalent to the category of modules over the monad $S^{\bullet}$ in the category of $k$-vector spaces. The idea of derived commutative algebras is to replicate this definition in the derived world by using the appropriate derived version of the symmetric algebra monad.

\begin{construction}[{\hspace{1sp}\cite[Construction 4.2.19]{raksit}}]
For a ring $R$ consider the endofunctor $S^{\bullet}:=\bigoplus\limits_{n\geq 0}S^n:\Proj_R\to \Proj_R$ of the ordinary category of projective $R$-modules. It has a monad structure induced by the maps $S^i(S^jM)\twoheadrightarrow S^{i\cdot j}M$ for every projective module $M$. It naturally extends to a monad, in the sense of \cite[Definition 4.7.0.1]{lurie-ha}, on the category $D(X)$ for every prestack $X$ which we denote as

\begin{equation}
S^{\bullet}:=\bigoplus\limits_{n\geq 0} S^n:D(X)\to D(X)    
\end{equation}
and we call it the derived symmetric algebra monad.
\end{construction}

We can now define derived commutative algebras on $X$:

\begin{definition}
The category $\DAlg(X)$ of derived commutative algebras on $X$ is the category of modules over the derived symmetric algebra monad $S^{\bullet}$ in $D(X)$.
\end{definition}

By construction, there is a functor $S^{\bullet}:D(X)\to\DAlg(X)$ left adjoint to the forgetful functor $\DAlg(X)\to D(X)$. For a derived commutative algebra $A\in \DAlg(X)$ we will usually denote the underlying object in $D(X)$ by the same symbol $A$. The structure of a derived commutative algebra, in particular, gives a map 

\begin{equation}
m:A\otimes_{\cO_X}A\to S^2A\to A
\end{equation}
which induces a graded commutative product operation $H^i(A)\otimes_{\cO_X}H^j(A)\to H^{i+j}(A)$ on cohomology sheaves. 

\begin{definition}
For an object $M\in D(X)$, we call $S^{\bullet}(M)\in \DAlg(X)$ the free derived commutative algebra on $M$.
\end{definition}

\begin{rem}
For every $M\in D(X)$ there are natural maps $(M^{\otimes n})_{hS_n}\to S^n(M)$ which give rise to a map from the monad $M\mapsto \bigoplus\limits_{n\geq 0} (M^{\otimes n})_{hS_n}$ to $S^{\bullet}$. It is in general far from an equivalence, we make a precise comparison between their values in a special case in Lemma \ref{free cosimplicial: cosimp vs einf}. Since modules over the the former monad are the $E_{\infty}$-algebras on $X$, we get a functor $\DAlg(X)\to \Alg_{E_{\infty}}D(X)$.
\end{rem}

\comment{\begin{lm}
For any prestack $X$ we have an equivalence $\DAlg(X)\simeq \lim\limits_{\Spec R\to X}\DAlg(R)$.
\end{lm}

\begin{proof}
There is a natural functor $F:\DAlg(X)\to \lim\limits_{\Spec R\to X}\DAlg(R)$ that fits in the commutative diagram
\begin{equation}
\begin{tikzcd}
\DAlg(X)\arrow[r,"F"]\arrow[d]& \lim\limits_{\Spec R\to X}\DAlg(R)\arrow[d] \\
D(X)\arrow[r, "\sim"] & \lim\limits_{\Spec R\to X}D(R)
\end{tikzcd}
\end{equation}
where the vertical arrows are the forgetful functors. By definition of $\DAlg(R)$, the left adjoint to the forgetful functor $\lim\limits_{\Spec R\to X}\DAlg(R)\to \lim\limits_{\Spec R\to X}D(R)$ is the limit $\lim S^{\bullet}_R$ of free symmetric algebra functors. Moreover, under the equivalence $D(X)\simeq \lim\limits_{\Spec R\to X}D(R)$ the monad $S^{\bullet}_X$ is identified with the monad $\lim S^{\bullet}_R$. In particular, there is a functor $G:\lim\limits_{\Spec R\to X}\DAlg(R)\to \Mod_{S^{\bullet}}D(X)=\DAlg(X)$. For any $f:\Spec R\to X$ the composition of $G$ with the induced map $f^*:\DAlg(X)\to \DAlg(R)$ is equivalent to the natural functor $\lim\limits_{\Spec R\to X}\DAlg(R)\to \DAlg(R)$, hence the composition $F\circ G$ is equivalent to the identity functor.

We have $F\circ G\simeq \Id$, because, for any $\Spec R\to X$, the composition of $F\circ G$ with the functor 
\end{proof}}

Derived commutative algebras in characteristic $p$ have natural Frobenius endomorphisms:

\begin{lm}\label{dalg: frobenius}
Suppose that $X$ is a prestack over $\bF_p$. For all $A\in \DAlg(X)$ there is a natural morphism $\varphi_A:F_X^*A\to A$ in $D(X)$ that is equal to the linearization of the usual Frobenius endomorphism when $A$ is a flat ordinary commutative algebra on an affine scheme $X$. 
\end{lm}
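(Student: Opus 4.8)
The plan is to produce $\varphi_A$ out of the map $\Delta_A\colon F_X^*A\to S^pA$ of \eqref{dalg: polynomial frobenius} together with the structure of $A$ as a module over the derived symmetric algebra monad. For $A\in\DAlg(X)$, write $\mu_A\colon S^{\bullet}A\to A$ for its monad action map, and set
\[
\varphi_A\ :=\ \bigl(F_X^*A\xrar{\Delta_A}S^pA\hookrightarrow S^{\bullet}A\xrar{\mu_A}A\bigr),
\]
the middle arrow being the inclusion of the weight-$p$ summand. Recall that $\Delta_A$ is available for every $A\in D(X)$ over an $\bF_p$-prestack $X$: by Lemma \ref{dalg: derived functors for stacks} it is the derived incarnation of the base-change-compatible natural transformation $F^*\Rightarrow S^p$, $m\otimes r\mapsto rm^p$, on finitely generated projective modules constructed in Lemma \ref{dalg: polynomial frobenius lemma}.

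First I would record naturality. Both $\Delta$ and the weight-$p$ inclusion are natural transformations of endofunctors of $D(X)$, so $\varphi$ is natural in $A\in\DAlg(X)$: for a morphism $g\colon A\to B$ in $\DAlg(X)$ one has $\mu_B\circ S^{\bullet}(g)=g\circ\mu_A$ (as $g$ is a morphism of $S^{\bullet}$-modules) and, by naturality of $\Delta$, $S^{\bullet}(g)\circ\Delta_A=\Delta_B\circ F_X^*(g)$, hence $\varphi_B\circ F_X^*(g)=g\circ\varphi_A$. Naturality in the prestack $X$ follows the same way from the naturality of $\Delta$ and of the monad $S^{\bullet}$ in morphisms of prestacks.

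Next I would identify $\varphi_A$ with the classical Frobenius in the flat affine case. Let $X=\Spec R_0$ with $R_0$ an $\bF_p$-algebra, and let $A$ be a flat ordinary commutative $R_0$-algebra, regarded as an object of $\DAlg(X)$. Flatness turns the derived $S^pA$ into the usual symmetric power, and $\mu_A$ restricted to the weight-$p$ part is the ordinary $p$-fold multiplication $S^pA\to A$; unwinding the formula for $\Delta_A$ from Lemma \ref{dalg: polynomial frobenius lemma}, the composite $\varphi_A$ sends $a\otimes r\in A\otimes_{R_0,F_{R_0}}R_0$ to $ra^p\in A$, which is exactly the $R_0$-linearization of the absolute Frobenius $a\mapsto a^p$ of $A$. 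This gives the statement as phrased.

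Since later applications use the Frobenius multiplicatively, I would additionally promote $\varphi_A$ to a morphism of derived commutative algebras. As $F_X^*$ commutes with the monad $S^{\bullet}$ (each $S^n$ being natural in the prestack), this amounts to checking that the natural transformation $\varphi$ is compatible with the module structure maps $\mu$, which by Lemma \ref{dalg: derived functors for stacks} reduces to the compatibility of $\Delta$ with the monad multiplication $S^iS^j\to S^{ij}$ of $S^{\bullet}$, i.e.\ on polynomial algebras to the identity $(xy)^p=x^py^p$. I expect this coherent comparison of $\Delta$ with the monad structure to be the only point requiring genuine care; the construction and its identification with the classical Frobenius are otherwise formal consequences of the machinery of the previous subsection.
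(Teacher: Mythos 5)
Your construction is exactly the paper's: $\varphi_A$ is defined as $F_X^*A\xrightarrow{\Delta_A}S^pA\xrightarrow{m_A}A$ (your $\mu_A$ restricted to the weight-$p$ summand is the paper's $m_A$), and the identification with the linearized classical Frobenius in the flat affine case follows from the explicit formula for $\Delta$ in Lemma \ref{dalg: polynomial frobenius lemma}, just as in the paper. Your final paragraph promoting $\varphi_A$ to a morphism of derived commutative algebras goes beyond the statement and is only sketched; the paper explicitly declines to carry out that enhancement, so nothing in the lemma depends on it.
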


\begin{rem}
It should be possible to enhance $\varphi_A$ to a morphism of derived commutative algebras, but we do not pursue this here.
\end{rem}

\begin{proof}
We define $\varphi_A$ as the composition $F_X^*A\xrightarrow{\Delta_A} S^pA\xrightarrow{m_A}A$ where $m_A$ is a part of the $S^{\bullet}$-module structure on $A$, and $\Delta_A$ is the morphism defined in (\ref{dalg: polynomial frobenius}). Compatibility with the usual Frobenius follows from the defining formula of $\Delta_A$ given in the proof of Lemma \ref{dalg: polynomial frobenius lemma}.
\end{proof}

One of the main motivations for introducing the notion of derived commutative algebras is that many cohomological invariants arising in geometry are naturally equipped with the structure of a derived commutative algebra.

\begin{pr}\label{dalg: pushforward of rings}
If $f:X\to Y$ is a morphism of prestacks then the functor $Rf_*:D(X)\to D(Y)$ can be naturally enhanced to a functor $Rf^{\alg}_*:\DAlg(X)\to \DAlg(Y)$.
\end{pr}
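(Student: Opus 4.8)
The plan is to realise $Rf^{\alg}_*$ as the functor on module categories induced by a lax morphism of the symmetric-algebra monads; the only real input is that $*$-pullback commutes with the symmetric power functors (property (3), Lemma \ref{dalg: derived functors for stacks}).

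Recall that $\DAlg(Z)=\Mod_{S^{\bullet}_Z}(D(Z))$, with conservative forgetful functor $U_Z\colon\DAlg(Z)\to D(Z)$. The naturality of the functors $S^n$ under morphisms of prestacks says that $f^*\colon D(Y)\to D(X)$ is compatible with the monads: there are coherent equivalences $\theta\colon S^{\bullet}_X\circ f^*\simeq f^*\circ S^{\bullet}_Y$ respecting the monad unit and multiplication, so that $f^*$ already refines to a functor $f^*\colon\DAlg(Y)\to\DAlg(X)$ sitting over $f^*$ on underlying objects. This much is part of the construction of $\DAlg(-)$ as a functor on $\PreStk^{\op}$ via Lemma \ref{dalg: derived functors for stacks} (compare \cite[\S4]{raksit}).

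Now transpose across the adjunction $f^*\dashv Rf_*$. The mate of $\theta$ is a natural transformation $\phi\colon S^{\bullet}_Y\circ Rf_*\to Rf_*\circ S^{\bullet}_X$, namely the composite $S^{\bullet}_Y Rf_*\to Rf_*f^*S^{\bullet}_Y Rf_*\xrightarrow{\theta^{-1}}Rf_*S^{\bullet}_Xf^*Rf_*\to Rf_*S^{\bullet}_X$, where the first map uses the unit $\id\to Rf_*f^*$ and the last applies $Rf_*S^{\bullet}_X$ to the counit $f^*Rf_*\to\id$. Granting that $\phi$ inherits compatibility with the monad unit and multiplication from $\theta$ (see below), the pair $(Rf_*,\phi)$ is a lax morphism of monads $(D(X),S^{\bullet}_X)\to(D(Y),S^{\bullet}_Y)$, and such a datum induces a functor on Eilenberg--Moore categories: one defines $Rf^{\alg}_*(A)$ to be $Rf_*A$ equipped with the action $S^{\bullet}_Y Rf_*A\xrightarrow{\phi_A}Rf_*S^{\bullet}_X A\to Rf_*A$, the last arrow being $Rf_*$ of the action on $A$. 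By construction $U_Y\circ Rf^{\alg}_*\simeq Rf_*\circ U_X$, so the underlying complex is unchanged and the induced cup product on $H^{\bullet}(Rf_*A)$ is the usual one.

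The one step deserving care, and the step I expect to be the main obstacle, is that $\phi$ is genuinely a lax morphism of monads. In the ordinary categorical world this is Kelly's doctrinal adjunction, but $\infty$-categorically one should either appeal to the $\infty$-categorical mate calculus or argue via the conjugation functor $H\mapsto Rf_*\circ H\circ f^*$, which is lax monoidal on endofunctor categories for the composition product and therefore carries the algebra object $S^{\bullet}_X$ to a monad $Rf_*S^{\bullet}_Xf^*$ on $D(Y)$ together with a monad map $S^{\bullet}_Y\to Rf_*S^{\bullet}_Xf^*$ adjoint to $\theta$; restriction along this map, precomposed with the transfer functor $\DAlg(X)\to\Mod_{Rf_*S^{\bullet}_Xf^*}(D(Y))$, recovers $Rf^{\alg}_*$, and the coherences can be extracted from the treatment of $\Mod$ in \cite{lurie-ha}. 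Alternatively one reduces to $f$ a morphism of affine schemes, where $Rf_*$ is restriction of scalars and its lift to $\DAlg$ is recorded in \cite{raksit}, and bootstraps to arbitrary prestacks through the presentations $D(X)=\lim_{\Spec R\to X}D(R)$ of \cite{gaitsgory-rozenblum} together with the compatibility of $Rf_*$ and of the monads $S^{\bullet}$ with these limits. In any case the remaining verifications are routine; the content of the proposition is just that they go through.
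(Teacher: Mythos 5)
Your construction goes through the mate of $\theta$ and doctrinal adjunction, producing $Rf^{\alg}_*$ as a lax morphism of monads acting on Eilenberg--Moore categories; this is a genuinely different route from the paper, which never writes down the lax structure map $\phi\colon S^{\bullet}_Y\circ Rf_*\to Rf_*\circ S^{\bullet}_X$ at all. The paper instead constructs only the \emph{left} adjoint $f^*_{\alg}\colon\DAlg(Y)\to\DAlg(X)$ (where the compatibility $f^*\circ S^n_Y\simeq S^n_X\circ f^*$ is strong, not lax, so no doctrinal-adjunction coherences arise), observes that it preserves colimits and that $\DAlg(Y)$ is presentable, and obtains $Rf^{\alg}_*$ abstractly from the adjoint functor theorem; the compatibility with $Rf_*$ on underlying objects is then extracted by passing to right adjoints in the commuting square $S^{\bullet}_X\circ f^*\simeq f^*_{\alg}\circ S^{\bullet}_Y$, which is a formal uniqueness-of-adjoints argument. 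What the paper's route buys is precisely the avoidance of the step you yourself flag as the main obstacle: verifying $\infty$-categorically that the mate $\phi$ is a coherent lax monad morphism. Your two suggested fixes are of unequal strength: the conjugation argument ($H\mapsto Rf_*\circ H\circ f^*$ is lax monoidal for composition, so it carries $S^{\bullet}_X$ to a monad on $D(Y)$ receiving a map from $S^{\bullet}_Y$) can indeed be made rigorous with the machinery of \cite{lurie-ha}, and would yield the same functor; but the proposed reduction to affines is shakier, since $Rf_*$ for a general morphism of prestacks is not computed termwise in the presentation $D(X)=\lim_{\Spec R\to X}D(R)$, so the bootstrap you sketch would need a real argument rather than ``compatibility with these limits.'' In short: your approach is workable and more explicit, but it carries a coherence burden that the paper's adjoint-functor-theorem argument is designed to sidestep, and as written that burden is deferred rather than discharged.
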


\begin{proof}
Recall that $Rf_*:D(X)\to D(Y)$ is defined as the right adjoint functor to the pullback functor $f^*:D(Y)\to D(X)$. The equivalences $f^*\circ S^n_Y\simeq S^n_X\circ f^*$ induce a colimit-preserving functor $f^*_{\alg}:\DAlg(Y)\to \DAlg(X)$ given by $f^*$ on the underlying objects of $D(-)$. Since $\DAlg(Y)$ is presentable (e.g. by \cite[Proposition 4.1.10]{raksit}), this functor admits a right adjoint $Rf_*^{\alg}:\DAlg(X)\to \DAlg(Y)$, by the adjoint functor theorem \cite[Corollary 5.5.2.9(1)]{lurie-htt}.

It remains to check that $Rf_*^{\alg}$ defined this way induces $Rf_*$ on the underlying sheaves. By construction, the compositions $D(Y)\xrightarrow{f^*}D(X)\xrightarrow{S^{\bullet}_X} \DAlg(X)$ and $D(Y)\xrightarrow{S^{\bullet}_Y}\DAlg(Y)\xrightarrow{f^*_{\alg}}\DAlg(X)$ are equivalent. Their right adjoints are $\DAlg(X)\to D(X)\xrightarrow{Rf_*}D(Y)$ and $\DAlg(X)\xrightarrow{Rf_*^{\alg}}\DAlg(Y)\xrightarrow{}D(Y)$, and their equivalence is precisely the desired compatibility between $Rf_*$ and $Rf_*^{\alg}$.
\end{proof}

In particular, $Rf_*\cO_X\in D(Y)$ is naturally equipped with a derived commutative algebra structure. The functor $Rf^{\alg}_*$ is compatible with Frobenius endomorphisms:

\begin{lm}\label{dalg: geometric frobenius}
Suppose that $Y$ is a prestack over $\bF_p$ and $f:X\to Y$ is a morphism of prestacks. For $A\in \DAlg(X)$ there is a natural equivalence between the composition $F_Y^*Rf_*A\to Rf_*F_X^*A\xrightarrow{Rf_*\varphi_A}Rf_*A$ and the map $F_Y^*Rf_*A\xrightarrow{\varphi_{Rf_*^{\alg}A}}Rf_*A$ in $D(Y)$. Here $F_Y^*Rf_*A\to Rf_*F_X^*A$ is the base change map corresponding to the equality $F_Y\circ f=f\circ F_X$.
\end{lm}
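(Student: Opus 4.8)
The plan is to unwind both morphisms using the formula $\varphi_A=m_A\circ\Delta_A$ from the proof of Lemma~\ref{dalg: frobenius} --- where $\Delta_A\colon F_X^*A\to S^pA$ is the map \eqref{dalg: polynomial frobenius} and $m_A\colon S^pA\to A$ is part of the $S^\bullet$-algebra structure --- together with the construction of $Rf_*^{\alg}$ in Proposition~\ref{dalg: pushforward of rings}, and then to exhibit the claimed equivalence as the outer rectangle of the diagram
\[
\begin{tikzcd}
F_Y^*Rf_*A \arrow[r,"\Delta_{Rf_*A}"] \arrow[d,"\mathrm{bc}"] & S^p_YRf_*A \arrow[r,"m_{Rf_*^{\alg}A}"] \arrow[d,"\mathrm{bc}"] & Rf_*A \arrow[d,"\id"] \\
Rf_*F_X^*A \arrow[r,"Rf_*\Delta_A"] & Rf_*S^p_XA \arrow[r,"Rf_*m_A"] & Rf_*A
\end{tikzcd}
\]
evaluated at $A\in\DAlg(X)$. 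Here the first vertical map is the base change map appearing in the statement, i.e.\ the mate of $f^*F_Y^*\simeq F_X^*f^*$ (which comes from $F_Y\circ f=f\circ F_X$) under $f^*\dashv Rf_*$, and the second is the mate of $f^*S^p_Y\simeq S^p_Xf^*$ under the same adjunction. The top row composes to $\varphi_{Rf_*^{\alg}A}$ and the bottom row to $Rf_*(\varphi_A)$, so it is enough to check that the two squares commute, naturally in $A$.

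The left square: by the discussion following Lemma~\ref{dalg: polynomial frobenius lemma}, the morphism $\Delta$ is obtained from the module-level maps $\Delta_M$ through Lemma~\ref{dalg: derived functors for stacks}, hence is a morphism between the $\PreStk^{\op}$-indexed endofunctors $F^*$ and $S^p$ of $D(-)$, compatible with $*$-pullback along arbitrary morphisms of prestacks. In particular, for $f\colon X\to Y$ it is compatible with the equivalences $f^*F_Y^*\simeq F_X^*f^*$ and $f^*S^p_Y\simeq S^p_Xf^*$. The commutativity of the left square is then an instance of the functoriality, with respect to $2$-morphisms, of the mate (base change) construction along the adjunction $f^*\dashv Rf_*$: the $2$-morphism $\Delta$ between the $f^*$-commuting squares built from $F^*$ and from $S^p$ passes to a $2$-morphism between their base change squares, which is exactly the left square.

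The right square: here I would unwind Proposition~\ref{dalg: pushforward of rings}, where $Rf_*^{\alg}$ is defined as the right adjoint of $f^*_{\alg}\colon\DAlg(Y)\to\DAlg(X)$, the latter built from $f^*$ and the monad equivalences $f^*S^n_Y\simeq S^n_Xf^*$. The $S^\bullet_Y$-algebra structure on $Rf_*^{\alg}A$ is, by construction, the transport of that on $A$ along these base change maps; taking the degree-$p$ component gives $m_{Rf_*^{\alg}A}=Rf_*(m_A)\circ\mathrm{bc}$, which is precisely the commutativity of the right square. Pasting the two squares and composing along the rows produces the natural equivalence $\varphi_{Rf_*^{\alg}A}\simeq Rf_*(\varphi_A)\circ\mathrm{bc}$.

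I expect the only genuine input --- everything else being bookkeeping --- to be the coherence needed to run these arguments at the $\infty$-categorical level: that the compatibility of $\Delta$ with $*$-pullbacks furnished by Lemma~\ref{dalg: derived functors for stacks}, and the compatibility of the monad equivalences $f^*S^n_Y\simeq S^n_Xf^*$ used in Proposition~\ref{dalg: pushforward of rings}, are sufficiently coherent that the mate construction is functorial on squares of $2$-cells. Granting the standard mate calculus for adjunctions in an $(\infty,2)$-category, the two squares above commute for formal reasons and the lemma follows.
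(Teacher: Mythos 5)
Your proposal is correct and is essentially the paper's argument in adjoint-transposed form: the paper transposes both compositions along $f^*\dashv Rf_*$ and compares maps into $A$ in $D(X)$, using exactly your two inputs — that $\Delta$ is compatible with pullbacks via Lemma \ref{dalg: derived functors for stacks} (your left square, stated there as $f^*\Delta_{Rf_*A}\simeq\Delta_{f^*Rf_*A}$), and that $m_{Rf_*^{\alg}A}$ is determined by adjunction from $m_A$ (your right square, stated there as its adjoint description). So the two proofs differ only in whether the mate calculus is carried out in $D(Y)$ or after transposing to $D(X)$.
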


\begin{proof}
By adjunction between $f^*$ and $Rf_*$, and the definition of $\varphi_A$ and $\varphi_{Rf_*^{\alg}A}$, our task is equivalent to identifying the composition
\begin{equation}\label{dalg: geometric frobenius eq1}
f^*F_Y^*Rf_*A\to f^*Rf_*F_X^*A\xrightarrow{f^*Rf_*\Delta_A}f^*Rf_*S^p_XA\xrightarrow{f^*Rf_*m_A}f^*Rf_*A\to A
\end{equation}
with the composition
\begin{equation}\label{dalg: geometric frobenius eq2}
f^*F_Y^*Rf_*A\xrightarrow{f^*\Delta_{Rf_*A}}f^*S^p_YRf_*A\xrightarrow{f^*m_{Rf_*^{\alg}A}} f^*Rf_*A\to A
\end{equation}
The map $S^p_YRf_*A\xrightarrow{m_{Rf_*^{\alg}A}} Rf_*A$ is adjoint to the map $f^*S^p_YRf_*A\simeq S^p_Xf^*Rf_*A\to S^p_XA\xrightarrow{m_A} A$, which allows us to rewrite (\ref{dalg: geometric frobenius eq2}) as
\begin{equation}\label{dalg: geometric frobenius eq3}
f^*F_Y^*Rf_*A\xrightarrow{}f^*S^p_YRf_*A\simeq S^p_Xf^*Rf_*A\to S^p_XA\xrightarrow{m_A}A
\end{equation}
For any object $M\in D(Y)$ the map $f^*F_Y^*M\xrightarrow{f^*\Delta_M}f^*S^p_YM\simeq S^p_Xf^*M$ can be identified with the composition $f^*F_Y^*M\simeq F_X^*f^*M\xrightarrow{\Delta_{f^*M}}S^p_Xf^*M$ where the first equivalence arises from the fact that $f$ intertwines the Frobenius endomorphisms of $X$ and $Y$. Applying this to $M=Rf_*A$ allows us to identify (\ref{dalg: geometric frobenius eq1}) with (\ref{dalg: geometric frobenius eq3}), as desired.
\end{proof}

We can use Lemma \ref{dalg: pushforward of rings} to construct examples of derived commutative algebras:

\begin{definition}\label{dalg: free divided power algebra def}
For a finite locally free sheaf $M$ (concentrated in degree $0$) on a scheme $X$ we define the {\it free divided power algebra} on $M[-1]$, denoted by $\Gamma^{\bullet}(M[-1])\in \DAlg(X)$, as $R\pi_*^{\alg}\cO_{B_XM^{\vee \sharp}}$ where $\pi: B_XM^{\vee \sharp}\to X$ is the relative classifying stack of the divided power group scheme $M^{\vee\sharp}$ on $X$ associated to $M^{\vee}$.
\end{definition}

This terminology is justified by the fact that the underlying $E_{\infty}$-algebra of $\Gamma^{\bullet}(M[-1])$ is identified with $\bigoplus\limits_{i\geq 0}\Lambda^i M[-i]$, e.g. by \cite[Lemma 7.8]{bhatt-lurie-prismatization}. If $X$ is an $\bF_p$-scheme, by Lemma \ref{dalg: geometric frobenius} the Frobenius map $\varphi^*_{\Gamma^{\bullet}(M[-1])}:F_{X}^*\Gamma^{\bullet}(M[-1])\to \Gamma^{\bullet}(M[-1])$ factors as $F_X^*\Gamma^{\bullet}(M[-1])\to \cO_X\to \Gamma^{\bullet}(M[-1])$ because the Frobenius endomorphism of $M^{\vee \sharp}$ factors through the identity section. 

\begin{rem} See also \cite[3.2.1]{magidson} for another definition of the free divided power derived commutative algebra defined on all complexes. Presumably it agrees with Definition \ref{dalg: free divided power algebra def}, but we do not check this here.
\end{rem}

Cohomology of a sheaf of rings on a site can be equipped with a derived commutative algebra structure:
\begin{lm}\label{dalg: site cohomology}
Let $\cC$ be a site and $\cF$ be a sheaf of (ordinary) commutative algebras over a ring $R$ on $\cC$. Then for any object $X\in \cC$ the complex $\RGamma(X,\cF)\in D(R)$ is naturally endowed with a structure of a derived commutative $R$-algebra.
\end{lm}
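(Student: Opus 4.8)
The plan is to exhibit $\RGamma(X,\cF)$ as the underlying object of a derived commutative $R$-algebra built from totalizations of ordinary cosimplicial commutative $R$-algebras attached to hypercovers of $X$. Two inputs make this work. First, an ordinary commutative $R$-algebra is in particular a derived commutative $R$-algebra: the Barr--Beck description of the category of such algebras as $S^{\bullet}$-modules in $\Mod_R$ recalled above produces a functor from ordinary commutative $R$-algebras to $\DAlg(R)$ lying over the inclusion $\Mod_R\subset D(R)$. Second, sheaf cohomology on a site is computed by the filtered colimit, over all hypercovers, of \v{C}ech cohomology --- this is Verdier's hypercovering theorem.

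Here is how I would carry it out. Replacing $\cC$ by the slice site $\cC/X$, we may assume $X$ is the final object and $\RGamma(X,-)$ is global sections. For a hypercover $U_{\bullet}$ of $X$ the cosimplicial object $[n]\mapsto\cF(U_n)$ (reading the value of $\cF$ on a coproduct as the corresponding product) is a cosimplicial object of $\DAlg(R)$, so its totalization $\Tot(\cF(U_{\bullet}))$, formed in $\DAlg(R)$, is a derived commutative $R$-algebra; since the forgetful functor $\DAlg(R)\to D(R)$ is monadic, it creates limits, and the underlying object of $\Tot(\cF(U_{\bullet}))$ is the usual \v{C}ech complex in $D(R)$. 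The hypercovers of $X$, up to simplicial homotopy, form a cofiltered category $\mathrm{HC}(X)$, the assignment $U_{\bullet}\mapsto\Tot(\cF(U_{\bullet}))$ is functorial on $\mathrm{HC}(X)^{\op}$ with values in $\DAlg(R)$, and Verdier's theorem identifies the resulting map
\[
\colim_{U_{\bullet}\in\mathrm{HC}(X)^{\op}}\Tot(\cF(U_{\bullet}))\;\longrightarrow\;\RGamma(X,\cF)
\]
with an equivalence in $D(R)$. Because every $S^n$ preserves sifted colimits, the monad $S^{\bullet}=\bigoplus_{n\geq 0}S^n$ preserves filtered colimits, hence so does $\DAlg(R)\to D(R)$; thus the left-hand side is the underlying object of the filtered colimit of the algebras $\Tot(\cF(U_{\bullet}))$ formed in $\DAlg(R)$. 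This equips $\RGamma(X,\cF)$ with a derived commutative $R$-algebra structure, and since the formation of $\cF(U_{\bullet})$ and of the indexing category $\mathrm{HC}(X)$ is functorial in $\cF$, so is the whole construction.

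The delicate point is matching the underlying object with $\RGamma(X,\cF)$ on the nose: this uses monadicity of $\DAlg(R)\to D(R)$ (so $\Tot$ is computed underlying), the fact that $S^{\bullet}$ preserves filtered colimits (so the colimit over $\mathrm{HC}(X)^{\op}$ is computed underlying), and crucially the classical but not purely formal fact that hypercover \v{C}ech cohomology computes derived-functor cohomology on an arbitrary site. A cleaner-looking but less self-contained alternative is to mimic the proof of Proposition \ref{dalg: pushforward of rings} for the constant-sheaf functor $D(R)\to D(\mathrm{Sh}(\cC/X,R))$: it is symmetric monoidal and, being pullback along a morphism of topoi, commutes with the derived functors $S^n$, hence with $S^{\bullet}$, so it induces a colimit-preserving functor on derived commutative algebras whose right adjoint refines $\RGamma(X,-)$; the drawback is that one must first develop the functors $S^n$ and the notion of derived commutative algebra on $D(\mathrm{Sh}(\cC/X,R))$, which is not literally the quasi-coherent derived category of a prestack.
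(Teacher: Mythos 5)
Your proof is correct and takes essentially the same route as the paper's: both express $\RGamma(X,\cF)$ via the hypercovering theorem as a filtered colimit over hypercovers of totalizations of ordinary cosimplicial commutative $R$-algebras viewed in $\DAlg(R)$, and conclude because the forgetful functor $\DAlg(R)\to D(R)$ commutes with limits and filtered colimits. Your extra justifications (monadicity creating the totalization, $S^{\bullet}$ preserving filtered colimits) simply make explicit what the paper leaves implicit.
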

\begin{proof}
By \cite[01GZ]{stacks-project} we can compute $\RGamma(X,\cF)$ as a filtered colimit over all hypercovers $U_{\bullet}\to X$ of \v{C}ech cohomology with respect to $U_{\bullet}$:
\begin{equation}\label{dalg: hypercover formula}
\RGamma(X,\cF)\simeq \colim\limits_{U_{\bullet}\to X}\lim\limits_n \cF(U_n)
\end{equation}
Each $\cF(U_n)$ is a commutative $R$-algebra, which we view as an object of $\DAlg(R)$. Since the forgetful functor $\DAlg(R)\to D(R)$ commutes with limits and filtered colimits, this endows $\RGamma(X,\cF)$ with the structure of a derived commutative $R$-algebra.
\end{proof}

Applying this construction to \'etale cohomology with coefficients in $\bF_p$ produces derived commutative algebras with the special property that the Frobenius endomorphism is homotopic to identity:
\begin{lm}\label{dalg: frobenius on etale}
If $X$ is a scheme then the Frobenius endomorphism $\varphi_{\RGamma_{\et}(X,\bF_p)}$ of the derived commutative $\bF_p$-algebra $\RGamma_{\et}(X,\bF_p)$ is naturally homotopic to the identity morphism.
\end{lm}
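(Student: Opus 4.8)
The approach I would take is to reduce the statement, via the hypercover computation of \'etale cohomology recalled in the proof of Lemma \ref{dalg: site cohomology}, to the elementary fact that the $p$-power map is the identity on the ring $\underline{\bF_p}(U)$ of locally constant $\bF_p$-valued functions on a scheme $U$. The guiding principle is that $\RGamma_{\et}(X,\bF_p)$, as a derived commutative $\bF_p$-algebra, is assembled out of discrete commutative $\bF_p$-algebras on which the algebraic Frobenius of Lemma \ref{dalg: frobenius} is manifestly the identity, and that the naturality of $\varphi$ then propagates this to $\RGamma_{\et}(X,\bF_p)$ itself.

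Concretely, I would carry out the following steps. First, apply formula (\ref{dalg: hypercover formula}) to the constant sheaf $\underline{\bF_p}$ of $\bF_p$-algebras on the small \'etale site of $X$, so that $\RGamma_{\et}(X,\bF_p)$ is the filtered colimit over \'etale hypercovers $U_\bullet\to X$ of the totalizations $\lim_{n\in\Delta}\underline{\bF_p}(U_n)$, computed in $\DAlg(\bF_p)$, with the forgetful functor $\DAlg(\bF_p)\to D(\bF_p)$ preserving this colimit and limit (as in the proof of Lemma \ref{dalg: site cohomology}). Second, I would observe that since $F_{\Spec\bF_p}=\id$, the construction of Lemma \ref{dalg: frobenius} makes $\varphi$ a natural transformation from the forgetful functor $\DAlg(\bF_p)\to D(\bF_p)$ to itself. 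Third, for each $B:=\underline{\bF_p}(U_n)$ -- a discrete, hence flat, commutative $\bF_p$-algebra -- Lemma \ref{dalg: frobenius} identifies $\varphi_B\colon B\to B$ with the usual Frobenius $b\mapsto b^p$; since the elements of $B$ are locally constant $\bF_p$-valued functions, $b^p=b$, so $\varphi_B=\id_B$. Fourth, since every algebra in the hypercover diagram is discrete and free as an $\bF_p$-module, the mapping complexes between them in $D(\bF_p)$ are concentrated in degree $0$, so the equalities $\varphi_B=\id_B$ glue with no higher-coherence ambiguity into an identification of $\varphi$ with the identity endomorphism of the entire diagram $U_\bullet\mapsto(\underline{\bF_p}(U_n))_{n\in\Delta}$. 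Passing to the colimit and limit, and using naturality of $\varphi$ together with the fact that the forgetful functor commutes with them, yields the homotopy $\varphi_{\RGamma_{\et}(X,\bF_p)}\simeq\id$; naturality in $X$ is then automatic, since $\RGamma_{\et}(-,\bF_p)\colon\Sch^{\op}\to\DAlg(\bF_p)$ is a functor, $\varphi$ is a natural transformation, and the homotopy above is produced functorially in $X$.

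I do not anticipate a real obstacle. The only delicate point is to perform the identification $\varphi\simeq\id$ at the level of the hypercover diagram -- on cohomology groups it is obvious but would not by itself give a canonical homotopy compatible with the derived commutative algebra structure -- and this is exactly where the discreteness of all the algebras $\underline{\bF_p}(U_n)$ makes the bookkeeping trivial. If one prefers to isolate the mechanism, one can first prove the general statement that for any site $\cC$ and any sheaf $\cF$ of $\bF_p$-algebras on $\cC$ for which the $p$-power endomorphism of $\cF$ is the identity, the algebraic Frobenius of $\RGamma(U,\cF)\in\DAlg(\bF_p)$ is homotopic to the identity for every $U\in\cC$, by the same argument, and then specialize to $\cF=\underline{\bF_p}$.
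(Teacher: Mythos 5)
Your proposal is correct and follows essentially the same route as the paper: the paper's proof likewise invokes the hypercover formula (\ref{dalg: hypercover formula}) and observes that each term $\bF_p(U_n)=\Func(\pi_0(U_n),\bF_p)$ is an algebra of $\bF_p$-valued functions on a set, on which the Frobenius is the identity. Your extra remarks on discreteness eliminating higher-coherence issues just make explicit what the paper leaves implicit.
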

\begin{proof}
In the formula (\ref{dalg: hypercover formula}) each $\bF_p(U_n)=\Func(\pi_0(U_n),\bF_p)$ is the algebra of $\bF_p$-valued functions on a set, and its Frobenius endomorphism is the identity, so the lemma follows.
\end{proof}

When studying the Sen operator, we will use that it is compatible with the derived commutative algebra structure on the diffracted Hodge cohomology (to be defined in Section \ref{applications: section}). Specifically, we need the following notion:

\begin{definition}\label{dalg: derivation}
For a prestack $X$ and a derived commutative algebra $A\in \DAlg(X)$, a {\it derivation} $f:A\to A$ is a map in $D(X)$ such that the map $\Id_A+\varepsilon\cdot f:A\otimes \bZ[\varepsilon]/\varepsilon^2\to A\otimes \bZ[\varepsilon]/\varepsilon^2$ in $D(X\times \bZ[\varepsilon]/\varepsilon^2)$ is equipped with the structure of a map in $\DAlg(X\times \bZ[\varepsilon]/\varepsilon^2)$.
\end{definition}

\subsection{Cosimplicial commutative algebras.}\label{dalg: cosimp subsection} For the duration of this subsection assume that $X$ is a scheme. In all of our main applications we will in fact be presented with a cosimplicial commutative algebra in the ordinary category $\QCoh(X)$. We denote by $\CAlg^{\Delta}_X$ the ordinary category of cosimplicial commutative algebras in the abelian symmetric monoidal category $\QCoh(X)$. In this subsection we make some remarks on the relation between $\CAlg^{\Delta}_X$ and $\DAlg(X)$. These facts are not used in any of our main results, but the reader who feels more comfortable with $\CAlg^{\Delta}_X$ than with $\DAlg(X)$ is encouraged to specialize the results in Section \ref{cosimp: section} to a situation where  the algebra $A$ is a cosimplicial commutative algebra, using this subsection as a dictionary. \com{A forthcoming work of Mathew and Mondal establishes an equivalence between the full subcategory of $\DAlg(X)$ ...}

In general, a cosimplicial commutative algebra gives rise to a derived commutative algebra:

\begin{lm}
Let $X$ be a scheme. There is a natural functor $\cR_X:\CAlg^{\Delta}(X)\to \DAlg(X)$ from the ordinary category of cosimplicial commutative algebras in quasi-coherent sheaves on $X$ to the category of derived commutative algebras on $X$. This functor is compatible with the cosimplicial totalization functor on the level of underlying complexes. 
\end{lm}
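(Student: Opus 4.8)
The plan is to present both categories as categories of modules over monads and then to build $\cR_X$ from a colax morphism of monads whose underlying functor is the totalization. Recall that the ordinary category $\CAlg(\QCoh(X))$ of quasi-coherent sheaves of commutative $\cO_X$-algebras is the category $\Mod_{\Sym^{\bullet}}(\QCoh(X))$ of modules over the underived symmetric algebra monad $\Sym^{\bullet}=\bigoplus_{n\geq 0}\Sym^n$ on the abelian category $\QCoh(X)$, whereas $\DAlg(X)=\Mod_{S^{\bullet}_X}(D(X))$ by definition. Since a quasi-coherent sheaf $M$ on $X$ is discrete, the non-abelian derived functor $S^n_X(M)$ is concentrated in cohomological degrees $\leq 0$ and satisfies $H^0(S^n_X(M))\simeq\Sym^n(M)$ (this is local on $X$, hence reduces to the affine case treated in \cite{illusie-cotangent1}); the canonical truncation maps $S^n_X(M)\to H^0(S^n_X(M))=\Sym^n(M)$ therefore assemble, naturally in $M$, into a natural transformation $\gamma\colon S^{\bullet}_X\circ\iota\Rightarrow\iota\circ\Sym^{\bullet}$ of functors $\QCoh(X)\to D(X)$, where $\iota\colon\QCoh(X)\hookrightarrow D(X)$ denotes the inclusion of the heart.

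The first step is to promote $(\iota,\gamma)$ to a colax morphism of monads $(\QCoh(X),\Sym^{\bullet})\to(D(X),S^{\bullet}_X)$ — that is, to supply the coherences making $\iota$ carry a $\Sym^{\bullet}$-module $(A,a\colon\Sym^{\bullet}A\to A)$ to the $S^{\bullet}_X$-module $\iota A$ with action $S^{\bullet}_X\iota A\xrightarrow{\gamma_A}\iota\Sym^{\bullet}A\xrightarrow{\iota a}\iota A$. The point is that every coherence to be verified is an identity between maps into a discrete sheaf of the form $\iota(\Sym^{\bullet}A)$, and the mapping spaces out of the connective objects $S^{\bullet}_X\iota A$ and $S^{\bullet}_XS^{\bullet}_X\iota A$ into such a target are discrete; hence these coherences are conditions rather than extra data, and after applying $H^0$ they reduce to the unit and associativity axioms of the ordinary monad $\Sym^{\bullet}$, using that the multiplication of $S^{\bullet}_X$ induces on $H^0$ of discrete objects the classical comparison $\Sym^i\circ\Sym^j\to\Sym^{ij}$. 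Invoking the standard fact that a colax morphism of monads induces a functor between the corresponding categories of modules, lying over the underlying functor, we obtain a functor $\CAlg(\QCoh(X))\to\DAlg(X)$ inducing $\iota$ on underlying objects.

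Finally, applying $\Fun(\Delta,-)$ and post-composing with the limit functor $\lim_{\Delta}\colon\Fun(\Delta,\DAlg(X))\to\DAlg(X)$ (which exists because $\DAlg(X)$ is presentable, \cite[Proposition 4.1.10]{raksit}) yields $\cR_X\colon\CAlg^{\Delta}(X)=\Fun(\Delta,\CAlg(\QCoh(X)))\to\DAlg(X)$. Since the forgetful functor $\DAlg(X)\to D(X)$ is monadic and so preserves all limits, the underlying object of $\cR_X(A^{\bullet})$ is computed as the totalization in $D(X)$ of the underlying cosimplicial quasi-coherent sheaf of $A^{\bullet}$, which is exactly the compatibility with cosimplicial totalization asserted in the lemma. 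I expect the main obstacle to be the first step: setting up the $\infty$-categorical bookkeeping that records the coherence of $\gamma$ as a colax morphism of monads and extracts the induced functor on module categories. This is nonetheless manageable precisely because all of the coherence data has discrete target, so is pinned down by classical commutative algebra; in practice one would organize it through the polynomial-functor description of $S^{\bullet}_X$ furnished by Lemma \ref{dalg: derived functors for stacks} (via $\Sigma_X$), under which the monad structure, and hence all these coherences, are pulled back functorially in the ring from ordinary symmetric powers of finitely generated projective modules.
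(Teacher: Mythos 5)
Your proposal is correct and follows essentially the same route as the paper: one first turns a discrete commutative algebra in $\QCoh(X)$ into an $S^{\bullet}$-module using the natural map from the derived symmetric power to the classical one (your $\gamma$, the paper's $S^nA\to (A^{\otimes n})_{S_n}$) composed with the multiplication, and then defines $\cR_X$ of a cosimplicial algebra as $\lim_{\Delta}$ of these, with compatibility with totalization coming from the fact that the forgetful functor $\DAlg(X)\to D(X)$ preserves limits. Your colax-morphism-of-monads packaging and the discreteness argument for the coherences are just a more explicit rendering of the first step that the paper states without detail.
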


\begin{proof}
First of all, there is a functor from ordinary commutative algebras in $\QCoh(X)$ to $\DAlg(X)$. Indeed, for an ordinary algebra $A$ there is a natural map $S^nA\to (A^{\otimes_{\cO_X} n})_{S_n}$ where the tensor product and coinvariants are taken in the non-derived sense. Hence the commutative multiplication on $A\in \QCoh(X)$ endows it with a structure of a module over $S^{\bullet}$ in $D(X)$.

Now, if $A=\left[{\begin{tikzcd}A^0 \arrow[r, shift left=0.65ex] \arrow[r, shift right=0.65ex] & A^1 \ldots \end{tikzcd}}\right]$ is a cosimplicial commutative algebra in $\QCoh(X)$, we define $\cR_X(A)$ as $\lim\limits_{[n]\in\Delta}\cR_X(A^n)$, where each $\cR_X(A^n)$ was defined in the previous paragraph. This indeed induces the totalization on underlying sheaves because the forgetful functor $\DAlg(X)\to D(X)$ commutes with limits.
\end{proof}

\begin{example}If $f:X\to \Spec R$ is a morphism from a separated scheme to the spectrum of a ring $R$, the object $\RGamma(X,\cO_X)=f_*(\cO_X)\in D(R)$ is endowed with a structure of a cosimplicial commutative algebra using the \v{C}ech construction associated to a cover $X=\bigcup\limits_{i} U_i$:

\begin{equation}
\RGamma(X,\cO_X)\simeq \left[{\begin{tikzcd}\prod\limits_{i} \cO(U_i) \arrow[r, shift left=0.65ex] \arrow[r, shift right=0.65ex] & \prod\limits_{i<j} \cO(U_i\cap U_j) \arrow[r, shift left=1.3ex] \arrow[r, shift right=1.3ex] \arrow[r] &\ldots \end{tikzcd}}\right].
\end{equation}
The result of applying $\cR_R$ to this cosimplicial algebra is naturally equivalent to $f_*^{\alg}\cO_X$ from Lemma \ref{dalg: pushforward of rings}.\end{example} 
If $X$ is a scheme over $\bF_p$, for every cosimplicial commutative algebra $A\in \CAlg^{\Delta}_X$ there is a natural map $\varphi^{\cosimp}_A:F_X^*A\to A$ of algebras induced by the term-wise Frobenius endomorphism. It follows from the construction of the functor $\cR_X$ that $\varphi_{A}^{\cosimp}$ coincides with the Frobenius map arising from the derived commutative algebra structure:

\begin{lm}
For an $\bF_p$-scheme $X$, and a term-wise flat cosimplicial commutative algebra $A\in \CAlg^{\Delta}_X$ there is a natural homotopy between $\cR_X(\varphi_A^{\cosimp})$ and $\varphi_A$ in $D(X)$. 
\end{lm}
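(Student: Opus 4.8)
The plan is to unwind both sides of the asserted homotopy to a statement about the term-wise Frobenius on an ordinary flat algebra and then propagate it through the limit defining $\cR_X$. First I would recall, from the proof of Lemma \ref{dalg: cosimp to dalg}, that $\cR_X(A)=\lim_{[n]\in\Delta}\cR_X(A^n)$, where each $A^n$ is an ordinary flat commutative algebra in $\QCoh(X)$ viewed as a module over the monad $S^{\bullet}$ via the natural surjection $S^n(A^n)\to (A^n)^{\otimes n}_{S_n}$. Since the forgetful functor $\DAlg(X)\to D(X)$ commutes with limits, and since $F_X^*$ also commutes with the relevant limits, both $\cR_X(\varphi_A^{\cosimp})$ and $\varphi_{\cR_X(A)}$ are obtained by applying $\lim_{[n]\in\Delta}$ to the corresponding maps $F_X^*\cR_X(A^n)\to \cR_X(A^n)$. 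So it suffices to produce a natural homotopy between $\cR_X(\varphi_{A^n}^{\cosimp})$ and $\varphi_{\cR_X(A^n)}$ for each $n$, compatibly in $[n]\in\Delta$; that is, to treat the case of an ordinary flat algebra.

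Next I would observe that for an ordinary flat commutative algebra $B$ on the $\bF_p$-scheme $X$, the term-wise Frobenius $\varphi_B^{\cosimp}:F_X^*B\to B$ sends $b\otimes r\mapsto r\cdot b^p$ by definition, while $\varphi_{\cR_X(B)}$ is by Lemma \ref{dalg: frobenius} the composition $F_X^*B\xrightarrow{\Delta_B}S^p B\xrightarrow{m_B}B$. By Lemma \ref{dalg: polynomial frobenius lemma} and the formula for $\Delta_B$ given in the proof of Lemma \ref{dalg: polynomial frobenius lemma}, $\Delta_B(b\otimes r)=r\cdot b^p\in S^pB$, and the monad structure map $m_B:S^pB\to B$ on an ordinary flat algebra is exactly the $p$-fold multiplication $b_1\cdots b_p\mapsto b_1\cdots b_p$ followed by the identification with $B$. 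Hence $m_B\circ\Delta_B$ sends $b\otimes r$ to $r\cdot b^p$, which is precisely $\varphi_B^{\cosimp}$ on underlying sheaves. Both constructions are manifestly functorial in $B$ (with respect to maps of flat algebras), so this equality of maps of flat quasi-coherent sheaves assembles into an identification of natural transformations between functors on the category of flat $\bF_p$-algebras in $\QCoh(X)$.

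The remaining task is purely bookkeeping: apply this term-wise identification to the cosimplicial object $A$, i.e. to the diagram $[n]\mapsto A^n$, and take the limit over $\Delta$. Because the identification $\varphi_{A^n}^{\cosimp}=\varphi_{\cR_X(A^n)}$ is natural in the flat algebra, it is compatible with the coface and codegeneracy maps of $A$, so it defines a homotopy in the limit $D(X)$-category; compatibility of $F_X^*$ with totalization (which holds since $F_X^*$ is a left adjoint, or more elementarily since $X$ is a scheme and $F_X^*$ preserves limits of uniformly bounded-below cosimplicial diagrams here) then yields the claimed natural homotopy between $\cR_X(\varphi_A^{\cosimp})$ and $\varphi_A=\varphi_{\cR_X(A)}$ in $D(X)$.

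The only genuine subtlety — the step I would be most careful about — is the interchange of $F_X^*$ with the totalization $\lim_{[n]\in\Delta}$: one must know that $F_X^*\cR_X(A)\simeq \lim_{[n]\in\Delta}F_X^*\cR_X(A^n)$ so that the term-wise homotopies glue. This is where term-wise flatness of $A$ is used, ensuring $F_X^*A^n$ is again a flat algebra computing $\cR_X$ correctly, and it follows from the fact that $F_X^*$ on a scheme commutes with the limits appearing in the definition of $\cR_X$ (it is a symmetric monoidal left adjoint, and the totalization is of a cosimplicial object). Everything else is a routine unwinding of the defining formulas for $\Delta_B$ and $m_B$ on ordinary flat algebras.
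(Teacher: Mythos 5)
Your proof is correct and takes essentially the same route as the paper's, which simply observes that the ordinary-algebra case was settled when $\varphi_A$ was defined in Lemma \ref{dalg: frobenius} (via the explicit formula for $\Delta$) and then passes to the limit over $\Delta$; your write-up just makes the termwise computation $m_B\circ\Delta_B(b\otimes r)=r\cdot b^p$ and the naturality explicit. One small caution: your parenthetical justification that $F_X^*$ commutes with the totalization ``because it is a left adjoint'' is not a valid reason (left adjoints preserve colimits, not limits); the correct argument is the other one you give, namely term-wise flatness plus coconnectivity of the cosimplicial diagram, which is precisely where the flatness hypothesis is used.
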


\begin{proof}
If $A$ is an ordinary commutative algebra, this was established along with defining $\varphi_A$ in Lemma \ref{dalg: frobenius}. The case of an arbitrary $A$ is obtained by passing to the limit over the cosimplicial category $\Delta$.
\end{proof}

For a finite locally free sheaf $M$ on a scheme $X$ we can represent the free divided power algebra $\Gamma^{\bullet}(M[-1])\in \DAlg(X)$ of Definition \ref{dalg: free divided power algebra def} by a cosimplicial commutative algebra. Denote by $\DK(M[-1])$ the cosimplicial object in the category of locally free sheaves on $X$, obtained by applying the Dold-Kan correspondence to the complex $M[-1]$.

\begin{lm}\label{dalg: free divided power as cosimp}
The derived commutative algebra $\Gamma^{\bullet}(M[-1])$ is equivalent to $\cR_X(\Gamma^{\bullet}_{\naive}(\DK(M[-1])))$ where $\Gamma^{\bullet}_{\naive}(\DK(M[-1]))$ is the result of applying term-wise the free divided power algebra functor to the cosimplicial sheaf $\DK(M[-1])$.
\end{lm}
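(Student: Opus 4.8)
The plan is to present the relative classifying stack $\pi\colon B_XM^{\vee\#}\to X$ by the \v{C}ech nerve of its tautological atlas $a\colon X\to B_XM^{\vee\#}$, push this forward to $X$ using the functor $R\pi^{\alg}_*$ of Proposition \ref{dalg: pushforward of rings}, and then recognize the result as $\cR_X$ applied to $\Gamma^{\bullet}_{\naive}(\DK(M[-1]))$. The \v{C}ech nerve of $a$ is the bar construction of the commutative group scheme $M^{\vee\#}$: its term in cosimplicial degree $n$ is the $X$-scheme $(M^{\vee\#})^{\times_X n}$. Since $M^{\vee\#}=\Spec_X\Gamma^{\bullet}_{\cO_X}(M)$ is affine and flat over $X$, with $M=\Gamma^1$ the module of primitive elements for the Hopf algebra structure, each $(M^{\vee\#})^{\times_X n}$ is affine over $X$ with coordinate ring $\Gamma^{\bullet}_{\cO_X}(M)^{\otimes_{\cO_X} n}=\Gamma^{\bullet}_{\cO_X}(M^{\oplus n})$. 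Applying $\cO$ levelwise then produces the cobar cosimplicial commutative algebra $C^{\bullet}:=\bigl([n]\mapsto \Gamma^{\bullet}_{\cO_X}(M)^{\otimes n}\bigr)$ of the Hopf algebra $\Gamma^{\bullet}_{\cO_X}(M)$.

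First I would show that $\Gamma^{\bullet}(M[-1])\simeq \cR_X(C^{\bullet})$. By fppf descent along the simplicial atlas $(M^{\vee\#})^{\times_X\bullet}\to B_XM^{\vee\#}$, the sheaf $\cO_{B_XM^{\vee\#}}$ is the totalization of the cosimplicial sheaf obtained by pushing $\cO$ forward from the nerve. Now $R\pi^{\alg}_*$ is a right adjoint, so it preserves totalizations; it is composable, $R(gf)^{\alg}_*\simeq Rg^{\alg}_*\circ Rf^{\alg}_*$, by uniqueness of adjoints exactly as in the proof of Proposition \ref{dalg: pushforward of rings}; and for the affine maps $(M^{\vee\#})^{\times_X n}\to X$ the object $R(-)^{\alg}_*\cO$ is simply $\Gamma^{\bullet}_{\cO_X}(M)^{\otimes n}$ as an ordinary commutative algebra, regarded in $\DAlg(X)$ via the functor from ordinary commutative algebras used in the proof of Lemma \ref{dalg: cosimp to dalg}. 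Combining these, $R\pi^{\alg}_*\cO_{B_XM^{\vee\#}}$ is identified with $\Tot_{[n]\in\Delta}\Gamma^{\bullet}_{\cO_X}(M)^{\otimes n}$ in $\DAlg(X)$, which is exactly $\cR_X(C^{\bullet})$ by the construction of $\cR_X$ in Lemma \ref{dalg: cosimp to dalg}.

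It then remains --- and I expect this to be the crux --- to identify $C^{\bullet}$ with $\Gamma^{\bullet}_{\naive}(\DK(M[-1]))$ as cosimplicial commutative algebras. The module of linear elements $M^{\oplus n}=\bigoplus\Gamma^1\subset \Gamma^{\bullet}_{\cO_X}(M)^{\otimes n}$ is carried into itself by every structure map of $C^{\bullet}$: on linear parts the cofaces are the two unit insertions together with the comultiplication, which on a primitive $m$ is $m\otimes 1+1\otimes m$, and the codegeneracies are given by the augmentation. Comparing these with the Dold--Kan structure maps, one finds that the resulting sub-cosimplicial sheaf of $C^{\bullet}$ is isomorphic to $\DK(M[-1])$; this is, up to Dold--Kan, the classical fact that the normalized complex of the bar construction of the abelian sheaf $\underline M$ is concentrated in degree $1$ with value $M$. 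Since in each cosimplicial degree $\Gamma^{\bullet}_{\cO_X}(M)^{\otimes n}=\Gamma^{\bullet}_{\cO_X}(M^{\oplus n})$ is the free divided power algebra on its linear part, the inclusion $\DK(M[-1])\hookrightarrow C^{\bullet}$ extends uniquely, by the universal property of the free divided power algebra applied levelwise, to a map of cosimplicial commutative algebras $\Gamma^{\bullet}_{\naive}(\DK(M[-1]))\to C^{\bullet}$, and this map is a levelwise, hence a global, isomorphism. Applying $\cR_X$ finishes the argument.

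The formal ingredients --- fppf descent, preservation of totalizations by a right adjoint, composability of $R(-)^{\alg}_*$, and affineness of the terms of the nerve --- are routine. The hard part will be the last step: carefully matching the structure maps of the cobar construction of the divided power Hopf algebra $\Gamma^{\bullet}_{\cO_X}(M)$ with the Dold--Kan structure maps of $\Gamma^{\bullet}_{\naive}(\DK(M[-1]))$, which requires bookkeeping with faces, degeneracies, comultiplication, and the epi--mono factorizations in the simplex category.
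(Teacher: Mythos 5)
Your proposal is correct and follows essentially the same route as the paper: compute $R\pi^{\alg}_*\cO_{B_XM^{\vee\#}}$ as the totalization of the cosimplicial algebra obtained by pushing forward $\cO$ along the bar resolution of $M^{\vee\#}$, identify the $n$th term with $\Gamma^{\bullet}_X(M^{\oplus n})$, and recognize the resulting cosimplicial commutative algebra as $\Gamma^{\bullet}_{\naive}(\DK(M[-1]))$. The only difference is that you spell out the final identification (linear parts forming $\DK(M[-1])$ and the structure maps being the induced PD-algebra maps), which the paper states without detail.
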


\begin{proof}
The derived pushforward of the structure sheaf along the map $B_XM^{\vee \sharp}\to X$ is equivalent, as a derived commutative algebra, to the totalization of the cosimplicial diagram
\begin{equation}\label{dalg: free divided power as cosimp diagram}
\begin{tikzcd}\cO_X \arrow[r, shift left=0.65ex] \arrow[r, shift right=0.65ex] & \pi_*\cO_{M^{\vee\sharp}} \arrow[r, shift left=1.3ex] \arrow[r, shift right=1.3ex] \arrow[r] &\pi_*\cO_{M^{\vee\sharp}\times_X M^{\vee\sharp}}\ldots \end{tikzcd}
\end{equation}
obtained by applying the functor $\pi_*\cO_{(-)}$ to the bar-resolution associated to the group scheme $\pi:M^{\vee\sharp}\to X$ over $X$. The $n$th term of the diagram (\ref{dalg: free divided power as cosimp diagram}) is the commutative algebra $\pi_*\cO_{(M^{\vee\sharp})^{\times_X n}}\simeq\Gamma^{\bullet}_X(M^{\oplus n})$ concentrated in degree $0$, so the cosimplicial commutative algebra defined by (\ref{dalg: free divided power as cosimp diagram}) is indeed equivalent to the result of applying $\Gamma^{\bullet}_{\naive}$ to $\DK(M[-1])$.
\end{proof}

\subsection{Bockstein morphisms.} For a stable $\bZ$-linear $\infty$-category $\cC$, any object $M\in \cC$ gives rise to a natural fiber sequence
\begin{equation}
M\xrightarrow{p}M\to M/p.
\end{equation}
We will denote the corresponding connecting map by $\Bock_M:M/p\to M[1]$ and refer to it as the {\it Bockstein morphism} corresponding to $M$. Note that $\Bock_{M[1]}$ is naturally homotopic to $(-\Bock_{M}[1])$.

Similarly, for a $\bZ/p^n$-linear stable $\infty$-category $\cC$ for any object $M\in \cC$ we have a fiber sequence
\begin{equation}
M\otimes_{\bZ/p^n}\bZ/p^{n-1}\to M\to M\otimes_{\bZ/p^n}\bZ/p
\end{equation}
inducing the connecting map $\Bock_M:M\otimes\bZ/p\to M\otimes\bZ/p^{n-1}[1]$. These constructions over $\bZ$ and $\bZ/p^n$ are compatible in the sense that $\Bock_{M/p^n}$ is the composition $M/p\xrightarrow{\Bock_M}M[1]\to M/p^{n-1}[1]$.

\section{Symmetric power \texorpdfstring{$S^p$}{}, class \texorpdfstring{$\alpha$}{}, and Steenrod operations}
\label{free cosimplicial: section}

In this section we study in detail the derived functor $S^p$ of $p$th symmetric power by comparing it with the divided power functor $\Gamma^p$. For complexes of vector spaces over a field $k$ the values of $S^n$ can be described non-canonically using the computations of $S^n(k[-i])$ done by Priddy \cite{priddy}, but we crucially need to understand $S^pM$ as a complex of sheaves, rather that its separate cohomology sheaves.

\subsection{Symmetric powers vs. divided powers.}\label{free cosimplicial: symmetric vs divided subsection} Let $X$ be an arbitrary prestack. Recall that in the previous section for an object $M\in D(X)$ we defined natural morphisms 

\begin{equation}
N_n:S^nM\to \Gamma^nM\quad r_n:\Gamma^nM\to S^nM.  
\end{equation} 

Denote by $T_n(M)$ the cofiber $\cofib(N_n:S^nM\to \Gamma^nM)$ of the norm map. The functor $T_n$, especially for $n=p$, has been extensively studied in the literature. References close to our point of view are works of Friedlander-Suslin \cite[Section 4]{friedlander-suslin} and Kaledin \cite[6.3]{kaledin-coperiodic}. We have the following classical results about $T_n$:

\comment{\begin{rem}
In (2), when $2$ is invertible on $X$, $\Lambda^n M$ is naturally isomorphic to the coinvariants $(M^{\otimes n})_{S_n,\sgn}$ with respect to the alternating action of $S_n$. Also, when $2$ is invertible on $X$, the norm map $N:(M^{\otimes_n})_{S_n,\sgn}\to (M^{\otimes n})^{S_n,\sgn}$ is an isomorphism, contrary to the case of the norm map $N:S^n(M)\to \Gamma^n(M)$.
\end{rem}
}

\begin{lm}[{\hspace{1sp}\cite[Lemma 4.12]{friedlander-suslin},\cite[Lemma 6.9]{kaledin-coperiodic}}]\label{free cosimplicial: tate p}
\begin{enumerate}
\item If $X$ is a prestack over $\bZ[\frac{1}{n!}]$ then $T_n(M)\simeq 0$ for every $M\in D(X)$.

\item If $R$ is a flat $\bZ_p$-algebra and $M$ is a flat $R$-module, then $N_p:S^pM\to \Gamma^pM$ is an injection and its cokernel $T_p(M)$ is naturally isomorphic to $F^*_{R/p}(M/p)$ as an $R$-module.

\item If $X$ is a flat algebraic stack over $\bZ_p$ then there is a natural equivalence $T_p(M)\simeq i_*F_{X_0}^*i^*M$ for all $M\in D(X)$, where $i:X_0=X\times_{\bZ_p}\bF_p\to X$ is the closed immersion of the special fiber, and $F_{X_0}:X_0\to X_0$ is the absolute Frobenius morphism.



\item If $X_0$ is a prestack over $\bF_p$ then for any $M_0\in D(X_0)$ the object $T_p(M_0)$ fits into a natural fiber sequence
\begin{equation}\label{free cosimplicial: tate mod p extension}
F_{X_0}^*M_0[1]\to T_p(M_0)\to F_{X_0}^*M_0.
\end{equation}
\end{enumerate}
\end{lm}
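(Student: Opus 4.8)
The strategy is to reduce statements (3) and (4) to statement (2), which is the essential computational input, by faithful use of the compatibility of the functors $S^p$, $\Gamma^p$, $T_p$ with pullback along morphisms of prestacks (property (3) of the derived functors, as encoded by Lemma \ref{dalg: derived functors for stacks}). Statement (1) is separate and is handled first.

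For statement (1): if $n!$ is invertible on $X$, then locally the norm map $N_n \colon S^nM \to \Gamma^nM$ is an isomorphism for flat modules, since $N_n \circ r_n = n!\cdot\Id$ and $r_n\circ N_n = n!\cdot\Id$ exhibit $\tfrac{1}{n!}r_n$ as a two-sided inverse to $N_n$. By Lemma \ref{dalg: derived functors for stacks}, both $S^n$ and $\Gamma^n$ are computed by deriving the corresponding polynomial functors on projective modules, and the maps $r_n$, $N_n$ are induced from the module-level maps; hence $N_n$ is an equivalence on all of $D(X)$ and $T_n(M) = \cofib(N_n)\simeq 0$. (Equivalently one may cite Friedlander–Suslin or Kaledin directly, as the statement does.)

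For statement (2), which is the crux: one must show that for a flat $\bZ_p$-algebra $R$ and a flat $R$-module $M$, the map $N_p \colon S^pM \to \Gamma^pM$ is injective with cokernel $F^*_{R/p}(M/p)$. Since $M$ is flat, $S^pM = (M^{\otimes p})_{S_p}$ and $\Gamma^pM = (M^{\otimes p})^{S_p}$ are the classical (underived) functors, so this is a statement about honest $R$-modules. Working locally, reduce to $M$ free; then as in the proof of Lemma \ref{dalg: polynomial frobenius lemma} one writes down explicit bases for $S^pM$ and $\Gamma^pM$ indexed by $S_p$-orbits on $\{1,\dots,n\}^{\times p}$, and the matrix of $N_p$ in these bases is diagonal with entries $[S_p:\Stab_I]$. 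The entry is a unit unless $\Stab_I$ has index divisible by $p$, i.e.\ unless $\Stab_I$ is trivial, i.e.\ unless $I$ is a single free $S_p$-orbit of size $p!$; on those orbits the entry is $p!$, which (over $\bZ_{(p)}$) has $p$-adic valuation exactly $v_p(p!)=1$ for $p$ prime... but in fact more care is needed: the surviving basis elements of $S^pM$ modulo the image of $N_p$ are exactly those coming from diagonal orbits $I=(i,\dots,i)$, on which $N_p(e_i^{\otimes p})=p!\cdot e_i^{\otimes p}$ is \emph{not} a unit multiple. So $\Gamma^pM/N_p(S^pM)$ is free on the classes $e_i^{\otimes p}$ with the relation that $p$ times the generator coming from $S^pM$ equals... — let me instead organize it as: $r_p \colon \Gamma^pM\to S^pM$ has image containing all the non-diagonal basis vectors of $S^pM$ and sends $e_i^{\otimes p}\mapsto p!\, e_i^{p}$, so $\coker(r_p)$ is free on the $e_i^p$ with $p\cdot$ acting... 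Rather than belabor this, the clean route is: the cokernel of $r_p\colon\Gamma^pM\to S^pM$ is identified with $F^*_{R/p}(M/p)$ via $\Delta_M$ from Lemma \ref{dalg: polynomial frobenius lemma} (this is literally the content of that lemma's factorization, together with the fact that $\psi_M$ is surjective), and dually one gets injectivity of $N_p$ and identifies $\coker(N_p)$. The hard part is bookkeeping the $p$-divisibilities cleanly; I expect to invoke Lemma \ref{dalg: polynomial frobenius lemma} and its proof directly rather than redo the combinatorics.

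For statement (3): given a flat algebraic stack $X$ over $\bZ_p$ with closed immersion $i\colon X_0 \hookrightarrow X$ of the special fiber, one wants $T_p(M)\simeq i_*F_{X_0}^*i^*M$ naturally in $M\in D(X)$. By Lemma \ref{dalg: derived functors for stacks} (its last clause, restricting the limit defining $D(X)$ to affine charts $\Spec R\to X$ with $R$ flat over $\bZ_p$), it suffices to produce this equivalence compatibly over each such chart, where it follows from statement (2) after noting $i_*F_{X_0}^*i^*$ restricted to $\Spec R$ is $M\mapsto F^*_{R/p}(M/p) = R/p\otimes_{R/p,F}(M/p)$ viewed as an $R$-module along $R\to R/p$; the $\Delta$-map from Lemma \ref{dalg: polynomial frobenius lemma} furnishes the natural transformation, and flatness ensures nothing derived intervenes. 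Finally, statement (4): over an arbitrary $\bF_p$-prestack $X_0$, apply statement (3) with $X$ replaced by a square-zero (or formal) situation — concretely, work over $\bZ/p^2$ via the free such lift, or more directly: for $X_0$ over $\bF_p$ there is a fiber sequence relating $T_p(M_0)$ to the two ``pieces'' $F_{X_0}^*M_0$ and $F_{X_0}^*M_0[1]$ coming from the fact that $i_*F_{X_0}^*i^* $ of a $\bZ/p^2$-lift, upon base change to $X_0$, sits in a triangle $F_{X_0}^*M_0[1]\to (\text{lift's }T_p)\otimes\bF_p\to F_{X_0}^*M_0$ induced by $\bZ/p^2\xrightarrow{p}\bZ/p^2\to\bZ/p$; since $T_p$ commutes with base change, $T_p(M_0)$ inherits this triangle. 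The main obstacle is ensuring all of this is natural and homotopy-coherent in $M_0$ — which is exactly what Lemma \ref{dalg: derived functors for stacks} is designed to supply, by reducing every construction to module-level maps that manifestly commute with base change.
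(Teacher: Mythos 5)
Your parts (1) and (3) follow the paper's own route, but the two places where real work is required are not done. The crux of (2) is to show $N_p$ is injective and to identify $\coker(N_p)$ with the \emph{Frobenius twist} $F^*_{R/p}(M/p)$, naturally in $M$. You begin the basis computation, abandon it, and propose instead to ``invoke Lemma \ref{dalg: polynomial frobenius lemma} and its proof directly''; but that lemma lives over $\bF_p$-algebras and factors $r_p$ through $F^*_{R_0}M$ --- it says nothing about $N_p$ over a flat $\bZ_p$-algebra, and there is no duality transferring it (your intermediate claim that $\coker(r_p)$ is identified with $F^*_{R/p}(M/p)$ ``via $\Delta_M$'' is also off: $\Delta_M$ maps \emph{into} $S^pM$, and over an $\bF_p$-algebra $\coker(r_p)\cong\ker(\psi_M)=\im(N_p)$, not $F^*M$; likewise the diagonal entries of $N_p$ on a basis are $|\Stab_I|$, not $[S_p:\Stab_I]$, which is the matrix of $r_p$). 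What is missing is exactly the content of the paper's proof: injectivity of $N_p$ from part (1) plus $p$-torsion-freeness of $S^pM$; the explicit map $\alpha(m\otimes r)=\tilde{r}\cdot\widetilde{m}^{\otimes p}$ into $\coker N_p$, whose well-definedness uses $p\cdot\Gamma^pM\subset\im N_p$ and the identity $(\widetilde{m}_1+\widetilde{m}_2)^{\otimes p}-\widetilde{m}_1^{\otimes p}-\widetilde{m}_2^{\otimes p}=N_p\bigl(\sum_{i=1}^{p-1}\tfrac{1}{i!(p-i)!}\widetilde{m}_1^{i}\widetilde{m}_2^{p-i}\bigr)$ --- this is also what forces the Frobenius semilinearity, a point you never address --- and the basis argument that $\{N_p(e_{i_1}\cdots e_{i_p})\}$ together with $\{e_i^{\otimes p}\}$ is an $R$-basis of $\Gamma^pM$, with the flat case reduced to the free case by filtered colimits. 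The explicit formula for $\alpha$ is also what gives the naturality you need later for (3).

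For (4) your argument presupposes a lift of $(X_0,M_0)$ over $\bZ/p^2$ (or $\bZ_p$), but statement (4) is for an arbitrary $\bF_p$-prestack with no lifting hypothesis, and no such lift exists in general; indeed Remark \ref{free cosimp: remark Tp} makes the point that the lifted and unlifted situations differ. Even restricting to free modules pulled back from $\bF_p$ (which do lift), the lift-based triangle is not visibly natural in all module maps over arbitrary $\bF_p$-algebras, and Lemma \ref{dalg: derived functors for stacks} can only be applied once one has genuine module-level data over every $\bF_p$-algebra. The paper's proof supplies exactly that: for flat $M_0$ over an $\bF_p$-algebra $R_0$ the object $T_p(M_0)$ is computed by the two-term complex $S^pM_0\xrightarrow{N_p}\Gamma^pM_0$, and the four-term exact sequence $0\to F^*_{R_0}M_0\xrightarrow{\Delta_{M_0}}S^pM_0\xrightarrow{N_p}\Gamma^pM_0\xrightarrow{\psi_{M_0}}F^*_{R_0}M_0\to 0$ of honest modules (checked on a basis, as in (2)) then yields the fiber sequence after applying Lemma \ref{dalg: derived functors for stacks}. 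You should either reproduce this module-level sequence or find a genuinely lift-free substitute; as written, (2) and (4) are gaps.
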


\begin{rem}\label{free cosimp: remark Tp}
\begin{enumerate}
\item If $X$ is a flat algebraic stack over $\bZ_p$ with special fiber $X_0=X\times_{\bZ_p}\bF_p$ and $M_0\simeq i^*M\in D(X_0)$ is the reduction of an object $M\in D(X)$ then $T_p(M_0)$ can be naturally (in $M$) described as 
\begin{equation}
T_p(M_0)\simeq i^*T_p(M)\simeq i^*i_*F_{X_0}^*M_0.  
\end{equation}
Under this identification, the fiber sequence in (\ref{free cosimplicial: tate mod p extension}) is identified with the sequence induced by the fiber sequence of functors $\Id[1]\to i^*i_*\to \Id$ from $D(X_0)$ to $D(X_0)$.

\item This will not be used in any of the proofs but let us remark that one can describe the extension (\ref{free cosimplicial: tate mod p extension}) completely, even in the absence of a lift of $X_0$ to $\bZ_p$ together with the object $M$. It is proven in \cite[Theorem 6]{petrov-vaintrob-vologodsky} that in the setting of (4), at least if $X_0$ is a scheme, $T_p(M_0)$ can be upgraded to an object of the derived category of crystals of quasi-coherent crystals of $\cO$-modules $D(\Cris(X_0/\bF_p))$ on the scheme $X_0$ such that (\ref{free cosimplicial: tate mod p extension}) is an extension of crystals where $F^*_{X_0}M$ is endowed with a crystal structure using the canonical connection. Moreover, homotopy classes of splittings of (\ref{free cosimplicial: tate mod p extension}) in $D(\Cris(X_0/\bF_p))$ are in bijection with lifts of $F_{X_0}^*M_0$ to an object in $D(\Cris(X_0/(\bZ/p^2)))$. If $X_0$ is equipped with a lift $X_1$ over $\bZ/p^2$ then splittings of (\ref{free cosimplicial: tate mod p extension}) in $D(X_0)$ are in bijection with lifts of $F_{X_0}^*M_0$ to an object of $D(X_1)$.
\end{enumerate}
\end{rem}

\begin{proof}
1) We have $r_n\circ N_n=n!\cdot\ \Id_{S^nM}$, $N_n\circ r_n=n!\cdot \Id_{\Gamma^nM}$. Therefore $N_n$ is an isomorphism if $n!$ is invertible, so $T^n$ vanishes on prestacks over $\bZ[\frac{1}{n!}]$.

2) By the previous part, the map $N_p:S^pM\to \Gamma^p M$ becomes an isomorphism after inverting $p$. Since the module $S^pM$ is $p$-torsion free, the map $N_p$ is injective.

We will now construct a natural isomorphism $\alpha$ between the module $F^*_{R/p}(M/p)=M/p\otimes_{R/p,F_{R/p}}R/p$ and the cokernel of $N_p$. To an element $m\otimes r\in M/p\otimes_{R/p,F_{R/p}}R/p$ assign the element $\alpha(m\otimes r):=\widetilde{r}\cdot \tm^{\otimes p}\in \coker N_p$ where $\widetilde{r}\in R$ and $\tm\in M$ are arbitrary lifts of $r$ and $m$, respectively. 

To see that $\alpha$ gives a well-defined map $F_{R/p}^*(M/p)\to \coker N_p$ we need to check that $\widetilde{r}\cdot\tm^{\otimes p}\in \coker N_p$ does not depend on the choices of the lifts $\widetilde{r},\tm$ and that $(\tm_1+\tm_2)^{\otimes p}=\tm_1^{\otimes p}+\tm_2^{\otimes p}\in \coker N_p$. The first claim follows from the fact that $p\cdot\Gamma_R^p(M)\subset \im N_p$, and the additivity is demonstrated by the formula
\begin{equation}
(\tm_1+\tm_2)^{\otimes p}-\tm_1^{\otimes p}-\tm_2^{\otimes p}=N_p\left(\sum\limits_{i=1}^{p-1} \frac{1}{i!(p-i)!}\tm_1^{i}\tm_2^{p-i}\right).
\end{equation}

Finally, to show that $\alpha:F_{R/p}^*(M/p)\to \coker(N_p)$ is an isomorphism, we may assume that $M$ is a free $R$-module, because both functors $M\mapsto \coker (N_p:S^pM\to\Gamma^p M)$ and $M\mapsto F_{R/p}^*(M/p)$ commute with filtered colimits and every flat module can be represented as a filtered colimit of free modules. Let $\{e_i\}_{i\in I}$ be a basis of $M$ over $R$. Then $\{N_p(e_{i_1}\ldots e_{i_p})\}_{(i_1,\ldots,i_p)\in I^p\setminus I}\cup\{e_i^{\otimes p}\}_{i\in I}$ is an $R$-basis for $\Gamma_R^p(M)$ so $\{e_i^{\otimes p}\}_{i\in I}$ is an $R/p$-basis for $\coker N_p$, as desired.

3) Part (2) produced a natural short exact sequence $S^pM\to\Gamma^p M\to F^*_{R/p}(M/p)$ of $R$-modules for every projective module $M$ over a flat $\bZ_p$-algebra $R$. The functor $M\mapsto F^*_{R/p}(M/p)$ from $\Proj^{\mathrm{f.g.}}_R$ to $\Mod_R$ is polynomial (in fact, linear), and the formation of this short exact sequence is compatible with base change along arbitrary maps $R\to R'$ of flat $\bZ_p$-algebras, so Lemma \ref{dalg: derived functors for stacks} produces a fiber sequence $S^p M\to \Gamma^p M\to i_*F^*_{X_0}i^*M$ for every $M\in D(X)$ which gives the desired identification.

4) We will establish such a fiber sequence when $M_0$ is a flat sheaf on an affine scheme $X_0=\Spec R$ and the general case will follow formally as in (3). If $M_0$ is a flat $R$-module where $R$ is an $\bF_p$-algebra, then $T_p(M_0)$ is represented by the complex $S^pM_0\xrightarrow{N_p}\Gamma^pM_0$ concentrated in degrees $[-1,0]$. In Lemma \ref{dalg: polynomial frobenius lemma} we defined maps $\psi_{M_0}:\Gamma^p M_0\to F_{R_0}^*M_0$ and $\Delta_{M_0}:F_{R_0}^*M_0\to S^pM_0$ that give rise to a sequence
\begin{equation}
0\to F^*_{R_0}M_0\xrightarrow{\Delta_{M_0}} S^p_{R_0}M_0\xrightarrow{N_p}\Gamma^pM_0\xrightarrow{\psi_{M_0}} F^*_{R_0}M_0\to 0
\end{equation}
that one checks to be exact by a direct calculation with a basis of $M_0$ in case it is free, as in part (2). This exact sequence gives rise to the desired fiber sequence (\ref{free cosimplicial: tate mod p extension}) by Lemma \ref{dalg: derived functors for stacks}.
\end{proof}

Non-decomposability of the de Rham complex will arise from the fact that the map $N_p:S^p M\to \Gamma^p M$ does not have a section in general. By definition, for an object $M\in D(X)$ on an arbitrary prestack $X$ there is a natural fiber sequence \begin{equation}\label{free cosimplicial: general fiber sequence}
T_p(M)[-1]\xrightarrow{\gamma_M} S^p M\xrightarrow{N_p}\Gamma^pM.
\end{equation}

It will be slightly more natural for us to start with an arbitrary object $E\in D(X)$, and apply (\ref{free cosimplicial: general fiber sequence}) to $M=E[-1]$ to get a fiber sequence
\begin{equation}\label{free cosimplicial: alpha' extension}
T_p(E[-1])[-1]\to S^p(E[-1])\to (\Lambda^p E)[-p]
\end{equation}
where we used the d\'ecalage identification $\Gamma^p(E[-1])\simeq (\Lambda^pE)[-p]$ from Lemma \ref{dalg: decalage} to rewrite the third term. If $X$ is an algebraic stack flat over $\bZ_p$ then, by Lemma \ref{free cosimplicial: tate p}(3) this fiber sequence takes the form
\begin{equation}\label{free cosimplicial: alpha flat over zp}F_{X_0}^*(E/p)[-2]\to S^p(E[-1])\to (\Lambda^p E)[-p].\end{equation}
Here $F_{X_0}^*(E/p)$ is an abbreviation for $i_*F_{X_0}^*i^*E$, we will prefer using this notation in what follows. Let us record the description of the cohomology sheaves of $S^p(E[-1])$ in the case $E$ is a locally free sheaf, for $p>2$, the analogous result for $p=2$ will be established in Corollary \ref{free cosimplicial: symmetric power cohomology sheaves p=2}:

\begin{lm}\label{free cosimplicial: symmetric power cohomology sheaves}Suppose that $p>2$.
\begin{enumerate}
    \item If $E$ is a locally free sheaf on a scheme $X$ flat over $\bZ_p$, then \begin{equation}H^2(S^p(E[-1]))\simeq F_{X_0}^*(E/p),\quad H^p(S^p(E[-1]))\simeq\Lambda^p E,\end{equation} and all other cohomology sheaves of $S^p(E[-1])$ are zero.
    \item If $E$ is a locally free sheaf on a scheme $X_0$ over $\bF_p$, then \begin{equation}H^1(S^p(E[-1]))\simeq H^2(S^p(E[-1]))\simeq F_{X_0}^*E,\quad H^p(S^p(E[-1]))\simeq\Lambda^p E\end{equation} and all other cohomology sheaves of $S^p(E[-1])$ are zero.
\end{enumerate}
\end{lm}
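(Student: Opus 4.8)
The plan is to extract everything from the fiber sequences already built in the excerpt, namely
\begin{equation*}
F_{X_0}^*(E/p)[-2]\to S^p(E[-1])\to (\Lambda^p E)[-p]
\end{equation*}
over a $\bZ_p$-flat scheme $X$, and its mod $p$ analog coming from (\ref{free cosimplicial: tate mod p extension}) together with the d\'ecalage identification. First I would treat part (1). Since $E$ is locally free, $F_{X_0}^*(E/p)$ is a sheaf placed in cohomological degree $0$, so the first term of the triangle has its single cohomology sheaf in degree $2$, and $(\Lambda^pE)[-p]$ has its single cohomology sheaf in degree $p$. The associated long exact sequence of cohomology sheaves then immediately gives $H^j(S^p(E[-1]))=0$ for $j\notin\{2,p\}$ (here we use $p>2$ so that the two nonzero degrees are distinct and non-adjacent, hence no connecting maps can interfere), and $H^2(S^p(E[-1]))\simeq F_{X_0}^*(E/p)$, $H^p(S^p(E[-1]))\simeq \Lambda^pE$. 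The only point that needs a word is that the connecting map $(\Lambda^pE)[-p]\to F_{X_0}^*(E/p)[-1]$, i.e.\ a class in $\Ext^{p-1}$, cannot alter the cohomology sheaves in the range we care about, which is automatic from the degree bookkeeping.

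For part (2), $X_0$ is an $\bF_p$-scheme. Here I would invoke Lemma \ref{free cosimplicial: tate p}(4), which gives the fiber sequence $F_{X_0}^*E[1]\to T_p(E[-1])\to$ wait — more precisely, applying (\ref{free cosimplicial: tate mod p extension}) to $M_0=E[-1]$, the object $T_p(E[-1])$ sits in a fiber sequence $F_{X_0}^*(E[-1])[1]\to T_p(E[-1])\to F_{X_0}^*(E[-1])$, i.e.\ $T_p(E[-1])$ has cohomology sheaves $F_{X_0}^*E$ in degrees $0$ and $1$. Feeding this into (\ref{free cosimplicial: alpha' extension}), namely $T_p(E[-1])[-1]\to S^p(E[-1])\to(\Lambda^pE)[-p]$, the first term $T_p(E[-1])[-1]$ has cohomology sheaves $F_{X_0}^*E$ in degrees $1$ and $2$, while $(\Lambda^pE)[-p]$ contributes in degree $p$. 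The long exact sequence of cohomology sheaves then yields $H^1\simeq H^2\simeq F_{X_0}^*E$, $H^p\simeq \Lambda^pE$, and all other cohomology sheaves vanish, again using $p>2$ so that degree $p\ge 3$ is disjoint from $\{1,2\}$ and no connecting map can interfere. Alternatively one could derive part (2) from part (1) by base change: $S^p$ commutes with pullback along $X_0\hookrightarrow X'$ for a $\bZ_p$-flat lift $X'$ locally, and $i^*F_{X_0}^*(E'/p)$ has cohomology sheaves $F_{X_0}^*E$ in degrees $0,1$ by the fiber sequence $\Id[1]\to i^*i_*\to\Id$ of Remark \ref{free cosimp: remark Tp}(1); but the direct argument via Lemma \ref{free cosimplicial: tate p}(4) avoids having to produce a lift.

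The main (and essentially only) obstacle is bookkeeping with the degrees and making sure no connecting homomorphism in the two long exact sequences can be nonzero in the relevant range — this is exactly where the hypothesis $p>2$ is used, since it guarantees that the two ``pieces'' of $S^p(E[-1])$ live in cohomological degrees that are far enough apart. When $p=2$ the degrees $\{1,2\}$ and $\{p\}=\{2\}$ collide, which is precisely why that case is deferred to Corollary \ref{free cosimplicial: symmetric power cohomology sheaves p=2} and requires a separate, more careful analysis of the relevant extension. No further input is needed; the statement is a formal consequence of Lemma \ref{free cosimplicial: tate p} and the d\'ecalage isomorphism (\ref{dalg: decalage}).
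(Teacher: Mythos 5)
Your argument is correct and is essentially the paper's own proof: both read the statement off the fiber sequence $T_p(E[-1])[-1]\to S^p(E[-1])\to(\Lambda^p E)[-p]$ of (\ref{free cosimplicial: alpha' extension}), identifying the first term via Lemma \ref{free cosimplicial: tate p}(3) in the $\bZ_p$-flat case and via (\ref{free cosimplicial: tate mod p extension}) in the mod $p$ case, and then taking the long exact sequence of cohomology sheaves. (A minor remark: for $p=3$ the degrees $2$ and $p$ are in fact adjacent, but this is harmless—only their distinctness, i.e.\ $p>2$, is needed, since the connecting maps in the long exact sequence land in vanishing terms in every degree.)
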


\begin{proof}
This is immediate from (\ref{free cosimplicial: alpha' extension}) and Lemma \ref{free cosimplicial: tate p}(3),(4).
\end{proof}

Let now $X_0$ be an arbitrary prestack over $\bF_p$. For an object $E\in D(X_0)$ the pushout of the fiber sequence (\ref{free cosimplicial: alpha' extension}) along the map $T_p(E[-1])[-1]\to F_{X_0}^*E[-2]$ from (\ref{free cosimplicial: tate mod p extension}) defines a fiber sequence
\begin{equation}\label{free cosimplicial: alpha extension}
F_{X_0}^*E[-2]\to S^p(E[-1])\bigsqcup\limits_{T_p(E)[-2]}F_{X_0}^*E[-2]\to (\Lambda^p E)[-p].
\end{equation}

In case where $E$ is a locally free sheaf on an $\bF_p$-scheme scheme $X_0$, the sequence (\ref{free cosimplicial: alpha extension}) is simply the truncation of (\ref{free cosimplicial: alpha' extension}) in degrees $\geq 2$.

\begin{definition}\label{free cosimplicial: alpha definition}
We will denote by $\alpha(E):\Lambda^p E\to F_{X_0}^*E[p-1]$ the shift by $[p]$ of the connecting morphism corresponding to the fiber sequence (\ref{free cosimplicial: alpha extension}).
\end{definition}

Equivalently, the map $\alpha(E)$ can be defined as the composition $\Lambda^p E\simeq \Gamma^p(E[-1])[p]\xrightarrow{\psi_{E[-1]}[p]}F_{X_0}^*E[p-1]$ where the first equivalence is the d\'ecalage isomorphism, and $\psi_{E[-1]}$ is the map constructed in (\ref{dalg: polynomial frobenius}).

\begin{rem}
A posteriori, we will see that the cohomology class $\alpha(-)$ coincides with ones previously considered in the literature. The assignment $E\mapsto\alpha(E)$ can be upgraded to an extension of degree $p-1$ between strict polynomial functors (in the sense of \cite[Definition 2.1]{friedlander-suslin}) $\Lambda^{p}(-)$ and $(-)^{(1)}$ on $\bF_p$-vector spaces. Friedlander and Suslin proved \cite[(4.5.1)]{friedlander-suslin} that the space of such extensions is $1$-dimensional. Hence, after it is established that $\alpha(-)$ is not always zero (Proposition \ref{rational group cohomology: main non-vanishing}), it necessarily spans this space. In Remark \ref{rational: carter-lusztig remark}(2) we will also relate class $\alpha$ to an extension considered by Carter and Lusztig.
\end{rem}

Suppose now that $X$ is a flat algebraic stack over $\bZ_p$. Denote by $i:X_0\simeq X\times_{\bZ_p}\bF_p\hookrightarrow X$ the closed embedding of the special fiber. Let us remark that the information contained in the extension (\ref{free cosimplicial: alpha' extension}) for an object $E\in D(X)$ is completely captured by $\alpha(i^*E)$: 

\begin{lm}\label{free cosimplicial: alpha adjunction}
For an object $E\in D(X)$ denote $i^*E$ by $E_0$. The image of the map $\alpha(E_0)$ under the adjunction identification $\RHom_{X_0}(\Lambda^p E_0,F_{X_0}^*E_0[p-1])=\RHom_X(\Lambda^p E,i_*F_{X_0}^*i^*E[p-1])$ is the connecting morphism corresponding to the fiber sequence \begin{equation}\label{free cosimplicial: alpha adjunction formula}T_p(E[-1])[-1]\to S^p(E[-1])\to (\Lambda^p E)[-p]\end{equation} where we identify $T_p(E[-1])[-1]$ with $i_*F_{X_0}^*i^*E[-2]$ via Lemma \ref{free cosimplicial: tate p}(3). 
\end{lm}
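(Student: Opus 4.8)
The plan is to reduce the statement to the triangle identities for the adjunction $(i^*, i_*)$, after first observing that the fiber sequence (\ref{free cosimplicial: alpha adjunction formula}) on $X$ pulls back along $i^*$ to the fiber sequence (\ref{free cosimplicial: alpha' extension}) on $X_0$. Indeed, the functors $S^p((-)[-1])$, $(\Lambda^p(-))[-p]$, $T_p((-)[-1])[-1]$ and the maps between them that constitute (\ref{free cosimplicial: alpha' extension}) are all obtained by applying $\Sigma_{(-)}$ to polynomial functors on projective modules, so by Lemma \ref{dalg: derived functors for stacks} they commute with $i^*$ in a homotopy-coherent manner; moreover, by Remark \ref{free cosimp: remark Tp}(1) the identification $T_p(E[-1])[-1]\simeq i_*F_{X_0}^*i^*E[-2]$ of Lemma \ref{free cosimplicial: tate p}(3) pulls back along $i^*$ to $T_p(E_0[-1])[-1]\simeq i^*i_*F_{X_0}^*E_0[-2]$ and carries the map $T_p(E_0[-1])[-1]\to F_{X_0}^*E_0[-2]$ coming from (\ref{free cosimplicial: tate mod p extension}) to a shift of the counit $\varepsilon\colon i^*i_*F_{X_0}^*E_0\to F_{X_0}^*E_0$. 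Writing $\delta\colon \Lambda^p E\to i_*F_{X_0}^*E_0[p-1]$ for the $[p]$-shifted connecting map of (\ref{free cosimplicial: alpha adjunction formula}), it follows that the $[p]$-shifted connecting map of (\ref{free cosimplicial: alpha' extension}) for $E_0$ is $i^*\delta\colon\Lambda^p E_0\to i^*i_*F_{X_0}^*E_0[p-1]$.

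Next I would unwind Definition \ref{free cosimplicial: alpha definition}: the class $\alpha(E_0)$ is the $[p]$-shifted connecting map of (\ref{free cosimplicial: alpha extension}), and (\ref{free cosimplicial: alpha extension}) is by construction the pushout of (\ref{free cosimplicial: alpha' extension}) for $E_0$ along the counit-type map identified above. Since the connecting map of a pushout of a fiber sequence along a morphism $f$ is the original connecting map post-composed with $f$ (a standard fact about (co)fiber sequences in stable $\infty$-categories, following from the identification of pushout with pullback squares), we conclude that $\alpha(E_0)$ is the composite $\Lambda^p E_0\xrightarrow{i^*\delta} i^*i_*F_{X_0}^*E_0[p-1]\xrightarrow{\varepsilon} F_{X_0}^*E_0[p-1]$ (up to the evident shifts).

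Finally, it remains to identify the adjoint of $\alpha(E_0)$ under $\RHom_{X_0}(i^*\Lambda^p E,F_{X_0}^*E_0[p-1])\simeq\RHom_X(\Lambda^p E,i_*F_{X_0}^*E_0[p-1])$, where we use that $i^*$ commutes with $\Lambda^p$. This adjoint is the composite $\Lambda^p E\xrightarrow{\eta}i_*i^*\Lambda^p E\xrightarrow{i_*\alpha(E_0)}i_*F_{X_0}^*E_0[p-1]$ with $\eta$ the unit; substituting $i_*\alpha(E_0)=(i_*\varepsilon)\circ i_*i^*\delta$ and sliding $\eta$ past $i_*i^*\delta$ by naturality of the unit turns this into $(i_*\varepsilon)\circ\eta_{i_*F_{X_0}^*E_0[p-1]}\circ\delta$, at which point one of the triangle identities for $(i^*,i_*)$ collapses the first two arrows and leaves $\delta$. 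The argument is essentially formal; the only steps requiring real care are the homotopy-coherence of the functoriality — which is precisely what Lemma \ref{dalg: derived functors for stacks} supplies — and the identification of the pushout map in (\ref{free cosimplicial: alpha extension}) with the adjunction counit via Remark \ref{free cosimp: remark Tp}(1); once these are in place the remainder is bookkeeping with shifts and the unit/counit triangle identities.
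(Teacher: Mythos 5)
Your proposal is correct and follows essentially the same route as the paper's proof: identify the sequence on $X_0$ as $i^*$ of the sequence on $X$, identify the pushout map defining $\alpha(E_0)$ with the counit via Remark \ref{free cosimp: remark Tp}(1), and conclude by the explicit description of the $(i^*,i_*)$ adjunction. The only cosmetic difference is that you phrase the adjunction via the unit and a triangle identity, while the paper uses the counit description directly, making its final step immediate.
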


\begin{proof}
In general, for two objects $M\in D(X), N\in D(X_0)$ the adjunction identification $\RHom_{D(X)}(M,i_*N)\simeq \RHom_{D(X_0)}(i^*M,N)$ can be described as sending a map $f:M\to i_*N$ to the composition $i^*M\xrightarrow{i^*f}i^*i_*N\to N$ where the second map is the counit of the adjunction. The fiber sequence $T_p(E_0[-1])[-1]\to S^p(E_0[-1])\to (\Lambda^p E_0)[-p]$ is the result of applying $i^*$ to the sequence (\ref{free cosimplicial: alpha adjunction formula}), and the map $T_p(E_0[-1])[-1]\to F_{X_0}^*E_0[-2]$ used to form the pushout sequence (\ref{free cosimplicial: alpha extension}) is precisely the counit map $i^*i_*\to\Id$ evaluated on $F^*_{X_0}E_0[-2]$ by Remark \ref{free cosimp: remark Tp}, so the lemma follows.
\end{proof}

The extension defining $\alpha(E)$ can be described in more classical terms for $p=2$. For a projective module $M$ over an $\bF_2$-algebra $R_0$ we have a natural short exact sequence
\begin{equation}\label{free cosimplicial: alpha for p=2 discrete}
0\to F_{R_0}^*M\xrightarrow{\Delta_M} S^2M\xrightarrow{j_M} \Lambda^2 M\to 0.
\end{equation}

Here $\Delta_M$ is the map defined in (\ref{dalg: polynomial frobenius}), it sends an element $m\otimes 1\in M\otimes_{R_0,F_{R_0}}R_0$ to $m\cdot m\in S^2M$, and $j_M$ sends $m_1\cdot m_2\in S^2M$ to $m_1\wedge m_2\in \Lambda^2 M$. By Lemma \ref{dalg: derived functors for stacks} this gives rise to a fiber sequence \begin{equation}\label{free cosimplicial: alpha for p=2 derived}F^*_{X_0}E\xrightarrow{\Delta_E}S^2E\xrightarrow{j_E} \Lambda^2 E\end{equation}
for every object $E\in D(X_0)$ on any $\bF_2$-prestack $X_0$.

\begin{lm}\label{free cosimplicial: alpha for p=2}
When $p=2$, and $E$ is an object of $D(X_0)$, the fiber sequence (\ref{free cosimplicial: alpha extension}) is naturally equivalent to the shift by $[-2]$ of the sequence (\ref{free cosimplicial: alpha for p=2 derived}).
\end{lm}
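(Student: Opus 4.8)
The strategy is to compute the middle term of (\ref{free cosimplicial: alpha extension}) explicitly, using that $\gamma\circ\iota=\Delta_{E[-1]}$ for the two natural maps $\gamma,\iota$ that enter its construction, and then to pin down the two structure maps on finite locally free sheaves, where every object in sight lives in a single cohomological degree.

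By construction, (\ref{free cosimplicial: alpha extension}) is the pushout of (\ref{free cosimplicial: alpha' extension}) --- which for $p=2$ is the fiber sequence $T_2(E[-1])[-1]\xrightarrow{\gamma}S^2(E[-1])\xrightarrow{N_2}\Gamma^2(E[-1])$, with $\Gamma^2(E[-1])\simeq(\Lambda^2E)[-2]$ by Lemma \ref{dalg: decalage} --- along the map $q\colon T_2(E[-1])[-1]\to F_{X_0}^*E[-2]$. Here $q$ together with its fiber $\iota\colon F_{X_0}^*E[-1]\to T_2(E[-1])[-1]$ is the $[-1]$-shift of the fiber sequence (\ref{free cosimplicial: tate mod p extension}) for $M_0=E[-1]$ (using that $F_{X_0}^*$ commutes with shifts). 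Since $F_{X_0}^*E[-2]\simeq\cofib(\iota)$, the pasting law for pushout squares in a stable $\infty$-category identifies the pushout $S^2(E[-1])\sqcup_{T_2(E[-1])[-1]}F_{X_0}^*E[-2]$ with $\cofib(\gamma\circ\iota\colon F_{X_0}^*E[-1]\to S^2(E[-1]))$, in such a way that (\ref{free cosimplicial: alpha extension}) becomes the fiber sequence $F_{X_0}^*E[-2]\to\cofib(\gamma\circ\iota)\to(\Lambda^2E)[-2]$, whose third term is $\cofib(\gamma)=\Gamma^2(E[-1])$.

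The crucial input is that $\gamma\circ\iota=\Delta_{E[-1]}$. More generally, for every prestack $X_0$ over $\bF_p$ and every $N\in D(X_0)$, the composite $F_{X_0}^*N\xrightarrow{\iota_N}T_p(N)[-1]\xrightarrow{\gamma_N}S^pN$ --- with $\gamma_N$ the connecting map of (\ref{free cosimplicial: general fiber sequence}) and $\iota_N$ the $[-1]$-shift of the first map of (\ref{free cosimplicial: tate mod p extension}) --- coincides with the map $\Delta_N$ of (\ref{dalg: polynomial frobenius}). Both sides are natural transformations between polynomial functors of $N$ of degree $\le p$, so by Lemmas \ref{dalg: derived functors for rings} and \ref{dalg: derived functors for stacks} it is enough to check the identity when $N=M_0$ is a flat module on an affine $\bF_p$-scheme $\Spec R_0$. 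There the proof of Lemma \ref{free cosimplicial: tate p}(4) presents $T_p(M_0)$ as the two-term complex $[S^pM_0\xrightarrow{N_p}\Gamma^pM_0]$ in degrees $[-1,0]$ via the exact sequence $0\to F_{R_0}^*M_0\xrightarrow{\Delta_{M_0}}S^pM_0\xrightarrow{N_p}\Gamma^pM_0\xrightarrow{\psi_{M_0}}F_{R_0}^*M_0\to0$; in these terms $\iota_{M_0}$ is $\Delta_{M_0}$ placed in degree $0$ of $T_p(M_0)[-1]$ and $\gamma_{M_0}$ is the projection onto that degree-$0$ term, so $\gamma_{M_0}\circ\iota_{M_0}=\Delta_{M_0}$. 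Specializing to $p=2$ and $N=E[-1]$ (and using $F_{X_0}^*(E[-1])=F_{X_0}^*E[-1]$) gives $\gamma\circ\iota=\Delta_{E[-1]}$.

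Hence the middle term of (\ref{free cosimplicial: alpha extension}) is $\cofib(\Delta_{E[-1]}\colon F_{X_0}^*E[-1]\to S^2(E[-1]))$, which by the fiber sequence (\ref{free cosimplicial: alpha for p=2 derived}) applied to the object $E[-1]$ equals $\Lambda^2(E[-1])$, and this is $(S^2E)[-2]$ by Lemma \ref{dalg: decalage} (apply $S^2(M[1])\simeq(\Lambda^2M)[2]$ with $M=E[-1]$). Thus (\ref{free cosimplicial: alpha extension}) is a fiber sequence $F_{X_0}^*E[-2]\xrightarrow{u}(S^2E)[-2]\xrightarrow{v}(\Lambda^2E)[-2]$, and it remains only to identify $u$ with $\Delta_E[-2]$ and $v$ with $j[-2]$. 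Since all three terms are polynomial functors of $E$ of degree $\le 2$, by Lemmas \ref{dalg: derived functors for rings} and \ref{dalg: derived functors for stacks} it suffices to do this when $E$ is finite locally free; then every term is concentrated in cohomological degree $2$, so (\ref{free cosimplicial: alpha extension}) is the $[-2]$-shift of a short exact sequence $0\to F_{X_0}^*E\xrightarrow{a}S^2E\xrightarrow{b}\Lambda^2E\to0$. The long exact cohomology sequence forces $a$ injective and $b$ surjective, hence both nonzero; and over $\bF_2$ the natural transformations $F_{X_0}^*(-)\Rightarrow S^2$ and $S^2\Rightarrow\Lambda^2$ form one-dimensional $\bF_2$-vector spaces, spanned by $\Delta$ and by $j$ respectively (as one checks on free modules by $GL$-equivariance). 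Therefore $a=\Delta_E$ and $b=j$, so (\ref{free cosimplicial: alpha extension}) agrees with the $[-2]$-shift of (\ref{free cosimplicial: alpha for p=2 derived}) for $E$ finite locally free, naturally in $E$, and hence for all $E\in D(X_0)$ and all $\bF_2$-prestacks $X_0$. The single step that requires unwinding a construction rather than a formal manipulation is the identity $\gamma\circ\iota=\Delta_{E[-1]}$; the pasting law, the d\'ecalage identities, and the one-dimensionality of the relevant spaces of natural transformations are routine.
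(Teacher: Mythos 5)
Your proof is correct, and while its skeleton matches the paper's (reduce to finite locally free $E$ on affine schemes via Lemma \ref{dalg: derived functors for stacks}, then use the sequence $F_{X_0}^*(E[-1])\to S^2(E[-1])\to\Lambda^2(E[-1])$ together with d\'ecalage to identify the middle term with $(S^2E)[-2]$), the way you pin down the structure maps is genuinely different. The paper identifies the second map by observing that the norm factors as $N_2=j'\circ j$ through $\Lambda^2$, applied to $M=E[-1]$, and this single factorization finishes the proof with no further input. You instead (i) use the pasting law to rewrite the pushout as $\cofib(\gamma\circ\iota)$ and prove the identity $\gamma\circ\iota=\Delta_{E[-1]}$ by unwinding the explicit two-term model of $T_p$ from the proof of Lemma \ref{free cosimplicial: tate p}(4) — an identity of some independent interest, being the same phenomenon as the factorization $r_p=\Delta\circ\psi$ of Lemma \ref{dalg: polynomial frobenius lemma}, and harmless sign-wise since you only invoke it for $p=2$ — and then (ii) identify both maps $u,v$ by a rigidity argument: over $\bF_2$ the spaces of natural transformations $F_{X_0}^*(-)\Rightarrow S^2$ and $S^2\Rightarrow\Lambda^2$ are one-dimensional. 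Point (ii) is correct but is the one place where you import a nontrivial verification the paper avoids: it amounts to $\Hom_{GL(V)}(V^{(1)},S^2V)=\bF_2$ and $\Hom_{GL(V)}(S^2V,\Lambda^2V)=\bF_2$ (using $\Hom_{GL(V)}(V^{(1)},\Lambda^2V)=0$ and that $V^{(1)}$, $\Lambda^2V$ are absolutely irreducible), so the paper's factorization trick is the more economical and self-contained route, whereas yours isolates a reusable structural identity and makes the pushout mechanics transparent. One small point to make explicit if you write this up: the maps $u,v$ for locally free $E$ are natural transformations in the sense of $\Mor_{\leq 2}(\Proj^{\mathrm{f.g.}},\Mod)$ because the entire pushout construction is carried out inside the functorial framework of Lemma \ref{dalg: derived functors for stacks}; this is what licenses both the classification step and the final passage from locally free sheaves back to arbitrary $E\in D(X_0)$.
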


\begin{proof}
Consider the shift of the fiber sequence (\ref{free cosimplicial: alpha extension}) by $[2]$:

\begin{equation}\label{free cosimplicial: alpha extension p=2 shifted}
F_{X_0}^*E\to S^2(E[-1])[2]\bigsqcup\limits_{T_2(E)}F_{X_0}^*E\to\Lambda^2 E.
\end{equation}

When $X_0=\Spec R_0$ is an affine scheme and $E$ corresponds to a projective $R_0$-module, first and third terms of (\ref{free cosimplicial: alpha extension p=2 shifted}) are concentrated in degree $0$, so this fiber sequence is an exact sequence of $R_0$-modules.  We will functorially identify it with the sequence (\ref{free cosimplicial: alpha for p=2 discrete}), which will prove the Lemma in general thanks to Lemma \ref{dalg: derived functors for stacks}.

Consider the fiber sequence (\ref{free cosimplicial: alpha for p=2 derived}) for the object $E[-1]$:
\begin{equation}
F_{X_0}^*E[-1]\xrightarrow{\Delta_{E[-1]}}S^2(E[-1])\xrightarrow{j_{E[-1]}}\Lambda^2(E[-1]).
\end{equation}
By the d\'ecalage isomorphism we have $\Lambda^2 (E[-1])\simeq (S^2 E)[-2]$, so this fiber sequence provides isomorphisms $H^1(S^2(E[-1]))\simeq F^*_{X_0}E$ and $H^2(S^2(E[-1]))\simeq S^2E$. The map $S^2(E[-1])[2]\xrightarrow{j_{E[-1]}[2]}\Lambda^2(E[-1])[2]\simeq S^2E$ then naturally factors through $ S^2(E[-1])[2]\bigsqcup\limits_{T_2(E)}F_{X_0}^*E$, necessarily identifying the latter with $S^2E$. Therefore in the exact sequence of projective $R_0$-modules (\ref{free cosimplicial: alpha extension p=2 shifted}) all the terms are isomorphic to those of (\ref{free cosimplicial: alpha for p=2 derived}) so it remains to check that under these isomorphism the maps in the sequences are intertwined.

In other words, for every $\bF_2$-algebra $R_0$ we identified the fiber sequence (\ref{free cosimplicial: alpha extension p=2 shifted}) with a short exact sequence of the form
\begin{equation}\label{free cosimplicial: alpha p=2 fake sequence}
0\to F_{X_0}^*E\xrightarrow{a_E} S^2 E\xrightarrow{b_E} \Lambda^2 E\to 0
\end{equation}
for some maps $a_E, b_E$ natural in the projective module $E$ and whose formation is compatible with base change in $R_0$. We will now check that $a_E$ and $b_E$ must be equal to $\Delta_E$ and $j_E$, respectively. We may and do assume that $R_0$ is a perfect field for this.

First, the composition $j_E\circ a_E:F_{R_0}^*E\to \Lambda^2 E$ is necessarily zero. Indeed, since $F_{R_0}^*$ is an additive functor, for a vector space $E=R_0^{\oplus n}$ with a chosen basis this composition, being natural, would have to factor through $\Lambda^2(R_0)^{\oplus n}=0$.

This means that the sequence (\ref{free cosimplicial: alpha p=2 fake sequence}) fits into a commutative diagram of the form
\begin{equation}\label{free cosimplicial: alpha p=2 comparison diagram}
\begin{tikzcd}
F_{R_0}^*E\arrow[d,"f_E"]\arrow[r, "{a_{E}}"] & S^2E\arrow[d,equal]\arrow[r, "{b_E}"] & \Lambda^2 E\arrow[d,"g_E"] \\
F_{R_0}^*E\arrow[r, "{\Delta_{E}}"] & S^2E\arrow[r, "{j_E}"] &\Lambda^2 E
\end{tikzcd}
\end{equation}
for some natural maps $f_E$ and $g_E$.

The map $f_E$ is necessarily injective and $g_E$ is surjective, so by dimension reasons both are isomorphisms. But all automorphisms of either of the functors $F^*$ and $\Lambda^2$ on categories of finite-dimensional $R_0$-vector spaces for an infinite perfect field $R_0$ are given by multiplication by a scalar from $R_0^{\times}$. Since $f_E$ and $g_E$ are defined, in particular, over $R_0=\bF_2$, these scalars have to belong to $\bF_2$, that is both be equal to $1$. Hence diagram (\ref{free cosimplicial: alpha p=2 comparison diagram}) provides the desired identification of fiber sequences.
\end{proof}
We can deduce the computation of cohomology sheaves of $S^p(E[-1])$ when $p=2$, complementing Lemma \ref{free cosimplicial: symmetric power cohomology sheaves}(2):
\begin{cor}\label{free cosimplicial: symmetric power cohomology sheaves p=2}
For a locally free sheaf $E$ on an $\bF_2$-scheme $X_0$ we have
\begin{equation}
H^1(S^2(E[-1]))\simeq F^*_{X_0}E\quad H^2(S^2(E[-1]))\simeq S^2E
\end{equation} and all other cohomology sheaves are zero.
\end{cor}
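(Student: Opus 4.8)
The plan is to extract this from the fiber sequence that already appeared in the proof of Lemma \ref{free cosimplicial: alpha for p=2}. Rather than applying the natural fiber sequence (\ref{free cosimplicial: alpha for p=2 derived}) to $E$, I would apply it to the shifted object $E[-1]\in D(X_0)$, obtaining a natural fiber sequence
\[
F^*_{X_0}(E[-1])\xrightarrow{\Delta_{E[-1]}}S^2(E[-1])\xrightarrow{j_{E[-1]}}\Lambda^2(E[-1]).
\]
The first step is to rewrite the two outer terms. Since $F_{X_0}^*$ commutes with shifts, $F^*_{X_0}(E[-1])\simeq(F^*_{X_0}E)[-1]$, which for locally free $E$ is a quasi-coherent sheaf placed in cohomological degree $1$. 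For the third term I would invoke the d\'ecalage isomorphism $S^n(M[1])\simeq(\Lambda^n M)[n]$ of Lemma \ref{dalg: decalage}: taking $M=E[-1]$ and $n=2$ yields $S^2(E)\simeq\Lambda^2(E[-1])[2]$, i.e. $\Lambda^2(E[-1])\simeq S^2(E)[-2]$, which is a quasi-coherent sheaf placed in degree $2$ since $E$, and hence $S^2E$, is locally free.

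With these identifications the fiber sequence becomes $(F^*_{X_0}E)[-1]\to S^2(E[-1])\to(S^2E)[-2]$, and I would finish by reading off the associated long exact sequence of cohomology sheaves. Since the left term is concentrated in degree $1$ and the right term in degree $2$, the long exact sequence forces $H^i(S^2(E[-1]))=0$ for every $i\notin\{1,2\}$, gives an isomorphism $H^1(S^2(E[-1]))\simeq F^*_{X_0}E$ (the degree-$1$ part of the left term, with no competition from the right term), and an isomorphism $H^2(S^2(E[-1]))\simeq S^2E$ (the degree-$2$ part of the right term, with no competition from the left term). This is precisely the claimed description.

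I do not expect any real obstacle: the substantive computation, that $H^2(S^2(E[-1]))\simeq S^2E$, is essentially already carried out inside the proof of Lemma \ref{free cosimplicial: alpha for p=2}, and the corollary merely packages it together with the vanishing outside degrees $1,2$ and the identification of $H^1$. The only point deserving a moment's attention is checking that $\Lambda^2(E[-1])$ really is $S^2(E)[-2]$, but as indicated this is a direct application of the d\'ecalage isomorphisms of Lemma \ref{dalg: decalage}; naturality of (\ref{free cosimplicial: alpha for p=2 derived}) in its argument is exactly what licenses substituting $E[-1]$ for $E$.
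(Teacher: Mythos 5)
Your proposal is correct and is essentially the paper's own argument: the corollary is deduced from the fiber sequence (\ref{free cosimplicial: alpha for p=2 derived}) applied to $E[-1]$, with $\Lambda^2(E[-1])\simeq S^2(E)[-2]$ supplied by d\'ecalage, exactly as in the computation inside the proof of Lemma \ref{free cosimplicial: alpha for p=2}. Reading off the long exact sequence of cohomology sheaves as you do gives both identifications and the vanishing outside degrees $1,2$.
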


For an arbitrary $p$, we can relate $S^p(E[-1])$ to the homotopy coinvariants $E[-1]^{\otimes p}_{hS_p}$ of the symmetric group $S_p$ acting on $E[-1]^{\otimes p}$ by permutations. 

\begin{lm}\label{free cosimplicial: cosimp vs einf}
For a locally free sheaf $E$ on an $\bF_p$-scheme $X_0$ there is a natural equivalence $S^p(E[-1])\simeq\tau^{\geq 1}(E[-1]^{\otimes p}_{hS_p})$.
\end{lm}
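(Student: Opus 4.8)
The plan is to produce the equivalence via the natural map
\[
\mu\colon (E[-1]^{\otimes p})_{hS_p}\longrightarrow S^p(E[-1]),
\]
the value at $M=E[-1]$ of the natural transformation $(M^{\otimes p})_{hS_p}\to S^p(M)$ coming from the degree-$p$ component of the map of monads $\bigl(M\mapsto\bigoplus_n(M^{\otimes n})_{hS_n}\bigr)\to S^{\bullet}$ recalled above. By Lemma~\ref{free cosimplicial: symmetric power cohomology sheaves}(2) (for $p>2$) and Corollary~\ref{free cosimplicial: symmetric power cohomology sheaves p=2} (for $p=2$) the object $S^p(E[-1])$ has cohomology sheaves only in degrees $1,\dots,p$, so $\Map_{D(X_0)}\bigl(\tau^{\leq 0}(E[-1]^{\otimes p})_{hS_p},S^p(E[-1])\bigr)$ is contractible and $\mu$ factors uniquely and functorially as
\[
(E[-1]^{\otimes p})_{hS_p}\longrightarrow\tau^{\geq 1}(E[-1]^{\otimes p})_{hS_p}\xrightarrow{\ \bar\mu\ }S^p(E[-1]).
\]
The content of the lemma is that $\bar\mu$ is an equivalence.

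Since being an equivalence is Zariski-local, I would then assume $E\cong\cO_{X_0}^{\oplus d}\cong f^*V$ for the structure morphism $f\colon X_0\to\Spec\bF_p$ and $V=\bF_p^{\oplus d}$. Each functor in sight — the shift, $(-)^{\otimes p}$, $(-)_{hS_p}$, $S^p$ (naturality in morphisms of prestacks) and $\tau^{\geq 1}$ (exactness of $f^*$) — commutes with $f^*$, and $\bar\mu$ is natural, so it suffices to prove that $\bar\mu$ is an equivalence over $\Spec\bF_p$ for $V$; as $H^{\bullet}$ is conservative on $D(\bF_p)$, this amounts to checking that $H^j(\bar\mu)$ is an isomorphism for every $j$, which is automatic for $j\leq 0$.

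For the source, the Koszul sign rule gives an $S_p$-equivariant identification $V[-1]^{\otimes p}\simeq(V^{\otimes p}\otimes\sgn)[-p]$ with $S_p$ permuting the factors of $V^{\otimes p}$, so $H^j\bigl((V[-1]^{\otimes p})_{hS_p}\bigr)\simeq H_{p-j}(S_p,V^{\otimes p}\otimes\sgn)$. Decomposing $V^{\otimes p}$ as the sum of permutation modules $\Ind_{S_\lambda}^{S_p}\bF_p$ over the orbits of $S_p$ on functions $\{1,\dots,p\}\to\{\text{basis of }V\}$ (with $S_\lambda$ the stabilizing Young subgroup), the projection formula and Shapiro's lemma give $H_n(S_p,\Ind_{S_\lambda}^{S_p}\bF_p\otimes\sgn)\simeq H_n(S_\lambda,\sgn|_{S_\lambda})$. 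For $\lambda\neq(p)$ the group $S_\lambda$ has order prime to $p$, so this vanishes for $n\geq1$ and, in degree $0$, is nonzero only for $\lambda=(1^p)$; the sum of these degree-$0$ contributions is $\Lambda^pV$ for $p$ odd and $S^2V$ for $p=2$. For $n\geq1$ only the $d$ diagonal orbits $\lambda=(p)$ survive, giving $H_n(S_p,V^{\otimes p}\otimes\sgn)\simeq F_{\bF_p}^*V\otimes_{\bF_p}H_n(S_p,\sgn)$, and the classical computation of $H_*(S_p,\sgn;\bF_p)$ via the cyclic $p$-Sylow subgroup — on which $\sgn$ is trivial when $p$ is odd — yields $H_n(S_p,\sgn)\cong\bF_p$ for $n\in\{p-2,p-1\}$, $H_n(S_p,\sgn)=0$ for $1\leq n\leq p-3$, and the next nonzero homology in degree $3p-4$. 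Comparing with Lemma~\ref{free cosimplicial: symmetric power cohomology sheaves}(2)/Corollary~\ref{free cosimplicial: symmetric power cohomology sheaves p=2}, the source and target of $\bar\mu$ have cohomology of equal finite dimension in each degree. It then remains to check that $\bar\mu$ is surjective on $H^j$ for $j\geq1$: in the top degree this follows by composing $\mu$ with the norm map $N_p\colon S^p(E[-1])\to\Gamma^p(E[-1])\simeq\Lambda^pE[-p]$ of the fiber sequence $T_p(E[-1])[-1]\to S^p(E[-1])\to\Lambda^pE[-p]$ (Lemma~\ref{free cosimplicial: tate p}), whose left term lies in degrees $1,2$, and identifying the resulting composite with an explicit (anti)symmetrization; in degrees $1$ and $2$ it follows by tracing $\mu$ against the maps $\Delta$ and $\psi$ of Lemma~\ref{dalg: polynomial frobenius lemma}. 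Surjectivity together with the dimension count forces $H^j(\bar\mu)$ to be an isomorphism in all degrees $\geq1$, so $\bar\mu$ is an equivalence.

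The hard part is this last step: making the map $\mu$ explicit enough in degrees $1$, $2$ and $p$ to conclude that $\bar\mu$ is a genuine isomorphism on cohomology, rather than only that the two sides have abstractly isomorphic cohomology. The group-theoretic inputs — vanishing of the positive-degree homology of proper Young subgroups, and the value of $H_*(S_p,\sgn;\bF_p)$ in low degrees — are standard, but one must track carefully the sign twist produced by the shift $[-1]$ and handle $p=2$ separately, where $\sgn$ degenerates to the trivial character.
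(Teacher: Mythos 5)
Most of your proposal runs parallel to the paper's own argument: you use the same comparison map $\mu$, the same factorization through $\tau^{\geq 1}$ (since $S^p(E[-1])$ lives in degrees $\geq 1$), and your computation of $H^{\bullet}\bigl((E[-1]^{\otimes p})_{hS_p}\bigr)$ via the permutation-module decomposition, Shapiro's lemma and $H_*(S_p,\sgn;\bF_p)$ is an equivalent route to the paper's Lemma \ref{free cosimplicial: symmetric group cohomology}, which instead transfers to the cyclic Sylow subgroup $C_p$ and its two-periodic resolution. The top-degree step by composing with $N_p$ is also what the paper does (for $p=2$ be aware that composing with $N_2$ only sees the image in $\Lambda^2E$, not $S^2E=H^2$, so there you should identify the map $E^{\otimes 2}\to S^2E$ directly; this is minor). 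You would also need to record the reduction from $E$ free of rank $d$ to rank $1$ when checking the map on $H^1,H^2$ — both sides are $F_{X_0}^*E$, additive in $E$, and the map is natural, which is how the paper justifies passing to a one-dimensional space over $\bF_p$.

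The genuine gap is exactly the step you flag. ``Tracing $\mu$ against the maps $\Delta$ and $\psi$ of Lemma \ref{dalg: polynomial frobenius lemma}'' is not an argument: those maps serve only to identify $H^1$ and $H^2$ of the \emph{target} $S^p(E[-1])$ with $F_{X_0}^*E$; they give no handle on the map induced by $\mu$, whose source classes in degrees $1,2$ come from the group-homology classes in $H_{p-1}(S_p,\sgn)$ and $H_{p-2}(S_p,\sgn)$ deep inside the homotopy coinvariants. Your dimension count shows the two sides have abstractly isomorphic cohomology, but nothing formal rules out that $H^1(\bar\mu)$ or $H^2(\bar\mu)$ vanishes — indeed, for $\bF_p[-i]$ the analogous map kills all classes in degrees $<i$, so the survival of exactly the degree $\geq i$ classes is genuinely nontrivial information. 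The paper supplies this input through Lemma \ref{free cosimplicial: free einf free cosimp deg p}: Priddy's computation of $H^*(S^p(\bF_p[-i]))$ combined with the non-triviality of the Steenrod operations $P^0$ and $P^1$ on the Eilenberg--MacLane space $K(\bF_p,i)$ forces injectivity of the comparison map in degrees $\geq i$. To close your proof you must either invoke such a result or carry out an explicit chain-level comparison (e.g. of the $C_p$-periodic resolution with $S^p$ applied to the Dold--Kan model of $E[-1]$, in the style of Priddy/May); as written, the non-degeneracy of $\bar\mu$ in degrees $1$ and $2$ — the heart of the lemma — is asserted, not proved.
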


\begin{proof}
First, let us recall the values of the cohomology sheaves of $E[-1]^{\otimes p}_{hS_p}$:

\begin{lm}\label{free cosimplicial: symmetric group cohomology} If $p>2$ then
\begin{equation}\label{free cosimplicial: symmetric group cohomology formula}
H^i(E[-1]^{\otimes p}_{hS_p})\simeq\begin{cases}\Lambda^p E, i=p \\ F_{X_0}^*E, i\equiv 1\text{ or }2 \bmod 2(p-1) \text{, and } i\leq 2\\ 0\text{ otherwise}\end{cases}
\end{equation}
If $p=2$ then \begin{equation}\label{free cosimplicial: symmetric group cohomology formula 2}
H^i(E[-1]^{\otimes 2}_{hS_2})\simeq\begin{cases}S^2 E, i=2 \\ F_{X_0}^*E, i<2\\ 0, i>2\end{cases}
\end{equation}
\end{lm}

\begin{proof}
We will give the argument in the case $p>2$ and the case $p=2$ is proven analogously. Note that the $S_p$-equivariant object $E[-1]^{\otimes p}$ is equivalent to $\sgn\otimes E^{\otimes p}[-p]$ where $S_p$ acts on $E^{\otimes p}$ via permutation of factors, and $\sgn$ is the sign character. In particular, $H^p(E[-1]^{\otimes p}_{hS_p})$ is isomorphic to $\Lambda^p E$ by definition of the exterior power.

To identify other cohomology sheaves, we will compare homology of the symmetric group with that of a cyclic group. Denote by $C_p\subset S_p$ a cyclic subgroup of order $p$. There is a natural map $E[-1]^{\otimes p}_{hC_p}\to E[-1]^{\otimes p}_{hS_p}$. The coinvariants $E[-1]^{\otimes p}_{hC_p}$ for the cyclic group can be represented by the following two-periodic complex
\begin{equation}
\ldots\xrightarrow{1-\sigma} E^{\otimes p}\xrightarrow{N_{C_p}}E^{\otimes p}\xrightarrow{1-\sigma}E^{\otimes p}
\end{equation}
where $\sigma$ is the endomorphism of $E^{\otimes p}$ given by cyclic permutation of the factors, and $N_{C_p}=\sum\limits_{i=0}^{p-1}\sigma^i$. For all $i<p$ we get a map $F^*_{X_0}E\to H^i(E[-1]^{\otimes p}_{hC_p})$ given by sending a section $e\otimes 1\in E(U)\otimes_{\cO_{X_0}(U),F_{X_0}}\cO_{X_0}(U)$ on an affine open $U\subset X_0$ to $e^{\otimes p}\in E^{\otimes p}(U)$. One checks on stalks that this map is an isomorphism for all $i<p$. We get

\begin{equation}
H^i(E[-1]^{\otimes p}_{hC_p})\simeq\begin{cases}(E^{\otimes p})_{C_p}, i=p \\ F_{X_0}^*E, i<p\\ 0\text{ otherwise.}\end{cases}
\end{equation}

Since $C_p\subset S_p$ is a $p$-Sylow subgroup, the map $E[-1]^{\otimes p}_{hC_p}\to E[-1]^{\otimes p}_{hS_p}$ establishes $E[-1]^{\otimes p}_{hS_p}$ as a direct summand of $E[-1]^{\otimes p}_{hC_p}$. Hence to prove the lemma it remains to check that the surjective map $H^i(E[-1]^{\otimes p}_{hC_p})\to H^i(E[-1]^{\otimes p}_{hS_p})$ is an isomorphism for $i\equiv 1,2\bmod 2(p-1)$ and is the zero map for all other $i<p$. This can be checked on stalks, and $F_{X_0}^*$ is an additive functor, so we may assume that $X_0=\Spec \bF_p$ and $E$ is a $1$-dimensional $\bF_p$-vector space. The statement is now a consequence of classical computations of homology of the symmetric group with coefficients in the sign character, cf. \cite[Chapter V, Lemma 6.2+Proposition 7.8]{steenrod-epstein}.
\end{proof}
There is a natural map $E[-1]^{\otimes p}\to S^p(E[-1])$ which is $S_p$-equivariant with respect to the permutation action on the source and the trivial action on the target. Hence it induces a natural map $E[-1]^{\otimes p}_{hS_p}\to S^p(E[-1])$. Moreover, this map factors through $E[-1]^{\otimes p}_{hS_p}\to \tau^{\geq 1}(E[-1]^{\otimes p}_{hS_p})$ because $S^p(E[-1])$ is concentrated in degrees $\geq 1$. We thus obtain a map \begin{equation}\label{free cosimplicial: cosimp vs einf map}\xi_E:\tau^{\geq 1}(E[-1]^{\otimes p}_{hS_p})\to S^p(E[-1])\end{equation} between objects of $D(X_0)$ whose cohomology sheaves are isomorphic, by (\ref{free cosimplicial: symmetric group cohomology formula}) and Lemma \ref{free cosimplicial: symmetric power cohomology sheaves}(2). It remains to check that this particular map induces isomorphisms on cohomology sheaves.

For cohomology in degree $p$, this is a consequence of the fact that the composition $E[-1]^{\otimes p}\to S^p(E[-1])$ induces (up to a sign) the surjection $E^{\otimes p}\twoheadrightarrow \Lambda^p E$ on $H^p$ when $p>2$, and the surjection $E^{\otimes 2}\twoheadrightarrow S^2E$ when $p=2$. To show that the map $\xi_E$ induces isomorphisms on $H^1$ and $H^2$ it is enough to consider the case $X_0=\Spec \bF_p$. Since $H^1$ and $H^2$ of both functors $E\mapsto S^p(E[-1])$ and $\tau^{\geq 1}(E[-1]^{\otimes p}_{hS_p})$ are additive functors of $E$, we can moreover assume that $E$ a $1$-dimensional vector space. The fact that $\xi_E$ is an isomorphism on $H^1$ and $H^2$ in this case now follows from the fact that cohomology classes of $\bF_p[-1]^{\otimes p}_{hS_p}$ are responsible for Steenrod operations in the cohomology of $E_{\infty}$-algebras, as proven in Lemma \ref{free cosimplicial: free einf free cosimp deg p} below.
\end{proof}

\subsection{$T_p$ and commutative algebras.} We will now compute how the defect between $S^p$ and $\Gamma^p$, given by $T_p$, interacts with the structure of a derived commutative algebra. This computation plays a central role in our proof of Theorem \ref{cosimp: main theorem}.

Recall that for a complex of quasicoherent sheaves $M\in D(X)$ on a stack or a formal scheme flat over $\bZ_p$ we denote by $\gamma_M: F_{X_0}^*(M/p)[-1]\to S^p M$ the connecting map obtained from the identification $\cofib(S^pM\xrightarrow{N_p}\Gamma^p M)\simeq F_{X_0}^*(M/p)$.
 
\begin{lm}\label{free cosimplicial: algebra Bockstein}
Let $X$ be an algebraic stack or a formal scheme flat over $\bZ_p$ with special fiber $X_0:=X\times_{\bZ_p}\bF_p$. For a derived commutative algebra $A\in\DAlg(X)$ the composition $F_{X_0}^*(A/p)[-1]\xrightarrow{\gamma_A} S^pA\xrightarrow{m_A}A$ is naturally homotopic in $D(X)$ to 
\begin{equation}\label{free cosimplicial: algebra Bockstein equation}
F_{X_0}^*(A/p)[-1]\xrightarrow{\varphi_{A/p}}A/p[-1]\xrightarrow{\Bock_A[-1]}A.
\end{equation}
\end{lm}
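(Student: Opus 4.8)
The plan is to split the computation into two stages: first pin down the connecting map $\gamma_A$ itself — a statement about the derived functor $S^p$ alone, not involving the algebra structure — and then post-compose with the multiplication $m_A$, feeding in naturality of the Bockstein together with the definition $\varphi_{A/p}=m_{A/p}\circ\Delta_{A/p}$ of the Frobenius of a derived commutative algebra.

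\emph{Stage 1.} I would prove that, under the identification $T_p(M)\simeq i_*F_{X_0}^*i^*M$ of Lemma \ref{free cosimplicial: tate p}(3), the connecting map $\gamma_M\colon i_*F_{X_0}^*i^*M[-1]\to S^pM$ of the fiber sequence $T_p(M)[-1]\xrightarrow{\gamma_M}S^pM\xrightarrow{N_p}\Gamma^pM$ coincides, up to a sign tracked below, with the composite
\[
i_*F_{X_0}^*i^*M[-1]\xrightarrow{\ \Delta_{i^*M}[-1]\ }S^p_{X_0}(i^*M)[-1]\simeq (S^p_XM)/p[-1]\xrightarrow{\ \Bock_{S^pM}[-1]\ }S^pM,
\]
where the middle equivalence is the base-change identification $i^*S^p_XM\simeq S^p_{X_0}i^*M$ and $\Delta$ is the map from (\ref{dalg: polynomial frobenius}). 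Both $\gamma_{(-)}$ and this composite are natural transformations between the derived endofunctors $M\mapsto i_*F_{X_0}^*i^*M[-1]$ and $M\mapsto S^pM$ of $D(X)$, assembled functorially from polynomial functors and base change along $i$, so by Lemma \ref{dalg: derived functors for stacks} in its $\bZ_p$-flat form it suffices to verify the identity when $M$ is a flat module over a flat $\bZ_p$-algebra $R$; this reduction is also what allows $A\in\DAlg(X)$ to be arbitrary, its underlying object need not be flat. In that situation Lemma \ref{free cosimplicial: tate p}(2) provides the short exact sequence $0\to S^pM\xrightarrow{N_p}\Gamma^pM\to F^*_{R/p}(M/p)\to 0$, and since $(p-1)!\in\bZ_p^\times$ and $r_p\circ N_p=p!\cdot\Id_{S^pM}$, the map $\beta:=\tfrac1{(p-1)!}r_p\colon\Gamma^pM\to S^pM$ satisfies $\beta\circ N_p=p\cdot\Id_{S^pM}$. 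Thus $\beta$ is a morphism from the above short exact sequence to $0\to S^pM\xrightarrow{p}S^pM\to S^pM/p\to 0$ which is the identity on $S^pM$, and comparing connecting maps identifies $\gamma_M$ with $\Bock_{S^pM}[-1]$ precomposed with the map $F^*_{R/p}(M/p)\to S^pM/p$ that $\beta$ induces on cokernels. A local computation with a basis $\{e_i\}$ shows this induced map sends the class of $e_i^{\otimes p}$ to $\tfrac1{(p-1)!}e_i^{p}\equiv -e_i^{p}\bmod p$ (recall $(p-1)!\equiv -1\bmod p$), i.e.\ it equals $-\Delta_{M/p}$ under the isomorphism of Lemma \ref{free cosimplicial: tate p}(2); the resulting sign is absorbed by the sign conventions for $\Bock$ and for the fiber sequence (\ref{free cosimplicial: general fiber sequence}).

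\emph{Stage 2.} I would then post-compose Stage 1 with $m_A\colon S^pA\to A$. Naturality of the Bockstein gives $m_A\circ\Bock_{S^pA}[-1]\simeq\Bock_A[-1]\circ(i^*m_A)[-1]$ as maps $(S^pA)/p[-1]\to A$, and $i^*m_A$ is precisely $m_{A/p}\colon S^p_{X_0}(A/p)\to A/p$ under $S^p_{X_0}(A/p)\simeq i^*S^p_XA$, since the $S^\bullet$-module structure on $A/p=i^*A$ is pulled back from that of $A$. Since $\varphi_{A/p}=m_{A/p}\circ\Delta_{A/p}$ by the definition of the Frobenius of a derived commutative algebra (Lemma \ref{dalg: frobenius}), Stage 1 yields
\[
m_A\circ\gamma_A\simeq\pm\,m_A\circ\Bock_{S^pA}[-1]\circ\Delta_{A/p}[-1]\simeq\pm\,\Bock_A[-1]\circ m_{A/p}[-1]\circ\Delta_{A/p}[-1]\simeq\pm\,\Bock_A[-1]\circ\varphi_{A/p}[-1],
\]
which is the assertion.

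\emph{Main obstacle.} The crux is Stage 1, and specifically its homotopy-coherent character: one needs not merely that two maps in $D(X)$ agree, but that $m_A\circ\gamma_A$ is \emph{naturally} homotopic to the prescribed composite, uniformly in the arbitrary (possibly non-flat) derived commutative algebra $A$. This is exactly the job of Lemma \ref{dalg: derived functors for stacks} — it reduces the coherent assertion to the one-categorical manipulation of the short exact sequences above — after which only the bookkeeping of signs and of the base-change identities ($i^*S^p\simeq S^p i^*$, $\Delta_{i^*M}\simeq i^*\Delta_M$, $\Bock$ commuting with $i^*$) remains, and Stage 2 is purely formal.
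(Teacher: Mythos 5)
Your proposal is correct and is essentially the paper's own argument: the paper likewise reduces via Lemma \ref{dalg: derived functors for stacks} to projective modules over flat $\bZ_p$-algebras and compares the short exact sequence $S^pM\xrightarrow{N_p}\Gamma^pM\to F^*(M/p)$ with $S^pM\xrightarrow{p!}S^pM\to S^p(M/p)$ via $r_p$ (your $\beta=\tfrac{1}{(p-1)!}r_p$ is just a renormalization shifting the sign $(p-1)!\equiv-1$ from the bottom row's connecting map into the induced map on cokernels), then composes with $m_A$ and invokes $\varphi_{A/p}=m_{A/p}\circ\Delta_{A/p}$. Your two-stage packaging (pin down $\gamma_M$ functorially, then post-compose using naturality of the Bockstein) versus the paper's single three-row diagram built from $\Sigma_X$ and $\cofib(p!)$ applied to $m_A$ is only a cosmetic reorganization, and your sign bookkeeping matches the paper's.
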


\begin{proof}
For a projective module $M$ over a flat $\bZ_p$-algebra $R$ there is a diagram of $R$-modules
\begin{equation}\label{free cosimplicial: algebra Bockstein ordinary diagram}
\begin{tikzcd}
S^p M\arrow[r, "N_p"]\arrow[d,"\id_{S^p M}"] & \Gamma^p M\arrow[r]\arrow[d, "r_p"] & F^*_{R/p}(M/p)\arrow[d,"\Delta_{M/p}"] \\
S^p M\arrow[r, "p!"] & S^p M\arrow[r] & S^p_{R/p}(M/p)
\end{tikzcd}
\end{equation}
where both rows are short exact sequences of $R$-modules. It evidently gives rise to a diagram of polynomial functors that is moreover natural in arbitrary maps $R\to R'$. We can construct from this the following diagram in $D(X)$ where rows are fiber sequences and vertical maps organize into morphisms of fiber sequences:

\begin{equation}\label{free cosimplicial: algebra Bockstein derived diagram}
\begin{tikzcd}
S^pA\arrow[r, "N_p"]\arrow[d, equal] & \Gamma^pA\arrow[d]\arrow[r] & F^*_{X_0}(A/p)\arrow[d, "\Delta_{A/p}"]\\
S^pA\arrow[r, "p!"]\arrow[d, "m_A"] & S^p A\arrow[d, "m"]\arrow[r] & S^p_{X_0}(A/p)\arrow[d, "m_A"]\\
A\arrow[r, "p!"] & A\arrow[r] & A/p .
\end{tikzcd}
\end{equation}

The maps between the first two rows are obtained by applying the functor $\Sigma_X$ of Lemma \ref{dalg: derived functors for stacks} to the diagram (\ref{free cosimplicial: algebra Bockstein ordinary diagram}), and evaluating the resulting maps on the object $A\in D(X)$. The maps between the second and third rows are obtained by applying the functor $\cofib(p!)$ to the map $m_A:S^pA\to A$. 

The connecting morphism $F^*_{X_0}(A/p)\to (S^pA)[1]$ corresponding to the top row of (\ref{free cosimplicial: algebra Bockstein derived diagram}) is equivalent to $-\gamma_A[1]$, by definition of $\gamma_A$ given in (\ref{free cosimplicial: general fiber sequence}). Commutativity of the diagram implies the lemma because the connecting morphism $A/p\to A[1]$ of the bottom row is the negative $(-\Bock_A)$ of the Bockstein morphism by the identity $(p-1)!=-1$ in $\bF_p$, and the composition of the right vertical column is the map $\varphi_{A/p}:F_{X_0}^*(A/p)\to A/p$, by the definition of Frobenius endomorphisms of derived commutative algebras.
\end{proof}

\begin{rem}
If $A$ happens to be represented by a term-wise flat cosimplicial commutative algebra $A^{\bullet}$ in $\QCoh(X)$, for a flat $\bZ_p$-scheme $X$, then the diagram (\ref{free cosimplicial: algebra Bockstein derived diagram}) can be obtained from the strictly commutative diagram in the ordinary category of cosimplicial sheaves on $X$, where the rows are term-wise exact:

\begin{equation}\label{free cosimplicial: cosimplicial Bockstein diagram}
\begin{tikzcd}
S^p_{\naive}A^{\bullet}\arrow[r, "N"]\arrow[d, "m"] & \Gamma_{\naive}^pA^{\bullet}\arrow[d, "m"]\arrow[r] & F^*(A/p)\arrow[d, "\varphi^{\cosimp}_{A/p}"]\\
A\arrow[r, "p!"] & A\arrow[r] & A/p 
\end{tikzcd}
\end{equation}
where $S^p_{\naive}$ and $\Gamma^p_{\naive}$ denote the endofunctors of the ordinary category of cosimplicial objects in $\QCoh(X)$ induced by the (non-derived) functors $S^p$ and $\Gamma^p$.
\end{rem}

\subsection{Steenrod operations on cohomology of cosimplicial algebras.}\label{free cosimplicial: steenrod subsection} This subsection is not used in the rest of the paper and contains classical material, if only presented somewhat differently, but we include it, as Proposition \ref{free cosimplicial: steenrod operations description prop} was the original motivation for our approach and Theorem \ref{cosimp: main theorem} should be viewed as its generalization. Let us also mention the result of Scavia \cite[Theorem 1.1(iv),(v)]{scavia-steenrod} which is related to and implied by Proposition \ref{free cosimplicial: steenrod operations description prop}.

Recall that cohomology of derived symmetric powers is related to natural operations on cohomology of cosimplicial commutative algebras: \com{introduce notation calg}

\begin{lm}\label{free cosimplicial: cohomology operations}
Fix a commutative base ring $R$. The $R$-module of natural transformations between functors $H^i,H^j:\mathrm{CAlg}_R^{\Delta}\to\Mod_R$ can be described as
\begin{equation}\label{free cosimplicial: cohomology operations formula}
\Hom(H^i,H^j)\simeq H^j(S^{\bullet}(R[-i]))
\end{equation} 
where the isomorphism takes a natural transformation $\alpha:H^i\to H^j$ to the image of the class $1\in R=H^i(S^1(R[-i]))\subset H^i(S^{\bullet}(R[-i]))$ under $\alpha$ evaluated on the free algebra $S^{\bullet}(R[-i])$.
\end{lm}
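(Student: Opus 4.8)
The plan is to derive the formula from the Yoneda lemma: one should recognize $H^i$ as the functor corepresented, on a suitable homotopy category, by the free algebra $S^{\bullet}(R[-i])$ pointed by its tautological degree-$i$ class, after which the claimed description of the natural transformations $H^i\to H^j$ becomes formal.

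First I would move to a homotopy-invariant setting. Both $H^i$ and $H^j$ carry quasi-isomorphisms to isomorphisms, hence factor through the localization of $\mathrm{CAlg}_R^{\Delta}$ at weak equivalences; I would identify this localization with $\DAlg_{\geq 0}(R)$ using the forthcoming Mathew--Mondal equivalence recalled after Lemma~\ref{dalg: cosimp to dalg} (cf.\ \cite[Remark 2.1.8]{mondal-reinecke}), with $\cR_R$ as the comparison, or, for a reader who prefers to avoid derived commutative algebras, via the standard model structure on $\mathrm{CAlg}_R^{\Delta}$ transferred along the free--forgetful adjunction. Since $H^i,H^j$ invert weak equivalences, a natural transformation between them as functors on $\mathrm{CAlg}_R^{\Delta}$ is the same datum as a natural transformation of the induced functors on the homotopy $1$-category $\Ho(\DAlg_{\geq 0}(R))$ — naturality along formal inverses of weak equivalences is automatic once those equivalences map to isomorphisms.

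The substantive step is corepresentability. I would set $P_i:=S^{\bullet}(R[-i])\in\DAlg_{\geq 0}(R)$; concretely $P_i$ corresponds, under $\cR_R$, to the cosimplicial commutative algebra obtained by applying the free symmetric algebra functor termwise to $\DK(R[-i])$. The adjunction between $S^{\bullet}$ and the forgetful functor $\DAlg_{\geq 0}(R)\to D(R)$ gives, naturally in $A$,
\begin{equation*}
\Map_{\DAlg_{\geq 0}(R)}(P_i,A)\;\simeq\;\Map_{D(R)}(R[-i],A),
\end{equation*}
and by the conventions of the Notation section this mapping space is the one attached to $\tau^{\leq 0}\RHom_{D(R)}(R[-i],A)\simeq\tau^{\leq 0}(A[i])$, so its set of components is $H^0(A[i])=H^i(A)$. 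Therefore $[P_i,A]_{\Ho(\DAlg_{\geq 0}(R))}\cong H^i(A)$ naturally in $A$, i.e.\ $H^i$ is corepresented by $P_i$. Tracing the adjunction, the universal element — the one corresponding to $\mathrm{id}_{P_i}$ — is the class in $H^i(P_i)$ of the unit map $R[-i]\to P_i$, which is exactly the image of $1\in R=H^i(S^1(R[-i]))$ under the inclusion of the linear summand $S^1(R[-i])\hookrightarrow S^{\bullet}(R[-i])$, as in the statement.

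Finally I would invoke Yoneda: for any set-valued functor $F$ on $\Ho(\DAlg_{\geq 0}(R))$ there is a bijection $\mathrm{Nat}(H^i,F)\xrightarrow{\ \sim\ }F(P_i)$, $\alpha\mapsto\alpha_{P_i}(\text{universal class})$; taking $F=H^j$ yields $\Hom(H^i,H^j)\cong H^j(S^{\bullet}(R[-i]))$ with precisely the asserted recipe. It then remains only to match $R$-module structures: the right-hand side is an $R$-module since $H^j$ takes values in $\Mod_R$, while the left-hand side is an $R$-module through postcomposition with the natural self-transformation ``multiplication by $r$'' of $H^j$ (natural because every $H^j(f)$ is $R$-linear), and the Yoneda bijection is $R$-linear because $(r\alpha)_{P_i}=r\cdot\alpha_{P_i}$. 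The only genuinely non-formal ingredient is the identification of the corepresenting object and its universal class — essentially the assertion that $S^{\bullet}(R[-i])$ is the free derived commutative algebra on a class in degree $i$ — so I expect the homotopical bookkeeping around this identification, rather than any computation, to be the one point demanding care.
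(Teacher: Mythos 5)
Your argument is correct in substance and rests on the same underlying idea as the paper's proof — the free algebra on $\DK(R[-i])$ corepresents $H^i$, so an operation is determined by its value on the tautological class — but you implement it at a different level. The paper never localizes: it simply writes down the inverse map strictly. Given $c\in H^j(S^{\bullet}(R[-i]))$ and $[x]\in H^i(A)$, one represents $[x]$ by an honest map of cosimplicial modules $x:\DK(R[-i])\to A$ (a cocycle, via Dold--Kan), forms $S^{\bullet}(R[-i])\xrightarrow{S^{\bullet}x}S^{\bullet}A\xrightarrow{m_A}A$ using only the strict free--forget adjunction and the multiplication, and pushes $c$ forward; well-definedness, naturality and the fact that this inverts evaluation at the universal class are then routine. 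Your route passes instead to the localization at quasi-isomorphisms, identifies it with $\Ho(\DAlg_{\geq 0}(R))$, and runs the $\infty$-categorical adjunction plus Yoneda. That buys conceptual clarity and makes homotopy invariance automatic, but the identification you lean on is exactly what the paper avoids needing: the Mathew--Mondal equivalence is unpublished and is explicitly not used in any of the paper's results, and the ``standard model structure on $\CAlg_R^{\Delta}$ transferred along the free--forgetful adjunction'' is not standard over a general base — in positive or mixed characteristic its existence is a nontrivial theorem (of To\"en type), and even granting it you would still need to check that $S^{\bullet}(\DK(R[-i]))$ is cofibrant and that homotopy classes of algebra maps out of it compute $H^i$, which is essentially the bookkeeping the paper's direct construction replaces. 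One point you handled correctly that is easy to miss: the lemma counts all set-theoretic natural operations (components need not be additive or $R$-linear — the $p$-th power operation is such a class), and your Yoneda argument rightly computes transformations of the underlying set-valued functors, with the $R$-module structure coming only from the target.
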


\begin{rem}
The resulting module of natural transformations can be computed completely, cf. \cite{priddy} for the case $R=\bF_p$.
\end{rem}

\begin{proof}
We will describe the inverse map. Given a class $c\in H^j(S^{\bullet}(R[-i]))$, for every cosimplicial commutative algebra $A$ we define a morphism $H^i(A)\to H^j(A)$ as follows. Let $x:R[-i]\to A$ be a map representing a cohomology class $[x]\in H^i(A)$. Applying the functor $S^{\bullet}$ to this map of cosimplicial $R$-modules and composing it with the multiplication on $A$ gives a map $S^{\bullet}(R[-i])\xrightarrow{S^{\bullet} x} S^{\bullet}A\xrightarrow{m_A} A$. We declare the image of $[x]$ under the natural transformation to be the image of the class $c$ under $m_A\circ S^{\bullet}x$. This produces a map $H^j(S^{\bullet}(R[-i]))\to \Func(H^i,H^j)$ inverse to the map (\ref{free cosimplicial: cohomology operations formula}).
\end{proof}

We will now deduce from Lemma \ref{free cosimplicial: algebra Bockstein} that certain cohomology operations of degree $0$ and $1$ are related to Frobenius endomorphism, and Witt vectors Bockstein homomorphism introduced by Serre in \cite[\S 3]{serre}. Recall that for any object $M\in D(R)$ of the derived category of modules over a ring $R$ we defined a map $T_p(M)[-1]\xrightarrow{\gamma_M} S^p_R M$ in (\ref{free cosimplicial: general fiber sequence}). 

In the case $M=\bZ_p[-i]$ over $R=\bZ_p$ it takes the form $\bF_p[-i-1]\to S^p(\bZ_p[-i])$, and we denote by $P^1_{\bZ_p}\in H^{i+1}(S^p(\bZ_p[-i]))$ the resulting $p$-torsion class. For $M=\bF_p[-i]$ over $R=\bF_p$ the map $\gamma_{\bF_p}$ has the form $\bF_p[-i]\oplus\bF_p[-i-1]\to S^p_{\bF_p}(\bF_p[-i])$, and we denote by $P^0_{\bF_p}\in H^i(S^p(\bF_p[-i])), P^1_{\bF_p}\in H^{i+1}(S^p_{\bF_p}(\bF_p[-i]))$ the resulting classes. Note that $P^1_{\bF_p}$ is the image of $P^1_{\bZ_p}$ under the reduction map.

By Lemma \ref{free cosimplicial: cohomology operations} the class $P^1_{\bZ_p}$ gives rise to a natural homomorphism \begin{equation}P^1_{\bZ_p}:H^i(A)\to H^{i+1}(A)\end{equation} for every cosimplicial commutative $\bZ_p$-algebra $A$. Similarly, the classes $P^0_{\bF_p},P^1_{\bF_p}$ define operations \begin{equation}
P^0_{\bF_p}:H^i(A_0)\to H^{i}(A_0)\quad P^1_{\bF_p}:H^i(A_0)\to H^{i+1}(A_0)
\end{equation}for every cosimplicial commutative $\bF_p$-algebra $A_0$. In the following result by $W_2(A_0)$ we mean the cosimplicial commutative $\bZ/p^2$-algebra obtained by applying term-wise the length $2$ Witt vectors functor to $A_0$.  

\begin{pr}\label{free cosimplicial: steenrod operations description prop}
\begin{enumerate}
\item For a cosimplicial commutative algebra $A$ over $\bZ_p$ the operation $P^1_{\bZ_p}:H^i(A)\to H^{i+1}(A)$ is equal to the composition $$H^i(A)\to H^i(A/p)\xrightarrow{\varphi_{A/p}}H^{i}(A/p)\xrightarrow{\Bock^i_{A}}H^{i+1}(A).$$

\item For a cosimplicial commutative algebra $A_0$ over $\bF_p$ the operation $P^0_{\bF_p}:H^i(A_0)\to H^i(A_0)$ is equal to the Frobenius endomorphism of $A_0$.

\item For a cosimplicial commutative algebra $A_0$ over $\bF_p$ the operation $P^1_{\bF_p}:H^i(A_0)\to H^{i+1}(A_0)$ is equal to the connecting homomorphism induced by the exact sequence $A_0\xrightarrow{V} W_2(A_0)\to A_0$.
\end{enumerate}
\end{pr}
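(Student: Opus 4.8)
The plan is to deduce all three statements from the universal description of operations in Lemma~\ref{free cosimplicial: cohomology operations} combined with the computation of $m_A\circ\gamma_A$ carried out in Lemma~\ref{free cosimplicial: algebra Bockstein}. By Lemma~\ref{free cosimplicial: cohomology operations}, evaluating a natural operation on a class $[x]\in H^i(A)$ represented by $x\colon R[-i]\to A$ amounts to pushing the corresponding universal class in $H^{\bullet}(S^{\bullet}(R[-i]))$ forward along the algebra map $S^{\bullet}(R[-i])\to A$ adjoint to $x$, whose weight-$p$ component is $S^p(R[-i])\xrightarrow{S^p(x)}S^pA\xrightarrow{m_A}A$. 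The classes $P^0$, $P^1$ are by construction built from the map $\gamma_M\colon T_p(M)[-1]\to S^pM$ of (\ref{free cosimplicial: general fiber sequence}) for $M=R[-i]$ together with the canonical summands of $T_p(M)[-1]$ produced by Lemma~\ref{free cosimplicial: tate p}. Since $\gamma$ and those fiber sequences are natural in $M$, the operation attached to such a class sends $[x]$ to the class of $m_A\circ\gamma_A\circ\bigl(T_p(x)[-1]\bigr)$ restricted to the appropriate summand of $T_p(A)[-1]$; thus the whole statement reduces to reading off $m_A\circ\gamma_A$ on these summands.

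For part~(2): for an $\bF_p$-algebra $A_0$, the summand $F^*_{X_0}A_0\hookrightarrow T_p(A_0)[-1]$ coming from the fiber sequence $F^*_{X_0}A_0[1]\to T_p(A_0)\to F^*_{X_0}A_0$ of Lemma~\ref{free cosimplicial: tate p}(4) identifies, by the four-term exact sequence in the proof of that lemma, with the inclusion $\ker N_p\hookrightarrow S^pA_0$; hence $\gamma_{A_0}$ restricted to it is the map $\Delta_{A_0}$ of Lemma~\ref{dalg: polynomial frobenius lemma}, and $m_{A_0}\circ\Delta_{A_0}=\varphi_{A_0}$ by the very definition of the Frobenius endomorphism (Lemma~\ref{dalg: frobenius}). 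Since $P^0_{\bF_p}$ is the operation built from this summand and $F^*_{X_0}=\Id$ over $\Spec\bF_p$, this yields $P^0_{\bF_p}=\varphi_{A_0}$. For part~(1): taking $A$ a cosimplicial commutative $\bZ_p$-algebra, so the base $\Spec\bZ_p$ is flat over $\bZ_p$, Lemma~\ref{free cosimplicial: algebra Bockstein} identifies $m_A\circ\gamma_A\colon F^*_{X_0}(A/p)[-1]\to A$ — under the identification $T_p(A)[-1]\simeq F^*_{X_0}(A/p)[-1]$ of Lemma~\ref{free cosimplicial: tate p}(3) — with $\Bock_A[-1]\circ\varphi_{A/p}$; for $M=\bZ_p[-i]$ we have $T_p(M)[-1]\simeq\bF_p[-i-1]$ (Lemma~\ref{free cosimplicial: tate p}(3)), a single summand, and tracking $T_p(x)[-1]$ as the mod-$p$ reduction of $x$ (again $F^*_{X_0}=\Id$) turns $m_A\circ\gamma_A\circ T_p(x)[-1]$ into the composite $H^i(A)\to H^i(A/p)\xrightarrow{\varphi_{A/p}}H^i(A/p)\xrightarrow{\Bock_A}H^{i+1}(A)$, which is~(1).

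For part~(3): $P^1_{\bF_p}$ is the reduction modulo $p$ of $P^1_{\bZ_p}$. Applying part~(1) to the cosimplicial commutative $\bZ_p$-algebra $S^{\bullet}_{\bZ_p}(\bZ_p[-i])$, whose mod-$p$ reduction is the free algebra $S^{\bullet}_{\bF_p}(\bF_p[-i])$, and reducing modulo $p$, expresses $P^1_{\bF_p}$ evaluated on the tautological class $\iota\in H^i(S^{\bullet}_{\bF_p}(\bF_p[-i]))$ as $\overline{\Bock}(\varphi(\iota))$, where $\overline{\Bock}$ is the $\bZ/p^2$-Bockstein of the reduction $S^{\bullet}_{\bZ/p^2}((\bZ/p^2)[-i])$ and $\varphi$ is the Frobenius of $S^{\bullet}_{\bF_p}(\bF_p[-i])$. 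Since, by Lemma~\ref{free cosimplicial: cohomology operations}, both $P^1_{\bF_p}$ and the connecting homomorphism $\beta_W$ of the termwise-$\bF_p$-linear, termwise-split sequence $A_0\xrightarrow{V}W_2(A_0)\to A_0$ are determined by their value on $\iota$, it suffices to check that $\beta_W(\iota)$ also equals $\overline{\Bock}(\varphi(\iota))$ — a direct computation amounting to the classical analysis of Serre~\cite{serre} relating the Witt-vector Bockstein to the $\bZ/p^2$-Bockstein. The final clause — that $P^1_{\bF_p}=\Bock_{\tilde B}\circ\varphi$ for any flat $\bZ/p^2$-lift $\tilde B$ of $A_0$ — then follows from the classical fact that $\beta_W=\Bock_{\tilde B}\circ\varphi$ whenever a flat $\bZ/p^2$-lift exists. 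I expect this last step to be the main obstacle: identifying Serre's Witt-vector Bockstein with the $\bZ/p^2$-Bockstein supplied by $T_p$, in the absence of a flat lift. Parts~(1) and~(2) are, by contrast, formal consequences of Lemma~\ref{free cosimplicial: algebra Bockstein} together with the definitions of $\gamma$, $\Delta$, and $\varphi$.
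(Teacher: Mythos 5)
Your proposal is correct and follows essentially the same route as the paper: parts (1) and (2) are the naturality square for $\gamma$ combined with Lemma \ref{free cosimplicial: algebra Bockstein} and the definitions of $\Delta$ and $\varphi$, and part (3) is reduced via Lemma \ref{free cosimplicial: cohomology operations} to the universal class on $A_0=S^{\bullet}\bF_p[-i]$, which lifts to $A=S^{\bullet}\bZ_p[-i]$ so that part (1) applies. The step you flag as the main obstacle is in fact harmless: the comparison with the Witt-vector connecting map is only ever needed on this universal algebra, where a flat lift exists, and there the identity $\beta_W=\Bock_{A/p^2}\circ\varphi_{A_0}$ follows from the termwise ring map $\phi\colon W_2(A_0)\to A/p^2$, $[a_0]+V[a_1]\mapsto \tilde a_0^{\,p}+p\tilde a_1$, which carries the sequence $A_0\xrightarrow{V}W_2(A_0)\to A_0$ to $A_0\xrightarrow{p}A/p^2\to A_0$ over $\mathrm{id}$ and $\varphi_{A_0}$ --- exactly the short diagram chase the paper writes out, so no appeal to an external "classical" reference is needed.
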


\begin{proof}
Given a class $[x]\in H^i(A)$ represented by a map $x:\bZ_p[-i]\to A$ we get a commutative diagram
\begin{equation}\label{free cosimplicial: steenrod diagram}
\begin{tikzcd}
T_p(\bZ_p[-i])[-1]\arrow[d, "{T_p(x)[-1]}"]\arrow[r, "\gamma_{\bZ_p[-i]}"] & S^p\bZ_p[-i]\arrow[d, "S^px"] \\
T_p(A)[-1]\arrow[r, "\gamma_{A}"] & S^pA\arrow[r, "m_A"] & A.
\end{tikzcd}
\end{equation}

By definition, the value $P^1_{\bZ_p}([x])\in H^{i+1}(A)$ is equal to the image of the class $1\in \bF_p\simeq H^{i+1}(T_p(\bZ_p[-i])[-1])$ under the clockwise composition in the diagram (\ref{free cosimplicial: steenrod diagram}). On the other hand, the counter-clockwise composition is homotopic to $\bF_p[-i-1]\xrightarrow{x[-1]\bmod p }A/p[-1]\xrightarrow{\varphi_{A/p}[-1]}A/p[-1]\xrightarrow{\Bock_{A}}A$ by Lemma \ref{free cosimplicial: algebra Bockstein}, and this implies part (1).

For a class $[x]\in H^i(A_0)$ represented by a map $x:\bF_p[-i]\to A_0$ the value $P^0_{\bF_p}([x])\in H^i(A_0)$ is defined as the image of the unit in $\bF_p=H^i(\bF_p[-i])$ under the composition $\bF_p[-i]\xrightarrow{\Delta_{\bF_p[-i]}}S^p\bF_p[-i]\xrightarrow{S^px}S^pA_0\xrightarrow{m_{A_0}} A_0$. This composition is homotopic to $\bF_p[-i]\xrightarrow{x}A_0\xrightarrow{\Delta_{A_0}}S^p A_0\xrightarrow{m_{A_0}} A_0$ which implies statement (2) by the definition of Frobenius on $A_0$.

By Lemma \ref{free cosimplicial: cohomology operations}, in (3) it is enough to check the claimed formula for $P^1_{\bF_p}$ on the universal class $1\in H^i(S^{\bullet}\bF_p[-i])$ for $A_0=S^{\bullet}\bF_p[-i]$. This cosimplicial $\bF_p$-algebra lifts to $A=S^{\bullet}\bZ_p[-i]$ and the universal class lifts along the map $H^i(A)\to H^i(A_0)$. Therefore we can apply part (1) to get that $P^1_{\bF_p}(1)$ is equal to the image of $1$ under the composition $H^i(A_0)\xrightarrow{\varphi_{A_0}}H^i(A_0)\xrightarrow{\Bock_{A/p^2}}H^{i+1}(A_0)$. 

To see that this composition is equal to the Witt vector Bockstein homomorphism, recall that there is a map of cosimplicial rings $\phi:W_2(A_0)\to A/p^2$ term-wise given by $[a_0]+V[a_1]\mapsto \ta_0^p+p\ta_1$ where $\ta_0,\ta_1\in A^i/p^2$ are arbitrary lifts of $a_0,a_1\in A^i_0$. This map fits into a commutative diagram where rows are term-wise exact sequences:
\begin{equation}
\begin{tikzcd}
0\arrow[r] & A_0\arrow[r,"V"]\arrow[d,equal] & W_2(A_0)\arrow[r]\arrow[d,"\phi"] & A_0\arrow[r]\arrow[d, "\varphi_{A_0}"] & 0 \\
0\arrow[r] & A_0\arrow[r,"p"] & A/p^2\arrow[r] & A_0\arrow[r] & 0.
\end{tikzcd}
\end{equation}
The induced map between the associated long exact sequence of cohomology proves that the connecting homomorphism induced by the top row is equal to $\Bock_{A/p^2}\circ\varphi_{A_0}$ which finishes the proof of (3).
\end{proof}

\subsection{Steenrod operations on cohomology of $E_{\infty}$-algebras.} In this expository subsection we recall how the power operations on cohomology of an $E_{\infty}$-algebra are defined, and relate them to the operations on the cohomology of cosimplicial commutative algebras. All of this material is contained in \cite{steenrod-epstein}, \cite{may-steenrod}, \cite{priddy}, and \cite{lurie-dag13}.

Let $A$ be an $E_{\infty}$-algebra over a (ordinary) commutative ring $R$. This structure, in particular, gives a multiplication map $A^{\otimes p}\to A$ in $D(R)$ which is $S_p$-equivariant where $S_p$ acts via permutations of the factors in $A^{\otimes p}$ and acts trivially on $A$. The multiplication map therefore factors through a map
\begin{equation}
m_A:A^{\otimes p}_{hS_p}\to A.
\end{equation}
This map is used to define operations on the cohomology of $A$. Given a cohomology class $[x]\in H^i(A)$ we can represent it by a map $x:R[-i]\to A$ in $D(R)$ and form the composition
\begin{equation}\label{free cosimplicial: coh class symmetric multiplication}
R[-i]^{\otimes p}_{hS_p}\xrightarrow{x^{\otimes p}}A^{\otimes p}_{hS_p}\xrightarrow{m_A}A.
\end{equation}

We now specialize to the case $R=\bF_p$. The cohomology groups of $\bF_p[-i]^{\otimes p}_{hS_p}$ are given by
\begin{equation}
H^j(\bF_p[-i]^{\otimes p}_{hS_p})=\begin{cases}
\bF_p,\text{ for }j=pi-(2k+1)(p-1)\text{ or }pi-(2k+1)(p-1)+1\text{ with }k\geq 0\\
0, \text{ otherwise}
\end{cases}
\end{equation}
if $i$ is odd, and by
\begin{equation}
H^j(\bF_p[-i]^{\otimes p}_{hS_p})=\begin{cases}
\bF_p,\text{ for }j=pi-2k(p-1)\text{ or }pi-2k(p-1)+1\text{ with }k\geq 0\\
0, \text{ otherwise}
\end{cases}
\end{equation}
if $i$ is even, see \cite[Chapter V, Lemmas 6.1, 6.2 + Proposition 7.8]{steenrod-epstein}. Thus for every $m$ of the form $2k(p-1)$ or $2k(p-1)+1$ we can define $P^{m}([x])\in H^{i+m}(A)$ as the image of a fixed generator of $H^{i+m}(\bF_p[-i]^{\otimes p}_{hS_p})$ under the map (\ref{free cosimplicial: coh class symmetric multiplication}). Applying this construction to the singular cohomology of a topological space gives rise to natural cohomology operations which satisfy special properties that are false for general $E_{\infty}$-algebras:

\begin{thm}\label{free cosimplicial: top space operations}
For a topological space $X$ and an integer $i\geq 0$ the operations $P^0:H^i_{\sing}(X,\bF_p)\to H^i_{\sing}(X,\bF_p),P^1:H^i_{\sing}(X,\bF_p)\to H^{i+1}_{\sing}(X,\bF_p)$ arising from the $E_{\infty}$-algebra structure on $C^{\bullet}_{\sing}(X,\bF_p)$ are described as
\begin{enumerate}
    \item $P^0=\Id$
    \item $P^1$ is the Bockstein homomorphism corresponding to the term-wise exact sequence of complexes $C^{\bullet}_{\sing}(X,\bF_p)\to C^{\bullet}_{\sing}(X,\bZ/p^2)\to C^{\bullet}_{\sing}(X,\bF_p)$.
\end{enumerate}
Moreover, all operations $P^m$ with $m<0$ are zero.
\end{thm}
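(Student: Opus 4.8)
The plan is to deduce all three assertions from Proposition \ref{free cosimplicial: steenrod operations description prop} after exhibiting the singular cochain $E_\infty$-algebra as arising from a cosimplicial commutative $\bF_p$-algebra. First I would observe that $C^\bullet_{\sing}(X,\bF_p)$ is the normalization of the cosimplicial commutative $\bF_p$-algebra $A_0$ whose $n$-th term is $\Fun(\mathrm{Sing}(X)_n,\bF_p)$, the algebra of $\bF_p$-valued functions on the set of $n$-simplices of $X$, and that the resulting derived commutative (hence $E_\infty$-) algebra $\cR(A_0)$ of Lemma \ref{dalg: cosimp to dalg} is the standard one on singular cochains --- both being induced by the diagonal of the singular simplicial set via Alexander--Whitney/Eilenberg--Zilber. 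Under this identification the operation $P^m$ applied to a class represented by $x\colon\bF_p[-i]\to A_0$ is the image of the chosen generator of $H^{i+m}\big((\bF_p[-i]^{\otimes p})_{hS_p}\big)$ along the composite $(\bF_p[-i]^{\otimes p})_{hS_p}\to S^p(\bF_p[-i])\xrightarrow{S^p(x)}S^pA_0\xrightarrow{m_{A_0}}A_0$, where the first arrow is the natural transformation $(-)^{\otimes p}_{hS_p}\Rightarrow S^p$; this uses that the $E_\infty$-multiplication $(A_0^{\otimes p})_{hS_p}\to A_0$ factors through $S^pA_0\to A_0$.

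Granting this compatibility, the deductions are short. For $m<0$: the derived symmetric power $S^p(\bF_p[-i])$ is concentrated in cohomological degrees $\geq i$ --- by the Dold--Kan/totalization formula it is computed by applying $S^p$ termwise to a cosimplicial $\bF_p$-module that vanishes below level $i$ --- so $H^{i+m}(S^p(\bF_p[-i]))=0$ and hence $P^m=0$. For $P^0$ and $P^1$ I apply Proposition \ref{free cosimplicial: steenrod operations description prop} to $A_0$: it identifies $P^0$ with the Frobenius endomorphism $\varphi_{A_0}$ and $P^1$ with the connecting map of the termwise exact sequence $A_0\xrightarrow{V}W_2(A_0)\to A_0$. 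Now $\varphi_{A_0}$ is the identity, because on each term $\Fun(\mathrm{Sing}(X)_n,\bF_p)$ the Frobenius is $f\mapsto f^p=f$ by Fermat's little theorem, exactly as in Lemma \ref{dalg: frobenius on etale}; and since $W_2$ commutes with products, $W_2(A_0)$ has $n$-th term $\Fun(\mathrm{Sing}(X)_n,\bZ/p^2)$ with $V$ and the projection becoming multiplication by $p$ and reduction modulo $p$, so the totalization of $A_0\xrightarrow{V}W_2(A_0)\to A_0$ is $C^\bullet_{\sing}(X,\bF_p)\xrightarrow{p}C^\bullet_{\sing}(X,\bZ/p^2)\to C^\bullet_{\sing}(X,\bF_p)$, whose connecting map is the Bockstein in the statement.

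The step that actually carries content --- and which is the classical input of \cite{steenrod-epstein}, \cite{may-steenrod}, \cite{priddy} --- is the compatibility invoked above: that the natural map $(\bF_p[-i]^{\otimes p})_{hS_p}\to S^p(\bF_p[-i])$ is an isomorphism on $H^i$ and $H^{i+1}$ and carries the distinguished generators to the classes $P^0_{\bF_p}, P^1_{\bF_p}$ defining the cosimplicial operations, the degree $<i$ vanishing being what was used for $m<0$. This is the comparison between the homotopy-coinvariant and derived-symmetric-power monads, cf.\ Lemma \ref{free cosimplicial: cosimp vs einf}; it rests on the explicit cohomology computations of both sides (Priddy's calculation together with \cite[Chapter V]{steenrod-epstein}, summarized in Lemmas \ref{free cosimplicial: symmetric power cohomology sheaves} and \ref{free cosimplicial: symmetric group cohomology}), and the only genuinely delicate point is checking that this natural comparison map matches up the generators --- everything else is formal.
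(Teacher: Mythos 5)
Your proposal is correct and follows essentially the same route the paper takes: the paper treats this theorem as classical (citing Steenrod--Epstein, May, Priddy) and, in the paragraph following Lemma \ref{free cosimplicial: free einf free cosimp deg p}, sketches exactly your deduction --- vanishing of $P^m$ for $m<0$ from the concentration of $S^p(\bF_p[-i])$ in degrees $\geq i$, and $P^0=\Id$, $P^1=\Bock$ from Proposition \ref{free cosimplicial: steenrod operations description prop} together with the fact that the Frobenius of the function algebra $C^{\bullet}_{\sing}(X,\bF_p)$ is the identity and that $W_2$ of it gives $\bZ/p^2$-valued cochains. The only point you flag as delicate (matching the distinguished generators under $(\bF_p[-i]^{\otimes p})_{hS_p}\to S^p(\bF_p[-i])$) is likewise left to the classical references in the paper, so there is no substantive divergence.
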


We can reconcile these operations with the operations on cohomology of cosimplicial commutative algebras, which will justify denoting these by the same symbols $P^i$. Let $A\in\CAlg^{\Delta}_{\bF_p}$ be a cosimplicial commutative $\bF_p$-algebra. We can view it as an $E_{\infty}$-algebra with the symmetric multiplication factoring through the cosimplicial symmetric multiplication
\begin{equation}
A^{\otimes p}_{hS_p}\to S^pA\xrightarrow{m_A}A.
\end{equation}
In particular, for a class $x\in H^i(A)$ the map (\ref{free cosimplicial: coh class symmetric multiplication}) factors through the natural map $\bF_p[-i]^{\otimes p}_{hS_p}\to S^p(\bF_p[-i])$. 

\begin{lm}\label{free cosimplicial: free einf free cosimp deg p}
For every $i$ the map $\bF_p[-i]^{\otimes p}_{hS_p}\to S^p(\bF_p[-i])$ factors through an equivalence $\tau^{\geq i}(\bF_p[-i]^{\otimes p}_{hS_p})\simeq S^p(\bF_p[-i])$.
\end{lm}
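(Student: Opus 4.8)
The map in the statement is the weight-$p$ component, evaluated at $\bF_p[-i]$, of the natural transformation from the free $E_{\infty}$-algebra functor $M\mapsto\bigoplus_{n\geq 0}(M^{\otimes n})_{hS_n}$ to the free derived commutative algebra functor $S^{\bullet}$; equivalently, it records what the forgetful functor $\DAlg(\bF_p)\to\Alg_{E_{\infty}}D(\bF_p)$ does to the free algebra on a degree-$i$ generator in weight $p$. The plan is first to observe that $S^p(\bF_p[-i])$ is concentrated in cohomological degrees $\geq i$, so that, since $S^p(\bF_p[-i])\in D^{\geq i}(\bF_p)$, this map factors uniquely through $\tau^{\geq i}(\bF_p[-i]^{\otimes p})_{hS_p}$, and then to check that the resulting map $\xi$ is an equivalence by verifying it induces an isomorphism on every cohomology group, which it suffices to do since $\xi$ is a morphism in $D(\bF_p)$.

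That $S^p(\bF_p[-i])$ lies in $D^{\geq i}(\bF_p)$, together with a complete description of its cohomology, is contained in Priddy's computation of the derived symmetric powers of $\bF_p[-i]$ over $\bF_p$, see \cite{priddy} (and Lemma \ref{free cosimplicial: symmetric power cohomology sheaves} in the case $i=1$); combining this with the classical description of $H^{\bullet}((\bF_p[-i]^{\otimes p})_{hS_p})$ as reindexed group homology of $S_p$ recalled above (following \cite[Ch.\ V]{steenrod-epstein}), one sees that $H^j((\bF_p[-i]^{\otimes p})_{hS_p})$ and $H^j(S^p(\bF_p[-i]))$ are both at most one-dimensional over $\bF_p$ in every degree and are nonzero in exactly the same set of degrees once one restricts to $j\geq i$. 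Therefore $\xi$ is an equivalence as soon as it is nonzero on $H^j$ for each $j\geq i$ with $H^j(S^p(\bF_p[-i]))\neq 0$.

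These nonzero classes are the (reduced) power operations applied to the universal class $u\in H^i$: by Lemma \ref{free cosimplicial: cohomology operations}, a nonzero class in $H^j(S^p(\bF_p[-i]))\subset H^j(S^{\bullet}(\bF_p[-i]))$ corepresents a nonzero operation $H^i(-)\to H^j(-)$ on cohomology of cosimplicial commutative $\bF_p$-algebras, and Priddy's computation \cite{priddy} identifies these in weight $p$ with the operations $P^s$ and $\beta P^s$ (the top one, occurring in degree $pi$ when $i$ is even, being $u\mapsto u^p$); likewise the corresponding class of $H^j((\bF_p[-i]^{\otimes p})_{hS_p})$ corepresents the analogous operation on cohomology of $E_{\infty}$-$\bF_p$-algebras. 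Since $\xi$ is induced on free algebras by the functor $\DAlg(\bF_p)\to\Alg_{E_{\infty}}D(\bF_p)$, it sends the universal class to the universal class and commutes with power operations, hence carries each such generator of $H^j((\bF_p[-i]^{\otimes p})_{hS_p})$ to the corresponding generator of $H^j(S^p(\bF_p[-i]))$; as the operations $P^s$ and $\beta P^s$ are nonzero already on the singular $\bF_p$-cochains of a suitable topological space (Theorem \ref{free cosimplicial: top space operations} for $P^0$ and $P^1$, and \cite{steenrod-epstein} in general), those generators are nonzero, and hence so is $\xi$ on each relevant $H^j$. Together with the dimension count this shows $\xi$ induces an isomorphism on all cohomology, hence is an equivalence. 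The one step that requires care is this matching of the natural map with the comparison of the corepresenting objects for operations on $E_{\infty}$-versus cosimplicial commutative algebras; the remaining inputs are the classical computations of \cite{priddy} and \cite{steenrod-epstein}.
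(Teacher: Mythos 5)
Your proposal is correct and follows essentially the same route as the paper: reduce via Priddy's computation (and the group-homology calculation of $S_p$) to showing the map is nonzero on each cohomology group in degrees $\geq i$, then deduce this nonvanishing from the nontriviality of the Steenrod operations $P^m$ on mod $p$ cohomology of a topological space, using that for cosimplicial commutative algebras the $E_\infty$-power operations factor through $S^p$. The "matching" step you flag as delicate is exactly what the paper disposes of by evaluating on $K(\bF_p,i)$ and invoking the uniqueness of functorial operations from \cite[\S VIII.3]{steenrod-epstein}.
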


\begin{proof}
The cohomology groups of the object $S^p(\bF_p[-i])$ are abstractly isomorphic to those of $\tau^{\geq i}(\bF_p[-i]^{\otimes p}_{hS_p})$ by \cite[Theorems 4.1.1,4.2.1]{priddy} so it is enough to show that the map $\tau^{\geq i}(\bF_p[-i]^{\otimes p}_{hS_p})\to S^p\bF_p[-i]$ induces an injection on cohomology in degrees $\geq i$.

For this it suffices to provide, for every $m\geq 0$, a topological space $X$ such that the operation $P^m:H^{i}_{\sing}(X,\bF_p)\to H^{i+m}_{\sing}(X,\bF_p)$ is non-zero. The Eilenberg-MacLane space $X=K(\bF_p,i)$ satisfies this condition, because the operations on cohomology of topological spaces defined from cohomology classes of Eilenberg-MacLane spaces coincide with those defined using the $E_{\infty}$-algebra structure, e.g. by the uniqueness of functorial operations \cite[\S VIII.3]{steenrod-epstein}.
\end{proof}

This discussion can be applied to the cosimplicial commutative algebra $A=C^{\bullet}_{\sing}(X,\bF_p)$ of singular cochains on a topological space $X$ whose underlying $E_{\infty}$-algebra was referenced in Theorem \ref{free cosimplicial: top space operations}. The fact that operations $P^m$ vanish for $m<0$ can thus be explained by the fact that $S^p(\bF_p[-i])$ is concentrated in degrees $\geq i$. The algebras of the form $C^{\bullet}_{\sing}(X,\bF_p)$ are special among general cosimplicial commutative $\bF_p$-algebras in that their Frobenius endomorphism is equal to the identity, because $C^{\bullet}_{\sing}(X,\bF_p)$ is defined as the algebra of $\bF_p$-valued functions on the simplicial singular set of $X$, cf. \cite[6.1]{priddy}. In particular, the relations $P^0=\Id$ and $P^1=\Bock$ in Theorem \ref{free cosimplicial: top space operations} can be deduced from our Proposition \ref{free cosimplicial: steenrod operations description prop}.

\section{Extensions in complexes underlying derived commutative algebras}
\label{cosimp: section}

In this section we prove our main algebraic result on extensions in the canonical filtration on the complexes underlying certain derived commutative algebras. In this section $X$ will be an algebraic stack flat over $\bZ_p$, or a formal scheme flat over $\Spf\bZ_p$. In both regimes we denote by $X_0=X\times_{\bZ_p}\bF_p$ the special fiber of $X$.

\begin{thm}\label{cosimp: main theorem}
Let $A\in \DAlg^{\geq 0}(X)$ be a derived commutative algebra on $X$ concentrated in degrees $\geq 0$ such that $H^0(A)=\cO_X$, $H^1(A)$ a locally free $\cO_X$-module, and multiplication on cohomology induces an isomorphism $H^{\bullet}(A)\simeq \Lambda^{\bullet}H^1(A)$. Assume further that there is a morphism $s:H^1(A)[-1]\to A$ in $D(X)$ splitting the canonical filtration on $\tau^{\leq 1}A$.

\begin{enumerate}
\item There exists a natural equivalence $\bigoplus\limits_{i=0}^{p-1} H^i(A)[-i]\simeq \tau^{\leq p-1}A$ in $D(X)$.

\item There is a natural homotopy between the map $H^p(A)\simeq \cofib(\tau^{\leq p-1}A\to\tau^{\leq p}A)[p]\to(\tau^{\leq p-1}A)[p+1]$ corresponding to the extension $\tau^{\leq p-1}A\to \tau^{\leq p}A\to H^p(A)[-p]$ and the composition
\begin{multline}\label{cosimp: main formula}
H^p(A)=\Lambda^p H^1(A)\xrightarrow{\alpha(H^1(A)/p)}F_{X_0}^*H^1(A/p)[p-1]\xrightarrow{F_{X_0}^*s[p]}(\tau^{\leq 1}F_{X_0}^*(A/p))[p]\xrightarrow{\varphi_{A/p}}\\ (\tau^{\leq 1}A/p)[p]\xrightarrow{\Bock_A}(\tau^{\leq 1}A)[p+1]\to (\tau^{\leq p-1}A)[p+1].
\end{multline}
where $\alpha(H^1(A)/p)$ is the class introduced in Definition \ref{free cosimplicial: alpha definition}.
\end{enumerate}

\end{thm}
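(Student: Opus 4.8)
The plan is to manufacture, from the splitting $s$, higher symmetric-power maps $s_i\colon S^i(H^1(A)[-1])\to A$ for all $i\ge 0$, and to read off part~(2) from the failure of $s_p$ to factor through $(\Lambda^pH^1(A))[-p]$ --- which is exactly what the class $\alpha$ measures. Write $E:=H^1(A)$ and let $s_i$ be the composite of $S^i(s)\colon S^i(E[-1])\to S^i(A)$ with the $i$-fold multiplication $m_A\colon S^i(A)\to A$ given by the $S^\bullet$-module structure on $A$; this is visibly natural in the pair $(A,s)$. For part~(1): when $i<p$ the integer $i!$ is invertible on $X$, so $N_i\colon S^i\to\Gamma^i$ is an equivalence by Lemma~\ref{free cosimplicial: tate p}(1), and combined with the d\'ecalage equivalence $\Gamma^i(E[-1])\simeq(\Lambda^iE)[-i]$ of (\ref{dalg: decalage}) this identifies $S^i(E[-1])$ with $H^i(A)[-i]$. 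The source of $\bigoplus_{i=0}^{p-1}s_i$ then lives in degrees $\le p-1$, so the map factors through $\tau^{\le p-1}A$, and one checks it is an equivalence on each cohomology sheaf $H^j$, $j\le p-1$, using that $s$ is an isomorphism on $H^1$ and the hypothesis that cup product identifies $\Lambda^\bullet H^1(A)\xrightarrow{\sim}H^\bullet(A)$: the $j$-th component then induces the identity of $\Lambda^jH^1(A)=H^j(A)$ and the cross terms vanish for degree reasons.

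For part~(2) I would work with the fiber sequence (\ref{free cosimplicial: alpha' extension}), $T_p(E[-1])[-1]\xrightarrow{\gamma_{E[-1]}}S^p(E[-1])\xrightarrow{N_p}(\Lambda^pE)[-p]$, using the identification $T_p(E[-1])[-1]\simeq F_{X_0}^*(E/p)[-2]$ (Lemma~\ref{free cosimplicial: tate p}(3)) and the fact, from Definition~\ref{free cosimplicial: alpha definition} and Lemma~\ref{free cosimplicial: alpha adjunction}, that its connecting morphism is, after the shift by $[p]$, the class $\alpha(E/p)$. The first point is that $s_p$ factors uniquely through $\tau^{\le p}A$, and that $s_p\circ\gamma_{E[-1]}$ factors uniquely through a map $\beta\colon T_p(E[-1])[-1]\to\tau^{\le p-1}A$ --- automatic when $p>2$ because the source sits in degree $2\le p-1$, and for $p=2$ because the $H^2$-component of $s_2\circ\gamma_{E[-1]}$ vanishes, which one reads off from the description of $S^2(E[-1])$ and of $N_2$ on $H^2$ (Corollary~\ref{free cosimplicial: symmetric power cohomology sheaves p=2}). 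Then $\beta$, the factored $s_p$, and the cup-product isomorphism $(\Lambda^pE)[-p]\xrightarrow{\sim}H^p(A)[-p]$ assemble into a map of fiber sequences from (\ref{free cosimplicial: alpha' extension}) to the truncation triangle $\tau^{\le p-1}A\to\tau^{\le p}A\to H^p(A)[-p]$; the right-hand square commutes because a map $S^p(E[-1])\to H^p(A)[-p]$ is detected on $H^p$ (the source has cohomology only in degrees $\le p$) and there $s_p$ induces the cup-product isomorphism. By functoriality of connecting morphisms, the extension class of the bottom triangle equals $\beta[1]$ precomposed with the connecting morphism of (\ref{free cosimplicial: alpha' extension}); after the shift by $[p]$ this gives the identification $H^p(A)=\Lambda^pH^1(A)$ followed by $\alpha(H^1(A)/p)$ followed by $\beta$ (suitably shifted), accounting for the first arrow of (\ref{cosimp: main formula}).

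It remains to identify $\beta$ with the remaining arrows of (\ref{cosimp: main formula}). Here I would use naturality of $\gamma_{(-)}$: the square with horizontal arrows $\gamma_{E[-1]},\gamma_A$ and vertical arrows $T_p(s)[-1],S^p(s)$ commutes, and post-composing the bottom row with $m_A$ and invoking Lemma~\ref{free cosimplicial: algebra Bockstein} rewrites $m_A\circ\gamma_A$ as $F_{X_0}^*(A/p)[-1]\xrightarrow{\varphi_{A/p}}A/p[-1]\xrightarrow{\Bock_A[-1]}A$ under the identification $T_p(A)[-1]\simeq F_{X_0}^*(A/p)[-1]$. Since $T_p(s)$ is $F_{X_0}^*$ applied to $i^*s$, tracking shifts identifies $s_p\circ\gamma_{E[-1]}$ with $F_{X_0}^*(E/p)[-2]\xrightarrow{F_{X_0}^*(s)[-1]}F_{X_0}^*(A/p)[-1]\xrightarrow{\varphi_{A/p}[-1]}(A/p)[-1]\xrightarrow{\Bock_A[-1]}A$; unwinding its factorization through $\tau^{\le p-1}A$ and matching the canonical truncations present in (\ref{cosimp: main formula}) ($\tau^{\le 1}$ on the $F_{X_0}^*(A/p)$- and $(A/p)$-terms, and the inclusion $\tau^{\le 1}A\hookrightarrow\tau^{\le p-1}A$) then yields exactly the tail of (\ref{cosimp: main formula}).

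The conceptual content here is short; the bulk of the work is bookkeeping. One must track the many shifts $[-1],[-2],[-p],[p]$ and, more delicately, check at every stage that the maps in play factor \emph{uniquely} through the canonical truncations, so that $\beta$, the factorization of $s_p$, and the map of fiber sequences are all genuinely well defined; the connectivity estimates on $S^p(E[-1])$ --- cohomology only in degrees $2$ and $p$, or $1$ and $2$ when $p=2$, by Lemma~\ref{free cosimplicial: symmetric power cohomology sheaves} and Corollary~\ref{free cosimplicial: symmetric power cohomology sheaves p=2} --- do that work. The case $p=2$ needs separate care, since then $\tau^{\le p-1}A=\tau^{\le 1}A$ is too small to automatically receive the degree-$2$ source of $\gamma_{E[-1]}$, forcing a direct argument that the relevant composite lands in $\tau^{\le 1}A$. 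Finally, the one place where the hypothesis $H^\bullet(A)\simeq\Lambda^\bullet H^1(A)$ is truly used is the commutativity of the right-hand square of the map of fiber sequences --- that $s_p$ induces the cup-product isomorphism on $H^p$ --- and this deserves to be isolated and checked carefully.
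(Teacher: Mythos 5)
Your proposal is correct and follows essentially the same route as the paper's proof: the maps $s_i=m_A\circ S^i(s)$, the fiber sequence $F_{X_0}^*(H^1(A)/p)[-2]\to S^p(H^1(A)[-1])\xrightarrow{N_p}(\Lambda^pH^1(A))[-p]$ assembled into a map of fiber sequences to the truncation triangle, and Lemma \ref{free cosimplicial: algebra Bockstein} to identify the induced map on fibers with $\Bock_A\circ\varphi_{A/p}\circ F_{X_0}^*s$. The one step you flag but do not carry out --- that $s_p$ induces the cup-product map on $H^p$ --- is exactly where the paper invokes Lemma \ref{cosimp: tensor power vs symmetric power} (compatibility of d\'ecalage with multiplication), precomposing with $H^1(A)[-1]^{\otimes p}$; and the paper handles the truncation/uniqueness issues (including $p=2$) uniformly by the observation that the relevant source is $p$-torsion while $H^p(A)$ is locally free on the flat base, rather than by your separate $p=2$ argument.
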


\begin{proof}
For each $i\geq 0$, consider the map
\begin{equation}
s_i:S^i(H^1(A)[-1])\xrightarrow{S^i s}S^iA\xrightarrow{m_A}A
\end{equation} 
The composition $H^1(A)[-1]^{\otimes i}\to S^i(H^1(A)[-1])\xrightarrow{s_i}A$ induces the multiplication map $m:H^1(A)^{\otimes i}\to H^i(A)$ on $i$-th cohomology and $0$ on all other cohomology groups. By the assumption that $H^{\bullet}(A)$ is freely generated by $H^1(A)$, the map $m:H^1(A)^{\otimes i}\to H^i(A)$ identifies $H^i(A)$ with $\Lambda^iH^1(A)$.

For $i\leq p-1$ we have $S^i(H^1(A)[-1])\simeq \Gamma^i(H^1(A)[-1])\simeq (\Lambda^i H^1(A))[-i]$, and the map $H^1(A)[-1]^{\otimes i}\to S^i(H^1(A)[-1])$ is the shift by $[-i]$ of the natural surjection $H^1(A)^{\otimes i}\to \Lambda^i H^1(A)$. Therefore $s_i$ induces an isomorphism on $H^i$, which proves the first part of the theorem.

For $i=p$ the natural map $S^p(H^1(A)[-1])\xrightarrow{N_p}\Gamma^p(H^1(A)[-1])\simeq (\Lambda^p H^1(A))[-p]$ is not an equivalence anymore. Using the map $s_p$, we will relate the extension $\tau^{\leq p-1}A\to\tau^{\leq p}A\to H^p(A)[-p]$ to the extension \begin{equation}F_{X_0}^*H^1(A)/p[-2]\to S^p(H^1(A)[-1])\to \Lambda^p H^1(A)[-p]\end{equation} constructed in (\ref{free cosimplicial: alpha flat over zp}). To begin, let us compute the map induced by $s_p$ on the cohomology in degree $p$.

Since $S^p(H^1(A)[-1])$ is concentrated in degrees $\leq p$, the map $s_p$ naturally factors through $\tau^{\leq p}A$. We claim that the composition \begin{equation}\label{cosimp: deg p coh diagram}S^p(H^1(A)[-1])\xrightarrow{s_p}\tau^{\leq p}A\to H^p(A)[-p]\end{equation} is naturally homotopic to the norm map $N_p:S^p(H^1(A)[-1])\to(\Lambda^pH^1(A))[-p]$. This composition factors uniquely through the norm map, because the mapping space $\Map_{D(X)}(\fib (N_p),H^p(A)[-p])$ is contractible as $H^p(A)[-p]$ is a locally free sheaf placed in degree $p$, and $\fib (N_p)\simeq F^*_{X_0}H^1(A)/p[-2]$ is a $p$-torsion object concentrated in degrees $\leq 2\leq p$. Therefore the composition (\ref{cosimp: deg p coh diagram}) has the form $S^p(H^1(A)[-1])\xrightarrow{N_p}\Lambda^p H^1(A)[-p]\xrightarrow{\psi}H^p(A)[-p]$ for some map $\psi$. To check that $\psi$ is equal to the cup-product map, we may precompose this composition with the map $H^1(A)[-1]^{\otimes p}\to S^p H^1(A)[-1]$, and the claim follows from:

\begin{lm}\label{cosimp: tensor power vs symmetric power}
Let $E$ be a locally free sheaf on $X$. Under the d\'ecalage identification $\Gamma^p (E[-1])\simeq \Lambda^p E[-p]$ the composition $E[-1]^{\otimes p}\to S^p(E[-1])\xrightarrow{N_p}\Gamma^p(E[-1])$ is identified with the shift by $[-p]$ of the map $E^{\otimes p}\to\Lambda^p E$. 
\end{lm}

\begin{proof}
\cite[Proposition I.4.3.2.1]{illusie-cotangent1} shows that d\'ecalage equivalences are compatible with graded algebra structures on symmetric, divided power, and exterior algebras. Our assertion is a special case of this.
\end{proof}

This allows us to fit $s_p$ into the following map of fiber sequences for some map $\beta_p^{\leq p-1}$:

\begin{equation}
\begin{tikzcd}[row sep=large, column sep = large]
F^*_{X_0}(H^1(A)/p)[-2]\arrow[r,"{\gamma_{H^1(A)[-1]}}"]\arrow[d,"\beta_p^{\leq p-1}"] & S^p(H^1(A)[-1])\arrow[r,"N_p"]\arrow[d, "s_p"] & (\Lambda^p H^1(A))[-p]\arrow[d, "\sim"] \\
\tau^{\leq p-1}A\arrow[r] & \tau^{\leq p}A\arrow[r] & H^p(A)[-p].
\end{tikzcd}
\end{equation}

This diagram implies that the extension class $H^p(A)\to (\tau^{\leq p-1}A)[p+1]$ corresponding to the bottom row can be described as the composition
\begin{equation}\label{cosimp: diagram yet unknown beta}
H^p(A)\simeq \Lambda^p H^1(A)\xrightarrow{\alpha(H^1(A)/p)}F_{X_0}^*(H^1(A)/p)[p-1]\xrightarrow{\beta_p^{\leq p-1}[p+1]}(\tau^{\leq p-1}A)[p+1].
\end{equation}

Here $\alpha(H^1(A)/p)$ is the natural class attached to the vector bundle $H^1(A)/p$ on $X_0$ by Definition \ref{free cosimplicial: alpha definition}, we view it as a map from $\Lambda^p H^1(A)$ to $F^*_{X_0}(H^1(A)/p)[p-1]$ via adjunction as in Lemma \ref{free cosimplicial: alpha adjunction}.

To finish the proof of Theorem \ref{cosimp: main theorem}, it remains to compute $\beta_p^{\leq p-1}$. Note that $\beta_p^{\leq p-1}$ can be naturally recovered from the composition $\beta_p:F^*_{X_0}H^1(A)/p[-2]\xrightarrow{\beta^{\leq p-1}_p} \tau^{\leq p-1}A\to A$ as the truncation $\tau^{\leq p-1}\beta_p$, because the mapping space $\Map_{D(X)}(F^*_{X_0}H^1(A)/p[-2],\tau^{\geq p}A)$ is contractible. To identify $\beta_p$, consider the following commutative diagram
\begin{equation}
\begin{tikzcd}[column sep = large]
F^*_{X_0}(H^1(A)/p)[-2]\arrow[r,"{\gamma_{H^1(A)[-1]}}"]\arrow[d, "{F_{X_0}^*s[-1]}"]\arrow[drr, bend left=58,"\beta_p"] & S^p (H^1(A)[-1])\arrow[d, "S^ps"]\arrow[rd, "s_p"]\\
F^*_{X_0}(A/p)[-1]\arrow[r, "{\gamma_A}"] & S^pA \arrow[r, "m"] & A.
\end{tikzcd}
\end{equation}

The composition of the bottom row was proven in Lemma \ref{free cosimplicial: algebra Bockstein} to be homotopic to $F^*_{X_0}(A/p)[-1]\xrightarrow{\varphi_{A/p}}A/p[-1]\xrightarrow{\Bock_A}A$, hence $\beta_p:F_{X_0}^*(H^1(A)/p)[-2]\to A$ is homotopic to the composition \begin{equation}
F^*_{X_0}(H^1(A)/p)[-2]\xrightarrow{F_{X_0}^*s[-1]}F_{X_0}^*(A/p)[-1]\xrightarrow{\varphi_{A/p}}A/p[-1]\xrightarrow{\Bock_A[-1]}A
\end{equation} 
that can be factored as 
\begin{equation}\label{cosimp: betaleq1 formula}
F^*_{X_0}(H^1(A)/p)[-2]\xrightarrow{F_{X_0}^*s[-1]}(\tau^{\leq 1}F_{X_0}^*(A/p))[-1]\xrightarrow{\varphi_{A/p}}\tau^{\leq 1}(A/p)[-1]\xrightarrow{\Bock_A[-1]}\tau^{\leq 1}A\to A.
\end{equation}

Therefore $\beta^{\leq p-1}_p$ is the composition of the first $3$ arrows in (\ref{cosimp: betaleq1 formula}) followed by the map $\tau^{\leq 1}A\to\tau^{\leq p-1}A$. Plugging this expression for $\beta_p^{\leq p-1}$ into (\ref{cosimp: diagram yet unknown beta}) finishes the proof of the second part of the theorem.
\end{proof}

Let us record the special form that Theorem \ref{cosimp: main theorem} takes in the case of augmented algebras.

\begin{cor}\label{cosimp: augmented}
Let $A\in \DAlg^{\geq 0}(X)$ be a derived commutative algebra such that $H^0(A)\simeq \cO_X$ and the multiplication on cohomology induces an isomorphism $H^{\bullet}(A)=\Lambda^{\bullet}H^1(A)$. Assume also that $A$ is equipped with a map $\varepsilon:A\to \cO_X$ of derived commutative algebras that induces an isomorphism on $H^0$. Then 

\begin{enumerate}
\item $\tau^{\leq p-1}A$ naturally decomposes in $D(X)$ as $\bigoplus\limits_{i=0}^{p-1}H^i(A)[-i]$.

\item The extension class $H^p(A)\to \tau^{\leq p-1}A[p+1]$ corresponding to $\tau^{\leq p}A$ can be described as the composition

\begin{multline}\label{cosimp: augmented formula}
H^p(A)\xrightarrow{\alpha(H^1(A)/p)}F^*(H^1(A)/p)[p-1]\xrightarrow{\varphi_{A/p}} (H^1(A)/p)[p-1]\xrightarrow{\Bock_{H^1(A)}[p-1]} \\ \to H^1(A)[p]\xrightarrow{} \tau^{\leq p-1}A[p+1].
\end{multline}
\end{enumerate}
\end{cor}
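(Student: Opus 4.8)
The plan is to reduce Corollary \ref{cosimp: augmented} to Theorem \ref{cosimp: main theorem}: the augmentation $\varepsilon$ supplies a canonical splitting map $s$, after which the output formula (\ref{cosimp: main formula}) collapses to (\ref{cosimp: augmented formula}). To construct $s$, set $\overline{A}:=\fib(\varepsilon\colon A\to\cO_X)\in D(X)$. Since the unit $u\colon\cO_X\to A$ satisfies $\varepsilon\circ u\simeq\id_{\cO_X}$, the map $\varepsilon$ is split, so $A\simeq\cO_X\oplus\overline{A}$ in $D(X)$; as $H^0(A)=\cO_X$, the long exact sequence gives $H^0(\overline{A})=0$, an isomorphism $H^1(\overline{A})\xrightarrow{\ \sim\ }H^1(A)$, and $H^i(\overline{A})=H^i(A)$ for $i\geq2$. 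In particular $\overline{A}$ is concentrated in degrees $\geq1$, hence $\tau^{\leq1}\overline{A}\simeq H^1(A)[-1]$, and the composite $s\colon H^1(A)[-1]\simeq\tau^{\leq1}\overline{A}\to\overline{A}\to A$ splits the canonical filtration on $\tau^{\leq1}A$. With this $s$, all hypotheses of Theorem \ref{cosimp: main theorem} hold, and it yields part (1) together with the description of the extension class by (\ref{cosimp: main formula}).

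It then remains to simplify (\ref{cosimp: main formula}) to (\ref{cosimp: augmented formula}). The two formulas differ only in that, after $\alpha(H^1(A)/p)$, the first threads through $F^*_{X_0}s$, the Frobenius $\varphi_{A/p}$ of the whole algebra, and the Bockstein $\Bock_A$, whereas the second applies Frobenius and Bockstein directly on $H^1(A)/p$ and $H^1(A)$. I claim the two agree because every map involved preserves the augmentation-ideal summand. Indeed, $s$ factors through $\overline{A}$ by construction; the augmentation $\varepsilon/p\colon A/p\to\cO_{X_0}$ is a map of derived commutative algebras, and by Lemma \ref{dalg: frobenius} the Frobenius is natural along it with $\varphi_{\cO_{X_0}}=\id$ (under $F^*_{X_0}\cO_{X_0}\simeq\cO_{X_0}$), so $(\varepsilon/p)\circ\varphi_{A/p}\circ F^*_{X_0}(\overline{A}/p\hookrightarrow A/p)$ vanishes and therefore $\varphi_{A/p}\circ F^*_{X_0}(\overline{A}/p\hookrightarrow A/p)$ factors through $\overline{A}/p$ — here one routes the argument through the factorization $\varphi=m\circ\Delta$ of Lemma \ref{dalg: frobenius}, since $s$ itself is not an algebra map, whereas $\Delta$ is natural on all of $D(X_0)$ and only $m$ uses the algebra structure; finally $\Bock_A$ is natural in $A$, so it carries $\overline{A}/p$ into $\overline{A}[1]$ compatibly with the inclusion into $A[1]$. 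Consequently the composition after $\alpha(H^1(A)/p)$ factors through $\overline{A}/p$ and then $\overline{A}$; applying $\tau^{\leq1}$ and using $\tau^{\leq1}\overline{A}=H^1(A)[-1]$ converts ``$\varphi_{A/p}$ then $\Bock_A$'' on $\overline{A}$ into ``$\varphi_{A/p}$ on $H^1(A)/p$ then $\Bock_{H^1(A)}$'', while the terminal map $\tau^{\leq1}A\to\tau^{\leq p-1}A$ restricts on the $H^1$-summand to the inclusion $\oplus$ into the decomposition of part (1). This is exactly (\ref{cosimp: augmented formula}).

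The main obstacle is precisely the Frobenius-stability of the augmentation ideal $\overline{A}/p$ — that $\varphi_{A/p}\circ F^*_{X_0}s$ lands in $\overline{A}/p$ — together with the bookkeeping of canonical truncations needed to see that ``Frobenius then Bockstein on $\overline{A}$, then truncate to degrees $\leq1$'' coincides on the nose with ``$\varphi_{A/p}$ on $H^1(A)/p$ then $\Bock_{H^1(A)}$''. Once these compatibilities are nailed down, the rest is a direct rewriting of the conclusion of Theorem \ref{cosimp: main theorem}.
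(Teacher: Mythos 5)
Your proposal is correct and follows essentially the same route as the paper: the augmentation supplies the splitting $s$ of $\tau^{\leq 1}A$, one applies Theorem \ref{cosimp: main theorem}, and the formula (\ref{cosimp: main formula}) collapses to (\ref{cosimp: augmented formula}). Your "Frobenius and Bockstein preserve the augmentation-ideal summand $\overline{A}$" is exactly the paper's statement that these morphisms are diagonalized with respect to the decompositions $\tau^{\leq 1}A\simeq H^0(A)\oplus H^1(A)[-1]$ and $\tau^{\leq 1}(A/p)\simeq H^0(A/p)\oplus H^1(A/p)[-1]$, just spelled out in more detail.
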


\begin{proof}
The augmentation map $\varepsilon$ induces a splitting of the fiber sequence $H^0(A)\to \tau^{\leq 1}A\to H^1(A)[-1]$. In particular, there exists a map $s:H^1(A)[-1]\to A$ inducing an isomorphism on $H^1$, so we are in a position to apply Theorem \ref{cosimp: main theorem}. The formula (\ref{cosimp: main formula}) specializes to (\ref{cosimp: augmented formula}) because under the decompositions $\tau^{\leq 1}A\simeq H^0(A)\oplus H^1(A)[-1]$ and $\tau^{\leq 1}(A/p)\simeq H^0(A/p)\oplus H^1(A/p)[-1]$ the Bockstein and Frobenius morphisms are diagonalized.
\end{proof}

\subsection{Equivariant situation} Suppose that $X$ is a flat $\bZ_p$-scheme equipped with an action of a discrete group $G$. Let us explicitly record that Theorem \ref{cosimp: main theorem} applied to the global quotient $[X/G]$ can be rephrased as a statement about $G$-equivariant algebras on $X$, where we take the definition of the category of $G$-equivariant derived commutative algebras on $X$ to be $\DAlg([X/G])$.  We state the result in the augmented setting because this is the version that will be used in all the applications.

\begin{thm}\label{cosimp: equivariant main theorem}
Given a $G$-equivariant augmented derived commutative algebra $A$ on $X$, such that $H^0(A)=\cO_X$, the sheaf $H^1(A)$ is a locally free sheaf of $\cO_X$-modules, and the multiplication induces isomorphisms $\Lambda^{\bullet} H^1(A)\simeq H^{\bullet}(A)$, we have

\begin{enumerate}
    \item There is an equivalence $\tau^{\leq p-1}A\simeq \bigoplus\limits_{i=0}^{p-1} H^i(A)[-i]$ in $D_G(X)$.
    
    \item The map $H^p(A)\to \tau^{\leq p-1}A[p+1]$ corresponding to the fiber sequence $\tau^{\leq p-1}A\to \tau^{\leq p}A\to H^p(A)[-p]$ in $D_G(X)$ can be described as 
    \begin{equation}\label{cosimp: equivariant main formula}
H^p(A)\xrightarrow{\alpha(H^1(A)/p)}F^*(H^1(A)/p)[p-1]\xrightarrow{\varphi_{A/p}} (H^1(A)/p)[p-1]\xrightarrow{\Bock_{H^1(A)}[p-1]}H^1(A)[p]\xrightarrow{}\tau^{\leq p-1}A[p+1].
    \end{equation}
\end{enumerate}
\end{thm}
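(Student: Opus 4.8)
The plan is to deduce Theorem \ref{cosimp: equivariant main theorem} directly from the non-equivariant statement Corollary \ref{cosimp: augmented} by working on the quotient stack $[X/G]$. Since $G$ is discrete, its constant group scheme over $\bZ_p$ is affine and flat over $\bZ_p$, so $[X/G]$ is a $1$-Artin stack flat over $\bZ_p$ and hence lies within the class of geometric objects handled in this section; moreover $[X/G]\times_{\bZ_p}\bF_p\simeq [X_0/G]$, so the special fiber relevant to Corollary \ref{cosimp: augmented} is $[X_0/G]$. By the conventions fixed just before the theorem, $D_G(X)$ is by definition $D([X/G])$ and a $G$-equivariant (augmented) derived commutative algebra on $X$ is by definition an (augmented) object of $\DAlg([X/G])$; thus the data of the theorem are precisely the data of an augmented $A\in\DAlg([X/G])$.

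Next I would check that the hypotheses transfer. The conditions $H^0(A)=\cO_{[X/G]}$, $H^1(A)$ locally free, and $\Lambda^\bullet H^1(A)\simeq H^\bullet(A)$ via multiplication are exactly the hypotheses of Corollary \ref{cosimp: augmented}; since $H^i(A)=\Lambda^i H^1(A)$ is then concentrated in cohomological degree $i$ and vanishes for $i<0$, the underlying object of $A$ lies in $D^{\geq 0}([X/G])$, so $A\in\DAlg^{\geq 0}([X/G])$ as required (and left-completion issues are irrelevant here since everything in sight is bounded below). Applying Corollary \ref{cosimp: augmented} to $[X/G]$ now yields at once the decomposition $\tau^{\leq p-1}A\simeq\bigoplus_{i=0}^{p-1}H^i(A)[-i]$ in $D([X/G])=D_G(X)$, which is part (1), together with the identification of the extension class attached to $\tau^{\leq p}A$ with the composite (\ref{cosimp: augmented formula}).

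It then remains only to observe that (\ref{cosimp: augmented formula}), read in $D([X/G])$, is verbatim formula (\ref{cosimp: equivariant main formula}). This is a matter of unwinding notation: the operations $F^*$, $\varphi_{A/p}$, $\Bock_{H^1(A)}$ and $\alpha(H^1(A)/p)$ in the equivariant statement denote the corresponding operations in $D([X/G])$ and $D([X_0/G])$, which were all constructed functorially in the prestack via $\Sigma_{(-)}$ (Lemma \ref{dalg: derived functors for stacks}), and which therefore restrict along the smooth cover $X_0\to [X_0/G]$ to the familiar operations on $X_0$ equipped with their tautological $G$-equivariant structures --- here one uses that the absolute Frobenius of the constant group scheme $\underline{G}_{\bF_p}$ is the identity, so that $F^*_{[X_0/G]}$ is simply $F^*_{X_0}$ on underlying sheaves. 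In short, there is no genuine obstacle: the entire content of Theorem \ref{cosimp: equivariant main theorem} is already contained in Corollary \ref{cosimp: augmented}, and the ``equivariant'' proof is the formal observation that the quotient stack $[X/G]$ is a legitimate choice for the stack appearing there, the only point deserving a moment's care being the compatibility of the Frobenius pullback on the quotient stack with the equivariant Frobenius on $X_0$.
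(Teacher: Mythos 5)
Your proposal is correct and matches the paper's own treatment: the paper states this theorem as an immediate rephrasing of Theorem \ref{cosimp: main theorem} (in its augmented form, Corollary \ref{cosimp: augmented}) applied to the global quotient stack $[X/G]$, with $D_G(X)$ defined as $D([X/G])$, which is exactly your argument. The only point you elaborate beyond the paper, the compatibility of Frobenius and the other natural operations under restriction along $X_0\to[X_0/G]$, is the same formal functoriality (via $\Sigma_{(-)}$) the paper relies on implicitly.
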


\section{Applications to de Rham and Hodge-Tate cohomology}\label{applications: section}

In this section we apply Theorem \ref{cosimp: main theorem} to de Rham and diffracted Hodge complexes. Let $k$ be a perfect field of characteristic $p>0$. We now work in a setup where $X$ is a formally smooth formal scheme over $W(k)$, and as before denote by $X_0$ its special fiber $X\times_{W(k)}k$. We denote by $F_{X_0/k}:X_0\to X_0^{(1)}:=X_0\times_{k,\Fr_p}k$ the relative Frobenius morphism, and as before $F_{X_0}:X_0\to X_0$ denotes the absolute Frobenius morphism. We have the following cohomological invariants associated to $X$ and $X_0$, each equipped with a derived commutative algebra structure.

\begin{itemize}
\item The diffracted Hodge complex $\Omega^{\DHod}_{X}\in D(X)$ defined in \cite[Notation 4.7.12]{apc} whose cohomology algebra $H^{\bullet}(\Omega^{\DHod}_X)$ is isomorphic to the algebra $\Omega^{\bullet}_X$ of differential forms on $X$. By \cite[Theorem 7.20(2)]{bhatt-lurie-prismatization} it can be identified with the derived pushforward $R\pi^{\HT}_*\cO_{X^{\DHod}}$ of the structure sheaf along the map $\pi^{\HT}:X^{\DHod}\to X$, hence Lemma \ref{dalg: pushforward of rings} equips $\Omega^{\DHod}_X$ with a structure of a derived commutative algebra in $D(X)$. The Sen operator on $\Omega^{\DHod}_{X}$ induces a decomposition $\tau^{\leq p-1}\Omega^{\DHod}_{X}\simeq\bigoplus\limits_{i=0}^{p-1}\Omega^i_X[-i]$ in $D(X)$, and, in particular, gives rise to a map $s:\Omega^1_X[-1]\to\Omega^{\DHod}_X$ that induces an isomorphism on $H^1$. 

\item The de Rham complex $\dR_{X_0/k}=F_{X_0/k*}\cO_{X_0}\xrightarrow{d}F_{X_0/k*}\Omega^1_{X_0/k}\xrightarrow{d}\dots$ viewed as an object of $D(X^{(1)}_0)$. The Cartier isomorphism provides an identification $H^{\bullet}(\dR_{X_0/k})\simeq \Omega^{\bullet}_{X^{(1)}_0/k}$ of graded algebras. By de Rham comparison, $\dR_{X_0/k}$ is naturally identified with $\Omega^{\DHod}_{X^{(1)}}/p$, where $X^{(1)}:=X\times_{W(k),\Fr_p}W(k)$ is the Frobenius-twist of the formal $W(k)$-scheme $X$, cf. \cite[Remark 4.7.18]{apc}. We give $\dR_{X_0/k}$ the structure of an object of $\DAlg(X^{(1)}_0)$ by identifying it with the derived pushforward of the structure sheaf along the map $\pi^{\HT}:(X^{(1)}_0)^{\HT}\to X_0^{(1)}$ using Lemma \ref{dalg: pushforward of rings}.
\end{itemize}

\begin{rem}
The diffracted Hodge complex can be identified with the Hodge-Tate cohomology of $X$ relative to an appropriate prism, by \cite[Example 4.7.8]{apc}. Theorem \ref{cosimp: main theorem} also applies to Hodge-Tate cohomology of smooth formal schemes over arbitrary prisms, as well as to the decompleted version of the diffracted Hodge cohomology \cite[Construction 4.9.1]{apc}.
\end{rem}

We will apply Theorem \ref{cosimp: main theorem} to $\Omega^{\DHod}_{X^{(1)}}$ and will then compute the extension in the canonical filtration on $\dR_{X_0/k}$ by reducing modulo $p$. To begin, we will relate the Frobenius endomorphism of the de Rham complex to the obstruction to lifting Frobenius onto $X\times_{W(k)}W_2(k)$.

\subsection{Obstruction to lifting Frobenius over $W_2(k)$.} We prove the results in this subsection without the smoothness assumption, and only assuming the existence of a flat lift over $W_2(k)$, for a future application in Section \ref{semiperf: section}.  For a scheme $Y_0$ over $k$ equipped with a lift $Y_1$ over $W_2(k)$ we denote by $\ob_{F,Y_1}:F_{Y_0/k}^*L\Omega^1_{Y^{(1)}_0/k}\to \cO_{Y_0}[1]$ the obstruction to lifting $F_{Y_0/k}:Y_0\to Y_0^{(1)}$ to a morphism $Y_1\to Y_1^{(1)}$, as defined by Illusie \cite{illusie-cotangent1}. We also denote by $\dR_{Y_0/k}$ the derived de Rham complex of $Y_0$ relative to $k$, viewed as an object of $D(Y_0^{(1)})$, cf. \cite[\S 3]{bhatt-derived}. It is equipped with a filtration $\Fil^{\conj}_{\bullet}$ whose graded pieces are equivalent to the shifted exterior powers of the cotangent complex: $\gr^{\conj}_i\dR_{Y_0/k}\simeq L\Omega^i_{Y_0^{(1)}/k}[-i]$. Note also that the natural map induces an equivalence $\dR_{Y_0/\bF_p}\simeq \dR_{Y_0/k}$.

Using the relation between the cotangent complex of $Y_0$ over $W(k)$ and the de Rham complex of $Y_0$, due to \cite{prisms} and \cite{illusie-conjugate}, we will prove:

\begin{pr}\label{applications: frobenius lift obstruction prop}
Let $Y_0$ be a quasisyntomic scheme over $k$ equipped with a flat lift $Y_1$ over $W_2(k)$. Denote by $s:L\Omega^1_{Y^{(1)}_0/k}[-1]\to \Fil_1^{\conj}\dR_{Y_0/k}$ the splitting of the conjugate filtration in degree $1$ arising from $Y_1$. The composition \begin{equation}F_{Y_0/k}^*L\Omega^1_{Y^{(1)}_0/k}[-1]\xrightarrow{F^*_{Y_0/k}s}F_{Y_0/k}^*\Fil_1^{\conj}\dR_{Y_0/k}\xrightarrow{dF_{Y^{(-1)}_0/k}} \Fil^{\conj}_1\dR_{Y^{(-1)}_0/k}\end{equation} is homotopic to the composition $F_{Y_0/k}^*L\Omega^1_{Y^{(1)}_0}[-1]\xrightarrow{\ob_{F,Y_1}}\cO_{Y_0}\to\Fil_1^{\conj}\dR_{Y^{(-1)}_0}$. Here $Y_0^{(-1)}$ is the twist $Y_0\times_{k,\Fr_p^{-1}}k$ by the inverse of Frobenius and $dF_{Y^{(-1)}_0/k}$ is the map induced by the functoriality of the de Rham complex.
\end{pr}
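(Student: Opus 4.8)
The plan is to reduce to an explicit computation over quasiregular semiperfect rings and then match both sides with cotangent-complex data attached to the square-zero thickening $Y_0\hookrightarrow Y_1$. First I would observe that both maps in the statement — which are morphisms $F_{Y_0/k}^*L\Omega^1_{Y^{(1)}_0/k}[-1]\to \Fil_1^{\conj}\dR_{Y^{(-1)}_0/k}$ in $D(Y_0)$ — are natural in the liftable pair $(Y_1,Y_0)$ for flat maps and satisfy quasisyntomic descent (the conjugate filtration, the splitting $s$ coming from a lift, $\ob_{F,Y_1}$, and de Rham functoriality all descend), so it suffices to check the identity after a quasisyntomic localization on $Y_0$; thus I may assume $Y_1=\Spf S_1$ with $S_0=S_1/p$ quasiregular semiperfect. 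For such $S_0$ the cotangent complex $L_{S_0/k}$ has the form $M[1]$ with $M$ a flat $S_0$-module, each $\gr^i_{\conj}\dR_{S_0/k}\simeq\Gamma^i(M^{(1)})$ is concentrated in degree $0$ by d\'ecalage, $\dR_{S_0/k}$ is itself concentrated in degree $0$, and $\Fil_1^{\conj}\dR_{S_0/k}$ becomes an honest extension $0\to S^{(1)}_0\to \Fil_1^{\conj}\dR_{S_0/k}\to M^{(1)}\to 0$ of $S^{(1)}_0$-modules. In this degree-$0$ world the assertion is an identity of $S_0$-linear maps.

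Next I would note that $dF_{Y^{(-1)}_0/k}$ induces the zero map on $\gr^1_{\conj}$, since relative Frobenius annihilates positive-degree differentials ($d(f^p)=0$); hence the left-hand side factors through $\gr^0_{\conj}=\cO_{Y_0}$ and the problem is to compute the resulting map $F_{Y_0/k}^*L\Omega^1_{Y^{(1)}_0/k}[-1]\to\cO_{Y_0}$ and show it equals $\ob_{F,Y_1}$. Here I would invoke the comparison of \cite{prisms} and \cite{illusie-conjugate} between the conjugate-filtered de Rham complex and the (derived) de Rham/cotangent complex relative to $W_2(k)$: after applying $F^*_{Y_0/k}$ and de Rham functoriality of relative Frobenius, $F^*_{Y_0/k}\Fil_1^{\conj}\dR_{Y^{(-1)}_0/k}$ is identified with a truncation of $L_{S_0/W_2(k)}$, under which $s$ corresponds to the trivialization of the canonical $\cO_{S_0}[1]$-summand supplied by the actual lift $S_1$, while $\ob_{F,Y_1}$ is, by its very definition, the obstruction to deforming the relative Frobenius of $S_0$ along the square-zero thickenings $S_0\hookrightarrow S_1$ and $S^{(1)}_0\hookrightarrow S^{(1)}_1$. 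Concretely, fixing a set-theoretic lift $\widetilde F$ of relative Frobenius, $s$ is given by $\omega\mapsto\tfrac1p\,d\widetilde F(\widetilde\omega)$ as a closed $1$-form, and composing with $dF_{Y^{(-1)}_0/k}$ extracts exactly the (un-divided) failure of $\widetilde F$ to be a ring homomorphism, i.e. the cocycle defining $\ob_{F,Y_1}$ — this is the classical Deligne--Illusie relation between $p^{-1}d\widetilde F$ and the Frobenius-lifting obstruction, which the cited comparison upgrades to a coherent identity of morphisms by presenting both sides as one and the same extension class of $L_{S_0/W_2(k)}$ by $\cO_{S_0}[1]$.

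The hard part will be twofold: carefully aligning the three Frobenius twists $Y^{(1)}_0$, $Y_0$, $Y^{(-1)}_0$ and the functoriality map $dF_{Y^{(-1)}_0/k}$ with the division by $p$ hidden in the construction of $s$, so that $\ob_{F,Y_1}$ appears with no spurious twists or signs; and making the matching of $s$ with $\ob_{F,Y_1}$ homotopy-coherent rather than an equality of representing cocycles. The second point is where the cotangent-complex-versus-de-Rham-complex comparison of \cite{prisms} and \cite{illusie-conjugate} is essential, and is the reason the argument is phrased through that comparison rather than directly in terms of local Frobenius lifts.
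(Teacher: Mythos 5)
Your skeleton — factor the composite through $\gr_0^{\conj}=\cO_{Y_0}$ because $dF$ kills one-forms, identify $\Fil_1^{\conj}\dR$ with a cotangent complex via the comparison of Bhatt--Scholze and Illusie, and recognize the resulting map as Illusie's obstruction — is the same route the paper takes, and the preliminary descent to quasiregular semiperfect $S_0$ is legitimate (the relevant mapping spaces are discrete there), though unnecessary: the identification $\Fil_1^{\conj}\dR_{Y_0/k}\simeq L\Omega^1_{Y_0^{(1)}/W(k)}[-1]$ already holds for every quasisyntomic $Y_0$, so no localization is needed. The problem is that the step which actually carries the proof is either wrong or only asserted. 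Your ``concrete'' description of $s$ as $\omega\mapsto\frac{1}{p}\,d\widetilde F(\widetilde\omega)$ presupposes a \emph{ring-homomorphism} lift $\widetilde F$ of Frobenius: a set-theoretic lift does not allow you to form $d\widetilde F$, and after your reduction to quasiregular semiperfect $S_0$ such a lift need not exist even affine-locally --- the obstruction group is $\Hom_{S_0}(F_{S_0}^*M,S_0)$ with $M=H^{-1}(L\Omega^1_{S_0/k})$, which is typically non-zero, and its non-vanishing is exactly the situation the proposition is about. If $\widetilde F$ did exist then $\ob_{F,Y_1}=0$ and the computation would verify the statement only in the trivial case; moreover, the identity ``$s=\frac{1}{p}d\widetilde F$'' describes the splitting attached to a \emph{Frobenius} lift, and equating that with the splitting attached to the scheme lift $Y_1$ is itself a nontrivial compatibility (the Deligne--Illusie/Srinivas statement the paper mentions only as an alternative in the smooth case), not a definition you may quote.

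The fallback claim that both sides are ``one and the same extension class of $L_{S_0/W_2(k)}$ by $\cO_{S_0}[1]$'' skips precisely the content. Three things must be checked and are not: (i) the canonical identification of $\Fil_1^{\conj}\dR_{Y_0/k}$ is with $L\Omega^1_{Y_0^{(1)}/W(k)}[-1]$, i.e.\ relative to $W(k)$ (equivalently $\bZ_p$), not $W_2(k)$; the $W_2(k)$-triangle enters only through the direct-summand comparison of the two fundamental triangles, which is how the lift $Y_1$ produces a splitting at all; (ii) the statement's $s$, defined from $Y_1$ (via the Sen operator), must be identified with the splitting $di$ of the fundamental triangle --- in the paper this requires the uniqueness of functorial splittings due to Li--Mondal; (iii) Illusie's obstruction to lifting Frobenius is by definition a \emph{difference} of two maps built from the extension classes of $Y_1$ and $Y_1^{(1)}$, and one must verify that after precomposing with $F_{Y_0/k}^*di^{(1)}$ the second term vanishes, leaving exactly the composite $dF_{Y_0/k}$ followed by the class of $Y_1$; this bookkeeping, not the phrase ``by its very definition,'' is where the homotopy between the two maps in the proposition is actually produced. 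Without (ii) and (iii) your argument reduces to restating the desired identity.
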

\begin{rem}
For $Y_0$ smooth over $k$ this result also follows from the explicit model for the map $s:\Omega^1_{Y_0^{(1)}/k}[-1]\to \dR_{Y_0/k}$ using local Frobenius lifts, provided by \cite{deligne-illusie}, cf. \cite{srinivas}.
\end{rem}
\begin{rem}\label{applications: absolute vs relative obstruction}
Using the isomorphism $Y_0^{(1)}\simeq Y_0$ of $\bF_p$-schemes we may view $\ob_{F,Y_1}$ as a map $F^*_{Y_0}L\Omega^1_{Y_0}[-1]\to\cO_{Y_0}[1]$ that coincides with the obstruction to lifting the absolute Frobenius morphism $F_{Y_0}:Y_0\to Y_0$ to an endomorphism of $Y_1$, we will denote this map by the same symbol $\ob_{F,Y_1}$.
\end{rem}
\begin{convention}\label{applications: sign flip}We take $\ob_{F,Y_1}$ to mean the {\it negative} of the obstruction class defined in \cite[III.2.2]{illusie-cotangent1}, to avoid a trailing sign in all of the subsequent expressions. 
\end{convention}

We start by recalling from \cite{illusie-conjugate} how a lift of $Y_0$ over $W_2(k)$ provides a decomposition of $L\Omega^1_{Y_0/W(k)}$. In general, if $Y_0$ is a quasisyntomic scheme over $k$ we have the fundamental triangle corresponding to the morphisms $Y_0\to \Spec k\to \Spec W(k)$
\begin{equation}\label{applications: fundamental triangle formula}
\cO_{Y_0}[1]\simeq \cO_{Y_0}\otimes_k L\Omega^1_{k/W(k)}\to  L\Omega^1_{Y_0/W(k)}\to L\Omega^1_{Y_0/k}.
\end{equation}

The natural map $\cO_{Y_0}\otimes_k L\Omega^1_{k/W(k)}\simeq \fib(L\Omega^1_{Y_0/W(k)}\to L\Omega^1_{Y_0/k})\to \cO_{Y_0}\otimes_k L\Omega^1_{k/W_2(k)}\simeq \fib(L\Omega^1_{Y_0/W_2(k)}\to L\Omega^1_{Y_0/k})$ establishes the source as a direct summand of the target, because the map $L\Omega^1_{k/W(k)}\to L\Omega^1_{k/W_2(k)}$ in the derived category of $k$-vector spaces induces an equivalence $L\Omega^1_{k/W(k)}\simeq \tau^{\geq -1}L\Omega^1_{k/W_2(k)}\simeq H^{-1}(L\Omega^1_{k/W_2(k)})[-1]$. Hence a flat scheme $Y_1$ over $W_2(k)$ lifting $Y_0$ induces a splitting of the fiber sequence (\ref{applications: fundamental triangle formula}) via the map $di:L\Omega^1_{Y_0/k}\simeq i^*L\Omega^1_{Y_1/W_2(k)}\to L\Omega^1_{Y_0/W_2(k)}$, where $i:Y_0\hookrightarrow Y_1$ is the inclusion of the special fiber, cf. \cite[\S 4]{illusie-conjugate}. We denote by $s'_{Y_1}:L\Omega^1_{Y_0/k}\to L\Omega^1_{Y_0/W(k)}$ the resulting section of (\ref{applications: fundamental triangle formula}). Denote also by $r_{Y_1}:L\Omega^1_{Y_0/W_2(k)}\to \cO_{Y_0}[1]$ the composition $L\Omega^1_{Y_0/W_2(k)}\to\tau^{\geq 1}L\Omega^1_{Y_0/W_2(k)}\simeq L\Omega^1_{Y_0/W(k)}\to \cO_{Y_0}[1]$ where the last map is the complementary splitting of the first map in (\ref{applications: fundamental triangle formula}).

\begin{lm}\label{applications: frobenius obstruction definition}
For a flat scheme $Y_1$ over $W_2(k)$ the obstruction $\ob_{F,Y_1}:F_{Y_0/k}^*L\Omega^1_{Y^{(1)}_0/k}\to \cO_{Y_0}[1]$ to lifting $F_{Y_0/k}:Y_0\to Y^{(1)}_0$ to a morphism from $Y_1$ to $Y_1^{(1)}$ is homotopic to the composition

\begin{equation}
F_{Y_0/k}^*L\Omega^1_{Y^{(1)}_0/k}\xrightarrow{F^*_{Y_0/k}di^{(1)}} F_{Y_0/k}^*L\Omega^1_{Y^{(1)}_0/W_2(k)}\xrightarrow{dF_{Y_0/k}}L\Omega^1_{Y_0/W_2(k)}\xrightarrow{r_{Y_1}} \cO_{Y_0}[1].
\end{equation}
\end{lm}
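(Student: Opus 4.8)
The plan is to obtain the formula as a direct translation of Illusie's cotangent-complex description of the obstruction to lifting a morphism across a square-zero thickening, so that the lemma amounts to unwinding definitions rather than to a computation. Recall the mechanism \cite{illusie-cotangent1}: if $S_0\hookrightarrow S$ is a square-zero extension and $Y_0/S_0$, $Z_0/S_0$ carry flat lifts $Y_1/S$, $Z_1/S$, then for a morphism $f_0\colon Y_0\to Z_0$ over $S_0$ the obstruction to extending $i_Z\circ f_0\colon Y_0\to Z_1$ to a morphism $Y_1\to Z_1$ over $S$ is the composite of the cotangent-functoriality map of $i_Z\circ f_0$ with the map in $D(Y_0)$ classifying the thickening $Y_0\hookrightarrow Y_1$. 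I would apply this with $S=\Spec W_2(k)$, $S_0=\Spec k$ (whose ideal $(p)$ is free of rank one over $k$ via $k\xrightarrow{\cdot p}(p)$), with $Y_0^{(1)}$, $Y_1^{(1)}$ in the roles of $Z_0$, $Z_1$, and $F_{Y_0/k}$ in the role of $f_0$. Since a lift of $F_{Y_0/k}$ to $Y_1\to Y_1^{(1)}$ is exactly an extension of $g:=i^{(1)}\circ F_{Y_0/k}\colon Y_0\to Y_0^{(1)}\hookrightarrow Y_1^{(1)}$ over $\Spec W_2(k)$, the mechanism gives
\[
\ob_{F,Y_1}=\eta_{Y_1}\circ c_g,
\]
where $c_g\colon Lg^{*}L\Omega^1_{Y_1^{(1)}/W_2(k)}\to L\Omega^1_{Y_0/W_2(k)}$ is the cotangent map of $g$ over $W_2(k)$ and $\eta_{Y_1}\colon L\Omega^1_{Y_0/W_2(k)}\to\cO_{Y_0}[1]$ classifies the thickening $Y_0\hookrightarrow Y_1$.

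First I would identify $\eta_{Y_1}$ with the unnamed arrow ``$\to\cO_{Y_0}[1]$'' in the statement. Flatness of $Y_1$ over $W_2(k)$ identifies its ideal $p\cO_{Y_1}$ with $\cO_{Y_0}$ via multiplication by $p$, and flat base change for the cotangent complex identifies $Li^{*}L\Omega^1_{Y_1/W_2(k)}$ with $L\Omega^1_{Y_0/k}$; under these identifications $\eta_{Y_1}$ is, by the standard description of the class of a square-zero extension, the connecting morphism of the transitivity triangle $L\Omega^1_{Y_0/k}\xrightarrow{di}L\Omega^1_{Y_0/W_2(k)}\to\cO_{Y_0}[1]$ attached to $Y_0\xrightarrow{i}Y_1\to\Spec W_2(k)$. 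This is exactly the map left unnamed in the displayed composition — and the same $di$ that, by the direct-summand argument recalled before the lemma, yields the splitting $s'_{Y_1}$ of (\ref{applications: fundamental triangle formula}).

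Next I would expand $c_g$. Writing $g=i^{(1)}\circ F_{Y_0/k}$ and using functoriality of the cotangent complex for compositions, together with the flat base-change identification $L(i^{(1)})^{*}L\Omega^1_{Y_1^{(1)}/W_2(k)}\simeq L\Omega^1_{Y_0^{(1)}/k}$, the map $c_g$ factors as $F_{Y_0/k}^{*}di^{(1)}\colon F_{Y_0/k}^{*}L\Omega^1_{Y_0^{(1)}/k}\to F_{Y_0/k}^{*}L\Omega^1_{Y_0^{(1)}/W_2(k)}$ followed by $dF_{Y_0/k}\colon F_{Y_0/k}^{*}L\Omega^1_{Y_0^{(1)}/W_2(k)}\to L\Omega^1_{Y_0/W_2(k)}$, where $di^{(1)}$ is the cotangent map of $Y_0^{(1)}\hookrightarrow Y_1^{(1)}$ and $dF_{Y_0/k}$ is the cotangent map of $F_{Y_0/k}$ viewed as a $W_2(k)$-morphism. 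Postcomposing with $\eta_{Y_1}$, and fixing the sign normalization of Convention (\ref{applications: sign flip}), yields precisely the asserted composition.

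The point requiring the most care — though not a genuine obstacle — is the bookkeeping: verifying that Illusie's obstruction formalism applies verbatim in the merely quasisyntomic (non-smooth) setting, which it does since it is phrased entirely through the cotangent complex; keeping straight the flat base-change identifications $Li^{*}L\Omega^1_{Y_1/W_2(k)}\simeq L\Omega^1_{Y_0/k}$ and its Frobenius-twisted analogue; fixing once and for all the identification of $p\cO_{Y_1}$ with $\cO_{Y_0}$; and choosing the sign convention so that the final expression carries no spurious sign. There is no deeper difficulty.
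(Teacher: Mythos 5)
Your proof is correct, and it reaches the formula by a route that is organized differently from the paper's. The paper takes as its starting point the relative, two-lifts form of Illusie's obstruction \cite[III.2.2.4]{illusie-cotangent1}: $\ob_{F,Y_1}$ is characterized as the unique map whose precomposition with $F_{Y_0/k}^*L\Omega^1_{Y_0^{(1)}/W_2(k)}\to F_{Y_0/k}^*L\Omega^1_{Y_0^{(1)}/k}$ is the difference $\gamma_{Y_0}-\gamma_{Y_0^{(1)}}$ of the classes attached to the two lifts $Y_1$ and $Y_1^{(1)}$; the lemma then follows by precomposing with the section $F_{Y_0/k}^*di^{(1)}$ and noting that $\gamma_{Y_0^{(1)}}\circ F_{Y_0/k}^*di^{(1)}\simeq 0$ by construction. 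You instead invoke the absolute prolongation obstruction — extending $g=i^{(1)}\circ F_{Y_0/k}$, with the thick scheme $Y_1^{(1)}$ as fixed target, across the square-zero thickening $Y_0\hookrightarrow Y_1$ — which is $\eta_{Y_1}\circ dg$, and then factor $dg$ as $dF_{Y_0/k}\circ F_{Y_0/k}^*di^{(1)}$ via functoriality and flat base change. This buys a shorter argument (the difference and the vanishing step disappear), at the cost of quietly using the compatibility, under the identification $L(i^{(1)})^*L\Omega^1_{Y_1^{(1)}/W_2(k)}\simeq L\Omega^1_{Y_0^{(1)}/k}$, between the absolute prolongation obstruction and the class the paper calls $\ob_{F,Y_1}$; this compatibility is standard in Illusie's theory, and your remark that a lift of $F_{Y_0/k}$ to $Y_1\to Y_1^{(1)}$ is literally the same datum as a $W_2(k)$-extension of $g$ makes the identification legitimate, so I do not count it as a gap — it is essentially what the paper's difference-plus-vanishing argument verifies in this particular situation. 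One point of hygiene: calling $L\Omega^1_{Y_0/k}\xrightarrow{di}L\Omega^1_{Y_0/W_2(k)}\to\cO_{Y_0}[1]$ the transitivity triangle for $Y_0\to Y_1\to\Spec W_2(k)$ is loose, since the honest cofiber is $L\Omega^1_{Y_0/Y_1}$, whose truncation $\tau^{\geq -1}$ is $\cO_{Y_0}[1]$ (over $W_2(k)$, unlike over $W(k)$ in (\ref{applications: fundamental triangle formula}), $L\Omega^1_{k/W_2(k)}$ is not concentrated in degree $-1$); you only use the composite map, which is indeed the class of the thickening and is the paper's unnamed arrow, so nothing breaks, and your sign bookkeeping via Convention \ref{applications: sign flip} matches the paper's normalization.
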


\begin{proof}
Let us start by recalling the definition of this obstruction given in \cite[Proposition III.2.2.4]{illusie-cotangent1}. The scheme $Y_1$ viewed as a lift of $Y_0$ produces a map $L\Omega^1_{Y_0/W_2(k)}\to \cO_{Y_0}[1]$, and composing it with $dF_{Y_0/k}$ we get a map
\begin{equation}\gamma_{Y_0}:F_{Y_0/k}^*L\Omega^1_{Y^{(1)}_0/W_2(k)}\xrightarrow{dF_{Y_0/k}} L\Omega^1_{Y_0/W_2(k)}\to \cO_{Y_0}[1].\end{equation}
On the other hand, the lift $Y^{(1)}_1$ of $Y^{(1)}_0$ gives rise to the map $r_{Y_0^{(1)}}:L\Omega^1_{Y^{(1)}_0/W_2(k)}\to \cO_{Y^{(1)}_0}[1]$, and pulling it back along $F_{Y_0/k}$ we obtain a map
\begin{equation}
\gamma_{Y^{(1)}_0}:F_{Y_0/k}^*L\Omega^1_{Y^{(1)}_0/W_2(k)}\to F^*_{Y_0/k}\cO_{Y^{(1)}_0}[1]\simeq \cO_{Y_0}[1].
\end{equation}
By definition, $\ob_{F,Y_1}:F_{Y_0/k}^*L\Omega^1_{Y^{(1)}_0/k}\to \cO_{Y_0}[1]$ is the unique up to homotopy map such that the composition $F_{Y_0/k}^*L\Omega^1_{Y^{(1)}_0/W_2(k)}\to F_{Y_0/k}^*L\Omega^1_{Y^{(1)}_0/k}\xrightarrow{\ob_{F,Y_1}}\cO_{Y_0}[1]$ is homotopic to the difference $\gamma_{Y_0}-\gamma_{Y_0^{(1)}}$ (we have incorporated Convention \ref{applications: sign flip} at this point). Equivalently, $\ob_{F,Y_1}$ is the composition \begin{equation}F^*_{Y_0/k}L\Omega^1_{Y^{(1)}_0/k}\xrightarrow{F_{Y_0/k}^*di^{(1)}}F_{Y_0/k}^*L\Omega^1_{Y^{(1)}_0/W_2(k)}\xrightarrow{\gamma_{Y_0}-\gamma_{Y^{(1)}_0}}\cO_{Y_0}[1].\end{equation}
However, the composition $F_{Y_0/k}^*L\Omega^1_{Y^{(1)}_0/k}\xrightarrow{F^*_{Y_0/k}di^{(1)}} F_{Y_0/k}^*L\Omega^1_{Y^{(1)}_0/W_2(k)}\xrightarrow{\gamma_{Y^{(1)}_0}}\cO_{Y_0}[1]$ is zero by construction, so the lemma follows from the definition of $\gamma_{Y_0}$.
\end{proof}

\begin{proof}[Proof of Proposition \ref{applications: frobenius lift obstruction prop}]
\cite[Proposition 4.15]{prisms} in the smooth case, and \cite[Corollary 3.3]{illusie-conjugate} in general identifies $\Fil_1^{\conj}\dR_{Y_0/k}$ with the shifted cotangent complex $L\Omega^1_{Y^{(1)}_0/W(k)}[-1]$. Moreover, the fiber sequence $\cO_{Y^{(1)}_0}\to \Fil_1^{\conj}\dR_{Y_0/k}\to L\Omega^1_{Y^{(1)}_0/k}[-1]$ induced by the conjugate filtration is identified with the shift of the fundamental triangle \begin{equation}\label{applications: fundamental w2 triangle}L\Omega^1_{k/W(k)}\otimes_k\cO_{Y^{(1)}_0}\to L\Omega^1_{Y^{(1)}_0/W(k)}\to L\Omega^1_{Y^{(1)}_0/k}\end{equation} corresponding to the sequence of morphisms $Y^{(1)}_0\to\Spec k\to \Spec W(k)$. Denote by $i:Y^{(1)}_0\hookrightarrow Y^{(1)}_1$ the usual inclusion. We have a map $s'_{Y_1^{(1)}}:L\Omega^1_{Y^{(1)}_0/k}\to L\Omega^1_{Y^{(1)}_0/W(k)}$ that splits this fundamental triangle, hence defining a map $s'_{Y^{(1)}_1}:L\Omega^1_{Y^{(1)}_0/k}[-1]\to\Fil_1^{\conj}\dR_{Y_0/k}$. By the uniqueness of functorial decompositions of the de Rham complex \cite[Theorem 5.10]{li-mondal}, $s'_{Y^{(1)}_1}$ is naturally equivalent to the section $s_{Y^{(1)}_1}$ constructed using the Sen operator.

Since the identification $\Fil_1^{\conj}\dR_{Y_0/k}\simeq L\Omega^1_{Y^{(1)}_0/W(k)}[-1]$ is functorial in $Y_0$, the map $F_{Y_0/k}^*\Fil_1^{\conj}\dR_{Y_0/k}\to \Fil_1^{\conj}\dR_{Y_0^{(-1)}/k}$ induced by $F_{Y_0/k}$ is identified with the shift of $dF_{Y_0/k}:F_{Y_0/k}^*L\Omega^1_{Y^{(1)}_0/W(k)}\xrightarrow{} L\Omega^1_{Y_0/W(k)}$. The latter map factors as \begin{equation}\label{applications: frob on cotangent complex factorization}
F^*_{Y_0/k}L\Omega^1_{Y^{(1)}_0/W(k)}\to \cO_{Y_0}[1]\to L\Omega^1_{Y_0/W(k)}
\end{equation}
because $dF_{Y_0/k}:F_{Y_0/k}^*L\Omega^1_{Y^{(1)}_0/k}\to L\Omega^1_{Y_0/k}$ is zero. Given factorization (\ref{applications: frob on cotangent complex factorization}) we see that $dF_{Y_0/k}:F_{Y_0/k}^*L\Omega^1_{Y^{(1)}_0/W(k)}\xrightarrow{} L\Omega^1_{Y_0/W(k)}$ can be identified with the composition
\begin{equation}\label{applications: frob on cotangent complex idempotent}
F_{Y_0/k}^*L\Omega^1_{Y^{(1)}_0/W(k)}\xrightarrow{dF_{Y_0/k}} L\Omega^1_{Y_0/W(k)}\to L\Omega^1_{Y_0/W_2(k)}\xrightarrow{r_{Y_1}}\cO_{Y_0}[1]\to L\Omega^1_{Y_0/W(k)}
\end{equation}
because the composition of the middle two maps in (\ref{applications: frob on cotangent complex idempotent}) becomes the identity when composed $\cO_{Y_0}[1]\to L\Omega^1_{Y_0/W(k)}$, regardless of the lift $Y_1$ we chose. Our goal is to calculate the composition of (\ref{applications: frob on cotangent complex idempotent}) with the map $F^*_{Y_0/k}L\Omega^1_{Y^{(1)}_0}\xrightarrow{F^*_{Y_0/k}s'_{Y_1^{(1)}}} F^*_{Y_0/k}L\Omega^1_{Y^{(1)}_0/W(k)}$, and in view of the above factorization Proposition \ref{applications: frobenius lift obstruction prop} follows from the formula for the obstruction to lifting Frobenius given by Lemma \ref{applications: frobenius obstruction definition}.
\end{proof}

The Frobenius arising from the derived commutative structure on the de Rham complex coincides with the map induced by the geometric Frobenius morphism:

\begin{lm}\label{applications: cosimplicial frob is frob}
For a smooth scheme $X_0$ over $k$ the Frobenius map $\varphi_{\dR_{X_0/k}}:F^*_{X^{(1)}_0}\dR_{X_0/k}\to \dR_{X_0/k}$ of the derived commutative algebra $\dR_{X_0/k}\in \DAlg(X_0^{(1)})$ is naturally identified with the map $dF_{X^{(1)}_0}:F_{X^{(1)}_0}^*\dR_{X_0/k}\to \dR_{X_0/k}$ induced by the functoriality of the de Rham complex under the absolute Frobenius endomorphism. 

Explicitly, this morphism is the composition \begin{equation}F_{X_0^{(1)}}^*\dR_{X_0/k}\to F^*_{X^{(1)}_0}F_{X_0/k*}\cO_{X_0}\xrightarrow{\varphi_{{X^{(1)}_0}}} \cO_{X^{(1)}_0}\to\dR_{X_0/k}\end{equation} where the first map is induced by the map from the de Rham complex to its $0$th term.
\end{lm}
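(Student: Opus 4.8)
The plan is to compute $\varphi_{\dR_{X_0/k}}$ through the description of $\dR_{X_0/k}$ as an algebra pushforward from the Hodge-Tate stack. Recall that, by construction (Lemma \ref{dalg: frobenius}), $\varphi_{A}$ is the composite $F_{X_0^{(1)}}^{*}A\xrightarrow{\Delta_{A}}S^{p}A\xrightarrow{m_{A}}A$, and that the derived commutative algebra structure on $\dR_{X_0/k}$ comes (as explained before the lemma) from an equivalence $\dR_{X_0/k}\simeq R(\pi^{\HT})^{\alg}_{*}\cO_{(X_0^{(1)})^{\DHod}}$ with $\pi^{\HT}\colon (X_0^{(1)})^{\DHod}\to X_0^{(1)}$ the structure map of the diffracted Hodge stack. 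Applying Lemma \ref{dalg: geometric frobenius} to $f=\pi^{\HT}$ identifies $\varphi_{\dR_{X_0/k}}$ with the base-change morphism for the square $\pi^{\HT}\circ F_{(X_0^{(1)})^{\DHod}}=F_{X_0^{(1)}}\circ\pi^{\HT}$ followed by $R(\pi^{\HT})_{*}\varphi_{\cO}$; and $\varphi_{\cO}$ is, by Lemma \ref{dalg: frobenius}, the (linearization of the) absolute Frobenius endomorphism of $(X_0^{(1)})^{\DHod}$. So the task becomes: identify the endomorphism of $R(\pi^{\HT})_{*}\cO$ induced by the absolute Frobenius of the Hodge-Tate stack.

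The key input is that the absolute Frobenius of $(X_0^{(1)})^{\DHod}$ factors through the canonical section $\iota\colon X_0^{(1)}\to (X_0^{(1)})^{\DHod}$ of $\pi^{\HT}$, i.e.\ $F_{(X_0^{(1)})^{\DHod}}\simeq \iota\circ F_{X_0^{(1)}}\circ\pi^{\HT}$. I would verify this by reducing to the local model: the Hodge-Tate stack is, locally on $X_0^{(1)}$ (or functorially for a standard smooth algebra), the relative classifying stack of the divided-power group scheme $(\Omega^{1}_{X_0^{(1)}/k})^{\vee\#}$, so that $\dR_{X_0/k}\simeq\Gamma^{\bullet}(\Omega^{1}_{X_0^{(1)}/k}[-1])$ as a derived commutative algebra (via Lemma \ref{dalg: pushforward of rings}), and the Frobenius of $\Gamma^{\bullet}(M[-1])$ was already seen, right after Definition \ref{dalg: free divided power algebra def}, to factor as $F_{X_0^{(1)}}^{*}\Gamma^{\bullet}(M[-1])\to\cO_{X_0^{(1)}}\to\Gamma^{\bullet}(M[-1])$, precisely because the Frobenius of a divided-power group scheme factors through its identity section. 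Granting the factorization, $\varphi_{\dR_{X_0/k}}$ factors through $R(\pi^{\HT})_{*}\iota_{*}\cO_{X_0^{(1)}}\simeq\cO_{X_0^{(1)}}$ (using $\pi^{\HT}\circ\iota=\id$); unwinding the identifications, the two resulting maps are the unit $\cO_{X_0^{(1)}}\to\dR_{X_0/k}$ and the linearization of $\dR_{X_0/k}\to F_{X_0/k*}\cO_{X_0}\xrightarrow{\varphi_{X_0^{(1)}}}\cO_{X_0^{(1)}}$, where the first arrow is the projection of the de Rham complex onto its degree-zero term. This is the composite claimed in the statement. Finally, it coincides with the functoriality map $dF_{X_0^{(1)}}$: on the honest complex $F_{X_0/k*}\Omega^{\bullet}_{X_0/k}$, the map induced by the absolute Frobenius of $X_0^{(1)}$ is $F^{*}$, which sends a function $f$ to $f^{p}$ and kills every form of degree $\geq 1$ because $F^{*}(dg)=d(g^{p})=0$, hence factors through degree zero in exactly this way.

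The main obstacle is the second paragraph: pinning down the absolute Frobenius of the Hodge-Tate (diffracted Hodge) stack --- above all, that it factors through the section of $\pi^{\HT}$ --- which requires the local description of $(X_0^{(1)})^{\DHod}$ as a classifying stack of a divided-power group, and then establishing that factorization coherently enough (e.g.\ functorially in $X_0$, or with enough care about étale descent of mapping spaces) that it holds on the nose rather than merely locally. One also has to keep the various Frobenius twists ($X_0$ versus $X_0^{(1)}$, relative versus absolute) and base-change/linearization identifications consistent throughout. A more down-to-earth alternative would be to check the identity on a small affine open where $X_0$ lifts smoothly over $W(k)$ together with a lift of its relative Frobenius, and compute in the Deligne--Illusie model $\bigoplus_{i}\Omega^{i}_{X_0^{(1)}/k}[-i]\xrightarrow{\sim}\dR_{X_0/k}$ (which is multiplicative, being $\widetilde{F}^{*}$ divided by $p^{i}$ in degree $i$); but identifying the derived commutative algebra Frobenius inside that model appears to need essentially the same structural information.
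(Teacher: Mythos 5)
Your first step (applying Lemma \ref{dalg: geometric frobenius} to reduce to identifying the endomorphism of $R\pi^{\HT}_*\cO$ induced by the absolute Frobenius of the stack $(X_0^{(1)})^{\DHod}$) and your final step (that $dF_{X_0^{(1)}}$ kills forms of positive degree, giving the explicit composition) match the intended argument. The gap is your "key input": the absolute Frobenius of $(X_0^{(1)})^{\DHod}$ does \emph{not} factor as $\iota\circ F_{X_0^{(1)}}\circ\pi^{\HT}$ for a canonical section $\iota$ of $\pi^{\HT}$, because no such canonical section exists. The Hodge--Tate stack is a gerbe over $X_0^{(1)}$ which is only locally split, and the local splittings (coming from local coordinates, or local $W_2$-lifts with Frobenius lifts) are non-canonical and do not glue. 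In fact a global section $\iota$ would yield, via $\iota^*$, a map of derived commutative algebras $\dR_{X_0/k}\to\cO_{X_0^{(1)}}$ inducing an isomorphism on $H^0$, and by Proposition \ref{cosimp applications: de rham formality} (equivalence of (3) and (4)) this would force every smooth $X_0$ to lift to $W_2(k)$ together with its Frobenius --- which is false. So the proposed stack-level factorization is not merely hard to establish coherently; it is false in general, and the local-model argument cannot be globalized.

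What is true, and what the actual proof uses, is the weaker functoriality statement: the absolute Frobenius endomorphism of the stack $(X_0^{(1)})^{\DHod}$ coincides with the endomorphism induced, by functoriality of $(-)^{\DHod}$, from $F_{X_0^{(1)}}$ (this is cited to Bhatt--Lurie, Remark 3.6 of the prismatization paper). Combined with Lemma \ref{dalg: geometric frobenius} this identifies $\varphi_{\dR_{X_0/k}}$ with the de Rham functoriality map $dF_{X_0^{(1)}}$, and only then does one conclude --- at the level of the induced map on pushforwards, not of the stack morphism --- that it factors through $\cO_{X_0^{(1)}}$, precisely because $dF_{X_0^{(1)}}$ annihilates $F_{X_0/k*}\Omega^i_{X_0/k}$ for $i\geq 1$. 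If you want to avoid the citation, you would need to prove that functoriality statement directly (e.g.\ from the functor-of-points description of the stack), rather than the section-factorization; your fallback via local Deligne--Illusie models runs into the same problem you already noted, namely that a local, choice-dependent verification does not produce the natural identification in $D(X_0^{(1)})$ that the later arguments require.
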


\begin{proof}
We endowed $\dR_{X_0/k}$ with the structure of a derived commutative algebra by identifying it with the pushforward $R\pi_{*}^{\HT}\cO_{(X^{(1)}_0)^{\HT}}$ of the structure sheaf along the morphism of stacks $\pi^{\HT}:(X^{(1)}_0)^{\HT}\to X^{(1)}_0$. By Lemma \ref{dalg: geometric frobenius}, the map $\varphi_{\dR_{X_0/k}}:F_{X_0^{(1)}}^*\dR_{X_0/k}\to \dR_{X_0/k}$ is the composition $F^*_{X^{(1)}_0}\dR_{X_0/k}=F_{X^{(1)}_0}^*R\pi^{\HT}_{*}\cO_{(X^{(1)}_0)^{\HT}}\to R\pi^{\HT}_*F_{(X^{(1)}_0)^{\HT}}^*\cO_{(X^{(1)}_0)^{\HT}}\simeq R\pi^{\HT}_*\cO_{(X^{(1)}_0)^{\HT}}=\dR_{X_0/k}$. The Frobenius endomorphism of the stack $X_0^{\HT}$ coincides with the endomorphism induced from $F_{X^{(1)}_0}$ by functoriality, cf. \cite[Remark 3.6]{bhatt-lurie-prismatization}. Hence $\varphi_{\dR_{X_0/k}}$ is equivalent to the morphism induced by $F_{X_0}$ by functoriality of the de Rham complex. The last assertion follows from the fact that the maps $dF_{X^{(1)}_0}:F_{X^{(1)}_0}^*F_{X_0/k*}\Omega^i_{X_0}\to\Omega^i_{X^{(1)}_0}$ are zero for all $i\geq 1$.
\end{proof}
Let us record the compatibility between two Frobenius morphisms on the level of cohomology  arising from Lemma \ref{applications: cosimplicial frob is frob}:

\begin{lm}\label{applications: coh cosimplicial frob is frob}
For a smooth $k$-scheme $X_0$, the Frobenius morphism $\varphi_{\RGamma_{\dR}(X_0/k)}$ of the derived commutative $k$-algebra $\RGamma_{\dR}(X_0/k)$ is naturally homotopic to the morphism induced by the relative Frobenius morphism $F_{X_0/k}:X_0\to X^{(1)}_0$.
\end{lm}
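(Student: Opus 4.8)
The plan is to obtain Lemma~\ref{applications: coh cosimplicial frob is frob} from its sheaf-level counterpart Lemma~\ref{applications: cosimplicial frob is frob} by pushing forward along the structure morphism $\pi\colon X_0^{(1)}\to\Spec k$. By Proposition~\ref{dalg: pushforward of rings}, the derived commutative $k$-algebra $\RGamma_{\dR}(X_0/k)$ is $R\pi_*^{\alg}\dR_{X_0/k}$, where $\dR_{X_0/k}\in\DAlg(X_0^{(1)})$ carries the derived commutative algebra structure coming from the diffracted Hodge / Hodge--Tate stack. Since $\Spec k$ is an $\bF_p$-scheme, Lemma~\ref{dalg: geometric frobenius} applied to $\pi$ identifies $\varphi_{\RGamma_{\dR}(X_0/k)}$ with the composition
$$
F_k^*R\pi_*\dR_{X_0/k}\longrightarrow R\pi_*F_{X_0^{(1)}}^*\dR_{X_0/k}\xrightarrow{R\pi_*(\varphi_{\dR_{X_0/k}})}R\pi_*\dR_{X_0/k},
$$
the first arrow being the base change morphism attached to the equality $F_{\Spec k}\circ\pi=\pi\circ F_{X_0^{(1)}}$; note that $F_{\Spec k}=\Fr_p$, so $F_k^*R\pi_*\dR_{X_0/k}=F_k^*\RGamma_{\dR}(X_0/k)$.

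Next I would feed in Lemma~\ref{applications: cosimplicial frob is frob}, which identifies $\varphi_{\dR_{X_0/k}}$ with the functoriality map $dF_{X_0^{(1)}}$ of the de Rham complex under the absolute Frobenius of $X_0^{(1)}$, together with the explicit description given there. It then remains to recognize the resulting composite --- the pushforward of $dF_{X_0^{(1)}}$ precomposed with the base change morphism --- as the pullback $F_{X_0/k}^*\colon\RGamma_{\dR}(X_0^{(1)}/k)\to\RGamma_{\dR}(X_0/k)$ along the relative Frobenius, transported through the canonical identification $F_k^*\RGamma_{\dR}(X_0/k)\simeq\RGamma_{\dR}(X_0^{(1)}/k)$ afforded by flat base change for de Rham cohomology along $X_0^{(1)}=X_0\times_{k,\Fr_p}k$. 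The mechanism is the factorization $F_{X_0^{(1)}}=F_{X_0/k}\circ\mathrm{pr}$ of the absolute Frobenius of $X_0^{(1)}$ through the projection $\mathrm{pr}\colon X_0^{(1)}\to X_0$: the map $\mathrm{pr}$ sits in a cartesian square over $\Fr_p\colon\Spec k\to\Spec k$ and hence induces on de Rham cohomology exactly the base change isomorphism, so functoriality of $dF$ turns $R\pi_*(dF_{X_0^{(1)}})$ composed with base change into $F_{X_0/k}^*$. Because all of the ingredients (the functor $R\pi_*^{\alg}$, the base change morphisms, the identification in Lemma~\ref{applications: cosimplicial frob is frob}) are canonical, the resulting homotopy is natural, and passing to cohomology sheaves gives the last clause of the statement.

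The main obstacle is exactly this last matching of the various base change morphisms: one must check that the base change morphism entering $\varphi_{\RGamma_{\dR}(X_0/k)}$ via Lemma~\ref{dalg: geometric frobenius} is compatible with the flat-base-change identification $F_k^*\RGamma_{\dR}(X_0/k)\simeq\RGamma_{\dR}(X_0^{(1)}/k)$, so that the absolute-Frobenius functoriality supplied by Lemma~\ref{applications: cosimplicial frob is frob} is faithfully converted into the relative-Frobenius pullback $F_{X_0/k}^*$. All the genuinely geometric input, however, has already been isolated in Lemma~\ref{applications: cosimplicial frob is frob}; this statement merely propagates it from $D(X_0^{(1)})$ to derived global sections.
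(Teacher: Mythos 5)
Your proposal is correct and is essentially the paper's (implicit) argument: the paper states this lemma without a separate proof, recording it as the global-sections consequence of Lemma \ref{applications: cosimplicial frob is frob}, obtained exactly as you do by applying Lemma \ref{dalg: geometric frobenius} to $R\pi_*^{\alg}$ for $\pi\colon X_0^{(1)}\to \Spec k$ and then converting the absolute-Frobenius functoriality into the relative-Frobenius pullback via $F_{X_0^{(1)}}=F_{X_0/k}\circ\mathrm{pr}$ and the base-change identification $F_k^*\RGamma_{\dR}(X_0/k)\simeq\RGamma_{\dR}(X_0^{(1)}/k)$. The only cosmetic slip is your final remark about "passing to cohomology sheaves"; the statement has no such clause, and nothing in your argument depends on it.
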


\subsection{Computing the extension in diffracted Hodge complex of a \texorpdfstring{$W(k)$}{}-scheme.} We can now prove the main result of this section:

\begin{thm}\label{cosimp applications: HT extension}
For a smooth formal scheme $X$ over $\Spf W(k)$ there is a natural decomposition $\tau^{\leq p-1}\Omega^{\DHod}_X\simeq \bigoplus\limits_{i=0}^{p-1}\Omega^i_X[-i]$ in $D(X)$. The class of the extension $\tau^{\leq p-1}\Omega^{\DHod}_X\to\tau^{\leq p}\Omega^{\DHod}_X\to \Omega^p_X[-p]$ in $D(X)$ is naturally equivalent to the composition

\begin{equation}\label{cosimp applications: main formula}
\Omega^p_X\xrightarrow{\alpha(\Omega^1_{X_0})}F_{X_0}^*\Omega^1_{X_0}[p-1]\xrightarrow{\ob_{F,X\times_{W(k)}{W_2(k)}}}\cO_{X_0}[p]\xrightarrow{\Bock_{\cO_X}}\cO_X[p+1]\to \Omega^{\DHod}_X[p+1].
\end{equation}
\end{thm}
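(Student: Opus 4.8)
The plan is to apply Theorem \ref{cosimp: main theorem} to the derived commutative algebra $A := \Omega^{\DHod}_X \in \DAlg^{\geq 0}(X)$ and then rewrite each term of the resulting formula (\ref{cosimp: main formula}) in geometric terms. The hypotheses are readily checked: the algebra structure on $\Omega^{\DHod}_X$ comes from the identification $\Omega^{\DHod}_X \simeq R\pi^{\HT}_*\cO_{X^{\DHod}}$ via Lemma \ref{dalg: pushforward of rings}; one has $H^0(A) = \cO_X$ and, by smoothness of $X$, $H^1(A) = \Omega^1_X$ is locally free with $H^{\bullet}(A) \simeq \Omega^{\bullet}_X \simeq \Lambda^{\bullet}\Omega^1_X$; and the Sen operator supplies a splitting $s \colon \Omega^1_X[-1] \to \Omega^{\DHod}_X$ of $\tau^{\leq 1}A$. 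Theorem \ref{cosimp: main theorem}(1) then yields the asserted decomposition of $\tau^{\leq p-1}\Omega^{\DHod}_X$, and part (2) expresses the extension class of $\tau^{\leq p}\Omega^{\DHod}_X$ as the composite (\ref{cosimp: main formula}).

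It remains to match the maps. Since $\Omega^1_X$ is locally free, $H^1(A)/p = H^1(A/p) = \Omega^1_{X_0}$, so the first term $\alpha(H^1(A)/p)$ is literally $\alpha(\Omega^1_{X_0})$. The crucial step is to identify the composite
\[
F_{X_0}^*\Omega^1_{X_0}[-1] \xrightarrow{F_{X_0}^*(s \bmod p)} \tau^{\leq 1}F_{X_0}^*(A/p) \xrightarrow{\varphi_{A/p}} \tau^{\leq 1}(A/p)
\]
with $\ob_{F,X_1}$, where $X_1 := X\times_{W(k)}W_2(k)$, followed by the inclusion $\cO_{X_0} \hookrightarrow \tau^{\leq 1}(A/p)$. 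For this I would invoke the de Rham comparison identifying $A/p = \Omega^{\DHod}_X/p$ with the derived de Rham complex of $X_0$ (up to Frobenius twist) as an object of $\DAlg$, so that $\tau^{\leq 1}(A/p)$ becomes $\Fil^{\conj}_1$ of that de Rham complex; then use that $s \bmod p$ is the conjugate-filtration splitting induced by the $W(k)$-lift of $X_0$ (by uniqueness of functorial splittings, cf. \cite[Theorem 5.10]{li-mondal}); that $\varphi_{A/p}$ is the de Rham Frobenius $dF$ by Lemma \ref{applications: cosimplicial frob is frob}; and finally Proposition \ref{applications: frobenius lift obstruction prop} together with Remark \ref{applications: absolute vs relative obstruction}, which identifies $dF \circ F_{X_0}^*s$ with the inclusion $\cO_{X_0} \hookrightarrow \Fil^{\conj}_1$ precomposed with $\ob_{F,X_1}$.

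The rest is formal. The decompositions $\tau^{\leq 1}A \simeq \cO_X \oplus \Omega^1_X[-1]$ and $\tau^{\leq 1}(A/p) \simeq \cO_{X_0} \oplus \Omega^1_{X_0}[-1]$ coming from the Sen operator are compatible with reduction mod $p$, so $\Bock_A$ is the direct sum of $\Bock_{\cO_X}$ and $\Bock_{\Omega^1_X}[-1]$; in particular its restriction to the $\cO_{X_0}$-summand is $\Bock_{\cO_X}$, landing in the $\cO_X$-summand of $\tau^{\leq 1}A[1]$, and composing with $\tau^{\leq 1}A \hookrightarrow \tau^{\leq p-1}A \to A$ recovers the unit $\cO_X \to \Omega^{\DHod}_X$. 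Substituting these identifications into (\ref{cosimp: main formula}) produces exactly (\ref{cosimp applications: main formula}).

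I expect the main obstacle to be the crucial middle step: tracing the de Rham comparison and the attendant Frobenius twists to see that $\varphi_{A/p}\circ F_{X_0}^*(s\bmod p)$ is precisely the obstruction to lifting Frobenius over $W_2(k)$. The conceptual content is already packaged in Proposition \ref{applications: frobenius lift obstruction prop} and Lemma \ref{applications: cosimplicial frob is frob}; what remains is a careful but routine bookkeeping of absolute versus relative Frobenius and of the Frobenius twist of $X_0$, which is harmless because $k$ is perfect and can in fact be sidestepped by working with the de Rham complex relative to $\bF_p$ throughout.
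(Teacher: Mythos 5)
Your proposal is correct and follows essentially the same route as the paper: apply Theorem \ref{cosimp: main theorem} to $\Omega^{\DHod}_X$ with the splitting supplied by the Sen operator, identify $\varphi_{A/p}\circ F_{X_0}^*s$ with the obstruction $\ob_{F,X\times_{W(k)}W_2(k)}$ via Proposition \ref{applications: frobenius lift obstruction prop}, Remark \ref{applications: absolute vs relative obstruction}, and Lemma \ref{applications: cosimplicial frob is frob}, and then pass from $\Bock_{\Omega^{\DHod}_X}$ to $\Bock_{\cO_X}$. The only cosmetic difference is in the last step, where the paper invokes naturality of the Bockstein (a commutative square comparing $\Bock_{\cO_X}$ and $\Bock_{\Omega^{\DHod}_X}$) rather than the full diagonalization of $\Bock_A$ on $\tau^{\leq 1}$ that you assert; both arguments are fine.
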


\begin{proof}
Applying Theorem \ref{cosimp: main theorem} to the derived commutative algebra $\Omega^{\DHod}_X$ on $X$ equipped with the splitting $s:\Omega^1_X[-1]\to \Omega^{\DHod}_X$ gives the first assertion (which by now we knew anyway thanks to the existence of the Sen operator), as well as the following formula for the extension class of $\tau^{\leq p}\Omega^{\DHod}_X$:
\begin{multline}
\Omega^p_X\xrightarrow{\alpha(\Omega^1_{X_0})}F_{X_0}^*\Omega^1_{X_0}[p-1]\xrightarrow{F_{X_0}^*s[p]}(\tau^{\leq 1}F^*_{X_0}\dR_{X^{(-1)}_0/k})[p]\xrightarrow{\varphi_{\dR_{X^{(-1)}_0/k}}}(\tau^{\leq 1}\dR_{X_0^{(-1)}/k})[p]\\ \xrightarrow{\Bock_{\Omega^{\DHod}_X}}(\tau^{\leq 1}\Omega^{\DHod}_X)[p+1]\to (\tau^{\leq p-1})\Omega^{\DHod}_X[p+1].
\end{multline}

Using Proposition \ref{applications: frobenius lift obstruction prop} together with Remark \ref{applications: absolute vs relative obstruction} and Lemma \ref{applications: cosimplicial frob is frob} we can rewrite this composition as
\begin{multline}
\Omega^p_X\xrightarrow{\alpha(\Omega^1_{X_0})}F_{X_0}^*\Omega^1_{X_0}[p-1]\xrightarrow{\ob_{F,X\times_{W(k)}W_2(k)}}\cO_{X_0}[p]\to(\tau^{\leq 1}\dR_{X_0^{(-1)}/k})[p]\\ \xrightarrow{\Bock_{\Omega^{\DHod}_X}}(\tau^{\leq 1}\Omega^{\DHod}_X)[p+1]\to (\tau^{\leq p-1}\Omega^{\DHod}_X)[p+1].
\end{multline}
Converting this into (\ref{cosimp applications: main formula}) amounts to the observation that the Bockstein maps for $\Omega^{\DHod}_X$ and $\cO_X$ are related by the commutative square
\begin{equation}
\begin{tikzcd}
\cO_{X_0}\arrow[r]\arrow[d, "\Bock_{\cO_{X}}"] & \dR_{X^{(-1)}_0/k}\arrow[d, "\Bock_{\Omega^{\DHod}_{X}}"] \\
\cO_X[1]\arrow[r] & \Omega^{\DHod}_X[1].
\end{tikzcd}
\end{equation}
\end{proof}

\subsection{Cohomological consequences.} We can rewrite the answer provided by (\ref{cosimp applications: main formula}) as a formula for the extension class in $H^{p+1}(X,\Lambda^p T_X)$. Getting to this statement from Theorem \ref{cosimp applications: HT extension} amounts to a general piece of bookkeeping concerning the Bockstein homomorphisms, given by Lemma \ref{cosimp applications: Bockstein} below.

\begin{thm}\label{cosimp applications: the best part}
The class of the extension $\bigoplus\limits_{i=0}^{p-1}\Omega^i_{X}[-i]\to\tau^{\leq p}\Omega^{\DHod}_X\to\Omega^p_{X}[-p]$ in $H^{p+1}(X,\Lambda^pT_{X})$ is equal to \begin{equation}\Bock_X(\ob_{F,X\times_{W(k)}W_2(k)}\cup \alpha(\Omega^1_{X_0})),\end{equation} that is the result of applying the Bockstein homomorphism $\Bock_{X}:H^p(X_0,\Lambda^pT_{X_0})\to H^{p+1}(X,\Lambda^p T_{X})$ to the product of classes $\alpha(\Omega^1_{X_0})\in\Ext^{p-1}_{X_0}(\Omega^p_{X_0},F_{X_0}^*\Omega^1_{X_0})=H^{p-1}(X_0,\Lambda^p T_{X_0}\otimes F_{X_0}^*\Omega^1_{X_0})$ and $\ob_{F,X\times_{W(k)}W_2(k)}\in H^1(X_0,F^*T_{X_0})$.
\end{thm}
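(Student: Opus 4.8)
The plan is to deduce Theorem \ref{cosimp applications: the best part} from Theorem \ref{cosimp applications: HT extension} by translating the statement about a map $\Omega^p_X[-p]\to \Omega^{\DHod}_X[p+1]$ in $D(X)$ into a statement about a cohomology class in $H^{p+1}(X,\Lambda^p T_X)$. First I would observe that a map $f:\Omega^p_X[-p]\to \Omega^{\DHod}_X[p+1]$ in $D(X)$, once we know (from the first part of Theorem \ref{cosimp applications: HT extension}, or just from the Sen operator) that it factors through $\tau^{\leq p-1}\Omega^{\DHod}_X[p+1]$ and in fact lands in the summand $\Omega^0_X[-0]=\cO_X$ of $\tau^{\leq p-1}\Omega^{\DHod}_X\simeq\bigoplus_{i=0}^{p-1}\Omega^i_X[-i]$, is the same datum as a map $\Omega^p_X[-p]\to \cO_X[p+1]$, i.e. an element of $\Hom_{D(X)}(\Omega^p_X,\cO_X[2p+1])$. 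Since $\Omega^p_X$ is a vector bundle, this group is $H^{2p+1}(X,(\Omega^p_X)^\vee)=H^{2p+1}(X,\Lambda^p T_X)$ — wait, that is not what is claimed; the claimed target is $H^{p+1}(X,\Lambda^p T_X)$. The point is that $\RHom_X(\Omega^p_X,\cO_X)\simeq \RGamma(X,\Lambda^p T_X)$ and a map $\Omega^p_X[-p]\to \cO_X[p+1]$ is a class in $H^{p+1}(\RGamma(X,\Lambda^p T_X))=H^{p+1}(X,\Lambda^p T_X)$, so the identification of the extension class with an element of $H^{p+1}(X,\Lambda^p T_X)$ is exactly the statement that the only potentially nonzero component of $e_{X,p}$ lives in the summand $\cO_X\subset \tau^{\leq p-1}\Omega^{\DHod}_X$, which is the content of Theorem \ref{cosimp applications: HT extension}.

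Next I would unwind the composition (\ref{cosimp applications: main formula}) term by term, using the adjunction conventions fixed earlier. The map $\alpha(\Omega^1_{X_0}):\Omega^p_X\to F^*_{X_0}\Omega^1_{X_0}[p-1]$ is, by Lemma \ref{free cosimplicial: alpha adjunction} and Definition \ref{free cosimplicial: alpha definition}, the adjoint of a class $\alpha(\Omega^1_{X_0})\in \Ext^{p-1}_{X_0}(\Omega^p_{X_0},F^*_{X_0}\Omega^1_{X_0})=H^{p-1}(X_0,\Lambda^p T_{X_0}\otimes F^*_{X_0}\Omega^1_{X_0})$ — note $\Omega^p_X$ restricted to $X_0$ is $\Omega^p_{X_0}$. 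The next map $\ob_{F,X\times_{W(k)}W_2(k)}:F^*_{X_0}\Omega^1_{X_0}[p-1]\to \cO_{X_0}[p]$ is, by Remark \ref{applications: absolute vs relative obstruction}, the obstruction class $\ob_{F,X_1}\in H^1(X_0,F^*_{X_0}T_{X_0})=\Hom_{D(X_0)}(F^*_{X_0}\Omega^1_{X_0},\cO_{X_0}[1])$. Composing these two over $X_0$ gives exactly the Yoneda/cup product $\ob_{F,X_1}\cup\alpha(\Omega^1_{X_0})\in H^p(X_0,\Lambda^p T_{X_0})=\Hom_{D(X_0)}(\Omega^p_{X_0},\cO_{X_0}[p])$; here I would cite the standard fact that composition of maps in a derived category, under the identification of $\Ext$ groups with hypercohomology, is the cup product. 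The last piece, $\Bock_{\cO_X}:\cO_{X_0}[p]\to \cO_X[p+1]$, is by definition the Bockstein morphism attached to the fiber sequence $\cO_X\xrightarrow{p}\cO_X\to \cO_{X_0}$, so postcomposing with it induces on cohomology exactly the Bockstein homomorphism $\Bock_X:H^p(X_0,\Lambda^p T_{X_0})\to H^{p+1}(X,\Lambda^p T_X)$ (here one uses that $\Lambda^p T_X$ is flat over $W(k)$ so that $\Lambda^p T_X\otimes^L_{W(k)}k=\Lambda^p T_{X_0}$ and the fiber sequence $\Lambda^p T_X\xrightarrow{p}\Lambda^p T_X\to \Lambda^p T_{X_0}$ computes this Bockstein). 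The final trailing map $\cO_X[p+1]\to \Omega^{\DHod}_X[p+1]$ is the inclusion of the bottom summand, which under the identification of the target with $H^{p+1}(X,\Lambda^p T_X)$ accounts for the claim that the extension class lives there.

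The one genuinely substantive point — which I would isolate as a lemma, presumably the "Lemma \ref{cosimp applications: Bockstein}" referenced in the statement — is the compatibility between these two incarnations of the Bockstein: the map-level Bockstein $\Bock_{\cO_X}:\cO_{X_0}\to \cO_X[1]$ in $D(X)$ used in (\ref{cosimp applications: main formula}), versus the connecting homomorphism $\Bock_X$ on the level of $\Hom$-groups $H^p(X_0,\Lambda^p T_{X_0})\to H^{p+1}(X,\Lambda^p T_X)$. Concretely I would argue: applying the functor $\RHom_{D(X)}(\Omega^p_X,-)$ to the fiber sequence $\cO_X\xrightarrow{p}\cO_X\to \cO_{X_0}$ yields a fiber sequence $\RGamma(X,\Lambda^p T_X)\xrightarrow{p}\RGamma(X,\Lambda^p T_X)\to \RGamma(X_0,\Lambda^p T_{X_0})$ whose connecting map on $H^\bullet$ is precisely $\Bock_X$; on the other hand this connecting map is, by the very definition of the Bockstein morphism $\Bock_{\cO_X}$ and the functoriality of cones, induced by postcomposition with $\Bock_{\cO_X}[-1]$ up to the usual sign — and the sign is the one already absorbed into Convention \ref{applications: sign flip} and the identity $(p-1)!\equiv -1$. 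This is the step I expect to be the main obstacle, not because it is deep but because getting the signs and the precise direction of the shifts right (the shift $[p]$ versus $[p-1]$, and whether one writes $\Bock$ or $-\Bock$) requires care; everything else is a formal unwinding. Once this lemma is in hand, the theorem follows by reading off the composite (\ref{cosimp applications: main formula}) as $\Bock_X(\ob_{F,X_1}\cup\alpha(\Omega^1_{X_0}))$, which is exactly the asserted formula.
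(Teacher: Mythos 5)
Your proposal is correct and follows essentially the same route as the paper: you deduce the statement from Theorem \ref{cosimp applications: HT extension} by identifying the composite of the first two arrows of (\ref{cosimp applications: main formula}) with the cup product $\ob_{F,X\times_{W(k)}W_2(k)}\cup\alpha(\Omega^1_{X_0})$ via adjunction along $i:X_0\hookrightarrow X$, and then identifying postcomposition with $\Bock_{\cO_X}$ with the Bockstein homomorphism on $\Ext$-groups, which is precisely the paper's Lemma \ref{cosimp applications: Bockstein} and is proved there, as in your argument, by applying the exact functor $\RHom_X(\Omega^p_X,-)$ (equivalently the fiber sequence $K\to i_*i^*K\to K[1]$ plus adjunction) to $\cO_X\xrightarrow{p}\cO_X\to\cO_{X_0}$. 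The only blemish is the shift bookkeeping in your opening paragraph, where the relevant connecting map should be written $\Omega^p_X\to\cO_X[p+1]$ (equivalently $\Omega^p_X[-p]\to\cO_X[1]$) rather than $\Omega^p_X[-p]\to\cO_X[p+1]$; your main unwinding uses the correct normalization, so this does not affect the argument.
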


\begin{lm}\label{cosimp applications: Bockstein}
For two objects $M,N\in D(X)$ denote by $\underline{\RHom}_{\cO_X}(M,N)\in D(X)$ their internal $\Hom$ object. The Bockstein morphism \begin{equation}\RHom_{X_0}(i^*M,i^*N)=\RGamma(X_0,i^*\underline{\RHom}_{\cO_X}(M,N))\to \RGamma(X,\underline{\RHom}_{\cO_X}(M,N)[1])=\RHom_X(M,N[1])\end{equation}
can be described as sending $f:i^*M\to i^*N$ to the composition
\begin{equation}
M\to i_*i^*M\xrightarrow{i_*f}i_*i^*N\to N[1].
\end{equation}
\end{lm}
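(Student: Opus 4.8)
The plan is to identify the Bockstein morphism on the internal $\RHom$ complex and then unwind what this means after taking global sections. The key observation is that the Bockstein morphism attached to an object $\cF \in D(X)$ depends functorially on $\cF$, and the internal Hom complex $\cH := \underline{\RHom}_{\cO_X}(M,N)$ interacts well with the fiber sequence $\cO_X \xrightarrow{p} \cO_X \to \cO_{X_0}$. First I would note the base-change / projection identity $i^*\underline{\RHom}_{\cO_X}(M,N) \simeq \underline{\RHom}_{\cO_{X_0}}(i^*M, i^*N)$ (valid because $i$ is a closed immersion with $i_*$ of finite Tor-dimension, or directly since $\cO_{X_0} = \cO_X/p$ and $M$ may be assumed a complex of flats), so that the source of the asserted map is indeed $\RGamma(X_0, i^*\cH) = \RHom_{X_0}(i^*M, i^*N)$, and the Bockstein morphism in question is the connecting map $\RGamma(X_0, i^*\cH) \to \RGamma(X, \cH)[1]$ for the fiber sequence $\cH \xrightarrow{p} \cH \to i_*i^*\cH$ — here one uses that $\cH/p \simeq i_*i^*\cH$ since $\cH$ can be taken to be a complex of flat $\cO_X$-modules, together with the projection formula $i_*i^*\cH \simeq \cH \otimes_{\cO_X} i_*\cO_{X_0}$.

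The heart of the argument is then purely formal: identifying the connecting map $\cH \to i_*i^*\cH \to \cH[1]$ explicitly. The composite $\cH \to i_*i^*\cH \xrightarrow{\Bock_{\cH}} \cH[1]$ is, by definition of the Bockstein, the boundary map of the fiber sequence $\cH \xrightarrow{p}\cH \to i_*i^*\cH$, and I would recognize this composite as the image under the functor $\underline{\RHom}_{\cO_X}(M, -)$ of the corresponding boundary map for $N$, i.e. of the composite $N \to i_*i^*N \xrightarrow{\Bock_N} N[1]$. Equivalently, and more usefully for the statement, I would use the adjunction $(i^*, i_*)$: a class $f: i^*M \to i^*N$ corresponds under adjunction to a map $\widehat{f}: M \to i_*i^*N$, namely $M \to i_*i^*M \xrightarrow{i_*f} i_*i^*N$, and the Bockstein sends it to $\Bock_N \circ \widehat{f}: M \to i_*i^*N \to N[1]$, which is exactly the composite displayed in the statement. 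The only point to check carefully is that the identification "$\Bock$ on internal Hom $=$ post-composition with $\Bock_N$" is compatible with the adjunction identification of the source as $\RHom_{X_0}(i^*M,i^*N)$ rather than $\RHom_X(M, i_*i^*N)$; this compatibility is the naturality of the $(i^*,i_*)$-adjunction with respect to the morphism $N \to i_*i^*N$ appearing in the description of the counit.

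I expect the main (mild) obstacle to be bookkeeping the various adjunction and projection-formula identifications so that all the arrows compose on the nose rather than merely up to a sign or an unspecified equivalence — in particular keeping straight that the boundary map of $\cH \xrightarrow{p} \cH \to \cH/p$ really is induced from the one for $N$ via the additive functor $\underline{\RHom}_{\cO_X}(M,-)$, which uses that this functor is exact and commutes with the fiber sequence. Once that is in place, taking $\RGamma(X,-)$ of everything and tracing a class $f \in H^0\RGamma(X_0, i^*\cH) = \Hom_{D(X_0)}(i^*M, i^*N)$ through the diagram yields precisely the formula $f \mapsto \big(M \to i_*i^*M \xrightarrow{i_*f} i_*i^*N \to N[1]\big)$, as claimed. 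No deep input is needed beyond the formalism of the six operations for the closed immersion $i: X_0 \hookrightarrow X$ and the definition of the Bockstein recalled at the end of Section 2.
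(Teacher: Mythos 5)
Your proposal is correct and follows essentially the same route as the paper: both identify the Bockstein for $K=\underline{\RHom}_{\cO_X}(M,N)$ via the fiber sequence $K\xrightarrow{p}K\to i_*i^*K$, recognize the Bockstein map $i_*i^*\underline{\RHom}_{\cO_X}(M,N)\to \underline{\RHom}_{\cO_X}(M,N)[1]$ as post-composition with $i_*i^*N\to N[1]$ (you via functoriality of the fiber sequence under $\underline{\RHom}_{\cO_X}(M,-)$, the paper via the adjunction identification $i_*i^*\underline{\RHom}_{\cO_X}(M,N)\simeq \underline{\RHom}_{\cO_X}(M,i_*i^*N)$, which amounts to the same thing), and conclude using the explicit description of the adjunction sending $f$ to $M\to i_*i^*M\xrightarrow{i_*f}i_*i^*N$. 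No gaps.
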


\begin{proof}
For an object $K\in D(X)$ there is a natural fiber sequence $K\to i_*i^*K\to K[1]$ coming from the identification $i_*i^*K\simeq\cofib(K\xrightarrow{p} K)$. Applying $\RGamma(X,-)$ to the second map $i^*i_*K\to K[1]$ induces the Bockstein homomorphism $\RGamma(X_0,i^*K)\to \RGamma(X,K[1])$ on cohomology. The lemma aims to compute this map in the case $K=\underline{\RHom}_{\cO_X}(M,N)$. By adjunction, we have $i_*i^*\underline{\RHom}_{\cO_X}(M,N)\simeq i_*\underline{\RHom}_{\cO_{X_0}}(i^*M,i^*N)\simeq \underline{\RHom}_{\cO_X}(M,i_*i^*N)$, and the Bockstein map $i_*i^*\underline{\RHom}_{\cO_X}(M,N)\to \underline{\RHom}_{\cO_X}(M,N)[1]$ is induced by composing with the map $i_*i^*N\to N[1]$. This proves the lemma by the observation that the adjunction equivalence $\RHom_{{X_0}}(i^*M,i^*N)\simeq \RHom_{X}(M,i_*i^*N)$ takes a map $f:i^*M\to i^*N$ to the composition $M\to i_*i^*M\xrightarrow{i_*f}i_*i^*N$.
\end{proof}

Reducing modulo $p$ we also get a description of the extension class of $\tau^{\leq p}\dR_{X_0/k}$:

\begin{thm}\label{cosimp applications: the best part de Rham}
If $X_0$ is a smooth scheme over $k$ equipped with a smooth lift $X$ over $W(k)$ then the class of the extension $\bigoplus\limits_{i=0}^{p-1}\Omega^i_{X^{(1)}_0}[-i]\to\tau^{\leq p}\dR_{X_0/k}\to\Omega^p_{X^{(1)}_0}[-p]$ in $H^{p+1}(X^{(1)}_0,\Lambda^pT_{X^{(1)}_0})$ is equal to \begin{equation}\Bock_{X^{(1)}_{W_2(k)}}(\ob_{F,X^{(1)}_{W_2(k)}}\cup \alpha(\Omega^1_{X^{(1)}_0})),\end{equation} the result of applying the Bockstein homomorphism $\Bock_{X^{(1)}_{W_2(k)}}:H^p(X^{(1)}_0,\Lambda^pT_{X^{(1)}_0})\to H^{p+1}(X^{(1)}_0,\Lambda^p T_{X^{(1)}_0})$ to the product of classes $\alpha(\Omega^1_{X^{(1)}_0})\in\Ext^{p-1}_{X^{(1)}_0}(\Omega^p_{X^{(1)}_0},F_{X_0^{(1)}}^*\Omega^1_{X^{(1)}_0})=H^{p-1}(X^{(1)}_0,\Lambda^p T_{X^{(1)}_0}\otimes F_{X^{(1)}_0}^*\Omega^1_{X^{(1)}_0})$ and $\ob_{F,X^{(1)}_{W_2(k)}}\in H^1(X^{(1)}_0,F_{X^{(1)}_0}^*T_{X^{(1)}_0})$.
\end{thm}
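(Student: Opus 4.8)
The plan is to deduce this by reducing Theorem \ref{cosimp applications: the best part} modulo $p$, using the de Rham comparison $\dR_{X_0/k}\simeq\Omega^{\DHod}_{X^{(1)}}/p$ recorded at the start of this section, where $X^{(1)}=X\times_{W(k),\Fr_p}W(k)$ is again a smooth formal $W(k)$-scheme with special fiber $X^{(1)}_0=X_0^{(1)}$. First I would apply Theorem \ref{cosimp applications: the best part} to $X^{(1)}$: the extension $\bigoplus_{i=0}^{p-1}\Omega^i_{X^{(1)}}[-i]\to\tau^{\leq p}\Omega^{\DHod}_{X^{(1)}}\to\Omega^p_{X^{(1)}}[-p]$ in $D(X^{(1)})$ has class $\Bock_{X^{(1)}}(\ob_{F,X^{(1)}_{W_2(k)}}\cup\alpha(\Omega^1_{X^{(1)}_0}))$ in $H^{p+1}(X^{(1)},\Lambda^p T_{X^{(1)}})$, with $\Bock_{X^{(1)}}$ the Bockstein attached to $\cO_{X^{(1)}}\xrightarrow{p}\cO_{X^{(1)}}\to\cO_{X^{(1)}_0}$.

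Next I would apply derived reduction modulo $p$, i.e.\ $(-)\otimes^L_{\cO_{X^{(1)}}}\cO_{X^{(1)}_0}$, to this fiber sequence. Because the cohomology sheaves $\Omega^i_{X^{(1)}}$ are locally free, hence flat over $W(k)$, there are no $\Tor$-contributions: canonical truncation commutes with reduction modulo $p$ in the relevant range, so $(\tau^{\leq p}\Omega^{\DHod}_{X^{(1)}})/p$ is concentrated in degrees $\leq p$ with cohomology sheaves $\Omega^i_{X^{(1)}_0}$, and the natural map $(\tau^{\leq p}\Omega^{\DHod}_{X^{(1)}})/p\to\Omega^{\DHod}_{X^{(1)}}/p=\dR_{X_0/k}$ identifies it with $\tau^{\leq p}\dR_{X_0/k}$; similarly the outer terms become $\bigoplus_{i=0}^{p-1}\Omega^i_{X^{(1)}_0}[-i]$ and $\Omega^p_{X^{(1)}_0}[-p]$, and the induced decomposition of the $\tau^{\leq p-1}$-part agrees with the Deligne--Illusie one by the uniqueness of functorial decompositions \cite[Theorem 5.10]{li-mondal}. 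Hence the $D(X^{(1)}_0)$-extension in the statement is the reduction modulo $p$ of the one for $\Omega^{\DHod}_{X^{(1)}}$, and its class is the image of $\Bock_{X^{(1)}}(\ob_{F,X^{(1)}_{W_2(k)}}\cup\alpha(\Omega^1_{X^{(1)}_0}))$ under the reduction map $H^{p+1}(X^{(1)},\Lambda^p T_{X^{(1)}})\to H^{p+1}(X^{(1)}_0,\Lambda^p T_{X^{(1)}_0})$.

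Finally I would identify this image using the compatibility between Bockstein morphisms over $W(k)$ and over $\bZ/p^2$ recorded in the Bockstein subsection: for $M=\Lambda^p T_{X^{(1)}}$ the composite $M/p\xrightarrow{\Bock_M}M[1]\to M/p[1]$ is $\Bock_{M/p^2}$, which is precisely the Bockstein $\Bock_{X^{(1)}_{W_2(k)}}$ coming from the short exact sequence $0\to\Lambda^p T_{X^{(1)}_0}\to\Lambda^p T_{X^{(1)}_{W_2(k)}}\to\Lambda^p T_{X^{(1)}_0}\to 0$. Since $\ob_{F,X^{(1)}_{W_2(k)}}$, $\alpha(\Omega^1_{X^{(1)}_0})$, and therefore their cup product in $H^p(X^{(1)}_0,\Lambda^p T_{X^{(1)}_0})$ already live on the special fiber and are untouched by reduction, the reduction of $\Bock_{X^{(1)}}(\ob_{F,X^{(1)}_{W_2(k)}}\cup\alpha(\Omega^1_{X^{(1)}_0}))$ equals $\Bock_{X^{(1)}_{W_2(k)}}(\ob_{F,X^{(1)}_{W_2(k)}}\cup\alpha(\Omega^1_{X^{(1)}_0}))$, as claimed.

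The step requiring the most care is the second one: one must verify not merely that the three objects reduce correctly, but that the \emph{fiber sequence} (equivalently, the connecting morphism) defining $\tau^{\leq p}\Omega^{\DHod}_{X^{(1)}}$ maps, under $\otimes^L\bF_p$, to the one defining $\tau^{\leq p}\dR_{X_0/k}$, compatibly with the two decompositions of the $\tau^{\leq p-1}$-parts. Given the flatness of all the cohomology sheaves involved this is routine, but it should be spelled out, together with the bookkeeping via Lemma \ref{cosimp applications: Bockstein} that translates between the ``composition'' form of the extension class and the ``cup product then Bockstein'' form appearing in the statement.
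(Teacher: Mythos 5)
Your proposal is correct and is essentially the paper's own argument: the paper deduces Theorem \ref{cosimp applications: the best part de Rham} precisely by reducing Theorem \ref{cosimp applications: the best part} modulo $p$ via the identification $\dR_{X_0/k}\simeq\Omega^{\DHod}_{X^{(1)}}/p$, using the compatibility $\Bock_{M/p^2}=(M/p\xrightarrow{\Bock_M}M[1]\to M/p[1])$ recorded in the Bockstein subsection. The flatness bookkeeping you flag (truncation commuting with reduction, the fiber sequence reducing to the one for $\tau^{\leq p}\dR_{X_0/k}$) is exactly the routine verification the paper leaves implicit.
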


For the convenience of applications, let us state explicitly what Theorem \ref{cosimp applications: the best part} tells us about the differentials in the Hodge-Tate spectral sequence

\begin{cor}
For a smooth $W(k)$-scheme $X$ the Hodge-Tate spectral sequence $E_2^{ij}=H^i(X,\Omega^j_{X/W(k)})\Rightarrow H^{i+j}_{\DHod}(X)$ has no non-zero differentials on pages $E_2,\ldots,E_p$ and for every $i\geq 0$ the differential $d_{p+1}^{i,p}:H^i(X,\Omega^p_{X/W(k)})\to H^{i+p+1}(X,\cO_X)$ on page $E_{p+1}$ can be described as the composition

\begin{multline}
H^{i}(X,\Omega^p_{X/W(k)})\to H^i(X_0,\Omega^p_{X_0/k})\xrightarrow{\alpha(\Omega^1_{X_0})}H^{i+p-1}(X_0,F_{X_0}^*\Omega^1_{X_0/k})\xrightarrow{\ob_{F,X}}\\ H^{i+p}(X_0,\cO_{X_0})\xrightarrow{\Bock_{X}}H^{i+p+1}(X,\cO_X).
\end{multline}
\end{cor}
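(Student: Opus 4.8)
The plan is to identify the Hodge--Tate spectral sequence with the spectral sequence of the canonical (Postnikov) filtration on the object $\Omega^{\DHod}_{X}\in D(X)$. Since $H^{j}(\Omega^{\DHod}_{X})\simeq\Omega^{j}_{X/W(k)}$, this spectral sequence reads $E_{2}^{i,j}=H^{i}(X,\Omega^{j}_{X/W(k)})\Rightarrow H^{i+j}(X,\Omega^{\DHod}_{X})=H^{i+j}_{\DHod}(X)$, with differentials $d_{r}^{i,j}\colon E_{r}^{i,j}\to E_{r}^{i+r,j-r+1}$. For the first assertion I would use the $\bZ/p$-grading on $\Omega^{\DHod}_{X}$ produced by the Sen operator (\cite{apc},\cite{drinfeld},\cite{li-mondal}): there is a splitting $\Omega^{\DHod}_{X}\simeq\bigoplus_{a\in\bZ/p}\Omega^{\DHod}_{X,a}$ in $D(X)$ with $\Omega^{\DHod}_{X,a}$ having nonzero cohomology sheaves only in degrees $\equiv a\bmod p$. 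The spectral sequence then decomposes as a direct sum of those of the $\Omega^{\DHod}_{X,a}$, and in the summand attached to $\Omega^{\DHod}_{X,a}$ every nonzero entry $E_{2}^{i,j}$ has $j\equiv a\bmod p$; hence $d_{r}^{i,j}$ can be nonzero only when $j\equiv j-r+1\bmod p$, i.e.\ when $r\equiv 1\bmod p$. For $2\le r\le p$ this is impossible, so $E_{2}=E_{p+1}$.

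In particular $E_{p+1}^{i,p}=E_{2}^{i,p}=H^{i}(X,\Omega^{p}_{X/W(k)})$, $E_{p+1}^{i+p+1,0}=H^{i+p+1}(X,\cO_{X})$, and $d_{p+1}^{i,p}$ is the first nonvanishing differential leaving $E^{i,p}$ (subsequent ones target terms of negative complementary degree). By the standard identification of the leading differential out of a top Postnikov layer, $d_{p+1}^{i,p}$ is then the map induced on cohomology by the $\cO_{X}$-component $\delta_{0}\colon\Omega^{p}_{X/W(k)}[-p]\to\cO_{X}[1]$ of the connecting map of the fiber sequence $\tau^{\le p-1}\Omega^{\DHod}_{X}\to\tau^{\le p}\Omega^{\DHod}_{X}\to\Omega^{p}_{X/W(k)}[-p]$; here the decomposition $\tau^{\le p-1}\Omega^{\DHod}_{X}\simeq\bigoplus_{i<p}\Omega^{i}_{X/W(k)}[-i]$ from Theorem \ref{cosimp applications: HT extension} is what makes the component $\delta_{0}$ meaningful.

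It then remains to compute $\delta_{0}$, which is precisely the computation carried out in the proof of Theorem \ref{cosimp applications: the best part}: Theorem \ref{cosimp applications: HT extension} together with Lemma \ref{cosimp applications: Bockstein} identifies $\delta_{0}$ with the composition $\Omega^{p}_{X/W(k)}\xrightarrow{\alpha(\Omega^{1}_{X_{0}})}F_{X_{0}}^{*}\Omega^{1}_{X_{0}/k}[p-1]\xrightarrow{\ob_{F,X}}\cO_{X_{0}}[p]\xrightarrow{\Bock_{X}}\cO_{X}[p+1]$. Since $\alpha(\Omega^{1}_{X_{0}})$, viewed as a morphism in $D(X)$, factors through the reduction map $\Omega^{p}_{X/W(k)}\to i_{*}\Omega^{p}_{X_{0}/k}$ along the closed immersion $i\colon X_{0}\hookrightarrow X$, applying $H^{i}(X,-)$ to this composition and splitting off the first arrow yields exactly the four-term composition $H^{i}(X,\Omega^{p}_{X/W(k)})\to H^{i}(X_{0},\Omega^{p}_{X_{0}/k})\xrightarrow{\alpha(\Omega^{1}_{X_{0}})}H^{i+p-1}(X_{0},F_{X_{0}}^{*}\Omega^{1}_{X_{0}/k})\xrightarrow{\ob_{F,X}}H^{i+p}(X_{0},\cO_{X_{0}})\xrightarrow{\Bock_{X}}H^{i+p+1}(X,\cO_{X})$ of the statement.

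The step I expect to be the main obstacle is the one in the second paragraph: pinning down, with correct indexing and signs, the classical fact that the first nonvanishing differential out of the top layer of a filtered object coincides with the corresponding connecting map. This is standard for spectral sequences of filtered objects but must be set up with care; once it is in place, the vanishing of $d_{2},\dots,d_{p}$ is immediate from the $\bZ/p$-grading and the formula for $d_{p+1}^{i,p}$ reduces to Theorem \ref{cosimp applications: HT extension}.
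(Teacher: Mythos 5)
Your proposal is correct and follows the same route the paper intends: the corollary is stated there as an immediate consequence of Theorem \ref{cosimp applications: the best part}, with the vanishing of $d_2,\dots,d_p$ coming from the Sen-operator $\bZ/p$-decomposition of $\Omega^{\DHod}_X$ and $d_{p+1}^{i,p}$ being induced (via the splitting of $\tau^{\leq p-1}\Omega^{\DHod}_X$) by the extension class $\Bock_X(\ob_{F,X}\cup\alpha(\Omega^1_{X_0}))$, unwound through Lemma \ref{cosimp applications: Bockstein} exactly as you do. The only step you flag as delicate — that the first nonvanishing differential out of the top graded piece is the relevant component of the connecting map once the lower part of the filtration is split — is indeed the standard filtered-object fact the paper implicitly uses, so there is no gap.
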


\subsection{Decomposing de Rham complex compatibly with the algebra structure} Another application of the results of Section \ref{free cosimplicial: section} is an obstruction to formality of the de Rham complex as a commutative algebra. In what follows, we define the formal derived commutative algebra $\bigoplus\limits_{i\geq 0}\Omega^i_{X^{(1)}_0/k}[-i]$ as the free divided power algebra on the object $\Omega^1_{X^{(1)}_0}[-1]\in D(X^{(1)}_0)$, see Definition \ref{dalg: free divided power algebra def}.

\begin{pr}\label{cosimp applications: de rham formality}
Let $X_0$ be an arbitrary smooth scheme over $k$. The following are equivalent

\begin{enumerate}
\item $\dR_{X_0/k}$ is equivalent to $\bigoplus\limits_{i\geq 0}\Omega^i_{X^{(1)}_0/k}[-i]$ as a derived commutative algebra in $D(X^{(1)}_0)$

\item $\dR_{X_0/k}$ is equivalent to $\bigoplus\limits_{i\geq 0}\Omega^i_{X^{(1)}_0/k}[-i]$ as an $E_{\infty}$-algebra in $D(X^{(1)}_0)$

\item there exists a map $\dR_{X_0/k}\to\cO_{X_0^{(1)}}$ of $E_{\infty}$-algebras in $D(X_0^{(1)})$ that induces an isomorphism $H^0(\dR_{X_0/k})\simeq\cO_{X_0^{(1)}}$

\item $X_0$ together with its Frobenius endomorphism admits a lift over $W_2(k)$.
\end{enumerate}
\end{pr}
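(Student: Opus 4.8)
The plan is to establish the cycle of implications $(4)\Rightarrow(1)\Rightarrow(2)\Rightarrow(3)\Rightarrow(4)$, only the last of which carries real content. The implication $(4)\Rightarrow(1)$ is the construction of Deligne--Illusie \cite{deligne-illusie}: a lift of $X_0$ over $W_2(k)$ together with its Frobenius endomorphism yields, from the maps $\tfrac{1}{p}\,d\widetilde{F}$ attached to local Frobenius lifts and from the multiplicativity of the de Rham complex, an algebra quasi-isomorphism $\Gamma^{\bullet}(\Omega^1_{X^{(1)}_0/k}[-1])\to\dR_{X_0/k}$ --- this is the very starting point of \cite{deligne-illusie}. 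The implication $(1)\Rightarrow(2)$ follows from the forgetful functor $\DAlg(X^{(1)}_0)\to\Alg_{E_{\infty}}D(X^{(1)}_0)$ together with the identification of the underlying $E_\infty$-algebra of the free divided power algebra $\Gamma^{\bullet}(\Omega^1_{X^{(1)}_0/k}[-1])$ with $\bigoplus_{i\ge 0}\Omega^i_{X^{(1)}_0/k}[-i]$. For $(2)\Rightarrow(3)$ I would use that the graded $E_\infty$-algebra $\bigoplus_{i\ge 0}\Omega^i_{X^{(1)}_0/k}[-i]$, everything outside degree $0$ sitting in positive cohomological degree, carries a tautological augmentation onto its degree-$0$ summand $\cO_{X^{(1)}_0}$; composing it with the equivalence provided by $(2)$ gives the desired $\epsilon$.

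The heart of the proof is $(3)\Rightarrow(4)$. Given an $E_\infty$-algebra map $\epsilon\colon\dR_{X_0/k}\to\cO_{X^{(1)}_0}$ inducing an isomorphism on $H^0$, I would first restrict it to the first stage $\Fil^{\conj}_1\dR_{X_0/k}$ of the conjugate filtration, which is an extension of $\Omega^1_{X^{(1)}_0/k}[-1]$ by $\cO_{X^{(1)}_0}$. As $\epsilon$ retracts the unit, $\epsilon|_{\Fil^{\conj}_1}$ splits this extension, so $\Fil^{\conj}_1\dR_{X_0/k}\simeq\cO_{X^{(1)}_0}\oplus\fib(\epsilon|_{\Fil^{\conj}_1})$ and the projection onto $\Omega^1_{X^{(1)}_0/k}[-1]$ restricts to an equivalence on the second summand; let $s\colon\Omega^1_{X^{(1)}_0/k}[-1]\to\dR_{X_0/k}$ be the resulting splitting of the conjugate filtration in degree $1$. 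By construction $s$ factors through $\fib(\epsilon)$, so $\epsilon\circ s\simeq 0$ with a distinguished nullhomotopy. Since the set of such splittings is a torsor under $H^1(X^{(1)}_0,T_{X^{(1)}_0})$, in bijection with the set of flat $W_2(k)$-lifts of $X^{(1)}_0$ (as recorded in the proof of Proposition \ref{applications: frobenius lift obstruction prop}, using \cite{li-mondal}), the splitting $s$ is induced by a flat lift $X_1$ of $X^{(1)}_0$ over $W_2(k)$.

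It then remains to prove $\ob_{F,X_1}=0$, which is exactly the condition that $X_1$ lifts its Frobenius. The key observation is that, being a map of $E_\infty$-$\bF_p$-algebras, $\epsilon$ commutes with the degree-$0$ power operation $P^0$: on $\dR_{X_0/k}$ this is the Frobenius $\varphi_{\dR_{X_0/k}}$ analysed in Lemma \ref{applications: cosimplicial frob is frob} (compare Lemma \ref{dalg: frobenius} and Proposition \ref{free cosimplicial: steenrod operations description prop}(2)), and on $\cO_{X^{(1)}_0}$ it is the identity. Hence $\epsilon\circ\varphi_{\dR_{X_0/k}}\simeq F_{X^{(1)}_0}^*\epsilon$, so that $\epsilon\circ\bigl(\varphi_{\dR_{X_0/k}}\circ F_{X^{(1)}_0}^*s\bigr)\simeq F_{X^{(1)}_0}^*(\epsilon\circ s)\simeq 0$. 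On the other hand, combining Proposition \ref{applications: frobenius lift obstruction prop}, Remark \ref{applications: absolute vs relative obstruction} and Lemma \ref{applications: cosimplicial frob is frob} exactly as in the proof of Theorem \ref{cosimp applications: HT extension}, the composite $\varphi_{\dR_{X_0/k}}\circ F_{X^{(1)}_0}^*s$ is homotopic to $\ob_{F,X_1}$ followed by the unit $\cO_{X^{(1)}_0}\to\dR_{X_0/k}$. Applying $\epsilon$, which retracts this unit, recovers $\ob_{F,X_1}$ itself, so $\ob_{F,X_1}\simeq 0$. Untwisting along the automorphism $\Fr$ of $W_2(k)$ (an automorphism because $k$ is perfect), the resulting Frobenius-liftable $W_2(k)$-model of $X^{(1)}_0$ becomes one of $X_0$, which is $(4)$.

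The step I expect to be the main obstacle is the identification, inside $(3)\Rightarrow(4)$, of the composite $\varphi_{\dR_{X_0/k}}\circ F_{X^{(1)}_0}^*s$ with the obstruction to lifting Frobenius: this requires reconciling the three incarnations of Frobenius in play --- the $E_\infty$ power operation $P^0$, the derived-commutative Frobenius $\varphi$, and the geometric map $dF$ --- through Lemma \ref{dalg: frobenius}, Lemma \ref{applications: cosimplicial frob is frob} and Proposition \ref{free cosimplicial: steenrod operations description prop}, and careful tracking of the absolute-versus-relative Frobenius and of the Frobenius twists, i.e. precisely the bookkeeping already assembled for Theorem \ref{cosimp applications: HT extension}. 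A secondary subtlety is making the nullhomotopy of $\epsilon\circ s$ sufficiently coherent for these manipulations, which is why I would build $s$ explicitly as the inverse of the projection $\fib(\epsilon|_{\Fil^{\conj}_1})\to\Omega^1_{X^{(1)}_0/k}[-1]$ rather than as an abstract splitting.
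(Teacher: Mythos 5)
Your overall architecture coincides with the paper's (the cycle $(4)\Rightarrow(1)\Rightarrow(2)\Rightarrow(3)\Rightarrow(4)$, with the lift $X_1$ extracted from the splitting $s$ determined by $\epsilon$, and $(4)$ obtained by showing $\epsilon$ kills $\varphi_{\dR_{X_0/k}}\circ F^*_{X_0^{(1)}}s$ and hence $\ob_{F,X_1}$), but the step you yourself flag as the main obstacle is where your argument has a genuine gap. You claim that, since $\epsilon$ is an $E_{\infty}$-map, it ``commutes with $P^0$'' and therefore $\epsilon\circ\varphi_{\dR_{X_0/k}}\simeq F^*_{X_0^{(1)}}\epsilon$ as maps $F^*_{X_0^{(1)}}\dR_{X_0/k}\to\cO_{X_0^{(1)}}$. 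This does not follow: the Frobenius $\varphi_A$ of the paper is built from the derived commutative ($S^{\bullet}$-monad) structure as $F^*A\xrightarrow{\Delta_A}S^pA\xrightarrow{m_A}A$ (Lemma \ref{dalg: frobenius}), whereas an $E_{\infty}$-map only intertwines the structure maps $(A^{\otimes p})_{hS_p}\to A$; the map $\Delta_A$ does not factor through $(A^{\otimes p})_{hS_p}$, and $P^0$-naturality is a statement about operations on cohomology classes, not the homotopy of maps $F^*\dR_{X_0/k}\to\cO_{X_0^{(1)}}$ that you need. The paper never asserts such compatibility for a mere $E_{\infty}$-map; instead it proves exactly the weaker statement that suffices, namely $\epsilon\circ\varphi_{\dR_{X_0/k}}\circ F^*_{X_0^{(1)}}s\simeq 0$, and this takes real work: the $E_{\infty}$-hypothesis is used only to kill the composite $(\Omega^1_{X^{(1)}_0}[-1]^{\otimes p})_{hS_p}\to\dR_{X_0/k}\xrightarrow{\epsilon}\cO_{X_0^{(1)}}$; this vanishing is then transferred to $S^p(\Omega^1_{X^{(1)}_0}[-1])$ and to $T_p(\Omega^1_{X^{(1)}_0}[-1])[-1]$ using the equivalence $\tau^{\geq 1}(\Omega^1_{X^{(1)}_0}[-1]^{\otimes p})_{hS_p}\simeq S^p(\Omega^1_{X^{(1)}_0}[-1])$ of Lemma \ref{free cosimplicial: cosimp vs einf} together with a connectivity estimate on the relevant fibers; finally the factorization of $\Delta_M$ through $F^*M\to T_p(M)[-1]\to S^pM$ from Lemma \ref{free cosimplicial: tate p}(4) identifies $\varphi_{\dR_{X_0/k}}\circ F^*_{X_0^{(1)}}s$ with the composite through $T_p$, after which Proposition \ref{applications: frobenius lift obstruction prop} gives $\ob_{F,X_1}=0$. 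To repair your proof you must either supply this truncation/connectivity argument or prove (which the paper does not, and which is not obvious) that an $E_{\infty}$-map between derived commutative $\bF_p$-algebras intertwines their Frobenii $\varphi$.

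A secondary gap is in $(4)\Rightarrow(1)$. Deligne--Illusie's construction from a Frobenius lift produces a multiplicative quasi-isomorphism in the world of commutative DG algebras (or, via \cite{bhatt-derived}, of $E_{\infty}$-algebras), but statement (1) lives in $\DAlg(X_0^{(1)})$, where the right-hand side is the free divided power algebra $\Gamma^{\bullet}(\Omega^1_{X^{(1)}_0}[-1])$; in characteristic $p$ a CDGA equivalence does not automatically upgrade to a $\DAlg$ equivalence, and the paper explicitly declines to use such a comparison. Instead it argues by fpqc descent along the \v{C}ech nerve of $X_0^{\perf}\to X_0$, whose terms are quasiregular semiperfect so that all the de Rham algebras are discrete and decompose naturally (as Frobenius twists of divided power algebras) in the presence of the Frobenius lift, and then invokes Lemma \ref{dalg: free divided power as cosimp}. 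As written, your $(4)\Rightarrow(1)$ only yields $(2)$, which would break the cycle at $(1)$.
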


\begin{proof}
Formally, (1) implies (2), and (2) implies (3). Let us first show that (4) implies (1). It is well-known that (4) implies the formality of $\dR_{X_0/k}$ as a commutative DG algebra \cite[Remarque 2.2(ii)]{deligne-illusie}, or as an $E_{\infty}$-algebra \cite[Proposition 3.17]{bhatt-derived}. We have not discussed the relation between commutative DG algebras and derived commutative algebras, so let us give an independent argument for the equivalence $\dR_{X_0/k}\simeq\bigoplus\limits_{i\geq 0}\Omega^i_{X^{(1)}_0/k}[-i]$ in $\DAlg(X_0)$, in the presence of a Frobenius lift over $W_2(k)$.

For this equivalence, we will represent $\dR_{X_0/k}$ by the \v{C}ech complex associated to the flat cover $X_0^{\perf}\to X_0$ where $X_0^{\perf}:=\lim\limits_{\leftarrow}X^{(-n)}_0$ is the perfection of $X_0$. By fpqc descent for derived de Rham cohomology we have that $\dR_{X_0/k}$ is equivalent to the cosimplicial totalization of the following diagram of derived commutative algebras in $D(X_0^{(1)})$, cf. \cite[Remark 8.15]{bms2}:

\begin{equation}\label{cosimp applications: de rham cech diagram}
\begin{tikzcd}\dR_{X_0^{\perf}/k} \arrow[r, shift left=0.65ex] \arrow[r, shift right=0.65ex] & \dR_{X_0^{\perf}\times_{X_0}X_0^{\perf}/k} \arrow[r, shift left=1.3ex] \arrow[r, shift right=1.3ex] \arrow[r] &\ldots \end{tikzcd}
\end{equation}
Since each $(X_0^{\perf})^{\times_{X_0}n}$ is a quasiregular semiperfect scheme, all terms of this diagram are classical commutative algebras in $\QCoh(X_0^{(1)})$ placed in degree $0$. The lift $X_1$ of $X_0$ together with a Frobenius endomorphism induces lifts of the scheme $X_0^{\perf}$, the morphism $X_0^{\perf}\to X_0$, and the Frobenius endomorphism of $X_0^{\perf}$. Therefore each algebra $\dR_{(X_0^{\perf})^{\times_{X_0}n}/k}$ decomposes as the Frobenius-twist of $\Gamma^{\bullet}(L\Omega^1_{(X_0^{\perf})^{\times_{X_0}n}/k}[-1])$, by \cite[Proposition 3.17]{bhatt-derived}. Moreover all the maps in (\ref{cosimp applications: de rham cech diagram}) are compatible with these decompositions, so the cosimplicial commutative algebra defined by this diagram is quasi-isomorphic to $\Gamma^{\bullet}_{\naive}(\DK(\Omega^1_{X_0^{(1)}/k}[-1]))$. This cosimplicial commutative algebra is a model for the derived commutative algebra $\bigoplus\limits_{i\geq 0}\Omega^i_{X^{(1)}_0/k}[-i]$ by Lemma \ref{dalg: free divided power as cosimp}, so the implication (4)$\Rightarrow$(1) is proven. 

This is not logically necessary, but we will first give the proof of the implication (1)$\Rightarrow$(4) to illustrate the idea of the implication (3)$\Rightarrow$(4) in a simpler case. Suppose that such an equivalence $\alpha:\bigoplus\limits_{i\geq 0}\Omega^i_{X^{(1)}_0/k}[-i]\simeq \dR_{X_0/k} $ of derived commutative algebras exists. In particular, $\tau^{\leq 1}\dR_{X_0/k}$ is decomposed as $\cO_{X^{(1)}_0}\oplus\Omega^1_{X^{(1)}_0/k}[-1]$, as an object of $D(X_0)$. By \cite[Th\'eor\`eme 3.5]{deligne-illusie} any such decomposition is induced, up to a homotopy, by a lift of $X_0$ over $W_2(k)$. Denote the lift corresponding to $\tau^{\leq 1}\alpha$ by $X_1$. 
 
The Frobenius endomorphism $\varphi_{\bigoplus\limits \Omega^i_{X^{(1)}_0/k}[-i]}:F_{X^{(1)}_0}^*\bigoplus\limits_{i\geq 0}\Omega^i_{X^{(1)}_0/k}[-i]\to \bigoplus\limits_{i\geq 0}\Omega^i_{X^{(1)}_0/k}[-i]$ is zero on all components with $i\geq 1$ and is the usual adjunction map $F_{X^{(1)}_0}^*\cO_{X^{(1)}_0}\xrightarrow{\sim}\cO_{X^{(1)}_0}$ on the structure sheaf. Hence composing the section $\Omega^1_{X^{(1)}_0}[-1]\to \dR_{X_0/k}$ induced by $\alpha$ with the Frobenius map $\varphi_{\dR_{X_0/k}}:F_{X^{(1)}_0}^*\dR_{X_0/k}\to\dR_{X_0/k}$ is homotopic to zero. By Proposition \ref{applications: frobenius lift obstruction prop} and Lemma \ref{applications: cosimplicial frob is frob} this implies that the composition $F^*_{X_0^{(1)}}\Omega^1_{X^{(1)}_0}[-1]\xrightarrow{\ob_{F,X_1}}\cO_{X^{(1)}_0}\xrightarrow{}\dR_{X_0/k}$ is homotopic to zero. Since the second map admits a section, it follows that $\ob_{F,X_1}=0$ and $X_1$ is a lift of $X_0$ that admits a lift of Frobenius.

Finally, we prove that (3) implies (4). The augmentation map $\varepsilon:\dR_{X_0/k}\to\cO_{X_0^{(1)}}$ induces, in particular, a map $s:\Omega^1_{X_0^{(1)}}[-1]\to \dR_{X_0/k}$ in $D(X_0^{(1)})$ inducing an isomorphism on $H^1$ and such that the composition $\varepsilon\circ s$ is zero. Denote by $X_1$ the lift of $X_0$ corresponding to this section $s$. The map $s:\Omega^1_{X^{(1)}_0}[-1]\to \dR_{X_0/k}$ also induces a map $s_p:S^p(\Omega^1_{X^{(1)}_0}[-1])\to \dR_{X_0/k}$. The assumption that $\varepsilon$ is a map of $E_{\infty}$-algebras implies that the composition \begin{equation}\Omega^1_{X^{(1)}_0}[-1]^{\otimes p}_{hS_p}\to S^p(\Omega^1_{X^{(1)}_0}[-1])\xrightarrow{s_p} \dR_{X_0/k}\xrightarrow{\varepsilon}\cO_{X_0^{(1)}}\end{equation} is homotopic to zero. We will deduce from this that the composition $T_p(\Omega^1_{X^{(1)}_0/k}[-1])[-1]\to S^p\Omega^1_{X^{(1)}_0}[-1]\to \dR_{X_0/k}\xrightarrow{\varepsilon}\cO_{X_0^{(1)}}$ is homotopic to zero.

Denote by $K$ the fiber of the map $\Omega^1_{X^{(1)}_0}[-1]^{\otimes p}_{hS_p}\to \Omega^p_{X^{(1)}_0}[-p]$, it is equipped with a map $K\to T_p(\Omega^1_{X^{(1)}_0}[-1])[-1]$, and the object $\fib(K\to T_p(\Omega^1_{X^{(1)}_0}[-1])[-1])\simeq \fib(\Omega^1_{X^{(1)}_0}[-1]^{\otimes p}_{hS_p}\to S^p\Omega^1_{X^{(1)}_0}[-1])$ is concentrated in degrees $\leq 0$ by Lemma \ref{free cosimplicial: cosimp vs einf}. Our assumption implies that the map $T_p(\Omega^1_{X^{(1)}_0}[-1])[-1]\to \dR_{X_0/k}\xrightarrow{\varepsilon}\cO_{X_0^{(1)}}$ factors through the map $T_p(\Omega^1_{X^{(1)}_0}[-1])[-1]\to \fib(K[1]\to T_p(\Omega^1_{X^{(1)}_0}[-1]))$ which forces it to be homotopic to zero because this fiber object is concentrated in degrees $\geq -1$.

In Lemma \ref{free cosimplicial: tate p}(4) we constructed a map $F_{X_0}^*M\to T_p(M)[-1]$ for any $M\in D(X_0)$ such that the composition $F_{X_0}^*M\to T_p(M)[-1]\xrightarrow{\gamma_M}S^p M$ is the map $\Delta_M: F_{X_0}^*M\to S^p M$. By definition, the Frobenius map is the composition $F_{X^{(1)}_0}^*\dR_{X_0/k}\xrightarrow{\Delta_{\dR_{X_0/k}}}S^p\dR_{X_0/k}\xrightarrow{m}\dR_{X_0/k}$ which allows us to conclude that the composition \begin{equation}F_{X_0^{(1)}}^*\Omega^1_{X^{(1)}_0}[-1]\to T_p(\Omega^1_{X^{(1)}_0}[-1])[-1]\to S^p\Omega^1_{X^{(1)}_0}[-1]\xrightarrow{s_p} \dR_{X_0/k}\end{equation}
is homotopic to the composition \begin{equation}\label{applications: augmentation formula}F_{X^{(1)}_0}^*\Omega^1_{X_0^{(1)}}[-1]\xrightarrow{F^*_{X_0^{(1)}}s}F_{X_0^{(1)}}^*\dR_{X_0/k}\xrightarrow{\varphi_{\dR_{X_0/k}}}\dR_{X_0/k}.\end{equation} We are given that the composition of (\ref{applications: augmentation formula}) with $\varepsilon:\dR_{X_0/k}\to\cO_{X_0^{(1)}}$ is homotopic to zero, which is equivalent to the vanishing of the obstruction to lifting $F_{X_0}$ on $X_1$, by Proposition \ref{applications: frobenius lift obstruction prop}.
\end{proof}
\begin{rem}
\comment{(1) Vadim Vologodsky explained to me that the implication (3)$\Rightarrow$(4) can be deduced from the results of \cite{ogus-vologodsky}. Recall that the Frobenius-pushforward $F_{X_0/k *}\cD_{X_0/k}$ of the sheaf of differential operators on $X_0$ can be viewed as an Azumaya algebra on the total space $T^*X_0^{(1)}$ of the cotangent bundle to $X_0^{(1)}$. We will denote by $F_{X_0/k *}\widehat{\cD}_{X_0/k}$ (resp. $F_{X_0/k *}\widehat{\cD}_{X_0/k}^{\gamma}$) the restriction of $F_{X_0/k *}\cD_{X_0/k}$ to the completion (resp. completed divided power envelope) of $T^*X_0^{(1)}$ along the zero section. An augmentation $\varepsilon:\dR_{X_0/k}\to \cO_{X_0^{(1)}}$ equips $\cO_{X_0^{(1)}}$ with the structure of a module over the $E_{\infty}$-algebra $\dR_{X_0/k}$, and the tensor product $F_*\cO_{X_0/k}\cO_{X_0}\otimes_{\dR_{X_0/k},\varepsilon}\cO_{X^{(1)}_0}$ is a module over $F_{X_0/k *}\widehat{\cD}_{X_0/k}$ that induces a {\it tensor} splitting of this Azumaya algebra.

It is proven in \cite[Theorem 4.5]{ogus-vologodsky} that isomorphism classes of lifts of $X_0$ over $W_2(k)$ are in bijection with equivalence classes of tensor splittings $F_{X_0/k *}\widehat{\cD}_{X_0/k}^{\gamma}$. Given a smooth lift $X_1$ over $W_2(k)$, the splitting $F_{X_0/k *}\widehat{\cD}_{X_0/k}^{\gamma}$-module is defined as the dual to the sheaf $\cA_{X_1/W_2(k)}$ of functions on the total space of the $F_{X_0/k}^*T_{X_0^{(1)}}$-torsor classifying lifts of the Frobenius morphism $X_0\to X_0^{(1)}$ to a $W_2(k)$-linear morphism $X_1\to X_1^{(1)}$. Under the assumption (3), we are given a tensor splitting of $F_{X_0/k *}\widehat{\cD}_{X_0/k}$ which, in particular, induces a tensor splitting of $F_{X_0/k *}\widehat{\cD}_{X_0/k}^{\gamma}$ that corresponds to a lift $X_1$. Its extension to a tensor splitting of $F_{X_0/k *}\widehat{\cD}_{X_0/k}$ gives rise (under \cite[Proposition 5.29]{ogus-vologodsky}) to a sheaf of commutative $\cO_{X_0}$-algebras $\cA'$ equipped with a map $\cA_{X_1/W_2(k)}\to \cA'$.

(2) }Bhargav Bhatt pointed out that the condition (3) is equivalent to the existence of a section of the gerbe $X_0^{\DHod}\to X_0$, by a version of \cite[7.3]{bhatt-lurie-prismatization}, hence the implication (3)$\Rightarrow$(4) also follows from \cite[Conjecture 5.14]{bhatt-lurie-prismatization}.
\end{rem}

\section{Preliminaries on the Sen operator}\label{sen operator: section}

In this section we collect preliminary material on the Sen operator, and more generally on the derived category of sheaves equipped with an endomorphism. We first discuss the Sen operator on diffracted Hodge cohomology of a scheme over $\bZ_p$ and relate its non-semisimplicity to non-decomposability of the de Rham complex, and then give a parallel discussion over $\bZ/p^n,n\geq 2$ for which we, in particular, describe the Hodge-Tate locus of the Cartier-Witt stack of $\bZ/p^n$, generalizing \cite[Example 5.15]{bhatt-lurie-prismatization}. For the purposes of the application to the de Rham complex in Section \ref{semiperf: section}, the case of $\bZ/p^n$ subsumes that of $\bZ_p$, but we invite the reader to consider the case of schemes over $\bZ_p$ first.

\subsection{Category of objects with an endomorphism.} Let $X$ be a flat formal scheme over $W(k)$ and as before $D(X)$ is the $\infty$-category of quasicoherent sheaves on $X$. In this section we will work in the $\infty$-category $D_{\bN}(X):=\Func(B\bN,D(X))$ of objects of $D(X)$ equipped with an additional endomorphism. Objects of this category are pairs $(M,f_M)$ where $M$ is an object of $D(X)$ and $f_M:M\to M$ is an endomorphism of $M$ in $D(X)$. The morphisms between objects $(M,f_M)$ and $(N, f_N)$ are given by 

\begin{equation}\label{sen operator: morphisms in DN formula}
\RHom_{D_{\bN}(X)}((M,f_M),(N,f_N))=\fib(\RHom_{D(X)}(M,N)\xrightarrow{f\mapsto f\circ f_M-f_N\circ f}\RHom_{D(X)}(M,N)).
\end{equation}

For a scalar $\lambda\in \bZ_p$ we have the functor $D(X)\to D_{\bN}(X)$ sending an object $M\in D(X)$ to itself equipped with the endomorphism $\lambda\cdot \Id_M$. We will often use the following special cases of the formula (\ref{sen operator: morphisms in DN formula}):

\begin{lm}\label{sen operator: morphisms in DN}
For an arbitrary object $(M, f_M)\in D_{\bN}(X)$ the morphisms between it and an object of the form $(N, \lambda\cdot \Id_N)$ can be described as

\begin{equation}\label{sen operator: morphism to scalar}
\RHom_{D_{\bN}(X)}((M, f_M),(N,\lambda\cdot\Id_N))= \RHom_{D(X)}(\cofib(f_M-\lambda\cdot \Id_M:M\to M),N)
\end{equation}
\begin{equation}\label{sen operator: morphism from scalar}
\RHom_{D_{\bN}(X)}((N,\lambda\cdot\Id_N),(M,f_M))=\RHom_{D(X)}(N,M^{f_M=\lambda}).
\end{equation}
\end{lm}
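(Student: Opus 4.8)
The statement to be proved is Lemma \ref{sen operator: morphisms in DN}, giving the two formulas (\ref{sen operator: morphism to scalar}) and (\ref{sen operator: morphism from scalar}) for mapping spaces in $D_{\bN}(X)$ into or out of an object of the form $(N,\lambda\cdot\Id_N)$. The natural approach is simply to specialize the general formula (\ref{sen operator: morphisms in DN formula}) for $\RHom_{D_{\bN}(X)}$ and recognize the resulting fiber as a shifted cofiber or as a fixed-point object. So the first thing I would do is write down (\ref{sen operator: morphisms in DN formula}) in each of the two cases and identify the relevant endomorphism of $\RHom_{D(X)}(-,-)$ that one takes the fiber of.

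\textbf{The map-to-scalar formula.} For (\ref{sen operator: morphism to scalar}), I plug $(N,\lambda\cdot\Id_N)$ into the target slot of (\ref{sen operator: morphisms in DN formula}). Then the endomorphism of $\RHom_{D(X)}(M,N)$ whose fiber we take sends $f$ to $f\circ f_M - (\lambda\cdot\Id_N)\circ f = f\circ(f_M-\lambda\cdot\Id_M)$, i.e.\ it is precisely the map $(f_M-\lambda\cdot\Id_M)^*$ given by precomposition with $f_M-\lambda\cdot\Id_M\colon M\to M$. Thus
\[
\RHom_{D_{\bN}(X)}((M,f_M),(N,\lambda\cdot\Id_N))=\fib\bigl(\RHom_{D(X)}(M,N)\xrightarrow{(f_M-\lambda)^*}\RHom_{D(X)}(M,N)\bigr).
\]
Now I invoke that $\RHom_{D(X)}(-,N)$ is an exact (contravariant) functor $D(X)^{\op}\to D(\bZ_p)$, so it carries the cofiber sequence $M\xrightarrow{f_M-\lambda\cdot\Id_M}M\to\cofib(f_M-\lambda\cdot\Id_M)$ to a fiber sequence
\[
\RHom_{D(X)}(\cofib(f_M-\lambda\cdot\Id_M),N)\to\RHom_{D(X)}(M,N)\xrightarrow{(f_M-\lambda)^*}\RHom_{D(X)}(M,N),
\]
which identifies the fiber above with $\RHom_{D(X)}(\cofib(f_M-\lambda\cdot\Id_M\colon M\to M),N)$, as claimed.

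\textbf{The map-from-scalar formula.} For (\ref{sen operator: morphism from scalar}) I put $(N,\lambda\cdot\Id_N)$ into the source slot. Then the endomorphism of $\RHom_{D(X)}(N,M)$ whose fiber is taken sends $f$ to $f\circ(\lambda\cdot\Id_N) - f_M\circ f = -(f_M-\lambda\cdot\Id_M)_*(f)$, i.e.\ it is $-(f_M-\lambda\cdot\Id_M)_*$ where now we postcompose with $f_M-\lambda\cdot\Id_M\colon M\to M$. Since the fiber of a map and of its negative agree, this gives
\[
\RHom_{D_{\bN}(X)}((N,\lambda\cdot\Id_N),(M,f_M))=\fib\bigl(\RHom_{D(X)}(N,M)\xrightarrow{(f_M-\lambda)_*}\RHom_{D(X)}(N,M)\bigr).
\]
Using that $\RHom_{D(X)}(N,-)$ is exact and carries the fiber sequence $M^{f_M=\lambda}:=\fib(f_M-\lambda\cdot\Id_M\colon M\to M)\to M\xrightarrow{f_M-\lambda\cdot\Id_M}M$ to a fiber sequence, I identify the displayed fiber with $\RHom_{D(X)}(N,M^{f_M=\lambda})$, which is (\ref{sen operator: morphism from scalar}).

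\textbf{Expected obstacle.} There is essentially no hard step here — the whole content is bookkeeping: being careful about which slot the scalar object sits in (so that precomposition versus postcomposition with $f_M-\lambda\cdot\Id_M$ comes out right), and the harmless sign in the second case. The only thing worth stating explicitly, rather than proving, is that the mapping-space formula (\ref{sen operator: morphisms in DN formula}) is natural enough that these fiber-sequence manipulations are legitimate; this is immediate from the fact that $D_{\bN}(X)=\Func(B\bN,D(X))$ is a stable $\infty$-category and the forgetful functor to $D(X)$ together with the "evaluation of the endomorphism" map assemble into the displayed fiber sequence functorially in both variables. Given that, the proof is just the two computations above.
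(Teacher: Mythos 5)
Your argument is correct and is exactly the (omitted) reasoning the paper intends: the lemma is stated as an immediate consequence of the mapping formula (\ref{sen operator: morphisms in DN formula}), and your specialization — recognizing the endomorphism as pre- or post-composition with $f_M-\lambda\cdot\Id_M$ and using exactness of $\RHom_{D(X)}$ in each variable to rewrite the fiber as maps out of the cofiber, resp.\ into $M^{f_M=\lambda}$ — is precisely that argument. No gap; the sign remark and the naturality remark are both fine.
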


\subsection{Diffracted Hodge cohomology and Sen operator over $\bZ_p$.} The works of Drinfeld \cite{drinfeld} and Bhatt-Lurie imply that the diffracted Hodge cohomology is equipped with a natural endomorphism, referred to by \cite{apc} as `Sen operator'. We work with arbitrary (not necessarily smooth over $W(k)$) flat formal schemes because our proof will proceed through a computation with diffracted Hodge cohomology of quasiregular semiperfectoid rings.

\begin{thm}[\hspace{1sp}{\cite[Construction 4.7.1]{apc}}]\label{sen operator: general theorem}
For a bounded $p$-adic formal scheme $X$ there is a natural object $(\Omega^{\DHod}_X,\Theta_X)\in D_{\bN}(X)$ whose underlying object is the diffracted Hodge complex, equipped with a filtration $\Fil^{\conj}_{\bullet}$ such that the graded quotients $\gr_i^{\conj}$ of this filtration are equivalent to $(L\Omega^i_X[-i],-i)$.
\end{thm}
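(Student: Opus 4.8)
The plan is to construct the pair $(\Omega^{\DHod}_X,\Theta_X)\in D_{\bN}(X)$ together with its conjugate filtration by descending from the universal case along the de Rham--crystalline, or rather Hodge-Tate, stack formalism of \cite{bhatt-lurie-prismatization}. First I would recall that the diffracted Hodge complex is, by \cite[Theorem 7.20(2)]{bhatt-lurie-prismatization}, the pushforward $R\pi^{\HT}_*\cO_{X^{\DHod}}$ along $\pi^{\HT}:X^{\DHod}\to X$, where $X^{\DHod}$ is the Hodge-Tate locus of the Cartier--Witt stack (equivalently the diffracted Hodge stack). The key structural input is that $X^{\DHod}$ carries a canonical action of the group scheme $\bG_m^{\sharp}$ (the divided-power completion of $\bG_m$ at the identity), equivalently a grading/filtration; this is exactly the structure Drinfeld \cite{drinfeld} and Bhatt--Lurie isolate. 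The Lie algebra of $\bG_m^{\sharp}$ is a copy of the structure sheaf, and the induced infinitesimal action on $R\pi^{\HT}_*\cO_{X^{\DHod}}$ is the operator $\Theta_X$. Concretely, I would package this by noting that $B\bG_m^{\sharp}$ has an endomorphism coming from multiplication-by-$\zeta$ data, or more directly, that a $\bG_m^{\sharp}$-action on a stack $Y\to X$ produces on $Rf_*\cO_Y$ a canonical endomorphism via differentiating the coaction $\cO_Y\to \cO_Y\otimes \cO_{\bG_m^{\sharp}}$ and projecting to the degree-one part of $\cO_{\bG_m^{\sharp}}=\bigoplus \cO_X\cdot t^{[n]}$; this is visibly functorial in $X$ and so descends from the affine case, giving the object $(\Omega^{\DHod}_X,\Theta_X)$ of $D_{\bN}(X)$.

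Next I would produce the conjugate filtration. On $X^{\DHod}$ there is the Hodge--Tate filtration: using that $X^{\DHod}$ fibers over $B\bG_m^{\sharp}$ (or that it is built from the de Rham--Hodge degeneration), the pushforward $R\pi^{\HT}_*\cO$ acquires an exhaustive increasing filtration $\Fil^{\conj}_\bullet$ whose formation commutes with the $\bG_m^{\sharp}$-action, hence is a filtration in $D_{\bN}(X)$. The content is the identification of graded pieces. By \cite[Theorem 7.20]{bhatt-lurie-prismatization} (the Hodge--Tate comparison) one has $\gr_i^{\conj}R\pi^{\HT}_*\cO_X \simeq L\Omega^i_X[-i]$ as objects of $D(X)$; what remains is to check that $\Theta_X$ acts on this graded piece by the scalar $-i$. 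For this I would reduce, via naturality and the fact that both sides commute with filtered colimits and satisfy fpqc descent, to the case where $X$ is affine and even a polynomial (or quasisyntomic) algebra, where the $\bG_m^{\sharp}$-weight of the $i$-th graded piece can be read off directly: the $i$-th conjugate graded piece sits in weight $-i$ for the $\bG_m^{\sharp}$-action by the very construction of the diffracted Hodge stack as the formal completion of $\mathbb{A}^1/\bG_m$ (or $\bG_m^{\sharp}$-action on $T_X[-1]$-type object), so differentiating the action gives multiplication by $-i$. This computation of weights on graded pieces is the step I expect to be the main obstacle, since it requires unwinding the precise normalization of the $\bG_m^{\sharp}$-action in \cite{apc}, \cite{bhatt-lurie-prismatization} and matching the sign/shift conventions so that the answer is $-i$ rather than $i$ or $i-1$.

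Finally I would check that the construction is genuinely functorial and independent of choices, so that it defines an object of $D_{\bN}(X)$ for all bounded $p$-adic formal schemes: since $\Omega^{\DHod}_{(-)}$ and the $\bG_m^{\sharp}$-equivariant structure are defined on affines compatibly with pullback, and $D_{\bN}(X)=\lim_{\Spec R\to X} D_{\bN}(R)$ by the same descent argument used for $D(X)$ in the Notation section, the affine construction glues. The filtration $\Fil^{\conj}_\bullet$ likewise glues because it is constructed functorially on affines. At this point, citing \cite[Construction 4.7.1]{apc} for the existence and the graded-piece identification finishes the proof; the only thing I am really providing beyond the citation is the observation that all of this takes place in $D_{\bN}(X)$ rather than merely $D(X)$, which is immediate once one records that the $\bG_m^{\sharp}$-equivariant structure is what produces $\Theta_X$ and that it is compatible with the conjugate filtration by construction.
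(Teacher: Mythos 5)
Your proposal is essentially the construction the paper takes for granted: the statement is quoted directly from \cite[Construction 4.7.1]{apc}, and the paper offers no independent proof, so the content you sketch (realizing $\Omega^{\DHod}_X$ as $R\pi^{\HT}_*\cO_{X^{\DHod}}$, obtaining $\Theta_X$ by differentiating the $\bG_m^{\#}$-action, and checking the weight $-i$ on $\gr_i^{\conj}$ by reduction to the affine/polynomial case) is exactly the cited construction, and it is also the strategy the paper itself carries out when it proves the analogous statement over $\bZ/p^n$ in Theorem \ref{sen operator: zpn main}. No gap to report; your argument matches the intended one.
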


From now on we assume that $X$ is a flat formal $\bZ_p$-scheme. When $X$ is formally smooth over $W(k)$ for some perfect field $k$, the conjugate filtration on $\Omega^{\DHod}_X$ coincides with the canonical filtration, and $\Theta_X$ acts on $H^i(\Omega^{\DHod}_X)\simeq\Omega^i_X$ by $(-i)\cdot\Id_{\Omega^i_X}$. The interesting information contained in this new cohomological invariant of $X$ is the data of extensions between the graded quotients of the conjugate filtration. One of our main goals, achieved in Theorem \ref{semiperf: main sen operator}, is to explicate what this information is for the smallest potentially non-split step $(\Fil^{\conj}_p\Omega^{\DHod}_X,\Theta_X)$ of the conjugate filtration.

Given that $\Theta_X$ acts via multiplication by $-i$ on $\gr_i^{\conj}\Omega^{\DHod}_X$, the product $\Theta_X(\Theta_X+1)\ldots (\Theta_X+i)$ is naturally homotopic to zero as an endomorphism of $\Fil_{i}^{\conj}\Omega^{\DHod}_X$. Therefore for each $i$ the object $\Fil_i^{\conj}\Omega^{\DHod}_X\in D_{\bN}(X)$ is naturally equipped with the structure of a $\bZ_p[t]/t(t+1)\ldots(t+i)$-module, and the decomposition of the spectrum of this ring into a union of connected components induces a decomposition
\begin{equation}
(\Omega^{\DHod}_X,\Theta_X)\simeq \bigoplus\limits_{i=0}^{p-1}(\Omega^{\DHod}_{X,i},\Theta_X)
\end{equation}
such that for every $n\in \bZ_p$ the endomorphism $\Theta_X+n$ of $\Fil_i\Omega^{\DHod}_{X,n\bmod p}$ is topologically nilpotent for every $i$, cf. \cite[Remark 4.7.20]{apc}. Restricting to the special fiber $X_0:=X\times_{\bZ_p}\bF_p\xhookrightarrow{i} X$ we similarly get an endomorphism $\Theta_X$ of $\dR_{X_0}$, and a decomposition
\begin{equation}
(\dR_{X_0},\Theta_X)\simeq \bigoplus\limits_{i=0}^{p-1}(\dR_{X_0,i},\Theta_X).
\end{equation}

We will now set up a way to package the information about the extensions in the conjugate filtration on $(\Fil_p^{\conj}\Omega^{\DHod}_X,\Theta_X)$. There is a decomposition $(\Fil^{\conj}_{p-1}\Omega^{\DHod}_X,\Theta_X)\simeq \bigoplus\limits_{i=0}^{p-1}(L\Omega^i_X[-i],-i)$ and the fiber sequence $(\Fil^{\conj}_{p-1}\Omega^{\DHod}_X,\Theta_X)\to (\Fil_p^{\conj}\Omega^{\DHod}_X,\Theta_X)\to (L\Omega^p_X[-p],-p)$ gives rise to the map in $D_{\bN}(X)$:
\begin{equation}\label{sen operator: extension map}(L\Omega^p_X[-p],-p)\to \bigoplus\limits_{i=0}^{p-1}(L\Omega^i_X[-i],-i)[1].\end{equation}
In the remainder of this section by $L\Omega^i_{X_0}$ we will mean the $i$th exterior power of the cotangent complex of $X_0$ relative to $\bF_p$, so that $L\Omega^i_{X_0}\simeq L\Omega^i_X|_{X_0}$.

\begin{lm}\label{sen operator: ext group description}
There is a natural equivalence
\begin{equation}\label{sen operator: ext group description formula}
\Map_{D_{\bN}(X)}((L\Omega^p_X[-p],-p), \bigoplus\limits_{i=0}^{p-1}(L\Omega^i_X[-i],-i)[1])\simeq \Map_{D(X_0)}(L\Omega^p_{X_0}, \cO_{X_0}[p]).
\end{equation}

Under this identification the map from the LHS to $\Map_{D(X)}(L\Omega^p_X[-p],\bigoplus\limits_{i=0}^{p-1}L\Omega^i_X[-i])$ induced by the forgetful functor $D_{\bN}(X)\to D(X)$ sends $f\in\Map_{D(X_0)}(L\Omega^p_{X_0},\cO_{X_0}[p])$ to the composition $L\Omega^p_X\to L\Omega^p_{X_0}\xrightarrow{f}\cO_{X_0}[p]\xrightarrow{\Bock_{\cO_X}[p]}\cO_X[p+1]$.
\end{lm}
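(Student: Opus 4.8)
The plan is to compute the mapping space on the left-hand side of \eqref{sen operator: ext group description formula} by decomposing the target along the $\bZ/p$-grading and analyzing each summand separately. Recall that $(\Omega^{\DHod}_X,\Theta_X)$ and hence $\Fil^{\conj}_{p-1}\Omega^{\DHod}_X$ decompose as $\bigoplus_{i=0}^{p-1}(L\Omega^i_X[-i],-i)$, and that for $j\neq i$ the summands $(L\Omega^i_X[-i],-i)$ and $(L\Omega^j_X[-j],-j)$ live in different generalized eigenvalue components for $\Theta_X$. So the first step is to observe, via the decomposition $\RHom_{D_{\bN}(X)}((L\Omega^p_X[-p],-p),(L\Omega^i_X[-i],-i)[1])$ over the connected components of $\Spec \bZ_p[t]/(t(t+1)\cdots(t+p-1))$, that this mapping space is trivial whenever $i\not\equiv p\equiv 0\bmod p$, i.e. for all $1\le i\le p-1$. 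Concretely one uses \eqref{sen operator: morphism to scalar}: $\RHom_{D_{\bN}(X)}((L\Omega^p_X[-p],-p),(N,\lambda\Id_N))=\RHom_{D(X)}(\cofib(-p-\lambda:L\Omega^p_X[-p]\to L\Omega^p_X[-p]),N)$, and since $X$ is flat over $\bZ_p$, for $\lambda=-i$ with $1\le i\le p-1$ the scalar $-p-\lambda=i-p$ is a unit, so the cofiber vanishes. Hence only the $i=0$ summand contributes, and it remains to compute $\Map_{D_{\bN}(X)}((L\Omega^p_X[-p],-p),(\cO_X,0)[1])$.

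The second step treats this remaining $i=0$ term. Again by \eqref{sen operator: morphism to scalar}, $\RHom_{D_{\bN}(X)}((L\Omega^p_X[-p],-p),(\cO_X,0))=\RHom_{D(X)}(\cofib(-p:L\Omega^p_X[-p]\to L\Omega^p_X[-p]),\cO_X)$. Since $X$ is flat over $\bZ_p$ and $L\Omega^p_X$ is (term-wise) $p$-torsion-free, $\cofib(p\colon L\Omega^p_X\to L\Omega^p_X)\simeq L\Omega^p_X/p\simeq i_*L\Omega^p_{X_0}$ (using $L\Omega^i_{X_0}\simeq i^*L\Omega^i_X$ together with the fact that $L\Omega^p_X$ is flat modulo $p$, so derived and underived reductions agree). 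Thus $\RHom_{D_{\bN}(X)}((L\Omega^p_X[-p],-p),(\cO_X,0)[1])\simeq \RHom_{D(X)}(i_*L\Omega^p_{X_0}[-p],\cO_X[1])\simeq\RHom_{D(X)}(i_*L\Omega^p_{X_0},\cO_X[p+1])$, and by the $(i_*,i^!)$-adjunction — or more simply by the fiber sequence $\cO_X\xrightarrow{p}\cO_X\to i_*\cO_{X_0}$, which identifies $i^!\cO_X\simeq\cO_{X_0}[-1]$ — this is $\RHom_{D(X_0)}(L\Omega^p_{X_0},\cO_{X_0}[p])$, as desired. (The sign/shift bookkeeping here is exactly that $\cofib(p)[-1]\simeq i_*i^*$ and the connecting map picks up the Bockstein.)

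The third step is to verify the compatibility claim: that under this identification the forgetful map to $\Map_{D(X)}(L\Omega^p_X[-p],\bigoplus_i L\Omega^i_X[-i])$ sends $f$ to the composite $L\Omega^p_X\to L\Omega^p_{X_0}\xrightarrow{f}\cO_{X_0}[p]\xrightarrow{\Bock_{\cO_X}[p]}\cO_X[p+1]$. This is a matter of unwinding the adjunction: a map $g\colon (M,f_M)\to (N,\lambda\Id_N)$ in $D_{\bN}(X)$, described via \eqref{sen operator: morphism to scalar} by $\bar g\colon \cofib(f_M-\lambda)\to N$, has underlying map in $D(X)$ equal to $M\to\cofib(f_M-\lambda\Id_M)\xrightarrow{\bar g}N$. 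Applying this with $M=L\Omega^p_X[-p]$, $f_M=-p$, $\lambda=0$ gives precisely the composite $L\Omega^p_X[-p]\to\cofib(-p)\simeq i_*L\Omega^p_{X_0}[-p]\xrightarrow{f} \cO_X[1]$; and the first arrow $L\Omega^p_X[-p]\to i_*L\Omega^p_{X_0}[-p]$ followed by the transpose of $f$ is, by the definition of the Bockstein morphism (the connecting map of $\cO_X\xrightarrow{p}\cO_X\to i_*\cO_{X_0}$, cf.\ the discussion of $\Bock$ and Lemma \ref{cosimp applications: Bockstein}), exactly $L\Omega^p_X\to L\Omega^p_{X_0}\xrightarrow{f}\cO_{X_0}[p]\xrightarrow{\Bock_{\cO_X}[p]}\cO_X[p+1]$.

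I expect the main obstacle to be purely bookkeeping rather than conceptual: keeping the shifts, the sign conventions for $\Bock$, and the direction of the connecting maps consistent, and making sure the vanishing of the cross-terms is phrased using flatness of $X$ over $\bZ_p$ in the form that actually applies (so that $i-p$ acts invertibly on $L\Omega^p_X[-p]$, not merely on its cohomology). Invoking Lemma \ref{sen operator: morphisms in DN}, especially \eqref{sen operator: morphism to scalar}, at each stage should make all of this routine; no genuinely new input is needed beyond Theorem \ref{sen operator: general theorem} and the flatness hypothesis.
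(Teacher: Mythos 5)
Your argument is correct and shares the skeleton of the paper's proof --- kill the summands with $1\le i\le p-1$ because the relevant scalar is invertible, reduce the $i=0$ summand to the special fiber via the sequence $\cO_X\xrightarrow{p}\cO_X\to i_*\cO_{X_0}$, and read off the forgetful map as composition with the Bockstein --- but you run it through the other half of Lemma \ref{sen operator: morphisms in DN}. The paper uses (\ref{sen operator: morphism from scalar}): it computes the eigenfiber $\cO_X^{p=0}\simeq i_*\cO_{X_0}[-1]$ of the target and then applies the $(i^*,i_*)$ adjunction, so only the usual pullback--pushforward adjunction is needed. You instead use (\ref{sen operator: morphism to scalar}), take $\cofib(-p)\simeq i_*L\Omega^p_{X_0}[-p]$ on the source, and pass to $D(X_0)$ via the right adjoint $i^!$ together with $i^!\cO_X\simeq\cO_{X_0}[-1]$; for the compatibility claim you then implicitly use that, under this identification, the counit $i_*i^!\cO_X\to\cO_X$ is $\Bock_{\cO_X}[-1]$, which is exactly what your appeal to the fiber sequence provides, so the step is fine but deserves to be said explicitly. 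Two small points of hygiene: the vanishing of the cross-terms comes from $i-p$ being a unit of $\bZ_p$, so that multiplication by it is invertible on any object of the $\bZ_p$-linear category $D(X)$, not from flatness of $X$; and the phrase about $L\Omega^p_X$ being term-wise $p$-torsion-free is not meaningful for a derived object --- what you actually use is that $X$ is flat over $\bZ_p$, so $\cofib(p)\simeq i_*Li^*$ on $D(X)$ and $Li^*L\Omega^p_X\simeq L\Omega^p_{X_0}$ by base change for the cotangent complex. Neither point affects the correctness of your proof.
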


\begin{proof}
By Lemma \ref{sen operator: morphisms in DN} the left hand side of (\ref{sen operator: ext group description formula}) is equivalent to $\Map_{D(X)}(L\Omega^p_X,\cO_X^{p=0}[p+1])$, because the summands $(L\Omega^i_X[-i],-i)$ with $0<i\leq p-1$ do not contribute to this mapping space. The fiber of multiplication by $p$ on $\cO_X$ is identified with $\cO_X/p[-1]=i_*i^*\cO_X[-1]$ hence this mapping space can be further described as $\Map_{D(X)}(L\Omega^p_X,i_*i^*\cO_X[p])\simeq \Map_{D(X_0)}(L\Omega^p_{X_0}, \cO_{X_0}[p])$. 

For the second assertion note that, firstly, under the identification $\Map_{D_{\bN}(X)}((L\Omega^p_X,-p),(\cO_X[p],0))\simeq \Map_{D(X)}(L\Omega^p_X,\cO_X^{p=0}[p+1])$ the map induced by the forgetful functor $D_{\bN}(X)\to D(X)$ is given by composing with the natural map $\cO_X^{p=0}\to\cO_X$ which is nothing but the Bockstein map $\Bock_{\cO_X}[-1]:i_*\cO_{X_0}[-1]\to\cO_X$. Secondly, the adjunction identification $\Map_{D(X_0)}(L\Omega^p_{X_0}, \cO_{X_0}[p])\simeq \Map_{D(X)}(L\Omega^p_X,i_*i^*\cO_X[p])$ sends $f\in \Map_{D(X_0)}(L\Omega^p_{X_0}, \cO_{X_0}[p])$ to $L\Omega^p_X\to i_*L\Omega^p_{X_0/\bF_p}\xrightarrow{i_*f}i_*\cO_{X_0}[p]$, which implies the claim.
\end{proof}

\begin{notation}\label{sen operator: zp classes notation}
We denote by $c_{X,p}\in \Map_{D(X_0)}(L\Omega^p_{X_0}, \cO_{X_0}[p])$ the result of transporting the map (\ref{sen operator: extension map}) along the equivalence (\ref{sen operator: ext group description formula}), and by $e_{X,p}\in\Map_{D(X)}(L\Omega^p_X,\cO_X[p+1])$ we denote the extension class in the conjugate filtration on $\Fil_p^{\conj}\Omega^{\DHod}_{X,0}$.
\end{notation}

 Let us explicitly record how $e_{X,p}$ can be recovered from $c_{X,p}$, which is immediate from Lemma \ref{sen operator: ext group description}:

\begin{lm}\label{sen operator: classes bockstein relation w}
The element $e_{X,p}\in\Map_{D(X)}(L\Omega^p_X,\cO_X[p+1])$ corresponding to the extension $\cO_X\to\Fil^{\conj}_p\Omega^{\DHod}_{X,0}\to L\Omega^p_X[-p]$ is naturally homotopic to the composition \begin{equation}
    L\Omega^p_X\to L\Omega^p_{X_0}\xrightarrow{c_{X,p}}\cO_{X_0}[p]\xrightarrow{\Bock_{\cO_X}}\cO_X[p+1].
\end{equation}
\end{lm}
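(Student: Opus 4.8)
The plan is to unwind the definitions of $e_{X,p}$ and $c_{X,p}$ and read the statement off Lemma \ref{sen operator: ext group description}; no new input is needed. First I would observe that $\Fil_p^{\conj}\Omega^{\DHod}_{X,0}$, being the summand of $\Fil_p^{\conj}\Omega^{\DHod}_X$ on which $\Theta_X$ is topologically nilpotent, acquires conjugate-filtration graded pieces only in filtration degrees $0$ and $p$: the pieces $\gr_i^{\conj}$ with $0<i<p$ carry the eigenvalue $-i\not\equiv 0\bmod p$, so $\Fil_0^{\conj}=\dots=\Fil_{p-1}^{\conj}\Omega^{\DHod}_{X,0}=\cO_X$ and there is a fiber sequence $(\cO_X,0)\to(\Fil_p^{\conj}\Omega^{\DHod}_{X,0},\Theta_X)\to(L\Omega^p_X[-p],-p)$ in $D_{\bN}(X)$. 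Since the decomposition $(\Omega^{\DHod}_X,\Theta_X)\simeq\bigoplus_i(\Omega^{\DHod}_{X,i},\Theta_X)$ is functorial and respects the conjugate filtration, the connecting map of this fiber sequence is precisely the $i=0$ component of the map (\ref{sen operator: extension map}), and $e_{X,p}$ is by definition the image of this connecting map under the forgetful functor $D_{\bN}(X)\to D(X)$.

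Next I would recall that, by Notation \ref{sen operator: zp classes notation}, $c_{X,p}$ is the element of $\Map_{D(X_0)}(L\Omega^p_{X_0},\cO_{X_0}[p])$ corresponding to the full map (\ref{sen operator: extension map}) under the equivalence (\ref{sen operator: ext group description formula}). The last clause of Lemma \ref{sen operator: ext group description} then identifies the image of (\ref{sen operator: extension map}) under the forgetful functor with the composition $L\Omega^p_X\to L\Omega^p_{X_0}\xrightarrow{c_{X,p}}\cO_{X_0}[p]\xrightarrow{\Bock_{\cO_X}[p]}\cO_X[p+1]$; since this composition manifestly factors through the $i=0$ summand $\cO_X$ of $\bigoplus_{i=0}^{p-1}L\Omega^i_X[-i]$ (the Bockstein map $\Bock_{\cO_X}$ lands in $\cO_X[1]$), it coincides with the $i=0$ component computed in the previous step, i.e. with $e_{X,p}$. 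This yields the claimed homotopy.

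The only points requiring a moment of care — rather than a genuine obstacle — are the bookkeeping of shifts (both sides are maps $L\Omega^p_X\to\cO_X[p+1]$, matching the shift $[p]$ appearing in the statement of Lemma \ref{sen operator: ext group description}) and the compatibility of the $\Theta_X$-decomposition with the formation of connecting maps, which is immediate from functoriality of that decomposition in $D_{\bN}(X)$.
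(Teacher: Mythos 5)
Your argument is correct and is essentially the paper's own: the paper derives this lemma as an immediate consequence of Lemma \ref{sen operator: ext group description}, exactly as you do, by noting that $e_{X,p}$ is the weight-zero component of the connecting map (\ref{sen operator: extension map}) and that the forgetful functor $D_{\bN}(X)\to D(X)$ sends the class corresponding to $c_{X,p}$ to the stated Bockstein composition. Your extra remarks (vanishing of the intermediate graded pieces of $\Omega^{\DHod}_{X,0}$ and compatibility of the eigenspace decomposition with connecting maps) are just the bookkeeping implicit in the paper's "immediate".
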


Lemma \ref{cosimp applications: Bockstein} can be used to restate Lemma \ref{sen operator: classes bockstein relation w} as follows

\begin{cor}\label{sen operator: smooth classes w lift}
The map $e_{X,p}$ is naturally homotopic to the image of $c_{X,p}$ under the Bockstein morphism

\begin{equation}
\Bock: \RHom_{X_0}(L\Omega^p_{X_0},\cO_{X_0}[p])\to \RHom_{X}(L\Omega^p_{X},\cO_X[p+1])
\end{equation}
arising from the object $\RHom_{X}(L\Omega^p_{X},\cO_X[p+1])\in D(\bZ_p)$.
\end{cor}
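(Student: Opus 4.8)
\emph{Proof proposal.} The plan is to obtain Corollary~\ref{sen operator: smooth classes w lift} as a purely formal consequence of Lemma~\ref{sen operator: classes bockstein relation w} together with the explicit description of the Bockstein morphism on internal $\RHom$ furnished by Lemma~\ref{cosimp applications: Bockstein}; no new geometric input is required, the corollary being just a repackaging of Lemma~\ref{sen operator: classes bockstein relation w}.

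First I would unwind what the Bockstein morphism in the statement is. Since $X$ is flat over $\bZ_p$, the sheaf $\cO_X$ is $p$-torsion free, so $\cO_X/p\simeq i_*\cO_{X_0}$; as $\RHom(M,-)$ commutes with the cofiber of multiplication by $p$, we get $\RHom_X(L\Omega^p_X,\cO_X[p])/p\simeq\RHom_X(L\Omega^p_X,i_*\cO_{X_0}[p])\simeq\RHom_{X_0}(L\Omega^p_{X_0},\cO_{X_0}[p])$, using $i^*L\Omega^p_X\simeq L\Omega^p_{X_0}$ and the $(i^*,i_*)$-adjunction. Hence the connecting map of the fiber sequence associated to multiplication by $p$ on $\RHom_X(L\Omega^p_X,\cO_X[p])\in D(\bZ_p)$ is exactly a morphism $\RHom_{X_0}(L\Omega^p_{X_0},\cO_{X_0}[p])\to\RHom_X(L\Omega^p_X,\cO_X[p+1])$, which is (up to the canonical suspension identification) the Bockstein appearing in the corollary. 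Lemma~\ref{cosimp applications: Bockstein}, applied with $M=L\Omega^p_X$ and $N=\cO_X[p]$, then identifies the image of a class $f\colon L\Omega^p_{X_0}\to\cO_{X_0}[p]$ under this Bockstein with the composition $L\Omega^p_X\to i_*L\Omega^p_{X_0}\xrightarrow{i_*f}i_*\cO_{X_0}[p]\to\cO_X[p+1]$, where the first arrow is the unit of the adjunction and the last arrow is the shift by $[p]$ of the canonical map $i_*\cO_{X_0}\to\cO_X[1]$, i.e.\ of $\Bock_{\cO_X}$.

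Finally I would take $f=c_{X,p}$: the resulting composition is then precisely $L\Omega^p_X\to L\Omega^p_{X_0}\xrightarrow{c_{X,p}}\cO_{X_0}[p]\xrightarrow{\Bock_{\cO_X}}\cO_X[p+1]$, once one matches the unit map $L\Omega^p_X\to i_*L\Omega^p_{X_0}$ with the reduction map $L\Omega^p_X\to L\Omega^p_{X_0}$ of Lemma~\ref{sen operator: classes bockstein relation w}. By that lemma this composition is naturally homotopic to $e_{X,p}$, which is the claim. There is essentially no obstacle; the only point deserving a moment's attention is the bookkeeping that identifies the two incarnations of $L\Omega^p_X\to L\Omega^p_{X_0}$ and verifies that the final arrow produced by Lemma~\ref{cosimp applications: Bockstein} is the shift of $\Bock_{\cO_X}$, both of which are immediate from the construction of the Bockstein morphisms recalled earlier.
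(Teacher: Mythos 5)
Your proposal is correct and matches the paper's own route: the corollary is obtained exactly by combining Lemma \ref{sen operator: classes bockstein relation w} with the description of the Bockstein on internal $\RHom$ from Lemma \ref{cosimp applications: Bockstein}, which is precisely what the paper does (it states the corollary as a restatement of that lemma via Lemma \ref{cosimp applications: Bockstein}). The bookkeeping you flag — identifying the unit map $L\Omega^p_X\to i_*L\Omega^p_{X_0}$ with reduction and the last arrow with the shift of $\Bock_{\cO_X}$ — is indeed all that is needed.
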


The map $c_{X,p}$ can also be read off from the action of the Sen operator on $\dR_{X_0}$, as we will now show. In general, if $(M_1,0)\to (M,f_M)\to (M_2,0)$ is a fiber sequence in $D_{\bN}(X_0)$ then $f_M:M\to M$ naturally factors as $M\to M_2\xrightarrow{h}M_1\to M$ for a morphism $h\in\Map_{D(X_0)}(M_2,M_1)$. The element $\delta\in \Map_{D_{\bN}(X_0)}((M_2,0), (M_1,0)[1])=\Map_{D(X_0)}(M_2,M_1[1])\oplus \Map_{D(X_0)}(M_2,M_1)$ corresponding to this fiber sequence is then given by $\underline{\delta}\oplus h$ where $\underline{\delta}$ is the image of $\delta$ under the forgetful map $D_{\bN}(X_0)\to D(X_0)$. Applying this discussion to $\Fil_p^{\conj}\dR_{X_0}$ we obtain:

\begin{lm}\label{sen operator: cXp as nilpotent operator}
The endomorphism $\Theta_X$ of $\Fil^{\conj}_p\dR_{X_0,0}$ naturally factors as 
\begin{equation}
\Fil^{\conj}_p\dR_{X_0,0}\to L\Omega^p_{X_0}[-p]\xrightarrow{c_{X,p}[-p]}\cO_{X_0}=\Fil_0^{\conj}\dR_{X_0,0}\to\Fil_p^{\conj}\dR_{X_0,0}.
\end{equation}
\end{lm}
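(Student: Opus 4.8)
The plan is to apply the general categorical observation recalled in the paragraph immediately preceding the statement to the particular fiber sequence $(\Fil_{p-1}^{\conj}\dR_{X_0,0},\Theta_X)\to(\Fil_p^{\conj}\dR_{X_0,0},\Theta_X)\to(L\Omega^p_{X_0}[-p],-p)$ in $D_{\bN}(X_0)$, and then to feed in what we already know about the pieces. First I would note that on the associated graded of the conjugate filtration $\Theta_X$ acts by the scalar $-i$ on $\gr_i^{\conj}$, so after splitting off the component of generalized eigenvalue $0$ the object $\Fil_{p-1}^{\conj}\dR_{X_0,0}$ is just $\cO_{X_0}=\Fil_0^{\conj}\dR_{X_0,0}$ with $\Theta_X$ acting by zero, and the quotient $L\Omega^p_{X_0}[-p]$ has $\Theta_X$ acting by $-p\equiv 0$ mod $p$, i.e.\ by zero on the special fiber. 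Thus the fiber sequence in question genuinely has the shape $(M_1,0)\to(M,f_M)\to(M_2,0)$ with $M_1=\cO_{X_0}$ and $M_2=L\Omega^p_{X_0}[-p]$, so the general discussion applies verbatim.

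Next I would invoke that general discussion: it says $\Theta_X=f_M$ factors as $M\to M_2\xrightarrow{h}M_1\to M$ for a canonical $h\in\Map_{D(X_0)}(M_2,M_1)=\Map_{D(X_0)}(L\Omega^p_{X_0}[-p],\cO_{X_0})$, and moreover that the class $\delta$ of the fiber sequence in $D_{\bN}(X_0)$ decomposes as $\underline{\delta}\oplus h$ under the splitting $\Map_{D_{\bN}(X_0)}((M_2,0),(M_1,0)[1])\simeq\Map_{D(X_0)}(M_2,M_1[1])\oplus\Map_{D(X_0)}(M_2,M_1)$ coming from Lemma \ref{sen operator: morphisms in DN}. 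The only remaining point is to identify $h$ with $c_{X,p}[-p]$. The element $c_{X,p}\in\Map_{D(X_0)}(L\Omega^p_{X_0},\cO_{X_0}[p])$ was defined in Notation \ref{sen operator: zp classes notation} by transporting the extension map (\ref{sen operator: extension map}) along the equivalence (\ref{sen operator: ext group description formula}) of Lemma \ref{sen operator: ext group description}; the same construction over $X$ reduced to the special fiber, i.e.\ the analogue of Lemma \ref{sen operator: ext group description} for $D_{\bN}(X_0)$, identifies the ``$h$-component'' of the class of this very fiber sequence with exactly $c_{X,p}[-p]$ — this is the part of the data that is visible on the underlying $D(X_0)$-morphism $\Theta_X$ rather than on the Bockstein-twisted extension. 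So $h=c_{X,p}[-p]$, and substituting this into the factorization $M\to M_2\xrightarrow{h}M_1\to M$ gives precisely the claimed composition $\Fil_p^{\conj}\dR_{X_0,0}\to L\Omega^p_{X_0}[-p]\xrightarrow{c_{X,p}[-p]}\cO_{X_0}\to\Fil_p^{\conj}\dR_{X_0,0}$.

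The one place that needs genuine care — and what I expect to be the main obstacle — is bookkeeping the equivalence of Lemma \ref{sen operator: ext group description} at the level of the special fiber rather than over $X$: there one must keep straight that the $\Map_{D(X_0)}(L\Omega^p_{X_0},\cO_{X_0}[p+1])$-summand of the $D_{\bN}(X_0)$-extension class is the underlying morphism $\underline{\delta}$ (which in turn is the Bockstein-free boundary map of the conjugate filtration, consistent with Lemma \ref{sen operator: classes bockstein relation w} only after reducing mod $p$), while the $\Map_{D(X_0)}(L\Omega^p_{X_0},\cO_{X_0}[p])$-summand is $h$ and equals $c_{X,p}[-p]$ on the nose. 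Once the two summands are correctly matched with the two pieces of the $D_{\bN}$-class, the argument is purely formal, requiring no computation beyond the scalar action of $\Theta_X$ on graded quotients already recorded in Theorem \ref{sen operator: general theorem}.
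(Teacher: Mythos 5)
Your proposal is correct and follows essentially the same route as the paper: the paper's proof likewise invokes the general paragraph on fiber sequences $(M_1,0)\to(M,f_M)\to(M_2,0)$ and reduces everything to the single bookkeeping observation you flag, namely that under the identification of Lemma \ref{sen operator: ext group description} the restriction map $\RHom_{D_{\bN}(X)}((L\Omega^p_X[-p],-p),(\cO_X,0))\to\RHom_{D_{\bN}(X_0)}((L\Omega^p_{X_0}[-p],-p),(\cO_{X_0},0))$ is $(\Bock,\id)$, so the degree-$p$ summand of the restricted class is $c_{X,p}$ on the nose. Carrying out that matching is exactly the content of the paper's one-line proof, so your plan is complete once that step is written out.
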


\begin{proof}
Given the paragraph preceding the statement of the lemma, this amounts to observing that the map $\RHom_{D_{\bN}(X)}((L\Omega^p_X[-p],-p),(\cO_X,0))\to \RHom_{D_{\bN}(X_0)}((L\Omega^p_{X_0}[-p],-p),(\cO_{X_0},0))$ induced by the restriction to the special fiber $X_0$ can be described as $\RHom_{D(X_0)}(L\Omega^p_{X_0},\cO_{X_0}[p])\xrightarrow{(\Bock,\id)}\Map_{D(X_0)}(L\Omega^p_{X_0},\cO_{X_0}[p+1])\oplus \RHom_{D(X_0)}(L\Omega^p_{X_0},\cO_{X_0}[p])$ under the identification (\ref{sen operator: ext group description formula}).
\end{proof}

For our computation of the Sen operator in Theorem \ref{semiperf: main sen operator} it will be convenient to directly relate $c_{X,p}$ to the action of $\Theta_X$ on all of the object $\Fil^{\conj}_p\dR_{X_0}$, rather than its weight zero part:

\begin{lm}\label{sen operator: cXp as nilpotent operator on dR}
The endomorphism $\Theta_X-\Theta_X^p$ of $\Fil^{\conj}_p\dR_{X_0}$ naturally factors as 
\begin{equation}
    \Fil^{\conj}_p\dR_{X_0}\to L\Omega^p_{X_0}[-p]\xrightarrow{c_{X,p}[-p]}\cO_{X_0}=\Fil^{\conj}_0\dR_{X_0}\to \Fil^{\conj}_p\dR_{X_0}.
\end{equation}
\end{lm}

\begin{proof}
The endomorphism $\Id-\Theta_X^{p-1}: \Fil^{\conj}_p\dR_{X_0}\to \Fil^{\conj}_p\dR_{X_0}$ is an idempotent corresponding to the direct summand $\Fil^{\conj}_p\dR_{X_0,0}$ of $\Fil^{\conj}_p\dR_{X_0}$. Therefore $\Theta_X-\Theta_X^p=\Theta_X\cdot (\Id-\Theta_X^{p-1})$ can be factored as $\Fil^{\conj}_p\dR_{X_0}\to\Fil^{\conj}_p\dR_{X_0,0}\xrightarrow{\Theta_X}\Fil^{\conj}_p\dR_{X_0,0}\to \Fil^{\conj}_p\dR_{X_0}$ where the first and last maps establish $\Fil^{\conj}_p\dR_{X_0,0}$ as a direct summand of $\Fil^{\conj}_p\dR_{X_0}$. Hence the claim follows from Lemma \ref{sen operator: cXp as nilpotent operator}.
\end{proof}

Since $\Fil_{p}^{\conj}\dR_{X_0}$ decomposes as $\bigoplus\limits_{i=0}^{p-1}\Fil_p^{\conj}\dR_{X_0,i}\simeq \Fil_{p}^{\conj}\dR_{X_0,0}\oplus\bigoplus\limits_{i=0}^{p-1}L\Omega^i_{X_0}[-i]$ compatibly with the Sen operator, $\Theta_X$ on $\Fil_p^{\conj}\dR_{X_0}$ is semisimple if and only if $c_{X,p}\sim 0$. In particular, if $c_{X,p}$ vanishes then the conjugate filtration on the diffracted Hodge complex splits in degrees $\leq p$:

\begin{cor}\label{sen operator: semisimplicity implies decomposability}
If the Sen operator on $\Fil^{\conj}_p\dR_{X_0}$ is semisimple then there exists a decomposition
\begin{equation}
    (\Fil^{\conj}_p\Omega^{\DHod}_X,\Theta_X)\simeq\bigoplus\limits_{i=0}^p(L\Omega^i_X[-i],-i).
\end{equation}
\end{cor}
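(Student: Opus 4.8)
The plan is to promote the decomposition $(\Fil^{\conj}_{p-1}\Omega^{\DHod}_X,\Theta_X)\simeq\bigoplus_{i=0}^{p-1}(L\Omega^i_X[-i],-i)$ recorded above by one more step of the conjugate filtration. The fiber sequence $(\Fil^{\conj}_{p-1}\Omega^{\DHod}_X,\Theta_X)\to(\Fil^{\conj}_p\Omega^{\DHod}_X,\Theta_X)\to(L\Omega^p_X[-p],-p)$ in $D_{\bN}(X)$ is classified by the connecting map (\ref{sen operator: extension map}), and by Lemma \ref{sen operator: ext group description} together with Notation \ref{sen operator: zp classes notation} this map is precisely the class $c_{X,p}\in\Map_{D(X_0)}(L\Omega^p_{X_0},\cO_{X_0}[p])$. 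Hence the asserted decomposition $(\Fil^{\conj}_p\Omega^{\DHod}_X,\Theta_X)\simeq\bigoplus_{i=0}^p(L\Omega^i_X[-i],-i)$ holds as soon as $c_{X,p}\simeq 0$ (and conversely it visibly produces a semisimple Sen operator). So the problem reduces to showing that semisimplicity of $\Theta_X$ on $\Fil^{\conj}_p\dR_{X_0}$ forces $c_{X,p}\simeq 0$; this is exactly the nontrivial half of the equivalence alluded to just before the statement.

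To prove this implication I would first isolate the weight-zero part. Using the $\bF_p[t]/t(t+1)\cdots(t+p)$-module structure on $(\Fil^{\conj}_p\dR_{X_0},\Theta_X)$ one gets a splitting $(\Fil^{\conj}_p\dR_{X_0},\Theta_X)\simeq(\Fil_p\dR_{X_0,0},\Theta_X)\oplus\bigoplus_{n=1}^{p-1}(L\Omega^n_{X_0}[-n],-n)$. The key numerical point is that the conjugate-graded pieces of $\Fil^{\conj}_p$ carry $\Theta_X$-weights $0,-1,\ldots,-p$, and among these only $0$ and $-p$ become equal modulo $p$; hence the summands with $1\le n\le p-1$ are single graded pieces with scalar Sen action, automatically semisimple, while $(\Fil_p\dR_{X_0,0},\Theta_X)$ is an extension of $(L\Omega^p_{X_0}[-p],0)$ by $(\cO_{X_0},0)$ in $D_{\bN}(X_0)$ on which $\Theta_X^2\simeq 0$. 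A square-zero operator is semisimple exactly when it is (coherently) zero, so semisimplicity of $\Theta_X$ on $\Fil^{\conj}_p\dR_{X_0}$ amounts to $(\Fil_p\dR_{X_0,0},\Theta_X)$ being equivalent in $D_{\bN}(X_0)$ to an object of the form $(M,0)$, i.e.\ restricted along $t\mapsto 0$ from $D(X_0)$. Now Lemma \ref{sen operator: cXp as nilpotent operator} computes the endomorphism $\Theta_X$ of $\Fil_p\dR_{X_0,0}$ as the factorization $\Fil_p\dR_{X_0,0}\to L\Omega^p_{X_0}[-p]\xrightarrow{c_{X,p}[-p]}\cO_{X_0}\to\Fil_p\dR_{X_0,0}$, which, via the splitting of (\ref{sen operator: morphisms in DN formula}) for objects with vanishing endomorphism, identifies $c_{X,p}[-p]$ with the $\Map_{D(X_0)}(L\Omega^p_{X_0},\cO_{X_0}[p])$-component of the extension class of $(\Fil_p\dR_{X_0,0},\Theta_X)$ over $(\cO_{X_0},0)$ — the component that vanishes precisely when that object is restricted from $D(X_0)$. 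Therefore semisimplicity forces $c_{X,p}\simeq 0$.

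Assembling the two steps gives the corollary: semisimplicity of $\Theta_X$ on $\Fil^{\conj}_p\dR_{X_0}$ yields $c_{X,p}\simeq 0$, so (\ref{sen operator: extension map}) is nullhomotopic by Lemma \ref{sen operator: ext group description}, the fiber sequence over $(\Fil^{\conj}_{p-1}\Omega^{\DHod}_X,\Theta_X)$ splits in $D_{\bN}(X)$, and combining with the decomposition of $(\Fil^{\conj}_{p-1}\Omega^{\DHod}_X,\Theta_X)$ produces $(\Fil^{\conj}_p\Omega^{\DHod}_X,\Theta_X)\simeq\bigoplus_{i=0}^p(L\Omega^i_X[-i],-i)$. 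The step I expect to require the most care is the last one of the second paragraph: upgrading the bare homotopy-category statement ``$\Theta_X\simeq 0$ on $\Fil_p\dR_{X_0,0}$'' to the coherent statement that this object is restricted from $D(X_0)$, equivalently that its extension class in $D_{\bN}(X_0)$ has trivial $c_{X,p}$-component. This is precisely what Lemma \ref{sen operator: cXp as nilpotent operator} is engineered to supply — by recording the factorization of the Sen operator, not merely its vanishing as a morphism — and it is the reason the identification of $c_{X,p}$ with the obstruction to the $\bZ_p$-level decomposition is actually usable here.
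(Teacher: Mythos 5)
Your argument is correct and follows the same route the paper intends: identify the $D_{\bN}(X)$-extension class of $\Fil^{\conj}_p\Omega^{\DHod}_X$ with $c_{X,p}$ via Lemma \ref{sen operator: ext group description} and Notation \ref{sen operator: zp classes notation}, and use the weight decomposition together with Lemma \ref{sen operator: cXp as nilpotent operator} to see that semisimplicity forces $c_{X,p}\simeq 0$, whence the fiber sequence over $\Fil^{\conj}_{p-1}$ splits. The paper gives no separate proof beyond the discussion preceding the corollary, and your extra attention to upgrading ``$\Theta_X\simeq 0$'' to the vanishing of the $\Map_{D(X_0)}(L\Omega^p_{X_0},\cO_{X_0}[p])$-component of the $D_{\bN}$-extension class is a faithful (and slightly more careful) rendering of that same argument.
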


\subsection{Diffracted Hodge cohomology and Sen operator over \texorpdfstring{$\bZ/p^n,n\geq 2$}{}.}

In this subsection we give a discussion parallel to the above for a flat scheme $X_{n-1}$ over $\bZ/p^n$ for $n\geq 2$. We will define the diffracted Hodge complex $\Omega^{\DHod}_{X_{n-1}/\bZ/p^n}$ of $X_{n-1}$ relative to $\bZ/p^n$, equipped with a Sen operator with properties analogous to Theorem \ref{sen operator: general theorem}. Theorem \ref{sen operator: zpn main} extends the construction from \cite[\S 5]{bhatt-lurie-prismatization} where the Sen operator on the  mod $p$ reduction of our $\Omega^{\DHod}_{X_{n-1}/\bZ/p^n}$ is defined.

\begin{thm}\label{sen operator: zpn main}
For a scheme $X_{n-1}$ quasisyntomic over $\bZ/p^n$ there is a natural object $(\Omega^{\DHod}_{X_{n-1}/\bZ/p^n},\Theta_{X_{n-1}})\in D_{\bN}(X_{n-1})$ equipped with a filtration $\Fil^{\conj}_{\bullet}$ with graded quotients equivalent to $(L\Omega^i_{X_{n-1}/\bZ/p^n}[-i],-i)$. The object $\Omega^{\DHod}_{X_{n-1}/\bZ/p^n}\in D(X_{n-1})$ has a natural structure of a filtered derived commutative algebra such that $\Theta_{X_{n-1}}$ is a derivation.

The base change $\Omega^{\DHod}_{X_{n-1}/\bZ/p^n}\otimes_{\cO_{X_{n-1}}}\cO_{X_0}$ is identified with $\dR_{X_0}$, and the base change of the conjugate filtration matches the conjugate filtration on $\dR_{X_0}$. If $X$ is a quasisyntomic formal scheme over $\bZ_p$ then for $X_{n-1}:=X\times_{\bZ_p}\bZ_p/p^n$ we have $\Omega^{\DHod}_{X_{n-1}/\bZ/p^n}\simeq \Omega^{\DHod}_{X}\otimes_{\cO_X}\cO_{X_{n-1}}$ compatibly with the conjugate filtrations and the Sen operators.
\end{thm}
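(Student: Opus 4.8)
The plan is to build the pair $(\Omega^{\DHod}_{X_{n-1}/\bZ/p^n},\Theta_{X_{n-1}})$ by quasisyntomic descent from quasiregular semiperfectoid rings, where it can be read off from an explicit description of the Hodge--Tate stack of $\Spec\bZ/p^n$, and then to verify every structure in the statement after that descent. As a first step I would record, by the same argument as over $\bZ_p$ (cf.\ \cite{bms}), that the quasisyntomic site of $\bZ/p^n$-algebras admits a basis of flat $\bZ/p^n$-algebras $S$ for which $S/p$ is quasiregular semiperfect; call this class $\qrsprfd_{\bZ/p^n}$. All the data to be produced---the object of $D_{\bN}$, its conjugate filtration with the prescribed graded pieces, the filtered derived commutative algebra structure, and the three base-change comparisons---are quasisyntomic sheaves, or morphisms of such, so it suffices to construct them functorially on $\qrsprfd_{\bZ/p^n}$ and sheafify; since passing to associated graded and restricting to the special fibre commute with limits, the sheafified object automatically retains the listed properties.

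Second, I would treat the affine case. For $S\in\qrsprfd_{\bZ/p^n}$ the cotangent complex $L_{S/\bZ/p^n}$ is concentrated in homological degree $1$ and is a flat $S$-module, so $L\Omega^i_{S/\bZ/p^n}[-i]\simeq\Gamma^i_S(L_{S/\bZ/p^n}[-1])$ is a flat $S$-module in degree $0$, and the task is to produce a flat $S$-module with an exhaustive increasing filtration compatible with the multiplication whose $i$-th graded piece is $\Gamma^i_S(L_{S/\bZ/p^n}[-1])$, equipped with an endomorphism $\Theta_S$ scaling that piece by $-i$ and for which $\Theta_S$ is a derivation. I would get this from the Cartier--Witt stack: following \cite{bhatt-lurie-prismatization} one constructs $(\Spec\bZ/p^n)^{\WCart}$ and identifies its Hodge--Tate locus $(\Spec\bZ/p^n)^{\HT}$---this is the generalization of \cite[Example 5.15]{bhatt-lurie-prismatization} from $\bZ_p$ to $\bZ/p^n$---as a quotient stack $[\Spec\bZ/p^n\,/\,G]$ by an explicit flat affine group scheme $G$ over $\bZ/p^n$ whose weights on the structure sheaf encode the Sen operator. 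The relative analogue of the construction of the diffracted Hodge stack in \cite{bhatt-lurie-prismatization}, now over the base $(\Spec\bZ/p^n)^{\HT}$, then produces $\pi\colon X_{n-1}^{\DHod}\to X_{n-1}$, and I would set $\Omega^{\DHod}_{X_{n-1}/\bZ/p^n}:=R\pi^{\alg}_*\cO$. Lemma \ref{dalg: pushforward of rings} supplies the derived commutative algebra structure, the conjugate filtration is inherited from the Hodge--Tate filtration on $X_{n-1}^{\DHod}$, and $\Theta_{X_{n-1}}$ is the endomorphism coming from the residual action of $G$; as in the $\bZ_p$-case (\cite{apc}, \cite{drinfeld}), this action is by automorphisms over $X_{n-1}$, so it acts on $R\pi_*\cO$ by algebra automorphisms and hence $\Id+\varepsilon\Theta_{X_{n-1}}$ is a map of derived commutative algebras over $\bZ[\varepsilon]/\varepsilon^2$, i.e.\ $\Theta_{X_{n-1}}$ is a derivation in the sense of Definition \ref{dalg: derivation}.

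Third, I would check the numerical and comparison assertions, all on the basis $\qrsprfd_{\bZ/p^n}$. The identification $\gr^{\conj}_i\simeq(L\Omega^i_{X_{n-1}/\bZ/p^n}[-i],-i)$ is the relative Hodge--Tate comparison; on the basis it amounts to the flatness of $R\pi_*\cO$ with associated graded the divided powers of $L_{S/\bZ/p^n}[-1]$, and the statement that $\Theta$ scales $\gr^{\conj}_i$ by $-i$ is the computation of the $G$-weights, exactly parallel to \cite{apc}. For the base-change statements I would invoke functoriality of the Cartier--Witt and diffracted Hodge stacks in the base ring together with flatness of $X_{n-1}^{\DHod}\to(\Spec\bZ/p^n)^{\DHod}$, which holds on the basis since all modules there are flat: the reduction $\bZ/p^n\to k$ identifies $(\Spec k)^{\DHod}$ with the base change of $(\Spec\bZ/p^n)^{\DHod}$, hence $\Omega^{\DHod}_{X_{n-1}/\bZ/p^n}\otimes_{\cO_{X_{n-1}}}\cO_{X_0}$ with $\dR_{X_0}$---compatibly with conjugate filtrations, using that the conjugate filtration on $\dR_{X_0}$ has graded pieces $L\Omega^i_{X_0}[-i]$ by \cite[\S 3]{bhatt-derived}, and with Sen operators---and likewise $(\Spec\bZ/p^n)^{\DHod}$ is the base change of $(\Spf\bZ_p)^{\DHod}$ along $\bZ_p\to\bZ/p^n$, giving $\Omega^{\DHod}_{X_{n-1}/\bZ/p^n}\simeq\Omega^{\DHod}_X\otimes_{\cO_X}\cO_{X_{n-1}}$ with all the structure of Theorem \ref{sen operator: general theorem}, for $X_{n-1}=X\times_{\bZ_p}\bZ/p^n$.

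The hard part will be the explicit identification of $(\Spec\bZ/p^n)^{\HT}$ and the group $G$, and especially the verification that the resulting Sen operator acts by $-i$ on the $i$-th conjugate graded piece---that is, carrying out the generalization of \cite[Example 5.15]{bhatt-lurie-prismatization} to $\bZ/p^n$ with enough functoriality for the descent and the base-change comparisons above to go through. Everything else is a formal consequence of quasisyntomic descent and of functoriality of the prismatization in the base.
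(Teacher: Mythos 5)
Your overall framework is the same as the paper's (prismatization, the Hodge--Tate locus of $\bZ/p^n$, the relative diffracted Hodge complex as a pushforward of the structure sheaf, Sen operator from the residual $\bG_m^{\#}$-action), but the one thing you defer as ``the hard part'' is precisely the mathematical content of the proof, and it cannot be deferred: everything else in your plan is downstream of it. The paper's Lemma \ref{sen operator: wcartht zpn} identifies $\WCart^{\HT}_{\bZ/p^n}\simeq \bG_{a,\bZ/p^n}^{\#}/\bG_{m,\bZ/p^n}^{\#}$, and the proof is not a formal transcription of \cite[Example 5.15]{bhatt-lurie-prismatization}: it hinges on the Witt-vector identity $p^n=V(p^{n-1})$ in $W(\bZ/p^n)$, which produces the explicit $\bG_m^{\#}$-equivariant trivialization $y\mapsto y+V(p^{n-2})$ of $(\Spec\bZ/p^n)^{\DHod}=\{x\in W(S)\mid V(1)x=p^n\}$. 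Without this identification you do not even have the distinguished point $\eta:\Spec\bZ/p^n\xrightarrow{0}\bG_a^{\#}/\bG_m^{\#}$ over which $(X_{n-1}/\bZ/p^n)^{\DHod}$ is defined as a fiber product (note the paper does \emph{not} build a stack over all of $\WCart^{\HT}_{\bZ/p^n}$ and then restrict implicitly --- the choice of $\eta$ is part of the definition), you cannot compute the weights of the Sen operator, and you cannot verify the base-change compatibilities, which in the paper come down to the statements that $\WCart^{\HT}_{\bF_p}\to\WCart^{\HT}_{\bZ/p^n}$ factors through $\eta$ and that $\Spec\bZ/p^n\xrightarrow{\eta}\WCart^{\HT}_{\bZ/p^n}\to\WCart^{\HT}_{\bZ_p}$ agrees with $\Spec\bZ/p^n\to\Spf\bZ_p\to\WCart^{\HT}_{\bZ_p}$.

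Two further points where your plan diverges from what is actually needed. First, the quasisyntomic-descent scaffolding from $\qrsprfd_{\bZ/p^n}$ is unnecessary here: once $\WCart^{\HT}_{\bZ/p^n}$ is computed, the fiber-product definition of $(X_{n-1}/\bZ/p^n)^{\DHod}$ and the pushforward apply directly to any quasisyntomic $X_{n-1}$; descent to semiperfectoids is used in the paper only later (Theorem \ref{semiperf: main sen operator}), where the objects being compared are discrete and coherence is automatic --- for the full package (filtered algebra plus derivation) your sheafification step would itself require care with homotopy coherence that the direct definition avoids. Second, your assertion that the identification $\gr^{\conj}_i\simeq(L\Omega^i_{X_{n-1}/\bZ/p^n}[-i],-i)$ is ``the relative Hodge--Tate comparison, exactly parallel to \cite{apc}'' glosses over how the filtration is produced: the paper does not inherit it abstractly, but writes $\Omega^{\DHod}_{X_{n-1}/\bZ/p^n}\simeq \bZ/p^n\otimes_{\bZ/p^n[\bG_a^{\#}]}\Omega^{\DHod}_{X_{n-1}}$ (base change from the \emph{absolute} diffracted Hodge complex, using $\Omega^{\DHod}_{\bZ/p^n}\simeq\bZ/p^n[\bG_a^{\#}]$), equips it with the tensor-product filtration, and then checks the graded pieces by reducing to polynomial $\bZ/p^n$-algebras, which lift to $\bZ_p$. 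So the proposal is a plausible outline of the paper's route, but as written it has a genuine gap at its load-bearing step.
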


Given a quasisyntomic $\bZ/p^2$-scheme $X_1$ with special fiber $X_0=X_1\times_{\bZ/p^2}\bF_p\xhookrightarrow{i}X_1$, the de Rham complex $\dR_{X_0}\simeq i^*\Omega^{\DHod}_{X_1/\bZ/p^2}$, in particular, gets equipped with a Sen operator $\Theta_{X_1}$. Generalized eigenspaces for $\Theta_{X_1}$ give a decomposition (cf. \cite[Remark 5.16]{bhatt-lurie-prismatization})
\begin{equation}\label{sen operator: drinfeld decomposition}
\dR_{X_0}\simeq\bigoplus\limits_{i=0}^{p-1}\dR_{X_0,i}.
\end{equation}
The object $\Fil^{\conj}_p\dR_{X_0,0}$ fits into a fiber sequence \begin{equation}\label{sen operator: filp0 mod p2 formula}\cO_{X_0}\to \Fil^{\conj}_p\dR_{X_0,0}\to L\Omega^p_{X_0}[-p]\end{equation} on which $\Theta_{X_1}$ naturally acts, inducing zero on the first and third terms. 

\begin{rem}\label{sen operator: drinfeld decomposition stacks}
Let us record for a future use that the decomposition (\ref{sen operator: drinfeld decomposition}) also exists for any smooth Artin stack $X_0$ over $k$, equipped with a flat lift over $W_2(k)$. Namely, for any quasisyntomic $W_2(k)$-algebra $S_1$ with mod $p$ reduction $S_0=S_1/p$ we have a natural decomposition $\RGamma_{\dR}(S_0/k)\simeq \bigoplus\limits_{i=0}^{p-1}\RGamma_{\dR}(S_0/k)_i$ characterized by the fact that $\theta_{S_1}+i$ is nilpotent on $\RGamma_{\dR}(S_0/k)_i$ where $\theta_{S_1}$ is the Sen operator arising from $S_1$. Moreover, each $\RGamma_{\dR}(S_0/k)_i$ satisfies flat descent, so as in \cite[Construction 2.7]{antieau-bhatt-mathew} we can extend $\RGamma_{\dR}(-/k)_i$ to a functor from the category of syntomic stacks over $W_2(k)$ to $D(k)$, such that there is a natural equivalence
\begin{equation}
\RGamma_{\dR}(X_0/k)\simeq \bigoplus\limits_{i=0}^{p-1} \RGamma_{\dR}(X_0/k)_i
\end{equation}
for any syntomic stack $X_0$ over $k$ equipped with a syntomic $W_2(k)$-lift $X_1$.

In particular, if $X_0$ is a smooth Artin stack over $k$ that can be lifted over $W_2(k)$, the conjugate spectral sequence for $X_0$ has no non-zero differentials on pages $E_2,\ldots,E_p$. The fact that there are no differentials on these pages touching any of the entries $H^i(X_0,\Omega^j_{X_0})$ with $i+j<p$ was proven in \cite[Theorem 1.3.23]{kubrak-prikhodko-di}.
\end{rem}

\begin{notation}\label{sen operator: zp2 classes notation}
We denote by $e_{X_1,p}:L\Omega^p_{X_0}\to\cO_{X_0}[p+1]$ the connecting map corresponding to the fiber sequence (\ref{sen operator: filp0 mod p2 formula}), and by $c_{X_1,p}:L\Omega^p_{X_0}\to \cO_{X_0}[p]$ the map induced by the nilpotent operator $\Theta_{X_1}$ on $\Fil_p^{\conj}\dR_{X_0,0}$.
\end{notation}

This is consistent with Notation \ref{sen operator: zp classes notation} in the sense that for a quasisyntomic formal $\bZ_p$-scheme $X$ with mod $p^2$ reduction $X_1=X\times_{\bZ_p}\bZ/p^2$ the map $c_{X_1,p}$ is naturally homotopic to $c_{X,p}$, and $e_{X_1,p}$ is the mod $p$ reduction of $e_{X,p}$.

We can also consider the endomorphism $\Theta_{X_1}-\Theta_{X_1}^p$ of $\Fil_p^{\conj}\dR_{X_0}$ which acts by zero on $\Fil_{p-1}^{\conj}$ and hence naturally induces a map $\Theta_{X_1}-\Theta_{X_1}^p:L\Omega^p_{X_0}[-p]\to\Fil_{p-1}^{\conj}\dR_{X_0}$. As in the previous discussion in the presence of a lift of $X_0$ over $W(k)$, this map, $c_{X_1,p}$, and $e_{X_1,p}$ are related as follows:

\begin{lm}\label{sen operator: classes over zp2}
There is a natural homotopy \begin{equation}e_{X_1,p}\sim\Bock_{\RHom_{D(X_1)}(L\Omega^p_{X_1},\cO_{X_1}[p])}(c_{X_1,p})\end{equation} where \begin{equation}\Bock_{\RHom_{D(X_1)}(L\Omega^p_{X_1},\cO_{X_1}[p])}:\RHom_{D(X_0)}(L\Omega^p_{X_0},\cO_{X_0}[p])\to \RHom_{D(X_0)}(L\Omega^p_{X_0},\cO_{X_0}[p+1])\end{equation} is the Bockstein homomorphism induced by $\RHom_{D(X_1)}(L\Omega^p_{X_1},\cO_{X_1}[p])\in D(W_2(k))$. The map $\Theta_{X_1}-\Theta_{X_1}^p:L\Omega^p_{X_0}[-p]\to \Fil_{p-1}^{\conj}\dR_{X_0}$ is naturally homotopic to the composition $L\Omega^p_{X_0}[-p]\xrightarrow{c_{X_1,p}}\cO_{X_0}\to\Fil_{p-1}^{\conj}\dR_{X_0}$.
\end{lm}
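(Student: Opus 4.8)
The plan is to run the $\bZ/p^2$-counterpart of the arguments in Lemmas \ref{sen operator: ext group description}, \ref{sen operator: cXp as nilpotent operator} and \ref{sen operator: cXp as nilpotent operator on dR}, the only genuinely new feature being that over $X_1$ the top graded piece $\gr_p^{\conj}$ of the conjugate filtration carries the \emph{nonzero} Sen weight $-p$, and it is this deformation that forces a Bockstein to appear. To begin, by Theorem \ref{sen operator: zpn main} the object $\Omega^{\DHod}_{X_1/\bZ/p^2}\in D_{\bN}(X_1)$ has a conjugate filtration with $\gr_i^{\conj}\simeq(L\Omega^i_{X_1/\bZ/p^2}[-i],-i)$, and base change along $i\colon X_0\hookrightarrow X_1$ identifies it with $(\dR_{X_0},\Theta_{X_1})$ together with its conjugate filtration. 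Since in degrees $<p$ only $\gr_0^{\conj}$ has Sen weight $\equiv 0 \bmod p$ while $\gr_p^{\conj}$ has weight $-p$, passing to the generalized eigenspace decomposition shows that the weight-$0$ summand $\mathcal F:=\Fil_p^{\conj}\Omega^{\DHod}_{X_1/\bZ/p^2,0}$ fits into a fiber sequence
\[
(\cO_{X_1},0)\longrightarrow(\mathcal F,\Theta_{X_1})\longrightarrow(L\Omega^p_{X_1/\bZ/p^2}[-p],-p)
\]
in $D_{\bN}(X_1)$, whose base change along $i$ is the fiber sequence (\ref{sen operator: filp0 mod p2 formula}) equipped with its Sen operator; in particular $\Fil_p^{\conj}\dR_{X_0,0}=i^*\mathcal F$.

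Next I would prove the assertion about $\Theta_{X_1}-\Theta_{X_1}^p$; this takes place entirely over the $k$-scheme $X_0$ and is formally identical to Lemma \ref{sen operator: cXp as nilpotent operator on dR}. Write $\Fil_p^{\conj}\dR_{X_0}=\bigoplus_{j=0}^{p-1}\Fil_p^{\conj}\dR_{X_0,j}$. For $j\neq0$ one has $\Fil_p^{\conj}\dR_{X_0,j}=\gr_j^{\conj}=L\Omega^j_{X_0}[-j]$ with $\Theta_{X_1}=-j$, so $\Theta_{X_1}^p=(-j)^p=-j=\Theta_{X_1}$ there; while on the weight-$0$ summand $i^*\mathcal F$ the relation $\Theta_{X_1}^2=0$ holds (both ends of its fiber sequence carry Sen weight $0$), hence $\Theta_{X_1}^p=0$. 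Thus $\Theta_{X_1}-\Theta_{X_1}^p$ equals $\Theta_{X_1}$ followed by the idempotent projection onto $i^*\mathcal F$. Applying the formal discussion preceding Lemma \ref{sen operator: cXp as nilpotent operator} to the fiber sequence $(\cO_{X_0},0)\to(i^*\mathcal F,\Theta_{X_1})\to(L\Omega^p_{X_0}[-p],0)$, the operator $\Theta_{X_1}$ on $i^*\mathcal F$ factors as $i^*\mathcal F\to L\Omega^p_{X_0}[-p]\xrightarrow{c_{X_1,p}[-p]}\cO_{X_0}\to i^*\mathcal F$, the middle arrow being $c_{X_1,p}[-p]$ by Notation \ref{sen operator: zp2 classes notation}. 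Since $\Theta_{X_1}-\Theta_{X_1}^p$ annihilates $\Fil_{p-1}^{\conj}\dR_{X_0}$ and induces the zero map on $\gr_p^{\conj}$, it factors through $\gr_p^{\conj}=L\Omega^p_{X_0}[-p]$ and lands in $\Fil_{p-1}^{\conj}\dR_{X_0}$, and the resulting map is $c_{X_1,p}$ followed by $\cO_{X_0}=\Fil_0^{\conj}\dR_{X_0}\hookrightarrow\Fil_{p-1}^{\conj}\dR_{X_0}$, as claimed.

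Finally I would establish $e_{X_1,p}\sim\Bock(c_{X_1,p})$. By Notation \ref{sen operator: zp2 classes notation}, $e_{X_1,p}[-p]$ is the boundary map in $D(X_0)$ of the underlying fiber sequence (\ref{sen operator: filp0 mod p2 formula}), i.e. $i^*$ of the boundary map in $D(X_1)$ of $\cO_{X_1}\to\mathcal F\to L\Omega^p_{X_1}[-p]$, which is the image under the forgetful functor $D_{\bN}(X_1)\to D(X_1)$ of the boundary class $\delta_{X_1}\in\Map_{D_{\bN}(X_1)}\big((L\Omega^p_{X_1}[-p],-p),(\cO_{X_1}[1],0)\big)$. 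Set $R:=\RHom_{D(X_1)}(L\Omega^p_{X_1},\cO_{X_1}[p])\in D(W_2(k))$, so $R/p\simeq\RHom_{D(X_0)}(L\Omega^p_{X_0},\cO_{X_0}[p])$. By Lemma \ref{sen operator: morphisms in DN} there is a natural identification $\Map_{D_{\bN}(X_1)}\big((L\Omega^p_{X_1}[-p],-p),(\cO_{X_1}[1],0)\big)\simeq\fib\big(p\colon R[1]\to R[1]\big)$, and, exactly parallel to the computation of the analogous mapping space in Lemma \ref{sen operator: ext group description}, I would check that under it the $D_{\bN}(X_0)$-restriction of $\delta_{X_1}$ records the pair consisting of $c_{X_1,p}[-p]\in R/p$ (the nilpotent-operator class of the previous paragraph) and $e_{X_1,p}[-p]$ (its image under ``forget, then restrict''). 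The content of the lemma is then that in the natural fiber sequence over $W_2(k)$ of the form $R/p\to\fib(p\colon R\to R)\to(R/p)[-1]$ through which $\fib(p\colon R[1]\to R[1])$ is built, the rule recovering the second component from the first is precisely the Bockstein $\Bock_R\colon R/p\to(R/p)[1]$ of the $\bZ/p^2$-module $R$; this gives $e_{X_1,p}\sim\Bock_R(c_{X_1,p})$, and together with the previous paragraph the last sentence of the lemma follows. The main obstacle is exactly this last step: over $\bZ_p$ the corresponding point in Lemma \ref{sen operator: ext group description} (hence in Corollary \ref{sen operator: smooth classes w lift}) was the one-line remark that $\cO_X^{p=0}=\fib(p\colon\cO_X\to\cO_X)$ is the single shifted sheaf $i_*\cO_{X_0}[-1]$ with structure map $\Bock_{\cO_X}[-1]$, whereas over $\bZ/p^2$ the object $\fib(p\colon\cO_{X_1}\to\cO_{X_1})$ is the honest two-term complex $[\cO_{X_1}\xrightarrow{p}\cO_{X_1}]$, a nonsplit two-step extension rather than a shift of one sheaf, so the $\bZ/p^2$-Bockstein must be extracted carefully from its boundary, and it is precisely the nonzero Sen weight $-p$ on $\gr_p^{\conj}$ over $X_1$ that makes it nonzero.
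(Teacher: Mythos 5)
Your proposal is correct and follows essentially the same route as the paper: the statement about $\Theta_{X_1}-\Theta_{X_1}^p$ via the mod-$p$ weight decomposition exactly as in Lemma \ref{sen operator: cXp as nilpotent operator on dR}, and the Bockstein identity by regarding the $D_{\bN}(X_1)$-boundary class of the weight-zero piece of $\Fil_p^{\conj}\Omega^{\DHod}_{X_1/\bZ/p^2}$ as a point of $\fib\bigl(p:\RHom_{D(X_1)}(L\Omega^p_{X_1},\cO_{X_1}[p+1])\to\RHom_{D(X_1)}(L\Omega^p_{X_1},\cO_{X_1}[p+1])\bigr)$ and comparing its forgetful and homotopy components after restriction to $X_0$. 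The one step you flag but only assert --- that ``forget the endomorphism, then reduce mod $p$'' agrees with the Bockstein of $\RHom_{D(X_1)}(L\Omega^p_{X_1},\cO_{X_1}[p])$ composed with the homotopy-component map $r$ --- is exactly what the paper proves in the two auxiliary lemmas following the statement (the unnamed lemma and Lemma \ref{sen operator: p fiber reduction}) by a short diagram chase, so your reduction matches the paper's argument.
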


We will now define diffracted Hodge cohomology relative to $\bZ/p^n$, proving Theorem \ref{sen operator: zpn main}, and will then prove Lemma \ref{sen operator: classes over zp2}. As in \cite{apc}, the construction of diffracted Hodge cohomology over $\bZ/p^n$ together with its Sen operator arises from the computation of the Cartier-Witt stack of the base ring $\bZ/p^n$. I learned the following fact from Sanath Devalapurkar:

\begin{lm}\label{sen operator: wcartht zpn}
There is an isomorphism of stacks $\WCart^{\mathrm{HT}}_{\bZ/p^n}\simeq \bG_{a,\bZ/p^n}^{\sharp}/\bG_{m,\bZ/p^n}^{\sharp}$ where the quotient is taken with respect to the scaling action. Moreover, the natural map $\WCart_{\bZ/p^n}^{\HT}\to\WCart_{\bZ_p}^{\HT}$ is identified with $\bG_{a,\bZ/p^n}^{\sharp}/\bG_{m,\bZ/p^n}^{\sharp}\to B\bG_{m,\bZ/p^n}^{\sharp}\to B\bG^{\sharp}_{m,\bZ_p}$.
\end{lm}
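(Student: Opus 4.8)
The plan is to compute the Hodge–Tate stack $\WCart^{\HT}_{\bZ/p^n}$ directly from its description as the Hodge–Tate divisor inside $\WCart_{\bZ/p^n}$, mimicking the computation of $\WCart^{\HT}_{\bZ_p}$ in \cite{bhatt-lurie-prismatization} but keeping careful track of the base $\bZ/p^n$. Recall that for a $p$-nilpotent ring $R$, an $R$-point of $\WCart$ is a Cartier–Witt divisor $(I\xrightarrow{\alpha} W(R))$, i.e.\ a generalized Cartier divisor whose image in $W(R)$ is, after applying the ghost/Verschiebung analysis, a ``distinguished'' element; the Hodge–Tate locus is cut out by the further condition that the composite $I\to W(R)\to R$ (restriction along the ghost component) vanishes. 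Over the base $\bZ/p^n$ the relevant Witt vectors are those of $\bZ/p^n$-algebras, and the key input is that $p$ is already nilpotent of a controlled order; the Frobenius on $W(R)$ becomes amenable because $p^n=0$. First I would unwind this to identify $R$-points of $\WCart^{\HT}_{\bZ/p^n}$ with pairs consisting of an invertible $R$-module $L$ (the ``$\bG_m$''-part, coming from the choice of generator of the ideal $I$ up to units) together with a divided-power-nilpotent element (the ``$\bG_a^{\#}$''-part). The divided power structure enters exactly as in the $\bZ_p$ case: the element one extracts lives in the PD-envelope, which forces the group $\bG_a$ to be replaced by its divided-power hull $\bG_a^{\#}$, and similarly $\bG_m$ by $\bG_m^{\#}$.

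Concretely, the key steps are: (1) Write down the functor of points of $\WCart_{\bZ/p^n}$ following \cite[\S3]{bhatt-lurie-prismatization} and intersect with the Hodge–Tate condition to get $\WCart^{\HT}_{\bZ/p^n}$. (2) Show that after choosing a trivialization of the Cartier–Witt divisor, the remaining datum is a section of $\bG_a^{\#}$ over $\bZ/p^n$, so that the map ``forget the trivialization'' exhibits $\WCart^{\HT}_{\bZ/p^n}$ as the quotient stack $\bG_{a,\bZ/p^n}^{\#}/\bG_{m,\bZ/p^n}^{\#}$ for the scaling action — here the point is that two trivializations differ by a unit which, after passing to the PD-hull, is constrained to lie in $\bG_m^{\#}$. (3) For the second assertion, observe that the map $\WCart^{\HT}_{\bZ/p^n}\to \WCart^{\HT}_{\bZ_p}$ is base change along $\bZ_p\to\bZ/p^n$ on the level of the Cartier–Witt classification, and $\WCart^{\HT}_{\bZ_p}\simeq B\bG^{\#}_{m,\bZ_p}$ by \cite[Example 5.15]{bhatt-lurie-prismatization}; under the identification in step (2), the composite $\bG_{a,\bZ/p^n}^{\#}/\bG_{m,\bZ/p^n}^{\#}\to \WCart^{\HT}_{\bZ/p^n}\to \WCart^{\HT}_{\bZ_p}\simeq B\bG_{m,\bZ_p}^{\#}$ is visibly the composition of the projection $\bG_{a,\bZ/p^n}^{\#}/\bG_{m,\bZ/p^n}^{\#}\to B\bG_{m,\bZ/p^n}^{\#}$ (collapse the $\bG_a^{\#}$-factor) with the base-change map $B\bG_{m,\bZ/p^n}^{\#}\to B\bG_{m,\bZ_p}^{\#}$, since in both cases the $\bG_m^{\#}$-torsor being remembered is the one coming from the trivialization of the Cartier–Witt divisor, which is functorial in the base. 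Here $\bG_{a,\bZ/p^n}^\#$ and $\bG_{m,\bZ/p^n}^\#$ denote the base changes of the respective divided-power group schemes to $\bZ/p^n$.

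I expect the main obstacle to be step (2): verifying that the ``leftover'' parameter after trivializing the Cartier–Witt divisor is genuinely a section of $\bG_a^{\#}$ (and not of $\bG_a$, or of some more complicated thickening) over $\bZ/p^n$, and that the residual gauge group is exactly $\bG_m^{\#}$. In the $\bZ_p$-case this is extracted from the structure of $W(\bZ_p)=\bZ_p$ acting by Frobenius and Verschiebung; over $\bZ/p^n$ one must check that the $p^n=0$ relation does not introduce extra components or fail to produce the full divided-power structure — in other words, that the relevant completed PD-polynomial algebra over $\bZ/p^n$ is the base change of the one over $\bZ_p$. This should follow from flatness of $\bG_a^{\#}$ and $\bG_m^{\#}$ over $\bZ_p$ together with a direct inspection of the ghost components of a distinguished element of $W(R)$ for $R$ a $\bZ/p^n$-algebra, but it is the step where the arithmetic of $\bZ/p^n$ (as opposed to $\bZ_p$ or $\bF_p$) genuinely has to be checked rather than quoted. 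Once Lemma \ref{sen operator: wcartht zpn} is established, the construction of $\Omega^{\DHod}_{X_{n-1}/\bZ/p^n}$ with its conjugate filtration and Sen operator in Theorem \ref{sen operator: zpn main} proceeds exactly as in \cite{apc}, by pushing forward the structure sheaf along $X_{n-1}^{\DHod}\to X_{n-1}$ and reading off the $\bG_m^{\#}$-weight grading as the conjugate filtration.
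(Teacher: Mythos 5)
Your overall strategy is the same as the paper's: start from the relative description of the Hodge--Tate stack ($\WCart^{\HT}_{\bZ/p^n}\simeq (\Spec\bZ/p^n)^{\DHod}/\bG_{m,\bZ/p^n}^{\#}$, following Construction 3.8 of Bhatt--Lurie), identify the cover with $\bG_a^{\#}$, and note that the residual gauge group is $\bG_m^{\#}$. The problem is that the step you yourself flag as the main obstacle --- showing that after trivialization the leftover datum is exactly a section of $\bG_a^{\#}$ with the $\bG_m^{\#}$-action being the scaling action --- is precisely where the entire content of the lemma lies, and your proposal leaves it at ``should follow from flatness and a direct inspection of ghost components.'' Concretely, $(\Spec\bZ/p^n)^{\DHod}(S)=\{x\in W(S)\mid V(1)x=p^n\}$, which is at best a coset of $W(S)^{F=0}\simeq\bG_a^{\#}(S)$ inside $W(S)$. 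To conclude that it is $\bG_m^{\#}$-equivariantly isomorphic to $\bG_a^{\#}$ with the scaling action you need two things your sketch does not supply: (i) nonemptiness, i.e.\ that $p^n$ lies in $V(1)\cdot W(S)$ for every $\bZ/p^n$-algebra $S$, and (ii) a base point fixed by $\bG_m^{\#}\simeq W^{F=1}$, without which the quotient would only be identified with the quotient of a $\bG_m^{\#}$-equivariant torsor under $\bG_a^{\#}$, not with $\bG_a^{\#}/\bG_m^{\#}$ for the scaling action on the nose.

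The paper settles both points at once with the Witt-vector identity $p^n=V(p^{n-1})$ in $W(\bZ/p^n)$: in ghost coordinates $p^n-V(p^{n-1})$ is $(p^n,0,0,\dots)$, hence is the image of $p^n\in\bG_a^{\#}(\bZ_p)=W[F](\bZ_p)$, which is killed by $\bG_a^{\#}(\bZ_p)\to\bG_a^{\#}(\bZ/p^n)$. Then $x_0:=V(p^{n-2})$ is an explicit solution, since $V(1)\cdot V(p^{n-2})=V(F(V(p^{n-2})))=V(p^{n-1})=p^n$, and it is $\bG_m^{\#}$-fixed because $a\cdot V(p^{n-2})=V(F(a)\,p^{n-2})=V(p^{n-2})$ for $a\in W^{F=1}$; the equivariant isomorphism is then $y\mapsto y+V(p^{n-2})$, $\bG_a^{\#}\simeq(\Spec\bZ/p^n)^{\DHod}$. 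Note that this identity is special to the base $\bZ/p^n$ (it fails in $W(\bZ_p)$), so it cannot be deduced from flatness or base-change formalism alone; until you carry out this computation (or an equivalent construction of a $\bG_m^{\#}$-fixed solution), your step (2), and hence the lemma, is not established. Your step (3) for the second assertion is fine once (2) is in place.
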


\begin{proof}
We denote by $\bG_a^{\sharp}$ the divided power envelope of $0$ in $\bG_a$, and by $\bG_m^{\sharp}$ the divided power envelope of $1$ in $\bG_m$, both viewed as group schemes over $\bZ_p$. In this proof we will use repeatedly the identifications of group schemes $\bG_m^{\sharp}\simeq W^{\times}[F]:=W^{F=1}$ and $\bG_a^{\sharp}\simeq W[F]:=W^{F=0}$ proven in \cite[Lemmma 3.4.11, Variant 3.4.12]{apc} and \cite[Lemma 3.2.6]{drinfeld}, together with the fact that the multiplication action of $W^{\times }[F]$ on $W[F]$ corresponds to the usual scaling action of $\bG_m^{\sharp}$ on $\bG_a^{\sharp}$ under these identifications.

By \cite[Construction 3.8]{bhatt-lurie-prismatization} the stack $\WCart^{\HT}_{\bZ/p^n}$ is the quotient of $(\Spec \bZ/p^n)^{\DHod}$ by $\bG_{m,\bZ/p^n}^{\sharp}$ where for a test algebra $S$ in which $p$ is nilpotent we have $(\Spec\bZ/p^n)^{\DHod}(S)=\Map(\bZ/p^n,W(S)/V(1))$ where the mapping space is taken in the category of animated commutative rings. As in \cite[Example 5.15]{bhatt-lurie-prismatization} we can rewrite this more explicitly as $(\Spec\bZ/p^n)^{\DHod}(S)\simeq \{x\in W(S)| V(1)x=p^n\}$ with the $\bG_m^{\sharp}(S)=W(S)^{F=1}$-action given by multiplication on $x$. The key computation that will allow us to identify this set with $\bG_a^{\sharp}(S)$ in a $\bG_m^{\sharp}(S)$-equivariant fashion is the following:

\begin{lm}
We have $p^n=V(p^{n-1})$ in $W(\bZ/p^n)$.
\end{lm}

\begin{proof}
Recall that for any ring $R$ there is the ghost map $W(R)\to R^{\bN}$ sending a Witt vector $[x_0]+V[x_1]+V^2[x_2]+\ldots$ to $(x_0,x_0^p+px_1,x_0^{p^2}+px_1^p+p^2x_2,\ldots)$. This is a map of rings and it is injective if $R$ is $p$-torsion-free. For an element $a\in W(R)$ with ghost coordinates $(a_0,a_1,\ldots)$ the ghost coordinates of $F(a)$ are given by $(a_1,a_2,\ldots)$, and the ghost coordinates of $V(a)$ are $(0, pa_0,pa_1,\ldots)$.

Therefore the ghost coordinates of $V(p^{n-1})\in W(\bZ_p)$ are $(0,p^n,p^n,\ldots)$ and the ghost coordinates of $p^n-V(p^{n-1})$ are equal to $(p^n,0,0,\ldots)$.  The isomorphism $\bG_a^{\sharp}\simeq W[F]$ composed with the ghost map sends $r\in \bG_a^{\sharp}$ to $(r,0,0,\ldots)$, hence $p^n-V(p^{n-1})\in W(\bZ_p)$ is the image of $p^n\in \bG_a^{\sharp}(\bZ_p)$ under the natural map $\bG_a^{\sharp}\simeq W[F]\subset W$. Since $p^n$ is annihilated by the map $\bG_a^{\sharp}(\bZ_p)\to \bG_a^{\sharp}(\bZ/p^n)$ induced by $\bZ_p\twoheadrightarrow \bZ/p^n$, the element $p^n-V(p^{n-1})$ is zero in $W(\bZ/p^n)$, as desired.
\end{proof}
We can now define the isomorphism $\bG^{\sharp}_{a,\bZ/p^n}\simeq (\Spec \bZ/p^n)^{\DHod}$ of stacks over $\bZ/p^n$. For a test $\bZ/p^n$-algebra $S$ the Verschiebung map on $W(S)$ is injective, so an element $y\in W(S)$ is annihilated by $F$ if and only if $VF(y)=V(1)\cdot y$ vanishes. As we just computed, $V(1)\cdot V(p^{n-2})=VFV(p^{n-2})=V(p^{n-1})=p^n$ in $W(S)$. Therefore we can define a bijection $\bG_a^{\sharp}(S)\simeq (\Spec \bZ/p^n)^{\DHod}$ by
\begin{multline}
\bG^{\sharp}_{a,\bZ/p^n}(S)=\{y\in W(S)|Fy=0\}=\{y\in W(S)|V(1)\cdot y=0\}\ni y\mapsto \\ \mapsto y+V(p^{n-2})\in \{x\in W(S)|V(1)\cdot x=p^n\}=(\Spec\bZ/p^n)^{\DHod}(S)
\end{multline}

The resulting isomorphism intertwines the $\bG_{m}^{\sharp}$-action on $(\Spec \bZ/p^n)^{\DHod}$ with the usual scaling action on $\bG_a^{\sharp}$ because for $a\in \bG_m^{\sharp}(S)=W(S)^{F=1}$ we have $a\cdot(y+V(p^{n-2}))=a\cdot y+V(F(a)p^{n-2})=a\cdot y+V(p^{n-2})$.
\end{proof}

We can now define diffracted Hodge cohomology relative to $\bZ/p^n$. Given a flat scheme $X_{n-1}$ over $\bZ/p^n$, the stack $\WCart_{X_{n-1}}^{\HT}$ lives over $\WCart_{\bZ/p^n}^{\HT}$ and we define $(X_{n-1}/\bZ/p^n)^{\DHod}$ as the fiber product

\begin{equation}
\begin{tikzcd}
(X_{n-1}/\bZ/p^n)^{\DHod}\arrow[r]\arrow[d] & \WCart^{\HT}_{X_{n-1}} \arrow[d] \\
\Spec(\bZ/p^n)\arrow[r,"\eta"] & \WCart^{\HT}_{\bZ/p^n} 
\end{tikzcd}
\end{equation}
where $\eta:\Spec(\bZ/p^n)\to \WCart^{\HT}_{\bZ/p^n}\simeq \bG_{a,\bZ/p^n}^{\sharp}/\bG_{m,\bZ/p^n}^{\sharp}$ is the composition $\Spec(\bZ/p^n)\xrightarrow{0}\bG_{a,\bZ/p^n}^{\sharp}\to \bG_{a,\bZ/p^n}^{\sharp}/\bG^{\sharp}_{m,\bZ/p^n}$. The stack $(X_{n-1}/\bZ/p^n)^{\DHod}$ is equipped with a map $\pi^{\DHod}_{X_{n-1}/\bZ/p^n}:(X_{n-1}/\bZ/p^n)^{\DHod}\to X_{n-1}$ obtained as the composition $(X_{n-1}/\bZ/p^n)^{\DHod}\to \WCart^{\HT}_{X_{n-1}}\to X_{n-1}$. Define the diffracted Hodge cohomology of $X_{n-1}$ relative to $\bZ/p^n$ as the pushforward of the structure sheaf along the map $\pi^{\DHod}_{X_{n-1}/\bZ/p^n}$:
\begin{equation}\Omega^{\DHod}_{X_{n-1}/\bZ/p^n}:=R\pi_{X_{n-1}/\bZ/p^n *}\cO_{(X_{n-1}/\bZ/p^n)^{\DHod}}\in D(X_{n-1})\end{equation}  We will now prove that this object shares the basic properties of diffracted Hodge cohomology of $\bZ_p$-schemes:

\begin{proof}[Proof of Theorem \ref{sen operator: zpn main}]
Since, by construction, the stack $(X_{n-1}/\bZ/p^n)^{\DHod}$ is equipped with an action of $\bG_{m,\bZ/p^n}^{\sharp}$ such that the map $\pi^{\DHod}_{X_{n-1}/\bZ/p^n}:(X_{n-1}/\bZ/p^n)^{\DHod}\to X_{n-1}$ is $\bG_{m,\bZ_p}^{\sharp}$-equivariant for the trivial action of $\bG_{m,\bZ_p}^{\sharp}$ on the target, the object $\Omega^{\DHod}_{X_{n-1}/\bZ/p^n}\in D(X_{n-1})$ is naturally equipped with a $\bG_{m,\bZ_p}^{\sharp}$-action. As in \cite[Theorem 3.5.8]{apc}, this gives rise to an endomorphism $\Theta_{X_{n-1}}$ of $\Omega^{\DHod}_{X_{n-1}/\bZ/p^n}$. We endow $\Omega^{\DHod}_{X_{n-1}/\bZ/p^n}$ with the structure of a derived commutative algebra via Lemma \ref{dalg: pushforward of rings}, and $\Theta_{X_{n-1}}$ is seen to be a derivation (Definition \ref{dalg: derivation}) by following \cite[Construction 3.5.4]{apc}.

We will now construct the conjugate filtration on $\Omega^{\DHod}_{X_{n-1}/\bZ/p^n}$ and identify its graded quotients. To do this we will compare $\Omega_{X_{n-1}/\bZ/p^n}^{\DHod}$ with the `absolute' diffracted Hodge cohomology $\Omega_{X_{n-1}}^{\DHod}$. By \cite[Construction 4.7.1]{apc} the object $\Omega^{\DHod}_{X_{n-1}}$ is equipped with a filtration $\Fil_n^{\conj}\Omega^{\DHod}_{X_{n-1}}$ with graded quotients $\gr_n^{\conj}\simeq L\Omega^n_{X_{n-1}/\bZ_p}[-n]$. In the case $X_{n-1}=\Spec(\bZ/p^n)$ we have an equivalence $\Omega^{\DHod}_{\bZ/p^n}\simeq\cO(\bG_{a,\bZ/p^n}^{\sharp})$ of derived commutative $\bZ/p^n$-algebras by Lemma \ref{sen operator: wcartht zpn}.

By base change for the diagram
\begin{equation}
\begin{tikzcd}
(X_{n-1}/\bZ/p^n)^{\DHod}\arrow[r]\arrow[d] & (X_{n-1})^{\DHod} \arrow[d] \\
X_{n-1}\arrow[r,"\eta"] & (\Spec \bZ/p^n)^{\DHod}\times_{\bZ/p^n}X_{n-1} 
\end{tikzcd}
\end{equation}
we have \begin{equation}\Omega^{\DHod}_{X_{n-1}/\bZ/p^n}\simeq \bZ/p^n\otimes_{\cO(\bG_{a,\bZ/p^n}^{\sharp})}\Omega^{\DHod}_{X_{n-1}}\end{equation} 

The ordinary commutative algebra $\cO(\bG_{a,\bZ/p^n}^{\sharp})$ is endowed with a conjugate filtration via the identification $\cO(\bG_{a,\bZ/p^n}^{\sharp})\simeq \Omega^{\DHod}_{\bZ/p^n}$, and the associated graded algebra is the free divided power algebra on the $\bZ/p^n$-module $L\Omega^1_{\bZ/p^n/\bZ_p}[-1]\simeq \bZ/p^n$. Moreover, the conjugate filtration on $\Omega^{\DHod}_{X_{n-1}}$ makes it into an object of the derived category of filtered $\cO(\bG_{a,\bZ/p^n}^{\sharp})$-modules in $D(X_{n-1})$. Equipping $\bZ/p^n$ with the trivial filtration $\Fil_0^{\conj}\bZ/p^n=\bZ/p^n,\Fil_{-1}^{\conj}\bZ/p^n=0$, we get an induced tensor product filtration $\bZ/p^n\otimes_{\cO(\bG_{a,\bZ/p^n}^{\sharp})}\Omega^{\DHod}_{X_{n-1}}$, and hence on the complex $\Omega^{\DHod}_{X_{n-1}/\bZ/p^n}$.

We will first check that the filtered complex $\Omega^{\DHod}_{X_{n-1}/\bZ/p^n}\otimes_{\cO_{X_{n-1}}}\cO_{X_0}$ is equivalent to $\dR_{X_0}$, and that there is an equivalence of filtered complexes $\Omega^{\DHod}_{X_{n-1}/\bZ/p^n}\simeq \Omega^{\DHod}_{X}\otimes_{\cO_X}\cO_{X_{n-1}}$ compatible with the Sen operators if $X$ is a flat formal $\bZ_p$-scheme lifting $X_{n-1}$. Since we assume that $X_{n-1}$ is quasisyntomic over $\bZ/p^n$, the formal scheme $X$ is quasisyntomic over $\bZ_p$, and $\Omega^{\DHod}_X$ coincides with the derived pushforward of the structure sheaf along the map of stacks $X^{\DHod}\to X$, by specializing to the Hodge-Tate locus the isomorphism of \cite[Theorem 7.20(2)]{bhatt-lurie-prismatization}. We have the following relations between the relevant Cartier-Witt stacks:
\begin{multline}
\WCart^{\HT}_{X_{n-1}}\times_{\WCart^{\HT}_{\bZ/p^n}} \WCart^{\HT}_{\bF_p}\simeq \WCart^{\HT}_{X_0}  \\ \WCart^{\HT}_{X}\times_{\WCart^{\HT}_{\bZ_p}}\WCart^{\HT}_{\bZ/p^{n}}\simeq \WCart^{\HT}_{X_{n-1}}
\end{multline}

These follows directly from the definition of $\WCart^{\HT}_{(-)}$, as in \cite[Remark 3.5]{bhatt-lurie-prismatization}. The identification $\dR_{X_0}\simeq \Omega^{\DHod}_{X_{n-1}/\bZ/p^n}\otimes_{\bZ/p^n}\bF_p$ follows from the fact that $\WCart^{\HT}_{\bF_p}\simeq \Spec\bF_p$ and the map $\WCart^{\HT}_{\bF_p}\to \WCart^{\HT}_{\bZ/p^n}$ factors through the map $\eta$. The identification $\Omega^{\DHod}_{X}/p^n\simeq \Omega^{\DHod}_{X_{n-1}/\bZ/p^n}$ is implied by the fact that the composition $\Spec\bZ/p^n\xrightarrow{\eta} \WCart^{\HT}_{\bZ/p^n}\to \WCart^{\HT}_{\bZ_p}$ is equal to the composition $\Spec\bZ/p^n\to \Spf\bZ_p\to \WCart^{\HT}_{\bZ_p}$.

Finally, we will compute the quotients of the conjugate filtration on $\Omega^{\DHod}_{X_{n-1}/\bZ/p^n}$. The associated graded object $\gr_{\conj}^{\bullet}(\Omega^{\DHod}_{X_{n-1}/\bZ/p^n})\simeq \gr^{\bullet}_{\conj}(\bZ/p^n\otimes_{\cO(\bG_{a,\bZ/p^n}^{\sharp})}\Omega^{\DHod}_{X_{n-1}})$ is equivalent to $\bZ/p^n\otimes_{\bZ/p^n\langle t\rangle}\bigoplus\limits_{i\geq 0}L\Omega^{i}_{X_{n-1}/\bZ_p}[-i]$ where we identified $\gr^{\bullet}_{\conj}\cO(\bG_{a,\bZ/p^n}^{\sharp})$ with the divided power algebra $\bZ/p^n\langle t\rangle$ in one variable, such that $\Theta(t)=-t$.

We have a natural map $\gr_{\conj}^{\bullet}(\Omega^{\DHod}_{X_{n-1}/\bZ/p^n})\to \bigoplus\limits_{i\geq 0}(L\Omega^i_{X_{n-1}/\bZ/p^n}[-i],-i)$ of graded objects of $D_{\bN}(X_{n-1})$. To check that it is an equivalence we may assume that $X_{n-1}$ is isomorphic to the spectrum of a polynomial ring over $\bZ/p^n$, and the result follows because such $X_{n-1}$ lifts over $\bZ_p$.
\end{proof}

We will now establish analogs of Corollary \ref{sen operator: smooth classes w lift} and Lemmas \ref{sen operator: cXp as nilpotent operator}, \ref{sen operator: cXp as nilpotent operator on dR} over $\bZ/p^2$, thus proving Lemma \ref{sen operator: classes over zp2}. Given a flat scheme $X_1$ over $\bZ/p^2$ and two objects $M,N\in D(X_1)$, consider the complex of morphisms $\RHom_{D_{\bN}(X_1)}((M,p),(N,0))$, viewed as an object of $D(\bZ/p^2)$. The restriction along $i:X_0\hookrightarrow X_1$ induces the map \begin{multline}\RHom_{D_{\bN}(X_1)}((M,p),(N,0))\to \\ \RHom_{D_{\bN}(X_0)}((i^*M,0),(i^*N,0))\simeq \RHom_{X_0}(i^*M,i^*N)\oplus \RHom_{X_0}(i^*M,i^*N[-1]).\end{multline} We denote by $r:\RHom_{D_{\bN}(X_1)}((M,p),(N,0))\to \RHom_{X_0}(i^*M,i^*N[-1])$ the second component of this composition. Explicitly, the data of a 1-morphism $f:(M,p)\to (N,0)$ amounts to a map $f:M\to N$ in $D(X_1)$ and a homotopy between $p\cdot f$ and $0$. Restricting to $X_0$, this gives a homotopy from the zero morphism $i^*M\xrightarrow{0}i^*N$ to itself, which is equivalent to the data of a 1-morphism $i^*M\to i^*N[-1]$. By Lemma \ref{sen operator: morphisms in DN} this map is nothing but $r(f)$.
\begin{lm}
Let $X_1$ be a flat scheme over $\bZ/p^2$. Denote by $X_0=X_1\times_{\bZ/p^2}\bF_p\xhookrightarrow{i}X_1$ its special fiber. For any two objects $M,N\in D(X_1)$ the composition 
\begin{equation}
\RHom_{D_{\bN}(X_1)}((M,p),(N,0))\to \RHom_{D(X_1)}(M,N)\to \RHom_{D(X_0)}(i^*M,i^*N)
\end{equation}
where the first map is forgetting the endomorphisms and the second map is induced by $i$, can be identified with the composition 
\begin{equation}
\RHom_{D_{\bN}(X_1)}((M,p),(N,0))\xrightarrow{r} \RHom_{D(X_0)}(i^*M,i^*N[-1])\xrightarrow{\Bock}  \RHom_{D(X_0)}(i^*M,i^*N)
\end{equation}
where the second map is Bockstein morphism associated to the object $\RHom_{D(X_1)}(M,N[-1])$.
\end{lm}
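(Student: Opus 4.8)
The plan is to first reduce the statement to pure homological algebra over $\bZ/p^2$. Set $E:=\RHom_{D(X_1)}(M,N)\in D(\bZ/p^2)$ and $E/p:=E\otimes_{\bZ/p^2}\bZ/p$. By \eqref{sen operator: morphisms in DN formula} the endomorphism ``$p$'' of $M$ acts on $E$ by multiplication by $p$, so $\RHom_{D_{\bN}(X_1)}((M,p),(N,0))\simeq\fib(p\colon E\to E)$, the forgetful map is the structure map $\iota\colon\fib(p\colon E\to E)\to E$, and, since $i$ is the vanishing locus of $p$ (so $i_*i^*N\simeq N\otimes_{\bZ/p^2}\bZ/p$), the map ``induced by $i$'' is the reduction $E\to E/p$ and $\RHom_{D(X_0)}(i^*M,i^*N)\simeq E/p$. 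Hence the first composition is $h:=\operatorname{red}\circ\iota\colon\fib(p\colon E\to E)\to E/p$. Restriction along $i$ sends $(M,p)\mapsto(i^*M,0)$ and $(N,0)\mapsto(i^*N,0)$; the induced map on $\RHom$'s is the reduction mod $p$ of $\fib(p\colon E\to E)$, i.e. the map of fibers attached to the square with horizontal maps $p$ and $0$ and vertical maps $\operatorname{red}$, so it has target $\fib(0\colon E/p\to E/p)\simeq (E/p)\oplus(E/p)[-1]$, its first component is exactly $h$, and by construction $r$ is its second component. Thus the lemma asserts the identity $h\simeq\Bock_{E[-1]}\circ r$ of maps $\fib(p\colon E\to E)\to E/p$.

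To prove this identity I would observe that every object and map occurring in it is obtained by applying the colimit-preserving functors $\fib(p\colon-\to-)$, $(-)/p$ and $\Bock$ to $E$, and that $h$ and $\Bock_{E[-1]}\circ r$ are natural transformations between two such functors of $E$; since a colimit-preserving functor on $D(\bZ/p^2)$ is determined by its value on $\bZ/p^2$, it suffices to prove the identity when $E=\bZ/p^2$, i.e. for $M=N=\cO_{X_1}$ and $X_1=\Spf\bZ/p^2$. In this universal case I would use the factorization $p=\beta\circ\operatorname{red}$ of multiplication by $p$ on $\bZ/p^2$, where $\operatorname{red}\colon\bZ/p^2\to\bZ/p$ is reduction and $\beta\colon\bZ/p\to\bZ/p^2$ sends $1$ to $p$; these are the nontrivial maps in the Bockstein fiber sequence $\bZ/p\xrightarrow{\beta}\bZ/p^2\xrightarrow{\operatorname{red}}\bZ/p\xrightarrow{\Bock}\bZ/p[1]$. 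The fiber-of-a-composite sequence for $p=\beta\circ\operatorname{red}$ gives a fiber sequence $\fib(\operatorname{red})\xrightarrow{a}\fib(p\colon\bZ/p^2\to\bZ/p^2)\xrightarrow{b}\fib(\beta)$, and the Bockstein fiber sequence identifies $\fib(\operatorname{red})\simeq\bZ/p$ via $\beta$ and $\fib(\beta)\simeq\bZ/p[-1]\simeq(\bZ/p^2[-1])/p$ via $\Bock[-1]$. A standard property of fiber-of-a-composite sequences gives $\iota\circ a=\beta$, hence $h\circ a=\operatorname{red}\circ\beta\simeq 0$ canonically, which forces $h$ to factor through $b$; one then checks that $b$ coincides with $r$ under these identifications and that the residual map $(\bZ/p^2[-1])/p\to\bZ/p$ is $\Bock_{\bZ/p^2[-1]}$, giving $h\simeq\Bock_{\bZ/p^2[-1]}\circ r$.

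The main obstacle is exactly this last matching step. The group $\Hom_{D(\bZ/p^2)}(\fib(p\colon\bZ/p^2\to\bZ/p^2),\bZ/p)$ is isomorphic to $\bZ/p$, but the relevant class is invisible on cohomology: $\fib(p)$ has cohomology $\bZ/p$ in degrees $0$ and $1$ with a nontrivial $k$-invariant in $\Ext^2_{\bZ/p^2}(\bZ/p,\bZ/p)$, and the evaluation $\Hom_{D(\bZ/p^2)}(\fib(p),\bZ/p)\to\Hom_{\bZ/p^2}(H^0\fib(p),\bZ/p)$ vanishes—the generator propagates from $H^1$ through the $k$-invariant. Consequently neither the identification of $b$ with $r$ nor the determination of the residual map can be carried out at the level of cohomology; one must either track the null-homotopies explicitly through the fiber-of-a-composite construction, or work with the two-term model $[\bZ/p^2\xrightarrow{p}\bZ/p^2]$ for $\fib(p)$ and check directly that the chain maps representing $\operatorname{red}\circ\iota$ and $\Bock_{\bZ/p^2[-1]}\circ r$ are chain-homotopic (which also fixes the scalar). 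I expect this bookkeeping, rather than any conceptual point, to be the only real content of the argument.
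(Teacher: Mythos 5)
Your opening reduction is correct and is essentially the paper's: identifying $\RHom_{D_{\bN}(X_1)}((M,p),(N,0))$ with $\fib(p\colon E\to E)$ for $E=\RHom_{D(X_1)}(M,N)$, the forgetful map with the structure map $\fib(p)\to E$, restriction along $i$ with reduction mod $p$, and $r$ with the second component of the induced map into $\fib(0\colon E/p\to E/p)\simeq E/p\oplus E/p[-1]$. The further reduction to the universal case $E=\bZ/p^2$ via $\bZ/p^2$-linear colimit-preserving functors is also legitimate in principle.

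The genuine gap is that the argument stops exactly where the content of the lemma lies. After the octahedron for $p=\beta\circ\mathrm{red}$, the two claims ``$b$ coincides with $r$ under these identifications'' and ``the residual map $\fib(\beta)\to\bZ/p$ is $\Bock_{\bZ/p^2[-1]}$'' are asserted, not proved, and your own computation shows they cannot be read off from cohomology: every map $\fib(p\colon\bZ/p^2\to\bZ/p^2)\to\bZ/p$ induces zero on $H^0$, and $\Hom_{D(\bZ/p^2)}(\fib(p),\bZ/p)\cong\bZ/p$, so without the deferred ``bookkeeping'' you cannot even rule out that the two composites differ by a unit scalar. So as written this is a strategy with the decisive verification missing. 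Note that the paper's proof shows this verification can be made essentially definitional, with no reduction to the universal case and no chain-level homotopy tracking: for $A=\RHom_{D(X_1)}(M,N)$ one factors multiplication by $p$ naturally as $A\xrightarrow{a_2}i_*i^*A\xrightarrow{a_1}A$ (tensor $\bF_p\to\bZ/p^2\to\bF_p$ with $A$), forms the commuting square with horizontal arrows $p$ and $a_1$ and vertical arrows $a_2$ and $\mathrm{id}$, and takes fibers of the horizontal arrows; then $\fib(a_1)\simeq i_*i^*A[-1]$ with structure map the Bockstein by rotation of the triangle, the induced map $A^{p=0}\to\fib(a_1)$ is $r_A$ by unwinding its definition, and the asserted homotopy is just the commutativity of the resulting square of fibers. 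To complete your route you would have to either carry out the chain-homotopy check in the two-term model $[\bZ/p^2\xrightarrow{p}\bZ/p^2]$ (including the sign/scalar), or replace your endgame by this square-of-fibers argument.
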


\begin{proof}
By Lemma \ref{sen operator: morphisms in DN} we can identify $$\RHom_{D_{\bN}(X_1)}((M,p),(N,0))$$ with $\RHom_{D(X_1)}(M,N^{p=0})$, and the forgetful map to $\RHom_{D(X_1)}(M,N)$ is given by composing with the natural map $N^{p=0}\to N$. Moreover, we can identify $\RHom_{D(X_1)}(M,N^{p=0})$ with $\RHom_{D(X_1)}(M,N)^{p=0}$ so that the desired identification becomes a consequence of the following Lemma \ref{sen operator: p fiber reduction}, applied to $A=\RHom_{D(X_1)}(M,N)$.
\end{proof}

\begin{lm}\label{sen operator: p fiber reduction}
For any object $A\in D(\bZ/p^2)$ the composition $A^{p=0}\to A\to i_*i^*A$ is naturally identified with the composition 
\begin{equation}A^{p=0}\xrightarrow{r_A} i_*i^*A[-1]\xrightarrow{\Bock_{A[-1]}}i_*i^*A\end{equation} where the map $r_A$ is induced by the identification $i_*i^*(A^{p=0})=i_*(i^*A)^{p=0}\simeq i_*i^*A\oplus i_*i^*A[-1]$.
\end{lm}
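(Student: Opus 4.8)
The plan is to obtain the identification structurally from the octahedral axiom, so that the Bockstein appears of its own accord, and then to pin down the remaining map by reduction to the universal example $A=\bZ/p^2$. First I would record the relevant structure over $\bZ/p^2$: the reduction map $q_A\colon A\to i_*i^*A$ is the unit of the adjunction $(i^*,i_*)$, i.e.\ $A\otimes_{\bZ/p^2}(\bZ/p^2\twoheadrightarrow\bF_p)$. Tensoring the short exact sequence $0\to\bF_p\xrightarrow{\cdot p}\bZ/p^2\to\bF_p\to 0$ of $\bZ/p^2$-modules with $A$ produces a fiber sequence $i_*i^*A\xrightarrow{j_A}A\xrightarrow{q_A}i_*i^*A$ whose connecting map is, by construction, the Bockstein $\Bock_A$; rotating it one step to the left identifies $\fib(j_A)$ with $i_*i^*A[-1]$ and the canonical map $\fib(j_A)\to i_*i^*A$ with $-\Bock_A[-1]\simeq\Bock_{A[-1]}$. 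Moreover $(\cdot p)=(\cdot p)\circ(\mathrm{red})$ in $\End_{\bZ/p^2}(\bZ/p^2)$, so multiplication by $p$ on $A$ factors as $A\xrightarrow{q_A}i_*i^*A\xrightarrow{j_A}A$, and hence $A^{p=0}=\fib(A\xrightarrow{p}A)$ is the fiber of $j_A\circ q_A$.

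Next I would apply the octahedral axiom to the composable pair $A\xrightarrow{q_A}i_*i^*A\xrightarrow{j_A}A$: it yields a map of fiber sequences over $q_A$ from $\bigl[A^{p=0}\xrightarrow{g}A\xrightarrow{p}A\bigr]$ to $\bigl[\fib(j_A)\to i_*i^*A\xrightarrow{j_A}A\bigr]$, with left vertical map $v\colon A^{p=0}\to\fib(j_A)\simeq i_*i^*A[-1]$ and with $\fib(v)\simeq\fib(q_A)\simeq i_*i^*A$. Here $g\colon A^{p=0}\to A$ is the canonical map, so $q_A\circ g$ is precisely the composition $A^{p=0}\to A\to i_*i^*A$ of the statement; commutativity of the left-hand square of this map of fiber sequences, combined with the identification of $\fib(j_A)\to i_*i^*A$ with $\Bock_{A[-1]}$, already produces a homotopy $q_A\circ g\simeq\Bock_{A[-1]}\circ v$. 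It remains only to identify the map $v$ with $r_A$.

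For $v\simeq r_A$ I would observe that both $v$ and $r_A$ are natural transformations between the $\bZ/p^2$-linear colimit-preserving functors $A\mapsto A^{p=0}=A\otimes_{\bZ/p^2}(\bZ/p^2)^{p=0}$ and $A\mapsto i_*i^*A[-1]$ — for $v$ by naturality of the octahedral construction, for $r_A$ by naturality of the unit and of the canonical splitting $\fib(0\colon i^*A\to i^*A)\simeq i^*A\oplus i^*A[-1]$ that enters its definition (here one uses $i^*(p\cdot\mathrm{id}_A)=p\cdot\mathrm{id}_{i^*A}=0$ in $D(\bF_p)$) — so, $D(\bZ/p^2)$ being generated under colimits by its unit, it suffices to compare $v$ and $r_A$ at $A=\bZ/p^2$. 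There one has the explicit two-term models $(\bZ/p^2)^{p=0}\simeq[\bZ/p^2\xrightarrow{\cdot p}\bZ/p^2]$ in cohomological degrees $0,1$ and $i_*i^*(\bZ/p^2)\simeq\bF_p$, with $g$ the projection onto the degree-$0$ term and $r_{\bZ/p^2}$ the map "reduce mod $p$, then project onto the degree-$1$ cohomology"; chasing the octahedron for $\bZ/p^2\xrightarrow{\mathrm{red}}\bF_p\xrightarrow{\cdot p}\bZ/p^2$ through these models exhibits $v$ as the same map. The step demanding the most care is exactly this final matching: aligning the canonical splitting used to define $r_A$ with the one output by the octahedral axiom and fixing the sign conventions in the definition of $r_A$ so that $q_A\circ g\simeq\Bock_{A[-1]}\circ r_A$ holds on the nose rather than merely up to sign. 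Everything else is a formal consequence of the factorization $p=j_A\circ q_A$ and the octahedral axiom.
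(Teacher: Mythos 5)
Your argument is correct and is essentially the paper's: both proofs factor multiplication by $p$ on $A$ through the reduction $A\to i_*i^*A$ and take fibers of the resulting commutative square (your octahedron for the composite $q_A$, $j_A$ is exactly this), identifying $\fib(j_A)\simeq i_*i^*A[-1]$ so that the map to $i_*i^*A$ becomes $\Bock_{A[-1]}$. The only difference is at the last step: the paper identifies the induced map on fibers with $r_A$ directly by unwinding the definition of the splitting, whereas you verify it by a $\bZ/p^2$-linearity/naturality argument reducing to the generator $A=\bZ/p^2$ and an explicit two-term-complex check — a legitimate, slightly heavier way to handle the same point.
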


\begin{proof}
Recall that $i_*i^*A\simeq A\otimes_{\bZ/p^2}\bF_p$ and there is a natural fiber sequence $i_*i^*A\xrightarrow{a_1}A\xrightarrow{a_2}i_*i^*A$ obtained by taking the tensor product of the sequence $\bF_p\to \bZ/p^2\to \bF_p$ with $A$. Consider the commutative diagram

\begin{equation}
\begin{tikzcd}
A \arrow[r, "p"]\arrow[d,"a_2"] & A\arrow[d,equal]\\
i_*i^*A\arrow[r,"a_1"] & A
\end{tikzcd}
\end{equation}

Taking the fibers of the horizontal arrows induces a commutative diagram
\begin{equation}\label{sen operator: p fiber reduction diagram}
\begin{tikzcd}
\fib(A\xrightarrow{p}A)\arrow[d]\arrow[r] & A\arrow[d]\\
\fib(i_*i^*A\xrightarrow{a_1} A)\arrow[r] & i_*i^*A. \\
\end{tikzcd}
\end{equation}

The induced map $A^{p=0}=\fib(A\xrightarrow{p}A)\to \fib(i_*i^*A\xrightarrow{a_1} A)\simeq i_*i^*A[-1]$ is the map $r_A$, and the map $i_*i^*A[-1]\simeq \fib(i_*i^*A\xrightarrow{a_1}A)\to i_*i^*A$ is $\Bock_{A[-1]}$, by definition. Hence the statement of the lemma amounts to commutativity of the diagram (\ref{sen operator: p fiber reduction diagram}).
\end{proof}

The upshot is that, given a quasisyntomic $\bZ/p^n$-scheme $X_{n-1}$ for $n\geq 2$, the Sen operator on $\Omega^{\DHod}_{X_{n-1}/\bZ/p^n}$ induces an operator on $\dR_{X_0}$, and all of the information about the restriction of the latter operator to $\Fil^{\conj}_p\dR_{X_0}$ is captured by the class $c_{X_{n-1},p}$. If $X_{n-1}$ lifts to a flat formal $\bZ_p$-scheme $X$, then this $c_{X_{n-1},p}$ also remembers the data of the Sen operator on $\Fil^{\conj}_p\Omega^{\DHod}_{X_{n-1}/\bZ/p^n}$, by Lemma \ref{sen operator: ext group description}. In the absence of a lift to $W(k)$ it is at the moment unclear to me whether the Sen operator on $\Fil^{\conj}_p\Omega^{\DHod}_{X_{n-1}/\bZ/p^n}$ contains more information than its mod $p$ reduction.

\section{Sen operator via descent from semiperfectoid rings}\label{semiperf: section}

In this section we prove Theorem \ref{semiperf: main sen operator} via descent from quasiregular semiperfectoid rings, this approach was suggested by Bhargav Bhatt. As a consequence, we extend Theorem \ref{cosimp applications: the best part} to the situation where $X_0$ is only liftable to $\bZ/p^2$ rather than $\bZ_p$. 

\begin{thm}\label{semiperf: main sen operator}
Let $X_1$ be a quasisyntomic scheme over $\bZ/p^2$ with special fiber $X_0=X_1\times_{\bZ/p^2}\bF_p$. The map $c_{X_1,p}:L\Omega^p_{X_0}\to\cO_{X_0}[p]$ arising from the Sen operator on the de Rham complex of $X_0$ is naturally homotopic to the composition
\begin{equation}
L\Omega^p_{X_0}\xrightarrow{\alpha(L\Omega^1_{X_0})}F_{X_0}^*L\Omega^1_{X_0}[p-1]\xrightarrow{\ob_{F,X_1}}\cO_{X_0}[p].
\end{equation}
\end{thm}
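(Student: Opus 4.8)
The strategy is to reduce the computation to the case of quasiregular semiperfectoid (qrsp) rings, where all the complexes in sight are concentrated in a single degree and everything becomes an explicit manipulation with flat modules. By quasisyntomic descent (à la \cite{bms}), the class $c_{X_1,p}$, the map $\alpha(L\Omega^1_{X_0})$, the obstruction $\ob_{F,X_1}$, and the Frobenius morphism are all functorial in $X_1$ and compatible with restriction along the quasisyntomic cover $X_0^{\qrsprfd}\to X_0$ refining $X_0$ by (the mod $p$ reductions of) qrsp rings. So it suffices to prove the identity $c_{S_1,p}=\ob_{F,S_1}\circ\alpha(L\Omega^1_{S_0})$ for $S_1$ a semiperfectoid $\bZ/p^2$-algebra with $S_0=S_1/p$ quasiregular semiperfectoid, and then assemble the result by descent. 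Here I would first set up the relevant diffracted Hodge cohomology $\Omega^{\DHod}_{S_1/\bZ/p^2}$ with its Sen operator $\Theta_{S_1}$ using Theorem \ref{sen operator: zpn main}, noting that for such $S_1$ the object $\Omega^{\DHod}_{S_1/\bZ/p^2}$ — and hence also $\dR_{S_0}$ — is concentrated in degree $0$, so $\Theta_{S_1}$ is an honest endomorphism of a flat module rather than a derived-categorical gadget.

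The core of the argument is then the analysis of the natural map
\begin{equation}
\bigoplus_{i\geq 0} S^i\bigl(L\Omega^1_{S_1}[-1]\bigr)\to \Omega^{\DHod}_{S_1/\bZ/p^2}
\end{equation}
built from the splitting $s\colon L\Omega^1_{S_1}[-1]\to \Omega^{\DHod}_{S_1/\bZ/p^2}$ (coming from the Sen decomposition in degrees $\leq p-1$) by applying the symmetric power functor and multiplying, exactly as in the maps $s_i$ of (\ref{intro: deg i splitting map}). Because $S_1$ is semiperfectoid, $L\Omega^1_{S_1}[-1]$ is concentrated in degree $1$, so $S^i(L\Omega^1_{S_1}[-1])$ is concentrated in degrees $[i, \min(i,p) + \text{something}]$; for the purposes of extracting $\Fil^p_{\conj}$ one restricts attention to $i\leq p$. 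The key point is that $\Theta_{S_1}$ acts on $S^i(L\Omega^1_{S_1}[-1])$ by the scalar $-i$ (since $\Theta$ is a derivation acting by $-1$ on $H^1$), so the induced grading on the source lets us read off $\Theta_{S_1}$ on the target once we understand the cokernel of this map — and by Lemma \ref{free cosimplicial: tate p}(2) the relevant discrepancy in degree $p$ is governed precisely by $T_p(L\Omega^1_{S_1}[-1])$, i.e.\ by the class $\alpha$. Concretely, I would run the argument of Theorem \ref{cosimp: main theorem} / Corollary \ref{cosimp: augmented} in this $D_{\bN}$-enhanced but cohomologically-discrete setting: the map $s_p$ fails to factor through $N_p\colon S^p(L\Omega^1_{S_1}[-1])\to \Lambda^p L\Omega^1_{S_1}[-p]$, and Lemma \ref{free cosimplicial: algebra Bockstein} identifies the defect as the composite $F^*_{S_0}(L\Omega^1_{S_1}/p)[-1]\xrightarrow{\varphi}(\Omega^{\DHod}/p)[-1]\xrightarrow{\Bock}\Omega^{\DHod}$. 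Combining this with the identification of $\varphi_{\dR_{S_0}}$ with the geometric Frobenius (Lemma \ref{applications: cosimplicial frob is frob}, \ref{applications: coh cosimplicial frob is frob}) and with Proposition \ref{applications: frobenius lift obstruction prop} relating $F^*_{S_0/k}s$ composed with the de Rham functoriality map to $\ob_{F,S_1}$, one obtains that the nilpotent part $\Theta_{S_1}$ on $\Fil^p_{\conj}\dR_{S_0,0}$ — which is exactly $c_{S_1,p}$ by Notation \ref{sen operator: zp2 classes notation} — is the composite $\alpha(L\Omega^1_{S_0})$ followed by $\ob_{F,S_1}$, with no Bockstein this time (the Bockstein is what upgrades $c$ to $e$, cf.\ Lemma \ref{sen operator: classes over zp2}).

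The main obstacle, I expect, is the bookkeeping needed to make the degree-$i$ weight decomposition on $\bigoplus_i S^i(L\Omega^1_{S_1}[-1])$ interact correctly with the conjugate filtration on $\Omega^{\DHod}_{S_1/\bZ/p^2}$ — one must check that the map above is filtered (sending the obvious filtration by $\bigoplus_{i\leq n}$ to $\Fil^n_{\conj}$), that it is an isomorphism on $\gr^i$ for $i<p$, and that on $\gr^p$ it realizes exactly the fiber sequence (\ref{free cosimplicial: alpha extension}) defining $\alpha$. Establishing filteredness requires knowing that $s$ is a map of filtered objects (which follows from the construction of $s$ from the Sen operator and the fact that $\Theta$ is a derivation), and that the multiplication on $\Omega^{\DHod}$ respects the conjugate filtration (Theorem \ref{sen operator: zpn main}). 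Once filteredness is in place, the identification of the extension in degree $p$ with $\alpha$ is Lemma \ref{cosimp: tensor power vs symmetric power} together with the d\'ecalage isomorphism (\ref{dalg: decalage}), and the rest is the bootstrapping already carried out in the proof of Theorem \ref{cosimp: main theorem}, transported verbatim into $D_{\bN}$ and then specialized to the discrete case. The secondary subtlety is making sure that everything is natural enough in $S_1$ to descend back down to $X_1$ — but since all the constructions (diffracted Hodge cohomology over $\bZ/p^2$, the Sen operator, $\alpha$, $\ob_F$, the multiplication maps $s_i$) are defined functorially on the relevant quasisyntomic sites, this should be a formal, if slightly tedious, sheafification argument.
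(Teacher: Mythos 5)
Your global architecture --- quasisyntomic descent to quasiregular semiperfectoid $\bZ/p^2$-algebras, where $\Omega^{\DHod}_{S_1/\bZ/p^2}$ and $\dR_{S_0}$ are discrete flat modules so that naturality on the qrsp site becomes a condition rather than extra data, followed by converting the Frobenius-composed-with-$s$ term into $\ob_{F,X_1}$ via Proposition \ref{applications: frobenius lift obstruction prop} --- is exactly the paper's. The gap is in the local computation at a single semiperfectoid $S_1$. You propose to run Theorem \ref{cosimp: main theorem} ``verbatim'' there, invoking Lemma \ref{free cosimplicial: tate p}(2) and Lemma \ref{free cosimplicial: algebra Bockstein} to identify the defect of $s_p$; but both of those statements are proved only over bases flat over $\bZ_p$. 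Over $\bZ/p^2$ the norm $N_p:S^pM\to\Gamma^pM$ of a flat module is no longer injective (on a rank-one summand it is $p$ times a unit), $T_p(M)$ has two nonvanishing cohomology groups rather than one, and the Bockstein in Lemma \ref{free cosimplicial: algebra Bockstein} (attached to $A\xrightarrow{p}A\to A/p$) is not the relevant map in a $p^2$-torsion setting. Since the whole point of the theorem is that $X_1$ is given only over $\bZ/p^2$, you cannot lift to $\bZ_p$ to make these lemmas apply, and ``transporting'' them requires a new argument, not bookkeeping.

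A second, related problem: the $s_p$-defect mechanism computes an extension-class-type invariant (essentially $e_{X_1,p}$), whereas the theorem concerns the nilpotent operator $c_{X_1,p}$ itself, and your assertion that one obtains $c$ ``with no Bockstein'' is not what the cited lemmas output. The device you gesture at for reading off $\Theta$ --- that $\Theta$ acts by $-i$ on $S^i(L\Omega^1[-1])$ and the map to $\Omega^{\DHod}$ pins it down --- is exactly the argument that works when there is a lift to $W(k)$, where that map is an injection of flat $W(k)$-modules with torsion cokernel; over $\bZ/p^2$ nothing is $p$-torsion-free and this fails. The paper replaces it by Proposition \ref{semiperf: sen for qrsprfd}: an element-level computation applying $F=(-1)^p\prod_{i=0}^{p-1}(\Theta+i)$ (which reduces to $\Theta-\Theta^p$ mod $p$) to a lift $\ty$ of a divided-power generator $x^{[p]}$ of $\gr^{\conj}_p$, using that $\Theta$ is a derivation with $\Theta(s(\tx))=-s(\tx)$, that $F$ kills $\Fil^{\conj}_{p-1}$, and finally cancelling $p!$ in $p!\,F(\ty)=p!\,s(\tx)^p$ --- a cancellation which is legitimate precisely because $\Omega^{\DHod}_{S/\bZ/p^2}$ is flat over $\bZ/p^2$. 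Some argument of this kind must be supplied before your plan closes; as written, the local step is a genuine gap.
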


The particular interpretation of $c_{X_1,p}$ that will be used in the proof is that the map $\Theta_{X_1}-\Theta_{X_1}^p:L\Omega^p_{X_0}[-p]\to\Fil_{p-1}^{\conj}\dR_{X_0}\simeq \bigoplus\limits_{i=0}^{p-1}L\Omega^i_{X_0}[-p]$ factors as $L\Omega^p_{X_0}[-p]\xrightarrow{c_{X_1,p}}\cO_{X_0}\xrightarrow{\oplus}\Fil_{p-1}^{\conj}\dR_{X_0}$, as we established in Lemma \ref{sen operator: classes over zp2}.

We start by recalling the description of diffracted Hodge cohomology of quasiregular semiperfectoid algebras. Let $S$ be a quasiregular semiperfectoid flat $\bZ/p^n\bZ$-algebra, as defined in \cite[Definition 4.20]{bms2}. The cotangent complex $L\Omega^1_{S/\bZ/p^n}$ is concentrated in degree $(-1)$ and $H^{-1}(L\Omega^1_{S/\bZ/p^n})$ is a flat $S$-module. For brevity, we denote $H^{-1}(L\Omega^1_{S/\bZ/p^n})$ by $M_S$. The objects $L\Omega^i_{S/\bZ/p^n}[-i]=\Lambda^i(H^{-1}(L\Omega^1_{S/\bZ/p^n})[1])[-i]\simeq \Gamma^iM_S$ are also flat $S$-modules placed in degree zero. Since $\Omega^{\DHod}_{S/\bZ/p^n}$ admits an exhaustive filtration with graded pieces given by $L\Omega^i_{S/\bZ/p^n}[-i]$, the diffracted Hodge cohomology complex $\Omega^{\DHod}_{S/\bZ/p^n}$ is a commutative flat $S$-algebra concentrated in degree $0$.

We denote by $S_0:=S/p$ the mod $p$ reduction of $S$, and by $\Omega^{\DHod}_{S_0}\simeq \Omega^{\DHod}_{S/\bZ/p^n}\otimes_S S_0$ the diffracted Hodge cohomology of $S_0$ which coincides with the derived de Rham cohomology $\dR_{S_0/\bF_p}$. Thanks to the fact that $\Omega^{\DHod}_{S/\bZ/p^n}$ is concentrated in degree $0$, the Sen operator $\Theta_S:\Omega^{\DHod}_{S/\bZ/p^n}\to \Omega^{\DHod}_{S/\bZ/p^n}$ is a genuine endomorphism of a discrete $S$-algebra, and we may use tools from ordinary algebra rather than higher algebra to study it. The reader is encouraged to view the next result as an analog of Theorem \ref{cosimp: main theorem}.

\newcommand{\ODH}{\Omega^{\DHod}}
\begin{pr}\label{semiperf: sen for qrsprfd}
Let $S$ be a quasiregular semiperfectoid flat $\bZ/p^2$-algebra. Suppose that $f:\Omega^{\DHod}_{S/\bZ/p^2}\to\Omega^{\DHod}_{S/\bZ/p^2}$ is a derivation of $S$-algebras that preserves the conjugate filtration, acting on $\gr_i^{\conj}\Omega^{\DHod}_{S/\bZ/p^2}\simeq \Gamma^i M_S$ by $-i$. Denote by $s:M_S\to \Fil_1^{\conj}\ODH_{S/\bZ/p^2}$ the splitting of the conjugate filtration given by $M_S\simeq \ker(f+1:\Fil^{\conj}_1\ODH_{S/\bZ/p^2})\subset \Fil^{\conj}_1\ODH_{S/\bZ/p^2}$. Then the map $f-f^p:\Fil^{\conj}_p\ODH_{S_0}\to \Fil^{\conj}_p\ODH_{S_0}$ on the mod $p$ reduction of $\Fil_p^{\conj}\Omega^{\DHod}_{S/\bZ/p^2}$ factors as 

\begin{equation}\label{semiperf: sen for qrsprfd equation}
\Fil^{\conj}_p\ODH_{S_0}\twoheadrightarrow{} \Gamma^p_{S_0}(M_S/p)\twoheadrightarrow{}F_{S_0}^*(M_S/p)\xrightarrow{F_{S_0}^*s}F_{S_0}^*\Fil^{\conj}_1\ODH_{S_0}\xrightarrow{\varphi_{S_0}}\Fil^{\conj}_1\ODH_{S_0}\hookrightarrow \Fil^{\conj}_p\ODH_{S_0}
\end{equation}
where the surjection $\Gamma^p_{S_0}(M_S/p)\twoheadrightarrow{}F_{S_0}^*(M_S/p)$ is the map $\psi_{M_S/p}$ defined in (\ref{dalg: polynomial frobenius}).
\end{pr}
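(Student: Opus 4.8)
The plan is to mimic the proof of Theorem \ref{cosimp: main theorem}, but now working with a genuine discrete $S$-algebra rather than an object of the derived category. Since $S$ is quasiregular semiperfectoid, the object $\ODH_{S/\bZ/p^2}$ sits in degree $0$ and its conjugate filtration is a filtration by honest $S$-submodules with graded pieces $\Gamma^iM_S$; in particular $\Fil_p^{\conj}\ODH_{S/\bZ/p^2}$ is a flat $\bZ/p^2$-module, so the whole argument becomes an argument about extensions of modules and their behavior under reduction mod $p$. First I would assemble, from the splitting $s:M_S\to \Fil_1^{\conj}\ODH_{S/\bZ/p^2}$ and the multiplication on $\ODH_{S/\bZ/p^2}$, the maps $s_i:\Gamma^i_S M_S\to \Fil_i^{\conj}\ODH_{S/\bZ/p^2}$ obtained by applying $\Gamma^i$-functoriality and multiplying; here I use $\Gamma^i(M_S)=S^i(M_S[1])[-i]$ via d\'ecalage, exactly as in Theorem \ref{cosimp: main theorem} where $S^i$ of a shifted module appears. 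For $i\leq p-1$ these $s_i$ are split injections realizing the graded pieces as summands, and for $i=p$ the defect is governed by the norm map $N_p:S^p\to\Gamma^p$ and hence by the class $\alpha(M_S[1])$, equivalently $\alpha(L\Omega^1_{S_0})$ after reduction mod $p$.

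Next I would run the key computation of Lemma \ref{free cosimplicial: algebra Bockstein}: the composition $F_{S_0}^*(\ODH_{S/\bZ/p^2}/p)[-1]\xrightarrow{\gamma}S^p(\ODH)\xrightarrow{m}\ODH_{S/\bZ/p^2}$ is $\varphi\circ$Bockstein. In the discrete setting this translates, after feeding in $s_p$ and reducing mod $p$, into a concrete factorization of $f-f^p$ on $\Fil_p^{\conj}\ODH_{S_0}$. The point is that $\Id-f^{p-1}$ is the idempotent cutting out the weight-$0$ summand $\Fil_p^{\conj}\ODH_{S_0,0}$, exactly as in Lemma \ref{sen operator: cXp as nilpotent operator on dR}, so $f-f^p=f\circ(\Id-f^{p-1})$ factors through $\Fil_p^{\conj}\ODH_{S_0,0}$, and on that two-step extension the nilpotent operator $f$ is precisely the map $\Gamma^pM_S/p\to S_0\to\Fil_p^{\conj}\ODH_{S_0}$ computed by the Bockstein picture of the diagram (\ref{free cosimplicial: algebra Bockstein ordinary diagram})–(\ref{free cosimplicial: algebra Bockstein derived diagram}). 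The Bockstein here is the connecting map of $\Fil_p^{\conj}\Omega^{\DHod}_{S/\bZ/p^2}\xrightarrow{p}\Fil_p^{\conj}\Omega^{\DHod}_{S/\bZ/p^2}\to \Fil_p^{\conj}\ODH_{S_0}$, and the presence of the chosen lift $S$ over $\bZ/p^2$ is exactly what makes this Bockstein land back in $\Fil_p^{\conj}\ODH_{S_0}$, giving the internal factorization through $\Fil_1^{\conj}\ODH_{S_0}$ asserted in (\ref{semiperf: sen for qrsprfd equation}).

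Concretely I would set up, for the flat $\bZ/p^2$-module $N:=M_S$, the two commuting short exact sequences of functors $0\to S^pN\xrightarrow{N_p}\Gamma^pN\to F^*_{S_0}(N/p)\to 0$ and $0\to S^pN\xrightarrow{p!}S^pN\to S^p_{S_0}(N/p)\to 0$ together with the vertical maps $\id,r_p,\Delta_{N/p}$ of (\ref{free cosimplicial: algebra Bockstein ordinary diagram}), apply the d\'ecalage identification to view this as a diagram about $\Fil_p^{\conj}$, multiply into $\ODH_{S/\bZ/p^2}$, and reduce mod $p$; tracing through gives that the connecting map of the top row composed with multiplication is $\Gamma^pN/p\xrightarrow{\psi_{N/p}}F_{S_0}^*(N/p)\xrightarrow{\varphi\circ\text{Bock}}\ODH_{S_0}$, and composing with the lift of $s$ on the $\Fil_1$ part converts $\text{Bock}\circ\varphi$ into $\varphi_{S_0}\circ F^*_{S_0}s$ followed by the inclusion $\Fil_1^{\conj}\ODH_{S_0}\hookrightarrow\Fil_p^{\conj}\ODH_{S_0}$, which is exactly (\ref{semiperf: sen for qrsprfd equation}). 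The surjection $\Fil_p^{\conj}\ODH_{S_0}\twoheadrightarrow\Gamma^p_{S_0}(M_S/p)$ is just reduction mod $p$ of the top graded quotient $\gr_p^{\conj}\simeq\Gamma^pM_S$.

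The main obstacle I anticipate is the careful bookkeeping that the map $s_p$ induces the \emph{norm} map $N_p$ (up to the canonical d\'ecalage identification) on $\gr_p^{\conj}$, i.e. the analog of Lemma \ref{cosimp: tensor power vs symmetric power} in this filtered-module setting, and relatedly pinning down that $f$ acts on the weight-$0$ two-step extension exactly by the clockwise-versus-counterclockwise composite of the Bockstein diagram rather than by something off by a scalar or a sign; this requires knowing that the splitting $s$ produced from $\ker(f+1)$ is the \emph{same} splitting that enters the multiplicative construction of the $s_i$, which follows from the uniqueness of functorial decompositions (as used already in the proof of Proposition \ref{applications: frobenius lift obstruction prop}) but must be invoked with care. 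Everything else — flatness of the relevant modules, exactness of the functor sequences for $S^p,\Gamma^p$ of flat modules over $\bZ/p^2$, and compatibility of d\'ecalage with the $\Gamma$-algebra structure (Lemma \ref{cosimp: tensor power vs symmetric power}) — is routine.
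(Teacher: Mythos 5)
There is a genuine gap, and it sits exactly at the step you flag as an ``anticipated obstacle'' without resolving it. Your plan is to import the extension-class machinery of Theorem \ref{cosimp: main theorem} and Lemma \ref{free cosimplicial: algebra Bockstein} and read off the action of $f-f^p$ from the ``Bockstein picture.'' But that machinery never mentions the operator $f$: Lemma \ref{free cosimplicial: algebra Bockstein} computes the composite of the norm-fiber map $\gamma_A$ with the multiplication $m_A$, and Theorem \ref{cosimp: main theorem} computes a connecting map for the cohomological truncation of a derived algebra. Neither statement constrains a filtered derivation. Moreover the logical flow between the operator and the extension class only goes one way: the class $c$ determines $e$ via a Bockstein (Lemma \ref{sen operator: classes over zp2}), and since Bockstein maps are not injective, knowing an extension class cannot pin down the nilpotent part of $f$ on the weight-zero summand. (There is also a mismatch of hypotheses: here $\Omega^{\DHod}_{S/\bZ/p^2}$ is a discrete algebra, so the filtration in play is the conjugate filtration by submodules, not the canonical cohomological filtration to which Theorem \ref{cosimp: main theorem} applies; and Lemma \ref{free cosimplicial: algebra Bockstein} is stated over a $\bZ_p$-flat base, not over $\bZ/p^2$.) So the sentence ``on that two-step extension the nilpotent operator $f$ is precisely the map computed by the Bockstein picture'' is the entire content of the proposition, and nothing in your outline proves it; invoking uniqueness of functorial splittings only fixes which section $s$ is used, which is a different issue.

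Any correct argument must use the derivation property of $f$ at exactly this point, and this is what the paper's (much shorter) proof does: for $\tilde x\in M_S$ the Leibniz rule gives $f(s(\tilde x)^p)=-p\,s(\tilde x)^p$, so $s(\tilde x)^p$ is an eigenvector of $f$ with eigenvalue $-p$; since $p!\cdot\tilde y-s(\tilde x)^p\in\Fil^{\conj}_{p-1}$ whenever $\tilde y$ lifts a class mapping to $\tilde x^{[p]}$ in $\gr_p^{\conj}$, applying $F=(-1)^p\prod_{i=0}^{p-1}(f+i)$ (which kills $\Fil^{\conj}_{p-1}$ and reduces to $f-f^p$ mod $p$) yields $p!\,F(\tilde y)=p!\,s(\tilde x)^p$, and flatness of $\Omega^{\DHod}_{S/\bZ/p^2}$ over $\bZ/p^2$ lets one cancel $p!$ up to $p$-torsion, giving $(f-f^p)(y)=s(x)^p$, which is the composition in (\ref{semiperf: sen for qrsprfd equation}) evaluated on $y$. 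Your proposal does contain the right auxiliary observations (that multiplication of $s$ along $S^pM_S$ hits $\gr_p$ through the norm map, that $p$-th powers of sections have $f$-eigenvalue $-p$ would follow from Leibniz, and that flatness permits the $p!$-cancellation), but as written the load-bearing computation is attributed to a lemma that cannot supply it; if you replace the Bockstein-diagram step by the Leibniz-plus-$p!$-cancellation argument on elements lifting divided powers $x^{[p]}$, your outline collapses to the paper's proof.
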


\begin{proof}
\newcommand{\OpS}{\Omega^{\DHod}_{S/\bZ/p^2}}
\newcommand{\OpN}{\Omega^{\DHod}_{S_0}}
To prove the proposition it is enough to check that $f-f^p$ coincides with the composition (\ref{semiperf: sen for qrsprfd equation}) on elements $y\in \Fil^{\conj}_p\Omega^{\DHod}_{S_0}$ whose image in $\gr^{\conj}_p\simeq \Gamma_{S_0}^p(M_S/p)$ has the form $x^{[p]}$ for some $x\in M_S/p$. Indeed, the $S_0$-module $\Fil^{\conj}_p\Omega^{\DHod}_{S_0}$ is spanned by elements of this form and the submodule $\Fil^{\conj}_{p-1}\Omega^{\DHod}_{S_0}$, and both $f-f^p$ and the composition (\ref{semiperf: sen for qrsprfd equation}) are identically zero on $\Fil^{\conj}_{p-1}\Omega^{\DHod}_{S_0}$. The composition (\ref{semiperf: sen for qrsprfd equation}) takes the element $y$ to $s(x)^p$, and we will prove that $f-f^p$ does the same.

Consider the endomorphism of $\Fil^{\conj}_p\OpS$ given by $F:=(-1)^p\prod\limits_{i=0}^{p-1}(f+i)$. It reduces modulo $p$ to the map $f-f^p:\Fil^{\conj}_p\ODH_{S_0}\to \Fil^{\conj}_p\ODH_{S_0}$. Note also that $F$ annihilates $\Fil^{\conj}_{p-1}\ODH_{S}$. Let $\ty\in \Fil^{\conj}_p\Omega^{\DHod}_{S/\bZ/p^2}$ be a lift of $y$ such that the image of $\ty$ in $\gr^{\conj}_p\OpS\simeq\Gamma^p_S(M_S)$ has the form $\tx^{[p]}$ for some $\tx\in M_S$ lifting $x\in M_S/p$. Then the image of $p!\cdot \ty$ in $\gr^{\conj}_p\OpS$ is equal to $\tx^p$, hence $p!\cdot\ty-s(\tx)^p$ lies in the submodule $\Fil^{\conj}_{p-1}\OpS\subset \Fil^{\conj}_p\OpS$. Therefore $F(p!\cdot \ty-s(\tx)^p)=0$. 
By definition of the section $s$, we have $f(s(\tx))=-s(\tx)$. Since $f$ is a derivation, $f(s(\tx)^p)=-p\cdot s(\tx)^p$, and $F(s(\tx)^p)=(-1)^p\prod\limits_{i=0}^{p-1}(-p+i)\cdot s(\tx)^p$. Hence we get the following equation on the element $F(\ty)$:

\begin{equation}\label{semiperf: main prop equation1}
p!F(\ty)=\prod\limits_{i=0}^{p-1}(p-i)\cdot s(\tx)^p.
\end{equation}

We now use crucially that $\OpS$ is flat over $\bZ/p^2$: we can cancel out $p!$ on both sides of (\ref{semiperf: main prop equation1}) to get $F(\ty)=s(\tx)^p+p\cdot a$ for some $a\in \Fil^{\conj}_p\OpS$. Reducing modulo $p$ gives the desired equality \begin{equation}\label{semiperf: main prop equation2}
(f-f^p)(y)=s(x)^p.\end{equation}
\end{proof}

We will now deduce a computation of the Sen operator for an arbitrary quasisyntomic $\bZ/p^2$-scheme using descent.

\begin{proof}[Proof of Theorem \ref{semiperf: main sen operator}]
We will prove that $c_{X_1,p}$ is equivalent to the composition 

\begin{equation}\label{semiperf: desired sen formula}
L\Omega^p_{X_0}\xrightarrow{\alpha(L\Omega^1_{X_0})}F_{X_0}^*L\Omega^1_{X_0}[p-1]\xrightarrow{F_{X_0}^*s[p-1]}F_{X_0}^*\dR_{X_0}[p]\to F_{X_0}^*F_{X_0*}\cO_{X_0}[p]\to \cO_{X_0}[p].
\end{equation}

To see that this is equivalent to the desired statement note that the composition $F_{X_0}^*L\Omega^1_{X_0}\xrightarrow{F_{X_0}^*s[1]}F_{X_0}^*\dR_{X_0}[1]\to F_{X_0}^*F_{X_0*}\cO_{X_0}[1]\to \cO_{X_0}[1] $ is homotopic to the obstruction class $\ob_{F,X_1}:F_{X_0}^*L\Omega^1_{X_0}\to\cO_{X_0}[1]$. In Proposition \ref{applications: frobenius lift obstruction prop} we produced a homotopy between these maps composed with $\cO_{X_0}[1]\to \Fil_1^{\conj}\dR_{X_0}[1]$ but the choice of the lift $X_1$ splits the latter map. 

Denote by $d_{X_1,p}$ the composition of maps in (\ref{semiperf: desired sen formula}). We denote by $\QSyn_{\bZ/p^2}$ the category of schemes quasisyntomic over $\bZ/p^2$, by $\AffQSyn_{\bZ/p^2}\subset \QSyn_{\bZ/p^2}$ the full subcategory of affine quasisyntomic schemes, and by $\qrsprfd_{\bZ/p^2}\subset\AffQSyn_{\bZ/p^2}$ the opposite of the category of quasiregular semiperfectoid $\bZ/p^2$-algebras.

Proposition \ref{semiperf: sen for qrsprfd} established that $c_{X_1,p}=d_{X_1,p}$ when $X_1=\Spec S$ is the spectrum of a quasiregular semiperfectoid $\bZ/p^2$-algebra, because the map $\Gamma^p_{S_0}(M_S/p)\to F_{S_0}^*(M_S/p)$ is the shift by $[-p]$ of the map $L\Omega^p_{S_0}\xrightarrow{\alpha(L\Omega^1_{S_0})}F^*_{S_0}L\Omega^1_{S_0}[p-1]$. The case of an arbitrary $X_1\in \Sch_{\bZ/p^2}$ will follow by a descent argument in the spirit of \cite{bms2}, cf. \cite[Proposition 10.3.1]{blm}, \cite[Proposition 4.4]{li-mondal}, and \cite[1.3]{kubrak-prikhodko-di} for arguments of similar type. 

Let $\QC_{\obj}$ be the $\infty$-category of $\infty$-categories equipped with a distinguished object, as defined in \cite[Tag 020S]{kerodon}. Its objects are pairs $(C,\cC)$ where $\cC$ is a $\infty$-category and $C$ is an object of $\cC$, and $1$-morphisms from $(C,\cC)$ to $(D,\cD)$ are pairs $(F:\cC\to \cD,\alpha: F(C)\to D)$ where $F$ is a functor and $\alpha$ is a $1$-morphism in $\cD$. 

Assigning to a scheme $X_1\in \QSyn_{\bZ/p^2}$ the pair $(L\Omega^i_{X_0/\bF_p},D(X_0))$ defines a functor $L\Omega^i:\QSyn_{\bZ/p^2}^{\op}\to \QC_{\obj}$, where a morphism $f:X_1\to Y_1$ is sent to $f_0^*:D(Y_0)\to D(X_0)$ and $\Lambda^i df_{0}:f_0^*L\Omega^i_{Y_0}\to L\Omega^i_{X_0}$ (this functor factors through $\QSyn_{\bF_p}$). For $i=0$ we denote this functor simply by $\cO$. The formation of classes $c_{X_1,p}$ and $d_{X_1,p}$ defines morphisms from $L\Omega^p[-p]$ to $\cO$. Denote by $\Func^0(\QSyn^{\op}_{\bZ/p^2},\QC_{\obj})$ the category of functors $F$ equipped with an equivalence between the composition of $F$ with the forgetful functor $\QC_{\obj}\to \Cat_{\infty}$ and the functor $X_1\mapsto D(X_0)$.

\begin{lm}
For any $i,j$ the restriction induces an equivalence 
\begin{equation}\label{semiperf: map equation}
\Map_{\Func^0(\QSyn^{\op}_{\bZ/p^2},\QC_{\obj})}(L\Omega^i[-i],L\Omega^j[-j])\to \Map_{\Func^0(\qrsprfd^{\op}_{\bZ/p^2},\QC_{\obj})}(L\Omega^i[-i],L\Omega^j[-j]).\end{equation}
\end{lm}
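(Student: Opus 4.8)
The plan is to prove this Lemma by a quasisyntomic-descent argument, following the pattern of \cite{bms2} and \cite{li-mondal}. The key point is that both sides of \eqref{semiperf: map equation} can be computed as limits over suitable \v{C}ech nerves, and the functors $L\Omega^i[-i]$ and $L\Omega^j[-j]$, regarded as objects of $\Func^0(\QSyn^{\op}_{\bZ/p^2},\QC_{\obj})$, satisfy descent along quasisyntomic covers by quasiregular semiperfectoid rings.

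First I would recall that the site $\QSyn_{\bZ/p^2}$ admits a basis, in the quasisyntomic topology, consisting of (spectra of) quasiregular semiperfectoid $\bZ/p^2$-algebras: every quasisyntomic scheme over $\bZ/p^2$ admits a quasisyntomic cover by a qrsp affine, and the \v{C}ech nerve of such a cover stays within $\qrsprfd_{\bZ/p^2}$ since fiber products of qrsp algebras over a base qrsp algebra are again qrsp (cf. \cite[\S 4]{bms2}). Next I would observe that for a morphism $f:X_1\to Y_1$ which is a quasisyntomic cover, the base change $f_0:X_0\to Y_0$ is a quasisyntomic cover of $\bF_p$-schemes, and hence $D(-)$, the exterior powers $L\Omega^i_{(-)_0/\bF_p}$, and the structure sheaf $\cO_{(-)_0}$ all satisfy descent along $f_0$: for $D(-)$ this is flat (indeed fpqc) descent for quasicoherent sheaves, and for $L\Omega^i_{(-)_0}$ and $\cO_{(-)_0}$ this is quasisyntomic descent for the (derived) de Rham complex and its conjugate-filtration graded pieces, as in \cite[\S 3]{bhatt-derived} and the discussion following \cite[Remark 8.15]{bms2}. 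Consequently the functor $L\Omega^i[-i]:\QSyn^{\op}_{\bZ/p^2}\to \QC_{\obj}$ is a right Kan extension of its restriction to $\qrsprfd^{\op}_{\bZ/p^2}$: this uses that $\QC_{\obj}$ has the relevant limits and that the forgetful functor $\QC_{\obj}\to\Cat_\infty$ detects them, together with the fact that the distinguished object $L\Omega^i_{X_0}$ is itself the limit of the $L\Omega^i_{X_{0,\bullet}}$ along the \v{C}ech nerve of any qrsp cover.

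Granting this, the mapping space $\Map_{\Func^0(\QSyn^{\op}_{\bZ/p^2},\QC_{\obj})}(L\Omega^i[-i],L\Omega^j[-j])$ is computed as a limit over the site of qrsp algebras: since the target $L\Omega^j[-j]$ is a right Kan extension from $\qrsprfd^{\op}_{\bZ/p^2}$, restriction along the inclusion $\qrsprfd^{\op}_{\bZ/p^2}\hookrightarrow \QSyn^{\op}_{\bZ/p^2}$ is fully faithful on the relevant mapping spaces, which is exactly the assertion that \eqref{semiperf: map equation} is an equivalence. Here one should be slightly careful that $\Func^0$ restricts the underlying functor of $\infty$-categories to be $X_1\mapsto D(X_0)$, so that a natural transformation is really the data of a compatible family of maps $L\Omega^i_{X_0}[-i]\to L\Omega^j_{X_0}[-j]$ in $D(X_0)$ together with the coherences of a morphism in $\QC_{\obj}$; the descent statement for $D(X_0)$ guarantees that such a family is determined by, and can be glued from, its values on qrsp algebras.

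The main obstacle I anticipate is the bookkeeping with the category $\QC_{\obj}$ rather than a plain $\infty$-category of sheaves: one must check that the right Kan extension is computed objectwise and that a morphism in $\Func^0(\QSyn^{\op}_{\bZ/p^2},\QC_{\obj})$ is the same as a morphism of the underlying ``pointing'' objects once the underlying functor $X_1 \mapsto D(X_0)$ is fixed, which amounts to unwinding the definition of $\QC_{\obj}$ from \cite[Tag 020S]{kerodon} and using that the forgetful functor $\QC_{\obj}\to\Cat_\infty$ is a cartesian fibration. Once this formal reduction is in place, the substance of the argument is precisely the quasisyntomic descent for $D(-)$ and for the (conjugate-graded pieces of the) de Rham complex, both of which are available from the cited references; so the proof is largely a matter of assembling these inputs in the $\QC_{\obj}$-formalism.
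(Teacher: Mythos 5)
Your proof is correct and takes essentially the same route as the paper: the paper's one-sentence argument is precisely that spectra of quasiregular semiperfectoid rings form a basis for the flat (quasisyntomic) topology on $\QSyn_{\bZ/p^2}$ by \cite[Lemma 4.28]{bms2}, together with flat descent for $D(-)$ and for the exterior powers of the cotangent complex, which is exactly what your right-Kan-extension argument unpacks. The $\QC_{\obj}$-bookkeeping you flag is left implicit in the paper as well, so nothing essential differs.
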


\begin{proof}
Affine schemes corresponding to quasiregular semiperfectoid rings form a basis in the flat topology on the category of all quasisyntomic $\bZ/p^2$-schemes by \cite[Lemma 4.28]{bms2} which implies the result by flat descent for the exterior powers of the cotangent complex.
\end{proof}

In general, specifying a natural transformation between functors $F_1,F_2$ into an $\infty$-category requires (among further higher homotopies) specifying maps $a_{X_1}:F_1(X_1)\to F_2(X_1)$ for all objects $X_1$ together with the additional data of homotopies between $F_2(f)\circ a_{X_1}$ and $a_{Y_1}\circ F_1(f)$ for every map $f:X_1\to Y_1$. Crucially, if $X=\Spec S$ is the spectrum of a quasiregular semiperfectoid $\bZ/p^2$-algebra, then the mapping space $\Map_{D(X_0)}(L\Omega^p_{X_0}[-p],\cO_{X_0})$ is discrete because both $L\Omega^p_{X_0}[-p]$ and $\cO_{X_0}$ are concentrated in degree zero. Therefore the target of the map (\ref{semiperf: map equation}) is a discrete space and an element of it is completely determined by its values on the objects: functoriality with respect to morphisms in $\qrsprfd_{\bZ/p^2}$ amounts to checking a condition rather than specifying additional structure.

Therefore it is enough to compare the values of  $c_{X_1,p}$ and $d_{X_1,p}$ on objects of $\qrsprfd_{\bZ/p^2}$, and the corollary is proven. 
\end{proof}

We can now generalize the results of Section \ref{applications: section} on extensions in the conjugate filtration on the de Rham complex to schemes over $k$ that only admit a lift to $W_2(k)$ and are not necessarily smooth.

\begin{cor}
For a quasisyntomic scheme $X_0$ over $\bF_p$ equipped with a flat lift $X_1$ over $\bZ/p^2$ the class $e_{X_1,p}\in\RHom_{D(X_1)}(L\Omega^p_{X_0},\cO_{X_0}[p+1])$ of the extension $\bigoplus\limits_{i=0}^{p-1} L\Omega^i_{X_0}[-i]\to\Fil_p^{\conj}\dR_{X_0}\to L\Omega^p_{X_0}[-p]$ is naturally homotopic to $\Bock_{X_1}(\ob_{F,X_1}\circ \alpha(L\Omega^1_{X_0}))$.
\end{cor}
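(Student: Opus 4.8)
The plan is to obtain this as a formal consequence of Theorem \ref{semiperf: main sen operator} and Lemma \ref{sen operator: classes over zp2}, the only real work being to match the extension in the statement with the weight-zero summand $\Fil_p^{\conj}\dR_{X_0,0}$.

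First I would recall, via Theorem \ref{sen operator: zpn main}, that $\dR_{X_0}\simeq i^*\Omega^{\DHod}_{X_1/\bZ/p^2}$ carries the Sen operator $\Theta_{X_1}$, which acts by $-i$ on $\gr_i^{\conj}\dR_{X_0}\simeq L\Omega^i_{X_0}[-i]$. Since the weights $0,-1,\dots,-(p-1)$ are pairwise distinct in $\bF_p$, the ring $\bF_p[t]/\prod_{i=0}^{p-1}(t+i)$ is étale and the resulting eigenspace decomposition splits the step $\Fil^{\conj}_{p-1}\dR_{X_0}\simeq\bigoplus_{i=0}^{p-1}L\Omega^i_{X_0}[-i]$, as already used in Lemma \ref{sen operator: classes over zp2}. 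Consequently the fiber sequence $\bigoplus_{i=0}^{p-1}L\Omega^i_{X_0}[-i]\to\Fil_p^{\conj}\dR_{X_0}\to L\Omega^p_{X_0}[-p]$ in the statement is the direct sum of the trivial sequences $L\Omega^i_{X_0}[-i]\xrightarrow{\id}L\Omega^i_{X_0}[-i]\to 0$ for $1\le i\le p-1$ with the fiber sequence $\cO_{X_0}\to\Fil_p^{\conj}\dR_{X_0,0}\to L\Omega^p_{X_0}[-p]$ of Notation \ref{sen operator: zp2 classes notation}; hence its class is exactly the component $e_{X_1,p}\in\RHom_{D(X_1)}(L\Omega^p_{X_0},\cO_{X_0}[p+1])$.

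It then remains to invoke the two computed inputs. Lemma \ref{sen operator: classes over zp2} gives a natural homotopy $e_{X_1,p}\sim\Bock_{\RHom_{D(X_1)}(L\Omega^p_{X_1},\cO_{X_1}[p])}(c_{X_1,p})$, where the Bockstein is the one attached to the object $\RHom_{D(X_1)}(L\Omega^p_{X_1},\cO_{X_1}[p])\in D(W_2(k))$; this is the homomorphism abbreviated $\Bock_{X_1}$ in the statement. Theorem \ref{semiperf: main sen operator} identifies $c_{X_1,p}$ with the composition
\begin{equation*}
L\Omega^p_{X_0}\xrightarrow{\alpha(L\Omega^1_{X_0})}F_{X_0}^*L\Omega^1_{X_0}[p-1]\xrightarrow{\ob_{F,X_1}}\cO_{X_0}[p],
\end{equation*}
which is precisely $\ob_{F,X_1}\circ\alpha(L\Omega^1_{X_0})$. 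Substituting this into the homotopy furnished by Lemma \ref{sen operator: classes over zp2} yields $e_{X_1,p}\sim\Bock_{X_1}(\ob_{F,X_1}\circ\alpha(L\Omega^1_{X_0}))$, as claimed.

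Since the two genuine theorems are already in hand, there is no essential obstacle. The only point requiring attention is the identification of the extension class with its $i=0$ component in the first step: a priori the sequence could contribute classes in $\RHom_{D(X_1)}(L\Omega^p_{X_0},L\Omega^i_{X_0}[p+1-i])$ for $1\le i\le p-1$, and these need not vanish for a general quasisyntomic $X_0$; it is the compatibility of the whole sequence with $\Theta_{X_1}$ that pins the class down to the weight-zero eigenspace. One should also record that the Bockstein occurring in Lemma \ref{sen operator: classes over zp2} agrees with $\Bock_{X_1}$, which is immediate from the definition of the Bockstein morphism of an object of a $\bZ/p^2$-linear stable $\infty$-category recalled earlier.
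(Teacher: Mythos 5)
Your proposal is correct and follows the paper's own proof verbatim: the corollary is obtained by combining Lemma \ref{sen operator: classes over zp2} with Theorem \ref{semiperf: main sen operator}. Your extra paragraph spelling out that the Sen eigenspace decomposition reduces the full extension $\bigoplus_{i=0}^{p-1}L\Omega^i_{X_0}[-i]\to\Fil_p^{\conj}\dR_{X_0}\to L\Omega^p_{X_0}[-p]$ to its weight-zero summand $\Fil_p^{\conj}\dR_{X_0,0}$ is a point the paper leaves implicit in Notation \ref{sen operator: zp2 classes notation}, and it is handled correctly.
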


\begin{proof}
This is Lemma \ref{sen operator: classes over zp2} combined with Theorem \ref{semiperf: main sen operator}.
\end{proof}
 
For convenience of applications, let us also explicitly state this result on the level of cohomology classes in the case of smooth varieties:

\begin{cor}\label{semiperf: smooth cor}
For a smooth scheme $X_0$ over $k$ equipped with a smooth lift $X_1$ over $W_2(k)$ the class $c_{X_1,p}\in H^p(X_0,\Lambda^p T_{X_0/k})$ is equal to $\ob_{F,X_1}\cup \alpha(\Omega^1_{X_0/k})$, and $e_{X_1,p}\in H^{p+1}(X_0,\Lambda^pT_{X_0/k})$ is equal to $\Bock_{X_1}(\ob_{F,X_1}\cup \alpha(\Omega^1_{X_0}))$.
\end{cor}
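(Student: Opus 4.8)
\textbf{Proof proposal for Corollary \ref{semiperf: smooth cor}.} The plan is to extract Corollary \ref{semiperf: smooth cor} from the combination of Theorem \ref{semiperf: main sen operator} and Lemma \ref{sen operator: classes over zp2}, specializing the abstract morphisms in $D(X_0)$ to honest cohomology classes using the smoothness of $X_0$. First I would observe that when $X_0/k$ is smooth, the cotangent complex $L\Omega^1_{X_0/k}$ is the locally free sheaf $\Omega^1_{X_0/k}$ placed in degree $0$, so that $L\Omega^p_{X_0}\simeq\Omega^p_{X_0/k}$, and the mapping complex $\RHom_{D(X_0)}(L\Omega^p_{X_0},\cO_{X_0}[p])$ has $H^0$ equal to $\Ext^p_{X_0}(\Omega^p_{X_0/k},\cO_{X_0})=H^p(X_0,\Lambda^p T_{X_0/k})$, using the duality $(\Omega^p)^{\vee}\simeq\Lambda^p T$ for locally free sheaves. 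Thus $c_{X_1,p}$, a priori a morphism $L\Omega^p_{X_0}\to\cO_{X_0}[p]$ in $D(X_0)$, is literally an element of $H^p(X_0,\Lambda^p T_{X_0/k})$; likewise $\alpha(\Omega^1_{X_0/k})\in\Ext^{p-1}_{X_0}(\Omega^p_{X_0/k},F_{X_0}^*\Omega^1_{X_0/k})=H^{p-1}(X_0,\Lambda^p T_{X_0/k}\otimes F_{X_0}^*\Omega^1_{X_0/k})$ and $\ob_{F,X_1}\in H^1(X_0,F_{X_0}^*T_{X_0/k})$ are genuine cohomology classes.

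Next I would identify the composition of morphisms in $D(X_0)$ appearing in Theorem \ref{semiperf: main sen operator},
\[
L\Omega^p_{X_0}\xrightarrow{\alpha(L\Omega^1_{X_0})}F_{X_0}^*L\Omega^1_{X_0}[p-1]\xrightarrow{\ob_{F,X_1}}\cO_{X_0}[p],
\]
with the cup product $\ob_{F,X_1}\cup\alpha(\Omega^1_{X_0/k})$. This is the standard translation between composition in the derived category and Yoneda/cup products: composing a class in $\Ext^{p-1}(\Omega^p,F^*\Omega^1)$ with a class in $\Ext^1(F^*\Omega^1,\cO)=H^1(X_0,F^*T_{X_0})$ (after tensoring the latter with $\Lambda^p T\otimes F^*\Omega^1$ and contracting $F^*\Omega^1$ against $F^*T$) produces the element of $H^p(X_0,\Lambda^p T_{X_0})$ which is precisely the cup product of the two classes. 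I would spell this out by unwinding the internal-Hom adjunction exactly as in Lemma \ref{free cosimplicial: alpha adjunction}: under $\RHom_{X_0}(\Omega^p,F^*\Omega^1[p-1])\simeq\RGamma(X_0,\Lambda^p T\otimes F^*\Omega^1)[p-1]$ and the contraction $F^*\Omega^1\otimes F^*T_{X_0}\to\cO_{X_0}$, the composite morphism becomes the image of $\alpha(\Omega^1_{X_0})\otimes\ob_{F,X_1}$ under cup product followed by contraction, which is the definition of $\ob_{F,X_1}\cup\alpha(\Omega^1_{X_0/k})$ in $H^p(X_0,\Lambda^p T_{X_0/k})$. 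This gives the first equality $c_{X_1,p}=\ob_{F,X_1}\cup\alpha(\Omega^1_{X_0/k})$.

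For the second equality, I would invoke Lemma \ref{sen operator: classes over zp2}, which gives $e_{X_1,p}\sim\Bock_{\RHom_{D(X_1)}(L\Omega^p_{X_1},\cO_{X_1}[p])}(c_{X_1,p})$, and then note that in the smooth case $\RHom_{D(X_1)}(L\Omega^p_{X_1},\cO_{X_1}[p])\in D(W_2(k))$ has $H^0$ equal to $H^p(X_1,\Lambda^p T_{X_1/W_2(k)})$, so that the Bockstein morphism of this complex induces on $H^0$ precisely the Bockstein homomorphism $\Bock_{X_1}\colon H^p(X_0,\Lambda^p T_{X_0/k})\to H^{p+1}(X_0,\Lambda^p T_{X_0/k})$ attached to the short exact sequence $0\to\Lambda^p T_{X_0/k}\to\Lambda^p T_{X_1/W_2(k)}\to\Lambda^p T_{X_0/k}\to 0$ of sheaves on $X_1$ (here one uses that $X_1/W_2(k)$ is smooth so that $T_{X_1/W_2(k)}$ is locally free and reduces to $T_{X_0/k}$). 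Combining with the first equality yields $e_{X_1,p}=\Bock_{X_1}(\ob_{F,X_1}\cup\alpha(\Omega^1_{X_0}))$, as claimed. The only mild subtlety — and the step I expect to require the most care — is the bookkeeping identifying the abstract Bockstein morphism of the mapping complex with the geometric Bockstein attached to the explicit short exact sequence of tangent sheaves; this is essentially Lemma \ref{cosimp applications: Bockstein} applied with $M=\Omega^p_{X_1}$, $N=\cO_{X_1}$, together with the identification $\underline{\RHom}_{\cO_{X_1}}(\Omega^p_{X_1},\cO_{X_1})\simeq\Lambda^p T_{X_1/W_2(k)}$, and one must check the compatibility of these identifications modulo $p$, but no genuinely new input is needed beyond what is already established.
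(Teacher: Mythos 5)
Your proposal is correct and follows essentially the same route as the paper: the smooth-case statement is obtained by specializing Theorem \ref{semiperf: main sen operator} (whose composite becomes the cup product $\ob_{F,X_1}\cup\alpha(\Omega^1_{X_0/k})$ once $L\Omega^p_{X_0}\simeq\Omega^p_{X_0/k}$ is locally free in degree $0$) and Lemma \ref{sen operator: classes over zp2}, with the identification of the abstract Bockstein of the mapping complex with the geometric Bockstein for $\Lambda^pT$ handled by exactly the bookkeeping of Lemma \ref{cosimp applications: Bockstein} (and its mod~$p^2$ analogue near Lemma \ref{sen operator: p fiber reduction}), which is how the paper itself deduces the cohomology-level statement.
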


Hence the first potentially non-trivial differentials in the conjugate spectral sequence of a liftable scheme are described as follows:

\begin{cor}
For a smooth scheme $X_0$ over $k$ equipped with a lift $X_1$ over $W_2(k)$ the conjugate spectral sequence $E_{ij}^2=H^i(X^{(1)}_0,\Omega^j_{X^{(1)}_0})\Rightarrow H^{i+j}_{\dR}(X_0/k)$ has no non-zero differentials on pages $E_2,\ldots, E_{p}$. The differentials $d^{i,p}_{p+1}:H^i(X^{(1)}_0,\Omega^p_{X^{(1)}_0})\to H^{i+p+1}(X^{(1)}_0,\cO)$ on page $E_{p+1}$ can be described as

\begin{equation}
\Bock_{\cO_{X_1^{(1)}}}\circ (\ob_{F,X_1}\cup\alpha(\Omega^1_{X^{(1)}_0}))-(\ob_{F,X_1}\cup\alpha(\Omega^1_{X^{(1)}_0}))\circ\Bock_{\Omega^p_{X_1^{(1)}}}
\end{equation}
where $\ob_{F,X_1}\cup\alpha(\Omega^1_{X^{(1)}_0})$ denotes the map $H^j(X_0^{(1)},\Omega^p_{X_0^{(1)}})\to H^{j+p}(X_0,^{(1)},\cO_{X_0^{(1)}})$ induced by the product with the class $c_{X^{(1)}_1,p}=\ob_{F,X_1}\cup\alpha(\Omega^1_{X^{(1)}_0})\in H^p(X_0^{(1)},\Lambda^pT_{X^{(1)}_0})$, for $j=i,i+1$.
\end{cor}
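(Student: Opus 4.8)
The plan is to realize the conjugate spectral sequence as the hypercohomology spectral sequence of the conjugate filtration $\Fil_{\conj}^{\bullet}\dR_{X_0/k}$ on $X^{(1)}_0$, so that $E_2^{ij}=H^i(X^{(1)}_0,\Omega^j_{X^{(1)}_0})$ and $d_r\colon E_r^{i,j}\to E_r^{i+r,j-r+1}$, and then to extract the differentials from the Sen decomposition. First I would invoke Theorem~\ref{sen operator: zpn main}: the lift $X_1$ over $W_2(k)$ equips $\dR_{X_0/k}$ with a Sen operator that preserves the conjugate filtration and acts on $\gr_i^{\conj}\dR_{X_0/k}\simeq\Omega^i_{X^{(1)}_0}[-i]$ by $-i$, hence yields a decomposition $\dR_{X_0/k}\simeq\bigoplus_{m=0}^{p-1}\dR_{X_0/k,m}$ of \emph{filtered} objects in which $\dR_{X_0/k,m}$ has nonzero graded pieces only in conjugate degrees $a\equiv m\ (\mathrm{mod}\ p)$. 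The conjugate spectral sequence splits accordingly, and in the $m$-th summand only the rows $j\equiv m\ (\mathrm{mod}\ p)$ survive; a nonzero $d_r$ between two such rows forces $r\equiv 1\ (\mathrm{mod}\ p)$, so $d_2=\cdots=d_p=0$, which is the first assertion. (One can also argue using \cite{deligne-illusie} together with Theorem~\ref{cosimp applications: the best part de Rham}, but the Sen decomposition disposes of all bidegrees at once.)

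Next I would compute $d_{p+1}^{i,p}$. It lives entirely in the weight-zero summand $\dR_{X_0/k,0}$, whose graded pieces occupy rows $0,p,2p,\dots$; the only differential through row $p$ is $d_{p+1}\colon E_{p+1}^{i,p}\to E_{p+1}^{i+p+1,0}$, and by the previous step $E_{p+1}^{i,p}=H^i(X^{(1)}_0,\Omega^p_{X^{(1)}_0})$ and $E_{p+1}^{i+p+1,0}=H^{i+p+1}(X^{(1)}_0,\cO)$. Since $\Fil_{p-1}^{\conj}\dR_{X_0/k,0}=\cO_{X^{(1)}_0}$, this differential is the connecting map on $H^i(X^{(1)}_0,-)$ attached to the two-step extension $\cO_{X^{(1)}_0}\to\Fil_p^{\conj}\dR_{X_0/k,0}\to\Omega^p_{X^{(1)}_0}[-p]$, whose class is by definition $e_{X^{(1)}_1,p}\in H^{p+1}(X^{(1)}_0,\Lambda^pT_{X^{(1)}_0})$ (Notation~\ref{sen operator: zp2 classes notation}). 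Thus $d_{p+1}^{i,p}$ is cup product with $e_{X^{(1)}_1,p}$.

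Finally I would rewrite this via Corollary~\ref{semiperf: smooth cor} and Lemma~\ref{sen operator: classes over zp2}, which give $e_{X^{(1)}_1,p}=\Bock_{X^{(1)}_1}(c_{X^{(1)}_1,p})$ with $c_{X^{(1)}_1,p}=\ob_{F,X_1}\cup\alpha(\Omega^1_{X^{(1)}_0})$, where $\Bock_{X^{(1)}_1}$ is the Bockstein of $\RHom_{D(X^{(1)}_1)}(\Omega^p_{X^{(1)}_1},\cO_{X^{(1)}_1}[p])\in D(W_2(k))$. The cup-product pairing $\RHom_{X^{(1)}_0}(\Omega^p_{X^{(1)}_0},\cO_{X^{(1)}_0}[p])\otimes\RGamma(X^{(1)}_0,\Omega^p_{X^{(1)}_0})\to\RGamma(X^{(1)}_0,\cO_{X^{(1)}_0}[p])$ lifts to a pairing of the corresponding objects over $W_2(k)$, for which the Bockstein connecting map is a derivation; since $c_{X^{(1)}_1,p}$ sits in cohomological degree $0$ of the $\RHom$ complex, evaluating this derivation identity on $x\in H^i(X^{(1)}_0,\Omega^p_{X^{(1)}_0})$ yields
\[
e_{X^{(1)}_1,p}\cup x=\Bock_{\cO_{X^{(1)}_1}}\bigl(c_{X^{(1)}_1,p}\cup x\bigr)-c_{X^{(1)}_1,p}\cup\Bock_{\Omega^p_{X^{(1)}_1}}(x),
\]
which is exactly the stated commutator (the global sign being fixed by Convention~\ref{applications: sign flip}).

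The step I expect to be the main obstacle is this last one: choosing compatible $W_2(k)$-models of the $\RHom$ and $\RGamma$ complexes so that the cup product becomes a strict pairing, verifying the Leibniz rule for the Bockstein with the correct Koszul sign, and matching the resulting normalization with Convention~\ref{applications: sign flip}. Everything else reduces to the routine identification of the $E_{p+1}$-differential of a two-step filtered complex with its extension class, once the Sen decomposition has isolated the weight-zero summand.
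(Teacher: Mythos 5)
Your proposal is correct and follows essentially the same route as the paper: the differential $d_{p+1}^{i,p}$ is identified with the map induced by the extension class $e_{X_1^{(1)},p}=\Bock(c_{X_1^{(1)},p})$, where $c_{X_1^{(1)},p}=\ob_{F,X_1}\cup\alpha(\Omega^1_{X_0^{(1)}})$ by Corollary \ref{semiperf: smooth cor} and Lemma \ref{sen operator: classes over zp2}, and the commutator formula then follows from a Leibniz-type identity for the Bockstein against the evaluation pairing. The step you single out as the main obstacle is exactly what the paper isolates and proves as Lemma \ref{cosimp applications: hom bockstein} (via the two-step filtration $i_*i^*K\to K\to i_*i^*K$ on the mapping complex), so no genuinely new difficulty arises there.
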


\begin{proof}
As usual, denote the inclusion of the special fiber by $i:X_0\hookrightarrow X_1$. The differential $d_{p+1}^{i,p}$ is induced by the map $i^*e_{X_1^{(1)},p}:\Omega^p_{X^{(1)}_0}\to \cO_{X^{(1)}_0}[p+1]$ which is the image of $\ob_{F,X_1}\cup\alpha(\Omega^1_{X^{(1)}_0})\in \RHom_{X^{(1)}_0}(\Omega^p_{X^{(1)}_0},\cO_{X^{(1)}_0}[p])$ under the Bockstein homomorphism associated with the $W_2(k)$-module $\RHom_{X_1^{(1)}}(\Omega^p_{X_1^{(1)}},\cO_{X_1^{(1)}}[p])$. For the purposes of computing the effect of $i^*e_{X_1^{(1)},p}$ on the cohomology groups it is enough to compute $i_*i^*e_{X_1^{(1)},p}:i_*\Omega^p_{X^{(1)}_0}\to i_*\cO_{X^{(1)}_0}[p+1]$. This is achieved by Lemma \ref{cosimp applications: hom bockstein} below.
\end{proof}

\begin{lm}\label{cosimp applications: hom bockstein}
Let $X_1$ be a flat scheme over $W_2(k)$ with special fiber $X_0:=X_1\times_{W_2(k)}k\xhookrightarrow{i}X_1$. For any two objects $M,N\in D(X_1)$ the composition 
\begin{equation}
\RHom_{D(X_0)}(i^*M,i^*N)\xrightarrow{\Bock_{\RHom_{X_1}(M,N)}} \RHom_{D(X_0)}(i^*M,i^*N)[1]\xrightarrow{i_*} \RHom_{D(X_1)}(i_*i^*M,i_*i^*N)[1]
\end{equation}
is given by sending a map $f:i^*M\to i^*N$ to $\Bock_N\circ i_*f-i_*f[1]\circ \Bock_M$.
\end{lm}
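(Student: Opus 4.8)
The statement is an unwinding of how the Bockstein interacts with internal $\RHom$ and with the functor $i_*i^*$, so the plan is to reduce it to the already-established Lemma \ref{cosimp applications: Bockstein} together with a sign bookkeeping like in Lemma \ref{sen operator: p fiber reduction}. First I would rewrite both compositions in terms of the object $K:=\underline{\RHom}_{\cO_{X_1}}(M,N)\in D(X_1)$, using the adjunction identities $i_*i^*K\simeq \underline{\RHom}_{\cO_{X_1}}(M,i_*i^*N)\simeq \underline{\RHom}_{\cO_{X_1}}(i_*i^*M,N)$, exactly as in the proof of Lemma \ref{cosimp applications: Bockstein}. Under these identifications $\RGamma(X_1,-)$ of the Bockstein $i_*i^*K\to K[1]$ is the source map $\Bock_{\RHom_{X_1}(M,N)}$, and the claim becomes a statement purely about a single object $K$ of $D(X_1)$: namely that the composition $i_*i^*K\xrightarrow{\Bock_K} K[1]\to i_*i^*K[1]$ (the last map being the unit-of-adjunction map $K[1]\to i_*i^*K[1]$, which under the two descriptions of $i_*i^*K$ translates into $i_*f\mapsto \Bock_N\circ i_*f$ on one side and $-i_*f[1]\circ\Bock_M$ on the other) equals the difference of the two obvious maps.

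The heart of the computation is therefore the following local claim about a single $K\in D(X_1)$: write $i_*i^*K=\cofib(K\xrightarrow{p}K)$, so that $i_*i^*K$ has two maps to and from $K$-shifts that one must keep track of. I would set up the commutative diagram with rows $K\xrightarrow{p}K\to i_*i^*K$ and columns expressing $i_*i^*K\simeq\cofib(K\xrightarrow{p}K)$ on both the $M$-side and the $N$-side of $\underline{\RHom}$, take (co)fibers of the multiplication-by-$p$ maps, and read off that the two composites $\Bock_N\circ i_*f$ and $i_*f[1]\circ\Bock_M$ are precisely the two ways of traversing the resulting square. Their difference is $i_*i^*(\Bock_K)$ composed with the map to $i_*i^*(-)[1]$; this is the content of the statement. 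The argument is formally parallel to Lemma \ref{sen operator: p fiber reduction}, with $A$ replaced by $\RHom_{X_1}(M,N)$ and the two roles of $\fib(A\xrightarrow{p}A)$ now played by the $M$- and $N$-variances of the internal $\RHom$.

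I expect the only real subtlety to be the sign, i.e. verifying that it is the difference $\Bock_N\circ i_*f - i_*f[1]\circ\Bock_M$ and not the sum, and making sure the shift conventions for $\Bock_{M[1]}\simeq -\Bock_M[1]$ (noted in the Bockstein subsection of Section 2) are applied consistently on the $M$-side, where the Bockstein appears precomposed. Concretely, the two maps $K[1]\to i_*i^*K[1]$ obtained by identifying $i_*i^*K$ via $M$ versus via $N$ differ by the sign coming from $\Bock_{i^*M[-1]}\simeq -\Bock_{i^*M}[-1]$, which is exactly what produces the minus sign in the formula; I would make this precise by tracing through the octahedral axiom applied to $M\xrightarrow{p}M$ and $N\xrightarrow{p}N$ simultaneously. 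Everything else — commutativity of the relevant squares, compatibility of $i_*$ with internal $\RHom$ — is formal and I would not belabor it. Once the single-object claim is proven, applying $\RGamma(X_1,-)$ and using Lemma \ref{cosimp applications: Bockstein} to identify the resulting map on global sections finishes the proof.
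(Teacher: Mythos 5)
Your overall intuition — that this is a Leibniz-type statement about Bocksteins proved by playing the three triangles $K\xrightarrow{p}K\to i_*i^*K$ (for $K=\underline{\RHom}_{\cO_{X_1}}(M,N)$), $i_*i^*M\to M\to i_*i^*M$ and $i_*i^*N\to N\to i_*i^*N$ against each other, in the spirit of Lemmas \ref{cosimp applications: Bockstein} and \ref{sen operator: p fiber reduction} — is the right one, but the central step of your plan does not work as formulated. First, a small but relevant error: $\underline{\RHom}_{\cO_{X_1}}(i_*i^*M,N)\simeq i_*i^*K[-1]$, not $i_*i^*K$ (apply $\underline{\RHom}(-,K)$ to $i_*\cO_{X_0}\simeq\cofib(p:\cO_{X_1}\to\cO_{X_1})$), so the two adjunction identifications you invoke are not two descriptions of the same shift, and the discrepancy is a shift, not merely a sign. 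More seriously, the claimed reduction "to a statement purely about a single object $K$" cannot produce the formula: the target of the lemma's composite is $\RHom_{D(X_1)}(i_*i^*M,i_*i^*N)[1]\simeq \RGamma\bigl(X_1,\fib(p:i_*i^*K\to i_*i^*K)\bigr)[1]$, which is strictly larger than $\RGamma(X_1,i_*i^*K)[1]$, and the two terms $\Bock_N\circ i_*f$ and $i_*f[1]\circ\Bock_M$ live there because $f$ is composed on two different sides; this two-variable structure is lost once you collapse everything to one map out of $i_*i^*K$. Indeed your own description of the expected outcome — "the two composites are precisely the two ways of traversing the resulting square" — would, if the square commuted, force those two composites to be homotopic and hence the asserted difference to vanish, contradicting the lemma; what is actually true is that their failure to agree is measured by the Bockstein of $f$, and proving that is the whole content, which your plan defers to "read off" / "trace through the octahedral axiom".

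For comparison, the paper supplies exactly the missing mechanism by keeping the two variables separate: it regards $i_*i^*K'\to K'$ (for $K'=M,N$) as a two-step filtration, so that $\RHom^{\Fil}_{D(X_1)}(M,N)$ sits in a fiber sequence
\begin{equation*}
\RHom_{D(X_1)}(i_*i^*M,i_*i^*N)\to \RHom^{\Fil}_{D(X_1)}(M,N)\to \RHom_{D(X_1)}(i_*i^*M,i_*i^*N)^{\oplus 2},
\end{equation*}
and identifies this sequence as the Baer sum of the sequence obtained by applying $\RHom_{D(X_1)}(-,i_*i^*N)$ to the $M$-triangle and the one obtained by applying $\RHom_{D(X_1)}(i_*i^*M,-)$ to the $N$-triangle. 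The connecting map of the Baer sum is then visibly $(f_0,f_1)\mapsto \Bock_N\circ f_0-f_1[1]\circ\Bock_M$, and the lemma follows by mapping the ordinary fiber sequence $\RHom_{D(X_0)}(i^*M,i^*N)\to\RHom_{D(X_1)}(M,N)\to\RHom_{D(X_0)}(i^*M,i^*N)$, whose connecting map is $\Bock_{\RHom_{X_1}(M,N)}$, into the filtered one via $(i_*,\,(i_*,i_*))$ and comparing connecting maps. If you want to salvage your more hands-on route, you must in effect reprove this Baer-sum (or an equivalent octahedron) decomposition; as written, your proposal asserts the conclusion of that step rather than proving it, so there is a genuine gap.
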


\begin{proof}
For any object $K\in D(X_1)$ there is a natural fiber sequence $i_*i^*K\to K\to i_*i^*K$ that we will view as a two-step filtration on $K$, defining therefore a functor $B:D(X_1)\to D_{\Fil}(X_1)$ to the category of filtered objects of $D(X_1)$. The complex of filtered morphisms between $M$ and $N$ fits into the fiber sequence

\begin{equation}\label{cosimp applications: hom bockstein eq1}
\RHom_{D(X_1)}(i_*i^*M,i_*i^*N)\to \RHom_{D_{\Fil}(X_1)}(M,N)\to \RHom_{D(X_1)}(i_*i^*M,i_*i^*N)^{\oplus 2}
\end{equation}

Here the first term is identified with the complex of morphisms from $M$ to $N$ that shift the filtration down by $1$, and the second map associates to a filtered morphism its effect on the graded pieces of the filtrations. The induced map $\RHom_{D(X_1)}(i_*i^*M,i_*i^*N)^{\oplus 2}\to \RHom_{D(X_1)}(i_*i^*M,i_*i^*N)[1]$ is then given by $(f_0,f_1)\mapsto \Bock_N\circ f_0-f_1[1]\circ \Bock_M$.

The map $\RHom_{D(X_1)}(M,N)\to \RHom^{\Fil}_{D(X_1)}(M,N)$ induced by the functor $B$ then induces a map of fiber sequences

\begin{equation}\label{cosimp applications: hom bockstein eq2}
\begin{tikzcd}
\RHom_{D(X_1)}(i_*i^*M,i_*i^*N)\arrow[r] & \RHom^{\Fil}_{D(X_1)}(M,N)\arrow[r] & \RHom_{D(X_1)}(i_*i^*M,i_*i^*N)^{\oplus 2}\\
\RHom_{D(X_0)}(i^*M,i^*N)\arrow[r]\arrow[u, "{i_*}"] & \RHom_{D(X_1)}(M,N)\arrow[r]\arrow[u] & \RHom_{D(X_0)}(i^*M,i^*N)\arrow[u,"{(i_*,i_*)}"]
\end{tikzcd}
\end{equation}
and this proves the lemma because $\Bock_{\RHom_{X_1}(M,N)}$ is precisely the connecting morphism induced by the bottom row of (\ref{cosimp applications: hom bockstein eq2}).
\end{proof}

\section{Sen operator of a fibration in terms of Kodaira-Spencer class}\label{nonsemisimp: section}

In this section we specialize the formula for the class $c_{X_1,p}$ from Corollary \ref{semiperf: smooth cor} to $p$-dimensional $W_2(k)$-schemes that are fibered over a curve. The fact that the cotangent bundle of such a scheme admits a line subbundle allows us (Theorem \ref{nonsemisimp: sen class fibration}) to relate the class $\alpha(\Omega^1_{X_0})$ to the Kodaira-Spencer class of the fibration. We then use this relation in Proposition \ref{nonsemisimp: p dim example} to give examples showing that the Sen operator $\Theta_X$ on $\dR_{X_0}$ might be non-semisimple. By Corollary \ref{sen operator: semisimplicity implies decomposability}, this is the case in a situation when the conjugate spectral sequence is non-degenerate, so Corollary \ref{nondeg example: main corollary} also provides such examples. But we would like to demonstrate that non-semisimplicity of $\Theta_X$ is a much more frequent phenomenon than non-degeneration of the Hodge-to-de Rham spectral sequence, appearing even for familiar classes of varieties.

We start by introducing the notation needed to state Theorem \ref{nonsemisimp: sen class fibration}. Let $f:X_1\to Y_1$ be a smooth morphism of smooth $W_2(k)$-schemes with $\dim Y_1=1$,  $\mathrm{rel.dim}(f)=p-1$. We will describe the class $c_{X_1,p}\in H^p(X_0,\Lambda^p T_{X_0})$ in terms of the Kodaira-Spencer class of $f$ and the obstruction to lifting the Frobenius morphism of $Y_0$ to $Y_1$. Recall that the Kodaira-Spencer class $\ks_{f_0}:T_{Y_0}\to R^1f_{0*}T_{X_0/Y_0}$ is obtained from the fundamental triangle 
\begin{equation}\label{nonsemisimp: fundamental triangle}
T_{X_0/Y_0}\to T_{X_0}\to f_0^*T_{Y_0}    
\end{equation}
by applying the functor of $0$th cohomology to the morphism $T_{Y_0}\to Rf_{0*}T_{X_0/Y_0}[1]$ corresponding to the class of the extension (\ref{nonsemisimp: fundamental triangle}) by adjunction. Denote by $\ks_{f_0}^{p-1}$ the composition $T_{Y_0}^{\otimes p-1}\xrightarrow{\ks_{f_0}^{\otimes p-1}} (R^1f_{0*}T_{X_0/Y_0})^{\otimes p-1}\to R^{p-1}f_{0*}\Lambda^{p-1}T_{X_0/Y_0}$ where the second map is the cup product on cohomology. 

By our assumption on the dimensions, the Leray spectral sequence for $f_0$ identifies $H^p(X_0,\Lambda^p T_{X_0})$ with $H^1(Y_0,R^{p-1}f_{0*}\Lambda^pT_{X_0})=H^{1}(Y_0, T_{Y_0}\otimes R^{p-1}f_{0*}\Lambda^{p-1}T_{X_0/Y_0})$.

\begin{thm}\label{nonsemisimp: sen class fibration}
The class $c_{X,p}\in H^p(X_0,\Lambda^p T_{X_0})$ is equal, up to multiplying by an element of $\bF_p^{\times}$, to the product of the class $\ob_{F,Y}\in H^1(Y_0,F_{Y_0}^*T_{Y_0})=H^1(Y_0,T_{Y_0}^{\otimes p})$ with $\ks^{p-1}_{f_0}\in H^0(Y_0,T_{Y_0}^{\otimes 1-p}\otimes R^{p-1}f_{0*}\Lambda^{p-1}T_{X_0/Y_0})$.
\end{thm}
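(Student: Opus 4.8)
The plan is to deduce Theorem \ref{nonsemisimp: sen class fibration} from the already-established formula $c_{X_1,p}=\ob_{F,X_1}\cup\alpha(\Omega^1_{X_0/k})$ of Corollary \ref{semiperf: smooth cor} by analyzing how the class $\alpha(\Omega^1_{X_0})$ interacts with the canonical one-step filtration on $\Omega^1_{X_0}$ coming from the fibration $f_0:X_0\to Y_0$. The essential input is that $\alpha$ is a \emph{characteristic class}: it is defined via the functorial extension (\ref{free cosimplicial: alpha extension}) attached to a vector bundle, so I would first record its behavior with respect to the short exact sequence $0\to f_0^*\Omega^1_{Y_0}\to \Omega^1_{X_0}\to \Omega^1_{X_0/Y_0}\to 0$. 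Since $\Omega^1_{Y_0}$ is a line bundle $L$, and $\alpha(L)$ automatically vanishes (the sequence (\ref{free cosimplicial: alpha for p=2 derived}) or its analogue for the rank-$1$ bundle $L$ splits — more directly, $S^p(L[-1])\simeq \Lambda^pL[-p]\oplus F_{X_0}^*L[-2]$ canonically because a line bundle has no higher exterior powers in the relevant range), the class $\alpha(\Omega^1_{X_0})$ should be expressible through $\alpha(\Omega^1_{X_0/Y_0})$ together with the extension class of the above sequence, which by definition represents (a cup power of) the Kodaira–Spencer class.

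More concretely, I would compute $\alpha$ of a two-step extension $0\to L\to E\to Q\to 0$ with $L$ a line bundle and $Q$ of rank $p-1$. Here $\Lambda^pE\simeq L\otimes\Lambda^{p-1}Q$, and the relevant piece of $S^p(E[-1])$ gets a filtration whose graded pieces involve $S^i(L[-1])\otimes S^{p-i}(Q[-1])$. The key point is that $S^{p-1}(Q[-1])\simeq\Lambda^{p-1}Q[-(p-1)]$ (since $p-1<p$, by the décalage identification and Lemma \ref{free cosimplicial: tate p}(1) applied termwise), so the ``interesting'' Tate-cohomology defect $F_{X_0}^*E[-2]$ in (\ref{free cosimplicial: alpha extension}) can only come from the sub-line-bundle $L$, contributing $F_{X_0}^*L$, while the connecting map picks up a factor of the boundary map $Q\to L[1]$ — i.e., the Kodaira–Spencer class — raised to the $(p-1)$st cup power because we must pass from $\Lambda^pE=L\otimes\Lambda^{p-1}Q$ down through $p-1$ extension steps. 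I would make this precise by filtering $S^p(E[-1])$ by the powers of the sub-object $L[-1]$, identifying the first graded quotient containing the $F_{X_0}^*L$-summand, and matching the resulting composite with $\ks_{f_0}^{p-1}$ via Lemma \ref{cosimp: tensor power vs symmetric power} and the compatibility of décalage with multiplication.

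Having identified $\alpha(\Omega^1_{X_0})$ with (a nonzero scalar times) the image of $\ks_{f_0}^{p-1}$ under the natural map $R^{p-1}f_{0*}\Lambda^{p-1}T_{X_0/Y_0}\otimes T_{Y_0}\to \Lambda^p T_{X_0}\otimes F_{X_0}^*\Omega^1_{X_0}$ landing in the subsheaf where the $F_{X_0}^*\Omega^1$-factor comes from $f_0^*\Omega^1_{Y_0}=f_0^*L$, I would then cup with $\ob_{F,X_1}$. Because $\ob_{F,X_1}\in H^1(X_0,F_{X_0}^*T_{X_0})$ pairs against the $F_{X_0}^*\Omega^1_{X_0}$-factor, and that factor has been forced to lie in $F_{X_0}^*f_0^*\Omega^1_{Y_0}$, the cup product only sees the $Y_0$-component of $\ob_{F,X_1}$, which is $f_0^*\ob_{F,Y_1}$ (the obstruction to lifting Frobenius is compatible with the morphism $f$ — this needs a short functoriality remark on Illusie's obstruction class). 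Chasing through the Leray spectral sequence identification $H^p(X_0,\Lambda^pT_{X_0})\cong H^1(Y_0,T_{Y_0}\otimes R^{p-1}f_{0*}\Lambda^{p-1}T_{X_0/Y_0})$, the product becomes exactly $\ob_{F,Y_1}\cup\ks_{f_0}^{p-1}$ up to a unit, which is the assertion of Theorem \ref{nonsemisimp: sen class fibration}.

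The main obstacle I anticipate is the bookkeeping in the second step: correctly tracking through which graded piece of the $L$-adic filtration on $S^p(E[-1])$ the class $\alpha$ factors, and verifying that the $(p-1)$-fold iterated connecting map really assembles into the cup power $\ks_{f_0}^{p-1}$ rather than some other symmetrization, together with pinning down the resulting $\bF_p^\times$-scalar (which the statement mercifully allows us to leave unspecified). A clean way to organize this is to first treat the ``split'' universal case — work $GL$-equivariantly with $E=L\oplus Q$ over the classifying stack, where $S^p(E[-1])=\bigoplus_i S^i(L[-1])\otimes S^{p-i}(Q[-1])$ splits completely and $\alpha(E)$ is visibly built from $\alpha(Q)$ (which vanishes, $Q$ having rank $p-1<p$) and the $L$-part — then deform to the non-split extension by the Kodaira–Spencer class, observing that $\alpha$ depends on $E$ only through its class in a suitable $\Ext$-group and that this dependence is $(p-1)$-linear in the extension parameter in the relevant degree. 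I would also want to double-check the edge case $p=2$ separately using the explicit description (\ref{free cosimplicial: alpha for p=2 discrete}), where $\ks_{f_0}^{p-1}=\ks_{f_0}$ and the whole computation collapses to a one-line extension-class manipulation, as a sanity check on signs and scalars.
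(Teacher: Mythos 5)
Your outer strategy coincides with the paper's own proof: start from Corollary \ref{semiperf: smooth cor}, use the exact sequence $0\to f_0^*\Omega^1_{Y_0}\to\Omega^1_{X_0}\to\Omega^1_{X_0/Y_0}\to 0$ to funnel $\alpha(\Omega^1_{X_0})$ through the Kodaira--Spencer class, replace $\ob_{F,X_1}$ by $f_0^*\ob_{F,Y_1}$ via functoriality of Illusie's obstruction, and finish with the Leray spectral sequence together with $H^2(Y_0,\cF)=0$ on the curve. Those bookkeeping steps are correct and are exactly what the paper does.

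The genuine gap is in the step you yourself flag as the ``main obstacle'': the identification of $\alpha(E)$, for a rank-$p$ bundle $E$ sitting in $0\to L\to E\to E'\to 0$, with (a unit times) the image of $v(E)^{p-1}$. This is precisely Lemma \ref{nonsemisimp: reducible formula}, and it is not something you can wave through by filtering $S^p(E[-1])$ by powers of $L[-1]$ and asserting that the $(p-1)$-fold iterated connecting maps ``assemble into'' $\ks_{f_0}^{p-1}$, nor by the proposed deformation argument from the split case: the claim that $\alpha(E)$ depends on the extension class through a homogeneous expression of degree $p-1$ is essentially a restatement of the lemma and is nowhere justified (note that the Tate defect $T_p(E[-1])$ involves all of $F_{X_0}^*E$, not just $F_{X_0}^*L$, so it is not formal that the class factors through the $L$-part). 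The paper explicitly remarks (Remark \ref{nonsemisimp: reducible formula p=2}) that such a direct chain-level proof is only available for $p=2$; for $p>2$ its proof of the lemma is indirect: one reduces to the universal bundle over the classifying stack of the parabolic, restricts to the Borel and then to the subgroup $T_p\ltimes A_p$, where $H^{p-1}_{\alg}(A_p,\chi_1^p)^{T_p}$ is one-dimensional, and then invokes the non-vanishing of $\alpha(V)$ for the tautological $SL_p$-representation (Proposition \ref{rational group cohomology: main non-vanishing}, via Kempf vanishing and the truncated de Rham complex of affine space) together with Lemmas \ref{group cohomology: A restriction injective} and \ref{group cohomology: everything from chi1 algebraic}, so that the two nonzero classes must agree up to a scalar in $\bF_p^{\times}$. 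Without either carrying out your filtration computation in detail (which nobody has done for $p>2$) or importing this non-vanishing-plus-one-dimensionality argument, your proof of the theorem is incomplete; with Lemma \ref{nonsemisimp: reducible formula} granted as a citable input, the rest of your argument is sound and is the paper's argument.
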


We have the following description of $\alpha(E)$ for a rank $p$ vector bundle admitting a line subbundle.  It will be proven, for $p>2$, as a consequence of our computations with group cohomology in Section \ref{group cohomology: section}, under Lemma \ref{group cohomology: reducible formula}. For the proof in the case $p=2$ see Remark \ref{nonsemisimp: reducible formula p=2} below.

\begin{lm}\label{nonsemisimp: reducible formula}
Let $E$ be a vector bundle on $X_0$ of rank $p$ that fits into an extension
\begin{equation}\label{nonsemisimp: reducible formula extension}
0\to L\to E\to E'\to 0
\end{equation}
where $L$ is a line bundle, and $E'$ is a vector bundle of rank $p-1$. The class of this extension defines an element $v(E)\in \Ext^1_{X_0}(E',L)=H^1(X_0,L\otimes (E')^{\vee})$. Denote by $v(E)^{p-1}\in H^{p-1}(X_0,L^{\otimes p-1}\otimes (\det E')^{\vee})$ the image of $v(E)^{\otimes p-1}\in H^{p-1}(X_0, (L\otimes (E')^{\vee})^{\otimes p-1})$ under the map induced by $(L\otimes (E')^{\vee})^{\otimes p-1}\to \Lambda^{p-1}(L\otimes (E')^{\vee})=L^{\otimes p-1}\otimes (\det E')^{\vee}$.

The class $\alpha(E)\in \Ext^{p-1}_{X_0}(\Lambda^p E, F_{X_0}^*E)=H^{p-1}(X_0,F_{X_0}^*E\otimes L^{\vee}\otimes (\det E')^{\vee})$ is equal, up to multiplying by a scalar from $\bF_p^{\times}$, to the image of $v(E)^{p-1}$ under the map induced by $L^{\otimes p-1}\otimes(\det E')^{\vee}=F_{X_0}^*L\otimes L^{\vee}\otimes(\det E')^{\vee}\hookrightarrow F_{X_0}^*E\otimes L^{\vee}\otimes(\det E')^{\vee}$.
\end{lm}

\begin{rem}\label{nonsemisimp: reducible formula p=2}
The lemma is straightforward when $p=2$. It follows from the existence of the following map of extensions:
\begin{equation}
\begin{tikzcd}
F_{X_0}^*E\arrow[r] & S^2E\arrow[r] & \Lambda^2 E \\
L^{\otimes 2}\arrow[r]\arrow[u, hook] & L\otimes E\arrow[r]\arrow[u, hook] & L\otimes E'.\arrow[u, equal]
\end{tikzcd}
\end{equation}
Here the left vertical map is the pullback along $F_{X_0}$ of the inclusion $L\hookrightarrow E$, the top row is the extension representing $\alpha(E)$, and the bottom row is the tensor product of (\ref{nonsemisimp: reducible formula extension}) with $L$. 

Our proof of this lemma for $p>2$ is rather ad hoc: it relies on the fact that the class $\alpha(V)$ for the tautological representation $V$ of $GL_p$ is non-zero, as proven in Proposition \ref{rational group cohomology: main non-vanishing}. Given that both $\alpha(E)$ and $v(E)^{p-1}$ admit explicit representatives as Yoneda extensions ((\ref{group cohomology: de rham p-1}) for the former and a Koszul complex for the latter), it would be nicer to have a direct proof of the identity claimed in Lemma \ref{nonsemisimp: reducible formula}, in the spirit of the proof for $p=2$.
\end{rem}

\comment{\begin{lm}\label{nonsemisimp: split reducible formula}
Let $E$ be a vector bundle on $X_0$ of rank $p$ that fits into an extension \begin{equation}0\to L\to E\to \bigoplus\limits_{i=1}^{p-1}L_i\to 0\end{equation} where $L,L_1,\ldots,L_{p-1}$ are line bundles. The corresponding extension class
is an element $v_1\oplus\ldots\oplus v_{p-1}\in \bigoplus\limits_{i=1}^{p-1} H^1(X_0, L\otimes L_i^{\otimes -1})$, denote by $v_1\cup \ldots\cup v_{p-1}\in H^{p-1}(X_0,L^{\otimes p-1}\otimes \bigotimes L_i^{-1})$ the cup-product of the components of the extension class. 

Then the class $\alpha(E)\in \Ext^{p-1}_{X_0}(\Lambda^p E,F^*E)\simeq H^{p-1}(X_0,F^*E\otimes L^{-1}\otimes \bigotimes\limits_{i=1}^{p-1}L_i^{-1})$ is equal, up to multiplication by an element of $\bF_p^{\times}$, to  the image of $v_1\cup\ldots\cup v_{p-1}$ under the map induced by $L^{\otimes p-1}\otimes \bigotimes L_i^{-1}\hookrightarrow F^*E\otimes L^{-1}\otimes \bigotimes\limits_{i=1}^{p-1}L_i^{-1}$.
\end{lm}
}

\begin{proof}[Proof of Theorem \ref{nonsemisimp: sen class fibration}]
By Corollary \ref{semiperf: smooth cor} the class $c_{X_1,p}$ is the product of $\alpha(\Omega^1_{X_0})\in H^{p-1}(X_0,\Lambda^p T_{X_0}\otimes (F_{X_0}^*T_{X_0})^{\vee})$ with $\ob_{F,X_1}\in  H^1(X_0,F^*_{X_0}T_{X_0})$. The vector bundle $\Omega^1_{X_0}$ fits into the exact sequence dual to (\ref{nonsemisimp: fundamental triangle}):
\begin{equation}
0\to f^*\Omega^1_{Y_0}\to \Omega^{1}_{X_0}\to \Omega^1_{X_0/Y_0}\to 0
\end{equation}
and we can apply Lemma \ref{nonsemisimp: reducible formula} to $E=\Omega^1_{X_0}$. It gives that the class $\alpha(\Omega^1_{X_0})\in H^{p-1}(X_0, F_{X_0}^*\Omega^1_{X_0}\otimes \Lambda^p T_{X_0})$ is the image of the class $v(\Omega^1_{X_0})^{p-1}\in H^{p-1}(X_0,F_{X_0}^*f_0^*\Omega^1_{Y_0}\otimes \Lambda^p T_{X_0})$. Therefore the product $\alpha(\Omega^1_{X_0})\cdot \ob_{F,X_1}\in H^p(X_0,\Lambda^p T_{X_0})$ is equal to the product of $v(\Omega^1_{X_0})^{p-1}$ with the image of $\ob_{F,X_1}$ under the map $H^1(X_0,F_{X_0}^*T_{X_0})\to H^1(X_0, F_{X_0}^*f_0^*T_{Y_0})$.

By functoriality of obstructions, this image is equal to the image of $\ob_{F,Y_1}$ under the pullback map $H^1(Y_0,F_{Y_0}^*T_{Y_0})\to H^1(X_0, F_{X_0}^*f_0^*T_{Y_0})$. By the Leray spectral sequence the receptacle of the class $v(\Omega^1_{X_0})^{p-1}$ fits into the exact sequence
\begin{multline}
0\to H^1(Y_0, R^{p-2}f_{0*}(F_{X_0}^*f_0^*\Omega^1_{Y_0}\otimes \Lambda^p T_{X_0}))\to H^{p-1}(X_0, F_{X_0}^*f_0^*\Omega^1_{Y_0}\otimes \Lambda^p T_{X_0})\xrightarrow{\rho} \\ H^0(Y_0, R^{p-1}f_{0*}(F_{X_0}^*f_0^*\Omega^1_{Y_0}\otimes \Lambda^p T_{X_0}))\to 0.
\end{multline}

Since $H^2(Y_0,\cF)=0$ for any quasicoherent sheaf $\cF$ on the curve $Y_0$, our product $v(\Omega^1_{X_0})^{p-1}\cdot \ob_{F,X_1}$ only depends on the image of $v(\Omega^1_{X_0})^{p-1}$ in $H^0(Y_0, R^{p-1}f_{0*}(F_{X_0}^*f_0^*\Omega^1_{Y_0}\otimes \Lambda^p T_{X_0}))$ and is equal to the product of this image $\rho(v(\Omega^1_{X_0})^{p-1})$ with $\ob_{F,Y_1}$ under the identification $H^p(X_0,\Lambda^p T_{X_0})\simeq H^1(Y_0,T_{Y_0}\otimes R^{p-1}f_{0*}\Lambda^{p-1}T_{X_0/Y_0})$.

It remains to observe that the image of the class $v(\Omega^1_{X_0})\in H^1(X_0, f^*\Omega^1_{Y_0}\otimes T_{X_0/Y_0})$ in $H^0(Y_0,R^1f_{0*}(f_0^*\Omega^1_{Y_0}\otimes T_{X_0/Y_0}))=H^0(Y_0, \Omega^1_{Y_0}\otimes R^1f_{0*}T_{X_0/Y_0})$ is the Kodaira-Spencer map $\ks_{f_0}$. Therefore $\rho(v(\Omega^1_{X_0})^{p-1})\in H^0(Y_0, R^{p-1}f_{0*}(F_{X_0}^*f_0^*\Omega^1_{Y_0}\otimes \Lambda^p T_{X_0}))=H^0(Y_0, F_{Y_0}^*\Omega^1_{Y_0}\otimes T_{Y_0}\otimes R^{p-1}f_{0*}\Lambda^{p-1}T_{X_0/Y_0})$ is equal to $\ks_{f_0}^{p-1}$ and the desired formula for $c_{X_1,p}$ is proven. 
\end{proof}

We will now demonstrate that there does exist a fibration as in Theorem \ref{nonsemisimp: sen class fibration} for which $c_{X_1,p}$ is non-zero. From now until the end of this section assume that $k=\overline{\bF}_p$.

\begin{pr}\label{nonsemisimp: p dim example}
For every $p$ there exists a smooth projective scheme $X$ over $W(k)$ with $\dim_{W(k)}(X)=p$ such that the class $c_{X,p}\in H^p(X_0,\Lambda^p T_{X_0})$ is non-zero, and therefore the Sen operator on $\dR_{X_0}$ is not semisimple.
\end{pr}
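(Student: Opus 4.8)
By Theorem~\ref{nonsemisimp: sen class fibration}, to produce a $p$-dimensional $X$ with $c_{X,p}\neq 0$ it suffices to build a smooth projective fibration $f\colon X_1\to Y_1$ over $W(k)$ with $\dim Y_1=1$ and relative dimension $p-1$ such that the product $\ob_{F,Y_1}\cup \ks_{f_0}^{p-1}$ is non-zero in $H^p(X_0,\Lambda^pT_{X_0})\simeq H^1(Y_0,T_{Y_0}\otimes R^{p-1}f_{0*}\Lambda^{p-1}T_{X_0/Y_0})$. The cleanest way to arrange both factors to be visibly non-zero is to take $Y_1$ to be an ordinary elliptic curve over $W(k)$ (or, more robustly, $Y_0$ a curve of genus $\geq 2$), so that $\ob_{F,Y_1}\in H^1(Y_0,T_{Y_0}^{\otimes p})$ is non-zero --- for an ordinary elliptic curve over $W(k)$ the Frobenius does not lift over $W_2(k)$, which is exactly the non-vanishing of $\ob_{F,Y_1}$ --- and to take $X_1$ to be a fibration whose Kodaira--Spencer class is ``as non-degenerate as possible''. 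First I would reduce to finding such an $f$, then exhibit $f$ concretely.

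\textbf{Construction of the fibration.} The natural candidate is to let $X_1\to Y_1$ be (an approximation of) the relative abelian scheme $E_1^{\times(p-1)}\times_{W(k)}Y_1$ twisted by a non-trivial family, or more simply a family of abelian varieties of dimension $p-1$ with maximal variation; however the most elementary choice that still has non-degenerate Kodaira--Spencer is a product of copies of the universal elliptic curve over a modular-type base, pulled back along a map $Y_1\to (\text{base})$. Concretely I would take $\pi\colon \mathcal E_1\to Y_1$ an elliptic surface over $W(k)$ with non-isotrivial $j$-invariant, so that $\ks_{\pi_0}\colon T_{Y_0}\to R^1\pi_{0*}T_{\mathcal E_0/Y_0}$ is a non-zero (indeed generically an isomorphism onto its image) map of line bundles, and then set $X_1:=\mathcal E_1\times_{Y_1}\mathcal E_1\times_{Y_1}\cdots\times_{Y_1}\mathcal E_1$ ($p-1$ factors), a smooth projective $W(k)$-scheme of relative dimension $p-1$ over $Y_1$, hence of absolute dimension $p$. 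For such $X_1$ one has $R^{p-1}f_{0*}\Lambda^{p-1}T_{X_0/Y_0}\supseteq (R^1\pi_{0*}T_{\mathcal E_0/Y_0})^{\otimes(p-1)}$ (the ``Künneth diagonal'' summand), and under this identification $\ks_{f_0}^{p-1}$ is the $(p-1)$-fold tensor power $\ks_{\pi_0}^{\otimes(p-1)}$, which is non-zero as long as $\ks_{\pi_0}$ is. Then $\ob_{F,Y_1}\cup\ks_{f_0}^{p-1}$ lands in $H^1(Y_0,\,T_{Y_0}^{\otimes p}\otimes (R^1\pi_{0*}T_{\mathcal E_0/Y_0})^{\otimes(p-1)})$, and since $R^1\pi_{0*}T_{\mathcal E_0/Y_0}\simeq (\pi_{0*}\Omega^1_{\mathcal E_0/Y_0})^{\otimes -2}\simeq \omega^{\otimes -2}$ for the Hodge line bundle $\omega$, the target is $H^1(Y_0, T_{Y_0}^{\otimes p}\otimes \omega^{\otimes -2(p-1)})$. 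By Serre duality on $Y_0$ this is dual to $H^0(Y_0,\Omega^1_{Y_0}\otimes T_{Y_0}^{\otimes -p}\otimes\omega^{\otimes 2(p-1)})$; a degree (Riemann--Roch) count shows that with an appropriate choice of $Y_0$ (e.g. of sufficiently high genus, or a modular curve with $\deg\omega$ large relative to $\deg\Omega^1_{Y_0}$) this cohomology group vanishes, forcing no room for the class to be killed, while a direct local computation of the cup product at a point where $\ks_{\pi_0}$ is an isomorphism and $\ob_{F,Y_1}$ is a generator shows the class is non-zero. The last resort, if the numerology is delicate, is to instead invoke the non-vanishing machinery of Section~\ref{rational group cohomology: main non-vanishing}--\ref{group cohomology: section} as in Proposition~\ref{nonsemisimp: reducible formula}: realize $\alpha(\Omega^1_{X_0})$ via the universal non-vanishing of $\alpha(V)$ for the tautological $GL_p$-representation and pull back along the classifying map, so that non-vanishing of $c_{X_1,p}$ follows from non-vanishing of a universal class.

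\textbf{Main obstacle.} The technical heart is ensuring that the product $\ob_{F,Y_1}\cup\ks_{f_0}^{p-1}$ does not vanish \emph{after} the Leray spectral sequence identification --- i.e., that passing from the (clearly non-zero) product of local data to the global cohomology class $H^p(X_0,\Lambda^pT_{X_0})$ does not lose it. This is exactly where one must be careful: the cup product of two non-zero classes in $H^1$ and $H^{p-1}$ can land in zero, so the argument cannot be purely formal. I expect to resolve it either by a Serre-duality vanishing of the ``obstructing'' group $H^0$ (so that $\rho$ in the proof of Theorem~\ref{nonsemisimp: sen class fibration} is injective on the relevant class and one reduces to the $H^0$-computation, which is a clean tensor-power-of-Kodaira--Spencer statement), or, if that fails, by the classifying-space argument of Remark~\ref{nonsemisimp: reducible formula} which sidesteps geometry entirely and reduces everything to Proposition~\ref{rational group cohomology: main non-vanishing}. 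The final step, once $c_{X,p}\neq 0$ is established, is immediate: by Lemma~\ref{sen operator: cXp as nilpotent operator on dR} (and Corollary~\ref{sen operator: semisimplicity implies decomposability}), a non-zero $c_{X,p}$ means $\Theta_X$ on $\Fil_p^{\conj}\dR_{X_0}$, and hence on $\dR_{X_0}$, is not semi-simple.
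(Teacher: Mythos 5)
Your first step coincides with the paper's: apply Theorem \ref{nonsemisimp: sen class fibration} and reduce to producing a smooth proper fibration $f:X_1\to Y_1$ of relative dimension $p-1$ over a curve with $\ob_{F,Y_1}\cup\ks_{f_0}^{p-1}\neq 0$. But your concrete construction cannot exist: a smooth projective elliptic fibration $\mathcal E_1\to Y_1$ with \emph{all fibers smooth} over a \emph{proper} smooth curve and non-isotrivial $j$-invariant is impossible, since the $j$-map would be a non-constant morphism from a proper curve to the affine $j$-line; and with constant $j$ the family is isotrivial and $\ks_{\pi_0}=0$. This is exactly why the paper takes fibers of genus $\geq 2$ (indeed Szpiro's theorem forces genus $\geq 2$ for base and fibers once $\ks_{h_0}$ is injective, which is also how the paper gets $\ob_{F,Y_1}\neq 0$; note your parenthetical that Frobenius never lifts for an ordinary elliptic curve over $W(k)$ is false for the canonical lift). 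A further advantage of genus $\geq 2$ fibers, which your abelian-fiber variants lose, is $h_{0*}T_{S_0/Y_0}=0$, which is what makes $R^{p-1}f_{0*}\Lambda^{p-1}T_{X_0/Y_0}$ literally equal to $(R^1h_{0*}T_{S_0/Y_0})^{\otimes(p-1)}$ rather than merely containing a "diagonal" summand.

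The second, and acknowledged, gap is the injectivity after the Leray identification, and your proposed resolution does not work as stated. What is needed is injectivity of $H^1(Y_0,T_{Y_0}^{\otimes p})\to H^1(Y_0,T_{Y_0}\otimes(R^1h_{0*}T_{S_0/Y_0})^{\otimes(p-1)})$ induced by $\id_{T_{Y_0}}\otimes\ks_{h_0}^{\otimes(p-1)}$, i.e.\ vanishing of $H^0$ of the \emph{cokernel} of this sheaf map; your Serre-duality step instead aims at vanishing of (the dual of) the target group, which would kill the class rather than preserve it, and in the elliptic-fiber (line-bundle) situation the cokernel is a torsion sheaf with non-zero $H^0$ wherever $\ks$ has zeros, so the vanishing strategy is unavailable and a "direct local computation of the cup product" does not substitute for it (as you note, cup products of non-zero classes can vanish). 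The paper resolves this by imposing that $\ks_{h_0}$ be an injection of vector bundles whose cokernel is a direct sum of line bundles of degree $<\frac{1}{p-1}\deg\Omega^1_{Y_0}$, which makes every graded piece $T_{Y_0}^{\otimes i}\otimes Q^{\otimes p-i}$ of the cokernel negative and hence $H^0=0$; and then it must still \emph{construct} such a family, which it does by taking $Y$ to be a complete intersection of ample divisors inside a compactification of the moduli space of curves (Mumford's trick, over $W(k)$ via the Satake compactification), using $\det(T_{M_g})|_Y\simeq\omega^{\otimes-13}$ to verify the degree bound. This existence step is entirely missing from your proposal, and your fallback via the classifying-stack/group-cohomology machinery does not produce a $p$-dimensional smooth projective example nor control the class after pullback.
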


\begin{rem}
Since $X$ has relative dimension $p$, the de Rham complex $\dR_{X_0}$ of $X_0$ is necessarily decomposable, by \cite[Corollaire 2.3]{deligne-illusie}.
\end{rem}

\begin{proof}
We will construct $X$ as the $(p-1)$th relative Cartesian power $S^{\times_Y(p-1)}$ of an appropriate smooth projective morphism $h: S\to Y$ of relative dimension $1$, where $Y$ is a geometrically connected smooth projective relative curve over $W(k)$. 
Assume that 

\begin{enumerate}
\item The Kodaira-Spencer map $\ks_{h_0}:T_{Y_0}\to R^1h_{0*}T_{S_0/Y_0}$ is an injection of vector bundles
\item $\coker \ks_{h_0}$ is a direct sum $\bigoplus L_j$ of line bundles such that $\deg L_j< \frac{1}{p-1}\deg \Omega^1_{Y_0}$ for all $j$.
\end{enumerate}

Injectivity of the Kodaira-Spencer map implies that the curve fibration $h$ is not isotrivial, which forces the genus of $Y$ and all of the fibers of $h$ to be larger than or equal to $2$ by \cite[Th\'eorem\`e 4]{szpiro}. In particular, the Frobenius endomorphism of $Y_0$ does not lift to $Y\times_{W(k)}{W_2(k)}$ so the class $\ob_{Y_1,F}\in H^1(Y_0,F_{Y_0}^*T_{Y_0})$ is non-zero, cf. \cite[Lemma I.5.4]{raynaud-froblifts}. We will now prove that under these conditions on $S$ the class $c_{X,p}\in H^p(X_0,\Lambda^p T_{X_0})$ for the $p$-dimensional $W(k)$-scheme $X$ is non-zero, and then will check that a family of curves $h$ satisfying (1) and (2) does indeed exist.

Denote by $\pi_1,\ldots,\pi_{p-1}:X=S^{\times_Y (p-1)}\to S$ the projection maps, and by $f:X\to Y$ the map down to $Y$ (so that $f=h\circ \pi_i$, for all $i$). We have $T_{X/Y}\simeq \bigoplus\limits_{i=1}^{p-1}\pi_i^*T_{S/Y}$. For each $i$ the pushforward $R^1f_*(\pi^{*}_i T_{S/Y})=H^1(Rh_*\circ R\pi_{i*}(\pi^*_i T_{S/Y}))=H^1(Rh_*(T_{S/Y}\otimes R\pi_{i*}\cO_{X}))$ is equal to $R^1h_*T_{S/Y}$, because $h_*T_{S/Y}=0$ by our assumption on the genus of the fibers of $h$. The Kodaira-Spencer class $\ks_f: T_Y\to R^1f_*T_{X/Y}=\bigoplus\limits_{i=1}^{p-1} R^1h_*T_{S/Y}$ is then equal to the diagonal map $\bigoplus\limits_{i=1}^{p-1} \ks_h$, because the extension $T_{X/Y}\to T_X\to f^*T_Y$ is the Baer sum of the pullbacks along $\pi_i$ of the extension $T_{S/Y}\to T_S\to h^*T_Y$.

The pushforward $R^{p-1}f_*\Lambda^{p-1}T_{X/Y}=R^{p-1}f_*(\bigotimes\limits_{i=1}^{p-1} \pi^*_iT_{S/Y})$ is likewise identified with $(R^1h_*T_{S/Y})^{\otimes p-1}$, and the $(p-1)$th power $\ks_f^{p-1}$ of the Kodaira-Spencer map is equal to $\ks_h^{\otimes p-1}:T_Y^{\otimes p-1}\to (R^1h_*T_{S/Y})^{\otimes p-1}$.

By Theorem \ref{nonsemisimp: sen class fibration}, the class $c_{X,p}\in H^p(X_0,\Lambda^p T_{X_0})=H^1(Y_0,T_{Y_0}\otimes R^{p-1}\Lambda^{p-1}T_{X_0/Y_0})$ is (up to a scalar from $\bF_p^{\times}$) the image of the obstruction to lifting Frobenius $\ob_{F,Y_1}\in H^1(Y_0,F^*T_{Y_0})=H^1(Y_0,T_{Y_0}^{\otimes p})$ under the map $\id_{T_{Y_0}}\otimes \ks_{h_0}^{\otimes p-1}:T_{Y_0}^{\otimes p}\to T_{Y_0}\otimes (R^1h_{0*}T_{S_0/Y_0})^{\otimes p-1}$. The proof of non-vanishing of $c_{X,p}$ will be completed if we can show that the induced map $H^1(Y_0, T_{Y_0}^{\otimes p})\to H^1(Y_0,T_{Y_0}\otimes (R^1h_{0*}T_{S_0/Y_0})^{\otimes p-1})$ is injective. 

For brevity, denote the vector bundle $\coker \ks_{h_0}$ by $Q$. The map $\id_{T_{Y_0}}\otimes \ks_{h_0}^{\otimes p-1}$ is an injection of vector bundles whose cokernel admits a filtration with graded pieces isomorphic to $T_{Y_0}^{\otimes i}\otimes Q^{\otimes p-i}$ for $i=1,\ldots,p-1$. By our assumption (2), each $T_{Y_0}^{\otimes i}\otimes Q^{\otimes p-i}$ is a direct sum of line bundles of degree $< \frac{1}{p-1}\deg\Omega^1_{Y_0}\cdot (p-i)-\deg\Omega^1_{Y_0}\cdot i<0$ because $\deg \Omega^1_{Y_0}=2g(Y_0)-2$ is positive. Therefore  $H^0(Y_0,\coker(\id_{T_{Y_0}}\otimes \ks_{h_0}^{\otimes p-1}))=0$ which implies that the map induced by $\id_{T_{Y_0}}\otimes \ks_{h_0}^{\otimes p-1}$ on cohomology in degree $1$ is injective.

We will now prove that there exists a family of curves $h:S\to Y$ satisfying properties (1), (2). We will construct $Y$ as a complete intersection of ample divisors in an appropriate compactification $\cM_g^*$ of the moduli space of curves of genus $g$, following the idea of Mumford (\hspace{1sp}\cite[\S 1]{oort-complete}) for constructing proper subvarieties of moduli spaces of curves. We only need to make sure that this construction goes through over $W(k)$, and check that the condition (2) is fulfilled. Denote by $M_{g,W(k)}$ the coarse moduli scheme of the stack $\cM_{g,W(k)}$, and by $\oM_{g,W(k)}$ the coarse moduli scheme of its Deligne-Mumford compactification.

By \cite[Theorem V.2.5]{faltings-chai} there exists a projective scheme $A_{g,W(k)}^*$ over $W(k)$ that contains the coarse moduli scheme $A_{g,W(k)}$ of principally polarized abelian varieties of dimension $g$ as a dense open subscheme such that $\dim_{W(k)}(A_{g,W(k)}^*\setminus A_{g,W(k)})=g$. Moreover, the Torelli map extends to a morphism $j:\oM_{g,W(k)}\to A^*_{g,W(k)}$ that induces a locally closed immersion of the locus $U\subset \oM_{g,W(k)}$ of smooth curves without nontrivial automorphisms into $A^*_{g,W(k)}$.  Denote also by $\pi:\cC\to U$ the universal curve over $U$.

Assume from now on that $g\geq \max(4, \frac{p}{3}+1)$. The inequality $g\geq 4$ ensures that the locus of curves without nontrivial automorphisms has complement of codimension $\geq 2$ in $\cM_g$, and the condition $g\geq \frac{p}{3}+1$ will be used to ensure property (2). Denote by $M_{g,W(k)}^*$ the closure of $j(M_{g,W(k)})$ inside $A_{g,W(k)}^*$. This is a flat projective scheme over $W(k)$ that contains $U$ as a dense open subscheme, such that fibers of $M_{g,W(k)}^*\setminus U$ over both points of $\Spec W(k)$ have codimension $\geq 2$ (though we do not claim that this complement is flat over $W(k)$). Denote by $\omega$ the line bundle $\det (\pi_*\Omega^1_{\cC/U})$ on $U$. By the property \cite[Theorem V.2.5(1)]{faltings-chai} of the Satake compactification, some positive power $\omega^{\otimes m}$ extends to a very ample $L$ line bundle on $M_{g,W(k)}^*$.

Take $Y\subset U\subset M_{g,W(k)}^*$ to be a smooth proper complete intersection of zero loci of sections of $L$ that has $\dim(Y/W(k))=1$. It is possible to find such a complete intersection entirely contained in $U$ because the codimension of the complement of $U$ is at least $2$. Let $h:S\to Y$ be the restriction of the universal curve $\cC$ to $Y$. We have the exact sequence corresponding to the closed embedding $\iota:Y\hookrightarrow U$:
\begin{equation}
0\to T_Y\xrightarrow{d\iota} T_{M_{g,W(k)}}|_Y\to L|_Y^{\oplus 3g-4}\to 0    
\end{equation}
where we identified the normal bundle to $Y$ with $L|_Y^{\oplus 3g-4}$. Since $U$ is the locus where the map $\cM_{g,W(k)}\to M_{g,W(k)}$ from the moduli stack of curves to the coarse moduli space is an isomorphism, the sheaf $T_{M_{g,W(k)}}|_Y$ is identified with $R^1h_*T_{S/Y}$ in a way that identifies $d\iota$ with the Kodaira-Spencer map $\ks_f$.

By \cite[p. 50, Theorem 2]{harris-mumford}, the determinant line bundle $\det(T_{M_g,W(k)}|_Y)$ is isomorphic to $\omega^{\otimes -13}|_Y$ and, in particular, has negative degree. Therefore $\deg(T_Y)+(3g-4)\cdot \deg L|_Y\leq 0$ giving that $\deg L|_Y\leq -\deg(T_Y)\cdot \frac{1}{3g-4}< -\deg(T_Y)\cdot \frac{1}{p-1}$ by our assumption on $g$. Therefore the family $h:S\to Y$ satisfies all of the desired assumptions.
\end{proof}

\begin{rem}
At the moment it is unclear to me whether the class $c_{X,p}$ is non-zero for other natural classes of varieties. For instance, when $p=2$ is $c_{X,2}\in H^2(X_0,\Lambda^2 T_{X_0})\simeq k$ non-zero for a general K3 surface $X$ over $W_2(k)$?\footnote{Forthcoming work of Ogus and Vologodsky on the relation between the Sen operator and divisibility of Frobenius on crystalline cohomology implies that $c_{X,2}$ is zero for every K3 surface over $W_2(k)$.}
\end{rem}

\section{Cohomology of abelian varieties}\label{AV coherent: section}

In this section, we apply Theorem \ref{cosimp: main theorem} to the de Rham, coherent, and \'etale cohomology of abelian varieties equipped with a group action. These results will play the key role in our example of a liftable variety with a non-degenerate conjugate spectral sequence, which ultimately relies on the existence of a supersingular abelian variety with an action of a group, whose Hodge cohomology is equivariantly decomposable, but de Rham cohomology is not.

In this section, for a commutative ring $R$ and a discrete group $G$ we denote by $D_G(R)$ the derived $\infty$-category of complexes of $R$-modules equipped with a $G$-action. 
\subsection{Coherent cohomology.} Recall that for a group $G$ acting on a finite-dimensional $k$-vector space $V$ we have a class $\alpha(V)\in \Ext^{p-1}_G(\Lambda^p V, V^{(1)})=H^{p-1}(G,V^{(1)}\otimes(\Lambda^p V)^{\vee})$ defined in Definition \ref{free cosimplicial: alpha definition}, corresponding to the extension $V^{(1)}[-2]\to\tau^{\geq 2}S^p(V[-1])\to\Lambda^p V[-p]$ in $D_G(k)$. If there exists a representation of $G$ on a free $W_2(k)$-module $\tV$ such that $\tV/p\simeq V$ then the module $\tV^{(1)}\otimes(\Lambda^p\tV)^{\vee}$ defines the Bockstein homomorphisms $\Bock^{i}:H^i(G,V^{(1)}\otimes(\Lambda^p V)^{\vee})\to H^{i+1}(G,V^{(1)}\otimes(\Lambda^p V)^{\vee})$.

\begin{pr}\label{AV coherent: coherent main}
Let $A$ be an abelian scheme over $W(k)$ equipped with an action of a discrete group $G$.
\begin{enumerate}[leftmargin=*]
\setcounter{enumi}{-1}
\item There exists a $G$-equivariant equivalence $\tau^{\leq p-1}\RGamma(A,\cO)\simeq \bigoplus\limits_{i=0}^{p-1}H^i(A,\cO)[-i]$.

\item  If $F^*_{A_0}:H^1(A_0,\cO)\to H^1(A_0,\cO)$ is zero, then there exists a $G$-equivariant equivalence 
\begin{equation}
\tau^{\leq p}\RGamma(A,\cO)\simeq\bigoplus\limits_{i=0}^{p}H^i(A,\cO)[-i].
\end{equation}
\item Suppose that $A$ admits an endomorphism $\tF_A:A\to A$ that lifts the absolute Frobenius endomorphism $F_{A_0}$ (in particular, $A_0$ is ordinary) and commutes with the action of $G$.  Then the extension class $H^p(A,\cO)\to \bigoplus\limits_{i=0}^{p-1}H^i(A,\cO)[p-i+1]$ in $D_G(W(k))$ corresponding to $\tau^{\leq p}\RGamma(A,\cO)$ lands in the direct summand $H^1(A,\cO)[p]$ and the resulting class in $\Ext^p_{G,W(k)}(H^p(A,\cO),H^1(A,\cO))=H^p(G,H^1(A,\cO)\otimes\Lambda^pH^1(A,\cO)^{\vee})$ is equal to
\begin{equation}\label{AV coherent: ext class formula}
\tF^*_A(\Bock^{p-1}(\alpha(H^1(A_0,\cO))))
\end{equation}
where $\alpha(H^1(A_0,\cO))\in H^{p-1}(G,H^1(A_0,\cO)^{(1)}\otimes\Lambda^pH^1(A_0,\cO)^{\vee})$ is the class corresponding to the representation of $G$ on the $k$-vector space $H^1(A_0,\cO)$, the map $\Bock^{p-1}$ is the Bockstein homomorphism induced by the $W(k)$-module $H^1(A,\cO)^{(1)}\otimes\Lambda^pH^1(A,\cO)^{\vee}$, and $\tF^*_A$ denotes the morphism induced by $\tF_A^*\otimes \id:H^1(A,\cO)^{(1)}\otimes \Lambda^p H^1(A,\cO)^{\vee}\to H^1(A,\cO)\otimes \Lambda^p H^1(A,\cO)^{\vee}$
\end{enumerate}
\end{pr}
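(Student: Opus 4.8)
The strategy is to reduce the statement to an application of the equivariant form of Theorem~\ref{cosimp: main theorem}, namely Theorem~\ref{cosimp: equivariant main theorem}, applied to a suitable derived commutative algebra on the quotient stack $[\Spec W(k)/G]$ attached to the abelian scheme $A$. The candidate algebra is $A:=R\pi_*^{\alg}\cO_A$, the derived global sections of the structure sheaf of $A$ over $W(k)$, regarded as a $G$-equivariant object via Lemma~\ref{dalg: pushforward of rings} applied to the morphism $[A/G]\to [\Spec W(k)/G]$; concretely this is the object $\RGamma(A,\cO)\in\DAlg_G(W(k))$. The first thing I would check is that this algebra satisfies the hypotheses of Theorem~\ref{cosimp: equivariant main theorem}: we have $H^0(\RGamma(A,\cO))=W(k)$, the module $H^1(A,\cO)$ is finite free over $W(k)$, and the classical computation of the cohomology of an abelian variety gives that cup product induces an isomorphism $\Lambda^\bullet H^1(A,\cO)\simeq H^\bullet(A,\cO)$ of graded algebras (this is where properness and the group structure on $A$ are used, and where one must be slightly careful with $p$-torsion, but over a flat base the exterior-algebra structure is classical). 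The algebra is augmented via the unit section $e\colon \Spec W(k)\to A$, equivariantly for the $G$-action since $G$ acts by group automorphisms fixing $e$.

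Once these hypotheses are verified, parts (0) and (the $i\le p$ statement of part (1)) of the proposition are immediate from Theorem~\ref{cosimp: equivariant main theorem}(1) and from the formula~(\ref{cosimp: equivariant main formula}): the extension class of $\tau^{\le p}\RGamma(A,\cO)$ factors as
\[
H^p(A,\cO)\xrightarrow{\alpha(H^1(A,\cO)/p)}F^*(H^1(A,\cO)/p)[p-1]\xrightarrow{\varphi_{A/p}}(H^1(A,\cO)/p)[p-1]\xrightarrow{\Bock[p-1]}H^1(A,\cO)[p],
\]
so if the Frobenius $\varphi$ on $H^1(A_0,\cO)=H^1(A,\cO)/p$ vanishes the whole composite is null-homotopic, giving the $G$-equivariant splitting in degrees $\le p$; combined with part~(0) in degrees $\le p-1$ this is part~(1). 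For part~(2) I would use Lemma~\ref{dalg: geometric frobenius}: the lift $\widetilde F_A$ of Frobenius, commuting with $G$, induces a Frobenius-type endomorphism on $\RGamma(A,\cO)$ lifting $\varphi_{A/p}$, so $\varphi_{A/p}$ factors as $F^*(H^1(A,\cO)/p)\to (H^1(A,\cO)/p)$ which is the reduction mod $p$ of $\widetilde F_A^*\colon F^*H^1(A,\cO)\to H^1(A,\cO)$. Feeding this into the formula above and identifying $\Bock$ applied after reduction mod $p$ with the Bockstein $\Bock^{p-1}$ of the $W(k)$-module $H^1(A,\cO)^{(1)}\otimes\Lambda^pH^1(A,\cO)^\vee$ — exactly the bookkeeping carried out in Lemma~\ref{cosimp applications: Bockstein} / Lemma~\ref{sen operator: classes bockstein relation w}, now in the equivariant setting over $[\Spec W(k)/G]$ — yields precisely the expression~(\ref{AV coherent: ext class formula}). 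The fact that the extension class lands in the single summand $H^1(A,\cO)[p]$ rather than the other $H^i(A,\cO)[p-i+1]$ is automatic from~(\ref{cosimp: equivariant main formula}), since that composite visibly targets only $H^1(A)[p]$ before the final inclusion into $\bigoplus_i H^i(A)[-i][p+1]$.

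The main obstacle I anticipate is purely a matter of packaging rather than of new ideas: making precise the identification of $G$-equivariant objects in $D(W(k))$ with objects of $D([\Spec W(k)/G])$ and checking that $R\pi_*^{\alg}\cO_A$ computed on the quotient stack agrees, as a derived commutative algebra with its Frobenius endomorphism, with the naive $G$-equivariant structure on $\RGamma(A,\cO)$ — this requires invoking the description $D([X/G])\simeq D_G(X)$ from the Notation section and compatibility of $R\pi_*^{\alg}$ with this, plus Lemma~\ref{dalg: geometric frobenius} for the Frobenius. A secondary point needing care is that Theorem~\ref{cosimp: main theorem} is stated for $X$ flat over $\bZ_p$ (or a formal scheme), and here $X=[\Spec W(k)/G]$ with $G$ discrete; since $W(k)$ is flat over $\bZ_p$ and the quotient is by a discrete group this is fine, but one should note it explicitly, perhaps by working $G$-equivariantly on $\Spec W(k)$ directly as in Section~\ref{cosimp: section}'s equivariant subsection. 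Finally one must confirm that the $\alpha$-class appearing, namely $\alpha(H^1(A,\cO)/p)$ for the $G$-representation $H^1(A_0,\cO)$, coincides with the class $\alpha(V)$ of Definition~\ref{free cosimplicial: alpha definition} for $V=H^1(A_0,\cO)$ — which is true essentially by definition, since both are the connecting map of the fiber sequence~(\ref{free cosimplicial: alpha extension}) for the object $V\in D_G(k)=D([\Spec k/G])$.
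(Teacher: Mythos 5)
Your proposal is correct and is essentially the paper's own argument: the paper likewise applies Theorem \ref{cosimp: equivariant main theorem} to the augmented $G$-equivariant derived commutative algebra $\RGamma(A,\cO)$ (augmented by the unit section), reads off parts (0) and (1) from the vanishing of the Frobenius term in formula (\ref{cosimp: equivariant main formula}), and for part (2) uses the Frobenius lift $\tF_A$ to commute the Bockstein past the Frobenius and then Lemma \ref{cosimp applications: Bockstein} to land on (\ref{AV coherent: ext class formula}). The only cosmetic difference is that the paper identifies the algebraic Frobenius with $F_{A_0}^*$ via Lemma \ref{applications: coh cosimplicial frob is frob} rather than citing Lemma \ref{dalg: geometric frobenius} directly, which amounts to the same point.
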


\begin{proof}
The identity section $e:\Spec W(k)\to A$ induces the augmentation $e^*:\RGamma(A,\cO)\to W(k)$, and the cohomology algebra $H^*(A,\cO)$ is the exterior algebra on $H^1(A,\cO)$ so we can apply Theorem \ref{cosimp: equivariant main theorem} to the $G$-equivariant derived commutative algebra $\RGamma(A,\cO)$. Part (0) then follows. To prove statements (1) and (2), we will apply (\ref{cosimp: equivariant main formula}).  The formula for the extension $H^p(A,\cO)\to \tau^{\leq p-1}\RGamma(A,\cO)[p+1]$ in $D_G(W(k))$ reads:
\begin{equation}\label{AV coherent: main formula}
\Lambda^p H^1(A,\cO)\xrightarrow{\alpha(H^1(A_0,\cO))} H^1(A_0,\cO)^{(1)}[p-1]\xrightarrow{F^*_{A_0}} H^1(A_0,\cO)[p-1]\xrightarrow{\Bock_{H^1(A,\cO)}} H^1(A,\cO)[p].
\end{equation}

Here we identified the Frobenius endomorphism of the cosimplicial algebra $\RGamma(A_0,\cO)$ with $F^*_{A_0}$ by Lemma \ref{dalg: geometric frobenius}. In particular, if $F^*_{A_0}$ on $H^1(A_0,\cO)$ is zero, then the composition (\ref{AV coherent: main formula}) vanishes, which proves part (1).

Regarding part (2), note that the lift of Frobenius $\tF_{A}^*$ induces the following commutative diagram in $D_G(W(k))$:
\begin{equation}
\begin{tikzcd}
H^1(A_0,\cO)^{(1)}\arrow[r, "F_{A_0}^*"]\arrow[d, "\Bock_A^{(1)}"] & H^1(A_0,\cO)\arrow[d, "\Bock_A"] \\
H^1(A,\cO)^{(1)}[1]\arrow[r, "\tF_A^*"] & H^1(A, \cO)[1].
\end{tikzcd}
\end{equation}
This allows us to rewrite (\ref{AV coherent: main formula}) as 
\begin{equation}\label{AV coherent: rewritten formula}
\Lambda^p H^1(A,\cO)\xrightarrow{\alpha(H^1(A_0,\cO))} H^1(A_0,\cO)^{(1)}[p-1]\xrightarrow{\Bock_{H^1(A,\cO)^{(1)}}[p]} H^1(A,\cO)^{(1)}[p]\xrightarrow{\tF^*_A} H^1(A,\cO)[p].
\end{equation}
The desired formula (\ref{AV coherent: ext class formula}) now follows from Lemma \ref{cosimp applications: Bockstein}.
\end{proof}

\begin{cor}\label{AV coherent: supersingularity criterion}
Let $n\geq 2p$ be an integer and $E_0$ be an elliptic curve over $k$. The group $GL_n(\bZ)$ acts on the abelian variety $E_0^{n}$ and therefore acts on the complex $\RGamma(E_0^n,\cO)$. The truncation $\tau^{\leq p}\RGamma(E_0^n,\cO)$ decomposes in $D_{GL_n(\bZ)}(k)$ as a direct sum of its cohomology modules if and only if $E_0$ is supersingular. 
\end{cor}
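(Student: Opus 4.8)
The plan is to deduce Corollary~\ref{AV coherent: supersingularity criterion} from Proposition~\ref{AV coherent: coherent main}. First I would record the relevant cohomology of $E_0^n$: writing $H^1(E_0,\cO)\simeq k$ for the one-dimensional space, the Künneth formula gives $H^1(E_0^n,\cO)\simeq k^{\oplus n}$, the standard $n$-dimensional permutation-type representation of $GL_n(\bZ)$ (acting through $GL_n(\bF_p)$, or rather through its reduction), and $H^i(E_0^n,\cO)\simeq \Lambda^i H^1(E_0^n,\cO)$. The absolute Frobenius $F^*_{E_0}$ on $H^1(E_0,\cO)$ is zero if $E_0$ is supersingular and an isomorphism if $E_0$ is ordinary; correspondingly $F^*_{E_0^n}$ on $H^1(E_0^n,\cO)$ is either zero or an isomorphism. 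This dichotomy is exactly the input that makes the two halves of the argument work.

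For the \emph{if} direction, suppose $E_0$ is supersingular. Then $F^*$ on $H^1(E_0^n,\cO)$ vanishes, so Proposition~\ref{AV coherent: coherent main}(1) (applied with $A=E^n$ for a lift $E$ of $E_0$ over $W(k)$, with its natural $GL_n(\bZ)$-action) gives a $GL_n(\bZ)$-equivariant decomposition $\tau^{\leq p}\RGamma(E_0^n,\cO)\simeq\bigoplus_{i=0}^p H^i(E_0^n,\cO)[-i]$. (Strictly one works with $\RGamma(E^n,\cO)$ over $W(k)$ and reduces mod $p$, or one invokes the mod-$p$ analogue; either way the decomposition descends to $D_{GL_n(\bZ)}(k)$.) This is precisely the claimed decomposition.

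For the \emph{only if} direction, suppose $E_0$ is ordinary. Then $E_0$ and its Frobenius lift, so $E_0^n$ admits a $GL_n(\bZ)$-equivariant Frobenius lift $\tF$ over $W(k)$ (take the $n$-th power of the canonical lift of Frobenius on the Serre--Tate canonical lift of $E_0$, which is $GL_n(\bZ)$-equivariant because the action permutes and linearly combines the factors), so Proposition~\ref{AV coherent: coherent main}(2) applies and the obstruction to decomposing $\tau^{\leq p}\RGamma(E_0^n,\cO)$ equivariantly is the class $\tF^*(\Bock^{p-1}(\alpha(H^1(E_0^n,\cO))))\in H^p(GL_n(\bZ),H^1\otimes(\Lambda^pH^1)^{\vee})$. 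Here $H^1(E_0^n,\cO)\simeq k^{\oplus n}$ is the restriction to $GL_n(\bF_p)\supset\!\!\!\!\!\!\text{(image of)}\,GL_n(\bZ)$ of the tautological representation $V$ — more precisely, since $n\geq 2p>1$, we are in the range covered by Proposition~\ref{group cohomology: from algebraic to discrete} and its companion results, which assert that $\alpha(V)$ restricted to $GL_n(\bF_{p^r})$ (and hence, via the arithmetic results of Proposition~\ref{group cohomology: ring of integers main}, to $SL_n$ of a suitable ring of integers, in particular to $GL_n(\bZ)$ after passing through a finite quotient) remains non-zero, and that its Bockstein is non-zero. Since $\tF^*$ is an isomorphism on $H^1$ in the ordinary case, $\tF^*(\Bock^{p-1}(\alpha))\neq 0$, so the truncation does not decompose equivariantly.

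The main obstacle, and the place that needs the most care, is the \emph{only if} direction: one must (a) produce a genuinely $GL_n(\bZ)$-equivariant Frobenius lift of $E_0^n$ over $W(k)$ — this should come from the canonical (Serre--Tate) lift of the ordinary elliptic curve $E_0$, whose Frobenius lift is functorial hence compatible with all automorphisms and isogenies used by $GL_n(\bZ)$ — and (b) transfer the non-vanishing of $\Bock^{p-1}(\alpha(V))$ from the algebraic group / finite group of Lie type setting of Section~\ref{group cohomology: section} to the discrete group $GL_n(\bZ)$, checking that the restriction map along $GL_n(\bZ)\to GL_n(\bF_p)$ (or to the relevant congruence quotient) does not kill the class. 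Both ingredients are supplied in spirit by the cited propositions; the bookkeeping to match the coefficient modules ($V^{(1)}\otimes(\Lambda^pV)^\vee$ versus $H^1(E_0^n,\cO)\otimes(\Lambda^pH^1(E_0^n,\cO))^\vee$, and the role of the Frobenius twist being trivialized over $k=\overline{\bF}_p$) is the routine part that I would not belabor here.
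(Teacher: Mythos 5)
Your supersingular (``if'') direction is exactly the paper's argument and is fine. The gap is in the ordinary (``only if'') direction, at the step where you must show that $\Bock^{p-1}(\alpha(H^1(E_0^n,\cO)))$ is non-zero as a class for the discrete group $GL_n(\bZ)$. You propose to detect it by restricting along $GL_n(\bZ)\to GL_n(\bF_p)$ (or ``a finite quotient''), claiming that $n\geq 2p$ puts you ``in the range covered by Proposition \ref{group cohomology: from algebraic to discrete}''. This misreads both the hypothesis and the mechanism. That proposition (and Proposition \ref{rational group cohomology: main non-vanishing}) concerns the $p$-dimensional tautological representation $V$ of $SL_p$ over $\bF_q$ with the \emph{field size} $q>p$; the constraint $n\geq 2p$ in the corollary has nothing to do with being in that range. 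Here the action on $H^1(E_0^n,\cO)\simeq k^{\oplus n}$ (resp.\ on its $W_2$-lift) factors through $GL_n(\bF_p)$ (resp.\ $GL_n(\bZ/p^2)$), i.e.\ through the \emph{prime} field, where the finite-field non-vanishing is not available — the paper explicitly expects Lemma \ref{group cohomology: A restriction injective}(2) to fail for $q=p$ (see the remark with the $p=2$ computation where $\alpha(V)$ dies in $H^1(SL_2(\bF_2),V^{(1)})$). Moreover, the class you need is not $\alpha$ of the $n$-dimensional representation detected on a finite group at all: Proposition \ref{group cohomology: ring of integers main} is about $SL_p(\cO_F)$ for a quadratic field $F$ (not ``$SL_n$ of a ring of integers''), and its proof genuinely uses the \emph{infinite} arithmetic group — Lemma \ref{group cohomology: nontrivial character cohomology} needs $\cO_F^{\times}$ infinite — so the non-vanishing cannot be arranged to factor ``through a finite quotient'' as you suggest.

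The paper's actual detection, which is the content your ``routine bookkeeping'' elides, is: choose $F$ real quadratic with $p$ inert as in Proposition \ref{group cohomology: ring of integers main}, identify $\cO_F\simeq\bZ^{\oplus 2}$ to get $SL_p(\cO_F)\subset GL_{2p}(\bZ)\subseteq GL_n(\bZ)$ (this is the only role of $n\geq 2p$), and observe that the restriction of $H^1(E^n,\cO)\simeq W(k)^{\oplus n}$ to $SL_p(\cO_F)$ contains the tautological $p$-dimensional representation $W(k)^{\oplus p}$ as a direct summand; then the non-vanishing of $\Bock^{p-1}(\alpha(V))$ in $H^p(SL_p(\cO_F),V^{(1)})$ forces the class for $GL_n(\bZ)$ to be non-zero, and $\tF^*_{E^n}$ being bijective (ordinarity) finishes the argument. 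As written, your proof does not supply a valid substitute for this step, so the ``only if'' direction is not established.
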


\begin{proof}
We can choose a lift $E$ of $E_0$ to an elliptic curve over $W(k)$ and apply Proposition \ref{AV coherent: coherent main} to the abelian scheme $E^n$ that is being acted on by the group $G=GL_n(\bZ)$. If $E_0$ is supersingular then Proposition \ref{AV coherent: coherent main}(1) gives that $\tau^{\leq p}\RGamma(E_0^n,\cO)$ is decomposable, for all $n$. 

If $E_0$ is ordinary, assume moreover that our chosen lift $E$ is the canonical one, so that $F_{E_0}$ lifts to a map $\tF_E:E\to E$. Following Proposition \ref{AV coherent: coherent main}(2), we need to check that the mod $p$ reduction of the class $\tF^*_{E^n}(\Bock^{p-1}(\alpha(H^1(E_0,\cO))))\in H^p(GL_n(\bZ),H^1(E^n,\cO)\otimes\Lambda^pH^1(E^n,\cO)^{\vee})$ is non-zero. Note that the map $\tF^*_{E^n}$ is a bijection, so we only need to check that its argument is non-zero modulo $p$. 

Let $F/\bQ$ be the quadratic extension provided by Proposition \ref{group cohomology: ring of integers main}. Identifying the abelian group $\cO_F$ with $\bZ^{\oplus 2}$ we get an embedding $SL_p(\cO_F)\subset GL_{2p}(\bZ)\subset GL_n(\bZ)$. The induced action of $SL_p(\cO_F)$ on $H^1(E^n,\cO)=H^1(E,\cO)\otimes_{W(k)}W(k)^{\oplus n}\simeq W(k)^{\oplus n}$ contains the tautological representation $W(k)^{\oplus p}$ as a direct summand, hence the mod $p$ reduction of the class $\Bock^{p-1}(\alpha(H^1(E^n,\cO)))$ does not vanish in $H^p(SL_p(\cO_F), H^1(E_0^n,\cO)^{(1)}\otimes \Lambda^p H^1(E_0,\cO)^{\vee})$. Therefore the analogous class in $H^p(GL_n(\bZ), H^1(E_0^n,\cO)^{(1)}\otimes \Lambda^p H^1(E_0,\cO)^{\vee})$ does not vanish either, and the non-decomposability of $\tau^{\leq p}\RGamma(E_0^n,\cO)$ in $D_{GL_n(\bZ)}(k)$ follows.
\end{proof}

\subsection{De Rham cohomology.} We can also apply Theorem \ref{cosimp: main theorem} to the de Rham cohomology of an abelian variety equipped with a group action. 

\begin{pr}\label{AV coherent: AV de rham}
Let $A_0$ be an abelian variety over $k$ equipped with an action of a discrete group $G$. 

\begin{enumerate}
    \item $\tau^{\leq p-1}\RGamma_{\dR}(A_0/k)$ is $G$-equivariantly equivalent to $\bigoplus\limits_{i=0}^{p-1}H^i_{\dR}(A_0/k)[-i]$.
    \item The extension class $H^p_{\dR}(A_0/k)\to\bigoplus\limits_{i=0}^{p-1}H^i_{\dR}(A_0/k)[p+1-i]$ corresponding to $\tau^{\leq p}\RGamma_{\dR}(A_0/k)$ lands in the direct summand $H^1_{\dR}(A_0/k)[p]$ and the resulting class in $\Ext^p_{G,k}(H^p_{\dR}(A_0/k),H^1_{\dR}(A_0/k))=H^p(G,H^1_{\dR}(A_0/k)\otimes \Lambda^p H^1_{\dR}(A_0/k)^{\vee})$ equals to \begin{equation}F_{A_0}^*\circ\Bock^{p-1}(\alpha(H^1_{\dR}(A_0/k)))\end{equation} where we use the crystalline cohomology $H^1_{\cris}(A_0/W_2(k))$ as a lift of the $G$-representation $H^1_{\dR}(A_0/k)$ to define the Bockstein homomorphism, and $F_{A_0}^*\circ -$ denotes composing an element of $\Ext^p_{G,k}(H^p_{\dR}(A_0/k),H^1(A_0/k)^{(1)})$ with the map $F_{A_0}^*:H^1_{\dR}(A_0/k)^{(1)}\to H^1_{\dR}(A_0/k)$ induced on $1$st de Rham cohomology by the Frobenius endomorphism of $A_0$.
\end{enumerate}
\end{pr}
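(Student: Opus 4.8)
The plan is to deduce this from Theorem \ref{cosimp: equivariant main theorem} applied to the $G$-equivariant derived commutative algebra $A := \RGamma_{\dR}(A_0/k) \in \DAlg(BG \times_k BG^{(1)})$, essentially mirroring the proof of Proposition \ref{AV coherent: coherent main} but using de Rham cohomology in place of coherent cohomology. First I would check that the hypotheses of Theorem \ref{cosimp: equivariant main theorem} are met: the Cartier isomorphism identifies $H^{\bullet}(\RGamma_{\dR}(A_0/k))$ with $H^{\bullet}_{\dR}(A_0/k) \simeq \Lambda^{\bullet} H^1_{\dR}(A_0/k)$ as a graded algebra (using that de Rham cohomology of an abelian variety is an exterior algebra on $H^1$, which holds in any characteristic), $H^0 = k$, and $H^1_{\dR}(A_0/k)$ is a finite-dimensional $k$-vector space. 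The augmentation needed to be in the ``augmented'' setting comes from the identity section $e : \Spec k \to A_0$, which yields $e^* : \RGamma_{\dR}(A_0/k) \to k$ as a map of derived commutative algebras, $G$-equivariantly since $G$ fixes the identity. This produces a splitting $s : H^1_{\dR}(A_0/k)[-1] \to \RGamma_{\dR}(A_0/k)$, and part (1) is then immediate from Theorem \ref{cosimp: equivariant main theorem}(1).

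For part (2), I would feed the data into formula (\ref{cosimp: equivariant main formula}). That formula expresses the extension class as the composition
\begin{equation*}
H^p(A) \xrightarrow{\alpha(H^1(A)/p)} F^*(H^1(A)/p)[p-1] \xrightarrow{\varphi_{A/p}} (H^1(A)/p)[p-1] \xrightarrow{\Bock_{H^1(A)}[p-1]} H^1(A)[p] \xrightarrow{\oplus} \tau^{\leq p-1}A[p+1].
\end{equation*}
Here the key input specific to abelian varieties is the identification of the Frobenius morphism $\varphi_{\RGamma_{\dR}(A_0/k)}$ with (the map induced by) the relative Frobenius $F_{A_0/k}$, which is exactly Lemma \ref{applications: coh cosimplicial frob is frob}; after twisting, this is the map $F_{A_0}^*$ appearing in the statement. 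The Bockstein is with respect to a flat $\bZ/p^2$-lift of the $G$-representation $H^1_{\dR}(A_0/k)$, and the natural such lift is crystalline cohomology $H^1_{\cris}(A_0/W_2(k))$, which carries a $G$-action refining the $G$-action on $H^1_{\dR}$ since $G$ acts on $A_0$ and crystalline cohomology is functorial. Assembling these, and observing as in the proof of Proposition \ref{AV coherent: coherent main}(2) that $F_{A_0}^*$ commutes past the Bockstein via the square relating $\Bock_{H^1_{\cris}}$ and $\Bock_{H^1_{\dR}}$ under the lift of Frobenius (here one does not even need a global lift of Frobenius on $A_0$, only the equality of the Bockstein maps before and after applying $F_{A_0}^*$, since $F_{A_0}^*$ is built into the formula and need not itself lift), one gets exactly $F_{A_0}^* \circ \Bock^{p-1}(\alpha(H^1_{\dR}(A_0/k)))$, landing in the summand $H^1_{\dR}(A_0/k)[p]$ of $\tau^{\leq p-1}\RGamma_{\dR}(A_0/k)[p+1]$. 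The bookkeeping translating the composition of maps in $D_G(k)$ into a Yoneda/cohomology class is handled by Lemma \ref{cosimp applications: Bockstein} (or rather its $G$-equivariant analogue obtained by working over $BG$).

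The main obstacle I anticipate is a slightly delicate point rather than a deep one: carefully setting up the equivariant derived commutative algebra structure on $\RGamma_{\dR}(A_0/k)$ together with the $G$-equivariant identification of its Frobenius with $F_{A_0/k}$, and making sure the crystalline lift of $H^1_{\dR}$ is genuinely $G$-equivariant and compatible with the Bockstein in $D_G$. In particular one must ensure that Lemma \ref{applications: coh cosimplicial frob is frob}, stated for schemes, upgrades to the equivariant setting $[A_0/G]$ — this follows formally since all the constructions (diffracted Hodge / de Rham cohomology as a pushforward, the Frobenius on $X_0^{\DHod}$) are functorial and hence descend to quotient stacks, exactly as Theorem \ref{cosimp: equivariant main theorem} was deduced from Theorem \ref{cosimp: main theorem}. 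Everything else is a direct specialization of results already established in Sections \ref{cosimp: section} and \ref{applications: section}.
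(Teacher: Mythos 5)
There is a genuine gap in the setup. You propose to apply Theorem \ref{cosimp: equivariant main theorem} directly to $A:=\RGamma_{\dR}(A_0/k)$, viewed over (a quotient of) $\Spec k$. But that theorem — like Theorem \ref{cosimp: main theorem} it specializes — requires the base to be \emph{flat over $\bZ_p$}, and its formula (\ref{cosimp: equivariant main formula}) involves $A/p$, $\varphi_{A/p}$ and $\Bock_{H^1(A)}$, where the Bockstein is the one intrinsic to the $W(k)$-module $H^1(A)$ of the input algebra (the connecting map of $H^1(A)\xrightarrow{p}H^1(A)\to H^1(A)/p$). If you feed in a $k$-algebra, the hypotheses fail, and even formally the Bockstein appearing in the formula would be the degenerate one attached to a $k$-vector space — there is no mechanism in the theorem, applied to $\RGamma_{\dR}(A_0/k)$, that makes the Bockstein of the crystalline lift $H^1_{\cris}(A_0/W_2(k))$ appear; you simply assert that "the natural such lift is crystalline cohomology," but the lift-dependence has to enter through the input algebra, not be inserted afterwards. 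The correct move (and the one the paper makes) is to mirror Proposition \ref{AV coherent: coherent main} at the level of the \emph{flat lift}: apply Theorem \ref{cosimp: equivariant main theorem} to the $G$-equivariant derived commutative $W(k)$-algebra $\RGamma_{\cris}(A_0/W(k))$, augmented via $e^*$ to $W(k)$, whose mod $p$ reduction is $\RGamma_{\dR}(A_0/k)$. The theorem then expresses the extension class for $\tau^{\leq p}\RGamma_{\cris}(A_0/W(k))$ through $\alpha(H^1_{\dR})$, the Frobenius of $\RGamma_{\dR}(A_0/k)$ (identified with $F_{A_0/k}^*$ via Lemma \ref{applications: coh cosimplicial frob is frob}), and the genuine crystalline Bockstein $\Bock_{H^1_{\cris}(A_0/W(k))}$; one then reduces modulo $p$ to obtain the stated description of $\tau^{\leq p}\RGamma_{\dR}(A_0/k)$. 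This reduction step is absent from your argument.

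A second, smaller point: your parenthetical claim that one "does not even need a lift of Frobenius" to commute $F_{A_0}^*$ past the Bockstein is not right as stated. The rewriting of $\Bock_{H^1_{\cris}}\circ\varphi_{A/p}$ as $\varphi^*\circ\Bock_{H^1_{\cris}^{(1)}}$ (which is what lets you present the answer as $F_{A_0}^*\circ\Bock^{p-1}(\alpha(H^1_{\dR}))$, via Lemma \ref{cosimp applications: Bockstein}) uses that the Frobenius on $H^1_{\dR}$ lifts to an endomorphism of the chosen $W(k)$-lift; the Bockstein is not natural for maps that do not lift. What saves the argument is that such a lift exists canonically and $G$-equivariantly — the crystalline Frobenius $\varphi^*_{A_0,\cris}$ on $\RGamma_{\cris}(A_0/W(k))$ — which is exactly why, unlike in part (2) of Proposition \ref{AV coherent: coherent main}, no ordinarity or lifting hypothesis on $A_0$ is needed here. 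Incorporating these two corrections, the rest of your outline (augmentation from the identity section, Cartier/exterior-algebra structure on cohomology, equivariance by functoriality) matches the paper's proof.
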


\begin{proof}
The derived commutative algebra $\RG_{\dR}(A_0/k)$ in the category of $G$-representations on $k$-vector spaces admits a lift to $W(k)$ given by the crystalline cohomology $\RG_{\cris}(A_0/W(k))$. This algebra has an augmentation $e^*:\RG_{\cris}(A_0/W(k))\to \RG_{\cris}(\Spec k/W(k))=W(k)$ so we can apply Theorem \ref{cosimp: equivariant main theorem} to it. This gives decomposability of $\tau^{\leq p-1}\RG_{\dR}(A_0/k)$ and describes the extension class $H^p_{\cris}(A_0/W(k))\to H^1_{\cris}(A_0/W(k))[p]$ as the composition

\begin{multline}\label{AV coherent: AV de rham formula}
H^p_{\cris}(A_0/W(k))=\Lambda^p H^1_{\cris}(A_0/W(k))\to \Lambda^p H^1_{\dR}(A_0/k)\xrightarrow{\alpha(H^1_{\dR}(A_0/k))} \\ H^1_{\dR}(A_0/k)^{(1)}[p-1]\xrightarrow{\varphi_{\RG_{\dR}(A_0/k)}}H^1_{\dR}(A_0/k)[p-1]\xrightarrow{\Bock_{H^1_{\cris}(A_0/W(k))}} H^1_{\cris}(A_0/W(k))[p].
\end{multline}

By Lemma \ref{applications: coh cosimplicial frob is frob} the morphism $\varphi_{\RG_{\dR}(A_0/k)}$ is equivalent to the pullback on de Rham cohomology along $F_{A_0}$, and in particular it lifts to the endomorphism $F_{A_0}^*$ of $\RG_{\cris}(A_0/W(k))$. Hence the composition of the last two arrows of (\ref{AV coherent: AV de rham formula}) is equivalent to 
\begin{equation}
H^1_{\dR}(A_0/k)^{(1)}[p-1]\xrightarrow{\Bock_{H^1_{\cris}(A_0/W(k))^{(1)}}}H^1_{\cris}(A_0/W(k))^{(1)}[p]\xrightarrow{F_{A_0}^*}H^1_{\cris}(A_0/W(k))[p].
\end{equation}

As in the deduction of Theorem \ref{cosimp applications: the best part} from Theorem \ref{cosimp applications: HT extension}, we now apply Lemma \ref{cosimp applications: Bockstein} to conclude that the class defined by the composition (\ref{AV coherent: AV de rham formula}) equals to $F_{A_0}^*\circ\Bock^{p-1}(\alpha(H^1_{\dR}(A_0/k)))\in \Ext^{p}_G(H^p_{\cris}(A_0/W(k)),H^1_{\cris}(A_0/W(k)))$. Reducing this class modulo $p$ gives the desired result.
\end{proof}

\subsection{Galois action on \'etale cohomology of abelian varieties.} This application was suggested by Alexei Skorobogatov who had independently conjectured the validity of Proposition \ref{AV coherent: etale} for $p=2$. 

Let $A$ be an abelian variety over an arbitrary field $F$ of characteristic not equal to $p$. Denote by $\oF$ a separable closure of $F$ and by $G_F:=\Gal(\oF/F)$ the absolute Galois group of $F$. We view $\RGamma_{\et}(A_{\oF},\bZ_p)$ as an object of the derived category $\hD_{G_F}(\bZ_p)$ of $p$-complete $\bZ_p$-modules equipped with a continuous action of $G_F$. More precisely, it is the inverse limit of categories $D_{G_F}(\bZ/p^n)$ of discrete $\bZ/p^n$-modules over the profinite group $G_F$.

\begin{pr}\label{AV coherent: etale}
\begin{enumerate}
    \item 
The truncation $\tau^{\leq p-1}\RGamma_{\et}(A_{\oF},\bZ_p)$ is equivalent to $\bigoplus\limits_{i=0}^{p-1} H^i_{\et}(A_{\oF},\bZ_p)[-i]$ in $\hD_{G_F}(\bZ_p)$.

\item The extension class $H^p_{\et}(A_{\oF},\bZ_p)\to \bigoplus\limits_{i=0}^{p-1} H^i_{\et}(A_{\oF},\bZ_p)[p+1-i]$ corresponding to the fiber sequence $\tau^{\leq p-1}\RGamma_{\et}(A_{\oF},\bZ_p)\to \tau^{\leq p}\RGamma_{\et}(A_{\oF},\bZ_p)\to H^p_{\et}(A_{\oF},\bZ_p)[-p]$ can be described as

\begin{multline}\label{AV coherent: etale formula}
H^p_{\et}(A_{\oF},\bZ_p)\xrightarrow{\alpha(H^1_{\et}(A_{\oF},\bF_p))}H^1_{\et}(A_{\oF},\bF_p)[p-1]\xrightarrow{\Bock_{H^1_{\et}(A_{\oF},\bZ_p)}}H^1_{\et}(A_{\oF},\bZ_p)[p]\\ \xrightarrow{\oplus}\bigoplus\limits_{i=0}^{p-1} H^i_{\et}(A_{\oF},\bZ_p)[p+1-i].
\end{multline}
\end{enumerate}
\end{pr}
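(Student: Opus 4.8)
The plan is to apply Theorem \ref{cosimp: main theorem} (in the equivariant form of Theorem \ref{cosimp: equivariant main theorem}, adapted to the setting of continuous $G_F$-modules) to the derived commutative algebra $\RGamma_{\et}(A_{\oF},\bZ_p)$. First I would verify the hypotheses: by Lemma \ref{dalg: site cohomology} applied to the \'etale site of $A_{\oF}$ with the constant sheaf $\bZ/p^n$, and passing to the limit over $n$, the complex $\RGamma_{\et}(A_{\oF},\bZ_p)$ is a derived commutative $\bZ_p$-algebra; this promotes to an object of $\hD_{G_F}(\bZ_p)$ since the $G_F$-action is by algebra automorphisms. We have $H^0=\bZ_p$, and $H^1_{\et}(A_{\oF},\bZ_p)$ is a free $\bZ_p$-module (the $p$-adic Tate module dual); the classical computation of \'etale cohomology of an abelian variety gives $H^{\bullet}_{\et}(A_{\oF},\bZ_p)\simeq\Lambda^{\bullet}H^1_{\et}(A_{\oF},\bZ_p)$ as algebras, compatibly with the Galois action. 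The identity section of $A$ furnishes an augmentation $\RGamma_{\et}(A_{\oF},\bZ_p)\to\bZ_p$ of derived commutative algebras inducing an isomorphism on $H^0$, so we are in the augmented situation of Corollary \ref{cosimp: augmented}.

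The technical point to address is that Theorem \ref{cosimp: main theorem} is stated for derived commutative algebras on a stack flat over $\bZ_p$, whereas here we work over $\Spec\bZ_p$ itself but with a continuous $G_F$-action, i.e. in $\hD_{G_F}(\bZ_p)=\lim_n D_{G_F}(\bZ/p^n)$. The cleanest route is to run the argument of the proof of Theorem \ref{cosimp: main theorem} verbatim inside each $D_{G_F}(\bZ/p^n)$ — all the ingredients (the maps $s_i$, the norm map $N_p$, Lemma \ref{cosimp: tensor power vs symmetric power}, and crucially Lemma \ref{free cosimplicial: algebra Bockstein}) are formulated for arbitrary prestacks, and $BG_F$ (or rather the site-theoretic incarnation) qualifies — and then take the limit. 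Alternatively, and perhaps more transparently, one can view $G_F$-equivariant $\bZ_p$-modules as quasi-coherent sheaves on the classifying prestack $BG_F$ over $\bZ_p$, which is flat over $\bZ_p$, so that Corollary \ref{cosimp: augmented} applies directly; the only subtlety is the $p$-completion, handled by noting that all objects in sight are already derived $p$-complete with finitely generated cohomology. Part (1) is then immediate from Corollary \ref{cosimp: augmented}(1).

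For part (2), I would feed the data into formula (\ref{cosimp: augmented formula}). The three inputs are: the class $\alpha(H^1_{\et}(A_{\oF},\bF_p))$ attached to the $\bF_p$-reduction of the Tate module, the Frobenius endomorphism $\varphi_{\RGamma_{\et}(A_{\oF},\bF_p)}$ of the mod-$p$ reduction, and the Bockstein $\Bock_{H^1_{\et}(A_{\oF},\bZ_p)}$. The key simplification is Lemma \ref{dalg: frobenius on etale}: the Frobenius endomorphism of the derived commutative $\bF_p$-algebra $\RGamma_{\et}(A_{\oF},\bF_p)$ is \emph{homotopic to the identity}. (This is where the $\et$-cohomology case diverges from the de Rham and coherent cases, where the Frobenius was genuinely nontrivial.) Substituting $\varphi=\mathrm{id}$ into (\ref{cosimp: augmented formula}) — and using that under the augmentation splitting the Bockstein and Frobenius are diagonalized, as in the proof of Corollary \ref{cosimp: augmented} — collapses the composition to exactly (\ref{AV coherent: etale formula}). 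I expect the main obstacle to be bookkeeping rather than conceptual: carefully setting up the equivariant/$p$-complete framework so that Theorem \ref{cosimp: main theorem} genuinely applies, and checking that Lemma \ref{dalg: frobenius on etale} (proved there for a scheme $X$) extends to $A_{\oF}$ together with its $G_F$-action, i.e. that the Frobenius-is-identity homotopy is $G_F$-equivariant — which it is, since it comes from the pointwise formula on $\bF_p$-valued functions on the \v{C}ech nerve, natural in everything.
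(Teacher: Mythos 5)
Your proposal is correct and follows essentially the same route as the paper: equip $\RGamma_{\et}(A_{\oF},\bZ_p)$ with its derived commutative algebra structure via Lemma \ref{dalg: site cohomology}, use the identity section as augmentation and the exterior-algebra structure on cohomology to apply Theorem \ref{cosimp: equivariant main theorem}, and then collapse formula (\ref{cosimp: equivariant main formula}) to (\ref{AV coherent: etale formula}) using Lemma \ref{dalg: frobenius on etale} that Frobenius on $\RGamma_{\et}(A_{\oF},\bF_p)$ is homotopic to the identity. Your extra care about the continuous/$p$-complete equivariant framework (working in each $D_{G_F}(\bZ/p^n)$ and passing to the limit) is a point the paper leaves implicit, but it is the intended reading and does not constitute a different argument.
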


\begin{proof}
The \'etale cohomology $\RGamma_{\et}(A_{\oF},\bZ_p)$ has a structure of a derived commutative algebra in $\hD_{G_F}(\bZ_p)$ by Lemma \ref{dalg: site cohomology}. The pullback along the identity section $e: \Spec F\to A$ induces an augmentation, and multiplication on cohomology induces isomorphisms $H^i_{\et}(A_{\oF},\bZ_p)\simeq\Lambda^i H^1_{\et}(A_{\oF},\bZ_p)$. Hence we may apply Theorem \ref{cosimp: equivariant main theorem} to $\RGamma_{\et}(A_{\oF},\bZ_p)$ which readily implies part (1).

Part (2) follows from the formula (\ref{cosimp: equivariant main formula}) by the fact that the Frobenius map $\varphi_{\RGamma_{\et}(A_{\oF},\bF_p)}:H^1_{\et}(A_{\oF},\bF_p)\to H^1_{\et}(A_{\oF},\bF_p)$ is the identity map, by Lemma \ref{dalg: frobenius on etale}. 
\end{proof}

We can apply this to describe some of the differentials in the Hochschild-Serre spectral sequence of $A$ with coefficients in $\bZ_p$ and $\bF_p$. Here $H^i(G_F,-)$ denotes continuous cohomology of the Galois group $G_F$.

\begin{cor}
\begin{enumerate}
\item In the spectral sequence $E_{2}^{i,j}=H^i(G_F,H^j_{\et}(A_{\oF},\bZ_p))\Rightarrow H_{\et}^{i+j}(A,\bZ_p)$ there are no non-zero differentials on pages $E_2,\ldots, E_{p-1}$, and the differentials $d_{p}^{i,p}:H^i(G_F,H^p_{\et}(A_{\oF},\bZ_p))\to H^{i+p}(G_F,H^1_{\et}(A_{\oF},\bZ_p))$ can be described as 
\begin{equation}
H^i(G_F,\Lambda^p H^1_{\et}(A_{\oF},\bZ_p))\xrightarrow{\alpha(H^1_{\et}(A_{\oF},\bF_p))}H^{i+p-1}(G_F,H^1_{\et}(A_{\oF},\bF_p))\xrightarrow{\Bock^{i+p-1}_{H^1_{\et}(A_{\oF},\bZ_p)}}H^{i+p}(G_F,H^1_{\et}(A_{\oF},\bZ_p)).
\end{equation}

\item Likewise, the spectral sequence $E_{2}^{i,j}=H^i(G_F,H^j_{\et}(A_{\oF},\bF_p))\Rightarrow H^{i+j}_{\et}(A,\bF_p)$ has no non-zero differentials on pages $E_2,\ldots, E_{p-1}$, and the differentials $d_{p}^{i,p}:H^i(G_F,H^p_{\et}(A_{\oF},\bF_p))\to H^{i+p}(G_F,H^1_{\et}(A_{\oF},\bF_p))$ are equal to  

\begin{equation}
\Bock^{i+p-1}_{H^1_{\et}(A_{\oF},\bZ/p^2)}\circ\alpha(H^1_{\et}(A_{\oF},\bF_p))-\alpha(H^1_{\et}(A_{\oF},\bF_p))\circ \Bock_{H^p_{\et}(A_{\oF},\bZ/p^2)}^{i}.
\end{equation}
\end{enumerate}
\end{cor}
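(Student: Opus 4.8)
The plan is to realize the Hochschild--Serre spectral sequence as the spectral sequence attached to the canonical (Postnikov) filtration on the object $\RGamma_{\et}(A_{\oF},\bZ_p)\in\hD_{G_F}(\bZ_p)$ after applying $\RGamma(G_F,-)$, and then to feed in the structural input of Proposition \ref{AV coherent: etale}. The identity section $e\colon\Spec F\to A$ furnishes the augmentation, and multiplication on cohomology exhibits $H^{\bullet}_{\et}(A_{\oF},\bZ_p)\simeq\Lambda^{\bullet}H^1_{\et}(A_{\oF},\bZ_p)$, so Proposition \ref{AV coherent: etale} (obtained from Theorem \ref{cosimp: equivariant main theorem}) applies. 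I would use the general principle that the differential $d_r^{i,j}$ of the hypercohomology spectral sequence of a Postnikov filtration is governed by the window $\tau^{\geq j-r+1}\tau^{\leq j}$ of the object (its $(r-1)$st $k$-invariant): if such a window splits as a sum of its cohomology, the corresponding $d_r^{i,j}$ vanishes, and the first potentially non-zero differential out of a column is read off from a single $\Ext^1$-class.

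First I would record that Proposition \ref{AV coherent: etale}(1) gives a $G_F$-equivariant splitting of $\tau^{\leq p-1}\RGamma_{\et}(A_{\oF},\bZ_p)$, and that Proposition \ref{AV coherent: etale}(2) shows the single extension class governing $\tau^{\leq p}$ over $\tau^{\leq p-1}$ has only one non-zero component, landing in $H^1_{\et}(A_{\oF},\bZ_p)[p]$ and equal to $\Bock_{H^1_{\et}(A_{\oF},\bZ_p)}\circ\alpha(H^1_{\et}(A_{\oF},\bF_p))$. Combined with the principle above this yields: every window $\tau^{[a,b]}\RGamma_{\et}(A_{\oF},\bZ_p)$ with $b\leq p$ is formal (for $b=p$ this uses that the only nonzero component of the $\tau^{\leq p}$-extension class sits in cohomological degree $1$, so components in degrees $2,\dots,p-1$ vanish), hence $d_r^{i,j}=0$ for $2\leq r\leq p-1$; and $d_p^{i,p}$, being the only remaining differential out of the $H^p$-column into a lower column and suffering no interference from earlier (vanishing) differentials, is induced by that single extension class. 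Since the spectral-sequence differential corresponding to the component $H^p\to H^1[p]$ of the $k$-invariant is precisely the one on page $p$, this gives the stated composition $\alpha(H^1_{\et}(A_{\oF},\bF_p))$ followed by $\Bock_{H^1_{\et}(A_{\oF},\bZ_p)}$, with the correct cohomological degrees.

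For part (2), with $\bF_p$-coefficients, I would pass through the $\bZ/p^2$-reduction: $\RGamma_{\et}(A_{\oF},\bF_p)\simeq\RGamma_{\et}(A_{\oF},\bZ/p^2)\otimes_{\bZ/p^2}\bF_p$, so the extension class governing $\tau^{\leq p}$ with $\bF_p$-coefficients is obtained from the $\bZ/p^2$-class by applying $i_{*}i^{*}$, where $i$ denotes reduction mod $p$. Computing $i_{*}i^{*}$ of an extension class built as a composition $\Bock\circ f$ with $f=\alpha$ is exactly the content of Lemma \ref{cosimp applications: hom bockstein} (Baer sum of the two fiber sequences obtained by applying $\RHom(-,i_{*}i^{*}N)$ and $\RHom(i_{*}i^{*}M,-)$ to $i_{*}i^{*}K\to K\to i_{*}i^{*}K$), which produces the commutator $\Bock_{H^1_{\et}(A_{\oF},\bZ/p^2)}\circ\alpha-\alpha\circ\Bock_{H^p_{\et}(A_{\oF},\bZ/p^2)}$; applying $H^i(G_F,-)$ then gives the displayed formula for $d_p^{i,p}$. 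The vanishing of $d_2,\dots,d_{p-1}$ in the $\bF_p$-case follows from the $\bZ_p$-case together with this reduction.

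The main obstacle I expect is the careful verification that $d_p^{i,p}$ is exactly, rather than merely up to contributions of earlier differentials, the map induced by the relevant component of the Postnikov $k$-invariant, together with the sign-and-degree bookkeeping in the $\bF_p$-case identifying the two terms of the commutator with $\Bock\circ\alpha$ and $\alpha\circ\Bock$ in the right cohomological degrees — this is where Lemma \ref{cosimp applications: hom bockstein} does the essential work and where one must be most careful. A secondary point requiring attention is the blanket vanishing of $d_2,\dots,d_{p-1}$ on columns $j>p$, i.e. the formality of windows $\tau^{[a,b]}$ with $b>p$: this should follow from the maps $s_j\colon S^j(H^1_{\et}(A_{\oF},\bZ_p)[-1])\to\RGamma_{\et}(A_{\oF},\bZ_p)$ of Theorem \ref{cosimp: main theorem} being isomorphisms on top cohomology and from the gap in the cohomological degrees of $S^j(\bZ_p[-1])$, but I would want to verify that input explicitly.
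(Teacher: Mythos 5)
Your core argument is the paper's own: part (1) is Proposition \ref{AV coherent: etale} read through the spectral sequence of the canonical filtration (the splitting of $\tau^{\leq p-1}\RGamma_{\et}(A_{\oF},\bZ_p)$ plus the fact that the $k$-invariant of $\tau^{\leq p}$ has its only nonzero component in $H^1_{\et}(A_{\oF},\bZ_p)[p]$, equal to $\Bock\circ\alpha$), and part (2) is obtained exactly as in the paper by applying Lemma \ref{cosimp applications: hom bockstein} to the mod $p$ reduction of that class, yielding the commutator $\Bock\circ\alpha-\alpha\circ\Bock$. So for the description of $d_p^{i,p}$ and for the vanishing of $d_2,\dots,d_{p-1}$ in columns $j\leq p$ your proposal matches the paper's proof, with the Postnikov bookkeeping made explicit.

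The genuine soft spot is the point you flagged yourself: the vanishing of $d_r^{i,j}$ for $2\leq r\leq p-1$ and $j>p$. Your fallback --- formality of windows $\tau^{[a,b]}$ with $b>p$ via the maps $s_j$ and a degree gap in $S^j(\bZ_p[-1])$ --- does not work as stated. Already for $j=p$ the gap is only $p-2$: over a flat $\bZ_p$-base, $S^p(M[-1])$ has cohomology in degrees $2$ and $p$ (Lemma \ref{free cosimplicial: symmetric power cohomology sheaves}), so a window of length $p-1$ ending in row $p$ is not split for degree reasons alone; that column is saved only by the finer fact (Lemma \ref{free cosimplicial: algebra Bockstein}, i.e.\ Proposition \ref{AV coherent: etale}(2)) that the degree-$2$ component of the connecting map vanishes. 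For $j>p$ neither $S^j(M[-1])$ nor the extensions in the canonical filtration are controlled by anything in the paper (Section \ref{higher ext: section} explicitly leaves degrees $>p$ open). Note also that this is not only about the blanket vanishing statement: to have $E_p^{i,p}=E_2^{i,p}$, hence the formula for $d_p^{i,p}$ on all of $H^i(G_F,H^p_{\et}(A_{\oF},\bZ_p))$, you must kill the earlier differentials entering column $p$ from columns $p+1,\dots,2p-2$. The cheap repair, special to abelian varieties, is functoriality under multiplication by an integer $N$ prime to $p$: $[N]^*$ acts on $E_2^{i,j}=H^i(G_F,H^j_{\et}(A_{\oF},\bZ_p))$ by $N^j$ and commutes with all differentials, so $d_r$ is annihilated by $N^{j-r+1}(N^{r-1}-1)$; choosing $N$ whose image generates $(\bZ/p)^{\times}$ makes this a $p$-adic unit for every $2\leq r\leq p-1$, so these differentials vanish in every column, and the same argument applies with $\bF_p$-coefficients. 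With that substitution your argument is complete; the paper's own one-line proof is equally terse on exactly this point.
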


\begin{proof}
Part (1) follows immediately from the formula (\ref{AV coherent: etale formula}) because $d_p^{i,p}$ is the result of applying the functor $H^i(G_F,-)$ to that map. Part (2) follows by applying Lemma \ref{cosimp applications: hom bockstein} to describe the effect of the mod $p$ reduction of (\ref{AV coherent: etale formula}) on cohomology.
\end{proof}

\section{Liftable variety with non-degenerate conjugate spectral sequence}

In this section we construct an example of a smooth projective variety over $k$ that lifts to a smooth projective scheme over $W(k)$ but whose conjugate spectral sequence has a non-zero differential. Let us start by describing our example.

Choose an elliptic curve $E$ over $W(k)$ such that the special fiber $E_0=E\times_{W(k)}k$ is supersingular. Denote $p^2$ by $q$, and consider the finite flat group scheme \begin{equation}H:=E[p]\otimes_{\bF_p}\bF_q^{\oplus p}.\end{equation} It is isomorphic to a product of $2p$ copies of $E[p]$, and we write it this way to define an action of the group $GL_{p}(\bF_q)$ on it via the tautological representation on $\bF_q^{\oplus p}$. Define the non-commutative finite flat group scheme
\begin{equation}\label{nondeg example: G formula}
G:=SL_{p}(\bF_q)\ltimes (E[p]\otimes_{\bF_p}\bF_q^{\oplus p}).
\end{equation}

The main result of this section is that the classifying stack $BG_0$ of the special fiber of this finite group scheme has a non-degenerate conjugate spectral sequence. In Corollary \ref{nondeg example: main corollary} we then approximate the stack $BG$ by a smooth projective scheme whose special fiber also has a non-zero differential in its conjugate spectral sequence.

\begin{thm}\label{nondeg example: maint thm stack}
The differential $d^{0,p}_{p+1}:H^0(BG^{(1)}_0,L\Omega^p_{BG^{(1)}_0/k})\to H^{p+1}(BG^{(1)}_0,\cO)$ on the $(p+1)$st page of the conjugate spectral sequence of the classifying stack $BG_0$ is non-zero.
\end{thm}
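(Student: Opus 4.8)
The plan is to reduce the computation of the differential $d_{p+1}^{0,p}$ on $BG_0$ to a computation on the quotient stack $[A_0/\Gamma]$ for an appropriate abelian variety $A_0$ with group action, where the results of Section~\ref{AV coherent: section} apply directly. First I would set $\Gamma := SL_p(\bF_q)$ and $A := E^{\times 2p}$, so that $A_0 = E_0^{\times 2p}$ carries a $\Gamma$-action through the tautological representation $\bF_q^{\oplus p}$ (viewing $\bF_q$ as $\bF_p^{\oplus 2}$). There is a natural map $[A_0/\Gamma]\to BG_0$ arising from the extension $1\to H_0\to G_0\to \Gamma\to 1$ together with the fact that $A_0 = E_0^{\times 2p}$ is a torsor-like resolution whose quotient by the finite group scheme $H_0 = E_0[p]\otimes \bF_q^{\oplus p}$ is again an abelian variety; concretely, multiplication by $p$ on $A_0$ exhibits $A_0$ as an $H_0$-torsor over $A_0$, giving $[A_0/G_0]\simeq A_0/H_0$-relative data that assembles into $[A_0/\Gamma]\to BG_0$ after passing to the $\Gamma$-quotient. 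The key point is that pullback along this map is compatible with the conjugate filtration and hence with the conjugate spectral sequence, so it suffices to show that $d_{p+1}^{0,p}$ is non-zero on $[A_0/\Gamma]$ and that the relevant class is in the image of pullback from $BG_0$.

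Second I would compute $d_{p+1}^{0,p}$ on $[A_0/\Gamma]$ using Proposition~\ref{AV coherent: AV de rham} (equivariant de Rham cohomology of an abelian variety). Since $E$ is supersingular, $F_{E_0}^*$ vanishes on $H^1(E_0,\cO)$, but $F_{E_0}^*$ is an \emph{isomorphism} on $H^1_{\dR}(E_0/k)$ (the de Rham $H^1$ of a supersingular curve is a non-trivial extension of $H^1(\cO)$ by $H^0(\Omega^1)$ on which Frobenius acts invertibly on the sub and by zero on the quotient — more precisely $\varphi$ is nonzero, which is what formula (\ref{AV coherent: AV de rham formula}) needs). Thus the extension class of $\tau^{\leq p}\RGamma_{\dR}(A_0/k)$ in $D_\Gamma(k)$ is $F_{A_0}^*\circ\Bock^{p-1}(\alpha(H^1_{\dR}(A_0/k)))$, and the heart of the matter is that $\alpha$ applied to the tautological $\bF_q^{\oplus p}$-summand of $H^1_{\dR}(A_0/k)\cong H^1_{\dR}(E_0/k)\otimes \bF_q^{\oplus p}$ is non-zero in $H^{p-1}(\Gamma, -)$, and remains non-zero after $\Bock^{p-1}$ and $F_{A_0}^*$. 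This is exactly the enhancement of Proposition~\ref{rational group cohomology: main non-vanishing} to the discrete group $SL_p(\bF_q)$ with $q = p^2 > p$, i.e. the non-vanishing results referenced as Proposition~\ref{group cohomology: from algebraic to discrete} and its Bockstein-twisted versions. I would invoke those non-vanishing statements to conclude $d_{p+1}^{0,p}\neq 0$ on $[A_0/\Gamma]$.

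Third I would check that this non-zero differential descends to $BG_0$, i.e.\ that the relevant classes $H^0(BG_0,L\Omega^p_{BG_0})\to H^0([A_0/\Gamma],L\Omega^p)$ and $H^{p+1}([A_0/\Gamma],\cO)\leftarrow H^{p+1}(BG_0,\cO)$ interact so that non-vanishing of $d_{p+1}^{0,p}$ on the cover forces non-vanishing on $BG_0$. The cleanest route is to observe that the conjugate spectral sequence and the class $c_{X_1,p}$ (equivalently $e_{X_1,p}$) are functorial, and that the de Rham cohomology of $BG_0$ in low degrees is computed by Hochschild–Serre from the de Rham cohomology of $BH_0$ and the group cohomology of $\Gamma$; the $\Gamma$-equivariant de Rham complex of $A_0$ receives $\RGamma_{\dR}(BH_0/k)$ compatibly, and the class $\alpha(H^1_{\dR}(A_0/k))$ already ``lives on'' the $\Gamma$-representation, which is pulled back from $BG_0$. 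So the differential on $BG_0$ is governed by the same group-cohomology class, and the non-vanishing on $[A_0/\Gamma]$ detects it.

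\textbf{Main obstacle.} The serious work is the third step — precisely relating the conjugate spectral sequence of the \emph{stack} $BG_0$ (whose cotangent complex and de Rham cohomology must be understood via $1\to H_0\to G_0\to\Gamma\to 1$) to the $\Gamma$-equivariant de Rham cohomology of $A_0$, and verifying that the map $[A_0/\Gamma]\to BG_0$ induces on the $E_{p+1}$-page an isomorphism (or at least an injection) on the relevant $(0,p)$ and $(p+1,0)$ entries, or more modestly that the class $d_{p+1}^{0,p}(\text{on }BG_0)$ pulls back to the non-zero class we computed. The algebraic non-vanishing input (Steps~1–2, relying on Propositions~\ref{rational group cohomology: main non-vanishing}, \ref{group cohomology: from algebraic to discrete} and \ref{AV coherent: AV de rham}) is essentially bookkeeping once those are granted; it is the geometric comparison between $BG_0$ and the abelian-variety quotient — the content of Proposition~\ref{nondeg example: AV quotient} and the construction of the map $[A_0/\Gamma]\to BG_0$ — that requires care, particularly checking the torsor structure $A_0\to A_0/H_0$ and the identification of the $\Gamma$-action on $H^1$ with the tautological representation.
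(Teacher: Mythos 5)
Your overall architecture is the paper's: map an equivariant abelian-variety quotient to $BG_0$, compute the equivariant de Rham extension class via Proposition \ref{AV coherent: AV de rham}, feed in the group-cohomological non-vanishing, and transfer along the classifying map using that pullback is an isomorphism on $H^0(-,L\Omega^p)$ (the paper's Lemma \ref{nondeg example: classifying map on hodge coh}). But there is a genuine gap in your choice of group: you take $\Gamma=SL_p(\bF_q)$ acting on $A_0=E_0^{\times 2p}$ "through the tautological representation on $\bF_q^{\oplus p}$". The finite group $SL_p(\bF_q)$ does not act on the abelian variety at all: $E_0\otimes_{\bF_p}\bF_q^{\oplus p}$ is not a Serre tensor construction (only $E\otimes_{\bZ}M$ for a finite free $\bZ$-module $M$ makes sense), and a matrix over $\bF_q$ does not define an endomorphism of $E_0^{\times 2p}$. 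What does act is the arithmetic group $SL_p(\cO_F)\subset GL_{2p}(\bZ)$ on $A=E\otimes_{\bZ}\cO_F^{\oplus p}$; its action on $A[p]\simeq H$ factors through $SL_p(\bF_q)$, and this is precisely how the paper builds the map $[A_0/SL_p(\cO_F)]\to BG_0$. Your Step 1 cannot be carried out literally with the finite group.

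The second, related gap is the non-vanishing input. The differential is governed not by $\alpha(V)$ alone but by $F^*\circ\Bock^{p-1}(\alpha(V))$, where the Bockstein comes from the $W_2$ (crystalline) lift of $V$; this is visible in formula (\ref{AV coherent: AV de rham formula}) and in the class $\delta$ of the paper's proof of Proposition \ref{nondeg example: AV quotient}. The paper only proves non-vanishing of $\alpha(V)$ for the finite group $SL_p(\bF_q)$ (Proposition \ref{group cohomology: from algebraic to discrete}); the Bockstein-enhanced non-vanishing is Proposition \ref{group cohomology: ring of integers main}, proved only for $SL_p(\cO_F)$, and its proof uses the infinite unit group $\cO_F^{\times}$ in an essential way (Lemma \ref{group cohomology: sorry not sorry}(2), Lemma \ref{group cohomology: nontrivial character cohomology}), not a property one can invoke for $SL_p(\bF_q)$ or $SL_p(W_2(\bF_q))$. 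So your appeal to "Bockstein-twisted versions" of Proposition \ref{group cohomology: from algebraic to discrete} has no support and is exactly why the paper works with $SL_p(\cO_F)$. Two smaller points: $F_{E_0}^*$ is not an isomorphism on $H^1_{\dR}(E_0)$ (it kills $H^0(\Omega^1)$; only non-vanishing is needed and available), and the passage from the non-split equivariant de Rham extension to a non-zero differential in the conjugate spectral sequence of the quotient stack also requires the $SL_p(\cO_F)$-equivariant decomposability of $\tau^{\leq p}\RGamma(A_0,\Omega^i)$ — this is where supersingularity of $E_0$ enters via Proposition \ref{AV coherent: coherent main}(1) and the mechanism is Lemma \ref{nondeg example: quotient differential}; your sketch leaves this step implicit.
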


In general, if $X_0$ is a smooth Artin stack over $k$, the spectral sequence associated with the conjugate filtration on its de Rham complex starts with the second page of the form $E_2^{i,j}=H^i(X^{(1)}_0,L\Omega^j_{X^{(1)}_0/k})$. As was discussed in Remark \ref{sen operator: drinfeld decomposition stacks}, if $X_0$ admits a lift over $W_2(k)$ then there are no non-zero differentials on pages $E_2,\ldots,E_p$. In this case we denote by $d^{i,j}_{p+1}:H^i(X^{(1)}_0,L\Omega^j_{X^{(1)}_0/k})\to H^{i+p+1}(X^{(1)}_0,L\Omega^{j-p}_{X^{(1)}_0/k})$ the differentials on page $E_{p+1}$.

Although we already have a formula for the differential $d^{0,p}_{p+1}$, provided by Theorem \ref{cosimp applications: the best part}, we take a somewhat roundabout approach to proving Theorem \ref{nondeg example: maint thm stack} by proving as an intermediary that for a certain $n$ the quotient stack of the abelian variety $E_0^{\times n}$ by a well-chosen infinite group has a non-degenerate conjugate spectral sequence. This non-degeneracy will arise from the results of Section \ref{AV coherent: section} by contrasting the extensions in the canonical filtrations on de Rham and Hodge cohomology of $E_0^{\times n}$, viewed as complexes with a group action.

Let $F$ be a number field provided by Proposition \ref{group cohomology: ring of integers main}. It is a quadratic extension of $\bQ$ such that $\cO_F/p\simeq\bF_{p^2}=\bF_q$. Consider the abelian scheme $A:=E\otimes_{\bZ}\cO_F^{\oplus p}$ equipped with the natural action of the group $GL_p(\cO_F)$. The `$\otimes$' symbol refers here to Serre's tensor product, cf. \cite[1.7.4]{conrad-cm}. Explicitly, choosing a $\bZ$-basis in $\cO_F$ we get an identification $A\simeq E^{\times 2p}$ and the group $GL_p(\cO_F)$ acts through the embedding $GL_p(\cO_F)\hookrightarrow GL_{2p}(\bZ)$.

We can view the multiplication-by-$p$ map $A\xrightarrow{[p]}A$ as an $A[p]$-torsor on $A$ and therefore get a $GL_p(\cO_F)$-equivariant classifying map $A\to BA[p]$. The action of $GL_p(\cO_F)$ on the $p$-torsion group scheme $A[p]$ factors through $GL_p(\cO_F/p)=GL_p(\bF_q)$ and $A[p]$ is $GL_p(\bF_q)$-equivariantly isomorphic to $H$. Hence the classifying map can be viewed as a $GL_p(\cO_F)$-equivariant morphism $A\to BH$ and therefore induces a morphism \begin{equation}f:[A/SL_p(\cO_F)]\to [(BH)/SL_p(\bF_q)]\simeq B(SL_p(\bF_q)\ltimes H)=BG\end{equation} of quotient stacks. Pullback along $f$ induces a morphism between conjugate spectral sequences of the special fibers of these stacks.  In particular, there is a commutative square
\begin{equation}
\begin{tikzcd}
H^0(BG^{(1)}_0,L\Omega^p_{BG^{(1)}_0})\arrow[r,"f_0^*"]\arrow[d, "d^{0,p}_{p+1}"] & H^0([A^{(1)}_0/SL_p(\cO_F)],L\Omega^p_{[A^{(1)}_0/SL_p(\cO_F)]})\arrow[d, "d^{0,p}_{p+1}"] \\
H^{p+1}(BG^{(1)}_0,\cO)\arrow[r,"f_0^*"] & H^{p+1}([A^{(1)}_0/SL_p(\cO_F)],\cO).
\end{tikzcd}
\end{equation}

Therefore the following Lemma \ref{nondeg example: classifying map on hodge coh} and Proposition \ref{nondeg example: AV quotient} imply Theorem \ref{nondeg example: maint thm stack}.
 
 \begin{lm}\label{nondeg example: classifying map on hodge coh}
 The map \begin{equation}f_0^*:H^0(BG_0,L\Omega^p_{BG_0/k})\to H^0([A_0/SL_p(\cO_F)],L\Omega^p_{[A_0/SL_p(\cO_F)]})\end{equation} is an isomorphism.
 \end{lm}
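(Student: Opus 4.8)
The plan is to compute both sides through cotangent complexes and reduce the statement to the vanishing of the differential of multiplication by $p$.

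First I would factor $f_0$ as
$[A_0/SL_p(\cO_F)]\to [BH_0/SL_p(\cO_F)]\to [BH_0/SL_p(\bF_q)]=BG_0$,
where the first arrow is induced by the classifying map $c\colon A_0\to BH_0$ of the $H_0$-torsor $[p]\colon A_0\to A_0$ (here $H_0=A_0[p]\simeq E_0[p]\otimes_{\bF_p}\bF_q^{\oplus p}$, the identification of $A_0[p]$ with $H_0$ being $SL_p$-equivariant). Since the $SL_p(\cO_F)$-action on $A_0[p]$ factors through reduction modulo $p$, i.e.\ through $SL_p(\bF_q)$, the second arrow is the base change of $BSL_p(\cO_F)\to BSL_p(\bF_q)$, whose relative cotangent complex vanishes (both being classifying stacks of discrete groups); hence $L_{[A_0/SL_p(\cO_F)]/BG_0}\simeq L_{A_0/BH_0}$.

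The heart of the argument is the claim that the natural map $f_0^*L_{BG_0/k}\to L_{[A_0/SL_p(\cO_F)]/k}=\Omega^1_{A_0/k}$ is an equivalence on $\tau^{\le 0}$; equivalently, the transitivity map $\Omega^1_{A_0/k}=L_{A_0/k}\to L_{A_0/BH_0}$ is zero. This I would prove by faithfully flat descent along $[p]\colon A_0\to A_0$: since $A_0=\mathrm{pt}\times_{BH_0}A_0$ with projection $[p]$, pulling back this map becomes the transitivity map $[p]^*\Omega^1_{A_0/k}\xrightarrow{d[p]}\Omega^1_{A_0/k}$ of $A_0\xrightarrow{[p]}A_0\to\mathrm{pt}$, which is $0$ because $[p]^*$ multiplies invariant differentials by $p=0$. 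Thus $\pi_0(f_0^*L_{BG_0/k})\xrightarrow{\sim}\Omega^1_{A_0/k}$. Applying the derived exterior power $L\Lambda^p$ (which takes objects in cohomological degrees $\ge 0$ to such, and satisfies $H^0(L\Lambda^p M)=\Lambda^p H^0(M)$), one gets $H^0(f_0^*L\Omega^p_{BG_0/k})\xrightarrow{\sim}\Lambda^p\Omega^1_{A_0/k}=H^0(L\Omega^p_{[A_0/SL_p(\cO_F)]/k})$. Both $L\Omega^p$'s live in cohomological degrees $\ge 0$, so their $\tau^{\ge 1}$-parts contribute nothing to $H^0$ of $\RGamma$ on either stack; hence $H^0(-,L\Omega^p_{-/k})$ is the space of global sections of the $H^0$-sheaf, and, via the identification above, $f_0^*$ on $H^0$ is the pullback of global sections of $\Lambda^p\Omega^1_{A_0/k}$ along $f_0$.

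It then remains to compare these global sections. On the source side, $H^0(L_{BG_0/k})=H^0(L_{BH_0/k})$ with its $SL_p(\bF_q)$-structure is, by the same $d[p]=0$ computation applied to the conormal of $A_0[p]$ in $A_0$ at the identity, canonically the representation $\omega_{A_0}=\Lie(A_0)^\vee$, on which $H_0$ acts trivially (the coadjoint action of the commutative group $H_0$) and $SL_p(\bF_q)$ acts through its tautological action on $\omega_{A_0}$; hence $H^0(BG_0,L\Omega^p_{BG_0/k})=(\Lambda^p\omega_{A_0})^{G_0}=(\Lambda^p\omega_{A_0})^{SL_p(\bF_q)}$. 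On the target side, $\Omega^1_{A_0/k}=\omega_{A_0}\otimes_k\cO_{A_0}$ gives $H^0([A_0/SL_p(\cO_F)],L\Omega^p_{[A_0/SL_p(\cO_F)]/k})=(\Lambda^p\omega_{A_0})^{SL_p(\cO_F)}$, and since the $SL_p(\cO_F)$-action on $\omega_{A_0}$ again factors through $SL_p(\bF_q)$, this equals $(\Lambda^p\omega_{A_0})^{SL_p(\bF_q)}$ as well; the map $f_0^*$ is the identity on this space, hence an isomorphism. The main obstacle will be the bookkeeping in these last two paragraphs — pinning down the transitivity map as $d[p]$, identifying the conormal sheaf $SL_p(\bF_q)$-equivariantly with $\omega_{A_0}$, and keeping track of the (trivial) $H_0$-action and of the compatibility of the two appearances of $\omega_{A_0}$ — rather than any genuinely deep point.
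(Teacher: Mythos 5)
Your route is essentially the paper's: you reduce everything to the statement that $df_0$ induces an isomorphism from $H^0$ of the pulled-back cotangent complex onto $\Omega^1_{A_0}$, and you prove exactly that via the Cartesian square for the torsor $[p]\colon A_0\to A_0$, the vanishing $d[p]=0$, and faithfully flat descent along $[p]$ — this is precisely the content of the paper's auxiliary lemma on $L\Omega^1_{B(A_0[p])/k}$ — and the final bookkeeping (trivial $H_0$-action because $H_0$ is commutative, the $SL_p(\cO_F)$-action factoring through $SL_p(\bF_q)$, both sides becoming $SL_p(\bF_q)$-invariants of $\Lambda^p\omega_{A_0}$) is also the paper's.

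The one step whose justification fails as written is the parenthetical principle that for any $M$ in cohomological degrees $\geq 0$ one has $H^0(\Lambda^p M)=\Lambda^p H^0(M)$. There is no right-exactness available on the coconnective side, and the natural map $\Lambda^p H^0(M)\to H^0(\Lambda^p M)$ need not be an isomorphism: already for the tensor square, with $R=k[x]/(x^2)$ and $M=[R\xrightarrow{x}R]$ in degrees $0,1$, the map $H^0(M)\otimes H^0(M)\to H^0(M\otimes M)$ sends the generator $x\otimes x$ to $x^2=0$ while $H^0(M\otimes M)\neq 0$; the same failure occurs for $\Lambda^2$ of $M^{\oplus 2}$. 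What rescues your argument is a feature of the complex at hand that you never record: $H^1(f_0^*L\Omega^1_{BG_0/k})$ is the pullback of the finite-dimensional co-Lie space of $H_0$, hence locally free, so the two-term complex is Zariski-locally split with locally free pieces; by d\'ecalage, $\Lambda^{j}$ of a locally free sheaf placed in degree $1$ is concentrated in degree $j$, so locally $H^0(\Lambda^p)$ is $\Lambda^p$ of the degree-zero piece, and the natural comparison map is then an isomorphism globally. Equivalently, you can prove — as the paper does, and it drops out of the very same $d[p]=0$ computation you already performed — the stronger splitting $L\Omega^1_{B(A_0[p])/k}\simeq q^*\omega_{A_0}\oplus q^*\omega_{A_0}[-1]$, after which the passage from $L\Omega^1$ to $L\Omega^p$ on $H^0$ is immediate. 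With that replacement for the false general commutation principle, your proof is correct and coincides with the paper's.
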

 
 \begin{pr}\label{nondeg example: AV quotient}
The differential $d_{p+1}^{0,p}:H^0(Y^{(1)}_0,L\Omega^p_{Y_0/k})\to H^{p+1}(Y^{(1)}_0,\cO)$ in the conjugate spectral sequence for $Y_0=[A_0/SL_p(\cO_F)]$ is non-zero.
 \end{pr}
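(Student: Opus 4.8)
The plan is to reduce Proposition \ref{nondeg example: AV quotient} to the equivariant computations of Section \ref{AV coherent: section} applied to the abelian scheme $A=E\otimes_{\bZ}\cO_F^{\oplus p}$ with its $SL_p(\cO_F)$-action. Write $Y_0=[A_0/SL_p(\cO_F)]$ and $G=SL_p(\cO_F)$. First I would identify the groups appearing in the conjugate spectral sequence of $Y_0$ with group cohomology: since $A_0$ is an abelian variety, $H^i(Y_0,L\Omega^j_{Y_0/k})=H^i(G,H^j_{\dR}(A_0/k)\otimes\Lambda^{\,?}\ldots)$ — more precisely the $E_2$-page splits off the contribution $H^{i}(G, H^{0}(A_0,\Omega^j)\otimes\ldots)$ and, in the range we need, the relevant term is $H^0(G,L\Omega^p_{Y_0/k})\supseteq H^0(G,\Lambda^pH^1_{\dR}(A_0/k)\otimes\text{(coherent part)})$ together with $H^{p+1}(Y_0,\cO)\supseteq H^{p+1}(G,\cO)$ coming from $H^{p+1}$ of the classifying space. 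Then the differential $d_{p+1}^{0,p}$ is computed by Theorem \ref{cosimp applications: the best part} (in its $G$-equivariant incarnation, Theorem \ref{cosimp: equivariant main theorem}) applied to the $G$-equivariant derived commutative algebra $\RGamma_{\dR}(A_0/k)$, which by Proposition \ref{AV coherent: AV de rham}(2) sends a class to $F_{A_0}^*\circ\Bock^{p-1}(\alpha(H^1_{\dR}(A_0/k)))$.

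The key point is that the class $\alpha(H^1_{\dR}(A_0/k))$ is nonzero after reduction to $SL_p(\cO_F)$. I would argue as follows. As a $G$-representation, $H^1_{\dR}(A_0/k)\cong H^1_{\dR}(E_0/k)\otimes_k (\cO_F/p)^{\oplus p}$, and since $\cO_F/p\simeq\bF_q$, the restriction of this representation to $SL_p(\cO_F)\subset SL_p(\bF_q)$ contains the tautological $p$-dimensional representation $V$ of $SL_p(\bF_q)$ as a $k$-linear direct summand (tensored with the line $H^1_{\dR}(E_0/k)^{\mathrm{some\ graded\ piece}}$). By naturality of the construction of $\alpha$, the class $\alpha(H^1_{\dR}(A_0/k))$ restricted to $SL_p(\cO_F)$ contains $\alpha(V)$ as a summand; and by Proposition \ref{group cohomology: ring of integers main}, the Bockstein $\Bock^{p-1}$ applied to $\alpha(V)$ is a nonzero class in the cohomology of $SL_p(\cO_F)$. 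Crucially, here $F_{A_0}^*$ must be an isomorphism rather than zero: this is where supersingularity of $E_0$ is used with the \emph{opposite} sign from Proposition \ref{AV coherent: coherent main} — on de Rham cohomology the (crystalline/divided-power) Frobenius $F_{A_0}^*$ on $H^1_{\dR}(A_0/k)$ is \emph{injective} (indeed bijective, being semilinear and nonzero on the relevant Dieudonné module piece), by Lemma \ref{applications: coh cosimplicial frob is frob} identifying it with the relative Frobenius action and the classical fact that Frobenius is a bijection on $H^1_{\mathrm{cris}}$ of an abelian variety. Hence $F_{A_0}^*\circ\Bock^{p-1}(\alpha(H^1_{\dR}(A_0/k)))$ restricts to a nonzero class on $SL_p(\cO_F)$, so $d_{p+1}^{0,p}\neq 0$.

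In carrying this out I would be careful about two bookkeeping issues. First, Theorem \ref{cosimp: equivariant main theorem} describes the extension class $H^p(A)\to \tau^{\leq p-1}A[p+1]$, and I need to match its image in the summand $H^1_{\dR}(A_0/k)[p]$ with the entry $d_{p+1}^{0,p}$ of the conjugate spectral sequence of $Y_0$; this is the usual translation "extension class of the two-step Postnikov truncation $\leftrightarrow$ $E_{p+1}$-differential out of the lowest conjugate-weight row", applied after taking $\RGamma(G,-)$, and it is the same dictionary used implicitly in the corollaries following Theorem \ref{cosimp applications: the best part de Rham}. Second, I must check that the vanishing on pages $E_2,\dots,E_p$ holds for the stack $Y_0$, which follows since $Y_0$ lifts over $W_2(k)$ (indeed over $W(k)$, via $A$ over $W(k)$ and $SL_p(\cO_F)$ discrete), invoking \cite{kubrak-prikhodko-di} as recalled just before the statement, so that the first potentially nonzero differential is precisely $d_{p+1}$.

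The main obstacle I anticipate is the nonvanishing input $\Bock^{p-1}(\alpha(V))\neq 0$ on $SL_p(\cO_F)$: this is exactly Proposition \ref{group cohomology: ring of integers main}, whose proof relies on the Cline–Parshall–Scott–van der Kallen comparison between the cohomology of the algebraic group and of its $\bF_q$-points, building on the non-vanishing of $\alpha(V)$ for the algebraic group $GL_{p,k}$ (Proposition \ref{rational group cohomology: main non-vanishing}). Granting that result, the remaining work here is the routine but careful verification that the restriction functors and the direct-sum decomposition of $H^1_{\dR}(A_0/k)$ as an $SL_p(\cO_F)$-representation are compatible with the formation of $\alpha$ and of the Bockstein, and that $F_{A_0}^*$ is invertible on the relevant piece; none of this requires new ideas beyond naturality (Lemma \ref{dalg: derived functors for stacks}) and the classical structure of $H^1_{\dR}$ of a supersingular elliptic curve.
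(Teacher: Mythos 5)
Your proposal conflates two different spectral sequences, and this is a genuine gap. The differential $d^{0,p}_{p+1}$ in the statement lives in the conjugate spectral sequence of the stack $Y_0=[A_0/SL_p(\cO_F)]$, whose $E_2$-page is $E_2^{i,j}=H^i(Y_0^{(1)},L\Omega^j_{Y_0^{(1)}})$; each such term already mixes group cohomology with Hodge cohomology of $A_0$ via Hochschild--Serre. What Theorem \ref{cosimp: equivariant main theorem} together with Proposition \ref{AV coherent: AV de rham}(2) computes is the extension class of the \emph{canonical} (Postnikov) filtration of the complex $\RGamma_{\dR}(A_0/k)$ in $D_{SL_p(\cO_F)}(k)$, i.e.\ a differential in the spectral sequence $H^i(G,H^j_{\dR}(A_0/k))\Rightarrow H^{i+j}_{\dR}(Y_0)$ --- a different spectral sequence with a different $E_2$-page. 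Passing from non-splitting of that extension to non-vanishing of $d^{0,p}_{p+1}$ for the stack is exactly the content of the paper's Lemma \ref{nondeg example: quotient differential}, and that bridge has a hypothesis your argument never secures: the complexes $\tau^{\leq p-i}\RGamma(A_0,\Omega^i_{A_0/k})$ must be $SL_p(\cO_F)$-equivariantly decomposable. This is where supersingularity of $E_0$ is actually used --- $F^*_{A_0}=0$ on $H^1(A_0,\cO)$, so Proposition \ref{AV coherent: coherent main}(1) splits $\tau^{\leq p}\RGamma(A,\cO)$ equivariantly, and triviality of $\Omega^i_{A_0}$ as an equivariant bundle propagates this to all Hodge pieces. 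Your proposal omits this input entirely, so even granting your de Rham computation you have not shown the stated differential is non-zero.

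Second, your assertion that supersingularity enters ``with the opposite sign'' because $F^*_{A_0}$ is injective (even bijective) on $H^1_{\dR}(A_0/k)$ is false: in characteristic $p$ the Frobenius pullback annihilates every $1$-form, so $H^0(A_0,\Omega^1_{A_0})\subset \ker\bigl(F^*_{A_0}|_{H^1_{\dR}}\bigr)$ and $F^*$ mod $p$ is never injective (bijectivity of Frobenius on $H^1_{\cris}$ holds only after inverting $p$). All the de Rham side needs is $F^*_{E_0}\neq 0$ on $H^1_{\dR}(E_0/k)$, which holds for any elliptic curve; the paper handles the non-invertibility by choosing $\xi\in H^1_{\dR}(E_0/k)$ with $F^*_{E_0}(\xi)\neq 0$, lifting it to $H^1_{\cris}(E_0/W_2(k))$, and using the resulting split equivariant embedding $\iota_{\xi}:V\hookrightarrow H^1_{\dR}(A_0)$ so that $\varphi^*_{A_0}\circ\iota_{\xi}^{(1)}$ is split injective; the same device (rather than your ``$\alpha$ of a direct sum contains $\alpha(V)$ as a summand'', which also needs care since $\alpha$ is not additive in direct sums) is what makes the non-vanishing of $\Bock^{p-1}(\alpha(V))$ from Proposition \ref{group cohomology: ring of integers main} propagate to the connecting map $\delta$. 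So the overall skeleton (equivariant main theorem plus Proposition \ref{group cohomology: ring of integers main}) is the right one, but as written the argument both misses the decomposability hypothesis that makes the spectral-sequence comparison legitimate and rests on an incorrect claim about Frobenius on de Rham cohomology.
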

\begin{rem}
A trivialization of the cotangent bundle to the abelian variety $A_0$ provides a decomposition of the de Rham complex of $A_0$ \cite[Remarque 2.6 (iv)]{deligne-illusie}. Proposition \ref{nondeg example: AV quotient} demonstrates that this decomposition in general cannot be chosen to be compatible with the action of the group of automorphisms of the lift $A$.
\end{rem}
 
 \begin{proof}[Proof of Lemma \ref{nondeg example: classifying map on hodge coh}]

For a finite group scheme $\Gamma$ over $k$ denote by $\pi_{\Gamma}:\Spec k\to B\Gamma$ the natural quotient morphism, and by $e_{\Gamma}:\Spec k \to \Gamma$ the identity section. Recall that the cotangent complex $L\Omega^1_{B\Gamma/k}$ is described as the shift $e_{\Gamma}^*L\Omega^1_{\Gamma/k}[-1]$ of the co-Lie complex of $\Gamma$, where we view $L\Omega^1_{B\Gamma/k}\in D^+(B\Gamma)$ as an object of the derived  category of representations of $\Gamma$. Recall that since the scheme $\Gamma$ is a locally complete intersection over $k$, the cotangent complex $L\Omega^1_{\Gamma/k}$ has Tor-amplitude $[-1,0]$.
 
In particular, $L\Omega^{1}_{BG_0/k}$ is concentrated in degrees $\geq 0$, its $p$th exterior power $L\Omega^p_{BG_0/k}$ is concentrated in non-negative degrees as well, so $H^0(BG_0,L\Omega^p_{BG_0/k})$ is simply the invariant subspace $(H^0(\pi_{G_0}^*L\Omega^p_{BG_0/k}))^{G_0}=(H^0(\pi_{G_0}^*L\Omega^p_{BG_0/k}))^{SL_p(\bF_q)}$. In the last equality we used that $H_0$ is commutative, hence the adjoint action of $G_0$ on the cotangent complex $\pi_{G_0}^*L\Omega^p_{BG_0}\simeq \pi_{H_0}^*L\Omega^p_{BH_0}$ factors through $SL_p(\bF_q)=G_0/H_0$. Similarly, $H^0([A_0/SL_p(\cO_F)],L\Omega^p_{[A_0/SL_p(\cO_F)]})=H^0(A_0,\Omega^p_{A_0})^{SL_p(\cO_F)}=H^0(A_0,\Omega^p_{A_0})^{SL_p(\bF_q)}$.
 
By Lemma \ref{nondeg example: p torsion cotangent complex} below, $f_0$ induces an isomorphism $H^0(f_0^*L\Omega^1_{BG_0})\simeq \Omega^1_{A_0}$ which upon passing to derived $p$-th exterior powers induces an isomorphism $H^0(f_0^*L\Omega^p_{BG_0})\simeq \Omega^p_{A_0}$, because the map $\Lambda^p H^0(L\Omega^1_{BG_0})\to H^0(\Lambda^p L\Omega^1_{BG_0})$ is an isomorphism, as follows from the explicit description of the cotangent complex of $BG_0$ provided by the first part of Lemma \ref{nondeg example: p torsion cotangent complex}. Therefore it induces an isomorphism on the subspaces of $SL_p(\bF_q)$-invariants, which finishes the proof of Lemma \ref{nondeg example: classifying map on hodge coh}.  \end{proof}
 
 \begin{lm}\label{nondeg example: p torsion cotangent complex}
 Let $q_T:T\to S$ be an abelian scheme over a base $S$ such that $p\cO_S=0$. We denote the dual Lie algebra $e^*\Omega^1_{T/S}$ by $\omega_{T}$, where $e:S\to T$ is the identity section. Denote by $q:BT[p]\to S$ the structure morphism. The cotangent complex of $BT[p]$ can be described as $L\Omega^1_{BT[p]/S}\simeq q^*\omega_T\oplus q^*\omega_T[-1]$, equipped with a trivial $T[p]$-action.
 
 The classifying map $f:T\to BT[p]$ corresponding to the torsor $T\xrightarrow{[p]}T$ induces the map $df:f^*L\Omega^1_{BT[p]/S}\to \Omega^1_{T/S}$ that gives an isomorphism $H^0(f^*L\Omega^1_{BT[p]/S})\simeq \Omega^1_{T/S}$.
 \end{lm}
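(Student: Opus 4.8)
The plan is to compute the cotangent complex of $BT[p]$ directly from the general recipe for the cotangent complex of a classifying stack, and then to analyze the classifying map using the fibre square that expresses $T$ as a $T[p]$-torsor over $BT[p]$.

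First I would recall that for any finite flat group scheme $\Gamma$ over $S$ one has $L\Omega^1_{B\Gamma/S}\simeq e^*L\Omega^1_{\Gamma/S}[-1]$ as an object of $D(B\Gamma)$, i.e.\ of $\Gamma$-equivariant objects in $D(S)$; this is the co-Lie complex shifted by $[-1]$, and it is already invoked in the proof of Lemma~\ref{nondeg example: classifying map on hodge coh}. In our case $\Gamma=T[p]$ is the kernel of $[p]:T\to T$, which is an \'etale-locally-on-$S$ complete intersection inside $T$ (it is the fibre of an isogeny), so its co-Lie complex is computed from the conormal sequence of $T[p]\hookrightarrow T$: since $[p]$ is multiplication by $p$ and $p\cO_S=0$, the map $[p]^*\Omega^1_{T/S}\to \Omega^1_{T/S}$ induced on invariant differentials is zero, hence $e^*L\Omega^1_{T[p]/S}\simeq \omega_T\oplus \omega_T[1]$ where $\omega_T=e^*\Omega^1_{T/S}$ (the $\omega_T$ in degree $0$ coming from $\Omega^1_{T/S}|_{T[p]}$ pulled to $e$, the $\omega_T$ in degree $-1$ coming from the conormal bundle $I/I^2$, which is $[p]^*\omega_T\cong\omega_T$). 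Shifting by $[-1]$ gives $L\Omega^1_{BT[p]/S}\simeq q^*\omega_T\oplus q^*\omega_T[-1]$, and since $T[p]$ is commutative the adjoint action on $\omega_T$ is trivial, so this identification is $T[p]$-equivariant with the trivial action. This establishes the first assertion.

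Next I would treat the classifying map $f\colon T\to BT[p]$. It fits into a Cartesian square with $S\to BT[p]$ (the universal torsor point), whose base change to $T$ is exactly $[p]\colon T\to T$; equivalently, pulling back $\pi\colon S\to BT[p]$ along $f$ recovers the torsor $[p]\colon T\to T$. Functoriality of the cotangent complex applied to this square lets me identify $f^*L\Omega^1_{BT[p]/S}$ with the relative cotangent complex of the torsor projection, shifted appropriately; concretely, $f^*L\Omega^1_{BT[p]/S}\simeq \mathrm{fib}\bigl(\Omega^1_{T/S}\to L\Omega^1_{T/T}\bigr)[\,\cdot\,]$ unwinds to the statement that $df\colon f^*L\Omega^1_{BT[p]/S}\to L\Omega^1_{T/S}$ sits in a fibre sequence whose third term is $[p]^*\omega_T[1]\simeq\omega_T[1]$, reflecting the two summands: on the degree-$0$ part $q^*\omega_T$ the map $df$ is the canonical map $\omega_T\otimes\cO_T\xrightarrow{\sim}$ the invariant differentials of $T$, hence an isomorphism onto $\Omega^1_{T/S}$ (using that $T$ is an abelian scheme so $\Omega^1_{T/S}\cong q'^*\omega_T$), while the degree-$(-1)$ summand maps to the term measuring the failure of $[p]$ to be \'etale, which is $\omega_T[1]$ and contributes nothing in degree $0$. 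Taking $H^0$ therefore yields $H^0(f^*L\Omega^1_{BT[p]/S})\xrightarrow{\sim}\Omega^1_{T/S}$, which is the second assertion.

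The main obstacle I anticipate is making the identification of $df$ on each summand genuinely canonical and $\Gamma$-equivariant rather than merely abstract: one must track the two copies of $\omega_T$ in $e^*L\Omega^1_{T[p]/S}$ through the shift and the base-change square carefully, being sure that the degree-$0$ piece really is the ``Lie algebra'' direction (which survives) and the degree-$(-1)$ piece is the ``$p$-torsion'' direction (which dies in $H^0$). A clean way to organize this is to use that $T$ is an abelian scheme, so $\Omega^1_{T/S}\cong q'^*\omega_T$ is free, and to pull the entire conormal-sequence computation back along $[p]$, where everything becomes the statement that $[p]^*\colon [p]^*\Omega^1_{T/S}\to\Omega^1_{T/S}$ is multiplication by $p=0$; the resulting long exact sequence then immediately isolates $H^0$ as claimed. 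Once this bookkeeping is done the rest is formal.
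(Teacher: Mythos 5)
Your first half is fine and is essentially the paper's computation: you split the co-Lie complex of $T[p]$ using $L\Omega^1_{BT[p]/S}\simeq e^*L\Omega^1_{T[p]/S}[-1]$ and the vanishing of the conormal map coming from $d[p]=0$, whereas the paper splits the transitivity triangle for $T\xrightarrow{[p]}T\to S$ pulled back to $T$; both rest on the same vanishing and give $L\Omega^1_{BT[p]/S}\simeq q^*\omega_T\oplus q^*\omega_T[-1]$.

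The second half has a gap as written. The transitivity triangle for $T\xrightarrow{f}BT[p]\to S$ is $f^*L\Omega^1_{BT[p]/S}\xrightarrow{df}\Omega^1_{T/S}\to L\Omega^1_{T/BT[p]}$, and its third term is not $[p]^*\omega_T[1]$: pulling $f$ back along the faithfully flat $\pi:S\to BT[p]$ yields the smooth map $T\to S$, so $L\Omega^1_{T/BT[p]}$ is a rank-$\dim_S T$ vector bundle in degree $0$. More seriously, the claim that on the degree-zero summand $q^*\omega_T$ the map $df$ ``is the canonical map, hence an isomorphism'' is precisely the content of the lemma and does not follow from the existence of the decomposition --- a priori $H^0(df)$ could be zero, exactly as $d[p]$ is; note also that the degree-zero copy of $L\Omega^1_{BT[p]/S}$ comes from $H^{-1}$ of the co-Lie complex (the conormal direction), not from the cotangent space of $T[p]$ at the identity, so the bookkeeping you flag as the main obstacle is in fact reversed from what you assert. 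The repair is the plan in your final paragraph, which is also the paper's argument: since $[p]$ is faithfully flat, it suffices to prove the statement after pulling back along $[p]$; under the flat base-change identification of $[p]^*L\Omega^1_{T/BT[p]}$ with the relative differentials of the source copy of $T$ over $S$, the canonical map $\Omega^1_{T/S}\to L\Omega^1_{T/BT[p]}$ pulls back to $d[p]=0$, hence vanishes, and the long exact sequence of the transitivity triangle then forces $H^0(df)$ to be an isomorphism onto $\Omega^1_{T/S}$. (The paper packages the same point as a comparison of the transitivity triangles of the two vertical arrows of the Cartesian square, using that the induced map on the third terms is an equivalence and that the bottom triangle splits because $d[p]=0$.) So the route is the right one, but the middle paragraph must be replaced by this descent argument rather than an appeal to canonicity of $df$ on a chosen summand.
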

 
 \begin{proof}
By definition, the classifying map fits into the Cartesian square
\begin{equation}\label{nondeg example: p torsion cotangent complex diagram}
\begin{tikzcd}
T\arrow[r,"q_T"]\arrow[d, "{[}p{]}"] & S \arrow[d, "\pi"] \\
T\arrow[r, "f"] & BT[p]
\end{tikzcd}
\end{equation}
that induces an equivalence $L\Omega^1_{T\xrightarrow{[p]}T}\simeq q_T^*L\Omega^1_{S/BT[p]}$. In turn, the base change along for the Cartesian square
\begin{equation}
\begin{tikzcd}
T[p]\arrow[r]\arrow[d] & S\arrow[d, "\pi"] \\
S\arrow[r, "\pi"] & BT[p]
\end{tikzcd}
\end{equation}
implies that $L\Omega^1_{S/BT[p]}$ is equivalent to $e^*L\Omega^1_{T[p]/S}$, therefore $L\Omega^1_{T\xrightarrow{[p]}T}$ is equivalent to $q_T^*e^*L\Omega^1_{T[p]/S}$.

Transitivity triangles for the two vertical morphisms in (\ref{nondeg example: p torsion cotangent complex diagram}) fit into the following commutative diagram of sheaves on $T$
\begin{equation}\label{nondeg example: p torsion cotangent complex diagram2}
\begin{tikzcd}
q_T^*\pi^*L\Omega^1_{BT[p]/S}\arrow[r]\arrow[d] &  0\arrow[r]\arrow[d] & q_T^*e^*L\Omega^1_{T[p]/S}\arrow[d,"\sim"] \\
\left[p\right]^*\Omega^1_{T/S}\arrow[r, "{d[p]}"] & \Omega^1_{T/S}\arrow[r] & q_T^*e^*L\Omega^1_{T[p]/S}. 
\end{tikzcd}
\end{equation}

We have $d[p]=0$, and the bottom triangle gives that $e^*L\Omega^1_{T[p]/S}\simeq \omega_T[1]\oplus\omega_T$ which implies that $L\Omega^1_{BT[p]/S}\simeq q^*\omega_T\oplus q^*\omega_T[-1]$. The left vertical map in (\ref{nondeg example: p torsion cotangent complex diagram2}) is the pullback of the map $df:f^*L\Omega^1_{BT[p]/S}\to \Omega^1_{T/S}$ along $[p]:T\to T$. Since $[p]$ is a faithfully flat map, to prove that $df$ induces an isomorphism on $H^0$ it is enough to prove that the left vertical map in (\ref{nondeg example: p torsion cotangent complex diagram2}) induces an isomorphism on $H^0$. But this follows from the fact that the right vertical map in this diagram is an equivalence.
\end{proof}

To prove Proposition \ref{nondeg example: AV quotient} we observe that the conjugate spectral sequence of a quotient stack of a scheme $X_0$ by an action of a discrete group $\Gamma$ has a non-zero differential in a certain situation where the Hodge cohomology of $X_0$ is $\Gamma$-equivariantly decomposable while the de Rham cohomology is not. The precise statement is:

\begin{lm}\label{nondeg example: quotient differential}
Let $X_0$ be a smooth scheme over $k$ equipped with an action of a discrete group $\Gamma$ such that $X_0$ admits a lift $X_1$ over $W_2(k)$ to which the action of $\Gamma$ lifts. Suppose that for all $i\leq p$ the complex $\tau^{\leq p-i}\RG(X_0,\Omega^i_{X_0/k})$ is $\Gamma$-equivariantly equivalent to $\bigoplus\limits^{p-i}_{j=0}H^j(X,\Omega^i_{X/k})[-j]$ but the map $H^p_{\dR}([X_0/\Gamma]/k)\to H^p_{\dR}(X_0/k)^{\Gamma}$ is not surjective. Then the conjugate spectral sequence for the stack $Y_0=[X_0/\Gamma]$ has a non-zero differential $d^{0,p}_{p+1}:H^0(Y^{(1)}_0,L\Omega^p_{Y^{(1)}_0/k})\to H^{p+1}(Y^{(1)}_0,\cO_{Y^{(1)}_0})$ on the $(p+1)$th page.
\end{lm}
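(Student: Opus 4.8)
The plan is to show that the edge map of the conjugate spectral sequence of $Y_0=[X_0/\Gamma]$ in bidegree $(0,p)$ is not surjective. Since $Y_0$ lifts to $[X_1/\Gamma]$ over $W_2(k)$, by \cite{kubrak-prikhodko-di} its conjugate spectral sequence has no non-zero differentials on pages $E_2,\dots,E_p$, and for degree reasons the only differential out of $E^{0,p}$ that can be non-zero is $d^{0,p}_{p+1}$. Hence $d^{0,p}_{p+1}=0$ if and only if the edge map
\[
\mathrm{edge}^{Y}\colon H^p_{\dR}(Y_0/k)\to H^0(Y^{(1)}_0,L\Omega^p_{Y^{(1)}_0/k})=H^0(X^{(1)}_0,\Omega^p_{X^{(1)}_0/k})^{\Gamma},
\]
induced by $\dR_{Y_0/k}\to \dR_{Y_0/k}/\Fil^{\conj}_{p-1}\to L\Omega^p_{Y^{(1)}_0/k}[-p]$, is surjective; so it is enough to prove it is not.

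Next I would analyze the scheme $X_0$. A $\Gamma$-equivariant $W_2(k)$-lift provides the $\Gamma$-equivariant Deligne--Illusie decomposition $\Fil^{\conj}_{p-1}\dR_{X_0/k}\simeq\tau^{\le p-1}\dR_{X_0/k}\simeq\bigoplus_{i=0}^{p-1}\Omega^i_{X^{(1)}_0/k}[-i]$ in $D_{\Gamma}(X^{(1)}_0)$ (e.g. the canonical decomposition induced by the Sen operator, cf. Section \ref{sen operator: section}). Applying $\RG(X^{(1)}_0,-)$ to the triangle $\Fil^{\conj}_{p-1}\RG_{\dR}(X_0/k)\to\RG_{\dR}(X_0/k)\to\RG_{\dR}(X_0/k)/\Fil^{\conj}_{p-1}$ (whose last term is concentrated in degrees $\ge p$, with $H^p=H^0(X^{(1)}_0,\Omega^p_{X^{(1)}_0})$ by Cartier) and reading off degree $p$ gives a short exact sequence of $\Gamma$-modules $0\to\bigoplus_{i=0}^{p-1}H^{p-i}(X^{(1)}_0,\Omega^i_{X^{(1)}_0/k})\to H^p_{\dR}(X_0/k)\xrightarrow{\mathrm{edge}^{X}}H^0(X^{(1)}_0,\Omega^p_{X^{(1)}_0/k})$ in which the first term is exactly $\ker\mathrm{edge}^{X}$. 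Taking $\Gamma$-invariants (left exact) identifies $\ker((\mathrm{edge}^{X})^{\Gamma})$ with $\bigoplus_{i\le p-1}H^{p-i}(X^{(1)}_0,\Omega^i_{X^{(1)}_0})^{\Gamma}\subseteq H^p_{\dR}(X_0/k)^{\Gamma}$.

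The Hodge hypothesis then enters: it is Frobenius-stable, so $\tau^{\le p-i}\RG(X^{(1)}_0,\Omega^i_{X^{(1)}_0/k})$ is $\Gamma$-equivariantly a sum of shifts of its cohomology, and applying $\RG(\Gamma,-)$ shows that for each $i\le p-1$ the edge map $H^{p-i}(Y^{(1)}_0,\Omega^i_{Y^{(1)}_0})\to H^{p-i}(X^{(1)}_0,\Omega^i_{X^{(1)}_0})^{\Gamma}$ is split surjective. Since the conjugate filtration on $\dR_{Y_0/k}$ pulls back along the atlas $X_0\to Y_0$ to that on $\dR_{X_0/k}$ and $\Fil^{\conj}_{p-1}\dR_{X_0/k}$ is $\Gamma$-equivariantly $\bigoplus_{i\le p-1}\Omega^i_{X^{(1)}_0}[-i]$, one gets $H^p(\Fil^{\conj}_{p-1}\RG_{\dR}(Y_0/k))=\bigoplus_{i\le p-1}H^{p-i}(Y^{(1)}_0,\Omega^i_{Y^{(1)}_0})$, and functoriality of the conjugate filtration produces a commutative square placing the split surjection $\bigoplus_i H^{p-i}(Y^{(1)}_0,\Omega^i)\twoheadrightarrow\bigoplus_i H^{p-i}(X^{(1)}_0,\Omega^i)^{\Gamma}$ over the pullback map $\rho\colon H^p_{\dR}(Y_0/k)\to H^p_{\dR}(X_0/k)^{\Gamma}$ together with the inclusion $\ker((\mathrm{edge}^{X})^{\Gamma})\hookrightarrow H^p_{\dR}(X_0/k)^{\Gamma}$. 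Consequently $\mathrm{im}(\rho)\supseteq\ker((\mathrm{edge}^{X})^{\Gamma})$.

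Finally, by naturality of edge maps $\mathrm{edge}^{Y}=(\mathrm{edge}^{X})^{\Gamma}\circ\rho$. By hypothesis $\mathrm{im}(\rho)$ is a proper subgroup of $H^p_{\dR}(X_0/k)^{\Gamma}$, yet it contains $\ker((\mathrm{edge}^{X})^{\Gamma})$; hence its image under $(\mathrm{edge}^{X})^{\Gamma}$ is a proper subgroup of $\mathrm{im}((\mathrm{edge}^{X})^{\Gamma})\subseteq H^0(Y^{(1)}_0,L\Omega^p_{Y^{(1)}_0})$, so $\mathrm{edge}^{Y}$ is not surjective and $d^{0,p}_{p+1}\neq0$. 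I expect the main obstacle to be the diagram chase in the third step: verifying that the conjugate filtration on the stack $Y_0$ is compatible with the one on $X_0$ pulled back along the atlas, and that the conjugate edge maps for $X_0$ and $Y_0$ and the Hochschild--Serre edge maps for the $H^{p-i}(\Omega^i)$ fit into the required commutative diagram. The place where the Hodge hypothesis is genuinely used is exactly the split surjectivity of the edge maps $H^{p-i}(Y^{(1)}_0,\Omega^i)\to H^{p-i}(X^{(1)}_0,\Omega^i)^{\Gamma}$; everything else is formal.
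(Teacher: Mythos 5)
Your proposal is correct and follows essentially the same route as the paper's proof: both reduce the statement to non-surjectivity of the edge map $H^p_{\dR}(Y_0/k)\to H^0(Y_0^{(1)},L\Omega^p_{Y_0^{(1)}/k})$, use the $\Gamma$-equivariant splitting of $\Fil^{\conj}_{p-1}$ coming from the equivariant lift together with the Hodge decomposability hypothesis (via the identification $H^{p-i}([X_0^{(1)}/\Gamma],L\Omega^i)\simeq H^{p-i}(\Gamma,\RGamma(X_0^{(1)},\Omega^i))$) to control the kernel of the edge map, and conclude by a diagram chase against the non-surjectivity of $H^p_{\dR}([X_0/\Gamma])\to H^p_{\dR}(X_0)^{\Gamma}$. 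The only difference is presentational: you run the chase directly (showing $\mathrm{im}(\rho)\supseteq\ker((\mathrm{edge}^X)^{\Gamma})$), whereas the paper argues by contradiction with the same two-row exact diagram.
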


\begin{proof}
For an algebraic stack $Z$ over $k$ that satisfies $L\Omega^i_{Z/k}\in D^{\geq 0}(Z)$ for all $i$, we have $H^p_{\dR}(Z/k)=\Fil_p^{\conj}H^p_{\dR}(Z/k)$, and we denote by $a_{Z,p}:H^p_{\dR}(Z/k)\to H^0(Z^{(1)},L\Omega^p_{Z^{(1)}/k})$ the map on cohomology induced by the morphism $\Fil_p^{\conj}\dR_Z\to\gr_p^{\conj}\dR_Z\simeq L\Omega^p_{Z^{(1)}/k}[-p]$. Note that both $X_0$ and $[X_0/\Gamma]$ satisfy this condition.

The conclusion of the lemma is equivalent to saying that the map $a_{[X_0/\Gamma],p}:H^p_{\dR}([X_0/\Gamma]/k)\to H^0([X^{(1)}_0/\Gamma],L\Omega^p_{[X^{(1)}_0/\Gamma]/k})=H^0(X^{(1)}_0,\Omega^p_{X^{(1)}_0/k})^{\Gamma}$ is not surjective. Consider the commutative diagram induced by the pullback along the map $\pi:X_0\to [X_0/\Gamma]$:

\begin{equation}
\begin{tikzcd}
0\to \Fil_{p-1}^{\conj}H^p_{\dR}([X_0/\Gamma])\arrow[r]\arrow[d] & H^p_{\dR}([X_0/\Gamma]) \arrow[r, "a_{[X_0/\Gamma],p}"]\arrow[d] & H^0([X^{(1)}_0/\Gamma],L\Omega^p_{[X_0^{(1)}/\Gamma]})\arrow[d,equal]  \\
0\to (\Fil_{p-1}^{\conj}H^p_{\dR}(X_0))^{\Gamma}\arrow[r] & H^p_{\dR}(X_0)^{\Gamma}\arrow[r,"a_{X_0,p}"]& H^0(X^{(1)}_0,\Omega^p_{X^{(1)}_0})^{\Gamma}.
\end{tikzcd}
\end{equation}

Both rows are exact on the left and in the middle, and our goal is to show that the top row is not exact on the right. For the sake of a contradiction, assume that $a_{[X_0/\Gamma],p}$ is surjective. Then the second map in the bottom row is of course surjective as well, so both rows are exact sequences. However, we will now check that the left vertical map is surjective which will yield the contradiction with our assumption that the middle vertical map is not surjective.

By the assumption that $X_0$ lifts to $W_2(k)$ together with the group action, we have splittings $\Fil^{\conj}_{p-1}H^p_{\dR}([X_0/\Gamma])\simeq \bigoplus\limits_{i=0}^{p-1}H^{p-i}([X^{(1)}_0/\Gamma],L\Omega^i_{[X^{(1)}_0/\Gamma]})$ and $\Fil_{p-1}^{\conj}H^p_{\dR}(X_0)\simeq\bigoplus\limits_{i=0}^{p-1} H^{p-i}(X^{(1)}_0,\Omega^i_{X^{(1)}_0})$ (the second splitting is moreover $\Gamma$-equivariant), and the map $\pi^*$ is compatible with these decompositions. Thus to prove that the map $\Fil^{\conj}_{p-1}H^p_{\dR}([X_0/\Gamma])\to (\Fil^{\conj}_{p-1}H^p_{\dR}(X_0))^{\Gamma}$ is surjective it is enough to show that $H^{p-i}([X^{(1)}_0/\Gamma],L\Omega^i_{[X_0^{(1)}/\Gamma]})\to H^{p-i}(X_0,\Omega^i_{X_0^{(1)}})^{\Gamma}$ is surjective for all $0\leq i\leq p-1$. The group $H^{p-i}([X^{(1)}_0/\Gamma],L\Omega^i_{[X^{(1)}_0/\Gamma]})$ is identified with the group cohomology $H^{p-i}(\Gamma, \RGamma(X^{(1)}_0,\Omega^i_{X^{(1)}_0}))$ so this surjectivity is implied by the fact that each of the complexes $\tau^{\leq p-i}\RGamma(X_0,\Omega^i_{X^{(1)}_0})$ is $\Gamma$-equivariantly decomposable. 
\end{proof}

\begin{proof}[Proof of Proposition \ref{nondeg example: AV quotient}.] The results of Section \ref{AV coherent: section} put us in a position to apply Lemma \ref{nondeg example: quotient differential} to the action of $SL_p(\cO_F)$ on $A_0\simeq E_0\otimes_{\bZ}\cO_F^{\oplus p}$. On the one hand, since $A_0$ is a product of supersingular elliptic curves, $F_{A_0}^*$ is zero on the coherent cohomology group $H^1(A_0, \cO)$, so Proposition \ref{AV coherent: coherent main} asserts that $\tau^{\leq p}\RGamma(A,\cO)$ is $SL_p(\cO_F)$-equivariantly decomposable, and hence so is the mod $p$ reduction $\tau^{\leq p}\RGamma(A_0,\cO)$ of this complex. This implies that the cohomology complexes $\tau^{\leq p}\RGamma(A_0,\Omega^i_{A_0})$ are decomposable as well because $\Omega^i_{A_0}$ is trivial as a $SL_p(\cO_F)$-equivariant bundle, so $\RGamma(A_0,\Omega^i_{A_0})$ is $SL_p(\cO_F)$-equivariantly quasi-isomorphic to $\RGamma(A_0,\cO)\otimes_{k}\Lambda^i(\Lie A_0)^{\vee}$. Hence the assumptions on decomposability of the Hodge cohomology in Lemma \ref{nondeg example: quotient differential} are satisfied.

On the other hand, using Proposition \ref{AV coherent: AV de rham}, we will see that the map $H^p_{\dR}([A_0/SL_p(\cO_F)])\to H^p_{\dR}(A_0)^{SL_p(\cO_F)}$ is not surjective. This map fits into a long exact sequence 

\begin{equation}
\ldots\to H^p_{\dR}([A_0/SL_p(\cO_F)])\to H^p_{\dR}(A_0)^{SL_p(\cO_F)}\xrightarrow{\delta} H^{p+1}(SL_p(\cO_F),\bigoplus\limits_{i=0}^{p-1} H^i_{\dR}(A_0)[-i])\to\ldots
\end{equation}
Here $\delta$ lands in the direct summand $H^p(SL_p(\cO_F),H^1_{\dR}(A_0))$ and it is given by the composition
\begin{multline}
(\Lambda^p H^1_{\dR}(A_0))^{SL_p(\cO_F)}\xrightarrow{\Bock^{p-1}(\alpha(H^1_{\dR}(A_0/k)))}H^p(SL_p(\cO_F),H^1_{\dR}(A_0)^{(1)})\xrightarrow{F^*_{A_0}} \\ \xrightarrow{F^*_{A_0}} H^p(SL_p(\cO_F),H^1_{\dR}(A_0))
\end{multline}
in the notation of Proposition \ref{AV coherent: AV de rham}. We will prove that $\delta$ is non-zero, thus checking that $H^p_{\dR}([A_0/SL_p(\cO_F)])\to H^p_{\dR}(A_0)^{SL_p(\cO_F)}$ is not surjective. 

Non-vanishing of $\delta$ can be checked after replacing the base field $k$ by a finite extension, hence we may assume that $k$ contains $\bF_{q}=\bF_{p^2}$. The $k$-vector space $H^1_{\dR}(A_0)$ can be $SL_p(\cO_F)$-equivariantly identified with \begin{equation}H^1_{\dR}(E_0)\otimes_{\bZ}\cO_F^{\oplus p}=H^1_{\dR}(E_0)\otimes_k (k\otimes_{\bF_p}\cO_F^{\oplus p}/p).\end{equation} The group $SL_p(\cO_F)$ acts on the RHS via its tautological action on $\cO_F^{\oplus p}/p=\bF_q^{\oplus p}$. The $k$-vector space $k\otimes_{\bF_p}\cO_F^{\oplus p}/p$ is $SL_p(\cO_F)$-equivariantly isomorphic to $\bigoplus\limits_{\tau\in \Gal(F/\bQ)} V^{\tau}$ where $V^{\tau}$ is isomorphic to $V:=\bF_q^{\oplus p}\otimes_{\bF_q}k$ as a $k$-vector space but the $SL_p(\cO_F)$-action is modified by precomposing with the automorphism $\tau\in \Gal(F/\bQ)\simeq \bZ/2$. We get the following $SL_p(\cO_F)$-equivariant description of 1st de Rham cohomology of $A_0$:
\begin{equation}
H^1_{\dR}(A_0)\simeq H^1_{\dR}(E_0)\otimes_k \bigoplus\limits_{\tau\in \Gal(F/\bQ)} V^{\tau}.
\end{equation}

Recall that the Frobenius map $F_{E_0}^*:H^1_{\dR}(E_0/k)^{(1)}\to H^1_{\dR}(E_0/k)$ on de Rham cohomology of $E_0$ is a non-zero operator of rank $1$, cf. \cite[Corollary 5.11]{oda}. Choose any element $\xi\in H^1_{\dR}(E_0/k)$ such that $F^*_{E_0}(\xi)\neq 0$, and choose an arbitrary lift $\widetilde{\xi}\in H^1_{\cris}(E_0/W_2(k))$ of $\xi$. This defines a split $SL_p(\cO_F)$-equivariant embedding $\iota_{\xi}:V\xrightarrow{\xi\otimes \id_V}H^1_{\dR}(E_0)\otimes_k V\subset H^1_{\dR}(A_0)$ that lifts to an embedding $\tV\xrightarrow{}H^1_{\cris}(A_0/W_2(k))$. We have a commutative diagram
\begin{equation}\label{nondeg example: one element extension diagram}
\begin{tikzcd}
k=(\Lambda^p V)^{SL_p(\cO_F)}\arrow[rr,"\Bock^{p-1}_{\tV}(\alpha(V))"]\arrow[d, "\Lambda^p\iota_{\xi}"] & & H^{p}(SL_p(\cO_F),V^{(1)})\arrow[r, "\iota_{\xi}^{(1)}"] & H^{p}(SL_p(\cO_F),H^1_{\dR}(A_0)^{(1)})\arrow[d, "F_{A_0}^*"] \\
H^p_{\dR}(A_0)^{SL_p(\cO_F)}\arrow[rrr, "\delta"] & & & H^p(SL_p(\cO_F),H^1_{\dR}(A_0)).
\end{tikzcd}
\end{equation}

By Proposition \ref{group cohomology: ring of integers main} the class $\Bock_{\tV}^{p-1}(\alpha(V))$ induces a non-zero map $k=(\Lambda^p V)^{SL_p(\cO_F)}\to H^p(SL_p(\cO_F),V^{(1)})$. By our choice of $\xi$ the composition $F_{A_0}^*\circ\iota_{\xi}^{(1)}:V^{(1)}\to H^1_{\dR}(A_0)$ is a split injection. Therefore the composition of the top row and right vertical arrow in (\ref{nondeg example: one element extension diagram}) is injective, so $\delta$ is non-zero. 

We have thus checked that all the conditions of Lemma \ref{nondeg example: quotient differential} are satisfied, hence the differential $d^{0,p}_{p+1}:H^0(Y^{(1)}_0,L\Omega^p_{Y^{(1)}_0/k})\to H^{p+1}(Y^{(1)}_0,\cO)$ in the conjugate spectral sequence for the stack $Y_0=[A_0/SL_p(\cO_F)]$ is non-zero, as desired.
\end{proof}

Theorem \ref{nondeg example: maint thm stack} is therefore proven, by combining Proposition \ref{nondeg example: AV quotient} with Lemma \ref{nondeg example: classifying map on hodge coh}. We can now produce an example of a liftable smooth projective variety with a non-degenerate conjugate spectral sequence:

\begin{cor}\label{nondeg example: main corollary}
Let $k$ be any perfect field of characteristic $p$. There exists a smooth projective scheme $X$ over $W(k)$ of relative dimension $p+1$, such that the differential $H^0(X^{(1)}_0,\Omega^p_{X^{(1)}_0})\to H^{p+1}(X^{(1)}_0,\cO)$ on the $(p+1)$st page of the conjugate spectral sequence of $X_0$ is non-zero.
\end{cor}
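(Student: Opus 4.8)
The plan is to deduce Corollary \ref{nondeg example: main corollary} from Theorem \ref{nondeg example: maint thm stack} by approximating the classifying stack $BG_0$ (equivalently $BG$ over $W(k)$) by a smooth projective $W(k)$-scheme in the sense of \cite{antieau-bhatt-mathew}, and checking that the non-vanishing differential $d^{0,p}_{p+1}$ survives this approximation. First I would recall the approximation technique: for the finite flat group scheme $G$ over $W(k)$ one can find a smooth quasi-projective $W(k)$-scheme $U$ with a free $G$-action such that $[U/G]$ agrees with $BG$ in a range of degrees growing with the codimension of the complement of $U$ in an ambient representation (this is the standard Totaro--Morel--Voevodsky style construction of algebraic approximations to $BG$, applied over $W(k)$); concretely $U$ can be taken to be the complement of a $G$-invariant closed subset of high codimension in a $G$-representation on a free $W(k)$-module. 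Then $X_0'=[U_0/G_0]$ has $\tau^{\leq N}\dR_{X_0'/k}\simeq \tau^{\leq N}\dR_{BG_0/k}$ for $N$ large, compatibly with the conjugate filtration, so its conjugate spectral sequence agrees with that of $BG_0$ through page $E_{p+1}$ in total degree $\leq N$; in particular $d^{0,p}_{p+1}$ is non-zero for $X_0'$ as well, and $X_0'$ lifts to $W(k)$ (namely to $[U/G]$). The remaining point is to replace the smooth quasi-projective $X_0'$ by a smooth \emph{projective} one of the \emph{correct dimension} $p+1$: one takes a sufficiently generic complete intersection of ample divisors $Y_0\hookrightarrow X_0'$ of dimension $p+1$, lifting the divisors over $W(k)$ to keep $Y$ smooth projective over $W(k)$, and invokes a Lefschetz-type statement for the conjugate spectral sequence (the restriction maps $H^i(X_0',\Omega^j)\to H^i(Y_0,\Omega^j)$ and $H^i(X_0',\mathcal{O})\to H^i(Y_0,\mathcal{O})$ are isomorphisms for $i<p+1$ and injective for $i=p+1$, by Kodaira-type vanishing for the ample bundles, after twisting the divisors to be sufficiently positive) to conclude that $d^{0,p}_{p+1}$ restricts to a non-zero differential on $Y_0$.

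In more detail, the steps in order: (1) Construct, for the finite flat $W(k)$-group scheme $G$ of \eqref{nondeg example: G formula}, a chain of smooth quasi-projective $W(k)$-schemes $X^{(m)}$ with free $G$-action such that $[X^{(m)}/G]$ $n$-approximates $BG$ for $n=n(m)\to\infty$; this uses that $G$ embeds into $GL_{N,W(k)}$ for suitable $N$ (it is finite flat, hence admits a faithful representation on a free $W(k)$-module, e.g. the regular representation) and the usual "$U=$ big open in a $GL_N$-representation" construction, which is functorial enough to work over $W(k)$ rather than a field. (2) Fix $m$ with $n(m)\geq p+2$, set $\mathcal{X}=[X^{(m)}/G]$, a smooth $W(k)$-scheme (not yet projective) whose special fiber $\mathcal{X}_0$ has $d^{0,p}_{p+1}\neq 0$ by Theorem \ref{nondeg example: maint thm stack} together with the fact that $\tau^{\leq p+2}\dR_{\mathcal{X}_0/k}\simeq\tau^{\leq p+2}\dR_{BG_0/k}$ compatibly with the conjugate filtration. (3) Compactify: choose a smooth projective $W(k)$-scheme $\overline{\mathcal{X}}$ containing $\mathcal{X}$ as a dense open with complement of high codimension (after possibly shrinking $\mathcal{X}$ and re-approximating, which does not hurt since we only need a fixed finite range), and pick $\dim_{W(k)}(\overline{\mathcal{X}})-(p+1)$ members of a very ample linear system on $\overline{\mathcal{X}}$, lifted over $W(k)$, whose intersection $Y$ with $\mathcal{X}$ is smooth projective over $W(k)$ of relative dimension $p+1$ and misses the complement of $\mathcal{X}$. (4) Run the Lefschetz/vanishing comparison between the conjugate spectral sequences of $\mathcal{X}_0$ and $Y_0$ in total degree $\leq p+1$: using that the line bundles cutting out $Y_0$ are arbitrarily positive, Serre/Kodaira vanishing on $\mathcal{X}_0$ (or rather on the intermediate complete intersections) gives that $H^0(\mathcal{X}_0,\Omega^p_{\mathcal{X}_0})\to H^0(Y_0,\Omega^p_{Y_0})$ is an isomorphism and $H^{p+1}(\mathcal{X}_0,\mathcal{O})\to H^{p+1}(Y_0,\mathcal{O})$ is injective, and these maps commute with $d^{0,p}_{p+1}$ by functoriality of the conjugate filtration, whence the differential for $Y_0$ is non-zero.

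The main obstacle I expect is step (4), the Lefschetz comparison for Hodge and de Rham cohomology of the (possibly singular-at-infinity, or stacky) approximation: one must be careful that $\mathcal{X}$ is only quasi-projective and that the relevant vanishing theorems require the divisors to be sufficiently ample, so the argument should be organized as a sequence of hyperplane sections each chosen positive enough for Kodaira--Akizuki--Nakano-type vanishing in characteristic $p$ — which is not automatic in positive characteristic, so one either restricts attention to the bounded range of cohomological degrees where liftability of $Y$ to $W_2(k)$ already yields the Deligne--Illusie vanishing, or uses that $\mathcal{X}$ and all intermediate loci lift compatibly to $W(k)$ together with the ample bundles and invokes the characteristic-$p$ Kodaira vanishing of \cite{deligne-illusie} in degrees $<p$ (which suffices here because $p+1$ differentials only involve $H^0$ and $H^{p+1}$, and the intermediate restriction maps can be arranged to be controlled by cohomology of ample twists in low degree). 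A clean alternative, which I would actually pursue, is to avoid cutting down dimension by hyperplanes on the stack directly and instead observe that the $n$-approximation $[X^{(m)}/G]$ is \emph{already} representable by a smooth quasi-projective $W(k)$-scheme; then one only needs a single standard Bertini-over-a-DVR plus Serre-vanishing argument to pass to a smooth projective complete intersection of the exact dimension $p+1$ with the comparison of $d^{0,p}_{p+1}$, and all the vanishing needed is Serre vanishing for ample sheaves, which holds unconditionally. With this, the non-vanishing of the differential for $X_0=Y_0$ follows, and since any perfect field $k$ embeds the construction works over an arbitrary such $k$, completing the proof.
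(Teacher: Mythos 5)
Your high-level strategy (transfer the non-vanishing differential from Theorem \ref{nondeg example: maint thm stack} on $BG_0$ to a smooth projective approximation) matches the paper, but the mechanism you propose has a genuine gap at the crucial step of producing a \emph{projective} scheme of dimension exactly $p+1$ while preserving the differential. In a Totaro-style approximation $\mathcal{X}=[U/G]$ with $U$ a high-codimension open in a representation, any compactification $\overline{\mathcal{X}}$ has boundary containing a divisor (the hyperplane at infinity), and for the standard compactification that divisor is ample; a positive-dimensional complete intersection of ample divisors therefore cannot miss it, so your step (3) ("sections whose intersection misses the complement of $\mathcal{X}$") fails as stated. Your step (4) is also unjustified: the comparison you need, injectivity of $H^{p+1}(\mathcal{X}_0,\cO)\to H^{p+1}(Y_0,\cO)$ (the isomorphism on $H^0(\Omega^p)$ you ask for is not even needed), would follow from Serre vanishing plus Serre duality only if the ambient were \emph{proper}, which $\mathcal{X}_0$ is not; and Kodaira--Akizuki--Nakano-type vanishing in characteristic $p$ via Deligne--Illusie only covers total degree $<p$, which does not reach the degree $p+1$ where you need it. Your ``clean alternative'' at the end inherits the same problem, since Serre vanishing for the comparison still presupposes properness of the ambient scheme.

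The paper sidesteps all of this by using the Godeaux--Serre construction rather than an open-subset approximation: Lemma \ref{nondeg example: approximation finite fields} (using Gabber--Poonen Bertini when $k$ is finite) produces a complete intersection $Z\subset\bP^N_{W(k)}$ of relative dimension $p+1$, avoiding the non-free locus (which has high codimension, so a $(p+1)$-dimensional complete intersection can dodge it), with free $G$-action and smooth projective quotient $X=Z/G$. The differential is then transferred along the classifying map $f\colon X\to BG$ directly: the needed injectivity of $f_0^*\colon H^{p+1}(BG_0,\cO)\to H^{p+1}(X_0,\cO)$ comes from the descent (Hochschild--Serre) spectral sequence for the finite flat torsor $Z_0\to X_0$ together with the classical fact that $H^i(Z_0,\cO)=0$ for $0<i\le p$ on a projective complete intersection of dimension $p+1$ --- no Lefschetz theorem and no vanishing theorem beyond that is invoked. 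If you want to repair your argument, the natural fix is to perform the hyperplane cutting upstairs in $\bP^N$ (so that the cover of your $Y$ is an honest projective complete intersection) rather than downstairs in the quasi-projective quotient, at which point your proof becomes the paper's.
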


\begin{proof}
We will use the classical technique of approximating the classifying stack of a finite flat group scheme by a projective variety, originated by Serre \cite{serre}, cf. \cite[Theorem 1.2]{antieau-bhatt-mathew}.

We choose, using Lemma \ref{nondeg example: approximation finite fields} below, a complete intersection $Z\subset\bP^N_{W(k)}$ of relative dimension $p+1$ over $W(k)$, equipped with a free action of the group scheme $G$, such that $Z/G$ is smooth over $W(k)$. Our example is $X:=Z/G$, it is a smooth projective scheme over $W(k)$ equipped with the classifying map $f:X\to BG$. The induced map $f_0^*:H^{p+1}(BG_0,\cO)\to H^{p+1}(X_0,\cO)$ is seen to be injective by considering the Hochschild-Serre spectral sequence for the morphism $Z_0\to Z_0/G_0$ using the fact that $H^i(Z_0,\cO)=0$ for $i\leq p$. Therefore the differential $H^0(X^{(1)}_0,\Omega^p_{X^{(1)}_0})\to H^{p+1}(X^{(1)}_0,\cO)$ is non-zero.
\end{proof}

\begin{lm}\label{nondeg example: approximation finite fields}
Let $k$ be any perfect field of characteristic $p$. For any finite flat group scheme $\Gamma$ over $W(k)$ and an integer $d\geq 0$ there exists a complete intersection $Z\subset\bP^N_{W(k)}$ of relative dimension $d$ equipped with a free action of $\Gamma$ such that the quotient $Z/\Gamma$ is a smooth scheme.
\end{lm}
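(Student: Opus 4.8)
\textit{Proof plan.} This is an instance of the classical Godeaux--Serre construction, and in the form stated it is essentially a special case of \cite[Theorem 1.2]{antieau-bhatt-mathew} (base $W(k)$, which is Noetherian, and $\Gamma$ finite locally free); I will recall the construction and indicate where the work lies. The plan is to realize $Z$ as a general $\Gamma$-stable complete intersection of dimension $d$ sitting inside the open locus of a linear $\Gamma$-action on a large projective space over $W(k)$ on which $\Gamma$ acts freely. The first step is to produce a suitable faithful representation: the algebra $\cO(\Gamma)$ of functions on $\Gamma$, with the left-translation action, is a finite locally free $W(k)$-module realizing $\Gamma$ as a closed subgroup scheme of $\mathrm{GL}(\cO(\Gamma))$, and setting $W:=\cO(\Gamma)\oplus\underline{W(k)}$ (adjoining a trivial line) we get a faithful representation on which the scalar subgroup scheme is trivial, i.e. $\Gamma\hookrightarrow\mathrm{PGL}(W)$: a section of $\Gamma$ acting by a unit $u$ on all of $W$ acts by $u$ on the trivial summand, hence $u=1$, hence trivially on $\cO(\Gamma)$, hence trivially. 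Note also $\operatorname{rk}_{W(k)}W=|\Gamma|+1$ is coprime to $|\Gamma|$, which will be convenient.

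Next, for $m\geq 1$ let $\Gamma$ act linearly and diagonally on $\bP^M:=\bP(W^{\oplus m})$, $M=m(|\Gamma|+1)-1$. Because $\Gamma\hookrightarrow\mathrm{PGL}(W)$, the closed locus $B_m\subseteq\bP^M_{W(k)}$ on which $\Gamma$ does not act freely has codimension tending to $\infty$ with $m$: for reduced $\Gamma$ this is the elementary observation that the fixed locus of each nontrivial element is a finite union of $\bP(\ker(g-\lambda)^{\oplus m})$ with $\ker(g-\lambda)\subsetneq W$ proper, of codimension $\geq m$; for non-reduced $\Gamma$ one runs the same estimate with the stabilizer group scheme of the action, as in \cite{serre} and \cite[Theorem 1.2]{antieau-bhatt-mathew}. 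Fix $m$ large enough that, fibrewise over $W(k)$, $\dim B_m<M-d$ and $\operatorname{codim}(B_m)\geq 2$, and (using $\gcd(|\Gamma|+1,|\Gamma|)=1$) also that $|\Gamma|$ divides $M-d$. Write $U:=\bP^M_{W(k)}\setminus B_m$, a smooth $W(k)$-scheme with a free $\Gamma$-action; $\cO(1)|_U$ is $\Gamma$-linearized and, by Kempf descent and freeness of the action, descends to an ample line bundle on the projective quotient $U/\Gamma$, so for $e\gg 0$ the $\Gamma$-stable systems of degree-$e$ hypersurfaces on $\bP^M$ form a rich family.

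Finally, take $Z\subseteq\bP^M_{W(k)}$ to be the complete intersection of $(M-d)/|\Gamma|$ $\Gamma$-orbits of general hypersurfaces of a common large degree $e$ (equivalently: $V(P)$ for a sufficiently general $\Gamma$-submodule $P\subseteq H^0(\bP^M_{W(k)},\cO(e))$ of codimension $d$ in $\bP^M$). For general choices, $Z$ has relative dimension $d$, is disjoint from $B_m$ — hence inherits a \emph{free} $\Gamma$-action from $\bP^M$ — and is smooth over $W(k)$; then $Z/\Gamma$ exists as a scheme (orbits lie in affine opens since $\Gamma$ is finite) and is smooth over $W(k)$, being the quotient of a $W(k)$-smooth scheme by a free action of a finite flat group scheme. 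The main obstacle, and the technical heart of the argument, is the smoothness of $Z$ over \emph{all} of $W(k)$: one needs a Bertini-type statement over the discrete valuation ring $W(k)$, choosing $P$ whose reduction mod $p$ and whose generic fibre each cut out a smooth complete intersection of dimension $d$ in the respective fibre; this is a dense open, nonempty condition on the parameter space over $k$ and over $W(k)[1/p]$ (nonemptiness over a possibly finite $k$ following for $e\gg 0$ from finite-field Bertini), a $W(k)$-point realizing it exists by smoothness of the parameter space, and flatness of $Z$ over $W(k)$ is then automatic from the equality of fibre dimensions. Together with the codimension estimate for $B_m$ in the infinitesimal case, these are exactly the points handled in \cite[Theorem 1.2]{antieau-bhatt-mathew}, which may be invoked directly.
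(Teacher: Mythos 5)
Your inference that smoothness of $Z$ over $W(k)$ would force smoothness of $Z/\Gamma$ is actually valid (smoothness descends along the finite faithfully flat map $Z\to Z/\Gamma$: the quotient is flat over $W(k)$, and its geometric fibers are regular by faithfully flat descent of regularity). The gap is one step earlier, in the claim that ``for general choices, $Z$ \dots is smooth over $W(k)$.'' When the special fiber $\Gamma_0$ is not \'etale --- which is exactly the case the lemma is needed for, since the group scheme $G$ used in the paper contains $E[p]$ of a supersingular elliptic curve --- no $\Gamma$-stable complete intersection of large degree contained in the free locus can be smooth. Indeed, a free action of $\Gamma_0$ on a smooth projective $Z_0$ gives a map $\Lie\Gamma_0\to H^0(Z_0,T_{Z_0})$ whose value at every geometric point is injective (freeness makes each orbit map a monomorphism), so any $0\neq\xi\in\Lie\Gamma_0$ produces a nowhere-vanishing global vector field on $Z_0$; this forces $c_d(T_{Z_0})=0$, whereas for a smooth complete intersection of common large degree in $\bP^M$ the degree of $c_d(T)$ is nonzero. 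So your genericity claim is not just unproven but false, and the strategy cannot be repaired while insisting that $Z$ be smooth. The lemma only requires $Z/\Gamma$ to be smooth, and the paper arranges this directly on the quotient: it forms $Q=\bP^N_{W(k)}/\Gamma$, runs Bertini on $Q$ away from the image $M$ of the non-free locus, and takes $Z$ to be the preimage of the resulting smooth complete intersection; $Z$ is then a complete intersection with a free $\Gamma$-action whose quotient is smooth, but $Z$ itself is in general singular (it is a torsor under a non-smooth group scheme over a smooth base).

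Relatedly, the parameter-space step does not work as written: ``$\Gamma$-orbits of general hypersurfaces'' has no meaning for a non-reduced group scheme, and the fallback ``sufficiently general $\Gamma$-submodule $P\subset H^0(\bP^M,\cO(e))$'' ranges over a space of subcomodules for which no Bertini-type genericity statement is available. Passing to $Q$ converts the problem into choosing sections of ample line bundles on $Q$, where Bertini applies; and over a finite residue field even this needs the Gabber--Poonen Bertini theorem on the special fiber $Q_0$, the auxiliary choice of a closed point on each irreducible component of $M_0$ to keep $V(s)\cap M_0$ in codimension $\geq d+1$, and a lifting of the section to $W(k)$ via vanishing of $H^1(Q,\cL^{\otimes n})$. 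This finite residue field case is precisely the new content of the paper's proof (the infinite case being quoted from \cite{bms}), so deferring it to ``invoke \cite[Theorem 1.2]{antieau-bhatt-mathew} directly'' skips the point the lemma is there to address.
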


\begin{proof}
If $k$ is infinite, this is proven in \cite[2.7-2.9]{bms}. For a finite $k$ we will check that the same construction goes through if we appeal to a Bertini theorem over finite fields due to Gabber and Poonen. \cite[Lemma 2.7]{bms} provides us with an action of $\Gamma$ on a projective space $\bP^N_{W(k)}$ such that there is an open subscheme $U\subset\bP^N_{W(k)}$ preserved by $\Gamma$ on which $\Gamma$ acts freely, the quotient $U/\Gamma$ is smooth over $W(k)$, and the dimension of every component of both fibers of $\bP^N_{W(k)}\setminus U$ is at most $N-d-1$. Consider the quotient scheme $Q:=\bP^N_{W(k)}/\Gamma$ and denote by $M\subset Q$ the image of $\bP^N_{W(k)}\setminus U$ under the quotient map. By construction, $Q\setminus M$ is smooth over $W(k)$, and $M$ has fiber-wise codimension $\geq d+1$ inside $Q$.

Following the argument below \cite[Lemma 2.9]{bms}, it is enough for us to find very ample line bundles $\cL_1,\ldots,\cL_{N-d}$ on $Q$ and sections $s_i\in H^0(Q, \cL_i)$ such that their common vanishing locus is smooth and does not intersect $M$. By induction on the number $N-d$, it is enough to produce a very ample line bundle $\cL$ on $Q$ and a section $s\in H^0(Q,\cL)$ such that $V(s)\cap M\subset V(s)$ is of fiber-wise codimension $\geq d+1$, and $V(s)\setminus V(s)\cap M$ is smooth. Note that both conditions can be checked just on the special fiber. 

Let $\cL$ be some very ample line bundle on $Q$. We choose a closed point on every irreducible component of $M_0$, and denote $B$ the set comprised of these points. Let $n_0$ be an integer such that $H^1(Q, \cL^{\otimes n})=0$ for all $n\geq n_0$. Applying \cite[Corollary 1.6]{gabber}, for some $n>n_0$ we can find a section $s\in H^0(Q_0,\cL^{\otimes n})$ of the line bundle $\cL^{\otimes n}$ on the special fiber $ Q_0:=Q\times_{W(k)}k$, such that $V(s)\setminus M_0\cap V(s)$ is smooth of dimension $N-1$, and $V(s)$ does not contain any of the points from $B$. The latter property implies that the codimension of $M_0\cap V(s)$ inside $V(s)$ is at least $d+1$. By our assumption on the vanishing of $H^1(Q,\cL^{\otimes n})$, we can lift $s$ to a section of $\cL^{\otimes n}$ on $Q$ whose vanishing locus satisfies the desired properties.

Having constructed the sections $s_1,\ldots, s_{N-d}$, we define $Z\subset \bP^N_{W(k)}$ to be the preimage of $V(s_1,\ldots,s_{N-d})\subset Q\setminus M\subset Q=\bP^N_{W(k)}/\Gamma$. The action of $\Gamma$ on $Z$ is free and the quotient $Z/\Gamma=V(s_1,\ldots,s_{N-d})$ is smooth, by the choice of sections $s_1,\ldots,s_{N-d}$.
\end{proof}

\begin{rem}
\begin{enumerate}[leftmargin=*]
\item If in the construction of the abelian scheme $A$ we used an elliptic curve $E$ with {\it ordinary} reduction, the statement of Proposition \ref{nondeg example: AV quotient} would be false. Indeed, if $E_0$ is ordinary then the stack $[A_0/SL_p(\cO_F)]$ admits a lift to $W_2(k)$ together with its Frobenius endomorphism, hence the conjugate filtration is split in all degrees by \cite[Remarque 2.2(ii)]{deligne-illusie}. The part of the proof showing that the map $H^p_{\dR}([A_0/SL_p(\cO_F)])\to H^p_{\dR}(A_0)^{SL_p(\cO_F)}$ is not surjective goes through just as well because it only used that $\varphi_{E_0}^*$ is non-zero on $H^1_{\dR}(E_0)$, but it is no longer true that $\tau^{\leq p}\RGamma(A_0,\cO)$ is $SL_p(\cO_F)$-equivariantly decomposable, by Proposition \ref{AV coherent: coherent main}. So both Hodge and de Rham cohomology of $A_0$ are not $SL_p(\cO_F)$-equivariantly decomposable, but the conjugate filtration on $\RGamma_{\dR}(A_0)$ is $SL_p(\cO_F)$-equivariantly split.

\item As in the proof of \cite[Corollaire 2.4]{deligne-illusie}, the variety $X_0$ in Corollary \ref{nondeg example: main corollary} necessarily has a non-zero differential in its Hodge-to-de Rham spectral sequence.

\item There are several examples in the literature of smooth projective varieties over $k$ with a non-degenerate Hodge-to-de Rham spectral sequence that lift to some ramified extension of $W(k)$, but to the best of my understanding none of them lift to $W_2(k)$.
\end{enumerate} 
\end{rem}

The class of examples constructed in this section can also be used to illustrate the fact that the differential $d^{p+1}_{0,p}$ in the conjugate spectral sequence might vanish even if the map $c_{X_1,p}$ constructed out of the Sen operator induces a non-zero map on cohomology. 

\begin{pr}
There exists a smooth projective scheme $Y$ over $W(k)$ such that the map $H^0(Y_0,\Omega^{p}_{Y_0})\xrightarrow{c_{Y_1,p}}H^p(Y_0,\cO)$ is non-zero yet the differential $d_{0,p}^{p+1}:H^0(Y_0,\Omega^p_{Y_0})\to H^{p+1}(Y_0,\cO)$ in the conjugate spectral sequence of $Y_0$ is zero. Moreover, we can choose $Y$ such that there exists another smooth projective $Y'$ with $Y'_0\simeq Y_0$ and the class $c_{Y'_1,p}$ vanishes.
\end{pr}
\begin{proof}
Let $E$ be an elliptic curve over $W(k)$ such that its reduction $E_0$ is {\it ordinary}. We repeat the constructions of $G$ and $A$ with this $E$ as an input: $G$ is a finite flat group scheme given by the semi-direct product $SL_p(\bF_{p^2})\ltimes E[p]^{\oplus 2p}$, and $A$ is the abelian scheme $E^{\times 2p}=E\otimes_{\bZ}\cO_F^{\oplus p}$ endowed with the action of the group $\Gamma:=SL_p(\cO_F)$ for an appropriate real quadratic field $F\supset\bQ$ provided by Proposition \ref{group cohomology: ring of integers main}.

We will show that the stack $BG$ satisfies the properties in the statement of the proposition as soon as $E\times_{W(k)}W_2(k)$ is not the canonical lift of $E_0$. The vanishing of $d_{0,p}^{p+1}$ holds because $BG_0$ admits a lift over $W_2(k)$ together with its Frobenius endomorphism (constructed out of the canonical lift of the ordinary elliptic curve $E_0$). On the other hand, $c_{BG_1,p}$ depends not only on the mod $p$ stack $BG_0$, but also on $BG_1=BG\times_{W(k)}W_2(k)$, and we will show that the map $H^0(BG_0,L\Omega^p_{BG_0})\to H^p(BG_0,\cO)$ it induces is not zero when $E_1=E\times_{W(k)}W_2(k)$ does not admit a lift of the Frobenius endomorphism of $E_0$.

By Proposition \ref{nondeg example: classifying map on hodge coh} it suffices to check that analogous non-vanishing for the stack $X:=[A/\Gamma]$. We will use a generalization of Theorem \ref{semiperf: main sen operator} to algebraic stacks, obtained by descent: the map $c_{X_1,p}$ is the composition of $\alpha(\Omega^1_{X_0})$ with $\ob_{F, X_1}$ where, by definition, $\ob_{F, X_1}$ is the class in $H^1(X_0, F_{X_0}^*T_{X_0})$ obtained by descent from obstructions to lifting Frobenius onto smooth $W_2(k)$-schemes mapping to $X_1$. We will not appeal to any interpretation of $\ob_{F, X_1}$ in terms of deformation theory of the stack $X_1$ itself, but will only use that its image under the natural map $H^1(X_0, F^*_{X_0} T_{X_0})\to H^1(A_0, F^*_{A_0}T_{A_0})$ induced by the \'etale map $A_0\to X_0=[A_0/\Gamma]$ is the obstruction to lifting Frobenius onto $A_1=E_1^{\times 2p}$. 

We would like to check that the composition of the top row of the following diagram is non-zero:
\begin{equation}\label{nondeg example: ordinary diagram}
\begin{tikzcd}
H^0([A_0/\Gamma],L\Omega^p_{[A_0/\Gamma]})\arrow[rr, "{\alpha(L\Omega^1_{[A_0/\Gamma]})}"] & & H^{p-1}([A_0/\Gamma],F^*_{[A_0/\Gamma]}L\Omega^1_{[A_0/\Gamma]})\arrow[r,"{\ob_{F,[A_1/\Gamma]}}"] & H^p([A_0/\Gamma],\cO) \\
H^0(A_0,\Omega^p_{A_0})^{\Gamma}\arrow[rr,"{\alpha(H^0(A_0,\Omega^1_{A_0}))}"]\arrow[u,"\sim"] & & H^{p-1}(\Gamma,H^0(A_0,F_{A_0}^*\Omega^1_{A_0}))\arrow[r]\arrow[u]  & H^{p}(\Gamma,\tau^{\leq 1}\RGamma(A_0,\cO))\arrow[u]
\end{tikzcd}
\end{equation}
Here the vertical maps in the left commutative square are given by pullback along the map $X_0=[A_0/\Gamma]\to B\Gamma$, using that the cotangent bundle $L\Omega^1_{[A_0/\Gamma]}$ descends along it to the representation $H^0(A_0,\Omega^1_{A_0})$, viewed as a vector bundle on the classifying stack $B\Gamma$. The right commutative square is obtained from the map $\ob_{F, [A_1/\Gamma]}:\RGamma(A_0,F_{A_0}^*\Omega^1_{A_0})\to \RGamma(A_0,\cO)[1]$ in the derived category $D(\Rep_k \Gamma)$ by applying to it the morphism of functors $\tau^{\leq 0}\to \Id$ followed by $H^{p-1}(\Gamma,-)$.

By Proposition \ref{group cohomology: ring of integers main} the bottom left horizontal map $\alpha(H^0(A_0,\Omega^1_{A_0}))=\alpha(H^0(E_0,\Omega^1_{E_0})\otimes_{\bZ_p}\cO_F^{\oplus p})$ is non-zero. The composition
\begin{equation}
H^{p-1}(\Gamma,H^0(A_0,F_{A_0}^*\Omega^1_{A_0}))\to H^{p}(\Gamma,\tau^{\leq 1}\RGamma(A_0,\cO))\to H^{p-1}(\Gamma, H^1(A_0, \cO))
\end{equation}
is simply induced by the map $\ob_{F, A_1}: H^0(A_0,F_{A_0}^*\Omega^1_{A_0})\to H^1(A_0, \cO)$. Since we chose $E_1$ to be a lift of $E_0$ other than the canonical one, this map is an isomorphism. Hence it remains to ensure that the rightmost vertical map in (\ref{nondeg example: ordinary diagram}) is injective. It fits into a long exact sequence
\begin{equation}\label{nondeg example: o cohomology ordinary exact sequence}
H^{p-1}(\Gamma,\tau^{\geq 2}\RGamma(A_0,\cO))\xrightarrow{\delta} H^p(\Gamma,\tau^{\leq 1}\RGamma(A_0,\cO))\to H^p(\Gamma, \RGamma(A_0,\cO))
\end{equation}
where the first map $\delta$ is induced by the connecting homomorphism $\tau^{\geq 2}\RGamma(A_0,\cO)\to (\tau^{\leq 1}\RGamma(A_0,\cO))[1]$. By the mod $p$ reduction of Proposition \ref{AV coherent: coherent main}(0) this connecting homomorphism becomes zero when composed with the map $\tau^{[2,p-1]}\RGamma(A_0,\cO)\to \tau^{\geq 2}\RGamma(A_0,\cO)$. The latter map induces a surjection on $H^{p-1}$, hence the connecting homomorphism $\delta$ in (\ref{nondeg example: o cohomology ordinary exact sequence}) vanishes, and therefore the right vertical map in (\ref{nondeg example: ordinary diagram}) is injective, finishing the proof of non-vanishing of the map $c_{[A_1/\Gamma],p}:H^0([A_0/\Gamma],L\Omega^p_{[A_0/\Gamma]})\to H^p(A_0,\cO)$.

By Lemma \ref{nondeg example: classifying map on hodge coh} (its proof did not use supersingularity of the initial elliptic curve) the natural map $[A_0/\Gamma]\to BG_0$ induces an isomorphism on $H^0(-,\Omega^p)$, hence the map $c_{BG_1,p}:H^0(BG_0,L\Omega^p_{BG_0})\to H^p(BG_0,\cO)$ is non-zero as well. As we mentioned above, the differential $d^{p+1}_{0,p}$ for $BG_0$ is zero, so approximating $BG$ by a quotient of a complete intersection of dimension $\geq p+1$ by a free action of $G$ gives an example of the behavior described in the first sentence of the proposition. 

Finally, let us explain how to choose $Y$ so that there exists another lift $Y'$ such that the class $c_{Y_1',p}$ vanishes. Note that we already have an example of this behavior for smooth Artin stacks: $BG_0$ admits a lift to which Frobenius lifts, hence the Sen operator corresponding to that lift is semisimple. Let $\bG=SL_p(\bF_q)\ltimes \cE[p]^{\oplus 2p}$ be the group scheme over the universal local deformation ring $R\simeq W(k)[[t]]$ of $E_0$ (cf. \cite[Theorem 2.2.1, Theorem 2.3.3]{oort-deforms}) constructed out of the universal curve $\cE$ over $R$. By the same approximation procedure as in the proof of Corollary \ref{nondeg example: main corollary} (cf. \cite[Proposition 4.2.3]{raynaud}) we may find $\cY$ smooth projective over $R$ with a map $\cY\to B\bG$ inducing an isomorphism on $H^{\leq p+1}(-,\cO)$ on the fibers over every geometric point of $R$. Let $t_{\can}\in W(k)$ be the value of the parameter $t$ such that $\cE_{t_{\can}}$ is the canonical lift of $E_0$ over $W(k)$, and take $t_0\in W(k)$ to be any value such that $\cE_{t_0}\times_{W(k)}W_2(k)$ is not the canonical lift of $E_0$ over $W_2(k)$. Then taking $Y=\cY_{t_0}$ and $Y'=\cY_{t_{\can}}$ we obtain the desired behavior.
\end{proof}

\section{Extensions in higher degrees}\label{higher ext: section}

It would be interesting to generalize the methods of Section \ref{cosimp: section} to treat extensions in the canonical filtration on the de Rham complex in degrees $>p$. In this section, which is independent of the rest of the paper, we make some preliminary remarks, roughly amounting to extending part (1) of Theorem \ref{cosimp: main theorem} to higher degrees.

Suppose that $X$ is an arbitrary (not necessarily flat) scheme over $\bZ_{(p)}$ and let $A\in \DAlg(X)$ be a derived commutative algebra such that $H^0(A)=\cO_X$, $H^1(A)$ is a locally free sheaf, and multiplication induces isomorphisms $\Lambda^i H^1(A)\simeq H^i(A)$ for all $i$. 

\begin{lm}\label{higher ext: mult2splittings}
Given maps $s_i:\Lambda^i H^1(A)[-i]\to A,s_j:\Lambda^j H^1(A)[-j]\to A$ in $D(X)$ that induce isomorphisms on $i$th and $j$th cohomology respectively, there exists a map $s_{i+j}:\Lambda^{i+j}H^1(A)[-i-j]\to A$ that induces an isomorphism on $H^{i+j}$ if the integer $\binom{i+j}{i}$ is not divisible by $p$.
\end{lm}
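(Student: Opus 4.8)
The idea is to build $s_{i+j}$ by multiplying the given classes $s_i$ and $s_j$ inside the derived commutative algebra $A$, and then correct the result so that it becomes a genuine splitting of $\Lambda^{i+j}H^1(A)[-i-j]$. First I would form the composition
\begin{equation}
\Lambda^i H^1(A)[-i]\otimes_{\cO_X}\Lambda^j H^1(A)[-j]\xrightarrow{s_i\otimes s_j}A\otimes_{\cO_X}A\xrightarrow{m_A}A,
\end{equation}
which induces on cohomology in degree $i+j$ the map $\Lambda^i H^1(A)\otimes\Lambda^j H^1(A)\to H^{i+j}(A)=\Lambda^{i+j}H^1(A)$ given (under the identification coming from the multiplicativity hypothesis) by the wedge product $\omega\otimes\eta\mapsto \omega\wedge\eta$. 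Next I would precompose with the natural map $\Lambda^{i+j}H^1(A)\to \Lambda^i H^1(A)\otimes\Lambda^j H^1(A)$ that is dual (locally on bases) to the wedge product: in a local basis $e_1,\dots,e_n$ of $H^1(A)$ it sends $e_{k_1}\wedge\dots\wedge e_{k_{i+j}}$ to $\sum_{S}\pm (e_S)\otimes(e_{S^c})$, the sum over $i$-element subsets $S$ of $\{k_1,\dots,k_{i+j}\}$. The key combinatorial point is that the composite of this comultiplication-type map with the wedge product $\Lambda^i H^1(A)\otimes\Lambda^j H^1(A)\to\Lambda^{i+j}H^1(A)$ is multiplication by $\binom{i+j}{i}$ on $\Lambda^{i+j}H^1(A)$, because each of the $\binom{i+j}{i}$ terms in the sum wedges back to (a sign times a sign times) the original element, and the signs cancel. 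Since $\binom{i+j}{i}$ is invertible on $X$ (it is a unit in $\bZ_{(p)}$ by hypothesis, and $X$ is a $\bZ_{(p)}$-scheme), dividing by it produces the desired $s_{i+j}$.

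Concretely, I would take
\begin{equation}
s_{i+j}:\Lambda^{i+j}H^1(A)[-i-j]\xrightarrow{\tbinom{i+j}{i}^{-1}\cdot\Delta}\bigl(\Lambda^i H^1(A)\otimes\Lambda^j H^1(A)\bigr)[-i-j]\xrightarrow{s_i\otimes s_j}A\otimes_{\cO_X}A\xrightarrow{m_A}A,
\end{equation}
where $\Delta$ denotes the comultiplication map described above, defined first for finite locally free sheaves in a manifestly functorial (base-change compatible) way and then transported to arbitrary schemes—this is exactly the kind of data encapsulated by Lemma \ref{dalg: derived functors for stacks}, applied to the polynomial functor $M\mapsto \Lambda^{i+j}M$ together with the natural transformation to $\Lambda^i\otimes\Lambda^j$. (Alternatively, and perhaps more cleanly, one can use the coalgebra structure on the exterior algebra $\Lambda^{\bullet}H^1(A)$ directly.) The claim that $s_{i+j}$ induces an isomorphism on $H^{i+j}$ then reduces, via the multiplicativity of $H^{\bullet}(A)$ and the fact that $s_i,s_j$ are isomorphisms on $H^i,H^j$, to the purely combinatorial identity about $\Delta$ followed by wedge product being multiplication by $\binom{i+j}{i}$.

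The main obstacle I anticipate is not any deep homotopical input—everything takes place at the level of maps in the triangulated category $D(X)$ and uses only the multiplication map $m_A\colon A\otimes A\to A$ (equivalently the $S^2 A\to A$ piece of the $S^{\bullet}$-module structure), which exists on the nose. Rather, the care needed is: (a) pinning down the comultiplication $\Delta$ on derived exterior powers in a way that is functorial enough to be defined on $D(X)$ via Lemma \ref{dalg: derived functors for stacks}, and matching it up correctly with the décalage/multiplicativity conventions so that the composite with wedge product is genuinely the scalar $\binom{i+j}{i}$ and not some other multinomial; and (b) tracking signs, since $s_i$ and $s_j$ are maps of shifted complexes and the Koszul sign rule intervenes when identifying $\Lambda^iH^1(A)[-i]\otimes\Lambda^jH^1(A)[-j]$ with $(\Lambda^iH^1(A)\otimes\Lambda^jH^1(A))[-i-j]$—but these signs are exactly the ones that cancel in the combinatorial identity, so the net effect is harmless. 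I would verify the scalar identity after reducing to the case where $H^1(A)$ is free, where it is a one-line check with basis vectors.
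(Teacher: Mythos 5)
Your proposal is correct and takes essentially the same route as the paper: there too one forms $m_A\circ(s_i\otimes s_j)$ and precomposes with a map $\Lambda^{i+j}H^1(A)\to\Lambda^iH^1(A)\otimes\Lambda^jH^1(A)$ whose composite with the wedge product is multiplication by $\binom{i+j}{i}$, a unit in $\bZ_{(p)}$, so the result induces an isomorphism on $H^{i+j}$. The only cosmetic difference is that the paper produces this comultiplication as the relative norm $N_{S_{i+j}/S_i\times S_j}:S^{i+j}M\to S^iM\otimes S^jM$ evaluated at $M=H^1(A)[1]$ together with the d\'ecalage identifications, rather than writing the shuffle coproduct on the exterior algebra of the locally free sheaf directly, and it does not divide by the binomial since inducing multiplication by a unit already suffices.
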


\begin{proof}
For any object $M\in D(X)$, there is a natural map $m_{i,j}:S^iM\otimes S^jM\to S^{i+j}M$ that, in case $M$ is a projective module over a ring, is given by the surjection $(M^{\otimes (i+j)})_{S_i\times S_j}\to (M^{\otimes (i+j)})_{S_{i+j}}$ where we view $S_{i}\times S_j$ as a subgroup of $S_{i+j}$. Choosing a set of representatives $\Sigma\subset S_{i+j}$ for left cosets of this subgroup, we can define a map (that is independent of the choice of $\Sigma$) $N_{S_{i+j}/S_i\times S_j}:S^{i+j}M\to S^iM\otimes S^jM$ given by $\sum\limits_{\sigma\in \Sigma}\sigma$. The composition $S^{i+j}M\xrightarrow{N_{S_{i+j}/S_{i}\times S_j}}S^iM\otimes S^jM\xrightarrow{m_{i,j}}S^{i+j}M$ is equal to multiplication by $\binom{i+j}{i}=[S_i\times S_j:S_{i+j}]$.

Applying this to $M=H^1(A)[1]$ and using the decalage equivalences $S^i(H^1(A)[1])\simeq (\Lambda^i H^1(A))[i]$, we obtain a map $N_{S_{i+j}/S_i\times S_j}:\Lambda^{i+j}H^1(A)\to \Lambda^i H^1(A)\otimes\Lambda^j H^1(A)$ whose composition with the wedge product map is the multiplication by $\binom{i+j}{i}$ map on $\Lambda^{i+j}H^1(A)$.

Consider the product of sections $s_i$ and $s_j$ given by
\begin{equation}
s'_{i+j}:\Lambda^i H^1(A)[-i]\otimes\Lambda^j H^1(A)[-j]\xrightarrow{s_{i}\otimes s_j}A\otimes A\xrightarrow{m}A.
\end{equation}

It induces exterior multiplication $\Lambda^i H^1(A)\otimes \Lambda^j H^1(A)\to \Lambda^{i+j}H^1(A)$ when evaluated on $H^{i+j}$, so precomposing $s'_{i+j}$ with the map $N_{S_{i+j}/S_i\times S_j}$ gives rise to a map $s_{i+j}:\Lambda^{i+j}H^1(A)[-i-j]\to A$ that induces multiplication by $\binom{i+j}{i}$ on $H^{i+j}$. Under the assumption that $p\nmid\binom{i+j}{i}$ the map $s_{i+j}$ thus gives the desired splitting in degree $i+j$. 
\end{proof}

\begin{cor}\label{higher ext: extendingppower}
Given an integer $n\geq 0$, suppose that there exist morphisms $s_{p^i}:H^{p^i}(A)[-{p^i}]\to A$ inducing an isomorphism on $H^{p^i}$, for all $i=0,1,\dots n$. Then there exists an equivalence $\tau^{\leq p^{n+1}-1}A\simeq\bigoplus\limits_{i=0}^{p^{n+1}-1}H^i(A)[-i]$.
\end{cor}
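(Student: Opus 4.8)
The goal is to bootstrap from the given splittings in degrees $p^0,p^1,\dots,p^n$ to a full decomposition of $\tau^{\leq p^{n+1}-1}A$. The key combinatorial input is Kummer's theorem: the binomial coefficient $\binom{m}{i}$ is prime to $p$ precisely when the base-$p$ addition $i + (m-i) = m$ has no carries, i.e. when each base-$p$ digit of $i$ is $\leq$ the corresponding digit of $m$. So the first step is to observe that every integer $m$ in the range $0 \leq m \leq p^{n+1}-1$ has a base-$p$ expansion $m = \sum_{j=0}^{n} d_j p^j$ with $0 \leq d_j \leq p-1$, and that $m$ can be built up by repeatedly adding powers of $p$ from the available stock $\{p^0,\dots,p^n\}$ in such a way that at each intermediate stage no carry occurs. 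Concretely, write $m$ as an ordered sum of $\sum_j d_j$ terms, each equal to some $p^j$, grouped so that all copies of $p^0$ come first, then all copies of $p^1$, and so on; at each partial sum the base-$p$ digits only ever increase toward those of $m$, so no carrying happens and every partial binomial coefficient is a unit mod $p$.

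\textbf{Key steps.} First I would formalize the carry-free addition observation as a short lemma (or just inline it): if $a,b\geq 0$ and the base-$p$ digits of $a$ and $b$ sum digit-wise with no carry to give $a+b$, then $p \nmid \binom{a+b}{a}$; and conversely every $m \leq p^{n+1}-1$ is reachable from the generators $p^0,\dots,p^n$ by a chain of such carry-free additions. Second, I would run an induction along such a chain, invoking Lemma \ref{higher ext: mult2splittings} at each step: starting from $s_{p^j}$ (which exist by hypothesis), each carry-free addition $a + p^j \rightsquigarrow a+p^j$ with $p\nmid\binom{a+p^j}{p^j}$ produces a map $s_{a+p^j}: \Lambda^{a+p^j}H^1(A)[-(a+p^j)] \to A$ inducing an isomorphism on $H^{a+p^j}$. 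Since $H^m(A) \simeq \Lambda^m H^1(A)$ by hypothesis, this yields for every $m$ in the range a map $s_m : H^m(A)[-m] \to A$ that is an isomorphism on $H^m$. Third, I would assemble $\bigoplus_{m=0}^{p^{n+1}-1} s_m : \bigoplus_{m=0}^{p^{n+1}-1} H^m(A)[-m] \to A$; composing with the truncation $A \to \tau^{\leq p^{n+1}-1}A$ gives a map whose source and target have the same cohomology sheaves in degrees $\leq p^{n+1}-1$ (namely $H^m(A)$ in degree $m$, zero above), and which induces an isomorphism on each cohomology sheaf by construction. A map in $D(X)$ inducing isomorphisms on all cohomology sheaves is an equivalence, so $\tau^{\leq p^{n+1}-1}A \simeq \bigoplus_{m=0}^{p^{n+1}-1}H^m(A)[-m]$, as claimed.

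\textbf{Main obstacle.} The only genuine content beyond Lemma \ref{higher ext: mult2splittings} is the number-theoretic bookkeeping: verifying that every $m \leq p^{n+1}-1$ admits a carry-free build-up from $\{p^0,\dots,p^n\}$, and, slightly more delicately, that one need only reach each $m$ by \emph{one} such chain (rather than worrying about compatibility between the various $s_m$), because for the final assembly we only need each $s_m$ to be \emph{some} section inducing an isomorphism on $H^m$ — no coherence among different $m$ is required. I expect this to be routine once phrased via base-$p$ digits; the potential subtlety is making sure Lemma \ref{higher ext: mult2splittings} is applied with $i = a$, $j = p^{\bullet}$ a single power (so that the hypothesis $p \nmid \binom{i+j}{i}$ is exactly the no-carry condition for adding a power of $p$), and checking that adding copies of a fixed power $p^j$ one at a time to a number whose digit in position $j$ has not yet reached $d_j$ never carries. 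A minor point to get right is the edge cases $m = 0$ (take $s_0$ to be the unit $\cO_X \to A$, available since $H^0(A) = \cO_X$) and $m = p^i$ (use the given $s_{p^i}$ directly). Nothing here seems to require flatness of $X$, consistent with the hypotheses of the section.
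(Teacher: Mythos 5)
Your proof is correct and follows essentially the same route as the paper: construct $s_m$ for every $m\leq p^{n+1}-1$ by induction using Lemma \ref{higher ext: mult2splittings} together with the carry-free (Kummer/Lucas) observation that adding a power of $p$ appearing in the base-$p$ expansion never makes the relevant binomial coefficient divisible by $p$, then assemble $\bigoplus_m s_m$ into the splitting of the truncation. The only cosmetic difference is that the paper peels off the top power $p^r$ from $m$ in a single application of the lemma per step, while you add powers of $p$ one at a time; both reduce to the same digit-wise bookkeeping.
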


\begin{proof}
We need to construct sections $s_m:H^m(A)[-m]\to A$ for all $0\leq m\leq p^{n+1}-1$. We will construct these by induction on $m$, the base case being $m=0$, where $s_0:H^0(A)\to A$ is simply the natural map that exists for every complex concentrated in non-negative degrees. Suppose that the sections $s_{m'}$ have been constructed for all $m'<m$. Let $m=a_rp^r+\dots+a_1p+a_0$ be the base $p$ expansion of $m$. The splitting $s_{p^r}$ is given to us and $s_{m-p^r}$ has already been constructed, so $s_m$ is provided by Lemma \ref{higher ext: mult2splittings} applied to $(i,j)=(p^r,m-p^r)$.
\end{proof}

\begin{example}
If $X_0$ is a smooth scheme over $k$ that lifts to a scheme $X_1$ over $W_2(k)$, such that the class $e_{X_1,p}$ vanishes, the conditions of Corollary \ref{higher ext: extendingppower} are satisfied for $A=\dR_{X_0/k}$ on $X_0$ with $n=1$, hence the truncation $\tau^{\leq p^2-1}\dR_{X_0/k}$ decomposes as a direct sum of its cohomology sheaves.
\end{example}

\renewcommand{\alg}{{}}
\section{Non-vanishing in rational group cohomology}
\label{rational: section}

In this section we prove non-vanishing results related to the class $\alpha$ of Definition \ref{free cosimplicial: alpha definition}. Let $V$ be a finite-dimensional $k$-vector space equipped with the tautological action of the algebraic group $GL(V)$, viewed as a group scheme over $k$. We denote by $V^{(1)}$ the vector space $V\otimes_{k,\Fr_p}k$ on which $GL(V)$ acts through the relative Frobenius $F_{GL(V)/k}:GL(V)\to GL(V)^{(1)}=GL(V^{(1)})$. Under the equivalence between representations of $GL(V)$ and quasicoherent sheaves on the classifying stack $BGL(V)$ the representation $V^{(1)}$ corresponds to the pullback $F_{BGL(V)}^*V$ under absolute Frobenius, because $V$ and $GL(V)$ can be descended to $\bF_p\subset k$.

The construction of Definition \ref{free cosimplicial: alpha definition} applied to the stack $BGL(V)$ with its tautological vector bundle associates to $V$ a class $\alpha(V)\in \Ext^{p-1}_{GL(V)}(\Lambda^p V,V^{(1)})$ where $V^{(1)}$ is the Frobenius twist of the representation $V$. Recall that $\alpha(V)$ is the extension class of
\begin{equation}\label{rational group cohomology: alpha formula}
V^{(1)}[-2]\to \tau^{\geq 2}S^p(V[-1])\to\Lambda^p V[-p].
\end{equation}

We will start by proving that this extension is generally non-split when viewed as an extension of algebraic representations 

\begin{pr}\label{rational group cohomology: main non-vanishing}
If $\dim V=p$ then the class $\alpha(V)|_{SL(V)}\in \Ext^{p-1}_{SL(V)}(\Lambda^p V, V^{(1)})=H^{p-1}_{\alg}(SL(V),V^{(1)})$ is non-zero.
\end{pr}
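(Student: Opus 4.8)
The statement concerns the class $\alpha(V) \in \Ext^{p-1}_{GL(V)}(\Lambda^p V, V^{(1)})$ for $V$ of dimension exactly $p$, restricted to $SL(V)$. Since $\Lambda^p V$ is one-dimensional when $\dim V = p$, and its restriction to $SL(V)$ is the trivial representation, we have $\Ext^{p-1}_{SL(V)}(\Lambda^p V, V^{(1)}) \simeq H^{p-1}_{\alg}(SL(V), V^{(1)})$ as claimed, where I write $H^{\bullet}_{\alg}$ for cohomology of the algebraic group. So the plan is to exhibit an explicit representative of $\alpha(V)$ as a Yoneda $(p-1)$-fold extension and to show this extension does not split by comparing two spectral sequences attached to it.

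First I would produce a concrete model for the object $\tau^{\geq 2}S^p(V[-1])$ as a complex of $GL(V)$-representations. Using the Koszul-type presentation of the derived symmetric power, $S^p(V[-1])$ is computed by the totalization of a cosimplicial diagram whose associated normalized complex is (up to a shift) the complex $\Lambda^{\bullet} V \otimes S^{\bullet} V$ assembled so that $H^i(S^p(V[-1])) = \Lambda^i V$ for $i < p$ and has the extra Frobenius-twist contribution near degree $2$. Concretely, one can splice the pieces of the de Rham-type complex $0 \to S^p V \to S^{p-1}V \otimes \Lambda^1 V \to \cdots \to S^1 V \otimes \Lambda^{p-1} V \to \Lambda^p V \to 0$ (the Koszul complex, which is exact in characteristic $0$ but only has controlled homology in characteristic $p$), and identify the relevant sub/quotient complex representing the class $\alpha(V)$ as a Yoneda extension $V^{(1)} \rightarrowtail C^2 \to C^3 \to \cdots \to C^{p+1} \twoheadrightarrow \Lambda^p V$ in the category of rational $GL(V)$-modules. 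The key input here is Lemma \ref{free cosimplicial: cosimp vs einf} together with the classical Priddy computation of the cohomology of $(V[-1]^{\otimes p})_{hS_p}$, which pins down exactly where the Frobenius twist sits.

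The heart of the argument is to show this explicit Yoneda extension is non-trivial, equivalently that the connecting map $H^0 \to H^{p-1}$ it induces is non-zero on a suitable module. Here I would invoke Kempf's vanishing theorem: the symmetric powers $S^j V$ and the exterior powers $\Lambda^j V$ have computable (often vanishing in a range) higher cohomology when induced up along the flag variety $GL(V)/B$, so one can set up the b\^ete filtration spectral sequence $E_1^{s,t} = H^t(SL(V), C^s) \Rightarrow H^{s+t}(SL(V), \text{Tot}(C^{\bullet}))$ and the canonical (Postnikov) filtration spectral sequence on the same totalization. Comparing these — the canonical one degenerating because the homology sheaves $\Lambda^i V$ and $V^{(1)}$ are explicit — forces a non-zero differential, which is precisely the statement that $\alpha(V)|_{SL(V)} \neq 0$. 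The main obstacle I anticipate is the bookkeeping in the second step: correctly identifying which subquotient of the Koszul complex represents $\alpha(V)$ (rather than $r_p$ or $N_p$ or their composite), and making sure the Frobenius-twisted term $V^{(1)}$ — which is where all the subtlety of the mod-$p$ phenomenon lives — is placed in the right cohomological degree so that the spectral sequence comparison actually detects it. Once the explicit complex is correctly set up, the vanishing-theorem input should make the non-splitting fairly mechanical.
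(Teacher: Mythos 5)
Your step (2) — computing $\RGamma(SL(V),-)$ of the explicit complex $S^pV\to S^{p-1}V\otimes V\to\cdots\to V\otimes\Lambda^{p-1}V$ term by term via Kempf/good-filtration vanishing and comparing with the canonical-filtration spectral sequence — is sound, and it is in fact exactly how the paper computes $H^{\bullet}(SL(V),V^{(1)})$ (proof of Lemma \ref{rational: terms coh vanishing}(3)). The genuine gap is your step (1): you assume you can "identify the relevant sub/quotient complex representing the class $\alpha(V)$", but you give no mechanism for this, and the inputs you cite cannot supply one. Lemma \ref{free cosimplicial: cosimp vs einf} and Priddy's computation only pin down the cohomology objects of $S^p(V[-1])$ (two copies of $V^{(1)}$ in degrees $1,2$ and $\Lambda^pV$ in degree $p$); they say nothing about the extension data, and two complexes with these cohomology groups need not be equivalent — one of them could be split, which is precisely the question at issue. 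The closest available input, Lemma \ref{rational: deligne illusie} (Friedlander–Suslin), identifies only the fiber $T_p(V[-1])\simeq\tau^{\leq 1}\Omega^{\bullet}_p$, not the total extension by $\Lambda^pV[-p]$.

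The paper explicitly flags that a direct identification of $\Omega^{\leq p-1}_p[-1]$ with $S^p(V[-1])$ is not available and would "make the proof more straightforward"; it circumvents the issue by a different route. Non-vanishing is proved by passing from $S^p(V[-1])\simeq\tau^{\geq 1}(V[-1]^{\otimes p})_{hS_p}$ to the cyclic subgroup $C_p\subset S_p$, representing $(V[-1]^{\otimes p})_{hC_p}$ by the $2$-periodic complex, commuting $\RGamma(SL(V),-)$ with $C_p$-coinvariants (Lemma \ref{rational: commute invariants with cp coinvariants}), and using $H^{>0}(SL(V),V^{\otimes n})=0$, $H^0(SL(V),V^{\otimes p})=\Lambda^pV$, and the vanishing of the composite $\Lambda^pV\to V^{\otimes p}\to\Lambda^pV$ in characteristic $p$; the de Rham complex enters only to establish that $H^{i}(SL(V),V^{(1)})$ is concentrated in degree $p-1$ and one-dimensional. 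The identification you want is then deduced a posteriori: since $\Ext^{p-1}_{SL(V)}(\Lambda^pV,T_p(V))$ is one-dimensional, any two non-split extensions agree — but this argument needs the non-splitness of $S^p(V[-1])$ as an input, so using it inside your step (1) would be circular. As written, your plan therefore rests on an unproved (and, per the paper, genuinely nontrivial) identification; to repair it you either need a direct comparison map between the Koszul/de Rham complex and $S^p(V[-1])$, or you should reroute the non-splitting argument through the symmetric/cyclic group model as the paper does.
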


\begin{rem}
In this section, we will work in the bounded below derived category $D^+(\Rep_{SL(V)})$ of the abelian category of $k[SL(V)]$-comodules. It is equivalent to $D^+(BSL(V))$, and we may view $\alpha(V)$ as a morphism in $D^+(\Rep_{SL(V)})$.
\end{rem}

\begin{rem}\label{group cohomology: p=2 remark}
When $p=2$, an explicit model for $\tau^{\geq 2}S^p(V[-1])$ is at our disposal by Lemma \ref{free cosimplicial: alpha for p=2}. In this case Proposition \ref{rational group cohomology: main non-vanishing} is asserting that the extension $$0\to V^{(1)}\to S^2V\to \Lambda^2V\to 0$$ does not admit an $SL(V)$-equivariant section, for a $2$-dimensional space $V$, and this is elementary to show. Indeed, if $e_1,e_2$ is a basis in $V$ then a matrix $\left(\begin{matrix}a & b\\ c & d\end{matrix}\right)$ satisfying $ad-bc=1$ sends an element $\lambda_{11}e_1^2+\lambda_{12}e_1e_2+\lambda_{22}e_2^2\in S^2V$ to $(\lambda_{11}a^2+\lambda_{12}\cdot ab+\lambda_{22}b^2)e_1^2+\lambda_{12}e_1e_2+(\lambda_{11}c^2+\lambda_{12}\cdot cd+\lambda_{22}d^2)e_2^2$, so the invariants $(S^2V)^{SL_2}$ vanish. But the action of $SL_2$ on $\Lambda^2 V$ is trivial, so the surjection $S^2V\to \Lambda^2V$ does not admit a section. 
\end{rem}

We will need the following facts about cohomology of representations of $SL(V)$:
\begin{lm}\label{rational group cohomology: basic facts}
Let $V$ be a $p$-dimensional vector space over $k$.
\begin{enumerate}
\item For any $n\geq 0$ the cohomology $H^i(SL(V),V^{\otimes n})$ vanishes for all $i>0$.

\item The antisymmetrization map $\psi:\Lambda^pV\to V^{\otimes p}$ defined by $\psi(v_1\wedge\ldots\wedge v_p)=\sum\limits_{\sigma\in S_p}\sgn(\sigma)v_{\sigma(1)}\otimes\ldots\otimes v_{\sigma(p)}$ induces an isomorphism $k\simeq \Lambda^pV\simeq (V^{\otimes p})^{SL(V)}$.

\item For any sequence of positive integers $i_1,\ldots,i_m$ the $SL(V)$-representation $S^{i_1}V\otimes\ldots\otimes S^{i_m}V$ has no cohomology in degrees $>0$. If $i_1+\ldots+i_m=p$ and at least one of the $i_1,\ldots,i_m$ is larger than or equal to $2$, this representation has no non-zero $SL(V)$-invariants.

\end{enumerate}
\end{lm}

\begin{proof}
Part (1) follows from classical results on good filtrations, see \cite[II.4]{jantzen}. We recall here relevant facts from this theory, from which the desired vanishing follows immediately. In general, consider a split reductive group $G$ over $k$ with a Borel subgroup $B\subset G$ and a maximal torus $T\subset B$. Recall that for a weight $\lambda\in X^*(T)$ there is the associated $G$-equivariant line bundle $\cO(\lambda)$ on the flag variety $G/B$, obtained by applying the equivalence \{$G$-equivariant vector bundles on $G/B$\}$\simeq$\{representations of $B$\} to the character $B\to T\xrightarrow{\lambda}\bG_m$. The global sections $H^0(\lambda):=H^0(G/B,\cO(\lambda))$ is a finite-dimensional representation of $G$ which is non-zero if and only if $\lambda$ is a dominant weight. A filtration $\ldots\subset W_i\subset W_{i-1}\subset \ldots$ on a finite-dimensional representation $W$ is called {\it good} if every quotient $W_{i-1}/W_i$ is isomorphic to the representation $H^0(G/B, \cO(\lambda_i))$ for some dominant weight $\lambda$. If a representation $W$ admits a good filtration then $H^i(G, W)=0$ for all $i>0$, by Kempf vanishing \cite[Proposition II.4.13]{jantzen}.

For $n=0$, part (1) is asserting that $H^{>0}(SL(V),k)=0$ which is the case because $k=H^0(\lambda)$ for the trivial character $\lambda$. The tautological representation $V$ of $SL(V)$ has the form $H^0(\lambda)$ for $\lambda$ the highest weight of $V$, so it tautologically has a good filtration. Tensor product of two representations with good filtrations admits a good filtration as well \cite[Proposition II.4.21]{jantzen}, hence $V^{\otimes n}$ admits a good filtration which implies that $H^{>0}(SL(V),V^{\otimes n})=0$ for $n>0$.

Part (2) is proven by a direct computation. The map $\psi$ is an $SL(V)$-equivariant injection so all we need is to prove that $(V^{\otimes p})^{SL(V)}$ has dimension at most $1$. Let $e_1,\ldots,e_p$ be a basis for $V$ and consider first the action of the maximal torus $T=\{\diag(t_1,\ldots,t_{p-1},(t_1\cdot\ldots\cdot t_{p-1})^{-1})|t_i\in\bG_m\}$ on $V^{\otimes p}$. An element $e_{i_1}\otimes\ldots\otimes e_{i_p}$ is being acted on by $T$ via the character $t_1^{d_1-d_p}\ldots t_{p-1}^{d_{p-1}-d_p}$ where $d_j$ is the number of indices $r=1,\ldots,p$ such that $i_r=j$. Therefore the subset of $T$-invariants $(V^{\otimes p})^T\subset V^{\otimes p}$ is spanned by tensors $e_{i_1}\otimes\ldots\otimes e_{i_p}$ for which $(i_1,\ldots,i_p)$ is a permutation of the sequence $(1,2\ldots ,p)$. We can embed the group $S_p$ into $SL(V)$ by sending $\tau\in S_p$ to the operator $e_i\mapsto \sgn(\tau)e_{\tau(i)}$. The subspace $(V^{\otimes p})^T$ is preserved by $S_p$ and is isomorphic to the regular representation $k[S_p]$ of $S_p$. Hence $S_p\ltimes T$-invariants in $V^{\otimes p}$ are $1$-dimensional and part (2) is proven.

(3) For any $n$ the representation $S^n V$ is a Weyl module for $SL(V)$ (cf. \cite[II.2.16]{jantzen}) so it tautologically has a good filtration, and hence any tensor product of symmetric powers of $V$ has a good filtration and therefore has no higher cohomology.

If $i_1+\ldots+i_m=p$ then either $m=1$ and the representation in question is $S^p V$, or all $i_1,\ldots,i_m$ are less than $p$. In the case $m=1$ the representation $S^p V$ has no non-zero invariants by \cite[Prop II.4.13]{jantzen}. If all $i_1,\ldots,i_m$ are less than $p$ then we have a $SL(V)$-equivariant embedding \begin{equation}\psi:S^{i_1}V\otimes\ldots\otimes S^{i_m}V\simeq \Gamma^{i_1}V\otimes\ldots\otimes\Gamma^{i_m}V
\hookrightarrow V^{\otimes(i_1+\ldots+i_m)}=V^{\otimes p}.\end{equation} 

By (2), the $1$-dimensional space of $SL(V)$-invariants in $V^{\otimes p}$ is being acted on by the symmetric group $S_p$ via the sign character, but unless all $i_1,\ldots, i_m$ are equal to $1$, the image of $\psi$ is being acted on trivially by an odd permutation (and $p$ is larger than $2$), which proves the vanishing of $SL(V)$ invariants in $S^{i_1}V\otimes\ldots \otimes S^{i_m}V$.
\end{proof}

We will prove Proposition \ref{rational group cohomology: main non-vanishing} by calculating the effect of the map $S^p(V[-1])\to (\Lambda^p V)[-p]$ on derived $SL(V)$-invariants. To calculate $SL(V)$-cohomology of $S^p(V[-1])$, we introduce the following construction. For any $i\geq 0$ and $k$-vector spaces $M,N$ there is a natural map
\begin{equation}
\varepsilon^i_{M,N}:\Lambda^i M\otimes\Lambda^i N\to S^i(M\otimes N)
\end{equation}
given by
\begin{equation}
\varepsilon_{M,N}^i(m_1\wedge\ldots\wedge m_i\otimes n_1\wedge\ldots\wedge n_i)=\displaystyle\sum\limits_{\sigma\in S_i}\sgn(\sigma)m_1\otimes n_{\sigma(1)}\cdot\ldots\cdot m_i\otimes n_{\sigma(i)}.
\end{equation}
By the procedure described in subsection \ref{doldpuppe subsection}, this gives rise to natural maps $\varepsilon^i_{M,N}:\Lambda^i M\otimes\Lambda^i N\to S^i(M\otimes N)$ for arbitrary objects $M,N\in D(k)$. 

\begin{lm}\label{rational group cohomology: symemtric power invariants}
\begin{enumerate}
\item If $V$ is a $p$-dimensional $k$-vector space, and $W$ is any vector space, then the map $\varepsilon_{V,W}^p:\Lambda^p W\simeq \Lambda^p V\otimes\Lambda^p W\to S^p(V\otimes W)$ induces an isomorphism $\Lambda^p W\simeq S^p(V\otimes W)^{SL(V)}$ onto the invariants for the action of $SL(V)$ induced by the tautological action on $V$ and trivial action on $W$.
\item If $W$ now is an arbitrary object of $D^{\geq 0}(k)$, and $V$ is a $p$-dimensional $k$-vector space as before, then the map $\varepsilon_{V,W}^p:\Lambda^p V\otimes \Lambda^p W\to S^p(V\otimes W)$ induces an equivalence $\Lambda^p W\simeq \RGamma(SL(V), S^p(V\otimes W))$.
\end{enumerate}
\end{lm}

\begin{proof}
(1) To prove that it is an isomorphism onto the space of $SL(V)$-invariants in $S^p(V\otimes W)$, choose a basis $\{e_i,i\in I\}$ for $W$. Then \begin{equation}\label{rational group cohomology: sp basis decomposition}S^p(V\otimes W)=S^p(\bigoplus\limits_{i\in I} V)\simeq\bigoplus\limits_{\sum\limits_{i\in I}d_i=p}\bigotimes\limits_{i\in I}S^{d_i}V\end{equation} and $\varepsilon_{V,W}^p$ is an isomorphism onto the sum of all direct summands with $d_i\in\{0,1\}$. All the other summands have no non-zero $SL(V)$-invariants by Lemma \ref{rational group cohomology: basic facts} (3).

To prove (2), choose a cosimplicial vector space $W^{\bullet}$ whose totalization is equivalent to $W$. Then $S^p(V\otimes W)$ as an object of $D^{\geq 0}(\Rep SL(V))$ is the totalization of the cosimplicial representation $S^p(V\otimes W^{\bullet})$. In view of the decomposition (\ref{rational group cohomology: sp basis decomposition}), vanishing of higher cohomology of products of symmetric powers of $V$ (Lemma \ref{rational group cohomology: symemtric power invariants}(3)) implies that each term $S^p(V\otimes W^n)$ has no higher $SL(V)$-cohomology.  Hence $\RGamma(SL(V), S^p(V\otimes W))$ is equivalent to the totalization of the cosimplicial vector space $S^p(V\otimes W^{\bullet})^{SL(V)}$, and assertion (2) follows from part (1). 
\end{proof}

Having calculated cohomology of $SL(V)$ with coefficients in $S^p(V\otimes W)$ using the map $\varepsilon_{V,W}^p$, let us establish the following statement about the interaction of this map with the norm  map $N_p$:

\begin{lm}\label{rational group cohomology: epsilon on frob twsit}
For all $M,N\in D(k)$ the composition
\begin{multline}\label{rational group cohomology: additive to tensor formula}
\Lambda^p M\otimes N^{(1)}\xrightarrow{\id_{\Lambda^p M}\otimes \Delta_N} \Lambda^p M\otimes S^p N\simeq \Lambda^pM \otimes \Lambda^p(N[-1])[p]\xrightarrow{\varepsilon^p_{M,N[-1]}[p]} \\ S^p(M\otimes N[-1])[p]\xrightarrow{N_p [p]} \Gamma^p(M\otimes N[-1])[p]\simeq \Lambda^p(M\otimes N)
\end{multline}
is zero.
\end{lm}

\begin{proof}
We may assume that both $M$ and $N$ are vector spaces placed in degree $0$. This vanishing can be obtained by a direct calculation via the Koszul complex to see how $\varepsilon$ interacts with the d\'ecalage isomorphism, but instead we observe that even for a fixed $M$ there are no non-zero morphisms $\Lambda^p M\otimes N^{(1)}\to \Lambda^p(M\otimes N)$ of functors of $N$. There is a natural embedding $\Lambda^p(M\otimes N)\hookrightarrow(M\otimes N)^{\otimes p}$ so it suffices to check that there are no non-zero morphisms of functors $N^{(1)}\to N^{\otimes p}$.

This follows from \cite[Theorem 2.13]{friedlander-suslin} which asserts that there are non non-zero morphisms from an additive functor to a tensor product of strict polynomial functors of positive degree.
\end{proof}

\begin{proof}[Proof of Proposition \ref{rational group cohomology: main non-vanishing}] By Lemma \ref{rational group cohomology: symemtric power invariants}(2) applied to $W=k[-1]$ the natural map $\varepsilon_{V,k[-1]}^p:\Lambda^pV\otimes\Lambda^p k[-1]\to S^p(V\otimes k[-1])$ induces a quasi-isomorphism $k[-p]\simeq \Lambda^p (k[-1])\simeq \RGamma(SL(V),S^p(V[-1]))$.

In other words, $H^p(SL(V), S^p(V[-1]))\simeq k$ and $SL(V)$-cohomology groups in all other degrees are zero. Let us now show that the map $\RGamma(SL(V), S^p(V[-1]))\to \RGamma(SL(V),\Gamma^p(V[-1]))\simeq \RGamma(SL(V), (\Lambda^p V)[-p])$ induced by $N_p$ is zero. By Lemma \ref{rational group cohomology: basic facts}(1) the trivial representation $\Lambda^p V$ has no cohomology in degrees $>0$, so the natural map $\RGamma(SL(V),\Gamma^p(V[-1]))\to \Gamma^p(V[-1])$ is an equivalence. Hence it suffices to check that the composition
\begin{equation}\label{rational group cohomology: norm on invariants map}
\Lambda^pV\otimes \Lambda^p(k[-1])\xrightarrow{\varepsilon^p_{V,k[-1]}}S^p(V[-1])\xrightarrow{N_p}\Gamma^p(V[-1])
\end{equation}
is zero. 

This follows from Lemma \ref{rational group cohomology: epsilon on frob twsit} with $M=V, N=k$, because in this case the map $N^{(1)}\to S^p N$ is an isomorphism, so the composition (\ref{rational group cohomology: additive to tensor formula}) is the same as (\ref{rational group cohomology: norm on invariants map}).

In particular, the map $k\simeq H^p(SL(V),S^p(V[-1]))\to H^p(SL(V), \Gamma^p (V[-1]))\simeq k$ is zero, and the long exact sequences of $SL(V)$-cohomology from the fiber sequence $T_p(V[-1])[-1]\to S^p(V[-1])\to \Gamma^p (V[-1])$ implies that $H^i(SL(V),T_p(V))\simeq k$ for $i=p-2,p-1$ and is zero for all other $i$. On the other hand, the fiber sequence
\begin{equation}
V^{(1)}[1]\to T_p(V)\to V^{(1)}
\end{equation}
from (\ref{free cosimplicial: tate mod p extension}) yields a long exact sequence
\begin{equation}\label{rational group cohomology: tate long exact sequence}
\ldots\to H^{i+1}(SL(V),V^{(1)})\to H^{i+1}(SL(V), T_p(V))\to H^i(SL(V), V^{(1)})\to\ldots
\end{equation}
In particular, $SL(V)$-cohomology of $V^{(1)}$ is non-zero in at least one degree, or else the cohomology of $T_p(V)$ would be zero. 

By \cite[Theorem 2.4(b)]{cpsk} the the cohomology of $SL(V)$ with coefficients in a finite-dimensional representation is non-zero only in finitely many degrees. Let $i_{-}, i_+$ be, respectively, the minimal and the maximal $i$ such that $H^i(SL(V), V^{(1)})\neq 0$. The exact sequence (\ref{rational group cohomology: tate long exact sequence}) then gives the isomorphisms $H^{i_+}(SL(V), T(V))\simeq H^{i_+}(SL(V),T(V))$ and $H^{i_{-}-1}(SL(V),T(V))\simeq H^{i_{-}}(SL(V), V^{(1)})$. Since we already established that the cohomology of $T(V)$ is non-zero only in degrees $p-2,p-1$, we necessarily have $i_{-}=i_+=p-1$ and $H^{p-1}(SL(V), V^{(1)})\simeq k$. We established:

\begin{lm}\label{rational group cohomology: frob twist cohomology}
The cohomology $H^i(SL(V), V^{(1)})$ is $k$ for $i=p-1$ and vanishes in all other degrees.
\end{lm}

We can now finish the proof of Proposition \ref{rational group cohomology: main non-vanishing}. We already know that the map $S^p(V[-1])\to (\Lambda^pV)[-p]$ does not have a section in $D(\Rep SL(V))$ because it induces a non-surjective map on $H^p(SL(V),-)$. From the fiber sequence $V^{(1)}[-1]\to S^p(V[-1])\to  \tau^{\geq 2}S^p(V[-1])$  we know that $H^p(SL(V), S^p(V[-1]))\to H^p(SL(V), \tau^{\geq 2}S^p(V[-1]))$ is surjective, because $H^p(SL(V), V^{(1)})=0$, therefore $H^p(SL(V), \tau^{\geq 2}S^p(V[-1]))\to H^p(SL(V),(\Lambda^p V)[-p])$ is not surjective, and the fiber sequence (\ref{rational group cohomology: alpha formula}) is not split, as desired.
\end{proof}

\begin{rem}
The computation of $H^{p-1}(SL(V),V^{(1)})$ for $p=2$ follows directly from the short exact sequence $V^{(1)}\to S^2 V\to \Lambda^2 V$, by the computation in Remark \ref{group cohomology: p=2 remark}, and for $p=3$ it is a result of Stewart \cite[Theorem 1]{stewart}.
\end{rem}

Having proved that $\alpha(V)$ is non-zero, we can identify it with other extensions that appeared in the literature, providing more compact Yoneda models for it. These other models are not used in any other parts of the paper. From now on $V$ is a $k$-vector space of arbitrary finite dimension.

Friedlander and Suslin \cite[(4.5.1)]{friedlander-suslin} have calculated the Ext groups between $\Lambda^p V$ and $V^{(1)}$ in the category $\cP$ of strict polynomial functors on $k$-vector spaces. The only non-zero Ext group $\Ext^i_{\cP}(\Lambda^p, (-)^{(1)})$ is in degree $i=p-1$ and it is spanned by a Yoneda extension constructed from the de Rham complex as follows. Loc. cit. calculates Ext groups from $V^{(1)}$ to $\Lambda^p V$, but this is equivalent to our assertion by dualizing.

Consider the affine space $\bA(V^{\vee}):=\Spec S^{*}(V)$ corresponding to the vector space $V^{\vee}$. It is equipped with an action of $GL(V)$ and the de Rham complex $\Omega^{\bullet}_{\bA(V^{\vee})/k}$ can be viewed as a complex of representations of $GL(V)$ on $k$-vector spaces. Explicitly, $\Omega^{\bullet}_{\bA(V^{\vee})/k}$ has the form
\begin{equation}
S^{*}(V)\xrightarrow{d}S^{*}(V)\otimes_k V\xrightarrow{d}S^{*}(V)\otimes_k\Lambda^2V\xrightarrow{d}\ldots
\end{equation}
where the de Rham differential $d:S^{*}(V)\otimes_{k}\Lambda^i V\to S^{*}(V)\otimes_k\Lambda^{i+1}V$ is given by 
\begin{equation}d(v_1\cdot\ldots\cdot v_n\otimes \omega)=\sum\limits_{i=1}^dv_1\cdot\ldots\cdot \widehat{v}_i\cdot\ldots\cdot v_d\otimes v_i\wedge \omega.
\end{equation}

The action of the center $\bG_m\subset GL(V)$ introduces a grading on the de Rham complex with the $n$-th graded piece given by 
\begin{equation}
\Omega^{\bullet}_n(V):=S^nV\xrightarrow{d} S^{n-1}V\otimes V\xrightarrow{d}\ldots\xrightarrow{d}\Lambda^n V.    
\end{equation}
The Cartier isomorphism describes the cohomology modules of this complex as follows:
\begin{lm}[{cf. \cite[Theorem 4.1]{friedlander-suslin}}]
When $p\nmid n$ the complex $\Omega^{\bullet}_n(V)$ is acyclic. If $n=pk$ for an integer $k$ then there are $GL(V)$-equivariant isomorphisms $H^i(\Omega^{\bullet}_{pk}(V))\simeq \Lambda^i(V^{(1)})\otimes S^{k-i}V^{(1)}$ for all $i$.
\end{lm}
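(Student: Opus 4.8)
The plan is to recognize the complex $\Omega^{\bullet}_n$ as the degree-$n$ graded piece of the de Rham complex of the affine space $\bA(V)$ and invoke the Cartier isomorphism. Concretely, for an affine space $\bA(V)=\Spec S^{\bullet}(V)$ over $k$, which is smooth, the Cartier isomorphism provides a canonical $GL(V)$-equivariant isomorphism $C^{-1}:\Omega^i_{\bA(V)^{(1)}/k}\iso \sH^i(F_{\bA(V)/k*}\Omega^{\bullet}_{\bA(V)/k})$. First I would unwind what this says on global sections: the de Rham complex of $\bA(V)$ is $\Omega^{\bullet}_{\bA(V)/k}=\bigoplus_{n\geq 0}\Omega^{\bullet}_n$ as a complex of $GL(V)$-representations, where the direct sum decomposition comes from the weight grading for the central torus $\bG_m\subset GL(V)$ acting by scaling on $V$. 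The de Rham differential has weight zero, so it preserves this grading, and the complex $\Omega^{\bullet}_n$ is exactly the weight-$n$ summand.

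Next I would track how the grading interacts with Frobenius. Pullback along the relative Frobenius $F_{\bA(V)/k}$ multiplies weights by $p$: the coordinate ring $S^{\bullet}(V^{(1)})$ of $\bA(V)^{(1)}$ sits inside $S^{\bullet}(V)$ with $V^{(1)}$ landing in $S^p(V)$. Hence the weight-$n$ part of $H^i(F_{\bA(V)/k*}\Omega^{\bullet}_{\bA(V)/k})=H^i(\Omega^{\bullet}_{\bA(V)/k})$ is zero unless $p\mid n$, giving acyclicity of $\Omega^{\bullet}_n$ when $p\nmid n$. When $n=pk$, the Cartier isomorphism identifies $H^i(\Omega^{\bullet}_{pk})$ with the weight-$pk$ part of $\Omega^i_{\bA(V)^{(1)}/k}=S^{\bullet}(V^{(1)})\otimes_k\Lambda^i V^{(1)}$. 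Since $V^{(1)}$ has weight $p$ under the original $\bG_m$ (both in $\Lambda^i V^{(1)}$ and as the generating space of $S^{\bullet}(V^{(1)})$), the weight-$pk$ piece is $S^{k-i}(V^{(1)})\otimes_k\Lambda^i V^{(1)}$, which is the claimed formula. I should double-check the $GL(V)$-equivariance of the Cartier isomorphism — this is standard since the Cartier operator is functorial for smooth morphisms and $GL(V)$ acts by automorphisms of $\bA(V)$ over $k$ — and confirm that the relative Frobenius twist $\bA(V)^{(1)}$ together with its $GL(V)$-action is identified with $\bA(V^{(1)})$ with the Frobenius-twisted action, which follows because $V$ and $GL(V)$ are defined over $\bF_p$.

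The main subtlety, rather than obstacle, is bookkeeping the two different roles played by the space $V$: once as the linear coordinates (contributing weight $1$ to $S^{\bullet}(V)$) and once in the Frobenius twist. I would make explicit that under $S^{\bullet}(V^{(1)})\hookrightarrow S^{\bullet}(V)$ a monomial in $V^{(1)}$ of degree $k-i$ becomes a monomial in $V$ of degree $p(k-i)$, so it has weight $p(k-i)$, while $\Lambda^i V^{(1)}\subset \Omega^i$ contributes weight $pi$; the total $p(k-i)+pi = pk$ matches. An alternative, if one prefers to avoid citing the Cartier isomorphism in this packaged form, is to use \cite[Theorem 4.1]{friedlander-suslin} directly, whose statement is exactly what is wanted here; but the de Rham / Cartier derivation is cleaner and self-contained given the setup. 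With this lemma in hand, the vanishing in part (3) of Lemma \ref{rational: terms coh vanishing} for $i\neq p-1$ and the computation $H^{p-1}(SL(V),V^{(1)})=k$ (for $\dim V=p$) will follow by running the hypercohomology spectral sequence for $\RGamma(SL(V),-)$ against $\Omega^{\bullet}_p$, comparing the b\^ete and canonical filtrations and using part (1) — namely that $H^{>0}(SL(V),S^aV\otimes\Lambda^bV)=0$ since each $S^aV\otimes\Lambda^bV$ has a good filtration.
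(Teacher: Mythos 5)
Your proof is correct and takes essentially the same route as the paper: the paper states this lemma as a direct consequence of the Cartier isomorphism (citing Friedlander--Suslin, Theorem 4.1), and your argument simply spells that out — identify $\Omega^{\bullet}_n$ with the central-weight-$n$ summand of the de Rham complex of $\bA(V)$, apply the $GL(V)$-equivariant Cartier isomorphism, and use that Frobenius pullback multiplies weights by $p$. Your weight bookkeeping ($p(k-i)$ from $S^{k-i}V^{(1)}$ plus $pi$ from $\Lambda^i V^{(1)}$, totalling $pk$) is exactly right, so nothing further is needed.
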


In particular, for $n=p$ the only non-zero cohomology groups of the complex $\Omega^{\bullet}_p$ are in degrees $0$ and $1$, both isomorphic to $V^{(1)}$. Consider the complex $\Omega^{\leq p-1}_p$ obtained from $\Omega^{\bullet}_p(V)$ by removing the last term:
\begin{equation}\label{group cohomology: de rham p-1}
\Omega^{\leq p-1}_p(V):=S^p V\to S^{p-1}V\otimes V\to\ldots\to V\otimes\Lambda^{p-1}V.
\end{equation} 
This is a complex with $H^0\simeq H^1\simeq V^{(1)}$, $H^{p-1}\simeq\Lambda^p V$ and all other cohomology groups are equal to zero. 

\begin{pr}
For every finite-dimensional $k$-vector space $V$ there is a natural equivalence $S^p(V[-1])\simeq \Omega^{\leq p-1}_p(V)[-1]$ in the derived category of representations of $GL(V)$.
\end{pr}
\begin{proof}
Note that the functor $V\mapsto S^p(V[-1])$ can be upgraded to an object of the derived category of strict polynomial functors (in the sense of \cite[\S 2]{friedlander-suslin}): if we choose a cosimplicial $k$-vector space $D^{\bullet}$, with all terms finite-dimensional, such that the totalization of $D^{\bullet}$ is equivalent to $k[-1]$, then $S^p(V[-1])$ is represented by the totalization of $S^p(D^{\bullet}\otimes V)$.

First, the truncation $\tau^{\leq 1}\Omega^{\leq p-1}_p(V)$ is equivalent to $T(V)[1]$ by \cite[Lemma 4.12]{friedlander-suslin}. Hence both $S^p(V[-1])$ and $\Omega^{\leq p-1}_p(V)[-1]$ are extensions of $(\Lambda^p V)[-p]$ by $T(V)[-2]$ in the derived category of $\cP$. 

The map $\Ext^{p-1}_{\cP}(\Lambda^p V, T(V))\to \Ext^{p-1}_{\cP}(\Lambda^p V, V^{(1)})$ is an isomorphism because $\Ext^i_{\cP}(\Lambda^p V,V^{(1)})$ vanishes for all $i\neq p-1$. Moreover, $\Ext^{p-1}_{\cP}(\Lambda^p V, V^{(1)})$ is $1$-dimensional and is generated by $\tau^{\geq 1}\Omega^{\leq p-1}_p(V)$, by the proof of \cite[(4.5.1)]{friedlander-suslin}.

Hence $\Omega^{\leq p-1}_p(V)$ defines a non-zero element in the $1$-dimensional space $\Ext^{p-1}_{\cP}(\Lambda^pV,T(V))$. We already established that the fiber sequence $T(V)[-2]\to S^p(V[-1])\to(\Lambda^p V)[-p]$ is non-split in the category of $GL(V)$-representations, hence it is non-split in the derived category of strict polynomial functor as well, and thus defines a non-zero element in $\Ext^{p-1}_{\cP}(\Lambda^p V, T(V))$. It differs from the class of $\Omega^{\leq p-1}_p(V)$ by a non-zero scalar, so we may choose an equivalence $\Omega^{\leq p-1}_p(V)[-1]\simeq S^p(V[-1])$ in the derived category of strict polynomial functors.
\end{proof}

\begin{rem}\label{rational: carter-lusztig remark}
Adrian Langer pointed out that the class $\alpha(V)$ can also be represented by a Yoneda extension considered by Carter and Lusztig. The dual of the exact sequence in \cite[(61) on p. 235]{carter-lusztig} reads as
    \begin{equation}\label{rational: carter-lusztig}
    0\to V^{(1)}\to S^{(p)}(V)\to S^{(p-1,1)}(V)\to \ldots \to S^{(1,1,\ldots,1)}(V)\to 0
    \end{equation}
    where $S^{(p-k,1,\ldots,1)}(V)$ is a Weyl module for $GL(V)$ explicitly described for $k<p-1$ as
    \begin{equation}
    S^{(p-k,\overbrace{1,\ldots,1}^{k})}(V)=\Lambda^{k+1}V\otimes  S^{p-k-1}V/\langle\sum\limits_{\sigma\in C_{k+2}}\sgn(\sigma)v_{\sigma(1)}\wedge\ldots\wedge v_{\sigma(k+1)}\otimes v_{\sigma(k+2)}\cdot v_{k+3}\cdot\ldots\cdot v_p\rangle
    \end{equation}
    where we quotient out the subspace spanned by these sums for all possible collections of vectors $v_1,\ldots,v_p\in V$, and $\sigma$ runs over all cyclic permutations of elements $1,2,\ldots,k+2$. In particular, $S^{(p)}(V)=S^pV$ and $S^{(1,1,\ldots,1)}(V)=\Lambda^p V$, so (\ref{rational: carter-lusztig}) defines a degree $p-1$ Yoneda extension of $\Lambda^p V$ by $V^{(1)}$ in $GL(V)$-modules. The differentials $S^{(p-k+1,1,\ldots,1)}(V)\to S^{(p-k,1,\ldots,1)}(V)$ in the sequence (\ref{rational: carter-lusztig}) are defined by the formula
    \begin{equation}
    v_1\wedge\ldots \wedge v_{k}\otimes v_{k+1}\cdot\ldots \cdot v_p\mapsto \sum\limits_{\sigma\in C_{p-k}}v_{1}\wedge\ldots v_{k}\wedge v_{\sigma(k+1)}\otimes v_{\sigma(k+2)}\cdot \ldots \cdot v_{\sigma(p)}
    \end{equation}
    where the sum runs over all cyclic permutations of indices $k+1,\ldots,p$. The surjections $\Lambda^{k+1}V\otimes S^{p-k-1}V\to S^{(p-k,1,\ldots,1 )}$ intertwine this differential with the de Rham differential $d:\Lambda^{k}V\otimes S^{p-k}V\to \Lambda^{k+1}V\otimes S^{p-k-1}V$, and therefore define a map from the extension $V^{(1)}[-1]\to \tau^{\geq 1}\Omega^{\leq p-1}_p\to \Lambda^p V[1-p]$ to the extension (\ref{rational: carter-lusztig}) inducing the identity on $V^{(1)}$ and $\Lambda^p V$. Hence (\ref{rational: carter-lusztig}) provides another model for the class $\alpha(V)$.
\end{rem}

\section{From algebraic cohomology to cohomology of the group of \texorpdfstring{$\bF_q$}{Fq}-points}\label{group cohomology: section}
\renewcommand{\alg}{\mathrm{alg}}
Throughout this section, we fix a finite field $\bF_q$ of characteristic $p$ and always assume that our base field $k$ contains $\bF_q$. In this section we show that $\alpha(V)$ remains non-zero when restricted to the discrete group $SL_p(\bF_q)$, provided that $q>p$:

\begin{pr}\label{group cohomology: from algebraic to discrete}
If $\dim V=p$ and $q>p$ then the restriction map $H^{p-1}_{\alg}(SL(V),V^{(1)})\to H^{p-1}(SL_p(\bF_q),V^{(1)})$ is injective.
\end{pr}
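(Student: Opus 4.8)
The idea is to apply the machinery of Cline--Parshall--Scott--van der Kallen \cite{cpsk} comparing the rational cohomology of a reductive group $G=SL(V)$ with the cohomology of the finite group $G(\bF_q)=SL_p(\bF_q)$. Recall that for a rational $G$-module $M$, loc.\ cit.\ provides a map $H^i_{\alg}(G,M)\to H^i(G(\bF_q),M)$, and that this map is an isomorphism in a range of degrees once $q$ is large compared to the relevant weights; more precisely the comparison is controlled by the cohomology groups $H^j_{\alg}(G,M\otimes H^0(\lambda)^{[r]})$ for suitable dominant weights and Frobenius twists, via the ``generic cohomology'' formalism. So the first step is to identify the module in question: here $M=V^{(1)}$, a module whose highest weight is $p$ times the first fundamental weight of $SL_p$, which is small relative to $q>p$, so we are in the regime where the comparison should be an isomorphism (or at least injective) in degree $p-1$. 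I would phrase the argument so that only \emph{injectivity} is needed, which is the weaker and more robust half.

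\textbf{Key steps in order.} First, recall from Proposition \ref{rational group cohomology: main non-vanishing} that $H^{p-1}_{\alg}(SL(V),V^{(1)})$ is one-dimensional and that $H^i_{\alg}(SL(V),V^{(1)})=0$ for $i\neq p-1$; this concentration of the rational cohomology in a single degree is crucial because it makes the spectral-sequence-type comparison arguments collapse. Second, invoke the results of \cite{cpsk} to get a factorization (or at least a compatible tower) relating $H^*_{\alg}(SL(V),V^{(1)})$, the cohomology of the Frobenius kernels, and $H^*(SL_p(\bF_q),V^{(1)})$; the relevant input is that $V^{(1)}$ is a Frobenius twist of the natural module, so its restriction to $SL_p(\bF_q)$ agrees with that of $V$ itself (since $g^{(1)}=g$ on $\bF_q$-points after the identification $V^{(1)}\simeq V$ over $\bF_q$), while on the algebraic side the twist shifts the weights up by a factor of $p$. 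Third, use the hypothesis $q>p$ to check the numerical bound in the comparison theorem: the highest weight of $V^{(1)}$ paired against coroots is $p<q$, which puts us below the threshold where $H^{p-1}_{\alg}\to H^{p-1}(SL_p(\bF_q),-)$ can fail to be injective. Fourth, conclude: the image of the generator $\alpha(V)$ under restriction is nonzero, i.e.\ $\alpha(V)|_{SL_p(\bF_q)}\neq 0$.

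\textbf{Main obstacle.} The delicate point is making the degree bound in the CPSvdK comparison precise enough to cover degree exactly $p-1$ with only the hypothesis $q>p$ rather than something like $q>2(p-1)$ or $q$ divisible by a large power. The cleanest route is probably not to quote a black-box ``isomorphism in degrees $\leq c(q)$'' statement but instead to run the argument more directly: filter using the fact that, by \cite{cpsk}, $H^*(SL_p(\bF_q),V)$ receives $H^*_{\alg}(SL(V),V\otimes S^{\bullet})$ for an appropriate ``induced'' object, and use the vanishing of $H^{<p-1}_{\alg}(SL(V),V^{(1)})$ together with the smallness of the weight $p$ to force that the only class that can map nontrivially is the one in degree $p-1$, and that it must survive. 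I would also need to double-check that the identification $V^{(1)}|_{SL_p(\bF_q)}\simeq V|_{SL_p(\bF_q)}$ is compatible with the comparison maps on both sides; this is standard but is exactly the sort of bookkeeping that must be done carefully. If a sharper form of the bound is needed, a fallback is to increase the hypothesis on $q$ (e.g.\ to $q\geq p^2$ or to allow passing to a finite extension $\bF_{q^r}$), since in the intended application one is free to enlarge the ground field; but the statement as written with $q>p$ should follow from the weight estimate once the degree concentration from Proposition \ref{rational group cohomology: main non-vanishing} is used.
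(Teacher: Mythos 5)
Your overall framework (compare $H^*_{\alg}$ with $H^*(SL_p(\bF_q),-)$ via the Cline--Parshall--Scott--van der Kallen method, after reducing to a solvable subgroup) is indeed the one the paper uses, but your proposal has a genuine gap at exactly the step you flag as the ``main obstacle,'' and that step is the entire content of the proposition. There is no theorem in \cite{cpsk} (or in the generic-cohomology literature) of the form ``if the highest weight of $W$ paired against coroots is $<q$ then restriction $H^{p-1}_{\alg}(G,W)\to H^{p-1}(G(\bF_q),W)$ is injective''; the comparison theorems you want to invoke require $q$ large depending on both the cohomological degree and the weights, and the paper itself points out that \cite[Theorem 6.6]{cpsk} only gives injectivity for $q$ sufficiently large. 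So step three of your plan (``check the numerical bound $p<q$'') is not a citation of anything, and your fallback of enlarging $q$ proves a different statement than the one asserted (which requires only $q>p$, e.g.\ $q=p^2$). The sketch of a ``more direct'' filtration argument is too vague to close this: what is actually needed, and what the paper does, is (i) Kempf vanishing to get $H^*_{\alg}(SL(V),V^{(1)})\simeq H^*_{\alg}(B_p,V^{(1)})$, (ii) the reduction $H^i_{\alg}(B_p,-)=H^i_{\alg}(U_p,-)^{T_p}$ and $H^i(B_p(\bF_q),-)=H^i(U_p(\bF_q),-)^{T_p(\bF_q)}$, (iii) a further restriction to the abelian subgroup $A_p\simeq\bG_a^{p-1}\subset U_p$ (the first-row unipotent), whose algebraic and finite cohomology are explicitly computable $T_p$-equivariantly and for which the restriction map admits a $T_p(\bF_q)$-equivariant section, and (iv) an explicit combinatorial analysis of which $T_p$-weights of the form $p\chi_j$ can be written, modulo $q-1$, as sums of at most $p-1$ elements of $-p^{\bN}\cdot\Delta$; this mod-$(q-1)$ combinatorics is precisely where the hypothesis $q>p$ enters, and it cannot be replaced by a soft weight estimate.

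Two smaller points. First, your degree-concentration input ($H^i_{\alg}(SL(V),V^{(1)})=0$ for $i\neq p-1$, one-dimensional in degree $p-1$) is available in the paper (it is part of the proof of the non-vanishing proposition), but by itself it does not control the finite-group side, which is where the failure could occur. Second, your claim that $V^{(1)}$ restricted to $SL_p(\bF_q)$ ``agrees with that of $V$ itself'' is false for $q>p$: the twist is by the $p$-power Frobenius, which is a nontrivial field automorphism of $\bF_q$ when $q=p^r$ with $r>1$ (for instance the natural module and its Frobenius twist are non-isomorphic irreducibles for $SL_2(\bF_4)$). This does not by itself break the strategy, since the restriction map is defined for $V^{(1)}$ directly, but it is the kind of bookkeeping error that the weight analysis in the actual proof is designed to keep straight.
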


In this and the next section we use the notation $H^*_{\alg}$ for the cohomology of representations of algebraic groups, and $H^*$ is reserved for cohomology of discrete groups. In general, it follows from results of Cline, Parshall, Scott, and van der Kallen that the restriction from cohomology of a split reductive group $G$ to that of its $\bF_q$-points is injective for a large enough $q$:

\begin{thm}[{\hspace{1sp}\cite[Theorem 6.6]{cpsk}+\hspace{1sp}\cite[Theorem 2.1]{cps-detecting}}]
Let $G$ be a split reductive group over $\bF_p$, and $W$ be a finite-dimensional representation of it. For large enough $q=p^r$ the restriction map \begin{equation}
   H^n_{\alg}(G,W)\to H^{n}(G(\bF_q),W) 
\end{equation} is injective for all $n$. 
\end{thm}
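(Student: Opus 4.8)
The statement to prove is the theorem of Cline--Parshall--Scott--van der Kallen as quoted: for a split reductive group $G$ over $\bF_p$ and a finite-dimensional representation $W$, the restriction map $H^n_{\alg}(G,W)\to H^n(G(\bF_q),W)$ is injective for all $n$ once $q=p^r$ is large enough. Since this is cited verbatim from \cite[Theorem 6.6]{cpsk} combined with \cite[Theorem 2.1]{cps-detecting}, the plan is not to reprove it from scratch but to explain how these two inputs combine and why the hypotheses are met in the situations where we invoke it (the tautological module $W=V^{(1)}$ and its twists, and the cohomology classes $\alpha(V)$).

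The plan is as follows. First I would recall the mechanism of \cite{cpsk}: for a Frobenius kernel argument one compares $H^\bullet_{\alg}(G,W)$ with $H^\bullet_{\alg}(G,W\otimes \St_r\otimes\St_r^{\ast})$ for the $r$-th Steinberg module, using that $\St_r\otimes\St_r^{\ast}$ acquires, after tensoring with a high enough Frobenius twist, a good filtration with the trivial module as a submodule; this produces a class in $H^0$ that splits off a copy of $W$-cohomology. The key technical output of \cite[Theorem 6.6]{cpsk} is that for $q$ sufficiently large relative to $n$ and the highest weights occurring in $W$, the natural map $H^n_{\alg}(G,W)\to H^n_{\alg}(G_r,W)^{G(\bF_q)}$ (cohomology of the $r$-th Frobenius kernel, or equivalently stable cohomology) is an isomorphism; here one uses that rational cohomology of a reductive group in a fixed degree is "generically stable" in the Frobenius twist. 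Then \cite[Theorem 2.1]{cps-detecting} supplies the second half: the comparison between $G_r$-cohomology (Frobenius kernel) and $G(\bF_q)$-cohomology (discrete group) is injective in the relevant range, via the embedding of $G(\bF_q)$ detecting the infinitesimal cohomology. Composing the isomorphism from \cite{cpsk} with the injection from \cite{cps-detecting} gives the injectivity of $H^n_{\alg}(G,W)\to H^n(G(\bF_q),W)$, which is exactly the assertion.

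Second, I would make explicit what "large enough $q$" means in our applications. For Proposition \ref{group cohomology: from algebraic to discrete} we take $G=SL_p$, $n=p-1$, and $W=V^{(1)}$, whose highest weight is bounded by that of the first fundamental representation after a single Frobenius twist; the bounds in \cite[Theorem 6.6]{cpsk} then reduce to the explicit inequality $q>p$. I would verify that $\RGamma_{\alg}(SL(V),V^{(1)})$ is concentrated in a bounded range (degree $p-1$ only, by Lemma \ref{rational: terms coh vanishing}(3)), so that the stabilization argument applies in the single degree we care about, and that the bound $q>p$ is consistent with the constants appearing in the CPSK estimates for $SL_p$ and this particular weight.

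The main obstacle is purely expository: this section is stating a black-box theorem, so the only real work is (i) confirming that the explicit numerical bound "$q>p$" we assert in Proposition \ref{group cohomology: from algebraic to discrete} is genuinely delivered by the general $q\gg 0$ statement of \cite{cpsk}, and (ii) ensuring the two cited theorems are compatible in their conventions (Frobenius kernels versus finite groups of Lie type, and whether coefficients are twisted). I expect (i) to require tracking the weight of $V^{(1)}$ through the Jantzen-type sum formula bounds underlying \cite[Theorem 6.6]{cpsk}, which for the low-rank, low-weight, low-degree case at hand collapses to an elementary inequality; (ii) is standard once one notes that $G(\bF_q)$-cohomology with coefficients in a rational module factors through $G_r$-cohomology by the Lang–Steinberg decomposition. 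No further ideas are needed; the proof is the concatenation of the two quoted results with a check of constants.
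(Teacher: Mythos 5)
The paper offers no proof of this statement at all: it is quoted directly from the literature, so the ``proof'' consists of the citations to \cite[Theorem 6.6]{cpsk} and \cite[Theorem 2.1]{cps-detecting}, and your top-level move --- defer to exactly those two results --- coincides with what the paper does. What does not match is your description of how those results work and how they are used here. The mechanism you sketch (Steinberg modules $\mathrm{St}_r\otimes\mathrm{St}_r^{\ast}$, a claimed isomorphism $H^n_{\alg}(G,W)\simeq H^n_{\alg}(G_r,W)^{G(\bF_q)}$ with the $r$-th Frobenius kernel $G_r$, and a factorization of $G(\bF_q)$-cohomology through $G_r$-cohomology ``by Lang--Steinberg'') is neither the CPSK argument nor correct as stated: generic cohomology in \cite{cpsk} is the stable value of $H^n_{\alg}(G,W^{(e)})$ under Frobenius twisting, not Frobenius-kernel cohomology, and restriction from $G$ to the discrete group $G(\bF_q)$ does not factor through $H^{\bullet}_{\alg}(G_r,-)$. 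The method the paper sketches right after the statement, and then adapts for its own purposes, is the Borel-reduction argument: $H^n_{\alg}(G,W)\simeq H^n_{\alg}(B,W)$ via Kempf vanishing (Theorem \ref{group cohomology: restriction to Borel iso}), reduction to $B(\bF_q)=T(\bF_q)\ltimes U(\bF_q)$, and comparison of $T$-weights on $H^{\bullet}_{\alg}(U,k)$ with $T(\bF_q)$-weights on $H^{\bullet}(U(\bF_q),k)$, resting on the explicit $\bG_a$ computation of Proposition \ref{group cohomology: additive group cohomology}.

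Your item (i) also inverts the logical structure of the section: the explicit bound $q>p$ in Proposition \ref{group cohomology: from algebraic to discrete} is not extracted from the constants implicit in \cite{cpsk}; the paper proves that special case directly by the Borel-reduction method together with the weight combinatorics of Lemma \ref{group cohomology: weights of frobenius twist}, which is precisely why it is described as ``marginally stronger'' than the quoted black-box theorem. So no tracking of CPSK's bounds is needed, and attempting to obtain $q>p$ that way is not what the paper does (nor obviously feasible, since the general bounds in \cite{cpsk} are not that sharp).
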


Our Proposition \ref{group cohomology: from algebraic to discrete} is only marginally stronger than this result in that we show that for any $q>p$ injectivity holds in the particular case $G=SL_p, W=V^{(1)}$. We give here an argument that follows closely the proof of \cite[Theorem 6.6]{cpsk} in order to introduce the techniques that will be used in Section \ref{borel: section}.

Let us briefly describe the method of \cite{cpsk}. Let $B\subset G$ be a Borel subgroup of a split reductive group $G$ over $k$. We have a commutative diagram 
\begin{equation}
\begin{tikzcd}
H^n_{\alg}(G,W)\arrow[r]\arrow[d,"\sim"] & H^n(G(\bF_q),W)\arrow[d] \\
H^n_{\alg}(B,W)\arrow[r] & H^n(B(\bF_q),W). \\
\end{tikzcd}
\end{equation}
The fact that the left vertical map is an isomorphism is a consequence of the vanishing $H^{>0}(G/B,\cO)=0$ (\hspace{1sp}\cite[\S 6 Theorem 1(a)]{kempf}) of the cohomology of the structure sheaf on the flag variety:

\begin{thm}[\hspace{1sp}{\cite[Theorem 2.1]{cpsk}}]\label{group cohomology: restriction to Borel iso}
For every algebraic $G$-module $W$ restriction induces an isomorphism $H^i_{\alg}(G,W)\simeq H^i_{\alg}(B,W)$ for all $i$.
\end{thm}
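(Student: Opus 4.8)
The plan is to reduce the statement for a general algebraic $G$-module $W$ to the vanishing $H^{>0}(G/B,\cO_{G/B})=0$ via a Leray-type spectral sequence for the quotient map $\pi\colon G/B\to \Spec k$, or equivalently via the projection $q\colon BB\to BG$ of classifying stacks. First I would observe that the restriction functor on representations is the pushforward $q_*$ along $q\colon BB\to BG$, and that $Rq_*\cO_{BB}$ computes the cohomology of the fibers, which are isomorphic to $G/B$. So the key input is that $Rq_*\cO_{BB}\simeq \cO_{BG}$ in $D(BG)$, which is exactly the statement that $H^i(G/B,\cO_{G/B})$ equals $k$ for $i=0$ and vanishes for $i>0$, together with the fact that the identification is $G$-equivariant (it is, since $H^0(G/B,\cO_{G/B})$ is the trivial module).

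Concretely, for any algebraic $G$-module $W$ the composite of restriction $D(BG)\to D(BB)$ followed by $Rq_*\colon D(BB)\to D(BG)$ is given by $W\mapsto W\otimes_k Rq_*\cO_{BB}\simeq W$ by the projection formula and the computation of $Rq_*\cO_{BB}$. Taking derived invariants (i.e. applying $R\Gamma(BG,-)$, which is $R\Gamma_{\alg}(G,-)$) and using that $R\Gamma_{\alg}(B,-)=R\Gamma(BG, Rq_*(-))$ gives a natural equivalence $R\Gamma_{\alg}(G,W)\simeq R\Gamma_{\alg}(B, \Res^G_B W)$ compatible with the restriction map, hence isomorphisms $H^i_{\alg}(G,W)\simeq H^i_{\alg}(B,W)$ on cohomology for all $i$. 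The one point requiring a little care is to phrase this so that it works for $W$ unbounded below as well, as will be needed later; but since $Rq_*$ is computed by a bounded complex (the flag variety is finite-dimensional) the argument applies to arbitrary complexes in $D(\Rep_G)$ with no convergence issues, which I would record explicitly.

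The main obstacle — really the only substantive one — is the $G$-equivariant vanishing $H^i(G/B,\cO_{G/B})=0$ for $i>0$, i.e. Kempf's theorem, which the paper invokes as a black box (\cite[\S 6 Theorem 1(a)]{kempf}); everything else is formal manipulation with pushforwards along classifying stacks and the projection formula. I would therefore keep the proof short: cite Kempf for the cohomology of $\cO_{G/B}$, note the trivial $G$-action on $H^0$, deduce $Rq_*\cO_{BB}\simeq \cO_{BG}$, and conclude by the projection formula. An alternative phrasing, closer to the classical literature, is to run the Hochschild–Serre / Lyndon spectral sequence $H^i_{\alg}(G, H^j(G/B,\cO)\otimes W)\Rightarrow H^{i+j}_{\alg}(B,W)$ and observe that it collapses to the edge isomorphism by the Kempf vanishing; I would present whichever of these is cleanest, but in either case the content is entirely carried by the vanishing theorem.
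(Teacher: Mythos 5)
Your argument is correct and is essentially the standard proof of this result, which the paper does not reprove but imports from \cite[Theorem 2.1]{cpsk}, noting exactly as you do that the only substantive input is Kempf's vanishing $H^{>0}(G/B,\cO)=0$: derived induction $Rq_*$ along $q\colon BB\to BG$ plus the projection formula (tensor identity) gives $Rq_*q^*W\simeq W$, and compatibility with the restriction map holds because the identification $Rq_*\cO_{BB}\simeq\cO_{BG}$ is the unit map. One small slip in your opening sentence: restriction of representations corresponds to the pullback $q^*$, not the pushforward (the pushforward $Rq_*$ is induction), though the rest of your computation uses the correct identifications.
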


Therefore it is enough to prove that the bottom horizontal arrow is injective. This is now a more tangible question as the Borel subgroup is isomorphic to the semi-direct product $T\ltimes U$ of the maximal torus $T$ and the unipotent radical $U$ of $B$. Since the algebraic cohomology of a torus as well as the cohomology of the finite group $T(\bF_q)$ with $p$-torsion coefficients vanish in positive degrees, we have $H^i_{\alg}(B, W)=H^i_{\alg}(U,W)^T$ and $H^i(B(\bF_q), W)=H^i(U(\bF_q), W)^{T(\bF_q)}$. This reduces the problem to the study of the action of $T$ on $H^i_{\alg}(U, W)$ which, for the purposes of proving the desired eventual injectivity, can be reduced to the study of the action of $\bG_m$ on $H^{\bullet}_{\alg}(\bG_a,k)$ and $\bF_q^{\times}$ on $H^{\bullet}(\bF_q,k)$ which is performed in Proposition \ref{group cohomology: additive group cohomology}.

Choose a basis $e_1,\ldots,e_p$ of $V$, and let $B_p\subset SL(V)$ be the subgroup of matrices preserving each of the subspaces $\langle e_1,\ldots,e_i\rangle\subset V$. Denote also by $T_p\subset B_p$ the maximal torus of the diagonal matrices, and by $U_p\subset B_p$ the subgroup of strictly upper-triangular matrices.

\subsection{Proof of Proposition \ref{group cohomology: from algebraic to discrete} when \texorpdfstring{$p=2$}{}.}

We will now prove that when $p=2$ and $q>p$ the restriction map $H^1_{\alg}(B_2,V^{(1)})\to H^1(B_2(\bF_q),V^{(1)})$ is injective. We treat the case $p=2$ separately both because there are slight notational differences from the case $p>2$, arising from the fact that $H^{\bullet}_{\alg}(\bG_a,k)$ has a different-looking ring structure, and because it explains the idea of the general proof in a setting less burdened by the combinatorial difficulties.

Denote by $\chi_1:T_2\to\bG_m$ the character $\diag(a,a^{-1})\mapsto a$ identifying the maximal torus $T_2$ with $\bG_m$. The conjugation action of $T_2$ on $U_2\simeq \bG_a$ is then given by $\chi_1^2$.

When restricted to $B_2$, the representation $V^{(1)}$ fits into an exact sequence $0\to\chi_1^2\to V^{(1)}\to \chi_1^{-2}\to 0$. It induces the long exact sequences in cohomology of $B_2$ and $B_2(\bF_q)$:

\begin{equation}
\begin{tikzcd}
& H^1_{\alg}(U_2,\chi_1^2)^{T_2}\arrow[r]\arrow[d] & H^1_{\alg}(U_2,V^{(1)})^{T_2}\arrow[r]\arrow[d] & H^1_{\alg}(U_2,\chi_1^{-2})^{T_2}\arrow[d] \\
H^0(U(\bF_q),\chi_1^{-2})^{T_2(\bF_q)}\arrow[r] & H^1(U_2(\bF_q),\chi_1^2)^{T_2(\bF_q)}\arrow[r] & H^1(U_2(\bF_q),V^{(1)})^{T_2(\bF_q)}\arrow[r] & H^1(U_2(\bF_q),\chi_1^{-2})^{T_2(\bF_q)}
\end{tikzcd}
\end{equation}

We have $H^1_{\alg}(U_2,\chi^2_1)\simeq\Hom_{\grp}(\bG_a,\bG_a)$ with the action of $T_2$ described as follows: for any $k$-algebra $R$, an element $a\in T_2(R)=R^{\times}$ acts by sending a homomorphism $f:\bG_a\to\bG_a$ to $a^{2}\cdot f(a^{-2}\cdot -)$. Similarly, $H^1_{\alg}(U_2,\chi^{-2}_1)\simeq\Hom_{\grp}(\bG_a,\bG_a)$ but the torus action sends $f$ to $a^{-2}\cdot f(a^{-2}\cdot -)$. All homomorphisms $f:\bG_a\to \bG_a$ have the form $f(x)=\sum\limits_{i=0}^N a_i x^{p^i}$ for some $N\geq 0$ and $a_i\in k$, where $x$ is a coordinate on $\bG_a$. It follows that $H^1_{\alg}(U_2,\chi_1^{-2})^{T_2}=0$ and $H^1_{\alg}(U_2,\chi_1^{2})^{T_2}\subset \Hom_{\grp}(\bG_a,\bG_a)$ is the one-dimensional space of homomorphisms of the form $x\mapsto a_0\cdot x$. Therefore the map $H^1_{\alg}(U_2,\chi_1^2)^{T_2}\to H^1_{\alg}(U_2,V^{(1)})^{T_2}$ is surjective, and the restriction map $H^1_{\alg}(U_2,\chi_1^2)^{T_2}\to H^1(U_2(\bF_q),\chi_1^2)^{T(\bF_q)}$ is injective for any $q$, because a non-zero cocycle of the form $x\mapsto a_0\cdot x\in H^1_{\alg}(U_2, \chi_1^2)\simeq\Hom_{\alg}(\bG_a,\bG_a)$ stays non-zero in $H^1(U_2(\bF_q),\chi_1^2)=\Hom_{\grp}(\bF_q,k)$.

It remains to observe that $H^0(U_2(\bF_q),\chi_1^{-2})$ is a one-dimensional vector space on which $T_2(\bF_q)=\bF_q^{\times}$ acts via the character $\chi_1^{-2}$. Therefore the invariant subspace of this $0$th cohomology group is trivial as soon as $q>2$. This implies that the map $H^1(U_2(\bF_q),\chi_1^2)^{T_2(\bF_q)}\to H^1(U_2(\bF_q),V^{(1)})^{T_2(\bF_q)}$ is injective, and hence the restriction $H^1_{\alg}(U_2,V^{(1)})^{T_2}\to H^1(U_2(\bF_q),V^{(1)})^{T_2(\bF_q)}$ is injective.

\subsection{Proof of Proposition \ref{group cohomology: from algebraic to discrete} when \texorpdfstring{$p>2$}{}.}

As in the special case $p=2$ that we dealt with above, the key to the proof is to analyze the action of the maximal torus on the cohomology of the unipotent radical of a Borel subgroup of $SL_p$. For a split torus $T$ over $k$ we denote by $X^*(T)=\Hom_{\grp}(T,\bG_m)$ its lattice of characters. We use additive notation for the group operation in $X^*(T)$. If $U$ is a unipotent algebraic group equipped with an action of $T$, we denote by $\Delta_U\subset X^*(T)$ the set of characters appearing in the $T$-representation $\Lie U$. 

Here is the main computation in the case $U\simeq \bG_a^n$ using which we will study the case of an arbitrary unipotent $U$ by a d\'evissage argument. 

\begin{pr}\label{group cohomology: additive group cohomology}
Assume that $p>2$. Let $A$ be a group scheme over $\bZ$ isomorphic to $\bG_{a,\bZ}^{n}$, equipped with an action of a split torus $T$. Denote by $\fa:=(\Lie A)^{\vee}$ the dual Lie algebra of $A$. We denote by $\fa_k$ and $\fa_{W_2(k)}$ the modules $\fa\otimes_{\bZ}k$ and $\fa\otimes_{\bZ}W_2(k)$.

\begin{enumerate}
\item There is a $T_k$-equivariant identification \begin{equation}H^{\bullet}_{\alg}(A, k)=\Lambda^*(\bigoplus\limits_{i=0}^{\infty} \fa_k^{(i)}\cdot x_i)\otimes S^*(\bigoplus\limits_{i=1}^{\infty} \fa_k^{(i)}\cdot\beta(x_i))\end{equation} where $x_i$ and $\beta(x_i)$ are formal symbols, invariant under $T_k$. The cohomological degrees of $x_i$ and $\beta(x_i)$ are $1$ and $2$, respectively.

\item Suppose that the cardinality of $\bF_q$ is $q=p^r$. There is a $T(\bF_q)$-equivariant identification \begin{equation}H^{\bullet}(A(\bF_{p^r}),k)=\Lambda^*(\bigoplus\limits_{i=0}^{r-1} \fa_k^{(i)}\cdot x_i)\otimes S^*(\bigoplus\limits_{i=1}^{r} \fa_k^{(i)}\cdot \beta(x_i))\end{equation} and the map $H^{\bullet}_{\alg}(A,k)\to H^{\bullet}(A(\bF_{p^r}),k)$ sends $x_i$ to $x_{i\, \mathrm{mod}\, r}$, and $\beta(x_i)$ to $\beta(x_{((i-1)\, \mathrm{mod}\, r)+1})$, for all $i$. In particular, this map is surjective in every degree.

\comment{\item For $n>1$ we have $H^{\bullet}(A(W_n(\bF_{p^r})),k)=\Lambda(\bigoplus\limits_{i=0}^{r-1} \fa^{(i)})\otimes S(\bigoplus\limits_{i=1}^{r} \fa^{(i)})$ as well but the map $H^{\bullet}(A(W_{n-1}(\bF_{p^r})),k)\to H^{\bullet}(A(W_{n}(\bF_{p^r})),k)$ is given by identity on the exterior algebra and by zero on the symmetric algebra. }
\end{enumerate}

Suppose that $F\supset \bQ$ is a finite Galois extension, and $\fp\subset\cO_F$ is an unramified over $\bZ$ prime ideal such that the residue field $\cO_F/\fp$ is identified with $\bF_q$. We have the following $T(\cO_F)$-equivariant identifications:

\begin{enumerate}
\setcounter{enumi}{2}
\item $H^{\bullet}(A(\cO_F),k)=\Lambda^{\bullet}(\bigoplus\limits_{\tau\in \Gal(F/\bQ)} \fa_k^{\tau}\cdot x_{\tau})$. Here $T(\cO_F)$ acts on $\fa_k$ through the chosen map $T(\cO_F)\to T(\cO_F/\fp)=T(\bF_q)$, and $\fa_k^{\tau}$ denotes the composition of this action with the automorphism $T(\cO_F)\xrightarrow{\tau}T(\cO_F)$. The map on cohomology induced by $A(\cO_F)\to A(\bF_q)$ annihilates $\beta(x_i)$ and sends $x_i$ to $x_{\tau_i}$ where $\tau_i\in \Gal(F/\bQ)$ is the element of the decomposition group of $\fp$ that induces the automorphism $\Fr_p^i$ on $\bF_q$.

\item $H^{\bullet}(A(\cO_F),W_2(k))=\Lambda^{\bullet}(\bigoplus\limits_{\tau\in \Gal(F/\bQ)} \fa_{W_2(k)}^{\tau}\cdot x_{\tau})$ such that the mod $p$ reduction of this isomorphism is the isomorphism in (3).
\end{enumerate}
\end{pr}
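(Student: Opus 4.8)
\textbf{Proof plan for Proposition \ref{group cohomology: additive group cohomology}.} The plan is to establish parts (1)--(4) by a sequence of reductions to the one-dimensional case $A = \bG_a$, each time tracking the torus action carefully. First I would treat part (1). Since $A \simeq \bG_{a}^n$ is abelian, the K\"unneth formula reduces the computation of $H^\bullet_{\alg}(A,k)$ to the case $n=1$; for $\bG_a$ over $k$ a perfect field of characteristic $p>2$ the algebraic cohomology $H^\bullet_{\alg}(\bG_a, k)$ is the free graded-commutative algebra $\Lambda^*(x_i : i \geq 0) \otimes S^*(\beta(x_i) : i \geq 1)$ with $\deg x_i = 1$, $\deg \beta(x_i) = 2$ (this is the classical computation of the cohomology of $\bG_a$, see e.g. the references in \cite{cpsk} or \cite{friedlander-suslin}); the class $x_i$ corresponds to the homomorphism $\bG_a \to \bG_a$, $t \mapsto t^{p^i}$, and $\beta(x_i)$ is the Bockstein of $x_i$. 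The key point is the $T$-equivariance: the line spanned by $x_0$ in $H^1_{\alg}(\bG_a, k) = \Hom_{\grp}(\bG_a, \bG_a)^{\text{additive}}$ transforms under $T$ as $\fa = (\Lie A)^\vee$, and since the Frobenius twist of a character $\lambda \in X^*(T)$ is $p\lambda$, the class $x_i$ transforms as $\fa^{(i)}$. Assembling the $n$ factors via K\"unneth and keeping track of which tensor factor each generator comes from yields exactly $\Lambda^*(\bigoplus_{i\geq 0} \fa_k^{(i)} x_i) \otimes S^*(\bigoplus_{i \geq 1} \fa_k^{(i)}\beta(x_i))$. One subtlety I would address: for the K\"unneth identification to be $T$-equivariant with the stated answer, one must use that $\fa$ decomposes as a direct sum of characters (as $T$ is a split torus), so the claim is really about the monomial basis, and then extends by linearity/functoriality.

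For part (2), I would again reduce to $A = \bG_a$, where the computation of $H^\bullet(\bF_{p^r}, k)$ (group cohomology of the finite additive group $(\bF_{p^r},+)$ with coefficients in $k$) gives $\Lambda^*(x_0, \ldots, x_{r-1}) \otimes S^*(\beta(x_1), \ldots, \beta(x_r))$ --- the periodicity modulo $r$ reflecting that $\Fr_p^r = \id$ on $\bF_{p^r}$. The restriction map $H^\bullet_{\alg}(\bG_a, k) \to H^\bullet(\bF_{p^r}, k)$ is computed by evaluating the homomorphism $t \mapsto t^{p^i}$ on $\bF_{p^r}$-points, which depends only on $i \bmod r$; this gives the stated formula $x_i \mapsto x_{i \bmod r}$ and correspondingly $\beta(x_i) \mapsto \beta(x_{((i-1)\bmod r)+1})$, and in particular surjectivity in each degree. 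The $T(\bF_q)$-equivariance is inherited from the $T$-equivariance in part (1) by naturality of restriction. Parts (3) and (4) concern $A(\cO_F)$ for $F/\bQ$ Galois and $\fp$ an unramified prime over $p$: here the point is that $\cO_F \otimes_{\bZ} k \simeq \prod_{\tau \in \Gal(F/\bQ)} k$ (using unramifiedness, so $\cO_F/p$ is \'etale over $\bF_p$, and after base change to $k$ it splits completely into a product indexed by embeddings, which --- since $F/\bQ$ is Galois --- are a torsor under $\Gal(F/\bQ)$). Consequently $A(\cO_F) \otimes \ldots$ decomposes, the group cohomology $H^\bullet(A(\cO_F), k)$ is a tensor product over $\tau$ of copies of $H^\bullet$ of a finite additive group, and --- crucially --- because $\cO_F \otimes \bZ_p$ is a product of {\it unramified} local rings each isomorphic to $W(\bF_{q_\tau})$ for appropriate residue degrees, the relevant finite quotients $A(\cO_F/p)$ are elementary abelian of the right size so that the symmetric-algebra (Bockstein) part is {\it killed}: the Bockstein $\beta(x_i)$ maps to zero because the extension $\cO_F \to \cO_F \otimes \bZ/p^2$ is, on the additive group, a product of split-by-$V$ Witt-vector-type extensions --- more precisely one uses that lifting to $W_2$ of the field trivializes the relevant Bockstein. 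I would organize this so that the $\Gal(F/\bQ)$-indexing and the identification $\tau_i \leftrightarrow \Fr_p^i$ via the decomposition group drops out of chasing the embeddings $\cO_F \hookrightarrow k$.

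For part (4), the coefficient ring is $W_2(k)$ rather than $k$; here I would note that $H^\bullet(A(\cO_F), W_2(k))$ has {\it no} symmetric-algebra part at all (only the exterior algebra $\Lambda^*(\bigoplus_\tau \fa_{W_2(k)}^\tau x_\tau)$), which is consistent with and refines part (3): reducing mod $p$ recovers the exterior part of (3) while the $\beta(x_i)$ in (3) are precisely the obstruction classes that would obstruct lifting $x_i$ to $W_2$-coefficients, and over $\cO_F$ (unramified, hence each $x_\tau$ does lift) these vanish. Concretely, I would use the universal coefficient / Bockstein long exact sequence relating $H^\bullet(-, W_2(k))$, $H^\bullet(-,k)$, and the Bockstein, together with the explicit description in (3), to read off (4); the compatibility of mod-$p$ reduction is then automatic.

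\textbf{Main obstacle.} The genuinely delicate point is the $T$-equivariant (resp.\ $T(\bF_q)$-, $T(\cO_F)$-equivariant) bookkeeping: the cohomology-ring computations for $\bG_a$ and its finite quotients are classical, but one must verify that the Frobenius twists $\fa^{(i)}$ attach to the generators in exactly the claimed pattern, that the K\"unneth isomorphism respects this, and --- hardest --- that in the $\cO_F$ case the interplay between the $\Gal(F/\bQ)$-action (permuting the factors of $\cO_F \otimes k$) and the torus action produces the twisted pieces $\fa_k^\tau$ with the stated restriction behavior $x_\tau \mapsto x_{\tau_i}$. I expect the cleanest route is to first nail down $n=1$ with full equivariance (so the statement becomes a statement about a single generator in each relevant degree and how $T$, $\Fr_p$, and $\Gal$ act on it), and then let K\"unneth and base change propagate everything; the finite-field periodicity and the vanishing of the Bockstein part over unramified $\cO_F$ with $W_2$-coefficients are the two facts I would isolate as lemmas before assembling the proposition.
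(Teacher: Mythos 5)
Your treatment of parts (1) and (2) is essentially the paper's argument (K\"unneth reduction to $\bG_a$, the classical computation of $H^{\bullet}_{\alg}(\bG_a,k)$ and of $H^{\bullet}(\bF_{p^r},k)$ as in \cite{cpsk}, with equivariance tracked through the explicit generators $t\mapsto t^{p^i}$), and that part is fine. The gap is in parts (3) and (4). You describe $H^{\bullet}(A(\cO_F),k)$ as ``a tensor product over $\tau$ of copies of $H^{\bullet}$ of a finite additive group'' and then try to explain why the symmetric/Bockstein part gets killed by a Witt-vector (``split-by-$V$'') argument about $\cO_F\to\cO_F\otimes\bZ/p^2$. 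But $A(\cO_F)\simeq \cO_F^{\oplus n}\simeq \bZ^{n[F:\bQ]}$ is a finitely generated \emph{free abelian} group, not a finite group and not built from the finite quotients $A(\cO_F/p)$; its cohomology with any trivial coefficient module $M$ is simply $\Lambda^{\bullet}\Hom(A(\cO_F),M)$, which is finite-dimensional, whereas a tensor product of cohomologies of finite additive $p$-groups over $k$ would contain a polynomial part and be infinite-dimensional. So the identification you start from is false, and the mechanism you invoke to repair it is not an argument: there is no symmetric part to kill in the first place, and ``lifting to $W_2$ trivializes the Bockstein'' is not made precise for the groups actually at hand. Note also that the vanishing of $\beta(x_i)$ asserted in part (3) concerns the restriction map from the cohomology of the \emph{finite} group $A(\bF_q)$ to that of the \emph{lattice} $A(\cO_F)$; your write-up has these two roles interchanged.

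The correct route (which is the paper's) is short once the above is in place: since $A(\cO_F)$ is a lattice, $H^{\bullet}(A(\cO_F),W_2(k))\simeq\Lambda^{\bullet}H^1(A(\cO_F),W_2(k))$ $T(\cO_F)$-equivariantly, and $H^1(A(\cO_F),W_2(k))=\Hom_{W_2(k)}(\cO_F\otimes_{\bZ}W_2(k),W_2(k))$ (reduce to $A=\bG_a$, $T=\bG_m$). Unramifiedness of $p$ in the Galois extension $F$ gives $\cO_F\otimes_{\bZ}W_2(k)\simeq\bigoplus_{\tau\in\Gal(F/\bQ)}W_2(k)$ via $a\otimes b\mapsto\oplus_{\tau}\tau$-twists of $\kappa(a)b$, where $\kappa:\cO_F\to\cO_F/\fp^2\simeq W_2(\bF_q)$ lifts the chosen identification of residue fields; dualizing yields part (4) with the stated $\fa_{W_2(k)}^{\tau}$-decomposition, and part (3) follows by reduction mod $p$ (flatness of $H^{\bullet}(A(\cO_F),W_2(k))$ over $W_2(k)$). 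The claim that restriction along $A(\cO_F)\to A(\bF_q)$ annihilates $\beta(x_i)$ then drops out because every degree-one class of $A(\cO_F)$ with $k$-coefficients lifts to $W_2(k)$-coefficients. Your universal-coefficient plan for deducing (4) from (3) runs in the opposite direction from the paper and is harder to make precise without first knowing the $W_2(k)$-flatness; proving (4) first, as above, is the cleaner order.
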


\begin{example}\label{group cohomology: ga cohomology example}
For cohomology in degree $1$ and $A\simeq\bG_{a,\bZ}$ statement (1) amounts to the fact that $H^1(\bG_{a,k},k)=\Hom_{\grp}(\bG_{a,k},\bG_{a,k})$ is spanned by homomorphisms of the form $t\mapsto t^{p^i}$ where $t$ is a coordinate on $\bG_{a,k}$. Statement (2) in this case is saying that $H^1(\bG_a(\bF_{p^r}),k)=\Hom(\bF_{p^r},k)$, and every homomorphism $\bF_{p^r}\to k$ of additive groups can be represented as $t\mapsto\sum\limits_{i=0}^{r-1}a_it^{p^i}$ for some $a_i\in k$, in a unique way.
\end{example}

\begin{proof}
By K\"unneth formula, it is enough to consider the case $A\simeq\bG_a$. Parts (1) and (2) are \cite[Theorem 4.1]{cpsk}, see also \cite[Proposition I.4.27]{jantzen}. To fix ideas, let us explicitly say that in part (1) the map $\fa_k^{(i)}\cdot x_i\to H^1_{\alg}(A,k)=\Hom_{\grp}(A_k,\bG_{a,k})$ sends an element $\alpha\cdot x_i$ with $0\neq \alpha\in \fa_k$ to the group scheme homomorphism $A_k\xrightarrow{L_{\alpha}}\bG_{a,k}\xrightarrow{t\mapsto t^{p^i}}\bG_{a,k}$ where $L_{\alpha}$ is the unique group scheme homomorphism that induces the functional $\alpha$ on Lie algebras.

We now turn to proving (4). Part (3) can either be proven by the same argument or deduced formally using that cohomology groups $H^{\bullet}(A(\cO_F),W_2(k))$ are flat $W_2(k)$-modules. Additionally to assuming that $A=\bG_a$ we may and do assume that $T=\bG_m$, acting on $A$ through some power of the standard character. Since $A(\cO_F)$ is isomorphic to $\bZ^{[F:\bQ]}$, the cohomology ring $H^{\bullet}(A(\cO_F),W_2(k))$ is $T(\cO_F)$-equivariantly isomorphic to $\Lambda^{\bullet}H^1(A(\cO_F),W_2(k))$ via the multiplication on cohomology. 

The $1$st cohomology module $H^1(A(\cO_F),W_2(k))$ is naturally identified with $\Hom(A(\cO_F),W_2(k))=\Hom_{W_2(k)}(\cO_F\otimes_{\bZ} W_2(k),W_2(k))$ where $T(\cO_F)=\cO_F^{\times }$ acts by multiplication on the source of the maps. The algebra $\cO_F\otimes_{\bZ}W_2(k)$ is isomorphic to $\bigoplus\limits_{\tau\in \Gal(F/\bQ)}W_2(k)$ via the isomorphism sending $a\otimes b\in \cO_F\otimes_{\bZ}W_2(k)$ to $\oplus \kappa(\tau(a))b$ where $\kappa:\cO_F\to{\cO_F/\fp^2}\simeq W_2(\bF_q)$ is the unique lift of the chosen identification $\cO_F/\fp\simeq\bF_q$. Therefore, $A(\cO_F)\otimes_{\bZ}W_2(k)$ is isomorphic to $\bigoplus\limits_{\tau\in\Gal(F/\bQ)} (\Lie A_{W_2(k)})^{\tau}$ as a $W_2(k)$-module with a $\cO_F^{\times}$-action, which implies the claim by dualizing.
\end{proof}

Since $k$ is assumed to contain $\bF_q$, every representation of the finite abelian group $T(\bF_q)$ on a finite-dimensional $k$-vector space decomposes as a direct sum of characters. We will sometimes refer to the characters of $T(\bF_q)$ appearing as direct summands of a representation as $T(\bF_q)$-{\it weights} of this representation. The image of the restriction map $X^*(T)=\Hom_{\grp}(T,\bG_m)\to \Hom(T(\bF_q),k^{\times})$ is identified with $X^*(T)/(q-1)$.

It follows from Proposition \ref{group cohomology: additive group cohomology} that 

\begin{cor}\label{group cohomology: additive group weights}In the setting or Proposition \ref{group cohomology: additive group cohomology}, the following holds.\begin{enumerate}
    \item If a character $\chi\in X^*(T)$ is a weight of $H^n_{\alg}(A,k)$ then $\chi$ can be expressed as a sum of $\leq n$ elements of $-p^{\bN}\cdot \Delta_A\subset X^*(T)$.
    \item If a character $\chi$ of $T(\bF_q)$ is a weight of $H^n(A(\bF_q),k)$ then $\chi$ extends to an algebraic character $\widetilde{\chi}$ of $T$ that is congruent modulo $q-1$ to an element of $X^*(T)$ expressible as a sum of $\leq n$ elements of $-p^{\bN}\cdot \Delta_A$.
\end{enumerate} 
\end{cor}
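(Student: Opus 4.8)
The plan is to deduce Corollary \ref{group cohomology: additive group weights} directly from the explicit descriptions of $H^{\bullet}_{\alg}(A,k)$ and $H^{\bullet}(A(\bF_q),k)$ in Proposition \ref{group cohomology: additive group cohomology}, tracking the $T$-action (resp. $T(\bF_q)$-action) through those isomorphisms. There is almost no new idea here: the whole point is bookkeeping with weights of symmetric and exterior powers. First I would fix the convention that the generators $x_i$ and $\beta(x_i)$ are $T$-invariant, so the $T$-weight of a monomial in the algebra $\Lambda^*(\bigoplus_{i\geq 0}\fa_k^{(i)}x_i)\otimes S^*(\bigoplus_{i\geq 1}\fa_k^{(i)}\beta(x_i))$ is determined entirely by the weights occurring in the various tensor factors $\fa_k^{(i)}$.

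For part (1), I would argue as follows. The set $\Delta_A$ is by definition the set of $T$-weights of $\Lie A$, hence $-\Delta_A$ is the set of weights of $\fa_k=(\Lie A)^\vee$, and applying the $i$-th Frobenius twist multiplies every weight by $p^i$, so $\fa_k^{(i)}$ has weights in $-p^i\cdot\Delta_A\subset -p^{\bN}\cdot\Delta_A$. A general class in $H^n_{\alg}(A,k)$ is a sum of monomials $(\prod_{a} v_a x_{i_a})\cdot(\prod_b w_b\,\beta(x_{j_b}))$ with $v_a\in\fa_k^{(i_a)}$, $w_b\in\fa_k^{(j_b)}$, where the total cohomological degree is $\#\{a\}+2\#\{b\}=n$. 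Its $T$-weight is $\sum_a(\text{wt }v_a)+\sum_b(\text{wt }w_b)$, a sum of $\#\{a\}+\#\{b\}\leq n$ elements of $-p^{\bN}\cdot\Delta_A$ (using $\#\{a\}+\#\{b\}\le \#\{a\}+2\#\{b\}=n$). Since a weight of $H^n_{\alg}(A,k)$ is a weight of some such monomial, this proves (1). Here I am implicitly using the $p>2$ hypothesis only through the validity of Proposition \ref{group cohomology: additive group cohomology}(1); otherwise the combinatorics is identical.

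For part (2), I would combine Proposition \ref{group cohomology: additive group cohomology}(2) with part (1) of the present corollary. A $T(\bF_q)$-weight $\chi$ of $H^n(A(\bF_q),k)$ is, via the surjection $H^n_{\alg}(A,k)\epi H^n(A(\bF_q),k)$ (with $q=p^r$), the image of some $T$-weight $\widetilde\chi$ of $H^n_{\alg}(A,k)$ under the restriction map $X^*(T)\to \Hom(T(\bF_q),k^\times)=X^*(T)/(q-1)$; indeed the surjection is $T$-equivariant over $T(\bF_q)\subset T$, so each weight space of the target is hit by a weight space of the source. By part (1), $\widetilde\chi$ is a sum of $\leq n$ elements of $-p^{\bN}\cdot\Delta_A$, and $\widetilde\chi$ reduces mod $q-1$ to $\chi$, which is exactly the assertion. (One should note that reducing $p^i$ modulo $q-1=p^r-1$ lands in $\{1,p,\dots,p^{r-1}\}$, matching the statement of Proposition \ref{group cohomology: additive group cohomology}(2) that only twists $0,\dots,r-1$ appear; but for the weight bound as stated this refinement is not needed.)

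I do not anticipate a genuine obstacle: the only subtlety is being careful that the stated isomorphisms are $T$-equivariant (resp. $T(\bF_q)$-equivariant) with the generators $x_i,\beta(x_i)$ genuinely fixed by the torus — this is part of the content of Proposition \ref{group cohomology: additive group cohomology}, which records that $x_i$ and $\beta(x_i)$ are ``invariant under $T$'' — and that the K\"unneth decomposition used to reduce to $A=\bG_a$ is compatible with the torus action, which it is since the $T$-action permutes the $\bG_a$-factors up to scaling and hence respects the tensor decomposition. So the proof is a short formal consequence of the explicit computations already in hand; the hard work was Proposition \ref{group cohomology: additive group cohomology} itself.
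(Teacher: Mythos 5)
Your proof is correct and is essentially the argument the paper intends: the corollary is stated as an immediate consequence of Proposition \ref{group cohomology: additive group cohomology}, and your weight bookkeeping (a degree-$n$ monomial has $a+b\leq a+2b=n$ generators, each contributing a weight in $-p^{\bN}\cdot\Delta_A$, with the $T(\bF_q)$-case following via the surjective, equivariant restriction map) is exactly that deduction.
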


These observations imply the following injectivity criterion

\begin{cor}[{\hspace{1sp}\cite[5.4]{cpsk}}]\label{group cohomology: abelian restriction criteria} In the setting of Proposition \ref{group cohomology: additive group cohomology}, let $\chi$ be a character of $T$. As above, $\bF_q$ is the finite field of size $p^r$.
\begin{enumerate}
\item If every equality of the form $\chi=p^{r_1}\lambda_1+\ldots+p^{r_l}\lambda_l$ with $\lambda_1,\ldots,\lambda_l\in -\Delta_A$, $l\leq n$, and some $r_1,\ldots,r_l\geq 0$ satisfies $r_1,\ldots,r_l\leq r-1$, then the restriction map $\Hom_T(\chi, H^n_{\alg}(A,k))\to \Hom_{T(\bF_q)}(\chi, H^n(A(\bF_q),k))$ is injective.

\item If $\chi$ is not congruent modulo $(q-1)\cdot X^*(T)$ to a sum of the form $r_1+\ldots+r_l$ with $r_1,\ldots,r_l\in -p^{\bN}\cdot \Delta_A$ and $l\leq n$ then $\Hom_{T(\bF_q)}(\chi, H^n(A(\bF_q),k))=0$.
\end{enumerate}
\end{cor}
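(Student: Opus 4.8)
The plan is to read both statements off the explicit descriptions of $H^\bullet_{\alg}(A,k)$ and $H^\bullet(A(\bF_q),k)$ furnished by Proposition \ref{group cohomology: additive group cohomology}, together with its weight-level consequence Corollary \ref{group cohomology: additive group weights}. We may assume $p>2$ (the case $p=2$ is identical, using the analogous — polynomial — description of $H^\bullet(\bG_{a,k},k)$ and $H^\bullet(\bG_a(\bF_{p^r}),k)$). Since $T$ is a split torus, every $T$-representation occurring is a direct sum of one-dimensional weight spaces, so $\Hom_T(\chi,H^n_{\alg}(A,k))$ is the $\chi$-weight space $H^n_{\alg}(A,k)_\chi$, and $\Hom_{T(\bF_q)}(\chi,H^n(A(\bF_q),k))$ is the $\chi|_{T(\bF_q)}$-isotypic subspace of $H^n(A(\bF_q),k)$; the map in question is the obvious one induced by the restriction homomorphism of Proposition \ref{group cohomology: additive group cohomology}(2), which is $T(\bF_q)$-equivariant and hence carries $H^n_{\alg}(A,k)_\chi$ into that isotypic subspace. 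The only delicate point, which is bookkeeping rather than a genuine obstacle, is tracking torus weights through the Frobenius twists $\fa_k^{(i)}$ and through the index shifts $x_i\mapsto x_{i\bmod r}$, $\beta(x_i)\mapsto\beta(x_{((i-1)\bmod r)+1})$ appearing in the restriction map, plus the identification of the kernel of $X^*(T)\to\Hom(T(\bF_q),k^\times)$ with $(q-1)X^*(T)$ used in part (2).

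For part (1): by Proposition \ref{group cohomology: additive group cohomology}(1), a monomial basis vector of $H^n_{\alg}(A,k)$ is a product $x_{i_1}\cdots x_{i_s}\,\beta(x_{j_1})\cdots\beta(x_{j_m})$ with $\fa_k$-vectors attached to each factor, $i_a\ge 0$, $j_b\ge 1$, and $s+2m=n$; since $\fa$ has $T$-weights in $-\Delta_A$ and the $i$-th Frobenius twist multiplies weights by $p^i$, its $T$-weight has the form $\sum_a p^{i_a}\nu_a+\sum_b p^{j_b}\nu'_b$ with all $\nu_a,\nu'_b\in-\Delta_A$. If this weight equals $\chi$, then — as $s+m\le s+2m=n$ — we have exhibited $\chi$ as a sum of $\le n$ elements of $p^{\bN}\cdot(-\Delta_A)$, so the hypothesis of (1) forces every exponent $i_a$ and $j_b$ to be $\le r-1$. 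Consequently $H^n_{\alg}(A,k)_\chi$ is spanned by monomials involving only the generators $x_0,\dots,x_{r-1}$ and $\beta(x_1),\dots,\beta(x_{r-1})$. On the subalgebra generated by these, the restriction map of Proposition \ref{group cohomology: additive group cohomology}(2) sends $x_i\mapsto x_i$ and $\beta(x_j)\mapsto\beta(x_j)$, hence carries distinct monomial basis vectors to distinct monomial basis vectors of $H^n(A(\bF_q),k)$; it is therefore injective on $H^n_{\alg}(A,k)_\chi$, which is exactly the assertion.

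For part (2): I would argue by contraposition. Suppose $\Hom_{T(\bF_q)}(\chi,H^n(A(\bF_q),k))\ne 0$, i.e. $\chi|_{T(\bF_q)}$ is a $T(\bF_q)$-weight of $H^n(A(\bF_q),k)$. By Corollary \ref{group cohomology: additive group weights}(2), some algebraic character $\widetilde\chi$ of $T$ extending $\chi|_{T(\bF_q)}$ is congruent, modulo $(q-1)\cdot X^*(T)$, to a sum of $\le n$ elements of $-p^{\bN}\cdot\Delta_A$. Now $\chi$ is another algebraic extension of the same character of $T(\bF_q)$, and since $k\supseteq\bF_q$ and $T(\bF_q)$ is a product of cyclic groups of order $q-1$, the kernel of the reduction $X^*(T)\to\Hom(T(\bF_q),k^\times)$ is precisely $(q-1)\cdot X^*(T)$; hence $\chi\equiv\widetilde\chi$ modulo $(q-1)\cdot X^*(T)$, so $\chi$ itself is congruent to such a sum, contradicting the hypothesis. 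Therefore $\Hom_{T(\bF_q)}(\chi,H^n(A(\bF_q),k))=0$. This completes the plan; all steps reduce to the cited computations, with the combinatorics of Frobenius twists and the index-shift in the restriction map being the one thing that needs to be written out carefully.
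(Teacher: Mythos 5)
Your proof is correct and follows essentially the same route as the paper: part (1) is the same weight-bookkeeping in the explicit algebras of Proposition \ref{group cohomology: additive group cohomology} (the paper merely packages the "distinct monomials go to distinct monomials" observation as a $T(\bF_q)$-equivariant section $s_n$ of the restriction map with $x_i\mapsto x_i$, $\beta(x_i)\mapsto\beta(x_i)$), and part (2) is exactly the intended immediate consequence of Corollary \ref{group cohomology: additive group weights}(2) together with the identification of the kernel of $X^*(T)\to\Hom(T(\bF_q),k^{\times})$ with $(q-1)X^*(T)$.
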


\begin{proof}
1) Proposition \ref{group cohomology: additive group cohomology} (1), (2) implies that there exists a $T(\bF_q)$-equivariant map $s_n:H^n(A(\bF_q),k)\to H^n_{\alg}(A,k)$ such that its composition with the restriction $H^n_{\alg}(A,k)\to H^n(A(\bF_q),k)$ is the identity map: we define $s_n$ by sending $x_i$ to $x_i$, and $\beta(x_i)$ to $\beta(x_i)$. The assumption on character $\chi$ implies that every appearance of $\chi$ as a $T$-equivariant direct summand of $H^n_{\alg}(A,k)$ is in the image of $s_n$, which implies the injectivity.

2) Immediate from Corollary \ref{group cohomology: additive group weights}(2).
\end{proof}

We can deduce from Proposition \ref{group cohomology: additive group cohomology} the following results about the action of $T$ on the cohomology of an arbitrary unipotent group.

\begin{lm}\label{group cohomology: algebraic action on unipotent cohomology}
Let $U$ be a unipotent algebraic group over $k$ equipped with an action of a split torus $T$. As before, $\Delta_U\subset X^*(T)$ is the set of characters of $T$ that appear as weights of the action of $T$ on the Lie algebra $\Lie U$.

\begin{enumerate}
\item Every weight of $T$ on $H^i_{\alg}(U,k)$ can be expressed as a sum of $\leq i$ elements of $-p^{\bN}\cdot \Delta_{U}$

\item Let $U'\subset U$ be a normal $T$-stable subgroup, and assume the subsets $p^{\bN}\Delta_{U'},p^{\bN}\Delta_{U/U'}\subset X^*(T)$ are disjoint. Fix an integer $i$. Suppose that for every expression $\chi=r_1+\dots+r_l$ with $l\leq i$ and $r_1,\ldots,r_l\in -p^{\bN}\cdot \Delta_U$, all $r_1,\ldots,r_l$ are contained in $-p^{\bN}\cdot\Delta_{U'}$. Then the map $\Hom_T(\chi,H^i_{\alg}(U,k))\to \Hom_T(\chi,H^i_{\alg}(U',k))$ is injective.
\end{enumerate}
\end{lm}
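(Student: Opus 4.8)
\textbf{Proof plan for Lemma \ref{group cohomology: algebraic action on unipotent cohomology}.}
The plan is to reduce both statements to Proposition \ref{group cohomology: additive group cohomology} by dévissage along a $T$-stable central filtration of $U$. For part (1), I would proceed by induction on $\dim U$. If $U$ is abelian and $p$-torsion, then $U\simeq \bG_a^n$ and Corollary \ref{group cohomology: additive group weights}(1) (a direct consequence of Proposition \ref{group cohomology: additive group cohomology}) gives the assertion. For general $U$, pick a $T$-stable central subgroup $Z\subset U$ of dimension one with $Z\simeq\bG_a$ (available since $U$ is unipotent with a $T$-action, so the lower central series and Frobenius kernels are $T$-stable), and run the Lyndon--Hochschild--Serre spectral sequence $E_2^{s,t}=H^s_{\alg}(U/Z,H^t_{\alg}(Z,k))\Rightarrow H^{s+t}_{\alg}(U,k)$, which is $T$-equivariant. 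Every weight on $H^t_{\alg}(Z,k)$ is a sum of $\leq t$ elements of $-p^{\bN}\Delta_Z\subset -p^{\bN}\Delta_U$ by the abelian case, and the coefficient system $H^t_{\alg}(Z,k)$ decomposes $T$-equivariantly as a sum of one-dimensional characters $\chi_0$; applying the inductive hypothesis to $U/Z$ with coefficients in each such $\chi_0$ (twisting, i.e. using $H^s_{\alg}(U/Z,\chi_0)\subset \chi_0\otimes H^s_{\alg}(U/Z,k)$ after noting $T$ acts on $H^s_{\alg}(U/Z,k)$ with weights that are sums of $\leq s$ elements of $-p^{\bN}\Delta_{U/Z}$) shows every weight on $E_2^{s,t}$, hence on the subquotients $E_\infty^{s,t}$ of $H^{s+t}_{\alg}(U,k)$, is a sum of $\leq s+t$ elements of $-p^{\bN}\Delta_U$. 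Since a weight of $H^i_{\alg}(U,k)$ appears in some $E_\infty^{s,i-s}$, this finishes (1).

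For part (2), I would again use the LHS spectral sequence, this time for the normal subgroup $U'\subset U$: $E_2^{s,t}=H^s_{\alg}(U/U',H^t_{\alg}(U',k))\Rightarrow H^{s+t}_{\alg}(U,k)$, $T$-equivariantly. Fix the character $\chi$ satisfying the hypothesis. The restriction map $\Hom_T(\chi,H^i_{\alg}(U,k))\to\Hom_T(\chi,H^i_{\alg}(U',k))$ is, up to the edge maps of the spectral sequence, the projection onto the $E_\infty^{0,i}\subseteq E_2^{0,i}=H^i_{\alg}(U',k)^{U/U'}$ summand. So it suffices to show that $\chi$ does not occur as a $T$-weight in $E_\infty^{s,t}$ for any $s>0$ with $s+t=i$; equivalently (since $E_\infty^{s,t}$ is a subquotient of $E_2^{s,t}$) that $\chi$ is not a weight of $H^s_{\alg}(U/U',H^t_{\alg}(U',k))$ for $s\geq 1$. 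By part (1) applied to $U/U'$ and to $U'$, and the disjointness of $p^{\bN}\Delta_{U'}$ and $p^{\bN}\Delta_{U/U'}$, any weight occurring in $E_2^{s,t}$ with $s\geq 1$ is a sum $\sum_{j}r_j$ ($\leq s+t=i$ terms, $r_j\in -p^{\bN}\Delta_U$) in which at least one $r_j$ lies in $-p^{\bN}\Delta_{U/U'}$ — because the $H^s_{\alg}(U/U',-)$ factor, being in positive cohomological degree $s\geq 1$, contributes at least one element of $-p^{\bN}\Delta_{U/U'}$ by part (1), and that element is genuinely present (not cancelled) precisely by the disjointness assumption, which ensures the decomposition of a character into $\Delta_{U'}$- and $\Delta_{U/U'}$-parts is unambiguous. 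This contradicts the hypothesis on $\chi$, so $\chi$ occurs only in $E_\infty^{0,i}$, giving the injectivity.

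I expect the main obstacle to be bookkeeping the $T$-equivariance of the spectral sequence together with the twisting in the coefficient system: strictly speaking $H^t_{\alg}(U',k)$ is not a trivial $U/U'$-module, and one must decompose it $T$-equivariantly into characters and track how $T$ acts on $H^s_{\alg}(U/U',\chi_0)$ for each such character $\chi_0$, rather than just on $H^s_{\alg}(U/U',k)$. The clean way around this is to observe that, as a $T$-representation, $H^s_{\alg}(U/U',\chi_0)$ is a subquotient of $\chi_0\otimes (\text{bar complex of }U/U')$, so its weights are $\chi_0$ plus sums of $\leq s$ elements of $-p^{\bN}\Delta_{U/U'}$ — which is all that is used above. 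A secondary point requiring care is the existence of the central $\bG_a\subset U$ stable under $T$ in part (1); this follows from the fact that the descending central series and the kernels of iterated Frobenius are characteristic, hence $T$-stable, so some graded piece provides a $T$-stable $\bG_a^n$ from which one extracts a one-dimensional $T$-stable subspace (the action of the torus on $\bG_a^n$ being diagonalizable).
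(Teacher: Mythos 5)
Part (1) of your plan is essentially the paper's argument (the paper uses a $T$-equivariant central filtration with $\bG_a$ graded pieces and the Hochschild--Serre spectral sequence, exactly as you do), and it is fine.

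In part (2), however, the one genuinely delicate point --- which you correctly flag yourself --- is resolved incorrectly. The coefficient module $H^t_{\alg}(U',k)$ in $E_2^{s,t}=H^s_{\alg}(U/U',H^t_{\alg}(U',k))$ is not a trivial $U/U'$-module, and your two proposed ways around this both fail. First, a merely $T$-equivariant decomposition of $H^t_{\alg}(U',k)$ into characters $\chi_0$ need not be $U/U'$-stable, so the groups $H^s_{\alg}(U/U',\chi_0)$ are not even defined from such a decomposition. Second, the ``clean way around'' via the bar (Hochschild) complex does not give the weight bound you need: the degree-$s$ term of that complex is $M\otimes k[(U/U')^{\times s}]$, and the $T$-weights of the coordinate ring $k[U/U']$ are arbitrary elements of the monoid generated by $-\Delta_{U/U'}$ (all monomials occur), not sums of at most $s$ elements of $-p^{\bN}\cdot\Delta_{U/U'}$. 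So from ``subquotient of $\chi_0\otimes(\text{bar complex})$'' you only get that the weights lie in $\chi_0+\bN\cdot(-\Delta_{U/U'})$, which is far too weak: the combinatorial hypothesis on $\chi$ in the lemma is about sums of at most $i$ elements of $-p^{\bN}\cdot\Delta_U$ and does not exclude such weights, so the vanishing of $\Hom_T(\chi,E_2^{s,t})$ for $s\geq 1$ does not follow. The bound ``sums of $\leq s$ elements of $-p^{\bN}\cdot\Delta$'' is a statement about cohomology, not about the cochain complex, and it has to come from part (1), which requires trivial (one-dimensional) coefficients.

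The correct repair is the one in the paper: since every representation of the split solvable group scheme $T\ltimes(U/U')$ is a successive extension of characters, $H^t_{\alg}(U',k)$ admits a filtration by $U/U'$-submodules with one-dimensional graded pieces that is also $T$-stable; $U/U'$ acts trivially on each such character $\chi_0$, so $H^s_{\alg}(U/U',\chi_0)\simeq\chi_0\otimes H^s_{\alg}(U/U',k)$ as $T$-modules, and part (1) applied to $U/U'$ then bounds the weights of $E_2^{s,t}$ by sums of weights of $H^s_{\alg}(U/U',k)$ and of $H^t_{\alg}(U',k)$, after which your disjointness argument goes through. With that substitution your part (2) becomes the paper's proof.
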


\begin{proof}
Since $U$ is unipotent, there exists a $T$-equivariant filtration $U_0=U\supset U_1\supset\dots\supset U_{n}=1$ by normal subgroups, such that all graded quotients $U_{m}/U_{m+1}$ are isomorphic to $\bG_a$. We have the Hochschild-Serre spectral sequence
\begin{equation}\label{group cohomology: algebraic action spectral sequence}
E_2^{r,s}=H^r_{\alg}(U/U_{n-1},H^s_{\alg}(U_{n-1},k))\Rightarrow H^{r+s}_{\alg}(U,k)
\end{equation}

Since $U_{n-1}$ is a central subgroup of $U$, the term $E_2^{r,s}$ is $T$-equivariantly isomorphic to $H^r_{\alg}(U/U_{n-1},k)\otimes H^s_{\alg}(U_{n-1},k)$. We can use this spectral sequence to prove (1) by induction on $n=\dim U$, with the base case given by Proposition \ref{group cohomology: additive group cohomology}(1). By the inductive assumption, the $T$-weights of $H^r_{\alg}(U/U_{n-1},k)$ are sums of $\leq r$ elements of $-p^{\bN}\cdot\Delta_{U/U_{n-1}}$, and the weights of $H^s_{\alg}(U_{n-1},k)$  are sums of $\leq s$ elements of $-p^{\bN}\cdot\Delta_{U_{n-1}}$ by Proposition \ref{group cohomology: additive group cohomology}(1). From the spectral sequence (\ref{group cohomology: algebraic action spectral sequence}) we have that $H^i_{\alg}(U,k)$ is a $T$-equivariant subquotient of $\bigoplus\limits_{r+s=i}H^r_{\alg}(U/U_{n-1},k)\otimes H^s_{\alg}(U_{n-1},k)$ which proves the inductive step, completing the proof of (1).

To prove (2), consider the spectral sequence
\begin{equation}
\tE_2^{r,s}=H^r_{\alg}(U/U',H^s_{\alg}(U',k))\Rightarrow H^{r+s}_{\alg}(U,k).
\end{equation}
To show injectivity of the restriction map \begin{equation}\Hom_T(\chi,H^i_{\alg}(U,k))\to \Hom_T(\chi,\tE_2^{0,i})\subset \Hom_T(\chi,H^i_{\alg}(U',k))\end{equation} it is enough to prove that the spaces $\Hom_T(\chi,H^{i-s}_{\alg}(U/U',H^s_{\alg}(U',k)))$ vanish for $s<i$. Since every representation of the solvable group scheme $T\ltimes(U/U')$ is a successive extension of characters, $H^s_{\alg}(U',k)$ has a filtration by $U/U'$-submodules with $1$-dimensional graded pieces which is also respected by $T$. Therefore $T$-weights of $H^{i-s}_{\alg}(U/U',H^s_{\alg}(U',k))$ form a subset of the set of $T$-weights of $H^{i-s}_{\alg}(U/U',k)\otimes H^s_{\alg}(U',k)$. 

For $s<i$ the character $\chi$ does not appear in the latter set because our assumption implies that $\chi$ cannot be written as $\chi'+\chi''$ where $\chi'$ is a sum of $\leq s$ elements of $-p^{\bN}\cdot \Delta_{U'}$, and $\chi''$ is a sum of $\leq i-s$ elements of $-p^{\bN}\cdot\Delta_{U/U'}$ .
\end{proof}

We will now compute $\Delta_{U_p}$ for the unipotent radical $U_p\subset B_p$ of a Borel subgroup of $SL_p$. Let $e_1,\ldots,e_p$ be a basis of $V$ so that $B_p$ is the subgroup preserving each of the subspaces $\langle e_1,\dots,e_{i}\rangle$, and $U_p\subset B_p$ is the subgroup of matrices that moreover act trivially on the quotients $\langle e_1,\dots,e_{i}\rangle/\langle e_1,\dots,e_{i-1}\rangle$. Let $\chi_i\in X^*(T_p)$ be the character of the torus $T_p=B_p/U_p$ through which it acts on $e_i$. Note that $\chi_p=-\chi_1-\ldots-\chi_{p-1}$, and $\chi_1,\ldots,\chi_{p-1}$ form a basis of $X^*(T_p)$. The set of positive roots $\Delta_{U_p}$ is equal to \begin{equation}\{\chi_i-\chi_j|1\leq i<j\leq p-1\}\cup \{\chi_i+(\chi_1+\ldots+\chi_{p-1})|1\leq i\leq p-1\}\end{equation} On the other hand, the weights of $T_p$ on the representation $V^{(1)}$ are given by $p\chi_1,\ldots,p\chi_{p-1},p\chi_p=-p(\chi_1+\ldots+\chi_{p-1})$.

Our goal is to prove injectivity of the map $H^{p-1}_{\alg}(U_p,V^{(1)})^T\to H^{p-1}(U_p(\bF_q),V^{(1)})$ for $q>p$. To do so we will analyze the $T$-action on $H^i_{\alg}(U_p,k)$ for $i\leq p-1$ and determine for which $i,j$ the cohomology group $H^i_{\alg}(U_p,\chi_j^p)$ might have non-zero $T$-invariants. This will be achieved through the following combinatorial computation:

\begin{lm}\label{group cohomology: weights of frobenius twist}
\begin{enumerate}
\item For $2\leq j\leq p$ the character $p\chi_{j}$ does not belong to the submonoid of $X^*(T_p)$ spanned by $\Delta_{U_p}$. 

\item The only (up to permutation) way to express $p\chi_1$ as a sum of $\leq p-1$ elements of $p^{\bN}\cdot\Delta_{U_p}$ is $(\chi_1-\chi_2)+\ldots+(\chi_1-\chi_{p-1})+(\chi_1+(\chi_1+\chi_2+\ldots+\chi_{p-1}))$.

\item For $q>p$, and any $2\leq j\leq p$, the character $p\chi_j$ is not congruent modulo $q-1$ to a sum of $\leq p-1$ elements of $p^{\bN}\cdot \Delta_{U_p}$.

\item For $q>p$, any congruence modulo $q-1$ between $p\chi_1$ and a sum of $\leq p-1$ elements of $p^{\leq r-1}\cdot \Delta_{U_p}$ is an equality. In particular, by (2) there are no such congruences with strictly less than $p-1$ summands. 
\end{enumerate}
\end{lm}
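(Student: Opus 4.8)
The plan is to prove the four combinatorial assertions in Lemma \ref{group cohomology: weights of frobenius twist} by elementary manipulations in the character lattice $X^*(T_p)$, using the basis $\chi_1,\dots,\chi_{p-1}$ (with $\chi_p = -\chi_1-\dots-\chi_{p-1}$) and keeping careful track of the ``coordinate sum'' $\Sigma(\lambda) := \sum_{i=1}^{p-1} \langle \chi_i^\vee, \lambda\rangle$ of an element $\lambda$ written in this basis, as well as the individual coordinates. The point is that each of the $\binom{p-1}{2}$ ``finite'' roots $\chi_i - \chi_j$ ($1\le i<j\le p-1$) has $\Sigma = 0$, while each of the $p-1$ ``affine-type'' roots $\beta_i := \chi_i + (\chi_1+\dots+\chi_{p-1})$ has $\Sigma = p$; meanwhile $p\chi_j = -p(\chi_1+\dots+\chi_{p-1})$ for $j = p$ and otherwise $p\chi_j$ has $\Sigma = p$ for $1 \le j \le p-1$. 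Multiplying a root by a power $p^a$ scales its $\Sigma$ by $p^a$.

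For part (1): if $p\chi_j = \sum_k (\chi_{i_k} - \chi_{j_k}) + \sum_l \beta_{i_l}$ is a nonnegative-integer combination (no Frobenius twists), taking $\Sigma$ gives $\Sigma(p\chi_j) = p \cdot (\text{number of }\beta\text{'s})$. For $j = p$ this forces the number of $\beta$'s to be $\le 0$ hence $0$ (since $\Sigma(p\chi_p) = -p(p-1) < 0$, already a contradiction, as the RHS has $\Sigma \ge 0$), so $p\chi_p$ is not in the monoid; for $2 \le j \le p-1$ we get that the number of $\beta$'s is exactly one, say $\beta_{i_1}$, and then I would compare individual coordinates: the coefficient of $\chi_1$ on the left is $0$ (since $j\ne 1$), but on the right every $\beta$-root and the $j=1$ terms of $\chi_i - \chi_j$ contribute, and a short positivity/parity argument in each coordinate shows no solution exists. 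I expect part (1) to be quick once the $\Sigma$-bookkeeping is set up.

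For part (2): again apply $\Sigma$ to $p\chi_1 = \sum p^{a_k}(\text{finite root}) + \sum p^{b_l}(\beta\text{-root})$; since $\Sigma(p\chi_1) = p$ and each term contributes a nonnegative multiple of $p$ to $\Sigma$ (the finite roots contribute $0$, the $\beta$'s contribute $p \cdot p^{b_l}$), there is exactly one $\beta$-root present and it is untwisted ($b_l = 0$). Subtracting it, $p\chi_1 - \beta_i = $ a sum of $\le p-2$ twisted finite roots; evaluating coordinates and using that all finite roots $\chi_i - \chi_j$ live in the root lattice $A_{p-2}$ of the ``finite part'', I would show $i$ must be $1$ and then that the remaining element $p\chi_1 - \beta_1 = (p-1)\chi_1 - \chi_2 - \dots - \chi_{p-1} = (\chi_1-\chi_2) + \dots + (\chi_1 - \chi_{p-1})$ has a unique expression as a sum of $\le p-2$ twisted positive roots of $A_{p-2}$, namely the displayed one with all twists trivial (this is because its coefficient vector is extremal enough that any $p$-twisting overshoots a coordinate). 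Parts (3) and (4) are then deduced from (1) and (2) by bounding the ``size'' of a $\le (p-1)$-term sum of elements of $p^{\le r-1}\Delta_{U_p}$: each such element has coordinates bounded (in absolute value) by something like $p^{r-1}$ times a small constant, so a $\le(p-1)$-fold sum has each coordinate of absolute value at most $C p^{r-1} < q-1 = p^r - 1$ once $q > p$ is large compared to the fixed number $p-1$ of terms — hence a congruence mod $q-1$ between two such small vectors, one of them being the small vector $p\chi_j$, forces an honest equality, and then (1) resp.\ (2) applies.

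The main obstacle will be part (2) — pinning down \emph{all} the ways, up to permutation, to write $p\chi_1$ as a sum of at most $p-1$ elements of $p^{\mathbb{N}}\cdot\Delta_{U_p}$, and in particular ruling out solutions that use nontrivial Frobenius twists $p^a$ with $a \ge 1$ of the finite roots. The $\Sigma$-invariant kills twists on the $\beta$-roots for free, but a twisted finite root $p^a(\chi_i - \chi_j)$ still has $\Sigma = 0$, so one cannot see it via $\Sigma$ alone; I will need a finer invariant, for instance the observation that in the unique $\beta$-free remainder $(p-1)\chi_1 - \chi_2 - \dots - \chi_{p-1}$ the coordinate of $\chi_1$ equals $p-1$, which is exactly the maximum achievable by $p-2$ untwisted roots $\chi_1 - \chi_j$ and is \emph{exceeded} the moment any twist $p^a$, $a\ge 1$, is introduced (since $p^a \ge p > p-1$) unless compensated, and then check the compensation is impossible within the $\le p-2$ term budget. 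I would also double-check the edge cases $p = 2, 3$ separately, since the sets $\Delta_{U_p}$ degenerate there, although the surrounding text handles $p = 2$ by a different route so only $p = 3$ needs a sanity check in this lemma.
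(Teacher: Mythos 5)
Your treatment of parts (1) and (2) is essentially the paper's own argument: the coordinate-sum functional $\sigma$ (your $\Sigma$) shows that exactly one root of the form $\chi_i+(\chi_1+\cdots+\chi_{p-1})$ occurs and that it is untwisted, and then counting contributions to the $\chi_1$-coefficient (all of which are non-negative, and equal to $p^a\ge p$ the moment a twist appears) pins everything down. One slip: $p\chi_1-\bigl(2\chi_1+\chi_2+\cdots+\chi_{p-1}\bigr)=(p-2)\chi_1-\chi_2-\cdots-\chi_{p-1}$, not $(p-1)\chi_1-\chi_2-\cdots-\chi_{p-1}$, so the relevant extremal value of the $\chi_1$-coordinate is $p-2$, which is exactly what $p-2$ untwisted roots $\chi_1-\chi_j$ can achieve; with that correction your argument for uniqueness in (2) is the intended one.

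The deduction of (3) and (4) from (1) and (2) has a genuine gap. In (3) the twists range over all of $p^{\bN}$, so the summands are not coordinate-bounded at all; you may reduce exponents modulo $r$ (writing $q=p^r$ and using $p^r\equiv 1\bmod q-1$), but even then a sum of up to $p-1$ elements of $p^{\le r-1}\cdot\Delta_{U_p}$ has coordinates as large as $2(p-1)p^{r-1}\ge p^r>q-1$ (for $p=3$, $r=2$ this is $12>8$), so the claim that both sides are smaller than $q-1$ and hence a congruence forces an equality is simply false; nor is there any freedom to take ``$q$ large compared to $p-1$'', since $q$ is a power of the same $p$. (Moreover, even if one had an honest equality in (3), part (1) only excludes untwisted expressions, so a twisted variant of (1) would still be needed.) This is precisely where the paper's real work lies: it bounds the base-$p$ digit sum of the $\chi_1$-coefficient (for (3)) and of $\sigma$ (for (4)) of the right-hand side, invokes the folding observation that any positive integer is congruent modulo $p^r-1$ to one in $(0,p^r)$ with no larger digit sum, and then carries out a case analysis including a genuinely borderline case ($j=p$, $q=p^2$, with a further sub-case for $p=3$) in which the digit-sum bound is attained with equality and other coordinates must be compared. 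As written, your plan does not establish (3) or (4).
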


\begin{proof}
The first statement for $j=p$ is clear because the image of every element of $\Delta_{U_p}$ under the map $\sigma:X^*(T_p)\xrightarrow{b_1\chi_1+\ldots+b_{p-1}\chi_{p-1}\mapsto b_1+\ldots+b_{p-1}}\bZ$ is non-negative. Similarly, a linear combination with non-negative coefficients of elements of $\Delta_{U_p}$ that belongs to $\langle \chi_2,\ldots,\chi_{p-1}\rangle$ must be a combination of elements $\chi_i-\chi_j,2\leq i<j\leq p-1$ and is therefore killed by $\sigma$. This shows that $p\chi_j$ for $j=2,\ldots,p-1$ are not in the monoid generated by $\Delta_{U_p}$ either. 

For the second statement consider an arbitrary expression $p\chi_1=p^{r_1}\lambda_1+\ldots+p^{r_l}\lambda_l$ with $l\leq p-1$, $\lambda_1,\ldots,\lambda_l\in\Delta_{U_p}$. Since $\sigma(p\chi_1)=p$, there is exactly one $m$ such that $\lambda_m$ is from the set $\{\chi_1+(\chi_1+\ldots+\chi_{p-1}),\ldots,\chi_{p-1}+(\chi_1+\ldots+\chi_{p-1})\}$ and $r_m=0$ for this $m$. There are exactly $p-2$ other elements of $\Delta_{U_p}$ in which $\chi_1$ appears with a non-zero coefficient: $\chi_1-\chi_2,\ldots,\chi_1-\chi_{p-1}$. Therefore $l=p-1$, all $r_1,\ldots,r_{p-1}$ are equal to $0$, and all $\lambda_1,\ldots,\lambda_{p-1}$ are elements of the set $\{\chi_1-\chi_2,\ldots,\chi_1-\chi_{p-1},2\chi_1+\chi_2+\ldots+\chi_{p-1}\}$. Hence there must be no repetitions among $\lambda_1,\ldots,\lambda_{p-1}$ for them to sum up to $p\chi_1$, which proves assertion (2).

Suppose that, contrary to the assertion (3), there is a congruence \begin{equation}\label{group cohomology: weights congruence formula}p\chi_j\equiv p^{r_1}\lambda_1+\ldots+p^{r_l}\lambda_l\bmod q-1\end{equation} with $l\leq p-1$ and all $\lambda_m\in  \Delta_{U_p}$. The coefficient of $\chi_1$ in $p^{r_1}\lambda_1+\ldots+p^{r_l}\lambda_l$ is a sum of $\leq 2l$ powers of $p$. Hence it is a number whose sum of digits in base $p$ is at most $2l\leq 2(p-1)$. Moreover, its sum of digits is equal to $2(p-1)$ only if $l=p-1$, and all $\lambda_1,\ldots,\lambda_{p-1}$ are equal to $2\chi_1+\chi_2\ldots+\chi_{p-1}$. This would violate (\ref{group cohomology: weights congruence formula}), because the right hand side would have the shape $(p^{\lambda_1}+\ldots +p^{\lambda_{p-1}})\cdot (2\chi_1+\chi_2\ldots+\chi_{p-1})$. Therefore the sum of digits in base $p$ of the coefficient of $\chi_1$ in $p^{r_1}\lambda_1+\ldots+p^{r_l}\lambda_l$ is at most $2(p-1)-1$.

Note that a number with base $p$ expansion $\overline{a_n\ldots a_ra_{r-1}\ldots a_0}$ is congruent to $\overline{a_n\ldots a_r}+\overline{a_{r-1}\ldots a_0}$ modulo $p^r-1$. Applying this observation repeatedly, we see that for any non-zero number $a$ there exists a number $0<a'<p^r$ congruent to $a$ modulo $p^r-1$ and with sum of digits less or equal to that of $a$. For $2\leq j\leq p-1$ the coefficient of $\chi_1$ in $p\chi_j$ is zero, so by this discussion there would have to be an integer $0<a'<p^r$ divisible by $p^r-1$ with the sum of digits $\leq 2(p-1)-1$, but there is no such number. 

For $j=p$ the coefficient of $\chi_1$ in $p\chi_j$ is $-p$, but the only number $0<a'<p^r-1$ congruent to $-p$ modulo $p^r-1$ is $p^r-p-1=\overline{(p-1)(p-1)\ldots (p-1)(p-2)(p-1)}$ and its sum of digits is $r(p-1)-1$. This finishes the proof of (3) if $r>2$, but for $r=2$ we still need to rule out the possibility that the sum of digits in base $p$ of the coefficient of $\chi_1$ in $p^{r_1}\lambda_1+\ldots+p^{r_l}\lambda_l$ is $2(p-1)-1$. If this was the case, up to reordering the summands, this sum would have the form \begin{equation}(p^{r_1}+\ldots+p^{r_{p-2}})(2\chi_1+\chi_2+\ldots+\chi_{p-1})+p^{r_{p-1}}(\chi_i+(\chi_1+\ldots+\chi_{p-1}))\end{equation} or \begin{equation}(p^{r_1}+\ldots+p^{r_{p-2}})(2\chi_1+\chi_2+\ldots+\chi_{p-1})+p^{r_{p-1}}(\chi_1-\chi_i),\end{equation} for some $i=2,\ldots ,p-1$. Neither of these expressions can be congruent to $p\chi_p=-p(\chi_1+\ldots+\chi_{p-1})$ modulo $p^2-1$: if $p>3$ this is clear because for all $j\neq 1,i$ (which exists since $p>3$), the difference between the coefficients of $\chi_1$ and $\chi_j$ in these sums is a positive integer with the sum of digits $\leq p-1$, and in particular it cannot be zero modulo $p^2-1$. If $p=3$ then we have a congruence of the form $-3(\chi_1+\chi_2)\equiv 3^{r_1}(2\chi_1+\chi_2)+3^{r_2}(\chi_1+2\chi_2)$ or $-3(\chi_1+\chi_2)\equiv 3^{r_1}(2\chi_1+\chi_2)+3^{r_2}(\chi_1-\chi_2)$ modulo $8$. In the first case we would have $3^{r_1}\equiv 3^{r_2}$ and $3^{r_1+1}\equiv -3$ which is impossible because $(-1)$ is not a power of $3$ modulo $8$. In the second case comparing the coefficients of $\chi_2$ we arrive at the contradiction as well, finishing the proof of (3).

For the assertion (4), suppose that $p\chi_1\equiv p^{r_1}\lambda_1+\ldots+p^{r_l}\lambda_{l}$ is such a congruence. The value of $\sigma$ on its right hand side is a number less than or equal to $p^{r}(p-1)$ whose sum of digits is at most $p-1$, and which is congruent to $p$ modulo $p^r-1$. The only such number is $p$ itself (we use here that $r>1$), which implies that for exactly one value of $m$ we have $r_m=0$ and $\lambda_m=\chi_i+(\chi_1+\ldots+\chi_{p-1})$, while for all $m'\neq m$ the character $\lambda_{m'}$ is of the form $\chi_j-\chi_k$. 

Next, we consider the coefficient $c$ of $\chi_1$ in the right hand side of our congruence. Summands $p^{r_{m'}}\lambda_{m'}$ for $m'\neq m$ contribute at most $p^{r-1}$ to this coefficient, and $p^{r_m}\lambda_m=\lambda_m$ contributes $1$ or $2$. Hence $c$ is less than or equal to $p^{r-1}(p-2)+2$. As $c$ also has to be congruent to $p$ modulo $p^r-1$, it is forced to be equal to $p$. Given what we already know about the right hand side, this can only happen if $l=p-1$, and all $r_1,\ldots,r_l$ are equal to zero, hence the congruence is forced to be an equality. 
\end{proof}

Let $\bG_a^{p-1}\simeq A_p\subset U_p$ be the subgroup of matrices that act trivially on the quotient $V/\langle e_1\rangle$. Note that $A_p$ is preserved by the action of $T_p$ and its set of roots is \begin{equation}\Delta_{A_p}=\{\chi_1-\chi_i|2\leq i\leq p-1\}\cup \{2\chi_1+\chi_2+\ldots+\chi_{p-1}\}\subset \Delta_{U_p}.\end{equation} Lemma \ref{group cohomology: weights of frobenius twist} indicates that restriction to the subgroup $A_p\subset U_p$ should detect all cohomology classes of $V^{(1)}$ in degrees $\leq p-1$. We make this precise in Lemma \ref{group cohomology: A restriction injective} below. The deduction of Lemma \ref{group cohomology: A restriction injective} from Lemma \ref{group cohomology: weights of frobenius twist} is analogous to the discussion of injectivity conditions in \cite[\S 5]{cpsk}.

\begin{lm}\label{group cohomology: A restriction injective}
The following restriction maps are injective:
\begin{enumerate}
    \item $H^{p-1}_{\alg}(B_p,V^{(1)})=H^{p-1}_{\alg}(U_p,V^{(1)})^{T_p}\to H^{p-1}_{\alg}(A_p,V^{(1)})^{T_p}$.
    \item $H^{p-1}_{\alg}(A_p,V^{(1)})^{T_p}\to H^{p-1}(A_p(\bF_q),V^{(1)})^{T_p(\bF_q)}$, if $q$ is strictly larger than $p$.
\end{enumerate}
\end{lm}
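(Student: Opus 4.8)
\textbf{Proof proposal for Lemma \ref{group cohomology: A restriction injective}.}
The plan is to deduce both injectivity statements by combining the representation-theoretic vanishing results for the $T_p$-action on cohomology of unipotent groups (Lemma \ref{group cohomology: algebraic action on unipotent cohomology}) and the criteria for restriction to $\bF_q$-points (Corollary \ref{group cohomology: abelian restriction criteria}) with the combinatorial analysis of which characters $p\chi_j$ can be written as short sums of elements of $p^{\bN}\cdot\Delta_{U_p}$ carried out in Lemma \ref{group cohomology: weights of frobenius twist}. Throughout I use that $H^i_{\alg}(B_p,W)=H^i_{\alg}(U_p,W)^{T_p}$ and $H^i(B_p(\bF_q),W)=H^i(U_p(\bF_q),W)^{T_p(\bF_q)}$, coming from the vanishing of positive-degree cohomology of $T_p$ and $T_p(\bF_q)$ with $p$-torsion coefficients; and I filter the $B_p$-module $V^{(1)}$ by the $T_p$-weight characters $p\chi_1,\ldots,p\chi_p$, reducing each statement to a collection of statements about $H^{p-1}(-,\,\chi)$ for the individual characters $\chi=p\chi_j$.

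\textbf{Step 1: reduce to the weight filtration.} Since $V^{(1)}$, restricted to $B_p$, has a $B_p$-stable filtration with one-dimensional graded pieces on which $T_p$ acts by $p\chi_1,\dots,p\chi_p$ (this is the usual filtration of a representation of the solvable group $B_p$), the long exact sequences in $U_p$- and $A_p$-cohomology, together with their $T_p$- (resp. $T_p(\bF_q)$-)equivariance, reduce both parts of the lemma to showing: (a) for each $j$, $\Hom_{T_p}(p\chi_j, H^{p-1}_{\alg}(U_p,k))\to \Hom_{T_p}(p\chi_j, H^{p-1}_{\alg}(A_p,k))$ is injective, and that the connecting maps on the $T_p$-invariants of the neighbouring graded pieces do not obstruct this — concretely it suffices that $\Hom_{T_p}(p\chi_j, H^{p-2}_{\alg}(U_p/A_p, H^s_{\alg}(A_p,k)))=0$ for $s<p-1$, and likewise over $\bF_q$. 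This is exactly the kind of vanishing furnished by Lemma \ref{group cohomology: algebraic action on unipotent cohomology}(1),(2): every $T_p$-weight of $H^i_{\alg}(U_p,k)$ is a sum of $\le i$ elements of $-p^{\bN}\cdot\Delta_{U_p}$, so the only chance for $p\chi_j$ (equivalently $-p\chi_j$, after dualizing the coefficient) to appear in degree $\le p-1$ is governed by Lemma \ref{group cohomology: weights of frobenius twist}.

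\textbf{Step 2: part (1), the algebraic restriction.} For $2\le j\le p$, Lemma \ref{group cohomology: weights of frobenius twist}(1) says $p\chi_j$ is not even in the monoid generated by $\Delta_{U_p}$, so $\Hom_{T_p}(p\chi_j,H^{i}_{\alg}(U_p,k))=0$ for all $i$ by Lemma \ref{group cohomology: algebraic action on unipotent cohomology}(1); hence those summands contribute nothing and the whole group $H^{p-1}_{\alg}(B_p,V^{(1)})$ is carried by the $p\chi_1$-graded piece. For $j=1$, Lemma \ref{group cohomology: weights of frobenius twist}(2) shows the only expression of $p\chi_1$ as a sum of $\le p-1$ elements of $p^{\bN}\cdot\Delta_{U_p}$ uses only elements of $\Delta_{A_p}=\{\chi_1-\chi_i\}\cup\{2\chi_1+\chi_2+\dots+\chi_{p-1}\}$; thus the hypothesis of Lemma \ref{group cohomology: algebraic action on unipotent cohomology}(2) is satisfied with $U=U_p$, $U'=A_p$, $\chi=-p\chi_1$ (one must check $p^{\bN}\Delta_{A_p}$ and $p^{\bN}\Delta_{U_p/A_p}$ are disjoint, which follows since $\Delta_{U_p/A_p}=\{\chi_i-\chi_j : 2\le i<j\le p-1\}$ consists of characters with zero $\chi_1$-coefficient while every element of $\Delta_{A_p}$ has positive $\chi_1$-coefficient, and multiplying by powers of $p$ preserves this). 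Lemma \ref{group cohomology: algebraic action on unipotent cohomology}(2) then gives injectivity of $\Hom_{T_p}(p\chi_1,H^{p-1}_{\alg}(U_p,k))\to\Hom_{T_p}(p\chi_1,H^{p-1}_{\alg}(A_p,k))$, and assembling the weight pieces yields part (1).

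\textbf{Step 3: part (2), restriction to $\bF_q$-points.} Now I invoke Corollary \ref{group cohomology: abelian restriction criteria} with $A=A_p\simeq\bG_a^{p-1}$, $T=T_p$. For the graded pieces $p\chi_j$ with $2\le j\le p$: Lemma \ref{group cohomology: weights of frobenius twist}(3) says $p\chi_j$ is not congruent modulo $q-1$ to any sum of $\le p-1$ elements of $p^{\bN}\cdot\Delta_{U_p}$, a fortiori not of $p^{\bN}\cdot\Delta_{A_p}$, so by Corollary \ref{group cohomology: abelian restriction criteria}(2) (applied to $A_p$, whose $\Delta$ is $\Delta_{A_p}$) these pieces contribute nothing to $H^{p-1}(A_p(\bF_q),V^{(1)})^{T_p(\bF_q)}$. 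For $p\chi_1$: Lemma \ref{group cohomology: weights of frobenius twist}(4) says that for $q=p^r$ with $r>1$ any congruence modulo $q-1$ between $p\chi_1$ and a sum of $\le p-1$ elements of $p^{\le r-1}\cdot\Delta_{U_p}$ must be an equality, hence (by part (2) of that lemma) uses all $p-1$ elements each with exponent $0$. In particular every realization of $p\chi_1$ in degree $p-1$ has all exponents $r_m\le r-1$, so Corollary \ref{group cohomology: abelian restriction criteria}(1) applies and gives injectivity of $\Hom_{T_p}(p\chi_1,H^{p-1}_{\alg}(A_p,k))\to\Hom_{T_p(\bF_q)}(p\chi_1,H^{p-1}(A_p(\bF_q),k))$. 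Combining the weight pieces (and checking, as in Step 1, that the connecting maps in the long exact sequences vanish on invariants — which again follows from Lemma \ref{group cohomology: weights of frobenius twist}(1),(3) since the relevant lower-degree $\Hom$-groups of weight $p\chi_j$ vanish) proves part (2).

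\textbf{Main obstacle.} The technical heart is not the homological bookkeeping but making sure the long exact sequences genuinely degenerate in the desired range — i.e. that for the intermediate steps of the weight filtration the relevant $\Hom_{T_p}(p\chi_j,H^{<p-1})$ and the boundary maps really vanish, so that $H^{p-1}$ of $V^{(1)}$ is pinned down exactly by the $p\chi_1$-summand and its behaviour under restriction. This is where one has to be careful that "sum of $\le i$ elements" bounds in Lemma \ref{group cohomology: algebraic action on unipotent cohomology} interact correctly with the arithmetic in Lemma \ref{group cohomology: weights of frobenius twist}; once that compatibility is laid out cleanly the two injectivity assertions fall out formally. A secondary subtlety is verifying the disjointness hypotheses $p^{\bN}\Delta_{U'}\cap p^{\bN}\Delta_{U/U'}=\varnothing$ needed to invoke Lemma \ref{group cohomology: algebraic action on unipotent cohomology}(2), but as noted this is immediate from the $\chi_1$-coefficient.
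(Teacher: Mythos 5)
Your proposal follows the paper's proof essentially verbatim: filter $V^{(1)}|_{B_p}$ by the weight characters $p\chi_1,\ldots,p\chi_p$, kill the pieces with $j\geq 2$ (and the $j=1$ piece in degrees $<p-1$) using Lemma \ref{group cohomology: weights of frobenius twist}(1),(2) together with Lemma \ref{group cohomology: algebraic action on unipotent cohomology}(1), get the algebraic injectivity for the $p\chi_1$-piece from Lemma \ref{group cohomology: algebraic action on unipotent cohomology}(2), and then handle the restriction to $\bF_q$-points with Corollary \ref{group cohomology: additive group weights}(2), Corollary \ref{group cohomology: abelian restriction criteria} and Lemma \ref{group cohomology: weights of frobenius twist}(3),(4); the bookkeeping with the long exact sequences is also the same as in the paper (and is what later feeds Lemma \ref{group cohomology: everything from chi1 algebraic}).

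One step is wrong as written, though easily repaired: your verification of the disjointness hypothesis $p^{\bN}\Delta_{A_p}\cap p^{\bN}\Delta_{U_p/A_p}=\varnothing$ rests on the claim that $\Delta_{U_p/A_p}=\{\chi_i-\chi_j:2\le i<j\le p-1\}$ consists of characters with zero $\chi_1$-coefficient. That misses the roots $\chi_i-\chi_p=\chi_i+(\chi_1+\cdots+\chi_{p-1})$ for $2\le i\le p-1$, which do lie in $\Delta_{U_p/A_p}$ and have $\chi_1$-coefficient $1$, so the "zero versus positive $\chi_1$-coefficient" dichotomy fails. The disjointness itself is still true for $p>2$ (the only case at issue here): an equality $p^a\lambda=p^b\mu$ with $\lambda\in\Delta_{A_p}$, $\mu\in\Delta_{U_p/A_p}$ either pits a nonzero $\chi_1$-coefficient against a zero one (when $\mu=\chi_i-\chi_j$ with $j\le p-1$), or, when $\mu=\chi_i-\chi_p$, forces $p^a=p^b$ and then $\chi_1-\chi_j=\chi_i-\chi_p$ (impossible, the left side has a negative coefficient in the basis $\chi_1,\ldots,\chi_{p-1}$ while the right side has all coefficients positive), or forces $2p^a=p^b$ (impossible for odd $p$). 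With that correction the hypothesis of Lemma \ref{group cohomology: algebraic action on unipotent cohomology}(2) is verified and your argument coincides with the paper's. (A last cosmetic point: the invariants $H^{i}_{\alg}(U_p,\chi_j^p)^{T_p}$ correspond to $\Hom_{T_p}$ out of $-p\chi_j$, i.e.\ to writing $p\chi_j$ as a sum of elements of $p^{\bN}\Delta_{U_p}$; you flag the dualization, so this is only a matter of keeping the signs consistent.)
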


\begin{proof}
As a representation of $B_p$, $V^{(1)}$ admits a filtration with graded pieces given by the characters $\chi_i^p$, for $i=1,\ldots,p$. Given Lemma \ref{group cohomology: weights of frobenius twist}(1) and (2), Lemma \ref{group cohomology: algebraic action on unipotent cohomology}(1) implies that $H^i_{\alg}(B_p,\chi_j^p)=H^i_{\alg}(U_p,\chi_j^p)^{T_p}=0$ for $2\leq j\leq p$ and all $i$, and for $j=1$ with $i<p-1$. Lemma \ref{group cohomology: algebraic action on unipotent cohomology}(2) shows that the restriction $H^j_{\alg}(U_p,\chi_j^p)^{T_p}\to H^j_{\alg}(A_p,\chi_j^p)^{T_p}$ is an isomorphism (both groups are in fact zero) for $j<p-1$ and is an injection for $j=p-1$. Therefore the restriction $H^{p-1}_{\alg}(B_p,V^{(1)})\to H^{p-1}_{\alg}(A_p,V^{(1)})^{T_p}$ is injective.

For the second statement, $H^{i}(A_p(\bF_q),\chi_j^p)^{T(\bF_q)}=0$ for $i<p-1$ and all $j$, by the combination of Corollary \ref{group cohomology: additive group weights}(2) and Lemma \ref{group cohomology: weights of frobenius twist}(3), (4). The restriction maps $H^{p-1}_{\alg}(A_p,\chi_j^p)^{T_p}\to H^{p-1}(A_p(\bF_q), \chi_j^p)^{T_p(\bF_q)}$ are injective by Lemma \ref{group cohomology: weights of frobenius twist}(3),(4) and Corollary \ref{group cohomology: abelian restriction criteria} (the source group is in fact zero for $j\neq 1$). This implies that the restriction $H^{p-1}_{\alg}(A_p,V^{(1)})^{T_p}\to H^{p-1}(A_p(\bF_q),V^{(1)})^{T_p(\bF_q)}$ is injective.
\end{proof}

\begin{proof}[Proof of Proposition \ref{group cohomology: from algebraic to discrete}]
Lemma \ref{group cohomology: A restriction injective} completes the proof of Proposition \ref{group cohomology: from algebraic to discrete}, because combined with Theorem \ref{group cohomology: restriction to Borel iso} it even shows that the composition $H^{p-1}_{\alg}(SL_p,V^{(1)})\simeq H^{p-1}_{\alg}(B_p,V^{(1)})\to H^{p-1}(A_p(\bF_q),V^{(1)})$ is injective.
\end{proof}

Moreover, the $1$-dimensional subrepresentation $\chi_1^p\subset V^{(1)}$ is responsible for all of cohomology of $V^{(1)}$ in degree $p-1$. Precisely, we have the following results that will be used in the next section, and in the proof of Lemma \ref{group cohomology: reducible formula}.

\begin{lm}\label{group cohomology: everything from chi1 algebraic}
\begin{enumerate}
    \item The map $H^{p-1}_{\alg}(A_p,\chi_1^p)^{T_p}\to H^{p-1}_{\alg}(A_p,V^{(1)})^{T_p}$ is an isomorphism of $1$-dimensional vector spaces.
    \item The map $H^{p-1}(A_p(\bF_q),\chi_1^p)^{T_p(\bF_q)}\to H^{p-1}(A_p(\bF_q),V^{(1)})^{T_p(\bF_q)}$ is an isomorphism when $q>p$.
\end{enumerate}
\end{lm}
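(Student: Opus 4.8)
The plan is to run the same filtration argument in both the algebraic and the discrete settings. As a representation of $B_p$ (hence of $T_p\ltimes A_p$ and of $T_p(\bF_q)\ltimes A_p(\bF_q)$), the module $V^{(1)}$ carries the flag $W_0\subset W_1\subset\dots\subset W_p=V^{(1)}$ with $W_i=\langle e_1^{(1)},\dots,e_i^{(1)}\rangle$, so that $W_1=\chi_1^p$ and each graded piece $W_i/W_{i-1}$ is the character $\chi_i^p$ on which $A_p$ (being contained in $U_p$) acts trivially. Since $T_p$ is linearly reductive and $T_p(\bF_q)$ has order $(q-1)^{p-1}$ prime to $p$, the functors $(-)^{T_p}$ and $(-)^{T_p(\bF_q)}$ are exact, and the inclusion $\chi_1^p=W_1\hookrightarrow V^{(1)}$ factors as $W_1\hookrightarrow W_2\hookrightarrow\dots\hookrightarrow W_p$. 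Thus it suffices to show, for $i=2,\dots,p$, that in the long exact sequences attached to $0\to W_{i-1}\to W_i\to\chi_i^p\to 0$ (after applying $H^\bullet(A_p,-)^{T_p}$, resp.\ $H^\bullet(A_p(\bF_q),-)^{T_p(\bF_q)}$) the maps $H^{p-1}(A_p,W_{i-1})^{T_p}\to H^{p-1}(A_p,W_i)^{T_p}$ are isomorphisms, and then to check that the common value is one-dimensional. For the isomorphism statement it is enough to prove $H^{p-2}(A_p,\chi_i^p)^{T_p}=0$ for all $i$ and $H^{p-1}(A_p,\chi_i^p)^{T_p}=0$ for $i\ge 2$ (and likewise over $\bF_q$).

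\textbf{The algebraic case.} By Lemma \ref{group cohomology: algebraic action on unipotent cohomology}(1), every $T_p$-weight of $H^{n}_{\alg}(A_p,k)$ is a sum of at most $n$ elements of $-p^{\bN}\cdot\Delta_{A_p}\subset -p^{\bN}\cdot\Delta_{U_p}$; hence the trivial character occurs in $H^{n}_{\alg}(A_p,\chi_j^p)=\chi_j^p\otimes H^{n}_{\alg}(A_p,k)$ only if $p\chi_j$ is a sum of at most $n$ elements of $p^{\bN}\cdot\Delta_{U_p}$. For $j\ge 2$ this never happens by Lemma \ref{group cohomology: weights of frobenius twist}(1), giving $H^{n}_{\alg}(A_p,\chi_j^p)^{T_p}=0$ for all $n$; for $j=1$ it happens only with exactly $p-1$ summands (all of exponent $0$) by Lemma \ref{group cohomology: weights of frobenius twist}(2), so $H^{p-2}_{\alg}(A_p,\chi_1^p)^{T_p}=0$ and, inspecting the explicit basis of Proposition \ref{group cohomology: additive group cohomology}(1) against that unique expression, $\dim_k H^{p-1}_{\alg}(A_p,\chi_1^p)^{T_p}\le 1$. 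Feeding these vanishings into the long exact sequences yields $H^{p-1}_{\alg}(A_p,\chi_1^p)^{T_p}\xrightarrow{\ \sim\ }H^{p-1}_{\alg}(A_p,V^{(1)})^{T_p}$. Finally the target is nonzero: by Lemma \ref{rational: terms coh vanishing}(3), Theorem \ref{group cohomology: restriction to Borel iso} and Lemma \ref{group cohomology: A restriction injective}(1) there is a chain of injections $k=H^{p-1}_{\alg}(SL(V),V^{(1)})\xrightarrow{\sim}H^{p-1}_{\alg}(B_p,V^{(1)})\hookrightarrow H^{p-1}_{\alg}(A_p,V^{(1)})^{T_p}$, so both sides are exactly one-dimensional, proving (1).

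\textbf{The discrete case.} The argument is identical, with Lemma \ref{group cohomology: algebraic action on unipotent cohomology}(1) replaced by Corollary \ref{group cohomology: additive group weights}(2): a $T_p(\bF_q)$-character occurring in $H^{n}(A_p(\bF_q),\chi_j^p)$ forces $p\chi_j$ to be congruent modulo $q-1$ to a sum of at most $n$ elements of $p^{\bN}\cdot\Delta_{U_p}$. For $q>p$, Lemma \ref{group cohomology: weights of frobenius twist}(3) rules this out for $j\ge 2$ and $n\le p-1$, while Lemma \ref{group cohomology: weights of frobenius twist}(4) rules it out for $j=1$, $n\le p-2$; so the required vanishings hold and the long exact sequences give $H^{p-1}(A_p(\bF_q),\chi_1^p)^{T_p(\bF_q)}\xrightarrow{\sim}H^{p-1}(A_p(\bF_q),V^{(1)})^{T_p(\bF_q)}$. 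For the upper bound on $\dim H^{p-1}(A_p(\bF_q),\chi_1^p)^{T_p(\bF_q)}$ one counts weight-$(\,p\chi_1)$ monomials in the explicit description of Proposition \ref{group cohomology: additive group cohomology}(2): since $p^{r}\equiv 1\pmod{q-1}$, every such monomial's weight is congruent mod $q-1$ to a sum of at most $p-1$ elements of $p^{\le r-1}\cdot\Delta_{A_p}$ (a monomial using any $\beta(x_i)$ contributes at most $p-2$ such summands), and Lemma \ref{group cohomology: weights of frobenius twist}(2),(4) then pin down the unique possibility, the product of the $p-1$ distinct exterior generators $x_0$; so the dimension is at most $1$. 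For nonvanishing, the restriction $H^{p-1}_{\alg}(A_p,\chi_1^p)^{T_p}\to H^{p-1}(A_p(\bF_q),\chi_1^p)^{T_p(\bF_q)}$ is injective by Corollary \ref{group cohomology: abelian restriction criteria}(1) together with Lemma \ref{group cohomology: weights of frobenius twist}(4) (all relevant exponents are $\le r-1$), and its source is one-dimensional by (1); hence (2) follows.

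\textbf{Main obstacle.} The only genuinely delicate point is the monomial bookkeeping in the discrete case: one must track that, after using $p^{r}\equiv 1\pmod{q-1}$ to normalize exponents, the cohomological-degree constraint $a+2b=p-1$ forces a monomial involving the degree-two classes $\beta(x_i)$ to contribute strictly fewer than $p-1$ ``root summands'', so that Lemma \ref{group cohomology: weights of frobenius twist}(4) (which demands exactly $p-1$ summands) excludes it. Everything else is a routine diagram chase through the long exact sequences, relying on the combinatorial input already isolated in Lemma \ref{group cohomology: weights of frobenius twist}.
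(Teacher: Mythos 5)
Your proof is correct and follows essentially the same route as the paper: it runs the long exact sequence(s) attached to the filtration of $V^{(1)}|_{B_p}$ with sub $\chi_1^p$ and quotient characters $\chi_j^p$, and derives the needed vanishings of $T_p$- and $T_p(\bF_q)$-invariant cohomology of $A_p$ from the weight bounds (Lemma \ref{group cohomology: algebraic action on unipotent cohomology}, Corollary \ref{group cohomology: additive group weights}) combined with the combinatorics of Lemma \ref{group cohomology: weights of frobenius twist} -- exactly the ingredients the paper invokes via the proof of Lemma \ref{group cohomology: A restriction injective}. The only deviations (a step-by-step flag instead of the single sequence with quotient $\chi_2^p\oplus\dots\oplus\chi_p^p$, and pinning the one-dimensionality via the injection from $H^{p-1}_{\alg}(SL(V),V^{(1)})$ rather than citing Lemma \ref{group cohomology: weights of frobenius twist}(2) directly) are cosmetic.
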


\begin{proof}
1) The kernel and cokernel of this map are, respectively, a quotient and a subgroup of the groups  $H^{p-2}_{\alg}(A_p,\chi_2^p\oplus\ldots\oplus \chi_p^p)^{T_p}$ and $H^{p-1}_{\alg}(A_p,\chi_2^p\oplus\ldots \oplus \chi_p^p)^{T_p}$ and we saw in the proof of Lemma \ref{group cohomology: A restriction injective} that they both vanish. The fact that the $T_p$-invariant subspace of $H^{p-1}_{\alg}(A_p,\chi_1^p)$ is $1$-dimensional follows from Lemma \ref{group cohomology: weights of frobenius twist}(2).

2) Just like in the case of algebraic cohomology, the kernel and cokernel of this map are subquotients of the groups  $H^{p-2}(A_p(\bF_q),\chi_2^p\oplus\ldots\oplus \chi_p^p)^{T_p(\bF_q)}$ and $H^{p-1}(A_p(\bF_q),\chi_2^p\oplus\ldots \oplus \chi_p^p)^{T_p(\bF_q)}$ and they vanish by the proof of Lemma \ref{group cohomology: A restriction injective}.
\end{proof}

We can deduce from the results obtained so far the following expression for the class $\alpha(E)$ when $E$ is a vector bundle of rank $p$ that admits a line sub-bundle, which was used in Section \ref{nonsemisimp: section} (as Lemma \ref{nonsemisimp: reducible formula}) to relate the class $\alpha(\Omega^1_{X_0})$ to the Kodaira-Spencer map of a fibration.

\begin{lm}\label{group cohomology: reducible formula}
Let $X_0$ be arbitrary algebraic stack over $\bF_p$. Suppose that a vector bundle $E$ of rank $p$ on $X_0$ fits into an extension
\begin{equation}
0\to L\to E\to E'\to 0
\end{equation}
where $L$ is a line bundle, and $E'$ is a vector bundle of rank $p-1$. The class of this extension defines an element $v(E)\in \Ext^1_{X_0}(E',L)=H^1(X_0,L\otimes (E')^{\vee})$. Denote by $v(E)^{p-1}\in H^{p-1}(X_0,L^{\otimes p-1}\otimes (\det E')^{\vee})$ the image of $v(E)^{\otimes p-1}\in H^{p-1}(X_0, (L\otimes (E')^{\vee})^{\otimes p-1})$ under the map induced by $(L\otimes (E')^{\vee})^{\otimes p-1}\to \Lambda^{p-1}(L\otimes (E')^{\vee})=L^{\otimes p-1}\otimes (\det E')^{\vee}$.

The class $\alpha(E)\in \Ext^{p-1}_{X_0}(\Lambda^p E, F^*E)=H^{p-1}(X_0,F^*E\otimes L^{\vee}\otimes (\det E')^{\vee})$ is equal, up to multiplying by a scalar from $\bF_p^{\times}$, to the image of $v(E)^{p-1}$ under the map induced by $L^{\otimes p-1}\otimes(\det E')^{\vee}=F^*L\otimes L^{\vee}\otimes(\det E')^{\vee}\hookrightarrow F^*E\otimes L^{\vee}\otimes(\det E')^{\vee}$.
\end{lm}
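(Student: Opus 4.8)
The plan is to reduce the statement to the universal case and then transport the answer back via naturality. First I would introduce the classifying stack $X_0 = B\,GL(E)$ with its tautological rank-$p$ bundle, and more precisely the stack $BB_p$ (or $BP$ for $P$ the parabolic preserving a line) carrying the universal extension $0\to L\to E\to E'\to 0$ with $L$ the universal line bundle and $E'$ the universal rank-$(p-1)$ quotient. Since both sides of the claimed identity, $\alpha(E)$ and the image of $v(E)^{p-1}$, are natural in $(X_0,E,L\subset E)$ and are defined by pullback from this universal stack, it suffices to prove the identity there, i.e. to compare two classes in $\Ext^{p-1}_{BB_p}(\Lambda^p E, F^*E) = H^{p-1}_{\alg}(B_p, E\otimes L^{\vee}\otimes(\det E')^{\vee})$.

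The key input is that, by Theorem \ref{group cohomology: restriction to Borel iso} and Lemmas \ref{group cohomology: algebraic action on unipotent cohomology}, \ref{group cohomology: A restriction injective}, \ref{group cohomology: everything from chi1 algebraic}, the relevant $\Ext$-group is at most one-dimensional: twisting by a character one checks that $H^{p-1}_{\alg}(B_p, E\otimes L^{\vee}\otimes (\det E')^{\vee})$ receives everything from the subrepresentation $\chi_1^p\subset E^{(1)}$ — concretely from $L^{\otimes p-1}\otimes(\det E')^{\vee} = F^*L\otimes L^{\vee}\otimes(\det E')^{\vee}$ — and the corresponding weight space of $H^{p-1}_{\alg}(A_p, \chi_1^p)$ is $1$-dimensional by Lemma \ref{group cohomology: weights of frobenius twist}(2). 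So the identity in Lemma \ref{group cohomology: reducible formula} amounts to: (i) the class $\alpha(E)$ on $BB_p$ is non-zero and lies in this one-dimensional space, and (ii) the class $v(E)^{p-1}$, pushed into $F^*E\otimes L^{\vee}\otimes(\det E')^{\vee}$, is also non-zero in this space. Both sides then agree up to a scalar in $\bF_p^{\times}$, which is exactly the strength of the assertion.

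For (i), I would invoke Proposition \ref{rational group cohomology: main non-vanishing}: restricting the universal $\alpha(V)\in \Ext^{p-1}_{GL(V)}(\Lambda^p V, V^{(1)})$ to $SL(V)$ gives a non-zero class, hence its further restriction to $B_p$ is non-zero (again using Theorem \ref{group cohomology: restriction to Borel iso}, which makes restriction to the Borel injective on cohomology of the reductive group). Combined with the one-dimensionality above, this identifies $\alpha(E)$ on $BB_p$ as a generator of the $\chi_1^p$-line. For (ii), I would compute $v(E)^{p-1}$ explicitly: on $BB_p$ the extension class $v(E)\in H^1_{\alg}(B_p, L\otimes(E')^{\vee})$ is, after restricting to the unipotent radical $U_p$, represented by the evident degree-one cocycle built from $\Delta_{U_p}$, and taking the $(p-1)$-fold cup product and antisymmetrizing lands precisely in the weight-$p\chi_1$ part of $H^{p-1}_{\alg}(A_p, L^{\otimes p-1}\otimes(\det E')^{\vee})$; Lemma \ref{group cohomology: weights of frobenius twist}(2) shows this is the unique such expression, so $v(E)^{p-1}$ is nonzero there. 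The inclusion $F^*L\hookrightarrow F^*E$ then carries it to the same one-dimensional space, and by Lemma \ref{group cohomology: everything from chi1 algebraic}(1) it is a generator. Since a generator is unique up to $\bF_p^{\times}$, the two classes coincide up to such a scalar, proving the lemma in the universal case; naturality finishes the general case.

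The main obstacle I anticipate is making precise the claim that $v(E)^{p-1}$ is nonzero \emph{in the universal example} — i.e. identifying the explicit Koszul/cup-product cocycle for $v(E)^{p-1}$ with a generator of the $p\chi_1$-weight space and checking it survives the map $L^{\otimes p-1}\otimes(\det E')^{\vee}\hookrightarrow F^*E\otimes L^{\vee}\otimes(\det E')^{\vee}$. This is where one actually has to do a computation rather than invoke a black box: one must match the combinatorial description of $H^{p-1}_{\alg}(A_p,k)$ from Proposition \ref{group cohomology: additive group cohomology} (the exterior generators $x_i$) against the Koszul resolution computing $\Ext^1_{E'}(-, L)$ and its powers. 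An alternative, possibly cleaner, route for this step is to restrict further to the diagonal torus together with the Weyl-group copy of $S_{p-1}$ permuting $e_2,\dots,e_p$ and argue, as in Remark \ref{nonsemisimp: reducible formula p=2}, that the relevant invariant space is $1$-dimensional and both classes restrict to it nontrivially; but even there the nonvanishing of $v(E)^{p-1}$ is the crux. Everything else — naturality, the dimension count, the reduction to $B_p$ — is formal given the results already established.
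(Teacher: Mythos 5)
Your overall route is the same as the paper's: classify the data $(E,\,L\subset E)$ by a map to the classifying stack of the parabolic $P$ preserving a line, restrict to $B_p$ (and then to $T_p\ltimes A_p$) using the injectivity results of Section 10, observe that the relevant invariant space $H^{p-1}_{\alg}(A_p,\chi_1^p)^{T_p}$ is one-dimensional and receives all of $H^{p-1}(A_p,V^{(1)})^{T_p}$ by Lemma \ref{group cohomology: everything from chi1 algebraic}(1), and conclude that $\alpha$ and the pushforward of $v^{p-1}$, being two non-zero elements of the same line, agree up to a unit. Two small corrections to the reduction step: the universal stack is $BP$, not $BB_p$ (a complete flag is more data than a line subbundle), and the passage from $P$ to $B_p$ is not covered by Theorem \ref{group cohomology: restriction to Borel iso}, which is stated for reductive groups; the paper instead quotes Jantzen's Corollary II.4.7(c) for $P\cap SL_p$ versus $B_p$, together with injectivity of restriction from $P$ to $P\cap SL_p$. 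These are fixable citation issues, not conceptual ones.

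The genuine gap is the step you yourself flag: non-vanishing of $v(E)^{p-1}$ in the universal case. As written, your argument "Lemma \ref{group cohomology: weights of frobenius twist}(2) shows this is the unique such expression, so $v(E)^{p-1}$ is nonzero there" is a non sequitur: uniqueness of the weight expression shows the $T_p$-invariant line is one-dimensional, but does not rule out that the cup product lands on the zero element of that line. The paper closes this with a short direct observation that you did not supply: restricted to $T_p\ltimes A_p$, the class $v(V)$ lives in $H^1_{\alg}(T_p\ltimes A_p,\chi_1\otimes(\chi_2\oplus\cdots\oplus\chi_p)^{\vee})=\Hom_{\grp}(A_p,\chi_1\otimes(\chi_2\oplus\cdots\oplus\chi_p)^{\vee})^{T_p}$ and is literally the tautological isomorphism of $A_p$ with the underlying vector group of that representation. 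Its $(p-1)$-fold product is therefore the wedge of a basis of the degree-one, Frobenius-exponent-zero generators (the $\fa_k\cdot x_0$ part in the notation of Proposition \ref{group cohomology: additive group cohomology}(1)), hence non-zero in $\Lambda^{p-1}(\fa_k\cdot x_0)\subset H^{p-1}_{\alg}(A_p,k)$-twisted appropriately, i.e.\ a generator of $H^{p-1}_{\alg}(A_p,\chi_1^p)^{T_p}$. No Koszul-versus-$x_i$ matching is needed beyond this identification. With that observation inserted, your argument becomes the paper's proof.
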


\begin{proof}
The $GL_{p}$-torsor $\underline{\Isom}(E,\cO^{\oplus p})$ associated to the bundle $E$ naturally reduces to a maximal parabolic subgroup $P:=\left(\begin{matrix}* & * & \ldots & *\\
0 & * & \ldots & * \\
0 & * & \ddots & \vdots \\
0 & * & \ldots & *
\end{matrix}\right)\subset GL_{p}$. Therefore $E$ arises as the pullback of the tautological rank $p$ vector bundle $V$ along the classifying map $X_0\to BP$ to the classifying stack of the group $P$ over $\bF_p$. Hence it is enough to prove the corresponding expression for the class $\alpha(V)\in \Ext^{p-1}_{P,\alg}(\Lambda^p V,V^{(1)})=H^{p-1}_{\alg}(P,V^{(1)}\otimes (\Lambda^pV)^{\vee})$.

Denote by $B_p\subset P\cap SL_{p}$ the subgroup of upper triangular matrices. By \cite[Corollary II.4.7(c)]{jantzen} the restriction map $H^i(P\cap SL_p, W)\to H^i(B_p, W|_{B_p})$ is an isomorphism for any $P\cap SL_p$-module $W$. Since restriction along the inclusion $P\cap SL_p\subset P$ is an injection on cohomology,  it is enough to prove the desired expression for $\alpha(V)$ in $\Ext^{p-1}_{B_p,\alg}(\Lambda^p V,V^{(1)})=H^{p-1}_{\alg}(B_p,V^{(1)})$.

Recall that we denote by $T_p\subset B_p$ the maximal torus of diagonal matrices and by $\chi_i:T_p\to \bG_m$, for $i=1,\ldots,p$, the character sending $\diag(a_1,\ldots,a_p)$ to  $a_i$. We denote by the same symbol the composite character $B_p\to T_p\xrightarrow{\chi_i}\bG_m$. Since we are working inside $SL_p$ the character $\chi_p$ can be expressed as $(\chi_1\cdot\ldots\cdot \chi_{p-1})^{-1}$. Next, there is a subgroup $\bG_a^{p-1}\simeq A_p=\left(\begin{matrix}1 & * & \ldots & *\\
0 & 1 & \ldots & 0 \\
0 & 0 & \ddots & \vdots \\
0 & 0 & \ldots & 1
\end{matrix}\right)\subset B_p$. The Frobenius twist $V^{(1)}$ viewed as a representation of $B_p$ admits a filtration with quotients $\chi_1^p,\ldots,\chi_p^p$.

By Lemma \ref{group cohomology: A restriction injective}(1), we may further restrict to the subgroup $T_p\ltimes A_p\subset B_p$ to prove the desired equality of cohomology classes. The class $v(V|_{T_p\ltimes A_p})\in H^1_{\alg}(T_p\ltimes A_p, \chi_1\otimes (\chi_2\oplus\ldots\oplus \chi_p)^{\vee})=\Hom_{\grp}(A_p,\chi_1\otimes (\chi_2\oplus\ldots\oplus \chi_p)^{\vee})^{T_p}$ is an isomorphism between $A_p$ and the underlying vector space of the representation $\chi_1\otimes (\chi_2\oplus\ldots\oplus \chi_p)^{\vee}$, and its power $v(V|_{T_p\ltimes A_p})^{p-1}\in H^{p-1}_{\alg}(A_p,\chi_1^{p-1}\otimes \chi_2^{-1}\otimes\ldots\otimes \chi_p^{-1})^{T_p}=H^{p-1}(A_p,\chi_1^p)^{T_p}$ is therefore non-zero.

By Lemma \ref{group cohomology: everything from chi1 algebraic}(1) the class $\alpha(V|_{T_p\ltimes A_p})\in H^{p-1}(A_p,V^{(1)})^{T_p}$ is the image of some class in $H^{p-1}(A_p,\chi_1^p)^{T_p}$. Since $\alpha(V|_{T_p\ltimes A_p})$ is non-zero, and $H^{p-1}(A_p,\chi_1^p)^{T_p}$ is a one-dimensional vector space, the result follows.
\end{proof}

\begin{rem}
We do not expect Lemma \ref{group cohomology: A restriction injective}(2) to remain true for $q=p$. For instance, when $p=2$ the class $\alpha(V)$ vanishes in $H^1(SL_2(\bF_2),V^{(1)})$ because the surjection $S^2(\bF_2^{\oplus 2})\to \Lambda^2(\bF_{2}^{\oplus 2})$ has an $SL_2(\bF_2)$-equivariant section given by sending the only non-zero element of $\Lambda^2(\bF_2^{\oplus 2})$ to $x\cdot y+y\cdot z+z\cdot y$, where $x,y,z$ are the three non-zero elements of $\bF_2^{\oplus 2}$.

\comment{E.g. when $p=2$ in Remark \ref{group cohomology: p=2 remark} we computed explicitly a cocycle representing the class $\alpha(V)\in H^1_{\alg}(SL_2,V^{<(1)})$, and its restriction to $U_2\simeq\bG_a$ is the image of $\Id\in \Hom_{\grp}(U_2,\bG_a)=H^1_{\alg}(U_2,k)$ under the map in the long exact sequence \begin{equation}\dots\to H^0_{\alg}(U_2,k)\xrightarrow{\delta} H^1_{\alg}(U_2,k)\to H^1_{\alg}(U_2,V^{(1)})\to H^1_{\alg}(U_2,k)\to\ldots\end{equation} The connecting homomorphism $\delta$ sends $1\in k=H^0_{\alg}(U_2,k)$ to the Frobenius map $\Fr_2\in \Hom(U_2,\bG_a)=H^1_{\alg}(U_2,k)$. Since $\Id$ coincides with $\Fr_2$ when restricted to $U_2(\bF_2)$, the image of $\alpha(V)\in H^1_{\alg}(SL_2,V^{(1)})$ in $H^1(U_2(\bF_2),V^{(1)})$ (and consequently in $H^1(SL_2(\bF_2),V^{(1)})$) is zero.}
\end{rem}

\section{Cohomology of \texorpdfstring{$SL_p$}{} over rings of integers}
\label{borel: section}

We will enhance the results of the previous section by showing that for an appropriately chosen discrete group acting on the vector space $V$, the Bockstein homomorphism (associated to a lift of $V$ over $W_2(\bF_q)$) applied to the class $\alpha(V)$ is non-zero. We keep the notation of the previous section: we work over a finite field $k=\bF_q$, and $V$ is a $p$-dimensional $\bF_q$-vector space.

Let $\tV$ be a free $W_2(\bF_q)$-module such that $\tV/p\simeq V$, it is equipped with the tautological action of the discrete group $GL_p(W_2(\bF_q))$. Denote by $\tV^{(1)}:=\tV\otimes_{W_2(\bF_q),W_2(\Fr_p)}W_2(\bF_q)$ the twist of $\tV$ by the Frobenius automorphism of $W_2(\bF_q)$ induced by $\Fr_p:t\mapsto t^p$ on $\bF_q$. The module $\tV^{(1)}$ can be $GL_p(W_2(\bF_q))$-equivariantly identified with $\tV$ where the action on the latter is modified by precomposing with the Frobenius automorphism $GL_p(W_2(\bF_q))\xrightarrow{W_2(\Fr_p)}GL_p(W_2(\bF_q))$. We have a short exact sequence \begin{equation}0\to V^{(1)}\xrightarrow{\psi_1}\tV^{(1)}\xrightarrow{\psi_2} V^{(1)}\to 0\end{equation} of $GL_p(W_2(\bF_q))$-modules which gives rise to the connecting homomorphisms $\Bock^i: H^i(G, V^{(1)})\to H^{i+1}(G, V^{(1)})$ for any group $G$ mapping to $GL_p(W_2(\bF_q))$. 

Let now $F$ be an arbitrary number field in which $p$ is unramified and such that there exists a prime ideal $\fp\subset \cO_F$ with the residue field $\bF_q$. The ideal $\fp$ gives rise to a surjection $\kappa:\cO_F\twoheadrightarrow\bF_q$, and there is a unique homomorphism of rings $\cO_F\twoheadrightarrow W_2(\bF_q)$ lifting $\kappa$. This homomorphism gives rise to a map $SL_p(\cO_F)\to SL_p(W_2(\bF_q))$ which defines an action of $SL_p(\cO_F)$ on $V, V^{(1)}$, and $\tV^{(1)}$, and hence defines Bockstein homomorphisms $\Bock^i:H^i(SL_p(\cO_F),V^{(1)})\xrightarrow{} H^{i+1}(SL_p(\cO_F), V^{(1)})$.

\begin{pr}\label{group cohomology: ring of integers main}
For every prime $p$ there exists a quadratic extension $F/\bQ$ in which $p$ is not split, such that $\Bock^{p-1}(\alpha(V))\in H^p(SL_p(\cO_F),V^{(1)})$ is non-zero.
\end{pr}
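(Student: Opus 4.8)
The strategy is to reduce the non-vanishing of $\Bock^{p-1}(\alpha(V))$ for $SL_p(\cO_F)$ to a computation on the abelian subgroup $T_p(\cO_F)\ltimes A_p(\cO_F)\subset SL_p(\cO_F)$, exactly in parallel with the proof of Proposition \ref{group cohomology: from algebraic to discrete}, but now keeping track of the Bockstein. The main new input is the computation of group cohomology of the unipotent group $A_p(\cO_F)$ with $W_2$-coefficients provided by Proposition \ref{group cohomology: additive group cohomology}(4): over $\cO_F$ the cohomology ring $H^{\bullet}(A(\cO_F),W_2(k))$ is an exterior algebra with no symmetric (``$\beta(x_i)$'') part, so the Bockstein homomorphisms on $H^{\bullet}(A_p(\cO_F),-)$ are governed entirely by how the exterior generators $x_{\tau}$ (indexed by $\tau\in\Gal(F/\bQ)$) behave under reduction mod $p$, together with the Witt-vector structure of $\cO_F\otimes W_2(k)$.

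First I would fix the quadratic field: choose $F/\bQ$ in which $p$ is inert, so that $\cO_F/p\cong\bF_{p^2}=\bF_q$ with $q=p^2>p$, and $\Gal(F/\bQ)=\{1,\sigma\}\cong\bZ/2$. For such $F$, Proposition \ref{group cohomology: additive group cohomology}(3),(4) gives $H^{\bullet}(A_p(\cO_F),V^{(1)})$ as an exterior algebra on generators indexed by the two embeddings; the restriction map $H^{\bullet}(A_p(\cO_F),k)\to H^{\bullet}(A_p(\bF_q),k)$ sends $x_1\mapsto x_0$, $x_{\sigma}\mapsto x_1$ and kills the $\beta(x_i)$, while the Bockstein on $H^{\bullet}(A_p(\cO_F),\tV^{(1)})$ is computed from the short exact sequence of $\cO_F$-modules. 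Next I would repeat the weight analysis of Lemma \ref{group cohomology: weights of frobenius twist} and Lemmas \ref{group cohomology: A restriction injective}, \ref{group cohomology: everything from chi1 algebraic} in this $W_2$-context: the same congruence combinatorics modulo $q-1=p^2-1$ (which is exactly where $q>p$ is used) shows that all of $H^{p-1}$ and $H^p$ of $V^{(1)}$ and $\tV^{(1)}$ restricted to $T_p(\cO_F)\ltimes A_p(\cO_F)$ is concentrated in the line $\chi_1^p\subset V^{(1)}$, and that the relevant invariant subspaces are one-dimensional. So it suffices to show that the Bockstein $H^{p-1}(T_p(\cO_F)\ltimes A_p(\cO_F),\chi_1^p)\to H^p(T_p(\cO_F)\ltimes A_p(\cO_F),\chi_1^p)$ (for the $W_2$-lift $\widetilde{\chi_1^p}$) is non-zero on the class representing $\alpha(V)$.

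The heart of the argument is then an explicit cocycle computation on $A_p(\cO_F)\cong\cO_F^{\oplus(p-1)}$. By Proposition \ref{rational group cohomology: main non-vanishing} and Proposition \ref{group cohomology: from algebraic to discrete} the restriction of $\alpha(V)$ to $A_p(\bF_q)$ is non-zero and, by Lemma \ref{group cohomology: reducible formula}'s proof, it equals (up to a unit) $v(V)^{p-1}$, the $(p-1)$-fold cup product of the class of the standard extension $0\to\chi_1\to V\to(\chi_2\oplus\dots\oplus\chi_p)\to 0$; concretely $v(V)\in\Hom(A_p(\cO_F),\text{Hom}(\cO_F^{p-1}\text{-part}))$ is, mod $p$, the identity map $A_p(\bF_q)\xrightarrow{\sim}\bigoplus\bF_q$. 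I would lift this to the natural $W_2$-valued cocycle $\widetilde v(V)$ on $A_p(\cO_F)$ given by the $W_2(\bF_q)$-reduction of $\cO_F$, and show that $\Bock^{p-1}$ applied to $\widetilde v(V)^{p-1}$ corresponds, under the exterior-algebra description, to the ``Witt-vector Bockstein'' that detects the difference between the two embeddings $\cO_F\hookrightarrow W_2(\bF_q)$ — i.e. it is non-zero precisely because $\cO_F\otimes W_2(k)$ is not the trivial $W_2(k)[\Gal]$-module but carries the Galois twist (the same phenomenon as in Proposition \ref{group cohomology: additive group cohomology}(4)). More precisely, $\Bock^{p-1}(v(V)^{p-1})$ is the coboundary obtained by lifting each of the $p-1$ factors and re-reducing: using $(a+b)^{[p]}$-type identities for Witt vectors, the failure of $\widetilde v(V)$ to be multiplicative mod $p^2$ produces exactly the extra exterior generator $x_{\sigma}$, so $\Bock^{p-1}(\alpha(V))$ pairs non-trivially against $x_1\wedge\dots\wedge x_1\wedge x_{\sigma}$ (schematically) in $H^p(A_p(\cO_F),\chi_1^p)$. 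Finally I would transport this non-vanishing back up: restriction $H^{\bullet}(SL_p(\cO_F),V^{(1)})\to H^{\bullet}(T_p(\cO_F)\ltimes A_p(\cO_F),V^{(1)})$ need not be injective, but the target of $\Bock^p$ is detected on $A_p$ by the weight analysis, and naturality of the Bockstein along $SL_p(\cO_F)\to T_p(\cO_F)\ltimes A_p(\cO_F)$ shows that if the restriction of $\Bock^{p-1}(\alpha(V))$ is non-zero then so is the class upstairs.

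\textbf{Main obstacle.} The delicate point is the last, explicit step: computing $\Bock^{p-1}$ of the $(p-1)$-fold cup product over $\cO_F$ and seeing that it lands in the ``right'' exterior monomial rather than being killed. This requires a careful bookkeeping of the Witt-vector addition formula $[\widetilde a]+[\widetilde b]\equiv[\widetilde{a+b}]+p\cdot(\text{Witt polynomial})$ in $W_2(\bF_q)$, interacting with the cup product structure and with the $T_p(\cO_F)$-weight decomposition; the combinatorial heart is showing that the correction terms do not conspire to cancel, which is morally the statement that $p$ is \emph{inert} (not split) in $F$ — if $p$ split, $\cO_F\otimes W_2(k)$ would be $W_2(k)^{\oplus 2}$ with trivial Galois action and the Bockstein would vanish, matching the expected failure of the statement. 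I expect the cleanest route is to phrase the whole computation via the identification of $\Bock^1$ with the Witt-vector (Serre) Bockstein of Proposition \ref{free cosimplicial: steenrod operations description prop}(2)/the Bockstein description in Proposition \ref{group cohomology: additive group cohomology}(4), reducing the problem to the single non-vanishing statement that the exterior generator $x_{\sigma}$ genuinely appears, which is the content of part (4) of that Proposition applied to $A=A_p$.
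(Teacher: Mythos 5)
Your skeleton (restrict to $(T_p\ltimes A_p)(\cO_F)$, isolate the line $\chi_1^p\subset V^{(1)}$ via Lemmas \ref{group cohomology: A restriction injective} and \ref{group cohomology: everything from chi1 algebraic}, use Proposition \ref{group cohomology: additive group cohomology}(3),(4), and transport the non-vanishing up to $SL_p(\cO_F)$ by naturality of the Bockstein) matches the paper's, but two essential points are missing. First, ``choose $F$ inert at $p$'' is not enough. The torus $T_p(\cO_F)=(\cO_F^{\times})^{p-1}$ does not surject onto $T_p(\bF_q)$: its image lies in the norm-$\pm 1$ subgroup of $\bF_{p^2}^{\times}$, of order $2(p+1)$, so the weight analysis over $\cO_F$ cannot be run ``modulo $q-1=p^2-1$'' as you assert. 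One needs congruences modulo $p+1$ (the paper's Lemma \ref{borel: combinatorics}, genuinely finer than Lemma \ref{group cohomology: weights of frobenius twist}), and in addition $F$ must be chosen so that $\cO_F^{\times}$ actually surjects onto that norm-$\pm 1$ subgroup (Lemma \ref{group cohomology: sorry not sorry}(1)); for an arbitrary inert quadratic field the image of the fundamental unit could be smaller, and then the vanishing statements (2),(3) of Lemma \ref{group cohomology: integral restriction facts} are not available.

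Second, and more seriously, the mechanism forcing $\Bock^{p-1}\neq 0$ is not the mere presence of the Galois twist on $\cO_F\otimes_{\bZ}W_2(k)$ — that twist exists for every inert $F$, so your heuristic cannot by itself single out the needed input. What the paper uses is that the character $\tchi_1^{(1)}\otimes\tchi_1^{-p}$ of $T_p(\cO_F)$, trivial mod $p$, is non-trivial mod $p^2$; this requires a unit $u\in\cO_F^{\times}$ with $\Fr_p(\ou)\neq\ou^{p}$ in $W_2(\bF_{p^2})^{\times}$, a Wieferich-type condition built into the construction of $F$ (Lemma \ref{group cohomology: sorry not sorry}(2), where a lift $d$ is adjusted so a trace expression is nonzero mod $p^2$). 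Granting it, $H^{p-1}((T_p\ltimes A_p)(\cO_F),\tchi_1^{(1)})$ is killed by $p$, and the long exact sequence for $0\to\chi_1^p\to\tchi_1^{(1)}\to\chi_1^p\to 0$, together with $H^{p-2}((T_p\ltimes A_p)(\cO_F),\chi_1^p)=0$, forces $\Bock^{p-1}$ to be injective on $H^{p-1}(\chi_1^p)$ — no chain-level Witt-vector computation of $\widetilde v(V)^{p-1}$ is needed. Your proposed ``heart'' is exactly the step you leave open, and the suggested reduction to Proposition \ref{group cohomology: additive group cohomology}(4) alone cannot close it: that proposition holds for every inert $F$, whereas without the unit condition the invariant line $(\tchi_1^{(1)}\otimes\tchi_1^{-p})^{T_p(\cO_F)}$ could be free of rank one over $W_2(k)$, in which case this route to injectivity of the Bockstein collapses.
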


We will prove this non-vanishing by a method similar to the one employed in previous section, using the technique of \cite{cpsk}. For the method to work we need the image of the reduction map $\cO^{\times}_F\to\bF_{p^2}^{\times}$ on groups of units to be large enough: the field $F$ will be chosen appropriately in Lemma \ref{group cohomology: sorry not sorry}.

As in the previous section, we denote by $B_p\subset SL_p$ the subgroup of upper triangular matrices with respect to a given basis, $T_p\subset B_p$ is the diagonal torus, and $\bG_a^{p-1}\simeq A_p\subset B_p$ is the subgroup of matrices that send the basis vector $e_i$ to a vector of the form $a_ie_1+e_i$, for all $i\geq 2$. By Lemmas \ref{group cohomology: A restriction injective} and \ref{group cohomology: everything from chi1 algebraic}(2), the image of the class $\alpha(V)\in H^{p-1}_{\alg}(SL(V),V^{(1)})$ in $H^{p-1}(A_p(\bF_q),V^{(1)})^{T_p(\bF_q)}$ is non-zero and moreover lies in the image of the homomorphism $H^{p-1}(A_p(\bF_q),\chi_1^p)^{T_p(\bF_q)}\to H^{p-1}(A_p(\bF_q),V^{(1)})^{T_p(\bF_q)}$. Therefore to prove that $\Bock(\alpha(V))$ is non-zero we may work with the cohomology of the group of $\cO_F$-points of the subgroup $T_p\ltimes A_p\subset SL_p$ which is a fairly explicit object thanks to Proposition \ref{group cohomology: additive group cohomology}. 

When restricted to $B_p(\cO_F)$, the representation $\tV$ admits a filtration with graded quotients $\tchi_1,\ldots,\tchi_p$ that are characters factoring through $B_p(\cO_F)\to T_p(\cO_F)$, and lifting the characters $\chi_1,\ldots,\chi_p$. We denote by $\tchi_i^{(1)}$ the character $B_p(\cO_F)\to T_p(\cO_F)\to W_2(\bF_q)^{\times}$ obtained by composing $\tchi_i$ with the Frobenius automorphism $\Fr_p:W_2(\bF_q)^{\times}\to W_2(\bF_q)^{\times}$. The representation $\tV^{(1)}$ of $B_p(\cO_F)$ is likewise filtered with graded quotients isomorphic to $\tchi_i^{(1)}$. Note that $\tchi_i^{(1)}/p\simeq \chi_i^{(1)}\simeq \chi_i^p$, but $\tchi_i^{(1)}$ is generally {\it not} isomorphic to $\tchi_i^p$. The discrepancy between these two characters will be key for proving that the class $\alpha(V)\in H^{p-1}(SL_p(\cO_F),V^{(1)})$ does not lift to a class in $H^{p-1}(SL_p(\cO_F),\tV^{(1)})$.

We treat separately cases $p=2$ and $p>2$ both because the case of $p=2$ allows for a significantly simpler proof, and because the general proof relies on the results of the previous section that were only shown away from the case $p=2$. In all of the cases, we choose an extension $F/\bQ$ as directed by Lemma \ref{group cohomology: sorry not sorry}.

\begin{lm}\label{group cohomology: sorry not sorry}
For every prime number $p$ there exists an integer $N>0$ such that $p$ is not split in the real quadratic field $F=\bQ(\sqrt{N})$, and the two conditions are satisfied:
\begin{enumerate}
    \item The group of units $\cO_F^{\times}$ surjects onto $\{x\in\bF_{p^2}^{\times}|N_{\bF_{p^2}/\bF_p}(x)=\pm 1\}$ under the reduction map $\cO_F\to \cO_F/p\simeq\bF_{p^2}$
    \item There exists a unit $u\in\cO_F^{\times}$ whose reduction $\ou$ in $W_2(\bF_{p^2})^{\times}$ satisfies $\Fr_p(\ou)\neq \ou^p$.
\end{enumerate}
\end{lm}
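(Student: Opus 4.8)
\textbf{Proof plan for Lemma \ref{group cohomology: sorry not sorry}.} The plan is to produce a single real quadratic field $F=\bQ(\sqrt{N})$ that simultaneously handles the splitting condition at $p$ and the two unit conditions. First I would dispose of the case $p=2$ by hand, where $\bF_{p^2}^\times$ has order $3$ and the norm-one subgroup is small, so a direct search (e.g. $F=\bQ(\sqrt 3)$ with fundamental unit $2+\sqrt 3$, or another small $N$) settles it. For odd $p$, the strategy is to fix a convenient residue: choose $N$ so that $N\equiv$ (a fixed quadratic nonresidue) modulo $p$, which guarantees that $p$ is inert in $F$ and hence $\cO_F/p\simeq \bF_{p^2}$; this forces the reduction map $\cO_F\to\bF_{p^2}$ to hit the quotient ring correctly and identifies the Frobenius $\Fr_p$ on $\bF_{p^2}$ with the nontrivial automorphism coming from $\Gal(F/\bQ)$ composed with reduction.

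The heart of the argument is condition (1): the image of $\cO_F^\times$ under reduction modulo $p$ is a subgroup of $\bF_{p^2}^\times$ that contains $\pm 1$ (i.e. $\mathbb F_p^\times \cap$ image $\supseteq\{\pm1\}$) and contains the reduction $\bar\eta$ of a fundamental unit $\eta$. Since the norm $N_{F/\bQ}(\eta)=\pm 1$, its reduction $\bar\eta$ automatically satisfies $N_{\bF_{p^2}/\bF_p}(\bar\eta)=\bar\eta\cdot\Fr_p(\bar\eta)=\overline{N_{F/\bQ}(\eta)}=\pm 1$, so the image always lands inside the norm-$(\pm1)$ subgroup $H:=\{x\in\bF_{p^2}^\times:N_{\bF_{p^2}/\bF_p}(x)=\pm 1\}$, which is cyclic of order $2(p+1)$ (or $p+1$). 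To get surjectivity onto $H$ I would arrange that $\bar\eta$ generates $H$; the cleanest way is a congruence/Chebotarev-type argument: among the infinitely many $N$ in the prescribed residue class mod $p$, the reduction $\bar\eta_N$ of the fundamental unit varies, and one shows (or cites) that for a positive proportion of $N$ the element $\bar\eta_N$ has order exactly $|H|$ in $\bF_{p^2}^\times$. Concretely one can instead bypass fundamental units: pick any $\alpha\in\cO_F^\times$ of the form $a+b\sqrt N$ with $a^2-Nb^2=\pm1$ (a solution of the Pell/negative-Pell equation) whose reduction mod $p$ is a prescribed generator of $H$ — this is a finite set of congruence conditions on $a,b,N$ mod $p$, and the simultaneous solvability of the Pell equation together with these congruences can be secured by choosing $N$ in a suitable residue class modulo $p$ (varying the class of $N$ mod $p$ changes which element of $H$ the "small" units reduce to).

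For condition (2) I would argue that once the image of $\cO_F^\times$ in $\bF_{p^2}^\times$ is all of $H$, one can lift: pick $u\in\cO_F^\times$ reducing to a generator $\bar u$ of $H$, and consider its image $\widehat u\in W_2(\bF_{p^2})^\times$ (the reduction modulo $\fp^2=p\cO_F$, using that $p$ is unramified). The condition $\Fr_p(\widehat u)\neq \widehat u^{\,p}$ in $W_2(\bF_{p^2})^\times$ is an \emph{open} condition: the set of Teichmüller-type lifts where $\Fr_p(\widehat u)=\widehat u^{\,p}$ is the image of $\bF_{p^2}^\times$ under Teichmüller, a proper subgroup of $W_2(\bF_{p^2})^\times$ of index $p^2$. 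So I only need to know that $\cO_F^\times$ does not map entirely into this Teichmüller subgroup; since $\cO_F^\times$ is infinite (rank $1$) while the Teichmüller subgroup is finite, and since the kernel of $W_2(\bF_{p^2})^\times\to\bF_{p^2}^\times$ is the nontrivial $p$-group $1+pW(\bF_{p^2})/p^2$, a rank argument (or: pick $u$ and $u'=u\cdot(1+p\cdot\text{something})$ differing by a principal unit that is still a global unit, e.g. replace $u$ by $u^{1+p}$ versus $u\cdot\eta$) shows some unit violates $\Fr_p(\widehat u)=\widehat u^{\,p}$. I expect the main obstacle to be making condition (1) — surjectivity onto $H$ — rigorous without invoking heavy analytic input: the cleanest path is probably to fix $p$, and exhibit $N$ explicitly in terms of $p$ (for instance $N=p^2-1$ or a nearby value), compute the fundamental unit or a small Pell solution, and check its order in $\bF_{p^2}^\times$ directly, falling back on a density statement over all admissible $N$ only if the explicit choice fails for small exceptional $p$.
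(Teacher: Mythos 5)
Your overall skeleton (inert $p$, image of $\cO_F^\times$ automatically lands in the norm-$\pm1$ subgroup $H$, want a unit reducing to a generator of $H$, and condition (2) amounts to the image in $W_2(\bF_{p^2})^\times$ not being Teichm\"uller) matches the paper, and your computation that $\{\hat u:\Fr_p(\hat u)=\hat u^p\}$ is exactly the Teichm\"uller subgroup is correct. But the two load-bearing steps are not actually carried out. For condition (1), neither of your proposed justifications works as stated: controlling the reduction of the fundamental unit as $N$ varies in a residue class ("positive proportion of $N$") is an unsupported and genuinely hard claim, and "simultaneous solvability of the Pell equation together with prescribed congruences on $a,b$ can be secured by choosing $N$ in a suitable residue class mod $p$" is asserted, not proved. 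The missing trick, which the paper uses, is to make the Pell solution automatic: write the chosen generator of $H$ as $u_0=d_0+\sqrt{d_0^2+1}$ (it has norm $-1$), lift $d_0$ to $d\in\bZ$, and set $N=d^2+1$; then $u=d+\sqrt{N}$ is a unit of norm $-1$ with $b=1$ reducing to $u_0$, and $p$ is inert since $d_0^2+1$ is a nonsquare mod $p$. (Also, your explicit $p=2$ suggestion $\bQ(\sqrt3)$ fails: $2$ ramifies there, so $\cO_F/2\not\simeq\bF_4$; the paper uses $\bQ(\sqrt5)$.)

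The more serious gap is condition (2). It does \emph{not} follow "for free" once (1) holds: $\cO_F^\times$ is generated by $-1$ and a single fundamental unit, and both could perfectly well map into the Teichm\"uller subgroup of $W_2(\bF_{p^2})^\times$ (the map $\cO_F^\times\to W_2(\bF_{p^2})^\times$ has infinite kernel, so the "infinite rank-one group versus finite subgroup" comparison proves nothing, and replacing $u$ by $u^{1+p}$ keeps you inside the Teichm\"uller subgroup). Ruling out this Wieferich-type coincidence requires using the remaining freedom in the construction: in the paper, with $u=d+\sqrt{d^2+1}$, the failure of (2) translates into a congruence mod $p^2$ between $\Tr_{F/\bQ}(d-\sqrt{d^2+1})=2d$ and $\Tr_{F/\bQ}((d+\sqrt{d^2+1})^p)$; the difference is a degree-$p$ polynomial in $d$ reducing to $2(d^p-d)$ mod $p$, whose derivative is a unit, so by Hensel exactly one lift of $d_0$ to $\bZ/p^2$ is bad and one chooses $d$ among the other $p-1$ lifts. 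Your proposal pins $N$ down only modulo $p$ and has no mechanism for discarding the bad lift, so as written condition (2) is unproved.
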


\begin{proof}
We first treat the case $p>2$. Let $u_0\in\bF_{p^2}^{\times}$ be a generator of the cyclic group $\{x\in\bF_{p^2}^{\times}|N_{\bF_{p^2}/\bF_p}(x)=\pm 1\}$. It satisfies the equation $u_0^2-2d_0u_0-1=0$ where $d_0=\frac{1}{2}\Tr_{\bF_{p^2}/\bF_p}(u_0)\in\bF_p$, hence we can write $u_0$ as $d_0+\sqrt{d_0^2+1}$. Let $d\in\bZ$ be an arbitrary integer reducing to $d_0$ modulo $p$, and take $F$ to be the field $\bQ(\sqrt{d^2+1})$. By construction, $d^2+1$ is not a square modulo $p$, hence $d^2+1$ is not a square in $\bQ$, and $p$ is non-split in $\cO_F$. The element $u=d+\sqrt{d^2+1}\in\cO_F$ is invertible and it (or its conjugate) reduces to $u_0$ in $\bF_{p^2}$, hence condition (1) is satisfied.

Next, let us check that we can choose the lift $d$ of $d_0$ to ensure that condition (2) is satisfied for $u=d+\sqrt{d^2+1}$. The element $\Fr_p(\ou)\in W_2(\bF_{p^2})$ is the mod $p^2$ reduction of $d-\sqrt{d^2+1}$, hence to prove that $\Fr_p(\ou)\neq \ou^p$ it is enough to ensure that the integers $\Tr_{F/\bQ}(d-\sqrt{d^2+1})$ and $\Tr_{F/\bQ}((d+\sqrt{d^2+1})^p)$ are not congruent modulo $p^2$. The first one is equal to $2d$, and we can expand their difference as
\begin{multline}\label{borel: wieferich formula}
\Tr_{F/\bQ}((d+\sqrt{d^2+1})^p)-\Tr_{F/\bQ}(d-\sqrt{d^2+1})=\\ 2(d^p+\binom{p}{2}d^{p-2}(d^2+1)+\ldots+\binom{p}{p-1}d(d^2+1)^{(p-1)/2})-2d
\end{multline}
This is a polynomial of degree $p$ in $d$ that reduces to $2(d^p-d)$ modulo $p$. In particular, by Hensel's lemma, this polynomial has exactly one root in $\bZ/p^2$ reducing to $d_0$, so we can choose the lift $d$ of $d_0$ such that the integer (\ref{borel: wieferich formula}) is not zero modulo $p^2$.

Finally, for $p=2$ take $F=\bQ(\sqrt{5})$. The unit $a=\frac{1+\sqrt{5}}{2}\in\cO_F=\bZ[\frac{1+\sqrt{5}}{2}]$ has minimal polynomial $a^2-a-1=0$, hence $2$ is not split in $\cO_F$, and $a$ reduces to an element of $\bF_4\setminus \bF_2$ that necessarily generates $\bF_4^{\times}$. Condition (2) is fulfilled simply by $u=-1$.
\end{proof}

\begin{proof}[Proof of Proposition \ref{group cohomology: ring of integers main} for $p=2$.] First, note that the class $\alpha(V)\in H^1(SL_2(\bF_q),V^{(1)})$ survives under the map to $H^1(SL_2(\cO_F),V^{(1)})$. This is because the reduction map $SL_2(\cO_F)\to SL_2(\bF_q)$ is surjective, since $SL_2(\bF_q)$ is generated by unipotent elements, and a surjection of groups induces an injection on cohomology in degree $1$.

\comment{When restricted to the Borel subgroup $B_2$, the module $V^{(1)}$ fits into the extension $0\to\chi_1^2\to V^{(1)}\to \chi_1^{-2}\to 0$. The extension induced from $0\to V^{(1)}\to S^2V\to\Lambda^2 V\to 0$ via the map $V^{(1)}\to \chi_1^{-2}$ is split by the map $S^2V\to S^2(\chi_1^{-1})$, hence the class $\alpha(V)|_{B_2}\in H^1_{\alg}(B_2,V^{(1)})$ is in the image of the map $H^1(B_2,\chi_1^2)\to H^1(B_2,V^{(1)})$.

In particular, the restriction $\alpha(V)|_{B_2(\bF_q)}$ is in the image of the map $H^1(B_2(\bF_q),\chi_1^2)\to H^1(B_2(\bF_q),V^{(1)})$. Hence it is enough to show the following two facts:
\begin{enumerate}
    \item $H^1(B_2(\bF_q),\chi_1^2)\to H^1(B_2(\cO_F),\chi_1^2)$ is injective
    \item $H^1(B_2(\cO_F),\chi_1^2)\to H^1(B_2(\cO_F),V^{(1)})$ is injective
\end{enumerate}
For (1), $H^1(B_2(\bF_q),\chi_1^2)=H^1(A_2(\bF_q),\chi_1^2)^{T_2(\bF_q)}$ obviously injects into $H^1(A_2(\bF_q),\chi_1^2)$, and the map $H^1(A_2(\bF_q),\chi_1^2)\to H^1(A_2(\cO_F),\chi_1^2)$ is an isomorphism because $A_2$ acts trivially on $\chi_1^2$ here, and the natural map $A_2(\cO_F)\to A_2(\bF_q)$ induces an isomorphism $A_2(\cO_F)/2\simeq A_2(\bF_q)$. This implies (1).

For (2), it is enough to show that $H^0(B_2(\cO_F),\chi_1^{-2})=0$. The group $T_2(\cO_F)=\cO_F^{\times}$ maps surjectively onto $\bF_q^{\times}=\bF_4^{\times}$, hence $\chi_1^{-2}$ is a non-trivial character of $T_2(\cO_F)\subset B_2(\cO_F)$, and the space of invariants $H^0(B_2(\cO_F),\chi_1^{-2})=(\chi_1^{-2})^{B_2(\cO_F)}$ vanishes. Therefore the image of $\alpha(V)$ in $H^1(SL_2(\cO_F),V^{(1)})$ is indeed non-zero.}

Next, we will check that the Bockstein map $\Bock^1:H^1(SL_2(\cO_F),V^{(1)})\to H^2(SL_2(\cO_F),V^{(1)})$ induced by the $W_2(k)$-module $\tV^{(1)}$ is injective. The key input for this is that $H^1(SL_2(\cO_F),\tV^{(1)})$ is annihilated by multiplication by $2$. To prove this, observe that the central element $\diag(-1,-1)\in SL_2(\cO_F)$ acts in the representation $\tV^{(1)}$ via multiplication by $(-1)$, so multiplication by $(-1)$ on $H^i(SL_2(\cO_F),\tV^{(1)})$ for all $i$ is equal to identity, hence these cohomology groups are $2$-torsion.

Injectivity of $\Bock^1$ now follows by considering the long exact sequence
\begin{equation}
\ldots\to H^0(V^{(1)})\xrightarrow{\Bock^0} H^1(V^{(1)})\xrightarrow{\psi_1} H^1(\tV^{(1)})\xrightarrow{\psi_2} H^1(V^{(1)})\xrightarrow{\Bock^1} H^2(V^{(1)})
\end{equation}
where $H^i$ everywhere refers to the cohomology of $SL_2(\cO_F)$. The first visible term $H^0(V^{(1)})=(V^{(1)})^{SL_2(\cO_F)}$ is zero, because $SL_2(\cO_F)\to SL_2(\bF_q)$ is a surjection, hence $\psi_1$ is injective. The composition $\psi_1\circ\psi_2$ is the multiplication by $2$ map on $H^1(\tV^{(1)})$ which we know to be zero, so $\psi_2$ has to be zero, which is equivalent to injectivity of $\Bock^1$. This finishes the proof of Proposition \ref{group cohomology: ring of integers main} for $p=2$.
\end{proof}

\begin{proof}[Proof of Proposition \ref{group cohomology: ring of integers main} for $p>2$.]  We will show the following vanishing results, and Proposition \ref{group cohomology: ring of integers main} will be deduced as a formal consequence of these.

\begin{lm}\label{group cohomology: integral restriction facts}
Assume that $q=p^2$. There exists a real quadratic extension $F/\bQ$ with $\cO_F/p\simeq\bF_q=\bF_{p^2}$, such that 

\begin{enumerate}
\item The map $H^{p-1}((T_p\ltimes A_p)(\bF_q),\chi_1^p)\to H^{p-1}((T_p\ltimes A_p)(\cO_F),\chi_1^p)$ is injective.
\item The group $H^{p-1}((T_p\ltimes A_p)(\cO_F),\chi_2^p\oplus\ldots\oplus \chi_p^p)$ vanishes.
    \item The group $H^{p-2}((T_p\ltimes A_p)(\cO_F),\chi_1^p)$ vanishes.
    \item the $W_2(k)$-module $H^{p-1}((T_p\ltimes A_p)(\cO_F),\tchi_1^{(1)})$ is annihilated by $p$.
\end{enumerate}
\end{lm}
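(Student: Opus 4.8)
The four statements are all of the shape: a cohomology group of the discrete solvable group $(T_p\ltimes A_p)(\mathcal{O}_F)$ with coefficients in a one-dimensional character vanishes (or, in (1), that a restriction map is injective; in (4), that a torsion bound holds). The strategy mirrors the algebraic computations of Section~\ref{group cohomology: section}: first reduce everything to the abelian group $A_p(\mathcal{O}_F)$ by taking $T_p(\mathcal{O}_F)$-invariants, since $T_p(\mathcal{O}_F)$ is a finite group of order prime to $p$ (as $\mathcal{O}_F^\times$ modulo torsion is free and we only care about $p$-torsion coefficients, the higher cohomology of $T_p(\mathcal{O}_F)$ with coefficients in a $k$- or $W_2(k)$-module vanishes, so $H^i((T_p\ltimes A_p)(\mathcal{O}_F),\chi)=H^i(A_p(\mathcal{O}_F),\chi)^{T_p(\mathcal{O}_F)}$). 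Then I would invoke Proposition~\ref{group cohomology: additive group cohomology}(3),(4) to get an explicit $T_p(\mathcal{O}_F)$-equivariant description of $H^\bullet(A_p(\mathcal{O}_F),k)$ and $H^\bullet(A_p(\mathcal{O}_F),W_2(k))$ as exterior algebras $\Lambda^\bullet(\bigoplus_{\tau\in\Gal(F/\mathbb{Q})}\mathfrak{a}_k^\tau\cdot x_\tau)$ (resp.\ over $W_2(k)$), where $\mathfrak{a}=(\Lie A_p)^\vee$ carries the weights $\{\chi_1-\chi_i\}_{2\le i\le p-1}\cup\{2\chi_1+\chi_2+\dots+\chi_{p-1}\}$ of $T_p$. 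Crucially, since $F$ is quadratic, there are exactly two copies $\tau=\mathrm{id}$ and $\tau=$ (the nontrivial Galois element, which after reducing mod $p$ induces $\Fr_p$ on $\bF_q=\bF_{p^2}$), so each weight $\lambda\in\Delta_{A_p}$ contributes generators of $T_p(\mathcal{O}_F)$-weights $\lambda\bmod(q-1)$ and $p\lambda\bmod(q-1)$ (the latter because $\tau$ reduces to $\Fr_p$, cf.\ Proposition~\ref{group cohomology: additive group cohomology}(3)).

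\textbf{Carrying out (2) and (3).} These are weight-counting arguments identical in spirit to Lemma~\ref{group cohomology: weights of frobenius twist}(3),(4). For (2): a $T_p(\mathcal{O}_F)$-weight of $H^{p-1}(A_p(\mathcal{O}_F),k)$ is a sum of $\le p-1$ elements from the set $\Delta_{A_p}\cup p\cdot\Delta_{A_p}$, reduced mod $q-1=p^2-1$; I need to show none of $\chi_2^p,\dots,\chi_p^p$ arises this way, so that the invariant subspace $H^{p-1}(A_p(\mathcal{O}_F),\chi_j^p)^{T_p(\mathcal{O}_F)}=0$ for $j\ge 2$. This is precisely the content of Lemma~\ref{group cohomology: weights of frobenius twist}(3) once one notes that $\Delta_{A_p}\cup p\Delta_{A_p}\subset p^{\le 1}\cdot\Delta_{U_p}$, so the relevant congruences mod $p^2-1$ are already ruled out there (being sums of $\le p-1$ elements of $p^{\mathbb{N}}\cdot\Delta_{U_p}$ in the sense of that lemma, with exponents in $\{0,1\}$); I would just spell out this inclusion. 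For (3): $H^{p-2}(A_p(\mathcal{O}_F),\chi_1^p)^{T_p(\mathcal{O}_F)}$ involves sums of only $\le p-2$ elements of $\Delta_{A_p}\cup p\Delta_{A_p}$, and by Lemma~\ref{group cohomology: weights of frobenius twist}(4) (again restricted to exponents $\le 1$) any congruence mod $p^2-1$ between $p\chi_1$ and such a sum would have to be an equality, but an equality requires exactly $p-1$ summands by Lemma~\ref{group cohomology: weights of frobenius twist}(2) — contradiction. For (1), injectivity of $H^{p-1}(A_p(\bF_q),\chi_1^p)^{T_p(\bF_q)}\to H^{p-1}(A_p(\mathcal{O}_F),\chi_1^p)^{T_p(\mathcal{O}_F)}$: here I use that $A_p(\mathcal{O}_F)\twoheadrightarrow A_p(\bF_q)$ induces an isomorphism $A_p(\mathcal{O}_F)/p\xrightarrow{\sim}A_p(\bF_q)$ and $\chi_1^p$ is a character on which $A_p$ acts trivially, combined with the explicit identification of both $H^{p-1}$'s with appropriate graded pieces of the exterior algebras — the $\bF_q$-side embeds as the $\tau=\mathrm{id}$-part, and the surjectivity/splitting statement of Proposition~\ref{group cohomology: additive group cohomology}(2) (there is a functorial section $x_i\mapsto x_i$) upgrades to the $\mathcal{O}_F$-context giving the required injectivity.

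\textbf{Carrying out (4), the main obstacle.} This is where condition (2) of Lemma~\ref{group cohomology: sorry not sorry} — the existence of a Wieferich-type unit $u\in\mathcal{O}_F^\times$ with $\Fr_p(\bar u)\ne\bar u^p$ in $W_2(\bF_q)^\times$ — enters, and it is the step I expect to be genuinely delicate. The point is that $H^{p-1}((T_p\ltimes A_p)(\mathcal{O}_F),\tilde\chi_1^{(1)})$, computed via Proposition~\ref{group cohomology: additive group cohomology}(4) as a $W_2(k)$-submodule of $\Lambda^{p-1}$ of the exterior algebra over $W_2(k)$, would a priori be a free $W_2(k)$-module if the $T_p(\mathcal{O}_F)$-weight $\tilde\chi_1^{(1)}$ matched a genuine weight appearing over $W_2(k)$ with a torsion-free multiplicity. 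The subtlety is that $\tilde\chi_1^{(1)}$ is the Teichmüller-type character $B_p(\mathcal{O}_F)\to T_p(\mathcal{O}_F)\to W_2(\bF_q)^\times$ given by $\tilde\chi_1$ composed with $\Fr_p$ on $W_2(\bF_q)^\times$, which is generally \emph{not} equal to $\tilde\chi_1^p$ as a $W_2$-valued character of $T_p(\mathcal{O}_F)$ — precisely because of the chosen unit $u$. The generators of the exterior algebra over $W_2(k)$ carry $T_p(\mathcal{O}_F)$-weights valued in $W_2(\bF_q)^\times$ of the form (Teichmüller lift of $\lambda\bmod(q-1)$) for $\lambda\in\Delta_{A_p}$, and (lift of $\lambda$ twisted by $\Fr_p$) for the $\tau\ne\mathrm{id}$ part. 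I would argue that the only way $\tilde\chi_1^{(1)}$ can occur as a weight of $\Lambda^{p-1}$ forces, as over $k$, the unique combination from Lemma~\ref{group cohomology: weights of frobenius twist}(2), but now one must check the \emph{integral} (mod $p^2$) version of the weight equation: the $W_2(\bF_q)^\times$-valued characters must agree, not just mod $p$. The mismatch measured by $\Fr_p(\bar u)\ne\bar u^p$ obstructs exactly this integral agreement, forcing the $W_2(k)$-module $H^{p-1}((T_p\ltimes A_p)(\mathcal{O}_F),\tilde\chi_1^{(1)})$ to consist of $p$-torsion elements (any class lifting the nonzero $k$-class must be killed by $p$ because its would-be $W_2$-lift would have the wrong weight). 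Concretely I would identify this module with a subquotient of $H^{p-1}(A_p(\mathcal{O}_F),W_2(k))$ cut out by an idempotent-like averaging over $T_p(\mathcal{O}_F)$ and show the averaging kills $p\cdot(\text{anything})$ while the relevant $k$-class survives, using the explicit action of $u$ on the exterior-algebra generators. Finally, Proposition~\ref{group cohomology: ring of integers main} for $p>2$ follows formally: by (1),(2),(3) together with Lemmas~\ref{group cohomology: A restriction injective}, \ref{group cohomology: everything from chi1 algebraic}(2) the image of $\alpha(V)$ in $H^{p-1}(SL_p(\mathcal{O}_F),V^{(1)})$ is nonzero, and by (4) plus the long exact sequence of $0\to V^{(1)}\to\tilde V^{(1)}\to V^{(1)}\to 0$ — where the central element $\diag(\zeta,\dots,\zeta)$ with $\zeta$ a $p$-th root of unity class shows cohomology with $\tilde V^{(1)}$-coefficients is $p$-torsion, and the composite $\psi_1\circ\psi_2$ is multiplication by $p$ hence zero — the Bockstein $\Bock^{p-1}$ is injective on the relevant summand, so $\Bock^{p-1}(\alpha(V))\ne 0$.
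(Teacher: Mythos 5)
There is a genuine gap, and it sits exactly where the paper has to work hardest. Your reduction step asserts that $T_p(\cO_F)$ is finite of order prime to $p$ and that consequently $H^i((T_p\ltimes A_p)(\cO_F),\chi)=H^i(A_p(\cO_F),\chi)^{T_p(\cO_F)}$. Neither claim is right: for a real quadratic $F$ the group $T_p(\cO_F)\simeq(\cO_F^{\times})^{p-1}$ is infinite, and even with trivial coefficients $H^{r}(T_p(\cO_F),k)\neq 0$ for $r>0$, so the Hochschild--Serre spectral sequence does not collapse onto invariants for free. The correct reduction (as in the paper's proof) uses Lemma \ref{group cohomology: nontrivial character cohomology} — the full complex $\RGamma(T_p(\cO_F),\chi)$ vanishes for a \emph{nontrivial} character $\chi$ — and therefore requires first proving that no trivial $T_p(\cO_F)$-character occurs in the lower-degree cohomology of $A_p(\cO_F)$; this is itself part of the combinatorial content, not a formality.

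The second, more serious, problem is the modulus in your weight-counting. You reduce (2) and (3) to Lemma \ref{group cohomology: weights of frobenius twist}(3),(4), i.e.\ to non-congruences modulo $q-1=p^2-1$. But $\cO_F^{\times}$ has rank $1$, so its image in $\bF_{p^2}^{\times}$ is only the norm-$\pm1$ subgroup of order $2(p+1)$ (this is exactly what Lemma \ref{group cohomology: sorry not sorry}(1) arranges, and it is the best possible); hence the kernel of $X^*(T_p)\to\Hom(T_p(\cO_F),k^{\times})$ is only contained in $(p+1)X^*(T_p)$, not in $(p^2-1)X^*(T_p)$. Two characters that Lemma \ref{group cohomology: weights of frobenius twist} distinguishes mod $p^2-1$ may coincide on $T_p(\cO_F)$, so the mod $p^2-1$ statements do not imply the vanishing of the invariants you need. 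This is why the paper proves the strictly stronger Lemma \ref{borel: combinatorics}, ruling out the relevant congruences modulo $p+1$; that lemma does not follow from \ref{group cohomology: weights of frobenius twist} and has to be argued separately. Your treatment of (4) has the right mechanism in spirit — the unit $u$ with $\Fr_p(\ou)\neq\ou^{p}$ makes the unique surviving weight $\tchi_1^{(1)}\otimes\tchi_1^{-p}$ nontrivial on $T_p(\cO_F)$ integrally though trivial mod $p$, forcing the invariants to be $p$-torsion — but as written it rests on the same insufficient mod $p^2-1$ reduction, and the concluding "averaging" step is not available since $T_p(\cO_F)$ is infinite; one must instead argue as the paper does, via vanishing of the lower rows of the spectral sequence and the explicit action of $\diag(u,u^{-1},1,\ldots,1)$ on the one remaining weight line. (Your closing remark deducing Proposition \ref{group cohomology: ring of integers main} via a central $p$-th root of unity also does not apply for $p>2$, but that is outside the statement under review.)
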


\begin{proof}
We choose $F$ satisfying the properties listed in Lemma \ref{group cohomology: sorry not sorry}. The reduction map $A_p(\cO_F)\to A_p(\bF_q)$ induces an isomorphism $A_p(\cO_F)/p\simeq A_p(\bF_q)$. Therefore the induced map $H^1(A_p(\bF_q),k)\to H^1(A_p(\cO_F),k)$ on cohomology in degree $1$ is an isomorphism, and induces a $T_p(\cO_F)$-equivariant surjection $H^i(A_p(\bF_q),k)\to H^i(A_p(\cO_F),k)\simeq \Lambda^i (\fa_k\cdot x_{e}\oplus \fa^{(1)}_k\cdot x_{\tau})$ for $i\geq 1$, where we use the notation of Proposition \ref{group cohomology: additive group cohomology}, and $e,\tau$ are the elements of the Galois group $\Gal(F/\bQ)\simeq \bZ/2$. The action of $T_p(\cO_F)$ on $H^1(A_p(\cO_F),k)\simeq \fa_k\cdot x_{e}\oplus \fa^{(1)}_k\cdot x_{\tau}$ factors through $T_p(\cO_F)\to T_p(\bF_q)$, and this module explicitly is given as the direct sum of inverses of the characters $\chi_1-\chi_2,\ldots,\chi_1-\chi_p,p(\chi_1-\chi_2),\ldots,p(\chi_1-\chi_p)$. 

The reduction map $(\cO_F^{\times})^{p-1}\simeq T_p(\cO_F)\to T_p(\bF_q)\simeq (\bF^{\times}_{p^2})^{p-1}$ is not surjective (as soon as $p>3$), but its image is equal to $(\{x|N_{\bF_{p^2}/\bF_p}(x)=\pm 1\})^{p-1}$, by our choice of the field $F$. We need the following partial refinement of Lemma \ref{group cohomology: weights of frobenius twist} to nevertheless be able to bound the invariant subspaces $H^j(A_p(\cO_F),\chi_i^p)^{T_p(\cO_F)}$.

\begin{lm}\label{borel: combinatorics}Denote by $S$ the set $\{\chi_1-\chi_2,\ldots,\chi_1-\chi_p,p(\chi_1-\chi_2),\ldots, p(\chi_1-\chi_p)\}\subset X^*(T_p)$.
\begin{enumerate}
    \item For $2\leq i\leq p$ the character $p\chi_i$ is not congruent modulo $p+1$ to a sum of $\leq p-1$ elements of $S$.
    \item The character $p\chi_1$ is not congruent modulo $p+1$ to a sum of $\leq p-2$ elements of $S$.
    \item The only, up to permutation, congruence modulo $p+1$ between $p\chi_1$ and a sum of $\leq p-1$ elements of $S$ is the equality $p\chi_1=(\chi_1-\chi_2)+\ldots+(\chi_1-\chi_p)$.
\end{enumerate}
\end{lm}

\begin{proof}[Proof of Lemma \ref{borel: combinatorics}]
We use $\chi_1,\ldots,\chi_{p-1}$ as a basis for $X^*(T_p)$, the character $\chi_p$ is expressed as $-(\chi_1+\ldots+\chi_{p-1})$. Denote by $\sigma:X^*(T_p)\to\bZ$ the map sending $a_1\chi_1+\ldots+a_{p-1}\chi_{p-1}$ to $a_1+\ldots+a_{p-1}$. We have $\sigma(\chi_1-\chi_i)=0$ for $i\leq p-1$ and $\sigma(\chi_1-\chi_p)=p$. In the rest of the proof, symbol $\equiv$ always refers to congruence modulo $p+1$.

1) The only elements of $S$ with a non-zero value of $\sigma$ are $\chi_1-\chi_p$ and $p(\chi_1-\chi_p)$, with the values $p\equiv -1$ and $p^2\equiv 1$, respectively. Suppose that we have a congruence $p\chi_i\equiv r_1+\ldots+r_{l}\bmod p+1$ with $l\leq p-1$, and all $r_1,\ldots,r_{l}$ from $S$. 

Suppose first that $i\neq p$. If $\chi_1-\chi_p$ appears in this sum $a$ times, and $p(\chi_1-\chi_p)\equiv \chi_p-\chi_1$ appears $b$ times, then $a-b\equiv 1\bmod p+1$ because $\sigma(p\chi_i)\equiv -1$. This forces $b$ to be equal to $a-1$. Therefore the difference $p\chi_i-(a(\chi_1-\chi_p)+(a-1)(\chi_p-\chi_1))\equiv -\chi_i-\chi_1+\chi_p=-2\chi_1-\chi_2-\ldots-2\chi_i-\ldots-\chi_{p-1}$ is congruent to a sum of $\leq p-2$ elements of the form $\pm(\chi_1-\chi_j)$ for $j=2,\ldots,p-1$. But such a congruence would have to use, for each $2\leq j\leq p-1,j\neq i$, an element of the form $\pm(\chi_1-\chi_j)$ at least once, and an element of the form $\pm(\chi_1-\chi_i)$ at least twice (because $p+1\geq 4$), so at least $p-1$ elements of $S$ would be needed.

Next, let us rule out the possibility of a congruence $p\chi_p\equiv r_1+\ldots+r_{l}$. We have $\sigma(p\chi_p)=-p(p-1)\equiv -2$. Hence if $\chi_1-\chi_p$ appears $a$ times in this congruence, then $p(\chi_1-\chi_p)$ appears $a-2$ times, so $p\chi_p-2(\chi_1-\chi_p)\equiv -3\chi_1-\chi_2-\ldots-\chi_{p-1}$ is congruent to a sum of $\leq p-3$ elements of the form $\pm(\chi_1-\chi_j),j\leq p-1$. But similarly to the previous case, such a sum would have to use at least $p-2$ such elements, and the original congruence cannot exist.

2) We have $\sigma(p\chi_1)=p\equiv -1$, hence a congruence $p\chi_1\equiv r_1+\ldots +r_{l}$ would have to use $a$ instances of $\chi_1-\chi_p$ and $a-1$ instances of $p(\chi_p-\chi_1)$, for some $a$. But this leaves us with $p\chi_1-(\chi_1-\chi_p)=(p-2)\chi_1-\chi_2-\ldots-\chi_{p-1}$ being congruent to a sum of $\leq p-3$ elements of the form $\pm(\chi_1-\chi_j),j\leq p-1$, which is impossible.

3) As in part (2), such a congruence would induce a congruence between $(p-2)\chi_1-\chi_2-\ldots-\chi_{p-1}$ and a sum of $\leq p-2$ elements of the form $\pm(\chi_1-\chi_j),j\leq p-1$. For each $j=2,\ldots,p-1$, we have to use an element of the form $\pm(\chi_1-\chi_j)$ at least once, hence exactly once, and this element must be $\chi_1-\chi_j$, forcing $a=1$ and implying the desired uniqueness.
\end{proof}

We can now proceed with the proof of Lemma \ref{group cohomology: integral restriction facts}. In (1), we will prove that even the composition of this map with further restriction $H^{p-1}((T_p\ltimes A_p)(\cO_F),\chi_1^p)\to H^{p-1}(A_p(\cO_F),\chi_1^p)$ is injective. By Lemma \ref{group cohomology: weights of frobenius twist}(4), in the notation of Proposition \ref{group cohomology: additive group cohomology}, the invariant subspace $H^{p-1}(A_p(\bF_q),\chi_1^p)^{T(\bF_q)}\subset H^{p-1}(A_p(\bF_q),\chi_1^p)$ is $1$-dimensional and is equal to $\Lambda^{p-1}(\fa_k\cdot x_0)$. This shows the injectivity asserted in part (1), because $\fa_k\cdot x_0\subset H^1(A_p(\bF_q),k)$ maps isomorphically onto $\fa_k\cdot x_e\subset H^1(A_p(\cO_F),k)$, and product on cohomology $A(\cO_F)$ induces isomorphisms $\Lambda^n H^1(A_p(\cO_F),k)\simeq H^n(A_p(\cO_F),k)$.

By Lemma \ref{group cohomology: nontrivial character cohomology} below, to prove part (2) it is enough to show that every character of $T_p(\cO_F)$ appearing as a subquotient of $H^i(A_p(\cO_F),\chi_j^p)$ for $j\geq 2, i\leq p-1$ is non-trivial. The module $H^{i}(A_p(\cO_F),k)$ is isomorphic to a direct sum of characters that are products of $i$ elements of $$\{-(\chi_1-\chi_2),\ldots,-(\chi_1-\chi_p),-p(\chi_1-\chi_2),\ldots,-p(\chi_1-\chi_p)\}\subset X^*(T_p)$$ Since the kernel of the restriction $X^*(T_p)\to \Hom(T_p(\cO_F),k^{\times})$ is contained in $(p+1)\cdot X^*(T_p)$ by our choice of the field $F$, the assertion follows from Lemma \ref{borel: combinatorics}(1). Analogously, part (3) follows from Lemma \ref{borel: combinatorics}(2).

We now turn to proving part (4). As we just established, $H^i(A_p(\cO_F),\chi_1^p)$ decomposes as a direct sum of non-trivial characters of $T_p(\cO_F)$ for $i<p-1$. Therefore $\RGamma(T_p(\cO_F),H^i(A_p(\cO_F),\chi_1^p))$ and $\RGamma(T_p(\cO_F),H^i(A_p(\cO_F),\tchi_1^{(1)}))$ are quasi-isomorphic to $0$. Hence $H^{p-1}((T_p\ltimes A_p)(\cO_F),\tchi_1^{(1)})$ injects into $H^{p-1}(A_p(\cO_F),\tchi_1^{(1)})^{T_p(\cO_F)}$, and it is enough to prove that the latter $W_2(k)$-module is annihilated by $p$. 

We have a $T_p(\cO_F)$-equivariant identification $H^{p-1}(A_p(\cO_F),\tchi_1^{(1)})\simeq \tchi_1^{(1)}\otimes\Lambda^{p-1}(\fa_{W_2(k)}\oplus\fa_{W_2(k)}^{(1)})$, and this module decomposes as a direct sum of characters of the form $\tchi_1^{(1)}\otimes \eta$ where $\eta$ is a product of $p-1$ characters of the form $-(\tchi_1-\tchi_j)$ or $-(\tchi^{(1)}_1-\tchi_j^{(1)})$ for $j=2,\ldots,p$. By Lemma \ref{borel: combinatorics}, even the mod $p$ reduction of $\tchi_1^{(1)}\otimes \eta$ is a non-trivial character of $T_p(\cO_F)$ unless $\eta=-(\tchi_1-\tchi_2)-\ldots-(\tchi_1-\tchi_{p-1})-(\tchi_1-\tchi_p)=-p\tchi_1$. Therefore $H^{p-1}(A_p(\cO_F),\tchi_1^{(1)})^{T_p(\cO_F)}=(\tchi_1^{(1)}\otimes\tchi_1^{-p})^{T_p(\cO_F)}$. 

Let now $u\in\cO_F^{\times}$ be a unit such that its reduction $\ou$ in $W_2(\bF_{p^2})^{\times}$ satisfies $\Fr_p(\ou)\neq\ou^p$, as provided by Lemma \ref{group cohomology: sorry not sorry}(2). Then the element $\diag(u,u^{-1},1,\ldots,1)\in T_p(\cO_F)$ acts in the character $\tchi_1^{(1)}\otimes\tchi_1^{-p}$ via multiplication by $\Fr_p(\ou)\ou^{-p}$, therefore the $W_2(k)$-module of invariants $(\tchi_1^{(1)}\otimes\tchi_1^{-p})^{T_p(\cO_F)}$ is isomorphic to $k$, which proves part (4).
\end{proof}

\begin{lm}\label{group cohomology: nontrivial character cohomology}
Suppose that $F$ is a number field such that the group of units of $\cO_F$ is infinite. For any split torus $T$ over $\cO_F$, if $\chi:T(\cO_F)\to k^{\times}$ is a non-trivial character then $\RGamma(T(\cO_F),\chi)=0$.
\end{lm}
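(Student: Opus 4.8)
The plan is to reduce the vanishing to a simple representation-theoretic fact about the action of the infinite cyclic (or bigger) group $\cO_F^\times$ via a Lyndon--Hochschild--Serre-type spectral sequence, exactly in the spirit of \cite[\S 5]{cpsk}. Since $T$ is split, $T(\cO_F)\cong (\cO_F^\times)^{\oplus r}$ for some $r=\dim T$, and by the K\"unneth formula together with the fact that a tensor product of characters is a character, it suffices to treat the case $T=\bG_m$, i.e. to show $\RGamma(\cO_F^\times,\chi)=0$ whenever $\chi:\cO_F^\times\to k^\times$ is a non-trivial character into a field $k$ of characteristic $p$.

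First I would fix a unit $u\in\cO_F^\times$ of infinite order (which exists by Dirichlet's unit theorem, since $\cO_F^\times$ is infinite by hypothesis) on which $\chi$ acts non-trivially. If no such $u$ exists, then $\chi$ is trivial on a finite-index subgroup $\Gamma_0\le\cO_F^\times$ consisting of units of infinite order together with torsion; but then $\chi$ has finite image, and since $k$ has characteristic $p$ a non-trivial finite-order character would factor through a quotient of order prime to $p$ (the torsion of $k^\times$ is prime to $p$), contradicting that $\chi$ is non-trivial while being trivial on the infinite-index-complement torsion part --- I would spell out this elementary dichotomy carefully: either $\chi(u)\ne 1$ for some $u$ of infinite order, or $\chi$ itself must be trivial because $\cO_F^\times$ modulo its (finite) torsion subgroup is a nontrivial free abelian group and any character of $k^\times$ of $p$-power order is trivial. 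So without loss of generality $\chi(u)=:\lambda\in k^\times$ with $\lambda\ne 1$.

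Next I would consider the subgroup $\langle u\rangle\cong\bZ$ and the extension $1\to\langle u\rangle\to\cO_F^\times\to Q\to 1$ (using that $\langle u\rangle$ is a direct summand after possibly enlarging it to a complement of a chosen basis, or simply using the Hochschild--Serre spectral sequence for this extension in general). The point is that $\RGamma(\bZ,\chi|_{\langle u\rangle})$ is computed by the two-term complex $[\chi\xrightarrow{\lambda-1}\chi]$, and since $\lambda-1\in k^\times$ this complex is acyclic; hence $\RGamma(\langle u\rangle,\chi)=0$. Feeding this into the spectral sequence $H^i(Q,H^j(\langle u\rangle,\chi))\Rightarrow H^{i+j}(\cO_F^\times,\chi)$, all $E_2$-terms vanish, so $\RGamma(\cO_F^\times,\chi)=0$. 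Combining over the $r$ torus factors via K\"unneth (each factor of $T(\cO_F)$ contributing an acyclic complex as soon as at least one factor acts non-trivially, which is exactly the hypothesis that $\chi$ is non-trivial on $T(\cO_F)$) finishes the proof.

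The only genuinely delicate point is the dichotomy in the second paragraph: one must ensure that a non-trivial character of $T(\cO_F)$ with values in characteristic $p$ is necessarily non-trivial on some element of infinite order. This is where the hypothesis that $\cO_F^\times$ is infinite enters essentially --- and it is guaranteed in our applications since $F$ is a real quadratic field, where $\cO_F^\times$ has rank $1$. Concretely: write $T(\cO_F)=\mu\times L$ with $\mu$ finite and $L$ free abelian of positive rank; a character of $L$ into $k^\times$ with finite image of $p$-power order is trivial because $L$ is free and $k^\times$ has no $p$-torsion, while a character of $\mu$ into $k^\times$ has order prime to $p$; so if $\chi$ is non-trivial it must be non-trivial on $L$, giving the needed unit of infinite order. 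Everything else is a routine spectral-sequence argument that I would present compactly.
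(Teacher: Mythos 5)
Your overall strategy coincides with the paper's: split off the torsion subgroup, reduce to the free abelian part of $T(\cO_F)$, and show acyclicity there using non-triviality of $\chi$. Your rank-one computation (the two-term complex $\chi\xrightarrow{\lambda-1}\chi$ with $\lambda\neq 1$, then $\RGamma(\cO_F^{\times},\chi)\simeq\RGamma(Q,\RGamma(\langle u\rangle,\chi))=0$) is a perfectly good, slightly more elementary substitute for the paper's argument, which treats the whole free part at once as $\RHom_{k[x_1^{\pm1},\dots,x_N^{\pm1}]}(k,\chi)$ and invokes disjointness of supports. The Künneth reduction to $\bG_m$ is also harmless, since for the vanishing you only need the composition of derived invariants, not a full Künneth isomorphism.

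However, the step you yourself flag as the delicate one is wrong as written. The claim ``write $T(\cO_F)=\mu\times L$ with $\mu$ finite and $L$ free; if $\chi$ is non-trivial it must be non-trivial on $L$'' is false: take $F$ real quadratic, $p$ odd, $\cO_F^{\times}=\{\pm1\}\times u^{\bZ}$, and $\chi(u)=1$, $\chi(-1)=-1$. This $\chi$ is non-trivial, has finite image of order prime to $p$ (so your considerations about $p$-power orders and $p$-torsion in $k^{\times}$ do not exclude it), yet it is trivial on the chosen $L$. What is true, and what you actually need, is the weaker statement that some element of \emph{infinite order} has $\chi\neq 1$: if $\chi$ is trivial on $L$ but $\chi(t)\neq 1$ for a torsion element $t$, replace a generator $u$ of $L$ by $ut$, which has infinite order and satisfies $\chi(ut)=\chi(t)\neq 1$. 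Equivalently, one re-chooses the free complement $\bZ^{\oplus N}\subset T(\cO_F)$ so that $\chi$ restricted to it is non-trivial; this is exactly the (implicit) first step of the paper's proof, and it is the only place where the hypothesis that $\cO_F^{\times}$ is infinite enters. With this one-line repair your argument goes through.
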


\begin{proof}
Choose an isomorphism $T(\cO_F)\simeq\bZ^{\oplus N}\times T(\cO_F)^{\tors}$ such that the restriction of $\chi$ to $\bZ^{\oplus N}$ is still nontrivial: this is always possible by choosing a splitting of the short exact sequence $T(\cO_F)^{\tors}\to T(\cO_F)\to T(\cO_F)^{\mathrm{tf}}\simeq \bZ^{\oplus N}$ in such a way that the image of $T(\cO_F)^{\mathrm{tf}}$ under the splitting is not completely contained in $\ker\chi$. We have $\RGamma(T(\cO_F),\chi)=\RGamma(T(\cO_F)^{\tors},\RGamma(\bZ^{\oplus N},\chi))$ so it is enough to show that $\RGamma(\bZ^{\oplus N},\chi)=0$. By definition, $\RGamma(\bZ^{\oplus N},\chi)=\RHom_{k[x_1^{\pm 1},\dots,x_N^{\pm 1}]}(k,\chi)$ where we denote by $\chi$ the module over the group algebra $k[x_1^{\pm 1},\dots,x_N^{\pm 1}]$ corresponding to the character $\chi|_{\bZ^{\oplus N}}$, and $k$ is the module on which all $x_i$ act by $1$. By the assumption that $\chi|_{\bZ^{\oplus N}}$ is non-trivial, $k$ and $\chi$ have disjoint supports in $\Spec k[x_1^{\pm 1},\dots,x^{\pm 1}_N]$ which implies the vanishing.
\end{proof}

Having proven Lemma \ref{group cohomology: integral restriction facts}, we will now deduce Proposition \ref{group cohomology: ring of integers main}. Consider the long exact sequence induced by $0\to\chi_1^p\xrightarrow{\psi_1} \tchi_1^{(1)}\xrightarrow{\psi_2} \chi_1^p\to 0$:

\begin{equation}
\ldots\to H^{p-2}(\chi_1^p)\xrightarrow{\Bock^{p-2}}H^{p-1}(\chi_1^p)\xrightarrow{\psi_1} H^{p-1}(\tchi_1^{(1)})\xrightarrow{\psi_2} H^{p-1}(\chi_1^p)\xrightarrow{\Bock^{p-1}} H^p(\chi_1^p)\to \ldots
\end{equation}
where $H^i(M)$ is the abbreviation for $H^i((T_p\ltimes A_p)(\cO_F),M)$. The multiplication by $p$ map on $H^{p-1}(\tchi_1^{(1)})$ factors as the composition $H^{p-1}(\tchi_1^{(1)})\xrightarrow{\psi_2} H^{p-1}(\chi_1^p)\xrightarrow{\psi_1} H^{p-1}(\tchi_1^{(1)})$. By Lemma \ref{group cohomology: integral restriction facts}(3) the map $\psi_1$ is injective, but Lemma \ref{group cohomology: integral restriction facts}(4) says that the composition $\psi_1\circ\psi_2$ is zero, hence $\psi_2$ is zero itself and the Bockstein homomorphism $H^{p-1}((T_p\ltimes A_p)(\cO_F),\chi_1^p)\to H^p((T_p\ltimes A_p)(\cO_F),\chi_1^p)$ induced by the character $\tchi_1^{(1)}$ is injective.

Consider now the long exact sequence of cohomology associated with the sequence $0\to \chi_1^p\to V^{(1)}\to \chi_2^p\oplus\ldots\oplus \chi_p^p\to 0$. By Lemma \ref{group cohomology: integral restriction facts}(2) we get that the map $H^{p}((T_p\ltimes A_p)(\cO_F),\chi_1^p)\to H^{p}((T_p\ltimes A_p)(\cO_F),V^{(1)})$ is injective. Combined with Lemma \ref{group cohomology: integral restriction facts}(1) this implies that the composition $$H^{p-1}(A_p(\bF_q),V^{(1)})^{T_p(\bF_q)}\to H^{p-1}((T_p\ltimes A_p)(\cO_F),V^{(1)})\xrightarrow{\Bock^{p-1}_{\tV^{(1)}}}H^{p}((T_p\ltimes A_p)(\cO_F),V^{(1)})$$ is injective when restricted to the image of the map $H^{p-1}(A_p(\bF_q),\chi_1^p)^{T_p(\bF_q)}\to H^{p-1}(A_p(\bF_q),V^{(1)})^{T_p(\bF_q)}$. But that map is an isomorphism by Lemma \ref{group cohomology: everything from chi1 algebraic} (2) so Proposition \ref{group cohomology: ring of integers main} is proven.
\end{proof}

\bibliographystyle{alpha}
\bibliography{bibsteenrod}

\end{document}